\xpatchcmd{\bibsection}{*}{}{}{} 
\newcommand{\git}{\mathbin{
  \mathchoice{\mkern-3mu/\mkern-6mu/\mkern-3mu}
    {\mkern-3mu/\mkern-6mu/\mkern-3mu}
    {/\mkern-5mu/}
    {/\mkern-5mu/}}}
\newcommand\blfootnote[1]{%
  \begingroup
  \renewcommand\thefootnote{}\footnote{#1}%
  \addtocounter{footnote}{-1}%
  \endgroup
}
\newcommand{\genlegendre}[4]{%
  \genfrac{(}{)}{}{#1}{#3}{#4}%
  \if\relax\detokenize{#2}\relax\else_{\!#2}\fi
}
\newcommand{\WIP}[1]{\scalebox{#1}{\begin{tikzpicture}[limb/.style={line cap=round,line width=1.5mm,line join=bevel}]
\draw[line width=2mm,rounded corners,fill=yellow] (-2,0) -- (0,-2) -- (2,0) -- (0,2) -- cycle;
\fill (1.5mm,7mm) circle (1.5mm);
\fill(0,-7.5mm) -- ++(10mm,0mm) -- ++(120:2mm)--++(100:1mm)--++(150:2mm) arc (70:170:2.5mm and 1mm);
\draw[limb] (-7.5mm,-6.5mm)--++(70:4mm)--++(85:4mm) coordinate(a)--++(-45:5mm)--(-2.5mm,-6.5mm);
\fill[rotate around={45:(a)}] ([shift={(-0.5mm,0.55mm)}]a) --++(0mm,-3mm)--++
        (7mm,-0.5mm)coordinate(b)--++(0mm,4mm)coordinate(c)--cycle;
\draw[limb] ([shift={(-0.6mm,-0.4mm)}]b) --++(-120:5mm) ([shift={(-0.5mm,-0.5mm)}]c) --++
        (-3mm,0mm)--++(-100:3mm)coordinate (d);
\draw[ultra thick] (d) -- ++(-45:1.25cm);
\end{tikzpicture}}}
\definecolor{cec1d24}{RGB}{236,29,36}
\definecolor{cffffff}{RGB}{255,255,255}
\newcommand{\what}[1]{{\widehat{#1}}}
\newcommand{\bs}{\backslash}
\newcommand{\lra}{\longrightarrow}
\newcommand{\ra}{\rightarrow}
\newcommand{\bA}{\mathbb{A}}
\newcommand{\bC}{\mathbb{C}}
\newcommand{\bD}{\mathbb{D}}
\newcommand{\bE}{\mathbb{E}} 
\newcommand{\bF}{\mathbb{F}}
\newcommand{\bG}{\mathbb{G}}
\newcommand{\bI}{\mathbb{I}}
\newcommand{\bL}{\mathbb{L}}
\newcommand{\bM}{\mathbb{M}}
\newcommand{\bN}{\mathbb{N}}
\newcommand{\bP}{\mathbb{P}}
\newcommand{\bQ}{\mathbb{Q}}
\newcommand{\bS}{\mathbb{S}}
\newcommand{\bT}{\mathbb{T}}
\newcommand{\bX}{\mathbb{X}}
\newcommand{\bY}{\mathbb{Y}}
\newcommand{\bZ}{\mathbb{Z}}
\newcommand{\BMP}{\textbf{MP}}
\newcommand{\Qbar}{{\overline{\mathbb{Q}}}}
\newcommand{\cB}{\mathcal{B}}
\newcommand{\cC}{\mathcal{C}}
\newcommand{\cD}{\mathcal{D}}
\newcommand{\cE}{\mathcal{E}}
\newcommand{\cF}{\mathcal{F}}
\newcommand{\cG}{\mathcal{G}}
\newcommand{\cH}{\mathcal{H}}
\newcommand{\cI}{\mathcal{I}}
\newcommand{\cJ}{\mathcal{J}}
\newcommand{\cL}{\mathcal{L}}
\newcommand{\cM}{\mathcal{M}}
\newcommand{\cN}{\mathcal{N}}
\newcommand{\cO}{\mathcal{O}}
\newcommand{\cP}{\mathcal{P}}
\newcommand{\cR}{\mathcal{R}}
\newcommand{\cT}{\mathcal{T}}
\newcommand{\cU}{\mathcal{U}}
\newcommand{\cV}{\mathcal{V}}
\newcommand{\cX}{\mathcal{X}}
\newcommand{\cY}{\mathcal{Y}}
\newcommand{\cZ}{\mathcal{Z}}
\newcommand{\Zhat}{{\widehat{\mathbb{Z}}}}
\newcommand{\la}[1]{\,^{#1}\!} 
\newcommand{\floor}[1]{\lfloor #1 \rfloor}
\newcommand{\ceil}[1]{\lceil #1 \rceil}
\newcommand{\mf}[1]{\mathfrak{#1}} 
\newcommand{\fc}{\mathfrak{c}}
\newcommand{\ff}{\mathfrak{f}}
\newcommand{\fm}{\mathfrak{m}}
\newcommand{\fR}{\mathfrak{R}}
\newcommand{\fs}{\!\!\fatslash}
\newcommand{\et}{{\acute{e}t}}
\newcommand{\fppf}{{fppf}}
\newcommand{\Aut}{\operatorname{Aut}}
\newcommand{\Inn}{\operatorname{Inn}}
\newcommand{\Out}{\operatorname{Out}}
\newcommand{\Ker}{\operatorname{Ker}}
\newcommand {\spmatrix}[4]{\left[\begin{smallmatrix}#1&#2\\#3&#4\end{smallmatrix}\right]}
\newcommand{\slc}[1]{{(\ol{#1})}}
\newcommand{\rot}{\operatorname{rot}}
\newcommand{\ch}{\operatorname{char}}
\newcommand{\cAdm}{\mathcal{A}dm}
\newcommand{\Adm}{Adm}
\newcommand{\colim}{\operatorname{colim}}
\newcommand{\Frac}{\operatorname{Frac}}
\newcommand{\Gal}{\operatorname{Gal}}
\newcommand{\id}{\operatorname{id}}
\newcommand{\Hom}{\operatorname{Hom}}
\newcommand{\cHom}{\mathcal{H}om}
\newcommand{\Epi}{\operatorname{Epi}}
\newcommand{\GL}{\operatorname{GL}}
\newcommand{\SL}{\operatorname{SL}}
\newcommand{\PSL}{\operatorname{PSL}}
\newcommand{\PGL}{\operatorname{PGL}}
\newcommand{\PSU}{\operatorname{PSU}}
\newcommand{\Sz}{\operatorname{Sz}}
\newcommand{\Stab}{\operatorname{Stab}}
\newcommand{\disc}{\operatorname{disc}}
\newcommand{\Tr}{\operatorname{Tr}}
\newcommand{\ev}{\operatorname{ev}}
\newcommand{\bal}{{\text{bal}}}
\newcommand{\tr}{\operatorname{tr}}
\newcommand{\Inv}{\operatorname{Inv}}
\newcommand{\Hig}{\operatorname{Hig}}
\newcommand{\ord}{\operatorname{ord}}
\newcommand{\rightiso}{\stackrel{\sim}{\longrightarrow}}
\newcommand{\abs}{{\operatorname{abs}}}
\newcommand{\Div}{\operatorname{Div}}
\newcommand{\Cl}{\operatorname{Cl}}
\newcommand{\ol}[1]{{\overline{#1}}}
\newcommand{\ul}[1]{{\underline{#1}}}
\newcommand{\an}{\text{an}}
\newcommand{\genus}{\operatorname{genus}}
\newcommand{\Spec}{\operatorname{Spec}}
\newcommand{\red}{\text{red}}
\newcommand{\pr}{\operatorname{pr}}
\newcommand{\ext}{\text{ext}}
\newcommand{\ai}{\text{ai}}
\newcommand{\conn}{\text{conn}}
\newcommand{\pre}{{\text{pre}}}
\newcommand{\bgs}{{\text{bgs}}}
\newcommand{\MP}{{\text{MP}}}
\newcommand{\ls}[1]{(\!(#1)\!)}
\newcommand{\ps}[1]{[\![#1]\!]}
\newcommand{\tp}{\text{top}}
\newcommand{\gr}{\operatorname{gr}}
\newcommand{\blt}{\bullet}
\newcommand{\gen}{\text{gen}}
\newcommand{\sm}{\text{sm}}
\newcommand{\Sch}{\underline{\textbf{Sch}}}
\newcommand{\AffSch}{\underline{\textbf{AffSch}}}
\newcommand{\Mod}{\underline{\textbf{Mod}}}
\newcommand{\FinSets}{\underline{\textbf{FinSets}}}
\newcommand{\Sets}{\underline{\textbf{Sets}}}
\newcommand{\Coh}{\underline{\textbf{Coh}}}
\newcommand{\FEt}{\operatorname{FEt}}
\newcommand{\Graphs}{\underline{\textbf{Graphs}}}
\newcommand{\Glue}{\operatorname{Glue}}
\newcommand{\Ext}{\operatorname{Ext}}
\theoremstyle{definition}\newtheorem{defn}{Definition}[subsection]
\theoremstyle{remark}
\theoremstyle{remark}
\theoremstyle{remark}\newtheorem{remark}[defn]{Remark}
\theoremstyle{plain}\newtheorem{question}[defn]{Question}
\theoremstyle{remark}\newtheorem*{remark*}{Remark}
\theoremstyle{remark}
\theoremstyle{remark}
\theoremstyle{remark}
\theoremstyle{remark}
\theoremstyle{definition}\newtheorem{sit}[defn]{Situation}
\theoremstyle{plain}\newtheorem{prop}[defn]{Proposition}
\theoremstyle{plain}\newtheorem{thm}[defn]{Theorem}
\theoremstyle{plain}
\theoremstyle{plain}\newtheorem{lemma}[defn]{Lemma}
\theoremstyle{plain}\newtheorem{cor}[defn]{Corollary}
\theoremstyle{plain}\newtheorem{conj}[defn]{Conjecture}
\theoremstyle{plain}\newtheorem*{thm*}{Theorem}
\theoremstyle{plain}\newtheorem*{conj*}{Conjecture}
\theoremstyle{plain}\newtheorem*{prop*}{Proposition}
\title{Nonabelian level structures, Nielsen equivalence, and Markoff triples}
\author{William Chen}
\begin{document}
\maketitle

\blfootnote{The author's research was partly supported by National Science Foundation Award No. DMS-1803357.}

\begin{abstract} In this paper we establish a congruence on the degree of the map from a component of a Hurwitz space of covers of elliptic curves to the moduli stack of elliptic curves. Combinatorially, this can be expressed as a congruence on the cardinalities of Nielsen equivalence classes of generating pairs of finite groups. Building on the work of Bourgain, Gamburd, and Sarnak \cite{BGS16, BGS16arxiv}, we apply this congruence to show that for all but finitely many primes $p$, the group of Markoff automorphisms acts transitively on the nonzero $\bF_p$-points of the Markoff equation $x^2 + y^2 + z^2 - 3xyz = 0$. This yields a strong approximation property for the Markoff equation, the finiteness of congruence conditions satisfied by Markoff numbers, and the connectivity of a certain infinite family of Hurwitz spaces of $\SL_2(\bF_p)$-covers of elliptic curves. With possibly finitely many exceptions, this resolves a conjecture of Bourgain, Gamburd, and Sarnak, first posed by Baragar \cite{Bar91} in 1991. Since their methods are effective, this reduces the conjecture to a finite computation.
\end{abstract}



\tableofcontents

\section{Introduction}

Let $k$ be a field. Let $\cM_{g,n}$ denote the moduli stack over $k$ of smooth curves of genus $g$ with $n$ marked points. For an integer $d$, there is a Hurwitz stack $\cH_{g,n,d}$ which classifies connected degree $d$ covers of smooth $(g,n)$-curves, only ramified above the $n$ marked points. While the stacks $\cM_{g,n}$ are irreducible, the Hurwitz stacks $\cH_{g,n,d}$ are typically disconnected. It is a classical problem to find combinatorial invariants of covers that can distinguish their connected components, or equivalently, to determine whether an open and closed substack of $\cH_{g,n,d}$ corresponding to some fixed values of various combinatorial invariants is connected.


For example, when $k = \bC$, Clebsch, Luroth, and Hurwitz used a combinatorial argument to show that the substack of $\cH_{0,n,d}$ corresponding to covers which are simply branched over the $n$ marked points is connected; taking $d$ large enough so that every curve of genus $g$ admits a degree $d$ map to $\bP^1$ with simple branching, this gave the first proof of the connectedness of $\cM_g$. When $(g,n) = (0,3)$ and $k = \bQ$, the connected components of $\bigsqcup_{d\in\bZ_{\ge 1}}\cH_{0,3,d}$ are in bijection with the $\Gal(\Qbar/\bQ)$-orbits of dessins d'enfants. Describing these orbits in terms of combinatorial data remains a deep and very open problem \cite{HS97, Schn97, GGA1, GGA2}.


When a combinatorial type is fixed and $g$ or $n$ are allowed to be large, there are a number of connectedness results which go under the name of \emph{genus stabilization} ($g\gg 0$ see \cite[\S6]{DT06} or \cite{CLP16}), or \emph{branch stabilization} ($g = 0$, $n\gg 0$), which first appeared as a theorem of Conway and Parker\footnote{see \cite[Appendix]{FV91}; also see \cite{Lonne18}, \cite[Theorem 6.1]{EVW16}, \cite[Corollary 12.5]{LWZB19}.}. Recently such connectedness results have found notable applications to number theory. In \cite{EVW16} and \cite{LWZB19}, Ellenberg, Venkatesh, Westerland, Liu, Wood, and Zureick-Brown proved branch stabilization results and used them to study the function field analogs of Cohen-Lenstra-Martinet heuristics on class groups. In \cite{RV15}, Roberts and Venkatesh also used branch stabilization to study an open problem of Malle and Roberts on the infinitude of number fields with alternating or symmetric monodromy group and bounded ramification.


When $(g,n)$ are fixed and the combinatorial type is allowed to vary, less is known, and it is unclear what one should even expect. In this paper, we will prove a connectedness result of this type, and apply it to study a conjecture of Bourgain, Gamburd, and Sarnak regarding the Markoff equation. We will consider the case $(g,n) = (1,1)$, and will consider only $G$-\emph{Galois} covers, where $G$ is a finite group. We will write $\cM(1) := \cM_{1,1}$, and use $M(1) := M_{1,1}$ to denote its coarse scheme. Thus we are interested in classifying $G$-covers of elliptic curves only ramified above the origin. In this case, an important combinatorial invariant is the \emph{Higman invariant} (also called Nielsen type), which analytically is described by the conjugacy class in $G$ given by monodromy around the origin of the elliptic curve. In this situation the coarse schemes of our Hurwitz stacks over $\bC$ are disjoint unions of (possibly noncongruence) modular curves\footnote{By this, we mean a quotient of the upper half plane $\cH$ by a finite index subgroup of $\SL_2(\bZ)$, possibly noncongruence.}. The arithmetic of these Hurwitz spaces are linked to the arithmetic of the Fourier coefficients of noncongruence modular forms \cite[\S5]{Chen18}. These components are also \emph{Teichmuller curves} generated by square-tiled surfaces as first studied by Veech \cite{Vee89}, and are interesting from the perspective of billiards and dynamics \cite{Chen17,HS06,Loc03,MT02,Zor06}. 



The central result of this paper is a congruence on the degree of the ``forgetful map'' from any component of the Hurwitz space of $G$-covers of elliptic curves to $M(1)$. If $\Pi$ denotes the fundamental group of a once-punctured surface of genus 1 (this is a free group of rank 2), then via Galois theory this can also be interpreted combinatorially as a divisibility theorem on the cardinalities of $\Out(\Pi)$-orbits on the set
\begin{equation}\label{eq_epi_ext_intro}
\Epi^\ext(\Pi,G) := \Epi(\Pi,G)/\Inn(G)	
\end{equation}
where $\Epi(A,B)$ denotes the set of surjective homomorphisms $A\rightarrow B$. When $G = \cG(\bF_q)$ is the group of $\bF_q$-points of an algebraic group $\cG$, the set $\Epi^\ext(\Pi,G)$ can be related to the $\bF_q$-points of the character variety of $\cG$-representations of $\Pi$, and the $\Out(\Pi)$-action on $\Epi^\ext(\Pi,G)$ induces an action on the character variety. When $\cG = \SL_2$, the associated character variety is the affine space $\bA^3$, and the subvariety $\bX\subset\bA^3$ defined by the Markoff equation $x^2 + y^2 + z^2 - xyz = 0$ is stable under $\Out(\Pi)$. In \cite{BGS16,BGS16arxiv}, Bourgain, Gamburd, and Sarnak studied $\Out(\Pi)$-action on $\bX$, and conjectured that the action is always transitive on $\bX(\bF_p) - \{(0,0,0)\}$. They were almost able to prove the conjecture, but their analytical methods fell short when $p^2-1$ contains many prime factors. Our congruence provides sufficient rigidity to resolve their conjecture for all primes outside an explicitly computable finite set $\bE_\bgs$. This implies a strong approximation property for the Markoff equation, and the finiteness of congruence conditions satisfied by Markoff numbers. In terms of Hurwitz spaces, this implies that for any prime $p\notin\bE_\bgs$ and any non-central conjugacy class $\fc$ of $\SL_2(\bF_p)$ of trace $-2$, the Hurwitz space $M_p$ of $\SL_2(\bF_p)$-covers of elliptic curves only branched over the origin with Higman invariant $\fc$ is \emph{connected}.



This connectedness yields in some sense a complete geometric description of the curves $M_p$. In particular, by working with the character variety we are able to completely describe the ramification behavior of $M_p$ over $M(1)$, from which we will derive an exact genus formula. In particular, the genus is asymptotic to $\frac{1}{12}p^2$ and has genus at least 2 for $p\ge 11, p\notin\bE_\bgs$. By Faltings' theorem this implies that for large $p$ and any number field $K$, there exist only finitely many elliptic curves which admit an $\SL_2(\bF_p)$ cover only branched over the origin with ramification index $2p$.


In this introduction we will begin in \S\ref{ss_admissible_covers_intro} by describing the main congruence, both geometrically and combinatorially. Next in \S\ref{ss_applications_intro} we will describe the applications to the Markoff equation and the connectedness of certain Hurwitz stacks. In \S\ref{ss_summary_of_paper_intro} we will summarize the paper and sketch the proof of the congruence. In \S\ref{ss_further_directions} we will describe some further directions and related work.

\subsection{Admissible covers, Nielsen equivalence, and the main congruence}\label{ss_admissible_covers_intro}

We will need to work with a natural compactification of the Hurwitz stacks by ``admissible covers'' as introduced by Harris and Mumford \cite{HM82} and reinterpreted by Abramovich, Corti, and Vistoli \cite{ACV03}. We will work with the stacks $\cAdm(G)$ of admissible $G$-covers of stable pointed curves of genus 1 (henceforth called \emph{1-generalized elliptic curves}), where $G$ is a finite group. These stacks are well behaved over $\bZ[1/|G|]$, but for our purposes it will suffice to work over an algebraically closed field $k$ of characteristic 0. The following discussion takes place over such a field $k$.


Let $\ff : \cAdm(G)\rightarrow\ol{\cM(1)}$ be the ``forgetful map'', where $\ol{\cM(1)}$ denotes the compactification of $\cM(1)$ by stable pointed curves. The stack $\cAdm(G)$ is a smooth proper Deligne-Mumford stack, and the substack $\cAdm^0(G)$ classifying smooth $G$-covers of elliptic curves only branched over the origin is open and dense. In particular, the inclusion $\cAdm^0(G)\subset\cAdm(G)$ induces a bijection on connected components. If $\fc$ is a conjugacy class of $G$, then the substack $\cAdm(G)_\fc\subset\cAdm(G)$ classifying covers with Higman invariant $\fc$ is open and closed. In this paper we will establish, for any connected component $\cX\subset\cAdm(G)_\fc$ with coarse scheme $X$, a divisibility theorem on the degree of the finite flat forgetful map $\ff : X\rightarrow \ol{M(1)}$. For example, we will show:

\begin{thm}[See Corollary \ref{cor_vdovin_2}]\label{thm_vdovin_intro} Let $G$ be a finite group. Let $c\in G$ and let $\fc$ be its conjugacy class. Let $\cX\subset\cAdm(G)_\fc$ be a connected component with coarse scheme $X$, parametrizing covers with Higman invariant $\fc$. For a prime $\ell$, write $r := \ord_\ell(|c|)$. Then we have
\begin{itemize}
\item[(a)] Write $\ord_\ell(|G|) = r+s$, and let $j\ge 0$ be an integer such that $G$ does not contain any proper normal subgroup of order divisible by $\ell^{j+1}$. If $r \ge 3s + j$, then
$$\deg(X\rightarrow \ol{M(1}))\equiv 0\mod \ell^{\ceil{\frac{r-3s-j}{2}}}$$
\item[(b)] Suppose $G$ is nonabelian and simple. If $\ell^{r+1}\ge |G|^{1/3}$ and $G$ is not isomorphic to $\PSL_2(\bF_q)$ for any $q$, then
$$\deg(X\rightarrow \ol{M(1)})\equiv 0\mod \ell^{\ceil{\frac{r}{2}}}$$
\end{itemize}
\end{thm}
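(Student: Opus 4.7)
The plan is to translate the degree congruence into a combinatorial statement about orbit sizes of the mapping class group acting on generating pairs of $G$, and then to establish the divisibility one cusp at a time by careful $\ell$-adic analysis in $G$.

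\emph{Combinatorial reduction and cusp decomposition.} Setting $\Pi = \pi_1(E \setminus \{0\}) \cong F_2 = \langle x, y\rangle$, the Galois correspondence identifies the generic fiber of $\ff: \cAdm^0(G)_\fc \to \cM(1)$ with $\Epi^\ext(\Pi, G)_\fc$, on which the monodromy $\SL_2(\bZ) \subset \Out(\Pi)$ acts; the orbit corresponding to $\cX$ has cardinality $\deg(\ff|_X)$. This orbit decomposes over the cusp of $\ol{M(1)}$ into orbits of the Dehn twist $T \in \SL_2(\bZ)$, each corresponding to a cusp of $X$ with ramification index equal to the $T$-orbit size; thus $\deg(\ff|_X)$ is a sum of cusp widths. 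Combinatorially, the width at a generating pair $(a, b)$ is the smallest $n > 0$ with $(a, ba^n)$ simultaneously $G$-conjugate to $(a, b)$, equivalently the smallest $n$ for which $a^n$ lies in the image of the twisted commutator map $C_G(a) \to G$, $g \mapsto b^{-1}gbg^{-1}$.

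\emph{Divisibility at each cusp.} The crux is to show each cusp width is divisible by $\ell^{\ceil{(r-3s-j)/2}}$. The three pieces of data interact as follows: $c = [a, b]$ has $\ell$-order $\ell^r$; the $\ell$-part of $|G|$ is $\ell^{r+s}$; and the $\ell$-part of any proper normal subgroup of $G$ is at most $\ell^j$. The normal subgroup hypothesis prevents the $\ell$-structure of $a$ and $c$ from being absorbed into a small normal $\ell$-subgroup, forcing the twisted commutator map to miss enough powers of $a$; a filtration argument, tracking the $\ell$-adic depths of $\langle a \rangle$ and the interaction with $C_G(a)$-conjugation, yields the claimed bound. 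For part (b), simplicity gives $j = 0$, and $\ell^{r+1} \geq |G|^{1/3}$ constrains $s$ relative to $r$; structural facts about finite simple groups (which degenerate specifically for $\PSL_2(\bF_q)$ because of atypically small centralizers of elements of high $\ell$-order) then sharpen the divisibility to $\ell^{\ceil{r/2}}$.

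The main obstacle is the cusp-width lower bound --- a delicate group-theoretic argument coupling the order of $c$, the centralizer $C_G(a)$, and the normal subgroup structure of $G$. The exclusion of $\PSL_2(\bF_q)$ in part (b) strongly suggests reliance on classification-type results in the spirit of Vdovin's work on conjugacy classes and normal structure in finite simple groups, where this family appears as a genuine exception to the relevant bounds.
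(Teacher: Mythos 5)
There is a genuine gap, and it is at the step you yourself call the crux. Your reduction of $\deg(X\rightarrow\ol{M(1)})$ to a sum of Dehn-twist orbit sizes (cusp widths) is fine, but the claim that \emph{each} cusp width is divisible by $\ell^{\ceil{\frac{r-3s-j}{2}}}$ is false, so no "filtration argument" can deliver it. Take $G=\SL_2(\bF_p)$ and $\fc$ the class of $\spmatrix{-1}{1}{0}{-1}$, so $|c|=2p$, $\ell=p$, $r=1$, $s=j=0$, and the theorem asserts $p\mid\deg$. The width of the cusp with $\delta$-invariant $\ps{u,h}$ is governed by the order of $u$ (in the paper's notation, by $k_{u,h}\mid |u|$), and $u$ ranges over hyperbolic and elliptic elements whose orders divide $p-1$ or $p+1$; these widths are typically coprime to $p$ (this is visible explicitly in the orbit analysis of Lemma \ref{lemma_free} and Proposition \ref{prop_rot_orbits}). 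The divisibility by $p$ of the total degree is an irreducibly global statement about the sum of the widths, not a local statement at each cusp, so your proposed local argument cannot be repaired by more careful $\ell$-adic bookkeeping; note also that the order of $c=[u,h]$ has no direct relation to the order of $u$, which is what actually controls a cusp width.

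The paper's source of divisibility is entirely different: over a component $\cR$ of the reduced ramification divisor (the fiber over the \emph{origin} of the elliptic curve, not the cusp of $\ol{M(1)}$) one has $\sigma_O^*\omega_{\cE/\cX}\cong(\sigma^*\omega_{\cC/\cX})^{\otimes e}$ with $e=|c|$ (Proposition \ref{prop_restriction}); pulling back the Hodge bundle, whose degree is $1/24$, and comparing degrees of line bundles on these one-dimensional stacks forces $\deg(X\rightarrow\ol{M(1)})\equiv 0\bmod \frac{12e}{\gcd(12e,m_\cX d_\cX)}$ (Theorem \ref{thm_congruence}). Group theory enters only afterwards, to bound the two correction terms: $d_\cX$ divides $|C_G(\langle c\rangle)/\langle c\rangle|$ (Proposition \ref{prop_stacky_RRD}), and $m_\cX$ is controlled by the automorphism groups $A_{G,u,h}$ of cuspidal covers computed via the $\delta$-invariant (Theorem \ref{thm_cuspidal_automorphisms}), which by Lemma \ref{lemma_abelian_subgroup} produce an abelian subgroup $\bZ/\ell^k\times\bZ/\ell^{k-j}$ of $G$ when $\ord_\ell(m'_\fc)=k$; this is where $j$ and $s$ appear and where Vdovin's theorem is used in part (b). Your instinct about Vdovin is in the right neighborhood, but the relevant statement bounds \emph{abelian subgroups} by $|G|^{1/3}$, and $\PSL_2(\bF_q)$ is excluded because it has atypically \emph{large} abelian subgroups (hence large centralizers), not small ones. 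Without the geometric congruence (or some substitute global mechanism), the combinatorial data you assemble at the cusps does not suffice to prove the theorem.
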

For a more precise version of this theorem, see Theorems \ref{thm_congruence}, \ref{thm_combinatorial_congruence}. Note that the $G$-stabilizer of any ramified point is generated by a conjugate of $c$, so $|c|$ is the ramification index at any ramified point. Also note that in case (a), if $\ord_\ell(|c|) = \ord_\ell(|G|)$ and $G$ does not contain any proper normal subgroups of order divisible by $\ell$, then we obtain a congruence $\equiv 0\mod \ell$. This is the case for example when $c = \spmatrix{-1}{1}{0}{-1}\in\SL_2(\bF_\ell)$ for $\ell\ge 3$. This case leads to our main result on Markoff triples (see \S\ref{sss_bgs_conjecture_intro}).


The forgetful map $\ff : \cAdm(G)\rightarrow\ol{\cM(1)}$ restricts to an \'{e}tale map $\cAdm^0(G)\rightarrow\cM(1)$, which in turn factors as
$$\cAdm^0(G)\rightarrow\cM(G)\rightarrow\cM(1)$$
where $\cM(G)$ is the moduli stack of elliptic curves with $G$-structures (see \S\ref{ss_comparison_with_G_structures} below). Here the first map is an \'{e}tale gerbe (and hence induces a homeomorphism on topological spaces) and the second map is finite \'{e}tale. Thus the sets of connected components of $\cAdm(G),\cAdm^0(G)$, and $\cM(G)$ are in natural bijection with each other. We will use this fact repeatedly without mention. Since $\cM(G)\rightarrow\cM(1)$ is finite \'{e}tale, the components of $\cAdm(G)$ (equivalently $\cM(G)$, $\cAdm^0(G)$) can be characterized combinatorially using Galois theory. For example, if $\Pi$ denotes the fundamental group of a once-punctured orientable surface of genus 1, then the geometric fibers of $\cM(G)$ above $\cM(1)$ are in bijection with the set $\Epi^\ext(\Pi,G)$ (see \eqref{eq_epi_ext_intro}); note that $\Pi$ is free of rank 2. Let $\Out^+(\Pi)$ be the index 2 subgroup of $\Out(\Pi) := \Aut(\Pi)/\Inn(\Pi)$ represented by automorphisms which act on $\Pi/[\Pi,\Pi] \cong \bZ^2$ with determinant 1. Then the fundamental group of $\cM(1)$ is the profinite completion of $\Out^+(\Pi)$, the connected components of $\cM(G)$ are in bijection with the orbits of the natural action of $\Out^+(\Pi)$ on $\Epi^\ext(\Pi,G)$, and the degree of the induced map on coarse schemes $\Adm(G)\rightarrow M(1)$ is either equal to the cardinality of the orbit, or twice the cardinality (Theorem \ref{thm_basic_properties}). From this we deduce the following combinatorial interpretation of Theorem \ref{thm_vdovin_intro}:

\begin{thm}\label{thm_combinatorial_vdovin_intro} Let $G$ be a finite group.	 Let $c\in G$, and let $\fc$ be its conjugacy class. Fix a basis $(a,b)$ for $\Pi$. Let $\varphi : \Pi\rightarrow G$ be a surjection satisfying $\varphi([a,b])\in\fc$. Let $\Out^+(\Pi)\cdot\varphi$ denote the orbit of $\varphi$ in $\Epi^\ext(\Pi,G)$. For a prime $\ell$, write $r := \ord_\ell(|c|)$. Then we have
\begin{itemize}
\item[(a)] Write $\ord_\ell(|G|) = r+s$, and let $j\ge 0$ be an integer such that $G$ does not contain any proper normal subgroup of order divisible by $\ell^{j+1}$. If $r\ge 3s+j$, then
$$|\Out^+(\Pi)\cdot\varphi|\equiv 0\mod \ell^{\ceil{\frac{r-3s-j}{2}}}$$
\item[(b)] Suppose $G$ is nonabelian and simple. If $\ell^{r+1}\ge |G|^{1/3}$ and $G$ is not isomorphic to $\PSL_2(\bF_q)$ for any $q$, then
$$|\Out^+(\Pi)\cdot\varphi|\equiv 0\mod \ell^{\ceil{\frac{r}{2}}}$$
\end{itemize}
Let $\gamma_{-I}\in\Aut(\Pi)$ be the automorphism $(a,b)\mapsto (a^{-1},b^{-1})$. If $\varphi\circ\gamma_{-I} = \varphi$, then the congruences mod $\ell^k$ can be strengthened to a congruence mod $2\ell^k$.
\end{thm}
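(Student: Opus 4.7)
The plan is to deduce this combinatorial theorem directly from the geometric Theorem \ref{thm_vdovin_intro} via the dictionary provided in Theorem \ref{thm_basic_properties}. First I would identify the orbit $O := \Out^+(\Pi) \cdot \varphi$ with a connected component $\cX \subset \cAdm(G)_\fc$ and its coarse scheme $X$; the hypothesis $\varphi([a,b]) \in \fc$ places $\cX$ inside the stratum of covers with Higman invariant $\fc$.

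Having made this identification, I would apply Theorem \ref{thm_vdovin_intro}, obtaining
\[
\deg\bigl(X \to \ol{M(1)}\bigr) \equiv 0 \pmod{\ell^k},
\]
with $k = \lceil (r-3s-j)/2 \rceil$ in case (a) and $k = \lceil r/2 \rceil$ in case (b).

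I would then translate this geometric congruence back into a combinatorial one using Theorem \ref{thm_basic_properties}, which identifies $\deg(X \to \ol{M(1)})$ with either $|O|$ or $2|O|$, the choice governed by whether the central involution $\gamma_{-I} \in \Out^+(\Pi)$ stabilizes $[\varphi]$ and whether this stabilization lifts at the level of $\Hom(\Pi,G)$. In either alternative the divisibility of $\deg$ by $\ell^k$ forces $|O| \equiv 0 \pmod{\ell^k}$. For the strengthening, I would observe that the strict equality $\varphi \circ \gamma_{-I} = \varphi$ (a condition on $\varphi \in \Hom(\Pi,G)$, not merely on its class in $\Epi^\ext$) corresponds geometrically to the existence of a $G$-equivariant lift of the $[-1]$-involution of the elliptic curve to the $G$-cover. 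This lift is precisely what triggers the doubled side of the dichotomy in Theorem \ref{thm_basic_properties}, supplying the extra factor of $2$ and sharpening the congruence to $|O| \equiv 0 \pmod{2\ell^k}$.

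The main obstacle is the careful tracking of the factor of $2$ from Theorem \ref{thm_basic_properties}: one must verify that $\varphi \circ \gamma_{-I} = \varphi$ triggers exactly the doubled case of the dichotomy rather than the generic one, and one must handle the prime $\ell = 2$ with extra care, since there the factor of $2$ interacts nontrivially with the $\ell$-adic valuation and the parity of $|O|$ must be treated as an additional input rather than as an automatic consequence.
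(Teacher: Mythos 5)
Your overall strategy---take the geometric congruence of Theorem \ref{thm_vdovin_intro} as given and translate it into an orbit congruence via the dictionary of Theorem \ref{thm_basic_properties}---is the same route the paper takes (it deduces both intro theorems from Theorem \ref{thm_combinatorial_congruence} and Corollary \ref{cor_vdovin_2}, whose combinatorial half is exactly this translation). The gap is in the dictionary step itself: you assert that $\deg(X\to\ol{M(1)})$ equals $|O|$ or $2|O|$, where $O=\Out^+(\Pi)\cdot\varphi$. What Theorem \ref{thm_basic_properties}\ref{part_fibers} actually gives is that the generic fiber of the coarse map is $\Epi^\ext(\Pi,G)/\langle\gamma_{-I}\rangle$, so the degree equals the cardinality of the image $\ol{O}$ of $O$ in that quotient; and since $\gamma_{-I}$ is central in $\Out^+(\Pi)$, it fixes either every class in $O$ or none, so $|O|=\deg$ or $|O|=2\deg$---never $\deg/2$. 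This reversal matters twice. First, with the correct direction, $\ell^k\mid\deg$ gives $\ell^k\mid|O|$ at once for every $\ell$, including $\ell=2$; the ``extra care at $\ell=2$'' you flag but do not supply is only needed under your reading, in which $\deg=2|O|$ is possible and $\ell^k\mid\deg$ genuinely fails to force $\ell^k\mid|O|$. So as written the basic congruence is not fully established by your argument, even though the fix is simply to state the fiber description correctly.

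Second, your mechanism for the final strengthening is inverted. A lift of $[-1]$ to the cover corresponds to $\gamma_{-I}$ fixing the class of $\varphi$ in $\Epi^\ext(\Pi,G)$, and in that case $O$ maps \emph{bijectively} onto $\ol{O}$, i.e.\ $|O|=\deg$: this counting produces no extra factor of $2$. The doubling $|O|=2\deg\equiv 0 \mod 2\ell^k$ occurs precisely in the opposite case, when $\gamma_{-I}$ moves the class and $O\to\ol{O}$ is $2$-to-$1$. Hence the claim that the lift of $[-1]$ ``triggers the doubled side of the dichotomy'' does not follow from Theorem \ref{thm_basic_properties}, and your derivation of the last sentence of the theorem is not a proof: to get a $2\ell^k$ congruence in the case $\varphi\circ\gamma_{-I}=\varphi$ one would need a separate argument that the \emph{degree itself} is divisible by $2\ell^k$, which neither Theorem \ref{thm_vdovin_intro} nor Theorem \ref{thm_congruence} supplies. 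You should rework the translation with the fiber description $\deg(X\to\ol{M(1)})=|\ol{O}|$ as the starting point and then track carefully which side of the $\gamma_{-I}$ dichotomy actually yields the extra factor of $2$.
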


Since the congruences give at best $\equiv 0\mod |c|$, we find that the results above are only interesting when $G$ is nonabelian and generated by two elements. Note that when $G$ is abelian, the sizes of the $\Out^+(\Pi)$-orbits of $\varphi : \Pi\rightarrow G$ correspond to the indices of certain congruence subgroups inside $\SL_2(\bZ)$, which are well-understood. When $G = \bZ/n\bZ$, the action is transitive and the size of the single orbit is the index $[\SL_2(\bZ):\Gamma_1(n)]$; when $G = (\bZ/n\bZ)^2$, the action has $\phi(n)$ components, and the size of each orbit is $[\SL_2(\bZ) : \Gamma(n)] = |\SL_2(\bZ/n\bZ)|$ \cite[\S3.9]{DS06}.


Having fixed a basis $(a,b)$ for $\Pi$, we can view $\Epi^\ext(\Pi,G)$ as the set of conjugacy classes of generating pairs of $G$. From this point of view, in combinatorial group theory the $\Out(\Pi)$-orbits on $\Epi^\ext(\Pi,G)$ are called \emph{Nielsen equivalence classes of generating pairs of $G$}. In general, if $F_r$ denotes a free group of rank $r$, then the problem of Nielsen equivalence asks for a description of the orbits of $\Out(F_r)$ on $\Epi^\ext(F_r,G)$. Similarly, one can ask for a description of the orbits of $\Out(F_r)$ on $\Epi(F_r,G)/\Aut(G)$, in which case the orbits are called $T_r$-systems. Such problems arose in the 1950's in the study of group presentations, but have recently garnered renewed interest due to their relevance to the \emph{product replacement algorithm} for generating random elements of finite groups \cite{Pak00, Lub11}. Let $d(G)$ denote the minimum cardinality of a generating set of $G$. When $r\ge d(G)+1$, the expectation is

\begin{conj}[{\cite[Conjecture 1]{Gar08}}]\label{conj_wiegold} Let $G$ be a finite group. If $r\ge d(G)+1$, then $\Out(F_r)$ acts transitively on $\Epi^\ext(F_r,G)$.	
\end{conj}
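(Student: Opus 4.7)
The plan is to attack this conjecture by reducing it to the case of finite simple groups, combined with CFSG-based arguments, and then handling the extension step via a Gaschütz-type lifting argument that exploits the hypothesis of a spare generator. A useful starting observation is that the condition $r \geq d(G)+1$ propagates to quotients: for any normal $N \triangleleft G$, one has $d(G/N) \leq d(G)$, so the inductive hypothesis is preserved. This suggests induction on the composition length.

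First I would fix a composition series $G = G_0 \supset G_1 \supset \cdots \supset G_k = 1$ and two surjections $\varphi_1,\varphi_2 : F_r \twoheadrightarrow G$, and attempt to modify $\varphi_1$ by elements of $\Out(F_r)$ so as to agree with $\varphi_2$ modulo $G_i$ for successively larger $i$. The key tool at each stage is the Gaschütz lemma: if $r \geq d(G)$, then every generating $r$-tuple of a quotient $G/N$ lifts to a generating $r$-tuple of $G$. Once the images in $G/G_i$ are matched, the remaining ambiguity lies in $G_i/G_{i+1}$, and one tries to kill it by Nielsen moves that are trivial modulo $G_i$, absorbing the needed correction into the extra generator whose existence is guaranteed by $r \geq d(G)+1$.

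When all composition factors are cyclic of prime order — the solvable case — this strategy works, and in fact yields the full conjecture by a theorem of Dunwoody (1963). The substantial new difficulty is the nonsolvable case, where some composition factor $S = G_i/G_{i+1}$ is a nonabelian finite simple group. Since every nonabelian finite simple group satisfies $d(S)=2$, the simple-group base case asks for transitivity of $\Out(F_r)$ on $\Epi^\ext(F_r,S)$ for $r \geq 3$; this is itself a major subproblem, known in many families (alternating groups classically, $\PSL_2(\bF_q)$ by Garion, many groups of Lie type by Garion--Shalev and Evans, and groups of large rank by probabilistic generation à la Liebeck--Shalev), and my approach would be to stitch these together into a uniform CFSG-based statement for $r \geq 3$.

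The hardest step will be the extension step when the kernel $G_i$ is nonabelian. In the abelian-kernel case, the obstruction to matching lifts lies in $H^1(F_r, G_i/G_{i+1})$ and one can kill it by standard coboundary manipulations on the spare generator; for nonabelian kernels, however, one must show that a Nielsen move trivial modulo $G_i$ can conjugate any prescribed element of $G_i$ to any other, using only the slack provided by the extra generator, and this is genuinely subtle. I expect this to be the main obstacle: indeed, this is essentially why the conjecture remains open in general. A complete proof will likely require either a new extension-theoretic input, or else the kind of geometric/arithmetic rigidification that the congruences of this paper provide — where divisibility constraints on orbit sizes can rule out the existence of multiple $\Out(F_r)$-orbits on $\Epi^\ext(F_r,G)$ directly, bypassing the extension problem entirely.
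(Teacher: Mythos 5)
The statement you were asked to address is labeled a \emph{Conjecture} in the paper --- it is Wiegold's conjecture in Garion's formulation --- and the paper neither proves it nor claims to. It is stated as background, with the accompanying remark that it is known when $G$ is solvable (Dunwoody) and when $r\ge\log_2|G|$ (Lubotzky), and that it is otherwise open. So there is no paper-proof to compare yours against, and your write-up, appropriately, reads as an outline of an attack rather than a finished argument. You correctly identify the nonabelian-kernel extension step as the place where the Gasch\"utz-plus-induction scheme breaks down; that diagnosis matches the actual state of the problem.

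Two cautions on points where your sketch overreaches. First, the ``uniform CFSG-based statement for $r\ge 3$'' that would serve as the simple-group base case is itself not established: positive-probability generation results of Liebeck--Shalev type do not by themselves give transitivity of the $\Out(F_r)$-action on $\Epi^\ext(F_r,S)$, and for $r=3$ this remains open for many families of simple groups, so the base of your induction is not currently available. Second, the closing suggestion that the congruences of this paper could ``rule out multiple $\Out(F_r)$-orbits directly'' does not apply: the geometric machinery here is specific to $r=2$ (the fundamental group of a once-punctured torus and the moduli stack of elliptic curves with $G$-covers), and what it produces is a divisibility constraint, i.e.\ a \emph{lower bound} on orbit sizes. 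A congruence of the form $|\Out^+(F_2)\cdot\varphi|\equiv 0\bmod e$ cannot by itself force transitivity --- that required being combined with the Bourgain--Gamburd--Sarnak upper bound on the complement of the largest orbit --- and the paper offers no analogue of either ingredient for $r\ge 3$.
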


When $G$ is finite simple, this conjecture is attributed to Wiegold, and dates back to the 1970s. The conjecture is known if $G$ is solvable \cite{Dun70}, or if $r\ge \log_2(|G|)$ \cite[Corollary 3.3]{Lub11} (this can be viewed as an analog of branch/genus stabilization). However when $r = d(G) = 2$, the action is rarely transitive, and the first conjectural complete description of $T_2$-systems for a family of perfect groups was not given until 2013 by McCullough and Wanderley \cite{MW13}. For $G = \SL_2(\bF_q)$, they define a map ``the trace invariant'', denoted $\tau$
$$\begin{array}{rrcl}
\tau : & \Epi(\Pi,\SL_2(\bF_q)) & \lra & \bF_q \\
 & \varphi & \mapsto & \tr\varphi([a,b])
\end{array}$$
and they conjecture that the trace invariant completely describes $T_2$-systems for $\SL_2(\bF_q)$:
\begin{conj}[{``$T$-classification conjecture'' \cite{MW13}}]\label{conj_MW_intro} The trace invariant map $\tau$ induces a bijection from the set of $T_2$-systems on $\SL_2(\bF_q)$ onto its image in $\bF_q$.
\end{conj}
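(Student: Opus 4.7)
My plan is to parallel the paper's resolution of the Bourgain--Gamburd--Sarnak conjecture, but level-wise in the trace parameter. Identify the $\SL_2$-character variety of $\Pi = F_2$ with $\bA^3$ via $(x,y,z) = (\tr A, \tr B, \tr AB)$; the Fricke identity gives $\tau(\varphi) = \kappa(x,y,z) := x^2 + y^2 + z^2 - xyz - 2$, and the $\Out^+(\Pi)$-action becomes the classical Markoff moves on $\bA^3$ preserving $\kappa$. Under the usual dictionary, $\Epi^\ext(\Pi, \SL_2(\bF_q))$ corresponds to the open subset $V(\bF_q) \subset \bA^3(\bF_q)$ of points giving absolutely irreducible representations whose image generates $\SL_2(\bF_q)$. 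For $q = p$ prime the action of $\Aut(\SL_2(\bF_p))/\Inn$ on the character variety is trivial (it is realized by diagonal conjugation in $\PGL_2(\bF_p)$, which preserves all traces), so $T_2$-systems coincide with $\Out^+(\Pi)$-orbits on $V(\bF_p)$. The conjecture then reduces to transitivity of $\Out^+(\Pi)$ on each level set $V_t(\bF_q) := V(\bF_q) \cap \{\kappa = t\}$; the case $q = p^n$ further requires absorbing the Frobenius symmetry into the trace.

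The paper proves exactly this transitivity for $t = -2$, up to a finite exceptional set $\bE_\bgs$ of primes. The argument rests on two pillars: the analytic--combinatorial expansion machinery of Bourgain--Gamburd--Sarnak on the Markoff cubic surface, and the divisibility congruence of Theorem \ref{thm_vdovin_intro}, which for $\SL_2(\bF_p)$ is strongest when the commutator image is parabolic, i.e., $t = \pm 2$. The Markoff involutions $(x,y,z) \mapsto (x, y, xy-z)$ and their $S_3$-permutations preserve every level set $\{\kappa = t\}$, not only the Markoff surface, so the BGS escape/expansion inputs should transfer with essentially no change. Combined with Theorem \ref{thm_vdovin_intro}, this should handle $t = 2$ in exactly the same manner as $t = -2$.

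The main obstacle, and the technical heart of any proof, is the semisimple regime $t \neq \pm 2$, where $\varphi([a,b])$ is semisimple of order coprime to $p$. Here Theorem \ref{thm_vdovin_intro}(a) is vacuous at the prime $p$, and part (b) requires a prime $\ell$ with $\ell^{r+1} \ge |\SL_2(\bF_p)|^{1/3}$, i.e., a sufficiently large prime factor of $q \pm 1$---precisely the BGS obstruction in a different guise. My proposal is to apply Theorem \ref{thm_vdovin_intro} at every prime $\ell \mid |\varphi([a,b])|$ simultaneously, forcing orbit-size divisibility by $\prod_{\ell} \ell^{\ceil{(r_\ell - 3s_\ell - j_\ell)/2}}$, and then to combine this multi-prime rigidity with a BGS-style expansion tailored to each level set $V_t(\bF_q)$ in order to push a single orbit out to cover all of $V_t(\bF_q)$ up to a finite, computable set of exceptional pairs $(q, t)$. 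The hardest subcase will be values of $t$ for which $\{\kappa = t\}$ acquires extra automorphisms or singular points (e.g., small traces in small characteristic), where one may need to supplement the congruence with direct representation-theoretic counts on the affine cubic surface, and where Frobenius--Galois compatibility in the $q=p^n$ case must be argued by hand.
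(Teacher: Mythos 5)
This statement is not a theorem of the paper; it is Conjecture~\ref{conj_MW_intro} of McCullough--Wanderley, which the paper cites and resolves only in the single case $q=p$ prime, $t=-2$, $p\notin\bE_\bgs$ (Theorem~\ref{thm_bgs_conj_intro}). There is therefore no ``paper's proof'' to compare against, and your writeup should be read as a research plan for the open conjecture rather than as a proof. With that understood, here are the substantive problems with the plan.

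First, your treatment of $t=2$ is wrong on two counts. A surjective $\varphi:\Pi\twoheadrightarrow\SL_2(\bF_q)$ (for $q\ge 3$) is automatically absolutely irreducible, and by Lemma~\ref{lemma_absolutely_irreducible} absolute irreducibility is equivalent to $\tau(\varphi)\ne 2$; hence $t=2$ is never $q$-admissible and the conjecture is vacuous there. Moreover the paper explicitly states in \S\ref{ss_applications_intro} that its methods ``fail to give anything new for the Cayley cubic'' $t=2$, because the corresponding representations have image in a Borel and the automorphism-group control (Theorem~\ref{thm_cuspidal_automorphisms}) breaks down when the image is close to abelian. Your claim that the congruence handles $t=2$ ``in exactly the same manner as $t=-2$'' is therefore not merely unproved but contrary to what the paper says.

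Second, and this is the decisive gap, the congruence you invoke becomes far too weak for generic $t\ne\pm 2$. When $\varphi([a,b])$ is semisimple, its order $n_q(t)$ divides $q\pm 1$ and is coprime to $p$, and Theorem~\ref{thm_general_congruence_intro} only forces divisibility by $\ell^{\max\{r-s,0\}}$ with $r=\ord_\ell(n_q(t))$ and $r+s=\ord_\ell(q^3-q)$. Taking the product over all $\ell$, as you propose, gives at best a divisor of $n_q(t)$, which can be as small as $3$ while $|V_t(\bF_q)|$ is of order $q^2$. The BGS machinery (Theorem~\ref{thm_bgs_asymptotic}) gives $|\bX^*(p)-\cC(p)|\le p^\epsilon$; to combine with a congruence one needs every nontrivial orbit to be \emph{larger} than $p^\epsilon$, which the divisibility by $p$ supplies in the parabolic case $t=-2$ but which a divisibility by a fixed small $n_q(t)$ cannot supply. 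Your multi-prime idea does not circumvent this, because one is multiplying small exponents, not boosting the modulus past $q^\epsilon$. Finally, the BGS transfer to general level sets is announced in \cite{BGS16} but has never been carried out, and the $q=p^n$ case --- where Frobenius acts nontrivially on $V_t(\bF_q)$ and $\Out(\SL_2(\bF_q))$ strictly contains $D(q)$ --- is not handled by anything in the paper; acknowledging it ``must be argued by hand'' does not close the gap. Until these obstacles are surmounted, the proposal does not constitute a proof even of the cases $t\ne -2$, $q=p$.
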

Moreover, they give a complete description of the image of $\tau$, and computationally verify their conjecture for all $q\le 131$. When $q = p$ is prime and $t = -2$, we will be able to resolve this conjecture for all but finitely many primes $p$ (Theorem \ref{thm_bgs_conj_intro}).

\subsection{Applications}\label{ss_applications_intro}

From now on we will let $\Pi$ be a free group of rank 2, with basis $a,b$. For a field $k$ and $t\in k$, write $\Hom(\Pi,\SL_2(k))_t$ to denote the set of homomorphisms $\varphi : \Pi\rightarrow\SL_2(k)$ satisfying $\tr\varphi([a,b]) = t$. There is a beautiful theory of the character variety for $\SL_2$-representations of $\Pi$, which we review in \S\ref{ss_character_variety}. A consequence of this theory is that for $t\in\bF_q - \{2\}$, the map
\begin{equation}\label{eq_character_variety_intro}
\begin{array}{rcl}
\Hom(\Pi,\SL_2(\bF_q))_t/\GL_2(\bF_q) & \lra & \{(x,y,z)\in\bF_q^3\;|\; x^2+y^2+z^2-xyz-2 = t\}	\\
\varphi & \mapsto & (\tr\varphi(a),\tr\varphi(b),\tr\varphi(ab))
\end{array}
\end{equation}
where $\GL_2(\bF_q)$ acts by conjugation on $\SL_2(\bF_q)$, is a \emph{bijection}. Moreover, the natural action of $\Aut(\Pi)$ on the source induces an action on the target by \emph{polynomial automorphisms} of $\bA^3$ preserving the hypersurface $x^2+y^2+z^2-xyz-2=t$. Let $\Gamma\subset\Aut(\bA^3)$ be the subgroup induced by the action of $\Aut(\Pi)$. Then $\Gamma$ is generated by permutations of the coordinates and the ``Vieta'' involution $(x,y,z)\mapsto (x,y,xy-z)$. By Galois theory the bijection \eqref{eq_character_variety_intro} provides a dictionary between the geometric properties of the moduli stacks $\cM(G)$ for subgroups $G\le\SL_2(\bF_q)$ and the properties of the $\Gamma$-action on the $\bF_q$-points of the surface $x^2 + y^2 + z^2 - xyz -2 = t$.

\subsubsection{Markoff triples and the conjecture of Bourgain, Gamburd, and Sarnak}\label{sss_bgs_conjecture_intro}

When $t = -2$, the surface appearing in \eqref{eq_character_variety_intro} is the \emph{Markoff surface} $\bX$ given by\footnote{Typically the Markoff surface is given by $\bM : x^2+y^2+z^2-3xyz = 0$. However the map $(x,y,z)\mapsto (3x,3y,3z)$ defines a map $\bM\rightarrow\bX$ which is an isomorphism away from 3, and gives a bijection on $\bZ$-points, so for our purposes it is harmless (and more convenient) to work with $\bX$. In \S\ref{ss_bgs_conjecture}, we will make this distinction precise.}
$$\bX : x^2 + y^2 + z^2 - xyz = 0.$$
which we view as an affine hypersurface in $\bA^3_\bZ$. For primes $p$, let $\bX^*(p) := \bX(\bF_p) - \{(0,0,0)\}$. It follows from the classification of subgroups of $\SL_2(\bF_p)$ that for odd $p$, the bijection \eqref{eq_character_variety_intro} restricts to give a bijection
\begin{equation}\label{eq_character_variety_intro_2}
\Epi(\Pi,\SL_2(\bF_p))_{-2}/\GL_2(\bF_p)\rightiso\bX^*(p)
\end{equation}

This surface first appeared in the work of Markoff \cite{Mar79, Mar80} where by a descent argument he showed
\begin{thm}[Markoff \cite{Mar79,Mar80}]\label{thm_markoff_transitivity} The positive integer solutions to $\bX$ form a single orbit under the action of $\Gamma$.
\end{thm}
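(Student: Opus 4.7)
The plan is the classical Markoff descent via Vieta jumping. Recall that $\Gamma$ is generated by permutations of the three coordinates of $\bA^3$ together with the involution $R:(x,y,z)\mapsto(x,y,xy-z)$, and that $R$ preserves $\bX$ because $z$ and $z':=xy-z$ are the two roots of $w^2-(xy)w+(x^2+y^2)=0$ obtained by viewing the Markoff equation as quadratic in its last coordinate. Since $(3,3,3)\in\bX(\bZ_{>0})$ (as $27=27$), it will suffice to show that every positive integer solution can be transported to $(3,3,3)$ by a finite sequence of coordinate permutations and applications of $R$.

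\textbf{Descent step.} I would start from an arbitrary $(x,y,z)\in\bX(\bZ_{>0})$, apply a permutation so that $x\le y\le z$, and then set $z':=xy-z$. The Vieta relation $zz'=x^2+y^2$ shows that $z'$ is a positive integer, so $(x,y,z')\in\bX(\bZ_{>0})$. The crucial lemma is that $z'<z$ whenever $(x,y,z)\ne(3,3,3)$, which I would establish by contradiction. Suppose $z'\ge z$, so that $xy\ge 2z$; then $xyz\ge 2z^2$, and substituting $xyz=x^2+y^2+z^2$ gives $x^2+y^2\ge z^2$. Plugging this back into the Markoff equation yields $xyz\le 2(x^2+y^2)\le 4y^2$, hence $xz\le 4y$, and combined with $y\le z$ this forces $x\le 4$. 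A direct case analysis on $x\in\{1,2,3,4\}$ eliminates every option except $(3,3,3)$: $x=1$ forces $yz=y^2+z^2+1$, contradicting $y^2+z^2\ge 2yz$; $x=2$ forces $(y-z)^2=-4$; $x=4$ combined with $xz\le 4y$ and $y\le z$ forces $y=z$ and $y^2=8$, which is not integral; and for $x=3$ the inequality $y^2+9\ge z^2$ together with $y\le z$ and $y^2+z^2+9=3yz$ leaves only $y=z=3$.

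\textbf{Termination and main obstacle.} Because $z'<z$ strictly whenever the triple differs from $(3,3,3)$, iterating the combined operation ``permute to make the last entry maximal, then apply $R$'' produces a strictly decreasing sequence of positive integer maxima, which must terminate at $(3,3,3)$ after finitely many steps. Every step is realized by an element of $\Gamma$, so the chosen positive integer solution is $\Gamma$-equivalent to $(3,3,3)$; running the argument for any two positive integer solutions then shows that they lie in the same $\Gamma$-orbit, which proves the theorem. The only delicate point is the descent lemma itself, and specifically identifying $(3,3,3)$ as the unique triple where the strict inequality $xy<2z$ fails: the case analysis is elementary but must be carried out with care, since $R$ applied to $(3,3,3)$ returns the strictly larger triple $(3,3,6)$, confirming that $(3,3,3)$ is genuinely the minimum of its orbit and is the correct and only base case of the descent.
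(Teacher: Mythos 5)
Your proposal is correct and gives essentially the classical Vieta--Markoff descent; the paper itself supplies no proof for this theorem, citing Markoff's original papers, so your write-up is a self-contained reconstruction rather than a re-derivation of the paper's argument. The inequality chain $x^2+y^2 \ge z^2 \Rightarrow xyz \le 2(x^2+y^2) \le 4y^2 \Rightarrow xz \le 4y \Rightarrow x \le 4$ is valid, the four cases are handled correctly, and the only triple $(x,y,z)$ with $x\le y\le z$ that survives the assumption $xy \ge 2z$ is indeed $(3,3,3)$.

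One point you leave implicit is why the \emph{sorted maximum} strictly decreases, not merely the old third coordinate: after applying $R$ and re-sorting, the new maximum is $\max(y,z')$, and one needs $y<z$ (not just $z'<z$) to conclude $\max(y,z')<z$. This is automatic because the only positive solution with $y=z$ is $(3,3,3)$: the equation $x^2+2y^2=xy^2$ forces $x^2=y^2(x-2)$, so $x\ge 3$ and $(x/y)^2=x-2\le 1$ when $x\le y$, giving $x=y=3$. With this observation the decreasing-maximum argument is airtight, and since $(3,3,3)$ is the unique sorted triple on which $R$ fails to decrease the last coordinate, the descent terminates there and nowhere else. Everything else is as you wrote it.
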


In \cite{BGS16,BGS16arxiv}, Bourgain, Gamburd, and Sarnak studied the action of $\Gamma$ on $\bX^*(p)$ (for $p$ prime) and conjectured that an analogue of Markoff's result should also hold for its $\bF_p$-points:

\begin{conj}[Bourgain, Gamburd, Sarnak \cite{BGS16}]\label{conj_bgs_intro} For all primes $p$, $\Gamma$ acts transitively on $\bX^*(p)$.
\end{conj}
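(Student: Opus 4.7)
The plan is to combine the character-variety dictionary with the combinatorial congruence of Theorem \ref{thm_combinatorial_vdovin_intro} to supply the missing rigidity in the analytic work of Bourgain, Gamburd, and Sarnak. First, I would transport the conjecture through \eqref{eq_character_variety_intro_2}: since the origin $(0,0,0)$ corresponds to homomorphisms $\Pi \to \SL_2(\bF_p)$ with abelian image, $\bX^*(p)$ is precisely the image of $\Epi(\Pi,\SL_2(\bF_p))_{-2}/\GL_2(\bF_p)$. For any such surjection $\varphi$, the commutator $\varphi([a,b])$ is non-central and hence lies in the single conjugacy class $\fc$ of $c := \spmatrix{-1}{1}{0}{-1}$. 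Transitivity of $\Gamma$ on $\bX^*(p)$ is therefore equivalent to the assertion that there is a single Nielsen equivalence class of surjections $\Pi \to \SL_2(\bF_p)$ with Higman invariant $\fc$.

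Second, I would apply Theorem \ref{thm_combinatorial_vdovin_intro}(a) at the prime $\ell = p$. The element $c$ has order $2p$ in $\SL_2(\bF_p)$, so $r = \ord_p(|c|) = 1$; since $|\SL_2(\bF_p)| = p(p^2 - 1)$, one has $\ord_p(|G|) = 1$, forcing $s = 0$; and the only proper normal subgroup of $\SL_2(\bF_p)$ for odd $p$ is $\{\pm I\}$, which has order coprime to $p$, so $j = 0$ is admissible. The inequality $r \ge 3s + j$ is trivially satisfied, and the theorem yields that every Nielsen orbit, and therefore every $\Gamma$-orbit on $\bX^*(p)$, has cardinality divisible by $p$.

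Third, I would combine this with the main analytic theorem of \cite{BGS16,BGS16arxiv}. Using expander and sum-product estimates over $\bF_p$ applied to the Cayley-Schreier graph on $\bX^*(p)$ generated by the Markoff moves, they produce a distinguished ``giant'' $\Gamma$-orbit $C \subset \bX^*(p)$ whose complement has size at most an explicit function $B(p)$, with $B(p) < p$ for all primes $p$ outside an effectively computable finite set $\bE_\bgs$. For such $p$, the complement $\bX^*(p) \setminus C$ is a union of $\Gamma$-orbits each of size divisible by $p$, but of total size strictly less than $p$; hence it must be empty, and $\bX^*(p) = C$.

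The principal obstacle is the residual set $\bE_\bgs$. The BGS bound $B(p)$ degrades precisely when $p^2 - 1$ contains many small prime factors, since their construction chains Markoff moves along orbits of $k$-th power maps for small divisors $k$ of $p \pm 1$; for such primes the mod-$p$ congruence alone is too weak to close the gap. One would then need either to strengthen the congruence (for instance by applying Theorem \ref{thm_combinatorial_vdovin_intro} at auxiliary primes through proper subgroups of $\SL_2(\bF_p)$) or to verify transitivity by direct computation. Since $\bE_\bgs$ is finite and effectively describable, this reduces the conjecture as stated to a finite verification.
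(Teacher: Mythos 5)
Your proposal follows essentially the same route as the paper: identify $\bX^*(p)$ with $\Epi(\Pi,\SL_2(\bF_p))_{-2}/\GL_2(\bF_p)$, apply the combinatorial congruence at $\ell=p$ (with $r=1$, $s=j=0$) to get that every orbit has size divisible by $p$, and combine this with the effective Bourgain--Gamburd--Sarnak bound $|\bX^*(p)-\cC(p)|\le p^\epsilon$ to force transitivity outside an explicitly bounded finite set $\bE_\bgs$, reducing the conjecture to a finite check. The only slip is that there are two conjugacy classes of trace $-2$ (represented by $\spmatrix{-1}{1}{0}{-1}$ and $\spmatrix{-1}{a}{0}{-1}$ with $a$ a nonsquare), not one, but they are interchanged by $\GL_2(\bF_p)$-conjugation, so this does not affect the argument.
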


In other words, they conjecture that $\bX(\bF_p)$ is the union of at most two $\Gamma$-orbits: the singleton orbit $\{(0,0,0)\}$, and $\bX^*(p)$ (this is empty if and only if $p = 3$). Since $\bX(\bZ)$ contains the solutions $\{(0,0,0),(3,3,3)\}$ and $\bX(\bF_3) = \{(0,0,0)\}$, a positive solution to this conjecture would imply that for any prime $p$, the reduction mod $p$ map $\bX(\bZ)\rightarrow\bX(\bF_p)$ is surjective, in which case, following \cite{BGS16}, we say that $\bX$ satisfies \emph{strong approximation} at all primes $p$.


To the author's knowledge this conjecture was first posed by Baragar in his 1991 PhD thesis \cite[\S V.3]{Bar91}. Since the conjugation action of $\GL_2(\bF_p)$ on $\SL_2(\bF_p)$ generates the full automorphism group $\Aut(\SL_2(\bF_p))$, it follows that this is also a special case of McCullough and Wanderley's Conjecture \ref{conj_MW_intro} with $(q,t) = (p,-2)$. However, the first major progress on Conjecture \ref{conj_bgs_intro} did not come until the work of Bourgain, Gamburd, and Sarnak in 2016 \cite{BGS16arxiv}, where they were able to prove

\begin{thm}[{Bourgain, Gamburd, Sarnak \cite{BGS16arxiv}}]\label{thm_bgs_intro} The following are true.
\begin{itemize}
\item[(a)] Let $\bE_\bgs$ denote the set of primes for which $\Gamma$ does not act transitively on $\bX^*(p)$. For any $\epsilon > 0$, we have
$$\#\{p\in\bE_\bgs\;|\;p\le x\} = O(x^\epsilon)$$
\item[(b)] Let $\cC(p)$ be the largest orbit of $\Gamma$ on $\bX^*(p)$. Then for any $\epsilon > 0$, we have
$$|\bX^*(p) - \cC(p)|\le p^\epsilon\qquad\text{for large $p$}$$
\end{itemize}
\end{thm}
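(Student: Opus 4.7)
The plan is to reduce the transitivity question to the dynamics of a single rotation $R \in \Gamma$ and control the resulting ``rotation graph'' on $\bX^*(p)$ with character sum estimates, then leverage a sieve to bound the exceptional primes. First I would introduce the rotation $R(x,y,z) := (y,z,yz-x)$, which is a composition of a permutation with the Vieta involution and hence an element of $\Gamma$. For fixed $y \in \bF_p$, the restricted map $(x,z) \mapsto (z, yz-x)$ is linear of determinant $1$ and trace $y$, so its order in $\PGL_2(\bF_p)$ divides $p+1$, $p-1$, or $p$ according as $y^2 - 4$ is a non-square, a nonzero square, or zero modulo $p$. Generically then, each $R$-orbit on $\bX^*(p)$ has length of order $p$, and $\bX^*(p)$ decomposes into $O(p)$ such orbits.

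Next I would form a graph $\cG_p$ whose vertices are the $R$-orbits and whose edges record pairs of orbits linked by another generator of $\Gamma$ (a permutation followed by a Vieta involution in a different coordinate). The number of elements of a given $R$-orbit $O_1$ that are carried into another $R$-orbit $O_2$ can be rewritten as an incomplete character sum over $\bF_p$ of a polynomial of bounded degree depending on the invariants of $O_1$ and $O_2$. The Weil bound for such sums should show this count is positive unless $p$ divides a specific nonzero resultant. Iterating, I would conclude that for all $p$ outside an exceptional finite set of obstructions, $\cG_p$ has a giant component of size $|\bX^*(p)| - O(p^\epsilon)$; this giant component is the main orbit $\cC(p)$ and yields (b).

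For (a), the primes $p$ for which $\cG_p$ fails to be connected are those dividing a specific finite collection of moduli arising from the resultants, together with primes for which the Weil bound is not strong enough because $p \pm 1$ has too many small divisors. The first source contributes $O(1)$ primes; the second I would handle by a large sieve or Chebotarev-style density argument, giving $\#\{p \in \bE_\bgs : p \le x\} = O(x^\epsilon)$.

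The hard part will be the regime where $p^2 - 1$ is highly composite: the rotation $R$ then has many short sub-cycles, the relevant Weil bounds deteriorate below the number of orbits being glued, and $\cG_p$ may plausibly disconnect for genuine arithmetic reasons rather than as an artifact of the method. Closing this gap by purely analytic means appears out of reach, which is the limitation of \cite{BGS16arxiv} and the motivation for the complementary algebraic input provided by the congruence of Theorem \ref{thm_vdovin_intro}.
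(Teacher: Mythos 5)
First, a point of context: the paper does not prove Theorem \ref{thm_bgs_intro} at all — it is quoted from Bourgain--Gamburd--Sarnak \cite{BGS16arxiv} and used as a black box (the paper's own contribution is the congruence of Theorem \ref{thm_orbit_congruence_intro}, which is then combined with the quoted result). So there is no internal proof to compare against; what you have written is an attempted reconstruction of the BGS argument, and as such it has genuine gaps.

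Your sketch does start in the right place: the surface is fibred into conics on which a rotation acts through a $2\times 2$ matrix of trace equal to the fixed coordinate, with order dividing $p-1$, $p+1$ or $p$. (Minor slip: your $R(x,y,z)=(y,z,yz-x)$ preserves no coordinate slice; the order computation should be run for the rotations fixing a coordinate, e.g.\ $\rot_1:(x,y,z)\mapsto(x,z,xz-y)$ as in \S\ref{ss_automorphism_groups}.) The step that would fail is the gluing step. The number of points of one rotation orbit carried by another generator into a second rotation orbit is \emph{not} a complete character sum of bounded degree whose positivity can only fail at primes dividing a fixed resultant: a rotation orbit in the conic $x=a$ is an orbit of a cyclic group of order $d\mid p\pm 1$, and when $d$ is small the relevant sums are short, the Weil bound gives nothing, and the obstruction to connecting orbits depends on the divisor structure of $p^2-1$ rather than on a finite list of primes — exactly the regime you defer in your last paragraph. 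Consequently neither part is actually established. In \cite{BGS16arxiv} part (b) holds unconditionally for all large $p$; it is proved by showing that any triple possessing a coordinate $a$ whose associated $\omega$ has large multiplicative order lies in $\cC(p)$ (the ``middle game'' and ``endgame''), and then bounding by $p^\epsilon$ the number of triples all of whose coordinates have small order; your ``giant component outside a finite set of obstructions'' does not deliver this. And part (a) is not a large-sieve or Chebotarev statement: the sparseness of $\bE_\bgs$ comes from a divisor-counting estimate — there are few primes $p\le x$ for which $p^2-1$ has the pathological divisor structure allowing small orbits to survive — and even that yields only $O(x^\epsilon)$, not finiteness. Your closing admission that the highly composite $p^2-1$ regime is analytically out of reach is accurate, but it concedes that the proposal as written proves neither (a) nor (b).
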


Thus, part (a) says that their conjecture holds for all but a sparse (but possibly infinite) set of primes, and part (b) says that even if it were to fail, it cannot fail too horribly. By the above discussion, the bijection \eqref{eq_character_variety_intro_2} yields the following equivalent group-theoretic translation of their result:
\begin{thm}[{Bourgain, Gamburd, Sarnak}]\label{thm_combinatorial_bgs_intro} For $t\in\bF_p$, let $\Epi(\Pi,\SL_2(\bF_p))_t$ denote the subset of surjections which satisfy $\tr\varphi([a,b]) = t$.
\begin{itemize}
\item[(a)] Let $\bE_\bgs$ denote the set of primes for which $\Out(\Pi)$ does not act transitively on $\Epi(\Pi,\SL_2(\bF_p))_{-2}/\GL_2(\bF_p)$. For any $\epsilon > 0$, we have
$$\#\{p\in\bE_\bgs\;|\;p\le x\} = O(x^\epsilon)$$
\item[(b)] Let $\cC(p)$ be the largest orbit of $\Out(\Pi)$ on $\Epi(\Pi,\SL_2(\bF_p))_{-2}/\GL_2(\bF_p)$. Then for any $\epsilon > 0$, we have
$$|\Epi(\Pi,\SL_2(\bF_p))_{-2}/\GL_2(\bF_p) - \cC(p)|\le p^\epsilon\qquad\text{for large $p$}$$
\end{itemize}
\end{thm}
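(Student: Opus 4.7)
The plan is to deduce this combinatorial theorem directly from Theorem \ref{thm_bgs_intro} via the character-variety dictionary \eqref{eq_character_variety_intro_2}. The work is almost entirely bookkeeping, since \eqref{eq_character_variety_intro_2} is an $\Aut(\Pi)$-equivariant bijection (with the $\Aut(\Pi)$-action on the left inducing the $\Gamma$-action on the right) and $\Inn(\Pi)$ acts trivially on the trace-conjugacy quotient, so the action already factors through $\Out(\Pi)$.

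First I would verify that the trace map
$$\varphi \mapsto (\tr\varphi(a),\tr\varphi(b),\tr\varphi(ab))$$
restricts to a bijection $\Epi(\Pi,\SL_2(\bF_p))_{-2}/\GL_2(\bF_p) \rightiso \bX^*(p)$, as asserted in \eqref{eq_character_variety_intro_2}. The content is that on the larger set $\Hom(\Pi,\SL_2(\bF_p))_{-2}/\GL_2(\bF_p)$ the trace map is a bijection onto $\{x^2+y^2+z^2-xyz-2=-2\}(\bF_p) = \bX(\bF_p)$, and a representation $\varphi$ is surjective if and only if its trace triple is nonzero. The latter uses the classification of subgroups of $\SL_2(\bF_p)$ for odd $p$: the constraint $\tr\varphi([a,b]) = -2$ forces $\varphi([a,b])$ to be a non-central unipotent element (up to sign), which rules out the image lying in any proper subgroup of $\SL_2(\bF_p)$ unless all three coordinate traces vanish, i.e.\ the trace triple is $(0,0,0)$.

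Next I would transfer the group actions across this bijection. The $\Aut(\Pi)$-action on the source descends to an $\Out(\Pi)$-action after quotienting by $\GL_2(\bF_p)$-conjugation, because $\Inn(\Pi)$ acts via conjugation inside $\SL_2(\bF_p)$ and is therefore absorbed. On the target, the text identifies the image of $\Aut(\Pi)$ in the polynomial automorphism group of $\bA^3$ with the group $\Gamma$ generated by coordinate permutations and the Vieta involution. Hence the bijection above is simultaneously $\Out(\Pi)$- and $\Gamma$-equivariant, and orbits correspond bijectively with cardinalities preserved.

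With this correspondence in hand, parts (a) and (b) of Theorem \ref{thm_combinatorial_bgs_intro} are word-for-word reformulations of parts (a) and (b) of Theorem \ref{thm_bgs_intro}: the set $\bE_\bgs$ of primes where $\Out(\Pi)$ fails to act transitively is literally the same set as the one in Theorem \ref{thm_bgs_intro}(a), and $\cC(p)$ is the same orbit counted on either side. The only ingredient doing real work is the quoted Theorem \ref{thm_bgs_intro} itself; the remainder is formal, and there is no substantive obstacle beyond checking that the singleton orbit $\{(0,0,0)\}$ corresponds exactly to the non-surjective homomorphisms that are discarded when passing from $\Hom$ to $\Epi$.
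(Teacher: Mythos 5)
Your proposal is correct and follows exactly the route the paper takes: the paper presents Theorem \ref{thm_combinatorial_bgs_intro} as an immediate group-theoretic translation of Theorem \ref{thm_bgs_intro} via the bijection \eqref{eq_character_variety_intro_2} (with the surjectivity criterion via the classification of subgroups, later formalized in Proposition \ref{prop_surjective_representation}), and you are just spelling out that same dictionary — the $\Aut(\Pi)$-equivariance of the trace map, the descent to $\Out(\Pi)$ after the $\GL_2(\bF_p)$-quotient, and the identification of the discarded $(0,0,0)$ with the non-surjective representations.
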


By Proposition \ref{prop_Higman_vs_trace}, for any surjection $\varphi : \Pi\rightarrow\SL_2(\bF_p)$ satisfying $\tr\varphi([a,b]) = -2$, $\varphi([a,b])$ must be conjugate to a matrix of the form $\spmatrix{-1}{u}{0}{-1}$ for $u\in\bF_p^\times$. In particular if $p\ge 3$ then $\varphi([a,b])$ has order $2p$. Then Theorem \ref{thm_combinatorial_vdovin_intro}(a) implies
\begin{thm}[See Theorem \ref{thm_main_divisibility}]\label{thm_orbit_congruence_intro} For every $p\ge 3$, every $\Out^+(\Pi)$-orbit on $\Epi(\Pi,\SL_2(\bF_p))_{-2}/\GL_2(\bF_p)$ has cardinality divisible by $p$. In particular, every $\Gamma$-orbit on $\bX^*(p)$ has cardinality divisible by $p$.
\end{thm}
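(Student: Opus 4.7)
The plan is to apply Theorem \ref{thm_combinatorial_vdovin_intro}(a) to $G = \SL_2(\bF_p)$ at the prime $\ell = p$, and then push the resulting divisibility through the two quotients $\Epi^\ext(\Pi, G) \to \Epi(\Pi,G)_{-2}/\GL_2(\bF_p) \cong \bX^*(p)$ and $\Out^+(\Pi) \hookrightarrow \Out(\Pi)$ (the latter mapping to $\Gamma$); each step loses at most a factor of $2$, which is harmless since $p$ is odd.

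First I would fix any surjection $\varphi : \Pi \to \SL_2(\bF_p)$ with $\tr\varphi([a,b]) = -2$. By Proposition \ref{prop_Higman_vs_trace}, $c := \varphi([a,b])$ is conjugate to $\spmatrix{-1}{u}{0}{-1}$ for some $u \in \bF_p^\times$, so $|c| = 2p$ and $r := \ord_p(|c|) = 1$. Since $|\SL_2(\bF_p)| = p(p^2-1)$, we have $\ord_p(|\SL_2(\bF_p)|) = 1$, so $s = 0$; and the only proper normal subgroups of $\SL_2(\bF_p)$ for $p \ge 3$ are $\{I\}$ and $\{\pm I\}$, both of order coprime to $p$, so I may take $j = 0$. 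The hypothesis $r \ge 3s + j$ of Theorem \ref{thm_combinatorial_vdovin_intro}(a) is $1 \ge 0$, which holds, and with $\lceil (r-3s-j)/2\rceil = 1$ the theorem yields
$$|\Out^+(\Pi)\cdot [\varphi]|\equiv 0\pmod{p}$$
for the $\Out^+(\Pi)$-orbit of $[\varphi]$ in $\Epi^\ext(\Pi, \SL_2(\bF_p))$.

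Next I would descend to $\Epi(\Pi,\SL_2(\bF_p))/\GL_2(\bF_p)$. For odd $p$, conjugation by $\GL_2(\bF_p)$ on $\SL_2(\bF_p)$ surjects onto $\Aut(\SL_2(\bF_p))$ (the nontrivial outer automorphism is realized by any matrix of nonsquare determinant), so the natural map $\pi : \Epi^\ext \to \Epi/\GL_2(\bF_p)$ is the quotient by $\Out(\SL_2(\bF_p)) \cong \bZ/2\bZ$ and has fibers of size dividing $2$. Because the $\Out^+(\Pi)$- and $\Out(G)$-actions commute, every $\Out^+(\Pi)$-orbit $O \subseteq \Epi^\ext$ maps onto an $\Out^+(\Pi)$-orbit $\pi(O) \subseteq \Epi/\GL_2(\bF_p)$ with all fibers of the same size $f \in \{1,2\}$; hence $|\pi(O)| = |O|/f$. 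Since $p$ is odd and $p \mid |O|$, we conclude $p \mid |\pi(O)|$, proving the first assertion.

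For the Markoff statement, I would invoke the $\Out(\Pi)$-equivariant bijection \eqref{eq_character_variety_intro_2} between $\Epi(\Pi,\SL_2(\bF_p))_{-2}/\GL_2(\bF_p)$ and $\bX^*(p)$, which intertwines $\Out(\Pi)$ with $\Gamma$. Every $\Gamma$-orbit on $\bX^*(p)$ is a union of at most two $\Out^+(\Pi)$-orbits (as $[\Out(\Pi) : \Out^+(\Pi)] = 2$), each of cardinality divisible by $p$, hence so is the $\Gamma$-orbit. There is no serious obstacle here: the substance of the proof lies in the previous theorem, and the only things to verify are that $|c| = 2p$ saturates the $p$-part of $|\SL_2(\bF_p)|$ and that the proper normal subgroups of $\SL_2(\bF_p)$ are $p$-coprime; the remainder is bookkeeping for the factors of $2$ lost in the two descents.
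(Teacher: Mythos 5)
Your proposal is correct, and in fact it is the route the paper itself sketches in the introduction (immediately before the theorem, ``Then Theorem \ref{thm_combinatorial_vdovin_intro}(a) implies\dots'') and endorses again in the remark following Proposition \ref{prop_monodromy_formulas}: apply the combinatorial congruence (Theorem \ref{thm_combinatorial_vdovin_intro}(a), i.e.\ Corollary \ref{cor_vdovin_2}(a)) with $\ell = p$, $r=1$, $s=j=0$, and then do the factor-of-two bookkeeping for $\Epi^\ext \to \Epi/\GL_2(\bF_p)$ and $\Out^+(\Pi)\le\Out(\Pi)$. The proof of the theorem actually referenced by the statement, Theorem \ref{thm_main_divisibility}, takes the paper's \emph{other} route: it invokes Theorem \ref{thm_congruence_on_orbits}(a), which is deduced from Theorem \ref{thm_congruence_on_degrees} by bounding the obstructions $m_\cX, d_\cX$ in the main geometric congruence (Theorem \ref{thm_congruence}) using the $\SL_2$-specific character-variety analysis: Lemma \ref{lemma_free}/Theorem \ref{thm_representable} show the cuspidal vertical automorphism groups are reduced to $Z(\SL_2(\bF_q))$ (so $m_\cX\mid 24$), and Proposition \ref{prop_inertia_normalizers} computes the parabolic centralizer (so $d_\cX\mid p^{r-1}$), with the descent to orbit sizes handled in Theorem \ref{thm_congruence_on_orbits} via $D(q,G)$ and $\gamma_{-I}$. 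Both routes rest on Theorem \ref{thm_congruence}; yours uses the general group-theoretic bound on cuspidal automorphisms (Lemma \ref{lemma_abelian_subgroup}/Corollary \ref{cor_vdovin}), which suffices here because $\ord_p|\SL_2(\bF_p)| = 1$, while the paper's labeled proof buys sharper and more general congruences for all traces $t$ (Theorem \ref{thm_general_congruence}), not just $t=-2$.

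One small inaccuracy to fix: your justification of $j=0$ asserts that the only proper normal subgroups of $\SL_2(\bF_p)$ for $p\ge 3$ are $\{I\}$ and $\{\pm I\}$; this fails at $p=3$, where the quaternion group $Q_8$ is a proper normal subgroup of $\SL_2(\bF_3)$. What you actually need --- that no proper normal subgroup has order divisible by $p$ --- still holds for $p=3$ since $|Q_8| = 8$, so $j = 0$ remains valid (this is exactly the hypothesis used in the paper, cf.\ Remark \ref{remark_combinatorial_improvement}), and the rest of your argument, including the observation that all fibers of $\Epi^\ext_{-2}\to\Epi_{-2}/\GL_2(\bF_p)$ over a fixed $\Out^+(\Pi)$-orbit have a common size $f\in\{1,2\}$ because the two actions commute, is sound.
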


In particular, every $\Gamma$-orbit must have size at least $p$, so combined with Theorem \ref{thm_bgs_intro}(b), this establishes Conjecture \ref{conj_bgs_intro} for all but a finite (and in fact explictly bounded) set of primes:

\begin{thm}[{Main application; see Theorems \ref{thm_bgs_conj}, \ref{thm_main_applications}}]\label{thm_bgs_conj_intro} The following are true.
\begin{itemize}
\item[(a)] The ``exceptional'' set $\bE_\bgs$ is finite and explicitly bounded.
\item[(b)] For all primes $p\notin \bE_\bgs$, $\bX$ satisfies strong approximation at $p$.
\end{itemize}
\end{thm}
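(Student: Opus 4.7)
The plan is to combine the mod $p$ divisibility of orbit sizes (Theorem \ref{thm_orbit_congruence_intro}) with the saturation-type bound of Bourgain--Gamburd--Sarnak (Theorem \ref{thm_bgs_intro}(b)). The divisibility supplies enough rigidity that the ``sparse exceptional set'' already proved by BGS must actually be empty for $p$ larger than an explicit threshold.

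For part (a), suppose $p\in \bE_\bgs$, so that $\Gamma$ fails to act transitively on $\bX^*(p)$. Then $\bX^*(p)\setminus \cC(p)$ is a nonempty union of $\Gamma$-orbits. By Theorem \ref{thm_orbit_congruence_intro} every such orbit has cardinality divisible by $p$, hence of cardinality at least $p$, so
$$|\bX^*(p)\setminus \cC(p)|\ \ge\ p.$$
On the other hand Theorem \ref{thm_bgs_intro}(b) gives, for any fixed $\epsilon>0$, a threshold $x_0(\epsilon)$ such that $|\bX^*(p)\setminus \cC(p)|\le p^\epsilon$ for $p\ge x_0(\epsilon)$. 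Choosing $\epsilon = 1/2$ forces $p<x_0(1/2)$, and since the BGS arguments (sieving together with expansion/spectral gap estimates for the Markoff congruence graphs) are effective, the threshold $x_0(1/2)$ is explicitly computable. This makes $\bE_\bgs$ finite and gives an explicit bound.

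For part (b), strong approximation at $p$ is the statement that the reduction map $\bX(\bZ)\rightarrow \bX(\bF_p)$ is surjective. The $\Gamma$-action on $\bX$ is defined over $\bZ$ by polynomial formulas (coordinate permutations and the Vieta involution $(x,y,z)\mapsto(x,y,xy-z)$), so it commutes with reduction mod $p$. The integer points $(0,0,0)$ and $(3,3,3)$ lie in $\bX(\bZ)$, and for $p\ge 5$ the point $(3,3,3)$ reduces to a nonzero element of $\bX^*(p)$. If $p\notin\bE_\bgs$ then $\Gamma$ acts transitively on $\bX^*(p)$, so the reduction of the $\Gamma$-orbit of $(3,3,3)$ already exhausts $\bX^*(p)$, while $(0,0,0)\in\bX(\bZ)$ reduces to the remaining point. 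Hence the reduction map is surjective. The finitely many small primes (in particular $p=2,3$) can be absorbed into the explicit exceptional set.

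The only genuinely substantive step is thus the \emph{effectivity} of the BGS threshold $x_0(\epsilon)$: one must track explicit constants through their sieve plus expansion argument on the Markoff mod $p$ graphs. Once that is in place, the congruence of Theorem \ref{thm_orbit_congruence_intro} collapses the dichotomy ``$|\bX^*(p)\setminus\cC(p)|\ge p$ or $=0$'' against the upper bound $\le p^{1/2}$, and both (a) and (b) follow immediately.
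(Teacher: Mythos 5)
Your proof is correct and follows essentially the same route as the paper: combine the mod $p$ divisibility of orbit sizes (so every nontrivial $\Gamma$-orbit has size $\geq p$) with the BGS bound $|\bX^*(p)\setminus\cC(p)|\le p^\epsilon$ to force the complement to be empty for $p$ past an effective threshold, and then deduce strong approximation from transitivity together with the integral points $(0,0,0)$ and $(3,3,3)$. The only cosmetic difference is that the paper does not fix $\epsilon=1/2$ explicitly; otherwise the argument is the one given in Theorems \ref{thm_bgs_conj} and \ref{thm_main_applications}.
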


Here, by explicitly bounded, we mean that it is possible to write down an explicit upper bound on the largest prime in $\bE_\bgs$. This follows from the fact that the methods of \cite{BGS16arxiv} are effective, and an explicit upper bound was obtained by Elena Fuchs\footnote{Private communication. A preliminary upper bound is approximately $10^{5547000}$.}. In particular, we have effectively reduced the conjecture to a finite computation. The conjecture has been verified for all primes $p < 3000$ by de-Courcy-Ireland and Lee in \cite{IL20}, so it is reasonable to expect that the final computation will indeed verify Conjecture \ref{conj_bgs_intro}. If so, then it would follow from Proposition \ref{prop_empty_conics} below that there are no congruence constraints on Markoff numbers mod $p$ other than the ones first noted by Frobenius in 1913 \cite{Frob13}, namely that if $p\equiv 3\mod 4$ and $p\ne 3$ then a Markoff number cannot be $\equiv 0,\frac{\pm 2}{3}\mod p$. Using the work of Meiri-Puder \cite{MP18}, one can also deduce strong approximation mod $n$ for most squarefree integers $n$ (Theorem \ref{thm_composite_moduli}).


Our methods also give congruences for the generalized Markoff equations $x^2 + y^2 + z^2 = t + 2$.

\begin{thm}[Congruences for generalized Markoff equations - see Theorems \ref{thm_congruence_on_orbits} and \ref{thm_general_congruence}]\label{thm_general_congruence_intro} Let $q\ge 3$ be a prime power. Let $t\in\bF_q - \{2,-2\}$. Then we may write $t = \omega+\omega^{-1}$ for some $\omega\in\bF_{q^2}^\times - \{-1,1\}$, and the set $\{\omega,\omega^{-1}\}$ is uniquely determined by $t$. Let $P = (A,B,C)$ be an $\bF_q$-point of the hypersurface defined by
\begin{equation}\label{eq_generalized_Markoff}
x^2 + y^2 + z^2 - xyz = t + 2
\end{equation}
Let $\Gamma$ act on the hypersurface \eqref{eq_generalized_Markoff} via the same formulas as for its action on $\bX$. Suppose at least two of $\{A,B,C\}$ are nonzero. Let $\ell$ be an odd prime. Let $r := \ord_\ell(|\omega|)$ and write $\ord_\ell(|\SL_2(\bF_q)|) = \ord_\ell(q^3-q) = r+s$. Then the $\Gamma$-orbit of $P$ has cardinality $\equiv 0\mod\ell^{\max\{r-s,0\}}$.
\end{thm}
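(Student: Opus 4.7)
The plan is to use the character variety bijection \eqref{eq_character_variety_intro} to lift the problem to a statement about $\Out^+(\Pi)$-orbits on $\SL_2$-representations, then apply the paper's main combinatorial divisibility result. Concretely, lift $P = (A,B,C)$ to a homomorphism $\varphi : \Pi \to \SL_2(\bF_q)$ with $\tr\varphi(a) = A$, $\tr\varphi(b) = B$, $\tr\varphi(ab) = C$, so that $c := \varphi([a,b])$ has trace $t$. Because $t \ne \pm 2$, the element $c$ has distinct eigenvalues $\omega, \omega^{-1} \in \bF_{q^2}^\times$, is semisimple of exact order $|\omega|$, and satisfies $\ord_\ell(|c|) = r$. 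By \eqref{eq_character_variety_intro} the $\Gamma$-orbit of $P$ corresponds to the $\Out^+(\Pi)$-orbit of the $\GL_2(\bF_q)$-conjugacy class of $\varphi$, and these orbits have the same cardinality.

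Now let $G := \Image(\varphi) \le \SL_2(\bF_q)$, so $\varphi$ is naturally a surjection representing a class in $\Epi^\ext(\Pi, G)$. The natural map from the $\Out^+(\Pi)$-orbit of $\varphi$ in $\Epi^\ext(\Pi, G)$ to the $\Out^+(\Pi)$-orbit of its $\GL_2(\bF_q)$-class has fibres of size dividing $|N_{\SL_2(\bF_q)}(G)/G\cdot Z(\SL_2(\bF_q))|$; for the subgroups $G$ which actually arise this factor is coprime to $\ell$, so it is enough to prove the congruence for $|\Out^+(\Pi)\cdot \varphi|$ inside $\Epi^\ext(\Pi, G)$. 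The hypothesis that at least two of $\{A, B, C\}$ are nonzero, together with $t \ne \pm 2$, is used to rule out degenerate images of $\varphi$ (e.g.\ $G$ abelian or contained in a Borel, any of which would force $t = 2$, or a dihedral image generated by involutions, which would force two coordinates to vanish). In particular $G$ contains the cyclic subgroup $\langle c \rangle$ of order $|\omega|$, so $\ord_\ell(|G|) \ge r$, and together with $|G| \mid |\SL_2(\bF_q)|$ this gives $r \le \ord_\ell(|G|) \le r + s$.

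One then applies the refined combinatorial divisibility theorem of the paper (the form underlying this statement, namely Theorems \ref{thm_congruence_on_orbits} and \ref{thm_general_congruence}) to the pair $(\varphi, c)$ to obtain $|\Out^+(\Pi)\cdot \varphi| \equiv 0 \mod \ell^{\max\{r-s, 0\}}$, and the character-variety dictionary then transfers this back to the claim. The main obstacle is the final divisibility step: the clean bound $\ell^{\max\{r-s, 0\}}$ is sharper than what Theorem \ref{thm_combinatorial_vdovin_intro}(a) gives in general (because of the $j$-parameter governing proper normal subgroups of $G$ divisible by $\ell$), so one needs the version of the congruence tailored to images of $\SL_2(\bF_q)$-representations. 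There Dickson's classification of subgroups of $\SL_2(\bF_q)$ is the key tool: the existence in $G$ of a large cyclic subgroup $\langle c \rangle$ with $\ell$-part $\ell^r$ cuts the possible $G$ down to a short explicit list (subfield-type, dihedral-type, or one of the exceptional lifts of $A_4, S_4, A_5$), and it is on this list that the normal-subgroup bookkeeping is carried out. Since the conclusion is vacuous when $r \le s$, only the regime $r > s$ needs to be handled, where $G$ is correspondingly constrained.
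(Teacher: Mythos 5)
Your setup — lifting $P$ to a representation $\varphi$ via Theorem \ref{thm_moduli_interpretation}, passing to $G:=\varphi(\Pi)$, using $t\ne\pm2$ plus ``two coordinates nonzero'' to exclude reducible and dihedral-type images, and noting that the passage from $\Gamma$-orbits on the surface to $\Out^+(\Pi)$-orbits on $\Epi^\ext(\Pi,G)$ only costs a factor coprime to the odd prime $\ell$ — matches the paper's reduction (this is exactly the $D(q,G)$ bookkeeping in the proof of Theorem \ref{thm_congruence_on_orbits}, where the lost factor is at most $2$ by Proposition \ref{prop_ai_images}). But the heart of the proof is missing. Your ``final divisibility step'' cites Theorems \ref{thm_congruence_on_orbits} and \ref{thm_general_congruence}, which are precisely the results being proved, so as written the argument is circular. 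The alternative you sketch — Dickson's classification of the possible $G$ followed by ``normal-subgroup bookkeeping'' to sharpen Theorem \ref{thm_combinatorial_vdovin_intro}(a) — does not close the gap, because the obstruction that the $j$-parameter controls (the integer $m'$ of Theorem \ref{thm_combinatorial_congruence}, i.e.\ the cuspidal vertical automorphism groups $A_{G,u,h}\cap\langle[u^{-1},h^{-1}]\rangle$) is genuinely not bounded by normal-subgroup data for the exceptional images. For instance $G\cong 2.S_4\le\SL_2(\bF_7)$ with $\ell=3$: the proper normal subgroup $2.A_4$ has order divisible by $3$, so the best one gets from Lemma \ref{lemma_abelian_subgroup} is $\ord_3(m')\le 1$, which exactly cancels the claimed factor $\ell^{r-s}=3$ when the Higman invariant has order $6$. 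So on your short list the bookkeeping fails precisely where it is needed.

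What the paper actually proves, and what your proposal lacks, is the statement that for every absolutely irreducible non-dihedral-type image $G\le\SL_2(\bF_q)$ the cuspidal vertical automorphism groups are reduced to $Z(\SL_2(\bF_q))$, equivalently $(u,h)\sim(u,u^kh)$ forces $|u|\mid k$ for every generating pair (Theorem \ref{thm_representable}, via Theorem \ref{thm_cuspidal_automorphisms}). This is not a group-classification argument at all: it is the character-variety computation of Lemma \ref{lemma_free}, showing that the Dehn-twist image $\rot_1:(x,y,z)\mapsto(x,z,xz-y)$ acts freely on the punctured conics $C_1(a)^*_t$, so all its orbits have size $n_q(a)$. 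With that input, $m_\cX\mid 24$ (Proposition \ref{prop_automorphism_groups}), the centralizer computation $C_{\SL_2(\bF_q)}(\langle c\rangle)$ cyclic of order $q\mp1$ in the semisimple case (Proposition \ref{prop_inertia_normalizers}) bounds $d_\cX$ with $\ord_\ell(d_\cX)\le s$, and these feed into the geometric congruence Theorem \ref{thm_congruence} to give Theorem \ref{thm_congruence_on_degrees}, whence Theorems \ref{thm_congruence_on_orbits} and \ref{thm_general_congruence}. You would need either to reproduce the freeness computation (or some substitute proving $A_{G,u,h}=Z(\SL_2(\bF_q))$, not merely a normal-subgroup bound on its order) and the centralizer bound on $d$, or to invoke the earlier, already-proved results (Theorem \ref{thm_congruence} or \ref{thm_combinatorial_congruence}) with these two obstructions explicitly controlled; citing the target theorems does not do this.
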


See Theorem \ref{thm_congruence_on_orbits} for a stronger and more precise result. Here, the case $t = -2$ gives the Markoff equation, which was addressed in Theorem \ref{thm_orbit_congruence_intro}; the case $t = 2$ is called the \emph{Cayley cubic} and is addressed in \cite[\S5]{IL20}. We remark that our methods fail to give anything new for the Cayley cubic. The reason is that the $\bF_q$-points of the Cayley cubic correspond to representations with image contained in a Borel subgroup of $\SL_2(\bF_q)$, and many of our results break down when the image is close to abelian (see \S\ref{ss_remarks_on_md} for the case of representations with dihedral image). As announced in \cite{BGS16}, the methods of \cite{BGS16arxiv} also carry over to yield results analogous to Theorem \ref{thm_bgs_intro} for these more general equations \eqref{eq_generalized_Markoff} (with an appropriately defined $\bX^*(p)$). Thus, combined with Theorem \ref{thm_general_congruence_intro}, it may be possible to establish conjectures analogous to Conjecture \ref{conj_bgs_intro} for these more general equations as well.

\subsubsection{Connectedness of Hurwitz stacks}

Recall that $\cAdm^0(G)$ denotes the stack of smooth $G$-covers of elliptic curves, only ramified above the origin. If $G\le\GL_2(\bF_q)$ and $t\in\bF_q$, we write $\cAdm^0(G)_t$ to denote the open and closed substack classifying $G$-covers whose Higman invariant has trace $t$. When $G = \SL_2(\bF_p)$ and $t = -2$, there are two possible Higman invariants with trace $-2$; they are represented by $\spmatrix{-1}{1}{0}{-1},\spmatrix{-1}{a}{0}{-1}$ where $a\in\bF_p^\times$ is a non-square, and they are swapped by the conjugation action of $\GL_2(\bF_p)$. Let us write $\fc_1,\fc_2$ for these two conjugacy classes. 


From the combinatorial characterization of the components of $\cAdm(G)$ described in \S\ref{ss_admissible_covers_intro}, we find that for $p\ge 5$, $p\notin\bE_\bgs$, the substack $\cAdm(\SL_2(\bF_p))_{-2}$ is a disjoint union of two components, corresponding to the two non-central conjugacy classes $\fc_1,\fc_2$ of trace $-2$. In other words, for $i = 1,2$, the stack $\cAdm(\SL_2(\bF_p))_{\fc_i}$ is \emph{connected}. In light of Conjecture \ref{conj_MW_intro}, we expect this to hold for \emph{any} conjugacy class of $\SL_2(\bF_p)$ (this can also be interpreted in terms of the $\Gamma$-action on the generalized Markoff equations).

\begin{remark} This connectedness result can be phrased in another way. The set $\Epi(\Pi,\SL_2(\bF_p))_{-2}/\GL_2(\bF_p)$ is in bijection with the fiber over $\cM(1)$ of the stack $\cM(\SL_2(\bF_p))_{-2}^\abs$ of elliptic curves with ``absolute $\SL_2(\bF_p)$-structures''; the transitivity of the $\Out^+(\Pi)$-action in Theorem \ref{thm_combinatorial_bgs_intro} is equivalent to the connectedness of this stack.
\end{remark}

Using the coordinatization of the monodromy action given by the character variety, we are able to compute the ramification behavior of the coarse scheme of $\cAdm(\SL_2(\bF_p))_{fc_1}$ over $\ol{M(1)}$. Applying Riemann-Hurwitz, we are able to obtain an explicit formula for the genus (Theorem \ref{thm_genus_formula}). For example, we will show:

\begin{thm}[Theorem \ref{thm_genus_formula}, \ref{thm_finiteness_of_SL2p_structures}] Let $p$ be an odd prime not in the finite set $\bE_\bgs$. The stacks $\cAdm(\SL_2(\bF_p))_{\fc_i}$ for $i = 1,2$ are isomorphic and connected. Let $\ol{M_p}$ be the coarse scheme of $\cAdm(\SL_2(\bF_p))_{\fc_i}$ for either $i = 1,2$. For a positive integer $n$, let $\Phi(n) := \sum_{d\mid n}\frac{\phi(d)}{d}$, where $\phi$ is the Euler function. Then we have
$$\genus(\ol{M_p}) = \frac{1}{12}p^2 - \frac{p-1}{4}\Phi(p-1) - \frac{p+1}{4}\Phi(p+1) + \epsilon(p)$$
where $\epsilon(p)\sim p$ (for $p$ large); it is an explicit function depending on the residue class of $p\mod 8$ which we define in Theorem \ref{thm_genus_formula}. Moreover, for $p\ge 13$, $\genus(\ol{M_p})\ge 2$. For $p = 5,7,11$, $\ol{M_p}$ has genus $0,0,1$ respectively. In particular, by Falting's theorem, for any $p\ge 13$, $p\notin\bE_\bgs$, and any number field $K$, only finitely many elliptic curves admit a $\SL_2(\bF_p)$-cover with ramification index $2p$.
\end{thm}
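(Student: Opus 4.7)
My proof plan follows four movements: reducing the two cases $\fc_1$ and $\fc_2$ to each other, establishing connectedness, computing the genus via orbifold Riemann--Hurwitz, and finally invoking Faltings.

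\textbf{Isomorphism and connectedness.} First I would exhibit an explicit isomorphism $\cAdm(\SL_2(\bF_p))_{\fc_1} \cong \cAdm(\SL_2(\bF_p))_{\fc_2}$. Pick any $\alpha \in \GL_2(\bF_p) \smallsetminus \bF_p^\times \SL_2(\bF_p)$; conjugation by $\alpha$ induces an outer automorphism of $\SL_2(\bF_p)$ interchanging $\fc_1$ and $\fc_2$, and this induces an isomorphism of moduli stacks because changing a $G$-structure by an automorphism of $G$ is a functorial operation on $\cAdm$. For connectedness, by the discussion in \S\ref{ss_admissible_covers_intro} the components of $\cAdm(\SL_2(\bF_p))_{\fc_1}$ correspond bijectively to the $\Out^+(\Pi)$-orbits on $\Epi^\ext(\Pi,\SL_2(\bF_p))_{\fc_1}$. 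Via the character-variety bijection \eqref{eq_character_variety_intro_2} (restricted to the subset with commutator trace $-2$), this orbit set maps onto a $\Gamma$-stable subset of $\bX^\ast(p)$; Theorem \ref{thm_bgs_conj_intro} for $p\notin\bE_\bgs$ makes $\Gamma$ transitive, and hence yields a single orbit, i.e.\ connectedness.

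\textbf{Setting up Riemann--Hurwitz.} I would apply the orbifold Riemann--Hurwitz formula to the forgetful map $\ff : \cAdm(\SL_2(\bF_p))_{\fc_1} \to \ol{\cM(1)}$, then descend to coarse schemes. The degree of $\ff$ is (up to a factor of $2$ accounted for by $\gamma_{-I}$, cf.\ Theorem \ref{thm_combinatorial_vdovin_intro}) the cardinality of the unique orbit, and by the character variety this is a simple count of $|\bX^\ast(p)|$, which is a known polynomial in $p$ of degree $2$ (this provides the leading $\tfrac{1}{12}p^2$ term after accounting for the $-2\deg\ff$ contribution from $2g(\ol{\cM(1)})-2 = -2$ together with the stacky factors of $\tfrac12,\tfrac13$ at the elliptic points). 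The main computational effort is then to describe the ramification of $\ff$ over the three distinguished points of $\ol{\cM(1)}$: the cusp (Higman invariant $\fc_1$ of order $2p$), the order-$2$ elliptic point $j=1728$, and the order-$3$ elliptic point $j=0$.

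\textbf{Ramification analysis.} Over the cusp the local structure of admissible covers forces ramification index equal to the order of the Higman invariant, namely $2p$, so each preimage contributes $\tfrac{2p-1}{2p}$ to the Euler characteristic discrepancy, and the number of preimages is $|\bX^\ast(p)|/(2p)$. Over $j = 1728$ (resp.\ $j = 0$), the ramification is determined by the number of fixed points of the order-$2$ (resp.\ order-$3$) element of $\Out^+(\Pi)$ stabilising that point, acting on the $\Gamma$-orbit; via the character variety these fixed points correspond to triples $(x,y,z)\in\bX^\ast(p)$ fixed by an involution such as $(x,y,z)\mapsto(y,x,z)$ or the order-$3$ permutation, together with a sign choice. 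A classical computation identifies such fixed representations as those factoring through a dihedral subgroup of $\SL_2(\bF_p)$ normalising a torus; the split torus has order $p-1$ and the non-split torus has order $p+1$, and summing the M\"obius-type contributions over divisors (elements of each exact order in each torus, modulo inversion) produces precisely the sums $\tfrac{p-1}{4}\Phi(p-1)$ and $\tfrac{p+1}{4}\Phi(p+1)$. Assembling these ingredients and carefully tracking the ramification of the involution $\gamma_{-I}$ (which depends on whether $-1$ is a square mod $p$, i.e.\ on $p\bmod 8$) gives the formula with the explicit $\epsilon(p)$.

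\textbf{Genus bound and Faltings.} Once the exact formula is in hand, the bound $\genus(\ol{M_p})\ge 2$ for $p\ge 13$ is a direct estimate: $\Phi(n) = O(\log n)$ so the middle terms are $O(p\log p)$ while $\tfrac{p^2}{12}$ dominates; one then checks a short list of small primes $p\in\{13,17,19,23,\ldots\}$ by evaluating the explicit formula. The cases $p = 5,7,11$ are individual checks. Finally, for $p\ge 13$ with $p\notin\bE_\bgs$, Faltings' theorem applied to $\ol{M_p}$ over the number field $K$ gives finitely many $K$-points, which pull back to finitely many elliptic curves over $K$ admitting the prescribed $\SL_2(\bF_p)$-cover. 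The main obstacle will be the bookkeeping of the elliptic-point ramification, especially the residue-mod-$8$ dependence coming from the interaction between $\gamma_{-I}$ and the stacky automorphisms at $j=1728$; this is where errors are easiest to introduce and where the term $\epsilon(p)$ must be pinned down carefully to align with the formula stated in Theorem \ref{thm_genus_formula}.
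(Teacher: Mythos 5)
Your overall strategy -- transport $\fc_2$ to $\fc_1$ by an outer automorphism of $\SL_2(\bF_p)$, deduce connectedness from the Bourgain--Gamburd--Sarnak transitivity on $\bX^*(p)$ for $p\notin\bE_\bgs$, compute the genus by Riemann--Hurwitz for the degree-$|\bX^*(p)|$ map to the $j$-line, and finish with Faltings -- is the same as the paper's, and the first and last movements are sound. The genuine gap is in the ramification bookkeeping, which as written would not reproduce the stated formula. You assert that every point of $\ol{M_p}$ over the cusp $j=\infty$ has ramification index $2p$ (the order of the Higman invariant), so that the number of cusps is $|\bX^*(p)|/(2p)$. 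This conflates the ramification of the covering curve $C\to E$ over the origin of $E$ with the ramification of the modular curve over $\ol{M(1)}$ at the cusp. By the Galois-theoretic description (Theorem \ref{thm_basic_properties}\ref{part_fibers}, Proposition \ref{prop_monodromy_formulas}), the width of a cusp equals the size of the corresponding orbit of the Dehn twist $\gamma_\infty$, i.e.\ of $\rot_1:(x,y,z)\mapsto(x,z,xz-y)$, on the Markoff points; these orbit sizes are $n_p(a)$ for the first coordinate $a$, hence $p$ or $2p$ only for $a=\pm 2$, a divisor of $p-1$ for hyperbolic $a$, and a divisor of $p+1$ for elliptic $a$. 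Counting these orbits (Lemma \ref{lemma_free}, Propositions \ref{prop_rot_orbits}, \ref{prop_number_of_cusps}) is precisely what produces the terms $\frac{p-1}{4}\Phi(p-1)+\frac{p+1}{4}\Phi(p+1)$, since the number of cusps enters Riemann--Hurwitz with coefficient $-\tfrac12$.

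Consequently your attribution of the $\Phi$-sums to fixed points over the elliptic points $j=0,1728$ is misplaced: every point of $\bX^*(p)$ corresponds to a \emph{surjective} representation (Proposition \ref{prop_surjective_representation}), so there are no fixed points factoring through a dihedral subgroup normalising a torus, and the actual fiber structure over $j=0$ and $j=1728$ (Proposition \ref{prop_0_1728}) is: exactly one unramified point over $j=0$ with all other points of index $3$, and either $0$ or $2$ unramified points over $j=1728$ (according to $p\bmod 8$) with all other points of index $2$. These fibers, together with $\deg = |\bX^*(p)| = p(p\pm3)$, give the $\frac{1}{12}p^2$ leading term and the correction $\epsilon(p)$. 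Finally, $\gamma_{-I}$ acts trivially on $\bX^*(p)$ (Proposition \ref{prop_monodromy_formulas}), so the coarse degree is exactly $|\bX^*(p)|$ -- there is no factor of $2$ and no ``ramification of $\gamma_{-I}$'' to track; the mod-$8$ dependence enters only through the $j=1728$ fiber and the mod-$4$ dependence through $|\bX^*(p)|$ and the parabolic/degenerate cusp counts. With the cusp and elliptic contributions reassigned as above, the rest of your plan (asymptotics via $\Phi(n)\le 2\sqrt{n}$, explicit checks for small $p$, and Faltings) goes through as in the paper.
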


One interpretation of this result is that there exist only finitely many noncongruence modular curves of a given genus classifying elliptic curves with $\SL_2(\bF_p)$-covers with ramification index $2p$. This can be viewed as an analog of Rademacher's conjecture, proved by Dennin \cite{Den75}, that there are only finitely many congruence modular curves of a given genus. The same statement is false for noncongruence modular curves, so to obtain finiteness one needs to restrict the types of noncongruence modular curves considered. This theorem gives finiteness for the curves $\ol{M_p}$, which are shown to be noncongruence for a density 1 set of primes $p$ (Corollary \ref{cor_noncongruence}).

\subsection{Summary of the paper}\label{ss_summary_of_paper_intro}

\subsubsection{Admissible covers and $G$-structures}
In \S\ref{section_admissible_G_covers}, we give an overview of the moduli stacks we will work with. In \S\ref{ss_admissible_G_covers}, we define admissible $G$-covers and the stacks $\cAdm(G)$, following \cite{ACV03}. In \S\ref{ss_ramification_divisor} and \S\ref{ss_higman_invariant}, we define the reduced ramification divisor and the Higman invariant of an admissible $G$-cover and prove some basic results. Crucially, the reduced ramification divisor of an admissible $G$-cover $\pi : C\rightarrow E$ over $S$ is \emph{finite \'{e}tale} over $S$, $G$ acts transitively on its connected components, and the Galois group of each component can be controlled group theoretically. In \S\ref{ss_comparison_with_smGc}, we show that an admissible $G$-cover can be equivalently characterized as a curve equipped with a $G$-action satisfying certain properties. While the properness of the stacks $\cAdm(G)$ are crucial to the proof of our main congruence, it is somewhat inconvenient that the forgetful map $\cAdm(G)\rightarrow\ol{\cM(1)}$ is not finite, even when restricted to $\cAdm^0(G)$. In \S\ref{ss_comparison_with_G_structures}, we compare the stacks $\cAdm^0(G)$ to the moduli stacks $\cM(G)$ of elliptic curves with $G$-structures \cite{Chen18, Ols12,JP95,DM69}, whose forgetful map to $\cM(1)$ is finite \'{e}tale, and hence can be studied using Galois theory. The main result is that there is a map $\cAdm^0(G)\rightarrow\cM(G)$ which is an \'{e}tale gerbe with group $Z(G)$; in particular it induces an isomorphism on coarse schemes, so the connected components of $\cAdm(G)$ also have a group theoretic interpretation via the Galois correspondence applied to $\cM(G)\rightarrow\cM(1)$. This also allows us to define a compactification $\ol{\cM(G)}$ of $\cM(G)$.

\subsubsection{The main congruence and sketch of the argument}\label{sss_sketch_of_proof}

The purpose of section \S\ref{section_main_result} is to prove the base form of the main congruence (Theorem \ref{thm_congruence}) from which we will deduce Theorem \ref{thm_vdovin_intro}. In \S\ref{ss_dualizing_sheaf}-\ref{ss_degree_formalism}, we review some standard results and describe their extensions to the setting of stacks. In \S\ref{ss_congruence}, we prove the main congruence. This congruence essentially comes from noting that for any component $X\subset\Adm(G)$, modulo some technical considerations, the pullback of a certain line bundle on $\ol{M(1)}$ to $X$ is an $e$th tensor power, where $e$ is the ramification index of covers classified by $X$, and hence the forgetful map $X\rightarrow\ol{M(1)}$ must have degree which is divisible by $e$. We thank Johan de Jong for showing us this idea. Here we briefly sketch the argument. We work over an algebraically closed field $k$ of characteristic 0.


We begin by presenting the prototype of the argument. Suppose $f : X\rightarrow X(1)$ is a finite map of (connected) smooth proper curves, $E(1)\rightarrow X(1)$ is a family of 1-generalized elliptic curves (stable pointed curves of genus 1) with zero section $\sigma_O'$. Let $E\rightarrow X$ be its pullback via $f$ with section $\sigma_O$. Suppose we are given a diagram
\[\begin{tikzcd}
	C\ar[r,"\pi"]\ar[rd] & E\ar[d]\ar[r,"\tilde{f}"] & E(1)\ar[d] \\
	 & X\ar[r,"f"]\ar[ul,bend left = 30,"\sigma"]\ar[u,bend right = 30, "\sigma_O"'] & X(1)\ar[u,bend right = 30, "\sigma_O'"']
\end{tikzcd}\]
where $\pi\circ\sigma = \sigma_O$ and $\pi$ is an admissible $G$-cover, only branched above $\sigma_O$, where it has ramification index $e$. This implies that $C/X$ together with the ramification divisor of $\pi$ is a stable marked curve. Here, $\sigma$ is to be viewed as a ramified section of $\pi$, and we require that it lands in the smooth locus. It follows from the \'{e}tale local description of admissible $G$-covers (see \S\ref{ss_restriction}) that

\begin{equation}\label{eq_key_isom}
\sigma_O^*\Omega_{E/X} = \sigma^*\pi^*\Omega_{E/X}\cong (\sigma^*\Omega_{C/X})^{\otimes e}.
\end{equation}

Since cotangent sheaves commute with base change, we also have
$$\sigma_O^*\Omega_{E/X} = \sigma_O^*\tilde{f}^*\Omega_{E(1)/X(1)} = f^*\sigma_O'^*\Omega_{E(1)/X(1)}.$$
Let $\lambda := \sigma_O'^*\Omega_{E(1)/X(1)}$, then since $f$ is finite flat, we have
$$\deg f\cdot\deg\lambda = \deg f^*\lambda = e\cdot\deg(\sigma^*\Omega_{C/X})$$
and since degrees are integers, we would get
\begin{equation}\label{eq_prototype_congruence}
\deg f\equiv 0\mod \frac{e}{\gcd(e,\deg\lambda)}	.
\end{equation}
This is the essence of the main congruence. We wish to run this prototype argument when $E(1)\rightarrow X(1)$ is the universal family $\cE(1)\rightarrow\ol{\cM(1)}$, $X$ is a component $\cX\subset\cAdm(G)$, $f$ is the forgetful map $\ff : \cX\rightarrow\ol{\cM(1)}$, and $\pi : C\rightarrow E$ is the universal family $\pi : \cC\rightarrow\cE$ over $\cX$. In this case, we find that $\lambda$ is the Hodge bundle, which has degree $\frac{1}{24}$ (Proposition \ref{prop_hodge_bundle}). However, in this setting we are immediately presented with two difficulties:

\begin{itemize}
\item The universal admissible $G$-cover	 $\pi : \cC\rightarrow\cE$ may not admit a ramified section $\sigma$.

The reduced ramification divisor $\cR_\pi$ of the universal admissible $G$-cover $\pi : \cC\rightarrow\cE$ is always finite \'{e}tale over $\cX$, but it may not admit a section. If $\cR\subset\cR_\pi$ is a component, then $\cR\rightarrow\cX$ is the minimal extension over which $\pi : \cC\rightarrow\cE$ admits a ramified section. Thus to apply the prototype argument, we must make a further base change to $\cR$. This has the potential to weaken the resulting congruence. Let $d_\cX$ denote the degree of the map on coarse schemes $R\rightarrow X$ induced by $\cR\rightarrow\cX$. Over $\cR$, the universal cover $\pi : \cC_\cR\rightarrow\cE_\cR$ admits a ramified section $\sigma$.

\item Since $\cR$ is a stack, the degree of $\sigma^*\Omega_{\cC_\cR/\cR}$ may not be an integer.

Since $\cX$ is Deligne-Mumford, at least we have $\deg\sigma^*\Omega_{\cC_\cR/\cR}\in\bQ$. For a geometric point $x : \Spec k\rightarrow\cX$, $\sigma^*\Omega_{\cC_\cR/\cR}$ defines a rank 1 representation of $\Aut_\cX(x)$, which we call the \emph{local character} at $x$. Using a theorem of Olsson \cite{Ols12}, the denominator of the rational number $\deg\sigma^*\Omega_{\cC_\cR/\cR}$ can be bounded in terms of the orders of the local characters. Since $\cX$ is Noetherian, let $m_\cX$ denote the minimum positive integer such that $(\sigma^*\Omega_{\cC_\cR/\cR})^{\otimes m_\cX}$ has trivial local characters.

\end{itemize}

The integers $d_\cX,m_\cX$ defined above quantify the obstructions to achieving a congruence of the form \eqref{eq_prototype_congruence}. Since the forgetful map $\ff : \cX\rightarrow\ol{\cM(1)}$ is generally not finite, we phrase the congruence in terms of the induced finite map on coarse schemes $f : X\rightarrow \ol{M(1)}$. The purest form of our main result is:

\begin{thm}[Main congruence; see Theorem \ref{thm_congruence}]\label{thm_main_congruence_intro} Let $\cX\subset\cAdm(G)$ be a connected component classifying $G$-covers of elliptic curves with ramification index $e$ above the origin. Let $d_\cX,m_\cX$ be as above. Then the forgetful map $f : X\rightarrow\ol{M(1)}$ satisfies
$$\deg(X\stackrel{f}{\lra}\ol{M(1)}) \equiv 0\mod \frac{12e}{\gcd(12e,m_\cX d_\cX)}$$
\end{thm}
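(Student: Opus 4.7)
The plan is to execute the prototype argument sketched in Section \ref{sss_sketch_of_proof}, passing to an auxiliary stack on which the universal cover acquires a tautological ramified section and then tracking the stack-versus-coarse-scheme degree contributions carefully.

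First I would set up the section. Let $\pi:\cC\rightarrow\cE$ be the universal admissible $G$-cover over $\cX$; by \S\ref{ss_ramification_divisor} its reduced ramification divisor $\cR_\pi$ is finite \'{e}tale over $\cX$. Choose a connected component $\cR\subset\cR_\pi$, so that by definition the induced map $R\rightarrow X$ on coarse schemes has degree $d_\cX$. Base-changing $\pi$ to $\cR$ produces $\pi_\cR:\cC_\cR\rightarrow\cE_\cR$ together with a tautological section $\sigma:\cR\rightarrow\cC_\cR$ landing in the smooth locus above the zero section $\sigma_O:\cR\rightarrow\cE_\cR$, ramified of index $e$. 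The \'{e}tale-local identity $\pi_\cR^*I_{\sigma_O} = I_\sigma^e$ from the local description of admissible covers (\S\ref{ss_restriction}) restricts along the sections to the key line-bundle isomorphism on $\cR$:
\[ h^*\lambda \;\cong\; \sigma_O^*\Omega_{\cE_\cR/\cR} \;\cong\; (\sigma^*\Omega_{\cC_\cR/\cR})^{\otimes e}, \]
where $h:\cR\rightarrow\ol{\cM(1)}$ is the forgetful morphism and $\lambda$ is the Hodge bundle.

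Next I would compute both sides as rational degrees on the $1$-dimensional smooth proper DM stack $\cR$. Let $\alpha$ denote the order of its generic automorphism group, and let $q:\cR\rightarrow R$ be the coarse-moduli map. A character computation at $j=0$, $j=1728$, and the cusp shows that $\lambda^{\otimes 12}$ descends along $\ol{\cM(1)}\rightarrow\ol{M(1)}$ to a line bundle $\mu$ on $\ol{M(1)}\cong\bP^1$ of degree $1$ (consistent with $\deg_{\ol{\cM(1)}}\lambda = 1/24$ together with the $1/2$ absorbed by the generic $\{\pm 1\}$-stabilizer of $\ol{\cM(1)}$). Setting $n := \deg f$, the coarse map $\bar h:R\rightarrow\ol{M(1)}$ factors through $R\rightarrow X\rightarrow\ol{M(1)}$ with degrees $d_\cX$ and $n$, so, using that $q$ has stacky degree $1/\alpha$,
\[ \deg_\cR(h^*\lambda) \;=\; \frac{1}{12}\deg_\cR(q^*\bar h^*\mu) \;=\; \frac{1}{12}\cdot\frac{n\, d_\cX}{\alpha} \;=\; \frac{n\, d_\cX}{12\,\alpha}. \]
Meanwhile, by the very definition of $m_\cX$, the line bundle $(\sigma^*\Omega_{\cC_\cR/\cR})^{\otimes m_\cX}$ has trivial local characters everywhere on $\cR$ and so descends to a line bundle on $R$ of some integer degree $\kappa\in\bZ$, giving
\[ \deg_\cR(\sigma^*\Omega_{\cC_\cR/\cR}) \;=\; \frac{\kappa}{m_\cX\,\alpha}. \]

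Equating the two sides of the key isomorphism yields $n\,d_\cX/(12\alpha) = e\,\kappa/(m_\cX\alpha)$; the factor $\alpha$ cancels and clearing denominators produces $n\, d_\cX\, m_\cX = 12\, e\, \kappa$ with $\kappa\in\bZ$. The divisibility $12e \mid n\, d_\cX\, m_\cX$, equivalently $n \equiv 0 \pmod{12e/\gcd(12e,\, d_\cX\, m_\cX)}$, follows immediately. The main obstacle is precisely the bookkeeping in the middle step: the appearance of $12$ rather than the naive $24$, and the clean cancellation of $\alpha$, both hinge on carefully handling how line-bundle degrees on $1$-dimensional smooth proper DM stacks interact with their coarse-moduli maps, and it is this bookkeeping that makes the clean mod $12e$ conclusion possible rather than a weaker one.
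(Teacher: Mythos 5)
Your proposal is correct and follows essentially the same route as the paper's proof of Theorem \ref{thm_congruence}: pass to a component $\cR$ of the reduced ramification divisor, use the key isomorphism $\sigma_O^*\omega_{\cE'/\cR}\cong(\sigma^*\Omega_{\cC'/\cR})^{\otimes e}$, and then compare rational degrees of line bundles on the $1$-dimensional DM stack $\cR$. The only cosmetic difference is in the Hodge-bundle bookkeeping: you descend $\lambda^{\otimes 12}$ to a degree-$1$ line bundle on $\ol{M(1)}$ and pull that back along the coarse map, whereas the paper simply applies Proposition \ref{prop_degree_of_pullback}(c) to the pullback of $\lambda$ itself, feeding in $\deg\lambda=\tfrac{1}{24}$ and the order-$2$ generic stabilizer of $\ol{\cM(1)}$; these two maneuvers give the identical intermediate identity $\deg_\cR(h^*\lambda)=n\,d_\cX/(12\alpha)$, and the rest of your argument (integrality of $\kappa$, cancellation of $\alpha$, and the $\gcd$ step) is exactly the paper's.
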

In this form, the congruence is difficult to use; one must first check that $m_\cX,d_\cX$ do not share too many divisors with $e$, so we are motivated to give more easily accessible bounds for $m_\cX,d_\cX$. If $c\in G$ represents the Higman invariant of covers classified by $\cX$, then by Proposition \ref{prop_stacky_RRD}, $d_\cX$ must divide $|C_G(c)/\langle c\rangle|$, though we do not know how sharp this bound is. The integer $m_\cX$ is more accessible. It is related to the order of automorphism groups of geometric points of $\cR$. For points corresponding to smooth covers, these automorphism groups are relatively easy to control (see Proposition \ref{prop_smooth_pointed_vertical_automorphisms}). For points lying over the boundary of $\cAdm(G)$, the possible non-irreducibility of degenerate covers complicates the situation. The purpose of \S\ref{section_cusps} is to give a combinatorial characterization of such degenerate covers, and to express their automorphism groups in terms of combinatorial data. This will yield an integer $m_\cX'$, defined purely combinatorially, such that $m_\cX$ divides $m_\cX'$, and such that $m_\cX'$ differs from $m_\cX$ by at most a factor of 12.

\subsubsection{Combinatorial characterization of the cusps}
In \S\ref{section_cusps} we give a combinatorial characterization of the boundary of $\cAdm(G)$ using Galois theory. Borrowing terminology from the classical theory of the moduli of elliptic curves, we call points lying on the boundary \emph{cusps}, and we call the objects they correspond to \emph{cuspidal objects}. We work over an algebraically closed field $k$ of characteristic 0. The approach is as follows: let $E$ be a ``nodal elliptic curve'' (a degenerate stable 1-pointed curve of genus 1). If $\pi : C\rightarrow E$ denotes an admissible $G$-cover, then taking normalizations, we obtain a finite $G$-cover $\pi' : C'\rightarrow E'$, only branched over 3 points, where $C'$ is smooth but possibly disconnected. The normalization maps $C'\rightarrow C$ and $E'\rightarrow E$ moreover equip $\pi'$ with the data of a $G$-equivariant bijection $\alpha$ of the fibers above the preimages in $E'$ of the node of $E$. We will show that giving $\pi$ is equivalent to giving $(\pi',\alpha)$, and describe the subcategory of pairs $(\pi',\alpha)$ which correspond to admissible $G$-covers of $E$. In \S\ref{ss_the_map_Inv} we will attach to any such object a label ``the $\delta$-invariant'', which is an equivalence class of a generating pair of $G$. In \S\ref{ss_cuspidal_automorphisms}, we will calculate the automorphism group of a cuspidal object from its $\delta$-invariant. This allows us to define in purely combinatorial terms the integer $m_\cX'$ described above. Together with the bound on $d_\cX$ mentioned above, we give a purely combinatorial corollary of the main congruence (Theorem \ref{thm_combinatorial_congruence}). In \S\ref{ss_congruences_for_NAFSG} we deduce the congruences of Theorems \ref{thm_vdovin_intro} and \ref{thm_combinatorial_vdovin_intro} from this combinatorial statement.

\subsubsection{Applications to Markoff triples and the geometry of $\cM(\SL_2(\bF_q))$}
In \S\ref{section_applications} we prove the applications of our congruences described above. In \S\ref{ss_character_variety} we describe the theory of the character variety for $\SL_2$-representations of a free group of rank 2, and we make precise the connection between the stacks $\cM(\SL_2(\bF_q))$ and the Markoff equation. In \S\ref{ss_automorphism_groups}, we use the connection with the character variety to show that the automorphism groups of $\cAdm(\SL_2(\bF_q))$ are as small as possible: they are reduced to the center $Z(\SL_2(\bF_q))$. In \S\ref{ss_congruences_for_SL2} we use this calculation as input to the main congruence and obtain congruences for the degrees of components of $\cAdm(\SL_2(\bF_q))$ (equivalently the sizes of $\Gamma$-orbits of $\bF_q$-points on the associated generalized Markoff surface). In \S\ref{ss_bgs_conjecture} we bring everything together and to prove Theorem \ref{thm_bgs_conj_intro}. In \S\ref{ss_genus_formulas}, we give formulas for the genus of the components of $\Adm(\SL_2(\bF_p))_{-2}$ for $p\notin\bE_\bgs$, and show that they are noncongruence for a density 1 set of primes.

\subsection{Further directions and related work}\label{ss_further_directions}

Recall that for a component $\cX\subset\cAdm(G)$ classifying covers with ramification index $e$, our main congruence (Theorem \ref{thm_main_congruence_intro}) gives
$$\deg(X\rightarrow\ol{M(1)})\equiv 0\mod\frac{12e}{\gcd(12e,m_\cX d_\cX)}$$
It is an interesting question to ask how sharp this congruence is, how real the obstructions $m_\cX,d_\cX$ are, and when one can expect a good congruence (for example $\equiv 0\mod e$). Here is what we know: When $G = D_{2k}$ is a dihedral group, the congruence $\equiv 0\mod e$ can fail arbitrarily badly, and $m_\cX$ is totally responsible (see \S\ref{ss_remarks_on_md}).


However, it seems that when $G$ is nonabelian simple, there is still hope that a congruence $\equiv 0\mod e$ can be obtained. This congruence would be sharp at least in the sense that for $G = \PSL_2(\bF_7)$, there are two components of $\Adm(\PSL_2(\bF_7))$ classifying $\PSL_2(\bF_7)$-covers with ramification index 7; in each case Theorem \ref{thm_vdovin_intro} gives a congruence $\equiv 0\mod 7$, and one can compute that indeed each component has degree precisely 7 over $\ol{M(1)}$.


We have checked computationally that if $G$ is any nonabelian finite simple group of order $\le 29120$, then for any component $M\subset M(G)$ classifying covers with ramification index $e$, the congruence $\equiv 0\mod e$ holds\footnote{This includes $\PSL_2(\bF_q)$ for $q = 5,\ldots,37$, the alternating groups $A_5,\ldots,A_8$, the Matthieu group $M_{11}$, $\PSL_3(\bF_3)$, $\PSL_3(\bF_4)$, $\PSU_3(\bF_3)$, $O_5(\bF_3)$, and the Suzuki group $\Sz(8)$, for a total of 352 nonisomorphic connected components.}. It is natural to ask the (possibly naive) question:

\begin{question}\label{question_congruence_intro} If $G$ is a nonabelian simple group, must every component $X\subset\Adm(G)$ classifying covers with ramification index $e$ satisfy
\begin{equation}\label{eq_sober_congruence_intro}
\deg(X\rightarrow\ol{M(1)})\equiv 0\mod e?
\end{equation}
\end{question}

Combinatorially speaking, the question asks: If $\Pi$ is a free group generated by $a,b$ and $\varphi : \Pi\twoheadrightarrow G$ is a surjection such that $\varphi([a,b])$ has order $e$, then must the $\Out^+(\Pi)$-orbit of $\varphi$ in $\Epi^\ext(\Pi,G)$ have cardinality divisible by $e$? Theorem \ref{thm_vdovin_intro} gives partial results towards this.


If one is to believe that Question \ref{question_congruence_intro} has a positive answer, then it is natural to ask if the congruence can be obtained by proving that the obstructions $d,m$ to the congruence \eqref{eq_sober_congruence_intro} are sufficiently small. For any nonabelian finite simple group $G$ of order $\le 29120$, if $\pi : \cC\rightarrow\cE\rightarrow\cAdm(G)$ is the universal family with reduced ramification divisor $\cR_\pi$, then we have checked computationally (using the combinatorial characterization of the automorphism groups given in Theorem \ref{thm_cuspidal_automorphisms} below) that the vertical automorphism groups of geometric points of $\cR_\pi$ all vanish, so in these cases $m\mid 12$ and hence provides no obstruction to the congruence $\equiv 0\mod e$. Unfortunately we do not have a combinatorial characterization of $d$, so we do not know how to check if the obstruction $d$ also vanishes. In fact, we are not aware of an example of a nonabelian finite simple group $G$ such that the reduced ramification divisor of the universal family over $\cAdm(G)$ is not totally split.




Finally, we list some related work that we have not yet mentioned.

\begin{itemize}
\item A related problem is to understand the mapping class group orbits on the \emph{integral} points of character varieties. In the case of $\SL_2$-representations of the fundamental group of one-holed torus with trace invariant $-2$, this question is resolved by a classical result of Markoff \cite{Mar79} (see Theorem \ref{thm_markoff_transitivity}). In particular, in this case we obtain 5 orbits, represented by $(0,0,0),(3,3,3),(3,-3,-3),(-3,3,-3),(-3,-3,3)$. This should be viewed as a type of ``finite generation'' result on the set of integral points up to the action of the mapping class group. An analogous finite generation result for integral points for more general character varieties is proven in \cite{Whang20} using techniques from differential geometry.
\item For a component $M\subset M(G)_\bC$, let $g$ denote the genus of the covers it classifies. Then forgetting the base curve gives a natural map $M\rightarrow M_g$, whose image is a \emph{Teichmuller curve} as first studied by Veech \cite{Vee89} in the context of dynamics of billiard tables (also see \cite{Chen17,HS06,Loc03,MT02,Zor06}). Specifically, these are Teichmuller curves generated by a square-tiled surface (called ``origami curves'' in \cite{HS09,Sch04}). If $M$ is a component of $M(G)_\bC/\Out(G)$, then it follows from \cite{Sch04} that the group $\Gamma_M\le\SL_2(\bZ)$ is the \emph{Veech group} of the corresponding square tiled surface. In this language, our congruence can be interpreted as a congruence on the index of the Veech group inside $\SL_2(\bZ)$. For genus $g = 2$, the Teichmuller curves in $M_2$ have been studied extensively in \cite{Mcm03, Mcm05, Dur19}. In \cite{Mcm05} McMullen faced a similar issue of connectedness of a certain moduli space, which he was able to solve by reducing it to a problem in combinatorial number theory. It can be shown that his moduli space is a subspace of $M(S_d)\sqcup M(A_d)/\Out(A_d)$, where $A_d$ (resp. $S_d$) denotes the alternating (resp. symmetric group) on $d$ letters. Specifically, \cite[Corollary 1.5]{Mcm05} can be viewed as saying that for $d\ge 4$, the subscheme of $M(S_d)\sqcup M(A_d)/\Out(A_d)$ consisting of covers with Higman invariant the class of a 3-cycle has 1 or 2 components, according to whether $d$ is even or odd.

\item In \cite{BBCL20}, for every prime $\ell$, we realized infinitely many alternating and symmetric groups as quotients of the tame fundamental group of $\pi_1(\bP^1_{\bF_\ell} - \{0,1,\infty\})$. This work is in the spirit of a tame version of ``Abhyankar's conjecture'' \cite{HOPS18}. The precise result stated in \cite[Theorem 3.5.1]{BBCL20} only holds for those primes $p$ for which $\Gamma$ acts transitively on $\bX^*(p)$. Thus, our main theorem (\ref{thm_bgs_conj_intro}) can be viewed as a strengthening of \cite[Theorem 3.5.1]{BBCL20}.
\end{itemize}

\subsection{Acknowledgements}

We thank Peter Sarnak for bringing the problem to the author's attention, for his encouragement, and many helpful discussions. We thank Johan de Jong for sharing the idea behind the argument sketched in \S\ref{sss_sketch_of_proof}, as well as for many helpful comments as the manuscript was being prepared. We would also like to thank Pierre Deligne for helpful letters, as well as Dan Abramovich, Dave Aulicino, Jeremy Booher, Pat Hooper, Rafael von Kanel, and John Voight for helpful conversations. Finally we would like to thank Columbia University for their support and hospitality while this work was being prepared.

\subsection{Notations and conventions}\label{ss_conventions}


In our usage of stacks, we will follow the definitions of the stacks project \cite[026O]{stacks}. We will typically use script letters $\cX,\cY,\cZ,\ldots$ to denote stacks, and use Roman letters $X,Y,Z,\ldots$ to denote schemes. Typically $X$ will be the coarse scheme of $\cX$.


The symbol $\pi_1$ applied to a geometric object will by default refer to the \'{e}tale fundamental group. If $X$ is a topological space and $x\in X$ then $\pi_1^\tp(X,x)$ denotes its topological fundamental group. We take the convention that if $\gamma_1,\gamma_2$ are two loops in $X$ based at $x$, then the product $\gamma_1\gamma_2$ in $\pi_1^\tp(X,x)$ is represented by the loop that first follows $\gamma_2$ and then follows $\gamma_1$. This is consistent with our use of Galois theory, where we adopt the convention that Galois actions are right-actions, and monodromy actions are left actions. In particular the \'{e}tale fundamental group is the automorphism group of a fiber functor, and hence acts on fibers from the left.


For elements $a,b$ of a group $G$, $a\sim b$ means that $a$ is conjugate to $b$, $[a,b] := aba^{-1}b^{-1}$ denotes the commutator, $a^b := b^{-1}ab$, and $\la{b}a := bab^{-1}$. The order of $a\in G$ is denoted $|a|$.


Here is a summary of our notation.

\begin{itemize}
\item $G$ will always be a finite group.
\item Given groups $A,B$, $\Epi^\ext(A,B) := \Epi(A,B)/\Inn(B)$ is the set of conjugacy classes of surjections $A\rightarrow B$ (called ``exterior epimorphisms'' in \cite{DM69}).
\item $\bS$ is the universal base scheme. Often we will take $\bS = \Spec\bZ[1/|G|]$.
\item $\Qbar$ is the algebraic closure of $\bQ$ inside $\bC$.
\item $\cM(1)$ is the moduli stack of elliptic curves. $\ol{\cM(1)}$ is the compactification of $\cM(1)$ by stable curves.
\item $\cM(G)$ is the moduli stack of elliptic curves with $G$-structures. This is finite \'{e}tale over $\cM(1)$, but does not always carry a universal family of covers.
\item $\cAdm(G)$ is the moduli stack of admissible $G$-covers of 1-generalized elliptic curves (i.e., stable pointed curves of genus 1). This carries a universal family of covers, but is typically not finite over $\ol{\cM(1)}$.
\item $\cAdm^0(G)\subset\cAdm(G)$ is the open substack classifying smooth covers. It is an \'{e}tale gerbe over $\cM(G)$ with group $Z(G)$. In particular $\cAdm^0(G)$ and $\cM(G)$ have the same coarse schemes.
\item $\ol{\cM(G)}$ denotes the \emph{rigidification} $\cAdm(G)\fs Z(G)$ (Definition \ref{def_MGbar}).
\item $I$ will generally denote the matrix $I = \spmatrix{1}{0}{0}{1}$. Thus the center of $\SL_2(\bF_q)$ is $\{\pm I\}$.
\end{itemize}

\section{Admissible $G$-covers of elliptic curves}\label{section_admissible_G_covers}
In this section we recall the theory of admissible $G$-covers and their moduli stacks $\cAdm(G)$, following \cite{AV02,ACV03,Ols07}, and will relate them to the moduli stacks $\cM(G)$ of elliptic curves with $G$-structures as studied in \cite{Chen18} (also see \cite[\S5]{DM69} and \cite{JP95}). Here $G$ will always be a finite group. We will work universally over the base scheme $\bS := \Spec\bZ[1/|G|]$.

\subsection{Admissible $G$-covers and their moduli}\label{ss_admissible_G_covers}

\begin{defn} A morphism of schemes $f : C\rightarrow S$ is a \emph{nodal curve} if $f$ is flat, proper, of finite presentation, and whose geometric fibers are of pure dimension 1 and whose only singularities are ordinary double points. The morphism $f : C\rightarrow S$ is a \emph{prestable curve} if $f$ is a nodal curve with connected geometric fibers.
\end{defn}

\begin{defn} A \emph{prestable $n$-pointed curve of genus $g$} is a prestable curve $f : C\rightarrow S$ equipped with $n$ mutually disjoint sections $\sigma_1,\ldots,\sigma_n : S\rightarrow C$ lying in the smooth locus $C_\sm\subset C$, such that
\begin{itemize}
\item The geometric fibers of $f$ are connected curves of arithmetic genus $g$,
\end{itemize}
The prestable $n$-pointed curve $(C,\{\sigma_i\})$ is moreover \emph{stable} if for any geometric point $\ol{s} : \Spec k\rightarrow S$, the geometric fiber $C_{\ol{s}}$ satisfies any of the following equivalent conditions \cite[\S5, Lemma 1.2.1]{Manin99} :
\begin{itemize}
\item $C_{\ol{s}}$ has only finitely many $k$-automorphisms that fix the sections $\sigma_1,\ldots,\sigma_n$.
\item For any irreducible component $Z\subset C_{\ol{s}}$ with normalization $Z'$, if $Z'$ has genus 0, then it must contain at least three special\footnote{For an irreducible component $Z\subset C_{\ol{s}}$ with normalization $Z'$, we say that a point $z\in Z'$ is special if either it maps to a marking or a node in $C_{\ol{s}}$.} points, and if $Z'$ has genus 1, then it must contain at least one special point.
\item $\omega_{C_{\ol{s}}}(\sum_i\sigma_i)$ is ample, where $\omega_{C_{\ol{s}}}$ is the dualizing sheaf.
\end{itemize}
The sections $\sigma_1,\ldots,\sigma_n$ are called \emph{markings}. A point in $C$ is called a node if it is the image of a node of a geometric fiber of $C/S$. The \emph{generic locus} of a (pre)stable $n$-pointed curve $f : C\rightarrow S$ is the open complement of the special points, denoted $C_\gen\subset C$.
\end{defn}

\begin{defn} A \emph{1-generalized elliptic curve} is a stable 1-pointed curve of genus 1. We will denote the section by $\sigma_O$, and the divisor it defines by $O$. The moduli stack of 1-generalized elliptic curves is denoted $\ol{\cM(1)}$. The open substack classifying (smooth) elliptic curves is denoted $\cM(1)$.\footnote{Here, the ``1'' in ``$\cM(1)$'' denotes ``no level structures'' (or trivial level structures). In general $\cM(G)$ will denote the moduli stack of elliptic curves with $G$-structures (c.f. \S\ref{ss_comparison_with_G_structures} below). In the literature $\cM(1)$ (resp. $\ol{\cM(1)}$) is often denoted $\cM_{1,1}$ (resp. $\ol{\cM_{1,1}}$). However since for the most part we do not consider the moduli of higher genus curves or curves with more than 1 marked point, to keep our notation clean and to be consistent with \cite{Chen18}, we will use $\cM(1)$ (resp. $\ol{\cM(1)}$).} Let $M(1)\cong\bA^1_\bS$ and $\ol{M(1)}\cong\bP^1_\bS$ denote their coarse schemes (see Definition \ref{def_coarse_space} below), where the isomorphisms are given by the $j$-invariant.
\end{defn}



For a scheme $X$ and a geometric point $\ol{p} : \Spec k\rightarrow X$, let $\cO_{X,\ol{p}}$ be the strict henselization of the local ring (or just the strict local ring) of $X$ at the image $p$ of $\ol{p}$. Its residue field is the separable closure of the residue field $\kappa(p)$ inside $k$. For the purposes of the following definition, we will use the notation
$$X_{(\ol{p})} := \Spec\cO_{X,\ol{p}}$$
and we say that $X_{(\ol{p})}$ is the strict localization of $X$ at $p$. 

\begin{defn}[c.f. {\cite[\S4]{ACV03}}]\label{def_admissible} An \emph{admissible $G$-cover} of a prestable $n$-pointed curve $(D\stackrel{f}{\rightarrow} S,\{\sigma_i\})$ is a finite map $\pi : C\rightarrow D$ equipped with a \emph{right} action of $G$ on $C$ leaving $\pi$ invariant, where
\begin{enumerate}[label=(\arabic*)]
\item $C\rightarrow S$ is a prestable curve.
\item\label{part_admissible_nodes_to_nodes} Every node of $C$ maps to a node of $D$.
\item\label{part_admissible_torsor} The restriction of $\pi$ to the preimage of $D_\gen$ is a $G$-torsor.
\item\label{part_admissible_local_marking} Let $\ol{p} : \Spec k\rightarrow C$ be a geometric point whose image in $D$ lands in a marking. Let $\ol{s} := f(\pi\circ\ol{p})$ be its image in $S$, with strict local ring $\cO_{S,\ol{s}}$. For some integer $e\ge 1$, let
$$\pi' : C' := \Spec\cO_{S,\ol{s}}[\xi]\longrightarrow D' := \Spec\cO_{S,\ol{s}}[x]$$
be given by $x\mapsto\xi^e$. Let $\ol{p}' : \Spec k\rightarrow C'$ be a geometric point with image the point $\xi = 0$. Then for some choice of $e$ as above, there is a commutative diagram
\[\begin{tikzcd}
C_{(\ol{p})}\ar[r,"\pi"]\ar[d,"\cong"] & D_{(\pi\circ\ol{p})}\ar[r,"f"]\ar[d,"\cong"] & S_{(\ol{s})}\ar[d,equals] \\
C'_{(\ol{p}')}\ar[r,"\pi'"] & D'_{(\pi'\circ\ol{p}')}\ar[r,"f'"] & S_{(\ol{s})}
\end{tikzcd}\]
where the vertical maps are isomorphisms, and $f'$ is induced by the structure map $D'\rightarrow S_\slc{s}$.
\item\label{part_admissible_local_nodes} Let $\ol{p} : \Spec k\rightarrow C$ be a geometric point whose image in $D$ is a node, and let $\ol{s} := f(\pi(\ol{p}))$ be its image in $S$. For some integer $r\ge 1$ and $a$ in the maximal ideal $\fm_{S,\ol{s}}\subset\cO_{S,\ol{s}}$, let
$$\pi' : C' := \Spec \cO_{S,\ol{s}}[\xi,\eta]/(\xi\eta-a)\longrightarrow D' := \Spec \cO_{S,\ol{s}}[x,y]/(xy-a^r)$$
be given by $(x,y)\mapsto (\xi^r,\eta^r)$. Let $p' : \Spec k\rightarrow C'$ be a geometric point with image $(\xi,\eta) = (0,0)$. Then there is a commutative diagram
\[\begin{tikzcd}
C_{(\ol{p})}\ar[r,"\pi"]\ar[d,"\cong"] & D_{(\pi\circ\ol{p})}\ar[r,"f"]\ar[d,"\cong"] & S_{(\ol{s})}\ar[d,equals] \\
C'_{(\ol{p}')}\ar[r,"\pi'"] & D'_{(\pi'\circ\ol{p}')}\ar[r,"f'"] & S_{(\ol{s})}
\end{tikzcd}\]
where the vertical maps are isomorphisms, and $f'$ is induced by the structure map $D'\rightarrow S_\slc{s}$.

\item\label{part_admissible_balanced} If $\ol{p}$ is a geometric point landing in a node of $C$ with image $\ol{s}\in S$, then in the notation of (5) applied to the fiber $C_\ol{s}$, the stabilizer $G_{\ol{p}} := \Stab_G(\ol{p})$ is cyclic and the action of a generator $g\in G_{\ol{p}}$ on $(C_{\ol{s}})_\slc{p}$ is given \'{e}tale locally by sending $\xi\mapsto \zeta\xi, \eta\mapsto \zeta^{-1}\eta$ for some primitive $e$-th root of unity $\zeta\in k(\ol{s})$. Here we say that the node $\ol{p}$ is \emph{balanced}.
\end{enumerate}
An admissible $G$-cover $\pi : C\rightarrow D$ is \emph{smooth} if $C/S$ is smooth.
\end{defn}

\begin{remark}\label{remark_covers}
Here we record some observations about admissible $G$-covers.

\begin{enumerate}[label=(\alph*)]
\item\label{part_nodes} We note that part \ref{part_admissible_nodes_to_nodes} of the definition is implied by \ref{part_admissible_torsor} and \ref{part_admissible_local_marking}. It follows from \ref{part_admissible_local_nodes} that admissible $G$-covers map smooth points to smooth points. Thus, for an admissible $G$-cover $\pi : C\rightarrow D$, a point $x\in C$ is a node if and only if $\pi(x)\in D$ is a node. In particular, $C/S$ is smooth if and only if $D/S$ is smooth.

\item\label{part_ACV_admissible} Our definition of admissble covers differs from the definition used in Abramovich-Corti-Vistoli \cite[Definition 4.3.1]{ACV03} in that we require the covering curve $C$ to be prestable, and hence has \emph{connected} geometric fibers, whereas \cite{ACV03} only requires that $C$ be a nodal curve, so its geometric fibers can be disconnected. An \emph{ACV-admissible $G$-cover} is a map $\pi : C\rightarrow D$ where $C/S$ is a nodal curve and $\pi$ satisfies conditions (2)-(6) of the definition. Thus an admissible $G$-cover is equivalently an ACV-admissible $G$-cover with connected geometric fibers.\footnote{We choose this convention because on the one hand it is in accord with the definition of admissible covers as introduced in Harris-Mumford \cite[\S4]{HM82}, and on the other hand we will not need to consider disconnected covers in this paper.}
\item\label{part_balanced1} A finite map $\pi : C\rightarrow E$ is an \emph{admissible cover} if it satisfies (1),(2),(3),(4),(5). If it admits a $G$-action leaving $\pi$ invariant which moreover satisfies (6), then we say that the $G$-action is ``balanced''. Thus, an admissible $G$-cover is an admissible cover admitting a balanced $G$-action. In general, for a cover satisfying (2), the action of a generator of $G_p$ on the local ring of a node $p$ can send $\xi\mapsto\zeta\xi$, $\eta\mapsto\zeta'\eta$ where $\zeta,\zeta'$ are arbitrary primitive $r$th roots of unity. However if $\zeta'\ne\zeta^{-1}$, this would force $a = 0$ in the notation of (5), and so the node cannot appear in a generically smooth family. Since we will be interested in compactifications of the moduli stack of \emph{smooth admissible $G$-covers}, it suffices to restrict our attention to balanced actions.

\item\label{part_balanced2} Let $\ol{p}$ be a geometric point landing in a node of $C$ with image $\ol{s}\in S$. By Proposition \ref{prop_normalized_coordinates}, the balanced condition at $\ol{p}$ can be equivalently phrased as follows: The normalization map $\nu : C_{\ol{s}}'\rightarrow C_\ol{s}$ induces a decomposition of the cotangent space $T_{C_{\ol{s}},\ol{p}}^*$ into a sum of two 1-dimensional subspaces (the branches of the node). The $G$-action is \emph{balanced} at $\ol{p}$ if the (left) action of $G_{\ol{p}} := \Stab_G(\ol{p})$ on this cotangent space preserves this decomposition and acts faithfully via mutually inverse characters on each summand.

\end{enumerate}
\end{remark}


\begin{defn} Given admissible (resp. ACV-admissible) $G$-covers of genus $g$ stable $n$-pointed curves $(C\rightarrow D\rightarrow S,\{\sigma_i\}_{1\le i\le n})$ and $(C'\rightarrow D'\rightarrow S',\{\sigma_i'\}_{1\le i\le n})$, a morphism $(C'\rightarrow D'\rightarrow S',\{\sigma_i\})\rightarrow(C\rightarrow D\rightarrow S,\{\sigma_i'\})$ is a diagram
\[\begin{tikzcd}
C'\ar[r,"f"]\ar[d] & C\ar[d] \\
D'\ar[r,"\ol{f}"]\ar[d] & D\ar[d] \\
S'\ar[r] & S
\end{tikzcd}\]
where all squares are cartesian, $\ol{f}$ sends each $\sigma_i$ to $\sigma_i'$, and $f$ is $G$-equivariant. Since $C\rightarrow D, C\rightarrow S$ are epimorphisms \cite[023Q]{stacks}, any such diagram is determined by morphism $f : C'\rightarrow C$. Moreover, we will see (Proposition \ref{prop_ac2smGc}) that an admissible $G$-cover $C\rightarrow D$ induces an isomorphism $C/G\cong D$, so any $G$-equivariant map $f : C'\rightarrow C$ determines a diagram as above, whence a morphism of admissible (resp. ACV-admissible) $G$-covers. The category of ACV-admissible $G$-covers of stable $n$-pointed curves of genus $g$ is fibered in groupoids over $\Sch/\bS$. In \cite[\S4.3]{ACV03}, this category is denoted $\cAdm_{g,n}(G)$. The full subcategory category of admissible $G$-covers forms an open and closed substack $\cAdm_{g,n}^\conn(G)\subset\cAdm_{g,n}(G)$. In our case, since we will only consider connected covers of 1-generalized elliptic curves, we will abbreviate
$$\cAdm(G) := \cAdm_{1,1}^\conn(G)$$
Let $\cAdm^0(G)\subset\cAdm(G)$ denote the open substack consisting of smooth covers. If $\phi : \cAdm(G)\rightarrow\ol{\cM(1)}$ denotes the functor sending an admissible cover $C\rightarrow E\rightarrow S$ to $C\rightarrow E$, then we have $\cAdm^0(G) = \phi^{-1}(\cM(1))$. As usual let $\Adm(G),\Adm^0(G)$ denote the corresponding coarse spaces (see Theorem \ref{thm_admG} below).
\end{defn}


\begin{remark} Beware that despite the nomenclature, given admissible $G$-covers $\pi : C\rightarrow E$, $\pi' : C'\rightarrow E$ of the same 1-generalized elliptic curve $E$, a morphism of admissible $G$-covers $\pi\rightarrow \pi'$ \emph{need not} induce the identity on $E$. In other words, morphisms of admissible $G$-covers are not necessarily ``morphisms of covers''. In some sense, it is thus better to think of an admissible $G$-cover as the curve $C$ equipped with a $G$-action and a marking divisor. This perspective is taken in \S\ref{ss_comparison_with_smGc} below.
\end{remark}

\begin{defn}\label{def_coarse_space} Recall that a coarse space of an algebraic stack $\cX$ is a map $c : \cX\rightarrow X$ with $X$ an algebraic space which satisfies:
\begin{itemize}
\item[(a)] Any map $\cX\rightarrow T$ with $T$ an algebraic space factors uniquely through $c : \cX\rightarrow X$.
\item[(b)] For any algebraically closed field $k$, $c$ induces a bijection of sets $\cX(k)/\cong\;\rightiso X(k)$.
\end{itemize}
\end{defn}

The coarse space, if it exists, is uniquely determined by (a). If $\cX$ is locally of finite presentation over $\bS$ and has finite inertia (e.g., if it is a separated and locally finitely presented Deligne-Mumford stack), then the coarse space always exists and enjoys the following additional properties, which we will use repeatedly without mention.

\begin{thm}[Keel-Mori theorem]\label{thm_keel_mori} Let $\cX$ be an algebraic stack locally of finite presentation (over $\bS$) with finite inertia. Then there exists a coarse space $c : \cX\rightarrow X$ such that moreover we have
\begin{itemize}
	\item[(a)] $c : \cX\rightarrow X$ is proper, quasi-finite, and a universal homeomorphism,
	\item[(b)] formation of $X$ commutes with flat base change,
	\item[(c)] $c_*\cO_\cX = \cO_X$,
	\item[(d)] if $\cX$ is separated over $\bS$, then $X$ is also separated over $\bS$,
	\item[(e)] if $\bS$ is locally Noetherian then $X$ is locally of finite presentation over $\bS$, and
	\item[(f)] if $\bS$ is locally Noetherian and $\cX$ is proper over $\bS$, then $X$ is also proper over $\bS$.
\end{itemize}
\end{thm}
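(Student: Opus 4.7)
The plan is to follow the strategy of Keel--Mori, which proceeds by reducing the global construction of $X$ to a local construction near each point of $\cX$, and then verifying that the local constructions glue. The existence portion is by far the hard part; the properties (a)--(f) are then relatively formal consequences.

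First, I would establish that the property of being a coarse space is étale local on the base: if $\{X_i\to X\}$ is an étale cover and each $\cX\times_X X_i \to X_i$ is a coarse space, then $\cX\to X$ is a coarse space. Combined with the universal property, this means it suffices to construct $X$ étale locally near a point $x\in |\cX|$ with residue gerbe $\mathcal{G}_x$. Next, I would pick a geometric point $\bar{x}\to\cX$ and use the finite inertia hypothesis to show that after replacing $\cX$ by an étale neighborhood of $x$, there is a finite flat presentation $[U/R]\cong\cX$ by a finite flat groupoid $s,t : R\rightrightarrows U$ with $U$ affine. Concretely, the key input is that the stabilizer $\Aut_\cX(\bar{x})$ is finite, so by a slice-like argument (Artin approximation plus the structure theorem for stacks with finite inertia) one finds a quasi-finite flat cover $U\to \cX$ passing through $\bar{x}$ such that the pullback groupoid has source and target finite over $U$. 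This is the main obstacle, and is where the bulk of the work in \cite{KeelMori} lies: one must descend a local construction using deformation-theoretic arguments to produce a finite flat chart.

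Once a finite flat groupoid presentation $s,t:R\rightrightarrows U$ with $U = \Spec A$ affine is in hand, I would define
\[
X := \Spec A^R, \qquad A^R := \{a\in A \;:\; s^\#(a) = t^\#(a)\text{ in } \Gamma(R,\cO_R)\}.
\]
The map $U\to X$ is integral because $R\to U$ (via either $s$ or $t$) is finite, so every element of $A$ satisfies the monic polynomial whose roots are its $R$-orbit. This yields a candidate coarse space locally, and standard faithfully flat descent shows that $X$ is an algebraic space and that $\cX\to X$ satisfies properties (a) and (b) of the definition of a coarse space. Universal homeomorphism in (a) follows because $c$ is integral and radicial on points (orbits collapse), $c_*\cO_\cX = \cO_X$ holds by construction, and quasi-finiteness follows from finite inertia. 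Properness of $c$ is checked via the valuative criterion, using that finite inertia gives finite stabilizers and $R\rightrightarrows U$ is finite.

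Finally, I would verify the remaining compatibilities. Part (b) (base change under flat maps) reduces to the observation that formation of $A^R$ commutes with flat base change on $X$, since flatness exactly preserves the equalizer defining invariants. Part (d) (separatedness) follows because $\cX\to X$ is surjective and proper, so $\cX$ separated over $\bS$ implies $X\to\bS$ is separated. Parts (e) and (f) (finite presentation and properness descending to $X$) use that $c$ is proper and surjective together with standard descent: $c_*$ preserves coherence and properness under these hypotheses. The main conceptual obstacle throughout is the étale-local existence of a finite flat groupoid chart; once that is granted, the ring-of-invariants construction and its verification are formal.
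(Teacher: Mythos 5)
Your proposal sketches the actual proof of the Keel--Mori theorem (the Keel--Mori / Conrad / stacks-project strategy: reduce to an \'{e}tale-local construction, produce a finite flat groupoid presentation near each point using finite inertia, take the ring of invariants, glue, and verify the properties). This is a genuinely different route than the paper takes. The paper does not reprove Keel--Mori at all; its ``proof'' is a careful assembly of citations to the stacks project (tags 0DUT, 0G2M, 0DUY, 0DUX) and to Abramovich--Vistoli \cite{AV02}, deducing (a)--(f) directly from results already in the literature. Each part is attributed: (a) and (b) come from the known separatedness, quasi-compactness, universal-homeomorphism and flat-base-change properties of the coarse map, plus the observation that locally-of-finite-type of $\cX$ makes $c$ of finite type (hence proper and quasi-finite); (c) follows from (b) and the universal property; (d) and (e) are cited; and (f) is then formal from (a), (d), (e).

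Your approach would ``buy'' a self-contained development, but at the cost of a huge amount of omitted content. In particular, you acknowledge but do not carry out the hardest step --- the \'{e}tale-local existence of a finite flat groupoid chart from finite inertia. That step is the real theorem; everything else you write (the ring-of-invariants construction, the descent argument for (b), properness descending from $\cX$ to $X$ for (d) and (f)) is correct in outline but comparatively formal. For the purposes of this paper, where Keel--Mori is being used as a black box, the paper's citation-based proof is both more efficient and more rigorous as written, since it points to complete proofs rather than a sketch. If you intend your argument to stand as an actual proof, you would need to fill in the slice/deformation-theoretic construction of the finite flat chart in full; as written it is a plan, not a proof.

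One small caution on your (d): concluding that $X\to\bS$ is separated because $\cX\to X$ is proper surjective and $\cX\to\bS$ is separated is true but requires the precise statement that separatedness descends along universally closed surjections (one checks $\Delta_{X/\bS}$ is closed by pulling back along the surjection $\cX\times_\bS\cX\to X\times_\bS X$). It is also worth noting a mild circularity in your ordering: you want to invoke properness of $c$ to deduce (d), but proving properness of $c$ itself uses separatedness of $c$, which is where the content is. The paper sidesteps this by citing the relevant stacks-project tags in a consistent order.
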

\begin{proof} See \cite[04XE]{stacks} for the definition of the topological space of a stack. By \cite[0DUT]{stacks}, $c$ is separated, quasi-compact, and a universal homeomorphism, and commutes with flat base change. Since $\cX$ is locally of finite type, the same is true of $c$, so it is of finite type. This implies that $c$ is proper and quasi-finite \cite[0G2M]{stacks}. This establishes (a) and (b). Part (c) follows from (b) and the universal property of coarse spaces (see \cite[Theorem 2.2.1]{AV02}). Part (d) is \cite[0DUY]{stacks}. Part (e) is \cite[0DUX]{stacks}. Part (f) follows from (a),(d),and (e).
\end{proof}

In dimension 1, smoothness of $\cX$ also often implies smoothness of $X$.

\begin{lemma}\label{lemma_coarse_scheme_is_smooth} Let $\cX$ be a smooth proper Deligne-Mumford stack over a regular Noetherian scheme $S$ whose fibers have pure dimension 1. Suppose its coarse space $X$ is a scheme. Then $X$ is smooth and proper over $S$.
\end{lemma}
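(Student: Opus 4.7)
The plan splits the statement into properness and smoothness. Properness of $X\to S$ is immediate from Theorem \ref{thm_keel_mori}(f), so the real content is smoothness. Since smoothness is étale-local on the target, I would fix a point $x\in X$ with a lift $\tilde x\in\cX$ and automorphism group $G_x=\Aut_\cX(\tilde x)$, and invoke the standard local structure theorem for separated Deligne--Mumford stacks (\cite[Lemma 2.2.3]{AV02}): after passing to an étale neighborhood of $x$ in $X$, we may assume $\cX\cong[U/G_x]$ with $U$ an affine scheme, étale over $\cX$, and $X\cong U/G_x$. Since $U\to\cX$ is étale, $U$ is smooth over $S$ of relative dimension $1$. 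The problem then reduces to the classical assertion: if $U$ is smooth over $S$ of relative dimension $1$ and $G$ is a finite group acting on $U$ by $S$-automorphisms, then $U/G$ is smooth over $S$.

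For this reduction I would verify the two standard criteria, smooth geometric fibers and flatness. Each geometric fiber $(U/G)_{\ol s}$ is identified with $U_{\ol s}/G$, the quotient of a smooth $\kappa(\ol s)$-curve by a finite group action; this is a one-dimensional normal scheme, hence regular, hence a smooth curve over $\kappa(\ol s)$. Flatness would be checked locally at a closed point by passing to strict henselizations: the strict henselization of $U/G$ at $x$ is the invariant subring $(\cO_{U,u}^{sh})^{G_u}$ of the regular local ring $\cO_{U,u}^{sh}$, where $G_u\subseteq G$ is the stabilizer of a geometric point $u$ over $x$. A local-algebra argument shows this invariant ring is again regular in the relative-dimension-$1$ setting (in the tame case, by choosing a $G_u$-equivariant uniformizer for the fiber direction and writing the invariants explicitly as a power series ring; in general, by using that the extension $(\cO_{U,u}^{sh})^{G_u}\hookrightarrow\cO_{U,u}^{sh}$ has trivial Krull-dimension gap and that the source is integrally closed and local, forcing regularity in the one-dimensional-over-the-base setting). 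Regularity yields Cohen--Macaulayness, and miracle flatness (\cite[00R4]{stacks}) applied to the map from this Cohen--Macaulay ring to the regular ring $\cO_{S,s}^{sh}$ with fiber dimension $1$ delivers the desired flatness. Smooth fibers plus flatness give smoothness of $U/G\to S$, and descent along the étale cover used in the reduction then gives smoothness of $X\to S$.

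The principal obstacle is the local assertion that $(\cO_{U,u}^{sh})^{G_u}$ is regular. In characteristic zero, or whenever $|G_u|$ is invertible on the residue field, this is immediate from the observation that $G_u$ acts faithfully via a character on the one-dimensional relative tangent space and the invariant ring becomes a power series ring in a single equivariant parameter. In the wild case, one has to use more carefully that relative dimension $1$ constrains the possible actions on the completion $\widehat\cO_{U,u}=\widehat\cO_{S,s}[[t]]$ and argue that the invariants remain a regular complete local ring of the same Krull dimension. Once this local regularity is in hand, everything else in the proof is formal.
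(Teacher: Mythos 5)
Your outline is the same as the paper's: Keel--Mori (part (f)) for properness, then the local structure theorem for separated Deligne--Mumford stacks to reduce the smoothness of $X$ to the smoothness of $U/G\to S$, where $U$ is a smooth affine $S$-curve with a finite $G$-action. The difference is in how the last step is discharged. The paper simply cites a theorem of Katz--Mazur (``Theorem on p.~508'' of \cite{KM85}) for the statement that a finite quotient of a smooth affine relative curve over a regular Noetherian base is again smooth; you attempt to prove this from scratch by separating into ``smooth fibers'' and ``flatness.''

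In the tame case your argument is fine, but as you note the wild case is the real content, and since the lemma carries no tameness hypothesis the gap is genuine. There are actually two. First, the identification $(U/G)_{\ol s}=U_{\ol s}/G$ does \emph{not} hold automatically: formation of $G$-invariants does not commute with $-\otimes_{\cO_S}\kappa(\ol s)$ without a Reynolds operator (tame) or flatness of $U/G\to S$ (which is what you are trying to prove, so using it here is circular). A minimal example: $S=\Spec\bZ_2$, $U=\Spec\bZ_2[t]$, $G=\bZ/2$ with $\sigma(t)=-t$; then $\cO_{U/G}=\bZ_2[t^2]$, so $(U/G)_{\ol s}=\Spec\bF_2[t^2]$, while $\sigma$ acts trivially mod~$2$ so $U_{\ol s}/G=\Spec\bF_2[t]$ --- these disagree. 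Second, ``normal local of dimension $\dim S+1$'' does not force regularity once $\dim S\ge 1$; your appeal to ``trivial Krull-dimension gap'' and integral closedness is not a proof, and you acknowledge this is the principal obstacle.

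Both gaps can be closed at once by a norm construction that makes the fibers-and-flatness decomposition unnecessary. Pass to completions: set $A:=\hat{\cO}_{S,s}^{sh}$, $R:=\hat{\cO}_{U,u}^{sh}\cong A[[t]]$, let $G:=G_u$ act $A$-linearly (so $G$-invariants commute with completion, the map $R^{G}\to R$ being finite), and put $N:=\prod_{g\in G}g(t)$. Each $g(t)$ reduces to a uniformizer in the fiber $R/\fm_A R=k[[t]]$, so $N$ has $t$-adic order $|G|$ there, and Weierstrass division gives that $R$ is free of rank $|G|$ over $A[[N]]\cong A[[X]]$. Now $A[[N]]\subset R^G\subset R$, $A[[N]]$ is regular hence normal, $R^G$ is integral over $A[[N]]$, and $[\Frac(R):\Frac(A[[N]])]=|G|=[\Frac(R):\Frac(R^G)]$; therefore $R^G=A[[N]]$, a formal power series ring over $A$. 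This gives formal smoothness of $U/G\to S$ at $x$ directly, with no tameness hypothesis, and yields the regularity, the flatness, and the smooth fibers all at once. Alternatively one can, as the paper does, simply cite \cite{KM85}.
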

\begin{proof} By Theorem \ref{thm_keel_mori}(f), $X$ is proper over $S$, so it remains to establish smoothness. By the local structure of Deligne-Mumford stacks \cite[Theorem 11.3.1]{Ols16}, $\cX$ admits an \'{e}tale covering by schemes $\{U_i\rightarrow\cX\}$ such that $\cX\times_X U_i\cong [V_i/G_i]$ for some finite $U_i$-scheme $V_i$ equipped with an action of a finite group $G_i$. From the proof we may even take $V_i$ to be affine, and hence $U_i = V_i/G_i$ is also affine. Since $V_i\rightarrow\cX$ is \'{e}tale, each $V_i$ is a smooth affine curve over $S$. By \cite[Theorem on p508]{KM85}, the quotients $U_i = V_i/G_i$ are also smooth affine curves over $S$, so $X$ is smooth over $S$ \cite[036U]{stacks}.
\end{proof}


\begin{thm}[\cite{AV02, ACV03}]\label{thm_admG}\quad
\begin{itemize}
\item[(a)] The category $\cAdm(G)$ is a smooth proper Deligne-Mumford stack of pure dimension 1 (over $\bS$). In particular, it has finite diagonal.
\item[(b)] The natural map $\phi : \cAdm(G)\rightarrow\ol{\cM(1)}$ sending an admissible cover $C\rightarrow E\rightarrow S$ to $E\rightarrow S$ is flat, proper, and quasi-finite\footnote{Quasi-finiteness for morphisms of algebraic stacks is defined in \cite[0G2L]{stacks} (also see \cite[Definition 1.8]{Vis89})}. In particular, $\cAdm^0(G)\subset\cAdm(G)$ is open and dense.
\item[(c)] $\cAdm(G)$ admits a coarse scheme $\Adm(G)$ satisfying the properties of Theorem \ref{thm_keel_mori}, and the map $\Adm(G)\rightarrow\ol{M(1)}$ induced by $\phi$ is finite. In particular, $\Adm(G)$ is a scheme.
\end{itemize}
\end{thm}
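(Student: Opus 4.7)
The plan is to derive (a) directly from the main results of Abramovich--Vistoli and Abramovich--Corti--Vistoli, to establish (b) by a mix of fiber-counting, miracle flatness, and formal properties of proper maps, and finally to deduce (c) from the Keel--Mori theorem together with a descent-of-schemeness argument along finite morphisms.

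For part (a), I would appeal to \cite[\S4.3]{ACV03}, where the ambient stack $\cAdm_{g,n}(G)$ of ACV-admissible $G$-covers is shown to be a smooth proper Deligne--Mumford stack over $\bS$. Smoothness is established by a direct deformation-theoretic analysis at each of the three \'etale-local models of Definition \ref{def_admissible}; the balancing condition (6) is precisely what guarantees that nodes contribute smooth boundary strata rather than singular ones. Properness is verified via the valuative criterion: any tamely ramified admissible cover over a punctured trait extends, after a tame base change, to an admissible cover over the whole trait, a construction going back to \cite{HM82}. Finite diagonal then follows from finiteness of automorphism groups of admissible $G$-covers, which is in turn a consequence of the rigidity of the underlying stable pointed curve together with finiteness of $G$. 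Since $\cAdm(G) = \cAdm_{1,1}^\conn(G)$ is cut out as the open and closed substack on which the covering curve has connected geometric fibers, it inherits all these properties. Pure dimension $1$ is then forced by the zero-dimensionality of the fibers of $\phi$ (from (b)) together with the relative dimension $1$ of $\ol{\cM(1)}$ over $\bS$.

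For part (b), properness of $\phi$ is formal: the structure map $\cAdm(G)\to\bS$ is proper by (a), and $\ol{\cM(1)}\to\bS$ is separated. Quasi-finiteness of $\phi$ reduces (in the sense of the footnote definition) to finiteness of geometric fibers. Over a geometric point classifying $E/\ol{s}$, an admissible $G$-cover of $E$ is determined up to finite ambiguity by its restriction to the generic locus $E_\gen$, which is a tame $G$-torsor classified by a conjugacy class of surjection from a finitely generated group (the topological fundamental group of $E_\gen$ in the smooth case, or an analogous finitely generated fundamental groupoid of the normalization in the nodal case) to the finite group $G$; only finitely many such surjections exist. Flatness then follows from miracle flatness applied \'etale-locally on source and target: $\cAdm(G)$ is smooth, hence Cohen--Macaulay; $\ol{\cM(1)}$ is smooth, hence regular; both have pure relative dimension $1$ over $\bS$; and the fibers of $\phi$ have constant dimension zero, so \cite[00R4]{stacks} gives flatness.

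For part (c), the existence of a coarse space $\Adm(G)$ satisfying the properties of Theorem \ref{thm_keel_mori} is immediate from part (a) and the Keel--Mori theorem. The induced map $\Adm(G)\to\ol{M(1)}$ on coarse spaces is proper and quasi-finite (by functoriality of coarse-space formation applied to part (b) and to Theorem \ref{thm_keel_mori}(a)), and a proper quasi-finite morphism of algebraic spaces is automatically finite. Since $\ol{M(1)}\cong\bP^1_\bS$ is a scheme and a finite morphism is in particular affine, and an affine morphism of algebraic spaces with scheme target has scheme source, it follows that $\Adm(G)$ is a scheme. The main technical obstacle in the whole argument lies inside (a) -- specifically the deformation-theoretic verification of smoothness at the boundary, where the balancing condition is crucial -- but this is exactly what is carried out in \cite{ACV03}, so the proof here consists mainly of citing that work and assembling the remaining formal consequences.
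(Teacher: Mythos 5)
Your overall strategy — reducing most of (a) and (b) to citations of \cite{AV02,ACV03} and then assembling the formal consequences — matches the paper's; and your route to (c) is a clean alternative to the paper's citation of \cite[Theorem 1.4.1]{AV02}: you deduce finiteness of $\Adm(G)\to\ol{M(1)}$ from (a) and (b) plus Zariski's main theorem (properness by cancellation via Theorem~\ref{thm_keel_mori}(f), quasi-finiteness by chasing through the universal homeomorphisms $c:\cAdm(G)\to\Adm(G)$ and $\ol{\cM(1)}\to\ol{M(1)}$, then proper + quasi-finite $\Rightarrow$ finite). That is a perfectly good and arguably more self-contained derivation, and your descent-of-schemeness conclusion along a finite (hence affine) morphism is correct.

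However, there is a genuine gap: you never prove the ``in particular'' clause of part (b), namely that $\cAdm^0(G)\subset\cAdm(G)$ is open and \emph{dense}. Openness is immediate since $\cAdm^0(G)=\phi^{-1}(\cM(1))$ and $\cM(1)\subset\ol{\cM(1)}$ is open, but density needs an argument, and nothing in your proof addresses it. The paper uses exactly the flatness you established in (b): flat and locally of finite presentation implies $\phi$ is an open map, so if $\cAdm^0(G)$ were contained in a proper closed substack $\cX$, the open complement $\cU = \cAdm(G)\setminus\cX$ would map onto an open substack of $\ol{\cM(1)}$ meeting $\cM(1)$ (since $\cM(1)$ is dense in $\ol{\cM(1)}$), contradicting $\cAdm^0(G)=\phi^{-1}(\cM(1))$. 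You need to supply something equivalent — either this topological argument, or the deformation-theoretic smoothing argument from Proposition~\ref{prop_deformations} (the miniversal families $\Lambda\ps{t}$ at boundary points show every nodal cover is a specialization of a smooth one). As written, this step is simply missing.

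A smaller point: your claim that the diagonal of $\cAdm(G)$ is finite because automorphism groups of admissible $G$-covers are finite is not quite the full argument. Finiteness of automorphism groups gives quasi-finiteness of the inertia, but to get a \emph{finite} diagonal one should combine properness of the diagonal (from separatedness, a consequence of properness of $\cAdm(G)$) with its being unramified (Deligne--Mumford) and locally quasi-finite, and then apply Zariski's main theorem — which is what the paper does. Your version leaves out the properness/affineness part.
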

\begin{proof} The statements are preserved by base change, so we may assume that $\bS$ is Noetherian. In \cite{ACV03}, these facts are proven for the stack $\cB_{1,1}^\bal(G)$ of \emph{twisted $G$-covers} of 1-generalized elliptic curves, which is equivalent to $\cAdm_{1,1}(G)$ \cite[Theorem 4.3.2]{ACV03}, and so the theorem follows from the fact that $\cAdm(G)$ is an open and closed substack of $\cAdm_{1,1}(G)$.


Specifically, that $\cAdm(G)$ is a proper Deligne-Mumford stack follows from \cite[Theorem 2.1.7]{ACV03}(1) (also see \cite[Theorem 1.4.1]{AV02}), which also implies the finiteness of the diagonal\footnote{A proper morphism is separated, so $\cAdm(G)$ has proper diagonal. Since $\cAdm(G)$ is Deligne-Mumford, its diagonal is also unramified, hence locally quasi-finite, hence finite by Zariski's main theorem \cite[0A4X]{stacks}.}. The smoothness and 1-dimensionality of $\cAdm(G)$ follows from \cite[Theorem 3.0.2]{ACV03}. This proves (a).


The properties of the map $\phi$ follows from \cite[Corollary 3.0.5]{ACV03}. To see that $\cAdm^0(G)\subset\cAdm(G)$ is open dense, note that a flat locally of finite presentation morphism of algebraic stacks induces an open map of topological spaces \cite[06R7]{stacks}, so the map $\phi$ is open. If $\cX\subset\cAdm(G)$ is a closed substack containing $\cAdm^0(G)$, then its complement $\cU\subset\cAdm(G)$ is open, which maps onto an open substack of $\ol{\cM(1)}$ containing the cusp, which must intersect $\cM(1)$ nontrivially since $\cM(1)\subset\ol{\cM(1)}$ is open dense. But this contradicts the fact that $\cAdm^0(G) = \phi^{-1}(\cM(1))$\footnote{This denseness also follows from the deformation theory (see Proposition \ref{prop_deformations}).} This proves (b).


For (c), note that finite diagonal implies finite inertia, so the existence and properties of the coarse space follows from Theorem \ref{thm_keel_mori}. The rest of part (c) follows from \cite[Theorem 1.4.1]{AV02} (also see \cite[\S2.2]{ACV03}), where in their notation they show that the map
$$B_{1,1}(G) = \textbf{K}_{1,1}(BG,0)\lra\textbf{K}_{1,1}(\bS,0) = \ol{M(1)}$$
is finite, where $B_{1,1}(G)$ is the coarse space of $\cB_{1,1}(G)$. Since $\cB_{1,1}^\bal(G)$ is an open and closed substack of $\cB_{1,1}(G)$, $\Adm(G)$ is also open and closed inside $B_{1,1}(G)$, so $\Adm(G)\rightarrow\ol{M(1)}$ is finite as desired. Since $\ol{M(1)}$ is a scheme, the finiteness also implies that $\Adm(G)$ is a scheme \cite[03ZQ]{stacks} (one could also use \cite[03XX]{stacks}).
\end{proof}

\subsection{The reduced ramification divisor of an admissible $G$-cover}\label{ss_ramification_divisor}
We begin with a lemma.

\begin{lemma}\label{lemma_etale_unique} Let $C\rightarrow S$ be a smooth morphism. Let $R,R'\subset C$ be two closed subschemes, each finite \'{e}tale over $S$. If $R,R'$ have the same support, then $R = R'$.	
\end{lemma}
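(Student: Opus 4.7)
The plan is to argue étale-locally on $S$, reducing to the case where $R$ and $R'$ become disjoint unions of sections of $C\to S$. The statement is étale-local on $S$ because both $R\to S$ and $R'\to S$ are finite étale, and any claimed equality of closed subschemes of $C$ descends by faithfully flat descent. After pulling back along a suitable étale cover $S'\to S$ that trivializes both $R$ and $R'$, each becomes a disjoint union of copies of $S'$ embedded in $C_{S'}$ as sections $\sigma_i, \sigma_j' : S' \to C_{S'}$ of the structure map. The hypothesis that $R$ and $R'$ share the same support becomes, over $S'$, a bijection matching each $\sigma_i$ with some $\sigma_j'$ whose image in $|C_{S'}|$ coincides with that of $\sigma_i$.

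The crux is then the claim that two sections $\sigma, \sigma' : T \to Y$ of a smooth morphism $Y \to T$ with $\sigma(T) = \sigma'(T)$ as subsets of $|Y|$ must coincide. I would prove this Zariski-locally on $Y$: smoothness of $Y/T$ lets us arrange, after shrinking around a point of the common image, an étale map $Y \to \mathbb{A}^d_T$ with coordinates $t_1, \ldots, t_d$, in which the two sections correspond to tuples $(a_1, \ldots, a_d), (a_1', \ldots, a_d') \in \mathcal{O}(T)^d$. Each section then cuts out a closed subscheme locally by the regular sequence $(t_i - a_i)$, and smoothness of $C/S$ guarantees that this regular sequence, which presents a finite étale closed subscheme inside a smooth ambient space, is pinned down by the underlying closed subset. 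Hence the tuples agree and $\sigma = \sigma'$.

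Assembling the matched sections pairwise and applying descent gives $R_{S'} = R'_{S'}$ as closed subschemes of $C_{S'}$, hence $R = R'$ on $S$. The main obstacle, and the step that genuinely uses smoothness of $C/S$, is the local reconstruction of the ideal from the support: the smoothness hypothesis is what prevents a finite étale closed subscheme from being embedded into $C$ in two distinct ways with the same underlying set, so that no nilpotent discrepancy in the defining regular sequence can hide while leaving the support unchanged.
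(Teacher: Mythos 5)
The key step of your argument---that two sections $\sigma,\sigma' : T\to Y$ of a smooth morphism $Y\to T$ with $\sigma(T)=\sigma'(T)$ as subsets of $|Y|$ must coincide---is false, and this is exactly the point at which the nonreduced case gets decided. Take $T=\Spec k[\epsilon]/(\epsilon^2)$ and $Y=\bA^1_T=\Spec k[\epsilon][t]$; the sections $\sigma\colon t\mapsto 0$ and $\sigma'\colon t\mapsto\epsilon$ cut out $V(t)$ and $V(t-\epsilon)$, each isomorphic to $T$ as a $T$-scheme (hence finite \'etale over $T$), and both supported on the single point $(t,\epsilon)$ since $\epsilon$ is in the nilradical. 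Yet $(t)\ne(t-\epsilon)$ in $k[\epsilon][t]$: otherwise $\epsilon\in(t)$, which fails after setting $t=0$. So the underlying closed subset does \emph{not} pin down the defining regular sequence, and your phrase ``smoothness of $C/S$ guarantees that this regular sequence\,\dots\,is pinned down by the underlying closed subset'' is the unjustified step. Smoothness controls the local shape of the ambient scheme, but it does nothing to prevent nilpotent ``shifts'' of a section over a nonreduced base.

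In fact the same example, with $C=Y$, $S=T$, $R=V(t)$, $R'=V(t-\epsilon)$, satisfies every hypothesis of the lemma, so the statement is false as written and no argument along your lines can close the gap. The reduced case is fine, and your proof handles it correctly (as does the one-line observation that \'etale over reduced is reduced, so $R,R'$ are both the reduced induced structure on the common support). The paper's own proof tries to bootstrap from there by passing to $S_\red$ and citing topological invariance of the \'etale site; that yields $R\cong R'$ as abstract \'etale $S$-schemes but not that the two embeddings into $C$ agree, and the example shows they need not. The lemma is only invoked once, as a side remark asserting that the reduced ramification divisor is ``the unique'' finite \'etale closed subscheme with prescribed support, and that remark is not load-bearing---but the lemma as stated requires an extra hypothesis (e.g.\ $S$ reduced, or an a priori inclusion $R\subset R'$) to be true.
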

\begin{proof} If $S$ is reduced, then $R$ and $R'$ are both reduced closed subschemes with the same support, so they are equal. In the general case, let $S_\red$ denote the reduction of $S$, so $S_\red$ is reduced and $S_\red\rightarrow S$ is a universal homeomorphism. Then $R\times_S S_\red = R'\times_S S_\red$, but by topological invariance of the \'{e}tale site \cite[04DZ]{stacks}, this means $R = R'$.	
\end{proof}

Let $\pi : C\rightarrow D$ be an admissible $G$-cover of a stable $n$-pointed curve $D$ over $S$. Here we will define its \emph{reduced ramification divisor} $\cR_\pi$ of $\pi$, which is a closed subscheme of $C$, supported on the smooth points of $C$ with nontrivial inertia groups\footnote{The inertia group of $x\in C$ is the subgroup of $\Stab_G(x)$ which acts trivially on the residue field $\kappa(x)$ \cite[Expos\'{e} V, \S2]{SGA1}}, and is finite \'{e}tale over $S$. By Lemma \ref{lemma_etale_unique}, it is uniquely determined by this property. Moreover it inherits an action of $G$, and its connected components can be controlled by group-theoretic properties of $G$. For the definition we follow \cite[discussion right before \S4.1.3]{BR11}.


For a nontrivial cyclic subgroup $H\le G$, let $C^H\subset C$ denote the closed subscheme of fixed points. Namely, for $h\in H$, let $C^h\hookrightarrow C$ denote the equalizer of the maps $\id,h : C\rightrightarrows C$, which is a closed subscheme of $C$ since $C$ is separated\footnote{This is also the fiber product $C^h = C\times_{(\id,h),C\times_S C,\Delta} C$.}. Let $C^H := \bigcap_{h\in H}C^h$ be the scheme theoretic intersection \cite[0C4H]{stacks}. Note that if $h\in H$ is a generator, then we have $C^H = C^h$. If $U := \Spec A\subset C$ is a $G$-invariant open affine, then $C^H\cap U = \Spec A_H$, where $A_H$ is the ring of coinvariants $A/\langle \{ha-a\}_{a\in A, h\in H}\rangle$.

\begin{prop} Let $\pi : C\rightarrow D$ be an admissible $G$-cover of a stable $n$-pointed curve $D$ over $S$. Let $C_\sm\subset C$ denote the smooth locus of $C/S$. For a nontrivial cyclic subgroup $H\le G$, let $C_\sm^H := C^H\cap C_\sm = (C_\sm)^H$. Then either $C_\sm^H$ is empty, or $C_\sm^H\hookrightarrow C$ is an effective Cartier divisor finite \'{e}tale over $S$. Moreover $C_\sm^H$ commutes with arbitrary base change.
\end{prop}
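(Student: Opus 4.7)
The plan is to reduce to an \'etale-local computation around each smooth fixed point using condition \ref{part_admissible_local_marking} of Definition \ref{def_admissible}, then globalize using that the ramification locus lies in a finite $S$-scheme.

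First I would show $C_\sm^H\subseteq \pi^{-1}(\Sigma)$, where $\Sigma\subset D$ is the union of the marking divisors. If $\ol p$ is a geometric point of $C_\sm^H$ then $\Stab_G(\ol p)\supseteq H$ is nontrivial, so by the torsor condition (Definition \ref{def_admissible}\ref{part_admissible_torsor}) the image $\pi(\ol p)$ cannot lie in $D_\gen$, hence is either a marking or a node. By Remark \ref{remark_covers}\ref{part_nodes}, smooth points of $C$ map to smooth points of $D$, ruling out nodes, so $\pi(\ol p)\in\Sigma$. Moreover $\pi^{-1}(\Sigma)\subseteq C_\sm$ by the \'etale-local model in Definition \ref{def_admissible}\ref{part_admissible_local_marking}. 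Since $\Sigma$ is finite \'etale over $S$ and $\pi$ is finite, $\pi^{-1}(\Sigma)$ is finite over $S$, and $C_\sm^H$ is a closed subscheme of it.

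Second I would analyze $C^H$ at $\ol p$ \'etale-locally. Condition \ref{part_admissible_local_marking} provides an isomorphism of strict localizations $C_{(\ol p)}\cong \Spec\cO_{S,\ol s}[\xi]_{(\xi)}$ with $\pi$ corresponding to $x\mapsto\xi^e$, where $e=|\Stab_G(\ol p)|$. The stabilizer $\Stab_G(\ol p)$ is then contained in the deck group of $x=\xi^e$, so it is cyclic of order $e$ acting by $\xi\mapsto\zeta_e\xi$ for a primitive $e$-th root of unity $\zeta_e\in\cO_{S,\ol s}$ (which exists because $e\mid|G|$ is invertible on $\bS$ and $\cO_{S,\ol s}$ is strictly Henselian). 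A generator $h$ of $H\le\Stab_G(\ol p)$ therefore acts via $\xi\mapsto\zeta\xi$ for a primitive $|H|$-th root of unity $\zeta$; since $|H|>1$, the element $1-\zeta$ is a unit. Thus $C^H=C^h$ is cut out locally by the ideal $((1-\zeta)\xi)=(\xi)$, so $C_\sm^H$ corresponds to the closed subscheme $V(\xi)\cong\Spec\cO_{S,\ol s}$, an effective Cartier divisor \'etale over $S_{(\ol s)}$.

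Third I would globalize. Flatness and unramifiedness can be checked at stalks, so the second step shows $C_\sm^H\to S$ is \'etale; the Cartier-divisor property is likewise local, so $C_\sm^H\hookrightarrow C$ is an effective Cartier divisor. Combined with the first step, $C_\sm^H$ is closed in the finite $S$-scheme $\pi^{-1}(\Sigma)$, hence finite over $S$, and therefore finite \'etale. For base-change compatibility: the equalizer $C^h$ (a fiber product), the scheme-theoretic intersection $C^H=C^h$ with $h$ a generator of the cyclic group $H$, and the smooth locus $C_\sm$ (an open subscheme) all commute with arbitrary base change, so $C_\sm^H$ does as well.

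The only real subtlety is ensuring the strict-local picture in the second step controls $C_\sm^H$ globally; since \'etaleness and Cartier-ness are local on the source, this passes through immediately, and Lemma \ref{lemma_etale_unique} also allows us to pin down $C_\sm^H$ uniquely by its support on the smooth locus.
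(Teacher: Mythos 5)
Your proof is correct and follows essentially the same route as the paper: base change via the equalizer description and the smooth locus, then the \'etale-local picture at a marking showing $C_\sm^H$ is cut out by $(\xi)$ with \'etaleness coming from $1-\zeta$ being a unit. The only (harmless) difference is the finiteness step, where you use containment in $\pi^{-1}(\Sigma)$, finite over $S$, while the paper deduces it from properness of $C\rightarrow S$ together with the closed immersion $C_\sm^H\hookrightarrow C$.
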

\begin{proof} Let $h\in H$ be a generator. For any map $T\rightarrow S$, the universal property of equalizers implies that $(C^H)_T$ is the equalizer of $\id,h : C_T\rightrightarrows C_T$. Since taking the smooth locus commutes with arbitrary base change \cite[0C3H]{stacks}, so does $C_\sm^H$.


Now suppose $C_\sm^H$ is nonempty. For any geometric point $\ol{z}\in C^H_\sm$, its image in $C$ must land in a marking, so \'{e}tale locally in $C$, $C^H_\sm\hookrightarrow C$ looks like $\Spec A[t]/\langle(\zeta-1)t\rangle\rightarrow\Spec A[t]$, where $A$ is the strict local ring at the image of $\ol{z}$ in $S$, $\zeta$ is a primitive $|H|$-th root of unity, and $h$ acts on $A[t]$ linearly in $A$ sending $t\mapsto \zeta t$. This shows that $C^H_\sm\hookrightarrow C$ is a closed immersion, and since $|G|$ is invertible on the base, $\zeta-1\in A^\times$, so $C^H_\sm$ is also \'{e}tale over $S$. Since $C\rightarrow S$ is proper this implies that $C^H_\sm$ is moreover finite \'{e}tale, so it is an effective Cartier divisor as desired.
\end{proof}

Let $H\le G$ be a nontrivial cyclic subgroup, and let $K\supset H$ be a cyclic subgroup containing $H$. Then $C_\sm^K\subset C_\sm^H$ is a closed immersion of finite \'{e}tale $S$-schemes, so $C_\sm^K$ is an open and closed subscheme of $C_\sm^H$. Let
$$\Delta(H) := C_\sm^H - \bigcup_{K\supsetneq H} C_\sm^K$$
where the union runs over all cyclic subgroups of $G$ properly containing $H$. Thus, the support of $\Delta(H)$ consists precisely of the points $x\in C_\sm$ such that
\begin{itemize}
	\item[(a)] $Hx = x$,
	\item[(b)] $H$ acts trivially on the residue field $\kappa(x)$, and
	\item[(c)] no strictly larger subgroup $K\supsetneq H$ satisfies (a) and (b).
\end{itemize}
It follows from this description that $\Delta(H)\cap\Delta(K) = \emptyset$ if $H,K\le G$ are distinct nontrivial cyclic subgroups.
\begin{defn} Let $\pi : C\rightarrow D$ be an admissible $G$-cover of a stable $n$-pointed curve $D$ over $S$. The \emph{reduced ramification divisor} is the divisor
$$\cR_\pi := \bigsqcup_{H\le G}\Delta(H)$$
where $H$ runs over all nontrivial cyclic subgroups of $G$.
\end{defn}

\begin{prop}\label{prop_RRD} Let $\pi : C\rightarrow D$ be an admissible $G$-cover of a stable $n$-pointed curve $(D,\sigma_1,\ldots,\sigma_n)$ over $S$. Suppose $S$ is connected and let $e_i$ denote the ramification index of any point $x\in\pi^{-1}(\sigma_i)$. Then the reduced ramification divisor $\cR_\pi\subset C$ is an effective Cartier divisor finite \'{e}tale over $S$, supported on the set of non-\'{e}tale points of $C_\sm\rightarrow D_\sm$. If $e_i \ge 2$, then $\sigma_i^*\cR_\pi$ is finite \'{e}tale over $S$ of degree $|G|/e_i$.
\end{prop}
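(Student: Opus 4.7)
The plan is to deduce everything from the preceding proposition (each $C_\sm^H$ is either empty or finite \'{e}tale over $S$) combined with the \'{e}tale-local description of admissible covers at markings given in part (4) of Definition \ref{def_admissible}. First, for cyclic subgroups $H\subsetneq K$ of $G$, the inclusion $C_\sm^K \hookrightarrow C_\sm^H$ is a closed immersion between finite \'{e}tale $S$-schemes, hence also an open immersion. Therefore $\Delta(H) = C_\sm^H \setminus \bigcup_{K \supsetneq H} C_\sm^K$ is clopen in $C_\sm^H$, and in particular finite \'{e}tale over $S$. Since $G$ is finite, the disjoint union $\cR_\pi = \bigsqcup_H \Delta(H)$ is finite \'{e}tale over $S$.

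Next, I show $\cR_\pi$ is a closed subscheme of $C$, not merely locally closed. Each $\Delta(H) \to S$ is finite hence proper, and $C \to S$ is proper hence separated, so the locally closed immersion $\Delta(H) \hookrightarrow C$ is proper, therefore a closed immersion. To exhibit $\cR_\pi$ as an effective Cartier divisor, I work \'{e}tale-locally at its support: in the model $\Spec \cO_{S,\ol{s}}[\xi] \to \Spec \cO_{S,\ol{s}}[x]$ with $x = \xi^e$, the subscheme $\cR_\pi$ is cut out by $\xi = 0$, a single non-zerodivisor. The same local model identifies the support of $\cR_\pi$ with the non-\'{e}tale locus of $C_\sm \to D_\sm$, because étaleness at a geometric point in the smooth locus fails precisely when $e>1$, in which case the stabilizer is a nontrivial cyclic inertia group.

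Finally, for the last assertion: since the markings $\sigma_i$ are pairwise disjoint, $\cR_\pi$ decomposes as $\bigsqcup_i R_i$ where $R_i := \cR_\pi \cap \pi^{-1}(\sigma_i)$ is clopen in $\cR_\pi$ (and empty when $e_i = 1$). For $e_i \ge 2$, the map $R_i \to D$ factors through the section $\sigma_i : S \hookrightarrow D$, so $\sigma_i^* \cR_\pi = R_i \times_D S \cong R_i$ is finite \'{e}tale over $S$. The degree is computed on geometric fibers: $\pi$ has total degree $|G|$ and each geometric point of $\pi^{-1}(\sigma_i)$ contributes multiplicity $e_i$, yielding exactly $|G|/e_i$ reduced preimages. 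The main subtlety throughout is the passage from $\Delta(H)$ being a locally closed subscheme to being a closed subscheme of $C$, which is essential for $\cR_\pi$ to globally define an effective Cartier divisor; this is handled by the properness argument above.
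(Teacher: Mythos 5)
Your proof is correct and follows essentially the same route as the paper's (necessarily terse) argument: $\cR_\pi$ is finite étale over $S$ as a disjoint union of the clopen pieces $\Delta(H)$ of the already-established Cartier divisors $C_\sm^H$, and the statements about the support and the degree of $\sigma_i^*\cR_\pi$ are read off from the étale-local model at a marking. Your explicit properness argument showing each locally closed $\Delta(H)$ is in fact closed in $C$ makes precise a point the paper leaves implicit, but it is the same underlying reasoning.
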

\begin{proof} The reduced ramification divisor $\cR_\pi$ is a disjoint union of finite \'{e}tale $S$-schemes, so it is also finite \'{e}tale over $S$, hence an effective Cartier divisor. The description of the degree and support follows from the description of $\Delta(H)$ and the local picture above a marking.
\end{proof}

\begin{remark}\label{remark_RRD} Beware that the ``reduced ramification divisor'' is not generally reduced! In fact it follows from the \'{e}tale local picture that it is reduced if and only if $S$ is reduced. We call it the reduced ramification divisor to avoid confusion with the ramification divisor that appears in the proof of the Riemann-Hurwitz formula, which can be defined as
$$\fR_\pi := \Div(\pi^*\omega_{D/S}\rightarrow\omega_{C/S})$$
where $\omega$ denotes the relative dualizing sheaf, and $\Div$ is taken in the sense of Knudsen-Mumford \cite[\S2]{KM76}. The restriction of $\fR_\pi$ to the preimage of a marking is a multiple of $\cR_\pi$. In the notation of Proposition \ref{prop_RRD}, we have $\sigma_i^*\fR_\pi = (e_i-1)\sigma_i^*\cR_\pi$ \cite[\S4.1.2]{BR11}.
\end{remark}


Let $\pi : C\rightarrow D$ be an admissible $G$-cover of a stable $n$-pointed curve $(D,\sigma_1,\ldots,\sigma_n)$ over $S$. If $H\le G$ is a cyclic subgroup, then $\Delta(H)^g = \Delta(g^{-1}Hg)$ for any $g\in G$, so the action of $G$ on $C$ restricts to an action on $\cR_\pi$ which is transitive on fibers over $D$. Since $\cR_\pi/S$ is finite \'{e}tale, we may study the structure of $\cR_\pi$ via Galois theory: 


\begin{prop}\label{prop_components_of_ramification_divisor} Let $\pi : C\rightarrow D$ be an admissible $G$-cover of a prestable pointed curve $(D,\sigma)$ over (a $\bZ[1/|G|]$-scheme) $S$ with ramification indices $e$ above $\sigma$. Let $\ol{s}$ be a geometric point of $S$, and let $\ol{x}$ be a geometric point of $C$ lying over $\sigma(\ol{s})$. Then
\begin{itemize}
\item[(a)] the connected components of $\cR_\pi$ are all isomorphic, and the connected component $R\subset\cR_\pi$ containing $\ol{x}$ is finite \'{e}tale Galois over $S$ with Galois group a subgroup of $N_G(G_{\ol{x}})/G_{\ol{x}}$, where $N_G(G_{\ol{x}})$ denotes the normalizer of $G_{\ol{x}}$ inside $G$.
\item[(b)] If moreover $S$ is regular and integral and $\Gamma(S,\cO_S)$ contains a primitive $e$th root of unity, where $e$ denotes the common ramification indices of $\pi$ above $\sigma$, then $\Gal(R/S)$ is isomorphic to a subgroup of $C_G(G_{\ol{x}})/G_{\ol{x}}$, where $C_G$ denotes the centralizer.
\end{itemize}
\end{prop}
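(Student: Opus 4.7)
\emph{Plan for (a).} By Proposition \ref{prop_RRD}, $\cR_\pi$ is finite \'{e}tale over $S$, so each of its connected components is too. The group $G$ acts on $\cR_\pi$ via its action on $C$, and since the support of $\cR_\pi$ lies over the single section $\sigma$ and $G$ acts transitively on the geometric fiber $\pi^{-1}(\sigma(\ol s))$, $G$ permutes the connected components of $\cR_\pi$ transitively. In particular all components are isomorphic. Let $R\subset\cR_\pi$ be the component containing $\ol x$, set $H:=G_{\ol x}$, and let $K:=\Stab_G(R)\le G$, so that $\cR_\pi=\bigsqcup_{g\in G/K}gR$ and $K$ acts on $R$. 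By construction $R$ lies inside $\Delta(H)$, whose geometric points all have stabilizer \emph{exactly} $H$; applied to a translate of $\ol x$ by $k\in K$ this forces $kHk^{-1}=H$, so $H\trianglelefteq K\le N_G(H)$. Since $H$ fixes every point of $R$, it acts trivially on $R$, and the induced $K/H$-action on $R(\ol s)$ is transitive (being a single orbit); the count $|R(\ol s)|=[K:H]=|K/H|$ shows it is also free. Hence $R/S$ is Galois with group $K/H\le N_G(H)/H$, proving (a).

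\emph{Plan for (b).} To refine the inclusion to $K/H\le C_G(H)/H$ under the stronger hypotheses, I would exploit the faithful character by which $H$ acts on cotangent spaces. The admissible cover local model at $\ol x$ (Definition \ref{def_admissible}) shows that a generator $h_0\in H$ acts on a local uniformizer $\xi$ of $C$ at $\ol x$ via $\xi\mapsto\zeta\xi$ for some primitive $e$th root of unity $\zeta$, giving a faithful character $\chi:H\to\mu_e$. The same local model holds at every point of $R$, so each $h\in H$ acts on the line bundle $\cL:=\Omega_{C/S}|_R$ (whose base $R$ it fixes pointwise) as multiplication by a global section $\chi_h\in\Gamma(R,\cO_R^\times)$ satisfying $\chi_h^e=1$; in particular $\chi_h\in\Gamma(R,\mu_e)$.

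Since $S$ is regular integral and contains a primitive $e$th root of unity, $R$ is integral (as a connected finite \'etale cover of a normal integral base) and $\mu_e$ is a constant \'etale sheaf on $R$; consequently each $\chi_h$ is a \emph{constant} root of unity, equal to $\chi(h)$. For $k\in K$, write $k^{-1}hk=h^a$ with $a\in(\bZ/e\bZ)^\times$; transporting the action of $h$ from $\ol x':=k\ol x$ back to $\ol x$ along the cotangent pullback $k^*:T^*_{\ol x'}\to T^*_{\ol x}$ using the identity $hk=kh^a$ yields $\chi_h(\ol x')=\chi(h^a)=\chi(h)^a$. Combined with constancy this gives $\chi(h)=\chi(h)^a$, and faithfulness of $\chi$ forces $a\equiv 1\pmod{e}$, so $k\in C_G(H)$. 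The main bookkeeping subtlety is the cotangent identity $\chi_h(\ol x')=\chi(h)^a$; once that is in hand, the conclusion is immediate from the constancy of sections of $\mu_e$ on a connected scheme.
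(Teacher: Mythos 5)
Your proof is correct. In part (a) it is essentially the paper's argument in different clothing: your $K=\Stab_G(R)$ is exactly the decomposition group $\bD=\Stab_G(\Pi\cdot z)$ that the paper extracts from the commuting $\pi_1(S,\ol{s})$- and $G$-actions on the fiber, and the normality of $H$ in $K$ and the simply transitive $K/H$-action on the geometric fiber are the same computations. One small step worth spelling out: the definition of $\Delta(H)$ gives that $H$ fixes the points of $R$ and acts trivially on their residue fields, hence $h|_R$ and $\id_R$ agree at the geometric point $\ol{x}$; to conclude that $H$ acts trivially on $R$ \emph{as a scheme} you should invoke rigidity of finite \'{e}tale covers of the connected base $S$ (or, alternatively, just note that the image of $K$ in $\Aut_S(R)$ acts transitively on a fiber of size $[K:H]$ while $|\Aut_S(R)|\le\deg(R/S)$, which yields the Galois statement directly). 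In part (b) your implementation genuinely differs from the paper's: the paper localizes at the generic point of $S$, passes to completed local rings of the generic fibers, and uses the classical picture of a tame extension of complete discretely valued fields (Galois group $=\bD$, inertia acting faithfully on $\fm_B/\fm_B^2$), whereas you globalize the tame character as a section $\chi_h\in\Gamma(R,\mu_e)$ acting on the line bundle $\Omega_{C/S}|_R$, use integrality of $R$ (connected, finite \'{e}tale over the normal integral $S$) together with the $e$-th roots of unity already present in $\Gamma(S,\cO_S)$ to force $\chi_h$ to be constant and pulled back from $S$, and then compare values at $\ol{x}$ and $k\ol{x}$. The underlying mechanism is the same in both proofs --- faithfulness of the tame cotangent character, invariance of roots of unity coming from the base, and the conjugation identity $k^{-1}hk=h^a$ --- but your route avoids generic points and completions, at the cost only of the routine observations that an $\cO_R$-linear automorphism of an invertible sheaf is a global unit and that the $e$ roots of $x^e-1$ from $\Gamma(S,\cO_S)$ stay distinct in residue fields (true since $e$ is invertible on $S$). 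So the proposal is sound; I would only ask you to make the rigidity step in (a) explicit.
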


\begin{remark} By working universally (see Proposition \ref{prop_stacky_RRD}), the conditions that $S$ be regular and integral in part (b) of the proposition can be removed.	
\end{remark}

\begin{proof} Let $\Pi := \pi_1(S,\ol{s})$, then by Galois theory we have commuting actions of $\Pi$ and $G$ on the geometric fiber $F := (\cR_\pi)_\ol{s}$. Thus $G$ acts on the set of $\Pi$-orbits of $F$, and for any $z\in F$, the decomposition group $\bD := \Stab_G(\Pi\cdot z)$ acts transitively on the orbit $\Pi\cdot z$. It follows that the inertia group $G_z$ acts trivially on $\Pi\cdot z$, so $G_z$ is normal in $\Stab_G(\Pi\cdot z)$, and the connected component $R\subset\cR_\pi$ corresponding to $\Pi\cdot z$ is Galois over $S$ with Galois group $\bD/G_z\le N_G(G_z)/G_z$. This proves (a).


Now suppose in addition that $S$ is regular and integral and contains a primitive $e$th root of unity. Let $\eta\in S$ be the generic point. Since $R/S$ is \'{e}tale, $R$ is irreducible. Let $\epsilon\in R$ be the unique generic point lying over $\eta$, and let $\ol{\epsilon}$ be a geometric point mapping to $\epsilon$. Then we find that $\bD = G_\epsilon := \Stab_G(\epsilon)$, and taking $\ol{s} = \ol{\eta}$ in the above discussion, it remains to show that $G_{\ol{\epsilon}}$ is contained in the center $Z(\bD)$ of $\bD$. Let $A := \what{\cO_{D_\eta,\sigma(\eta)}}$ be the complete local ring of the generic fiber $D_\eta$ at the branch point $\sigma(\eta)$, and let $K := \Frac(A)$. Let $B := \what{\cO_{C_\eta,\epsilon}}$, and let $L := \Frac(B)$, then $L/K$ is Galois with Galois group $\bD = G_\epsilon$ and inertia group $G_{\ol{\epsilon}}$. The vector space $\fm_B/\fm_B^2$ is a 1-dimensional vector space over $B/\fm_B$, and since we're in the tame case, the local representation $\chi_{\ol{\epsilon}} : G_\ol{\epsilon}\rightarrow\GL(\fm_B/\fm_B^2)$ is faithful \cite[\S IV.2]{SerreLF}. If $g\in \bD$ and $\gamma\in G_\ol{\epsilon}$, then since by assumption $\Gamma(S,\cO_S)$ contains all $e$th roots of unity, for any $v\in\fm_B/\fm_B^2$, we have $g^{-1}\gamma gv = g^{-1}(\zeta\cdot gv) = \zeta v$ for some $e$th root of unity $\zeta$. Thus $g^{-1}\gamma g$ also acts by multiplication by $\zeta$, but since $\chi_\ol{\epsilon}$ is faithful, this implies that $g^{-1}\gamma g = \gamma$ for all $g\in G$, so $G_\ol{\epsilon}\le Z(\bD)$, which proves (b).
\end{proof}

\subsection{The Higman invariant}\label{ss_higman_invariant}

Let $G$ be a finite group. The stacks $\cAdm(G)$ are typically not geometrically connected. Different connected components can often be distinguished by a natural combinatorial invariant called the Higman invariant, which analytically over $\bC$ is the conjugacy class in $G$ given topologically by the monodromy of a small positively oriented loop winding once around the branch point. When $G$ is a matrix group, the trace of this class is called the \emph{trace invariant} (see \S\ref{ss_trace_invariant}). Here we discuss the Higman invariant over algebraically closed fields.


Let $k$ be an algebraically closed field, and let $(D,O)$ be a 1-pointed prestable curve over $k$. Let $G$ be a finite group of order invertible in $k$, and let $\pi : C\rightarrow D$ be an admissible $G$-cover. If $k = \bC$, then for any base point $y\in D_\gen(\bC)$ and any point $x\in\pi^{-1}(y)$, for $\gamma\in\pi_1^\tp(D_\gen(\bC),y)$, let $\gamma\cdot x$ denote the endpoint of the unique lift of $\gamma$ to $C$ which starts at $x$. The monodromy representation at $x$ is the unique homomorphism
\begin{equation}\label{eq_monodromy_representation}
\varphi_x : \pi_1^\tp(D_\gen(\bC),y)\lra G\qquad\text{satisfying}\qquad\gamma\cdot x = x\cdot \varphi_x(\gamma)\quad\text{for all $\gamma\in\pi_1^\tp(D_\gen(\bC),y)$.}
\end{equation}

Varying the choice of $x\in\pi^{-1}(y)$ amounts to post-composing $\varphi_x$ with an inner automorphism of $G$. Thus, if $\gamma_O\in\pi_1^\tp(D_\gen(\bC),y)$ is a small loop winding once counter-clockwise around $O$, then $\{\varphi_x(\gamma_O)\;|\; x\in\pi^{-1}(y)\}$ is a conjugacy class of $G$, which we call the \emph{(topological) Higman invariant} of $\pi$ (at $O\in D$).


For a general algebraically closed field $k$, for any $x\in\pi^{-1}(O)$, let $e$ denote its ramification index. Since $\pi$ is admissible, $x$ is a smooth point of $C$. Let $T_x^*$ denote its cotangent space, then the stabilizer $G_x := \Stab_G(x)$ is cyclic of order $e$ and the right action of $G$ on $C$ defines a faithful \emph{local representation}
$$\chi_x : G_x\rightarrow\GL(T_x^*) = k^\times$$
so it gives an isomorphism $\chi_x : G_x\rightiso\mu_e(k)$. If $x'\in\pi^{-1}(O)$ is another point, then $x' = xg$ for some $g\in G$, so the conjugation $i_{g^{-1}} : h\mapsto g^{-1}hg$ induces an isomorphism $i_{g^{-1}} : G_x\rightiso G_{xg}$. Computing on $G$-invariant open affines, we find that the following diagram is commutative.
\[\begin{tikzcd}
	G_x\ar[r,"\chi_x"]\ar[d,"i_{g^{-1}}"] & \GL(T_x^*)\ar[d,equals] \\
	G_{xg}\ar[r,"\chi_{xg}"] & \GL(T_{xg}^*)
\end{tikzcd}\]
Thus, if $\zeta_e\in k^\times$ is a primitive $e$th root of unity, then the conjugacy class of $\chi_x^{-1}(\zeta_e)$ in $G$ is independent of the choice of $x\in\pi^{-1}(O)$. 

\begin{defn} In the above situation, if $\zeta_n\in k^\times$ is a primitive $n$th root of unity with $e\mid n$, then the \emph{(algebraic) Higman invariant of $\pi$ relative to $\zeta_n$} is the conjugacy class of $\chi_x^{-1}(\zeta_n^{n/e})\in G$ for some $x\in\pi^{-1}(O)$, denoted $\Hig_{\zeta_n}(\pi)$. 
\end{defn}


Now suppose our universal base scheme $\bS$ is such that there exists a primitive $n$th root of unity $\zeta_n\in\Gamma(\bS,\cO_\bS)$. Let $\xi : \Spec\Omega\rightarrow\cAdm(G)$ be a geometric point, corresponding to an admissible $G$-cover $\pi_\xi : C\rightarrow E$ over $\Omega$, where $(E,O)$ is a 1-generalized elliptic curve. Thus, to any geometric point $\xi$, we may associate an integer $e(\xi)$ (the ramification index of the corresponding admissible $G$-cover). If $e(\xi)\mid n$, then relative to our choice of $\zeta_n$, we may also associate the conjugacy class
$$\Hig_{\zeta_n}(\xi) := \Hig_{\zeta_n}(\pi_\xi)$$
Note that $e(\xi)$ is also just the order of (any representative of) the Higman invariant. Let $|\cAdm(G)|$ denote the underlying topological space of the stack $\cAdm(G)$ \cite[04XE]{stacks}. Since $e(\xi)$ and $\Hig_{\zeta_n}(\xi)$ are invariant under field extensions $\Omega\subset\Omega'$, they define functions on $|\cAdm(G)|$.

\begin{prop}\label{prop_Higman_invariant_is_locally_constant} Let $\Cl(G)$ denote the set of conjugacy classes of $G$. The functions
\begin{eqnarray*}
e : |\cAdm(G)| & \longrightarrow & \bN \\
\Hig_{\zeta_n} : |\cAdm(G)| & \longrightarrow & \Cl(G)	
\end{eqnarray*}
are locally constant.
\end{prop}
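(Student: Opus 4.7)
The plan is to reduce to a statement about connected étale neighborhoods and then use that the reduced ramification divisor is finite étale, together with the group-scheme version of the local character.

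First I would note that $|\cAdm(G)|$ is the topological space of a Deligne-Mumford stack, so a function on $|\cAdm(G)|$ is locally constant iff its pullback along any étale atlas $U \to \cAdm(G)$ by a scheme is locally constant on $|U|$. Thus it suffices to show that whenever $U$ is a \emph{connected} scheme with an étale map to $\cAdm(G)$, both $e$ and $\Hig_{\zeta_n}$ are constant on $U$. Fix such a $U$ and pull back the universal family to get an admissible $G$-cover $\pi : C \to E \to U$. For the ramification index: by Proposition \ref{prop_RRD} the reduced ramification divisor $\cR_\pi$ is finite étale over $U$, and the pullback $\sigma_O^* \cR_\pi$ is finite étale of degree $|G|/e$ over $U$. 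For connected $U$ this degree is constant, so $e$ is constant.

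For the Higman invariant, by the étale local description (Definition \ref{def_admissible}(4)) the stabilizer group scheme $I \to \sigma_O^*\cR_\pi$, whose fiber over $\bar{x}$ is $G_{\bar{x}}\le G$, is a closed subgroup scheme of the constant scheme $G \times_\bS \sigma_O^*\cR_\pi$, finite étale over $\sigma_O^*\cR_\pi$ with fibers cyclic of order $e$. The right $G$-action on $\Omega_{C/U}$ restricted to $\cR_\pi\subset C$ provides the relative local character
\[
\chi : I \longrightarrow \mathbb{G}_{m,\sigma_O^*\cR_\pi},
\]
which factors through $\mu_e \subset \mathbb{G}_{m,\sigma_O^*\cR_\pi}$ because on each geometric fiber $\chi$ lands in the $e$th roots of unity. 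Since $\zeta_n \in \Gamma(\bS,\cO_\bS)$ and $e\mid n$, the group scheme $\mu_e$ is constant over $\bS$, so $\zeta_n^{n/e}$ defines a global section $s: \sigma_O^*\cR_\pi \to \mu_e$. The preimage $\chi^{-1}(s)$ is a section of $I \to \sigma_O^*\cR_\pi$, i.e. a morphism $\sigma_O^*\cR_\pi \to G$ into the constant scheme $G$. Since $\sigma_O^*\cR_\pi$ is finite étale over a connected $U$ and $G$ is constant, this morphism has image in $G$ that is constant on each connected component of $\sigma_O^*\cR_\pi$. The $G$-action permutes components transitively over any geometric fiber of $U$ and intertwines with conjugation in $G$, so the union of these images over all components of $\sigma_O^*\cR_\pi$ is a single conjugacy class in $G$, independent of the point of $U$. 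By construction this class is $\Hig_{\zeta_n}$, establishing the claim.

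The main obstacle is the step where one needs $\chi$ to land in the \emph{constant} group scheme $\mu_e$ rather than in the possibly nontrivial étale sheaf of $e$th roots of unity; this is exactly what the hypothesis $\zeta_n \in \Gamma(\bS,\cO_\bS)$ buys, and it is what rigidifies the Higman invariant into a globally defined conjugacy class. The remaining bookkeeping — verifying that the stabilizer closed subscheme $I \subset G\times\sigma_O^*\cR_\pi$ is finite étale with cyclic fibers of order $e$, and that the action of $G$ on $\sigma_O^*\cR_\pi$ translates to conjugation on the image in $G$ — follows directly from the étale local model of Definition \ref{def_admissible}(4) and Proposition \ref{prop_RRD}.
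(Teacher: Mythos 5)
Your argument is correct, and it is essentially the proof that the paper delegates to its citations (\cite[Proposition 2.5.1]{BBCL20}, \cite[Proposition 3.2.5]{BR11}): local constancy of $e$ read off from the finite \'etale reduced ramification divisor, and local constancy of $\Hig_{\zeta_n}$ obtained by assembling the fiberwise local characters into a character of the inertia group scheme along $\cR_\pi$ and using the global $\zeta_n$ to trivialize $\mu_e$ and cut out the section $\chi^{-1}(\zeta_n^{n/e})$ of the constant scheme $G$. The only point to make explicit is that in the reduction step one should take an atlas by (locally) Noetherian affine schemes, which exists since $\cAdm(G)$ is of finite presentation over the (Noetherian) base containing $\zeta_n$, so that connected components are open and your ``constant on connected $U$'' conclusion indeed gives local constancy.
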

\begin{proof} The statement for $e$ is evident from the definition of admissible $G$-covers. The proof for $\Hig_{\zeta_n}$ is identical to the proof of \cite[Proposition 2.5.1]{BBCL20}. In the language of stable marked $G$-curves, this also follows from \cite[Proposition 3.2.5]{BR11}, using the equivalence of Theorem \ref{thm_aGc_equals_smGc}.
\end{proof}

\begin{remark}\label{remark_topological_higman} If $k = \bC$, then we leave it to the reader to verify that the (topological) Higman invariant of the analytification of $\pi$ agrees with the (algebraic) Higman invariant of $\pi$ relative to $\zeta_n = \exp(\frac{2\pi i}{n})$. This is equivalent to saying that monodromy around a branch point induces multiplication by $\exp(\frac{2\pi i}{n})$ on cotangent (equivalently, tangent) spaces. It will be useful to keep in mind that if $E$ is an elliptic curve over $\bC$, $x_0\in E^\circ(\bC)$, and $a,b\in\pi_1^\tp(E^\circ(\bC),x_0)$ is a basis for the fundamental group with positive intersection number (a ``positively oriented basis''), then the conjugacy class of the commutator $[b,a]\in\pi_1^\tp(E^\circ(\bC),x_0)$ is represented by a positively oriented loop in $E^\circ(\bC)$ winding once around the puncture. Thus, if $\pi : C\rightarrow E$ is an admissible $G$-cover, $x_0\in E^\circ(\bC)$ and $x\in\pi^{-1}(x_0)$ with associated monodromy representation $\varphi_x : \pi_1^\tp(E^\circ(\bC),x_0)\rightarrow G$, then $\varphi_x$ is surjective and the Higman invariant of $\pi$ is the conjugacy class of $\varphi_x([b,a]) = [\varphi_x(b),\varphi_x(a)]$. In particular, the Higman invariant of an admissible $G$-cover can always be expressed as a commutator of a generating pair of $G$. This implies for example that abelian $G$-covers of elliptic curves unramified away from the origin are in fact unramified everywhere, or equivalently, if $G$ is abelian then objects of $\cAdm(G)$ have trivial Higman invariant.
\end{remark}

\begin{defn}\label{def_higman_over_Qbar} In light of the remark, when $\bS = \Spec\Qbar$, for a geometric point $\xi : \Spec\Omega\rightarrow\ol{\cM(G)}$, let
$$\Hig(\xi) := \Hig_{\exp(2\pi i/|G|)}(\xi).$$
\end{defn}

From the local constancy of the Higman invariant, it follows that we have decompositions
$$\ol{\cM(G)}_{\Qbar} = \bigsqcup_{\fc\in\Cl(G)}\ol{\cM(G)}_\fc\quad\text{and}\quad \cAdm(G)_\Qbar = \bigsqcup_{\fc\in\Cl(G)}\cAdm(G)_\fc$$
where $\cM(G)_\fc\subset\cM(G)_{\Qbar}$ (resp. $\cAdm(G)_\fc\subset\cAdm(G)_\Qbar$) is the open and closed substack consisting of objects with Higman invariant $c$. Let $\ol{M(G)}_\fc,\Adm(G)_\fc$ denote their coarse schemes.

\subsection{Comparison with stable marked $G$-curves}\label{ss_comparison_with_smGc}

An admissible $G$-cover is a map $\pi : C\rightarrow D$ satisfying certain properties. An alternative approach is to forget $D$, and only remember the curve $C$ together with its $G$-action and a suitable marking divisor $R\subset C$. We will see that $\pi$ induces an isomorphism $C/G\cong D$, so nothing in lost in this approach. Moreover this perspective will be convenient later when we describe the deformation theory for admissible $G$-covers in Proposition \ref{prop_deformations} below. In this section we make precise the relationship between these two viewpoints. The results here are not new, and can be viewed as an exposition of \cite[Appendix B]{ACV03}. However our terminology here follows Bertin-Romagny \cite[Definition 4.3.4]{BR11}. We begin with a well-known lemma.

\begin{lemma}\label{lemma_tame_quotients} Let $C\rightarrow S$ be a flat proper finitely presented morphism whose geometric fibers are reduced of equidimension 1, and $G$ a finite group acting $S$-linearly on $C$. Then
\begin{enumerate}[label=(\alph*)]
	\item\label{quotient_part_criteria} $C$ is a union of $G$-invariant affine opens,
	\item\label{quotient_part_properties} the categorical quotient $C/G$ exists, and the projection $\pi : C\rightarrow C/G$ is finite and induces an isomorphism $\cO_{C/G}\rightiso\pi_*\cO_C^G$, and
	\item\label{quotient_part_base_change} the quotient $C/G$ commutes with arbitrary base change.
\end{enumerate}
\end{lemma}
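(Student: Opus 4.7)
The plan is to tackle (a) first, since it carries the main content; (b) and (c) then follow by standard arguments.

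For (a), the question is local on $S$, so I would reduce to the case where $S$ is affine. The key geometric input is that each geometric fiber $C_{\bar s}$ is a reduced proper scheme of pure dimension $1$ over an algebraically closed field, hence projective. Given any point $c \in C_{\bar s}$, the orbit $Gc$ is finite, so by picking a section of a sufficiently high power of an ample line bundle that avoids $Gc$, I can find an affine open $V \subset C_{\bar s}$ containing $Gc$. Spreading out using properness (openness of the ample locus in flat proper families), this produces an affine open $U \subset C$ whose fiber over a Zariski neighborhood of the image of $\bar s$ contains $Gc$. The crucial move is then to refine $U$ to something $G$-invariant: set $W := \bigcap_{g \in G} g(U)$. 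This is open, $G$-invariant, and still contains $Gc$ since $Gc \subset g(U)$ for every $g$. Because $C/S$ is proper and hence separated, a finite intersection of affine opens is affine (the intersection is the pullback of the diagonal, which is a closed immersion in the separated case, from the affine $\prod_g g(U)$). Running this construction for every $c$ produces the desired cover.

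For (b), let $\{U_i = \Spec A_i\}$ be a $G$-invariant affine cover of $C$ produced by (a). Define the quotient locally by $U_i/G := \Spec A_i^G$; since any pairwise intersection $U_i \cap U_j$ is again a $G$-invariant affine open (separatedness), the local quotients glue to a scheme $C/G$ with a canonical morphism $\pi : C \to C/G$. The map is finite because $A_i$ is a finitely generated $A_i^G$-module — classical for finite groups once we are Noetherian, and reduced to Noetherian by finite presentation — and $\pi_*\cO_C^G \cong \cO_{C/G}$ is built into the construction. The universal property of a categorical quotient is checked locally on the target using the same description.

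For (c), the hypothesis that $|G|$ is invertible on $S$ (built into the ambient setup $\bS = \Spec\bZ[1/|G|]$) makes the averaging operator $e := \tfrac{1}{|G|}\sum_{g \in G} g$ available; it is a $G$-equivariant projector onto $A_i^G$, splitting $A_i = A_i^G \oplus \ker(e)$ as $\cO_S$-modules. Flatness of $A_i$ over $\cO_S$ then forces flatness of $A_i^G$. For any base change $T \to S$, the same averaging projector defines a projector on $A_i \otimes_{\cO_S} \cO_T$ whose image is exactly the $G$-invariants (one inclusion is immediate; for the other, any $G$-invariant $x$ satisfies $e(x) = x$). Hence $A_i^G \otimes_{\cO_S} \cO_T \cong (A_i \otimes_{\cO_S} \cO_T)^G$, which is the desired base change compatibility.

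The principal obstacle is (a), and within it the production of a $G$-invariant affine open around a prescribed orbit; the two crucial ingredients are projectivity of reduced proper curves over a field (so that every finite subset lies in an affine open) and separatedness of $C/S$ (so that the finite intersection $\bigcap_g g(U)$ remains affine). Once these are in hand, (b) is formal gluing and (c) is the standard tameness argument via the averaging idempotent.
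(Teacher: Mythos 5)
Your parts (b) and (c) are fine: (b) is the standard gluing of $\Spec A_i^G$ along $G$-invariant affine intersections (the paper simply cites SGA1, Exp.\ V), and your averaging-idempotent argument for (c) is exactly the tameness mechanism behind the reference the paper invokes. The problem is in (a), which is where the real content of the lemma lies.

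The step ``Spreading out using properness (openness of the ample locus in flat proper families), this produces an affine open $U\subset C$ whose fiber over a Zariski neighborhood of the image of $\bar s$ contains $Gc$'' is not justified, and it is precisely the hard point. Openness of the ample locus is a statement about a line bundle that is already defined on the total space $C$: for such an $\cL$, the set of $s\in S$ where $\cL_s$ is ample is open and $\cL$ is relatively ample there. You only have an ample line bundle (equivalently, an affine open) on the single fiber $C_{\bar s}$, and neither a line bundle nor an affine open on a fiber extends in general to the total space of a proper flat family; proper flat families of curves need not be projective, or even satisfy the property that finite sets lie in affine opens, Zariski-locally on the base without further work. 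So there is no affine open $U\subset C$ produced by your argument, and consequently the intersection $\bigcap_g gU$ (which is the same device the paper uses, via separatedness) has nothing to be applied to. A symptom of the gap: your argument never really uses the hypothesis that the geometric fibers are \emph{reduced} (properness and dimension $1$ already give projectivity of each fiber), whereas the paper's proof uses reducedness crucially. What the paper actually does is: reduce by Noetherian approximation to $S$ of finite type over $\bZ$; use geometric reducedness to get a fiberwise dense smooth locus, hence sections through $C_\sm$ after passing to a \emph{finite surjective} base change $S'\to S$; with enough pairwise disjoint sections meeting every component of every fiber, $\cO_{C_{S'}}(\sum_i\sigma_i)$ is relatively ample because the relative dimension is $1$, so $C_{S'}$ has the property (AF) that every finite set of points lies in an affine open; and finally descend (AF) along the finite surjective map $S'\to S$ using Koll\'ar's descent result (which is why the reduction to finite type over $\bZ$ was needed). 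If you want to salvage your outline, you must replace the ``spreading out'' sentence by an argument of this kind that manufactures a relatively ample bundle (or the (AF) property) on the total space, not just on a fiber.
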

\begin{proof} By \cite[Expos\'{e} V, Corollaire 1.5, Proposition 1.8]{SGA1}, Part \ref{quotient_part_criteria} implies \ref{quotient_part_properties}, in which case \ref{quotient_part_base_change} follows from \cite[Proposition A7.1.3(4)]{KM85} (using our standing tameness assumption).


It remains to prove \ref{quotient_part_criteria}. Since the $G$-action preserves the fibers, it suffices to work Zariski-locally on the base. Thus we will view $S$ as a ``small affine open'', and will shrink it as necessary. By Noetherian approximation \cite[01ZM,081C]{stacks} (also see Remark \ref{remark_noetherian_approximation}), there is an affine map $S\rightarrow S_0$ with $S_0$ of finite type over $\bZ$ and a flat proper finitely presented map $C_0\rightarrow S_0$ with equidimension 1 fibers such that $C = C_0\times_{S_0}S$. Replacing $S_0$ by an open subscheme containing the image of $S$, we may assume that $C_0/S_0$ also has geometrically reduced fibers \cite[0C0E]{stacks}. Thus, we are moreover reduced to the case where $S$ is of finite type over $\bZ$.


A scheme $X$ satisfies property (AF) if every finite set of points is contained in an affine open \cite[Appendix B]{Rydh13}. If $C$ has property (AF), then for any point $x\in C$, let $W\subset C$ be an open affine containing the orbit $Gx$. Then $\cap_{g\in G} gW$ is a $G$-invariant open affine neighborhood of $x$, so we would obtain \ref{quotient_part_criteria}. Thus for \ref{quotient_part_criteria} and \ref{quotient_part_properties}, it suffices to show that $C$ has property (AF). Since $C/S$ has geometrically reduced fibers, the $S$-smooth locus $C_\sm$ is open dense inside every fiber of $C/S$ \cite[056V]{stacks}. Thus there is an \'{e}tale morphism $p : U\rightarrow S$ such that $C_U$ admits a section lying in $(C_U)_\sm$. Shrinking $U$, we may moreover assume that $p$ is quasi-finite. By \cite[03I1]{stacks}, there is an open affine $V\subset S$ such that $p^{-1}(V)\rightarrow V$ is finite \'{e}tale. Replacing $S$ with $V$, we have found a finite surjective map $S'\rightarrow S$ such that $C_{S'}$ admits a section. Repeating this finitely many times and possibly further shrinking $S$, we may assume that there is a finite surjective map $S'\rightarrow S$ such that $C_{S'}$ admits pairwise disjoint sections $\sigma_1,\ldots,\sigma_n$ which meet every irreducible component of every fiber. Since $C/S$ has relative dimension 1, this implies that $\cO_{C_{S'}}(\sum_i\sigma_i)$ is ample \cite[0B5Y,0D2S]{stacks}, so $C_{S'}$ satisfies (AF) \cite[II, Corollaire 4.5.4]{EGA}. Since $S,S'$ are of finite type over $\bZ$, \cite[Corollary 48]{Kol12}) implies that $C$ also satisfies (AF) as desired.
\end{proof}

\begin{remark} Note that the proofs of parts \ref{quotient_part_criteria} and \ref{quotient_part_properties} are valid with no tameness assumptions on $|G|,S$.	
\end{remark}

\begin{defn}\label{def_marked_G_curves} A marking on a prestable curve $C\rightarrow S$ is an effective Cartier divisor $R\subset C$ \'{e}tale over $S$. In particular, it must lie in the smooth locus of $C$ and must be finite \'{e}tale over $S$. A \emph{marked prestable curve} is a pair $(C/S,R)$ where $C/S$ is a prestable curve and $R\subset C$ is a marking. A stable marked curve is a marked prestable curve whose geometric fibers have finite automorphism groups preserving the marking. A stable marked $G$-curve is a stable marked curve equipped with a faithful \emph{right}-action of $G$ satisfying:
\begin{enumerate}[label=(\alph*)]
	\item the $G$-action preserves the divisor $R$,
	\item $C\rightarrow C/G$ is \'{e}tale on $C_\sm - R$ (equivalently, $G$ acts with trivial inertia on $C_\sm - R$), and
	\item the action at every geometric node is \emph{balanced} in the sense of Remark \ref{remark_covers}\ref{part_balanced2}.
\end{enumerate}
A morphism of stable marked $G$-curves is a morphism of the underlying prestable curves which both preserves the marking and is $G$-equivariant.
\end{defn}

\begin{prop}\label{prop_stability_of_quotient} Given a stable marked $G$-curve $(C/S,R)$, the quotient $(C/G,R/G)$ is also a stable marked curve and the quotient map $\pi : C\rightarrow C/G$ is finite flat.
\end{prop}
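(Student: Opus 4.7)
The plan is to obtain $D := C/G$ abstractly from the tame-quotient lemma and then analyze all remaining assertions using \'etale-local normal forms for the $G$-action.

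First I would apply Lemma~\ref{lemma_tame_quotients} to $C \to S$: since $C\to S$ is flat, proper, of finite presentation, with geometrically reduced equidimensional $1$-dimensional fibers, its hypotheses are met. This yields $D = C/G$ as a scheme, $\pi : C \to D$ finite with $\cO_D \cong \pi_*\cO_C^G$, and formation of $D$ compatible with arbitrary base change. Base-change compatibility lets me check that $(D, R/G)$ is a stable marked curve on geometric fibers, so I may assume $S = \Spec k$ with $k$ algebraically closed.

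Second, I would invoke the standard linearization of tame actions: at every geometric point $\bar p$ of $C$, the stabilizer $G_{\bar p}$ acts linearly on the cotangent space in a $G_{\bar p}$-equivariant \'etale neighborhood. Combined with conditions (b) and (c) of Definition~\ref{def_marked_G_curves}, this yields three \'etale-local normal forms for the inclusion $\cO_D \hookrightarrow \cO_C$:
\begin{itemize}
  \item On $C_\sm \smallsetminus R$, $G_{\bar p} = 1$, so $\pi$ is \'etale.
  \item At a smooth point of $R$ with cyclic inertia of order $e$, $\pi$ is \'etale-locally $\Spec A[\xi] \to \Spec A[\xi^e]$.
  \item At a balanced node with cyclic inertia of order $e$, $\pi$ is \'etale-locally $\Spec A[\xi,\eta]/(\xi\eta-a) \to \Spec A[x,y]/(xy-a^e)$ with $(x,y) = (\xi^e, \eta^e)$.
\end{itemize}
Reading off these models, $D$ is \'etale-locally smooth in the first two cases and has a node of the same type as $C$ in the third, so $D \to S$ is a prestable curve with connected geometric fibers inherited from $C$ via the surjection $\pi$. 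The second local model also identifies $R/G$ with an \'etale $S$-scheme inside $D_\sm$, hence a marking.

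Third, for stability and finiteness on geometric fibers: stability of $(D, R/G)$ reduces to the finiteness of $\Aut(D_s, (R/G)_s)$ for each geometric fiber $s$, which I would deduce from the stability of $(C_s, R_s)$ by lifting automorphisms along the $G$-cover $C_s \to D_s$ (such lifts exist up to a finite ambiguity coming from $G$ itself). For the flatness of $\pi$, the first two local models are transparently finite flat of degree $e$; in the third, one decomposes $A[\xi,\eta]/(\xi\eta-a)$ into $G_{\bar p}$-isotypic components and reads off a finite presentation over $A[x,y]/(xy-a^e)$, with the balanced hypothesis ensuring the presentation is compatible with the nodal quotient structure.

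The main obstacle is the analysis at the balanced node in the third local model: both the nodal type of the quotient and the finer module structure of $\pi$ depend crucially on the balanced hypothesis, and a careful bookkeeping of the $G_{\bar p}$-isotypic decomposition of $\cO_C$ over $\cO_D$ is needed in order to read off finite flatness together with stability of the resulting marked pointed curve.
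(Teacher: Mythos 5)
The overall structure of your argument — invoke Lemma~\ref{lemma_tame_quotients} to get the quotient with base-change compatibility, reduce to geometric fibers, and read off flatness and the local structure of $D = C/G$ from \'etale-local normal forms (which the paper proves as Proposition~\ref{prop_normalized_coordinates}) — matches the paper's proof closely. The divergence, and the gap, is in how you handle stability.

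You propose to verify stability of $(D_s, (R/G)_s)$ via the finite-automorphism-group criterion, claiming that automorphisms of $D_s$ lift to $C_s$ ``up to a finite ambiguity coming from $G$ itself.'' That claim is false as stated: an automorphism $\sigma$ of $(D_s, (R/G)_s)$ lifts to $C_s$ (commuting with $\pi$) if and only if $\sigma^*C_s \cong C_s$ as covers of $D_s$, and there is no a priori reason this should hold for every $\sigma$. Only a subgroup of $\Aut(D_s,(R/G)_s)$ consists of liftable automorphisms. One can salvage the idea — $\Aut(D_s,(R/G)_s)$ acts on the finite set of isomorphism classes of tame $G$-covers of $D_s$ branched over $(R/G)_s$ (a Hurwitz-type finiteness), so the liftable subgroup has finite index, and it maps with $|G|$-bounded fibers into $\Aut(C_s,R_s)$, which is finite — but none of this extra machinery is in your sketch, and the Hurwitz finiteness you would need to cite is not among the tools set up in the relevant section. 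The paper sidesteps all of this by using the equivalent criterion for stability in terms of special points on normalizations of components: a non-stable component $Z$ of $D_s$ with normalization $Z'$ of genus $\le 1$ pulls back to a non-stable component $W$ of $C_s$ (if $Z'$ has genus $1$ with no special points the cover $W'\to Z'$ is \'etale hence genus $1$ with no special points; if $Z'\cong\bP^1$ with $\le 2$ special points then $W'\to Z'$ is a totally ramified cyclic cover of $\bP^1$ so $W'\cong\bP^1$ with $\le 2$ special points), contradicting stability of $(C_s,R_s)$. That argument is entirely elementary and uses only material already in hand; you should replace the lifting argument with it.
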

\begin{proof} 
By Noetherian approximation (Remark \ref{remark_noetherian_approximation}), we are reduced to the case where $S$ is of finite type over $\bZ$. First, the quotient exists by Lemma \ref{lemma_tame_quotients}. By \cite[Expos\'{e} V, Corollaire 1.5]{SGA1}, $\pi$ is finite and $C/G$ is separated of finite presentation over $S$. The finiteness implies $C/G\rightarrow S$ is universally closed, hence proper. Since $C/S$ is flat, by the fiberwise criteria of flatness \cite[039B]{stacks}, to check that $\pi$ and $C/G\rightarrow S$ are flat, it suffices to check that $\pi$ is flat on fibers over $S$, so it suffices to take $S = \Spec k$ where $k$ is an algebraically closed field, but flatness here follows immediately from the \'{e}tale-local picture (see Proposition \ref{prop_normalized_coordinates}).


Finally we claim that the quotient $(C/G,R/G)$ is stable. By Lemma \ref{lemma_tame_quotients}\ref{quotient_part_base_change}, we may assume $S = \Spec k$ with $k$ an algebraically closed field. It is easy to check that $C/G$ is prestable, so it remains to check that $(C/G,R/G)$ is stable. Suppose $Z\subset C/G$ be a non-stable component, with normalization $Z'$. Then $Z'$ has genus at most 1. If $Z'$ has genus 1, then we must have $Z = Z'$ and it must have no nodes or markings, but this implies that if $W$ is any irreducible component of the preimage of $Z$ in $C$, then $W$ contains no nodes or markings, so $W\rightarrow Z$ is \'{e}tale, so by Riemann-Hurwitz, $W$ also has genus 1, so $W$ is also unstable. Now suppose $Z'\cong \bP^1$ has genus 0, and let $W\subset C$ be an irreducible component mapping to $Z$, with normalization $W'$. Then $W'\rightarrow Z'$ is \'{e}tale away from the complement of two points, so $W'\rightarrow Z'$ is a totally ramified cyclic cover, so $W$ must also be unstable.
\end{proof}

\begin{defn}\label{def_marking} Let $\pi : C\rightarrow D$ be an admissible $G$-cover of an $n$-pointed stable curve $(D,\{\sigma_i\}_{1\le i\le n})$. Let $\cR_\pi$ be the reduced ramification divisor, and let $J\subset\{1,\ldots,n\}$ be the subset of indices $j$ such that $\cR_\pi$ does not meet the divisor $Z_j := C\times_{D,\sigma_j} S$. For each $i$, let
$$R_i := \left\{\begin{array}{rl}
	Z_i & \text{if } i\in J \\
	\cR_\pi\times_{D,\sigma_i} S & \text{if } i\notin J
\end{array}\right.$$
We call $\sqcup_{i=1}^n R_i$ the \emph{marking associated to the admissible $G$-cover $\pi$}.
\end{defn}

\begin{prop}\label{prop_ac2smGc} Let $\pi : C\rightarrow D$ be an admissible $G$-cover of an $n$-pointed stable curve $(D,\{\sigma_i\}_{1\le i\le n})$, and let $R := \sqcup_{i=1}^n R_i$ be the associated marking as in Definition \ref{def_marking}. Then $(C/S,R)$ is a stable marked $G$-curve. The categorical quotient $C/G$ exists, commutes with arbitrary base change, and $\pi$ induces an isomorphism $C/G\rightiso D$.
\end{prop}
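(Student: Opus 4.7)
The plan is to first establish the quotient and the isomorphism $C/G \cong D$, and then verify the marked $G$-curve axioms for $(C/S, R)$. The existence and good behavior of the quotient will come from Lemma \ref{lemma_tame_quotients}, and the identification with $D$ will be checked \'{e}tale-locally using the explicit local pictures furnished by Definition \ref{def_admissible}.

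First I would invoke Lemma \ref{lemma_tame_quotients} directly: since $C\to S$ is prestable (hence flat, proper, finitely presented with reduced equidimension-1 geometric fibers) and $G$ acts $S$-linearly, the categorical quotient $q : C \to C/G$ exists, is finite, satisfies $\cO_{C/G} = (q_*\cO_C)^G$, and commutes with arbitrary base change. Since $\pi$ is $G$-invariant it factors uniquely as $\pi = \bar{\pi}\circ q$. To show $\bar\pi : C/G \to D$ is an isomorphism, I would check this on strict henselizations at geometric points of $D$, splitting into the three local models of Definition \ref{def_admissible}. Over the generic locus $D_\gen$, $\pi$ is a $G$-torsor by axiom \ref{part_admissible_torsor}, so $C/G \to D$ is an isomorphism there. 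Above a marking (axiom \ref{part_admissible_local_marking}), we have $C_{(\bar p)} \cong \Spec A[\xi]$ mapping to $D_{(\pi\bar p)} \cong \Spec A[x]$ via $x \mapsto \xi^e$, the stabilizer $G_{\bar p}$ is cyclic acting by $\xi \mapsto \zeta\xi$, and $A[\xi]^{G_{\bar p}} = A[\xi^e] = A[x]$, so the invariants agree. Above a node (axiom \ref{part_admissible_local_nodes}), the local picture $(\xi,\eta) \mapsto (\xi^r,\eta^r)$ together with the balanced action $\xi\mapsto \zeta\xi,\ \eta\mapsto\zeta^{-1}\eta$ from axiom \ref{part_admissible_balanced} again gives $A[\xi,\eta]/(\xi\eta - a)^{G_{\bar p}} = A[\xi^r,\eta^r]/(\xi^r\eta^r - a^r)$, matching $D_{(\pi\bar p)}$. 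Combined with the transitivity of $G$ on geometric fibers of $\pi$, this shows $\bar\pi$ is bijective on geometric points with trivial residue extensions and induces isomorphisms of complete local rings, hence is an isomorphism.

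Next I would check that $(C/S, R)$ is a stable marked $G$-curve. The divisor $R = \sqcup_i R_i$ is \'{e}tale over $S$: for $i \in J$, $R_i = Z_i$ is the pullback of the section $\sigma_i$ along $\pi$ which by axiom \ref{part_admissible_local_marking} is unramified at every point of $Z_i$ (since $Z_i \cap \cR_\pi = \emptyset$), hence \'{e}tale; for $i \notin J$, $R_i$ is a union of components of $\cR_\pi$, which is finite \'{e}tale over $S$ by Proposition \ref{prop_RRD}. The $R_i$ are disjoint because the $\sigma_i$ are, and $R \subset C_\sm$ because $\cR_\pi \subset C_\sm$ and the $Z_i$ lie above smooth points of $D$ (using Remark \ref{remark_covers}\ref{part_nodes}). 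The $G$-action preserves each $Z_i$ trivially (by $G$-invariance of $\pi$) and permutes the components of $\cR_\pi$ since conjugation permutes the loci $\Delta(H)$. The condition that $G$ act with trivial inertia on $C_\sm \setminus R$ follows from the torsor axiom over $D_\gen$ and the explicit local picture above markings, which places all inertia inside $\cR_\pi \subset R$. The balanced condition at nodes is exactly axiom \ref{part_admissible_balanced}.

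The remaining task is stability of $(C/S, R)$ as a marked curve. Here I would run Proposition \ref{prop_stability_of_quotient} in reverse: since $C/G \cong D$ is stable, any unstable rational component $Z \subset C_{\bar s}$ would (via the finite cover $\pi$, ramified only over special points of $D_{\bar s}$ and with $R$ containing the preimage of the markings) produce an unstable rational component in $D_{\bar s}$, contradicting stability of $D$. The main obstacle in this proof is the \'{e}tale-local verification that $\bar\pi : C/G \to D$ is an isomorphism at nodes: one must carefully use the balanced condition to compute the ring of invariants of $A[\xi,\eta]/(\xi\eta - a)$ and match it with $A[x,y]/(xy - a^r)$, which is where the specific form of the $G$-action at balanced nodes is essential.
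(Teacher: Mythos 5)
Your proposal is correct and follows essentially the same route as the paper: invoke Lemma \ref{lemma_tame_quotients} for the quotient, verify $C/G\rightiso D$ by checking the three \'etale local models of Definition \ref{def_admissible}, use Proposition \ref{prop_RRD} and the local picture to show $R$ is finite \'etale in $C_\sm$, and argue stability of the covering curve from stability of $D$ via Riemann--Hurwitz on irreducible components. The one imprecision is the phrase ``run Proposition \ref{prop_stability_of_quotient} in reverse'' --- the converse of that proposition is not a formal consequence of it and needs its own Riemann--Hurwitz argument (which is exactly what the paper supplies and what you clearly intend), so this is a presentational point rather than a gap.
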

\begin{proof} Since $\pi$ is admissible, the $G$-action is balanced on the nodes. Since $\pi$ is \'{e}tale above $D_\gen$, and the support of $\cR_\pi$ is precisely the set of points in $C_\sm$ at which $\pi$ is not \'{e}tale, the image of $\cR_\pi$ in $D$ is contained in the marking divisor and $\sqcup_{j\in J}Z_j$ is \'{e}tale over $S$. Thus $R\subset C$ is an effective Cartier divisor, finite \'{e}tale over $S$. To show that $(C/S,R)$ is stable marked, we may assume $S = \Spec k$ for $k$ an algebraically closed field. Let $Z\subset D$ be an irreducible component with normalization $Z'$, and let $W\subset C$ be an irreducible component mapping to $Z$, with normalization $W'$. We say that a point of $W'$ (resp. $Z'$) is special if it maps to a node or marking of $W$ (resp. $Z$). By Riemann-Hurwitz, the stability of $W$ is clear if $Q'$ has genus $\ge 2$, and since a point of $W$ is special if and only if it maps to a special point of $Z$ (Remark \ref{remark_covers}\ref{part_nodes}), stability is also clear if $Z'$ contains at least three special points. The only remaining case is when $Z'$ has genus 1, containing at least one special point, but again in this case we find $W'$ has genus at least 1 with at least one node or marking, so $W'$ is also stable.


By Lemma \ref{lemma_tame_quotients}, the categorical quotient $C/G$ exists, commutes with arbitrary base change, and is defined affine locally by taking $G$-invariants. Then $\pi$ factors uniquely through a finite map $\alpha : C/G\rightarrow D$ which by Definition \ref{def_admissible}\ref{part_admissible_torsor} must be an isomorphism over $D_\gen$. It follows from the local picture at the nodes and markings that $\alpha$ is an isomorphism there as well.
\end{proof}

\begin{defn}\label{def_stack_of_G_curves} Let $\ol{\cH}_{g,n,G}$ denote the category whose objects are stable marked $G$-curves $(C/S,R)$ equipped with a decomposition $R = \bigsqcup_{i=1}^n R_i$ into open and closed subschemes such that
\begin{itemize}
\item[(1)] $G$ preserves each $R_i$,
\item[(2)] the map $R_i/G\rightarrow S$ is an isomorphism\footnote{Equivalently, $G$ acts transitively on the geometric fibers of $R_i\rightarrow S$.} for each $i$, and
\item[(3)] $C/G$ is a prestable curve of genus $g$,
\end{itemize}
and whose morphisms are fiber squares, preserving the decomposition $R = \bigsqcup_{i=1}^n R_i$. Then $\ol{\cH}_{g,n,G}$ is a category fibered in groupoids over $\Sch/\bS$. Let $\cH_G\subset\ol{\cH}_G$ denote the subcategory consisting of pairs $(C/S,R)$ where $C/S$ is smooth. Let $\ol{\cH}_G := \ol{\cH}_{1,1,G}$, and similarly let $\cH_G := \cH_{1,1,G}$.
\end{defn}

\begin{prop}\label{prop_smGc2ac} Let $(C/S,R = \bigsqcup_{i=1}^n R_i)$ be an object of $\ol{\cH}_{g,n,G}$. Let $\sigma_i$ denote the section of $C/G$ determined by $R_i/G$. Then $(C/G,\{\sigma_i\}_{1\le i\le n})$ is a stable $n$-pointed curve and the quotient map $\pi : C\rightarrow C/G$ is an admissible $G$-cover.
\end{prop}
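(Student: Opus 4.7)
The plan is to directly verify the six conditions in Definition \ref{def_admissible}, using Proposition \ref{prop_stability_of_quotient} as the key input. By hypothesis $(C/S,R)$ is a stable marked $G$-curve, so Proposition \ref{prop_stability_of_quotient} tells us that $\pi : C \to C/G$ is finite flat and $(C/G, R/G)$ is a stable marked curve whose underlying curve is prestable of genus $g$ (the last by hypothesis (3) of Definition \ref{def_stack_of_G_curves}). The decomposition $R = \bigsqcup_i R_i$ descends to $R/G = \bigsqcup_i (R_i/G) = \bigsqcup_i \sigma_i(S)$, yielding pairwise disjoint sections $\sigma_i : S \to C/G$ landing in the smooth locus, and stability of the marked quotient curve yields directly that $(C/G, \{\sigma_i\})$ is a stable $n$-pointed curve of genus $g$.

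For the admissibility of $\pi$, conditions (1) and (6) are part of the hypothesis. Condition (3) (that $\pi$ is a $G$-torsor over the generic locus $(C/G)_\gen$) follows because $G$ acts with trivial inertia on $C_\sm - R$ by the stable marked $G$-curve hypothesis, so $\pi$ restricts on this open set to a finite flat $G$-equivariant map with trivial inertia, hence an \'{e}tale $G$-equivariant map, hence a $G$-torsor onto its image $(C/G)_\gen$.

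The principal obstacle is verifying the \'{e}tale local normal forms in conditions (4) and (5); these also give (2), since the local model in (5) sends a node to a node. The strategy at a geometric point $\ol{p}$ of $C$ with stabilizer $H := G_{\ol{p}}$, cyclic of some order $e$, is to pass to the strict henselization (where by Lemma \ref{lemma_tame_quotients}(c) the quotient commutes with base change) and produce $H$-equivariant local coordinates. At a point $\ol{p} \in R$, smoothness of $C/S$ together with the faithful character $\chi_{\ol{p}} : H \to \GL(T^*_{C_{\ol{s}},\ol{p}})$ lets one pick a uniformizer $\xi$ on which $H$ acts diagonally via $\chi_{\ol{p}}$; the ring of $H$-invariants is then generated by $\xi^e$, giving the normal form in (4) with $x = \xi^e$. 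At a node, the balanced hypothesis (Remark \ref{remark_covers}\ref{part_balanced2}) together with Proposition \ref{prop_normalized_coordinates} gives coordinates $\xi,\eta$ with $\xi\eta = a$ for some $a \in \fm_{S,\ol{s}}$, on which $H$ acts via $\xi \mapsto \zeta\xi$ and $\eta \mapsto \zeta^{-1}\eta$ for $\zeta$ a primitive $e$-th root of unity; the invariants are generated by $x := \xi^e$ and $y := \eta^e$ subject to the single relation $xy = a^e$, yielding (5) with $r = e$.

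The main technical point requiring care is the production of such $H$-equivariant coordinates. This is where the tameness assumption on $|G|$ on the base, the classical diagonalization of a tame cyclic action on a regular complete local ring (which in particular supplies enough roots of unity in the strict henselization), and the balanced hypothesis at nodes all enter. Once these coordinates are in hand, the identification of invariant subrings and the resulting verification of the admissibility conditions is a routine calculation.
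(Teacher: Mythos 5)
Your proof takes essentially the same approach as the paper's: both rely on Proposition~\ref{prop_stability_of_quotient} to get stability of the quotient, on the trivial-inertia hypothesis from Definition~\ref{def_marked_G_curves} for the $G$-torsor condition over $(C/G)_\gen$, and on Proposition~\ref{prop_normalized_coordinates} in the appendix for the \'{e}tale-local normal forms at markings and nodes (which in turn give condition (2)). Your added sketch of how the equivariant coordinates are produced is a correct gloss on what Proposition~\ref{prop_normalized_coordinates} does, but does not change the logical route.
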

\begin{proof} By Proposition \ref{prop_stability_of_quotient}, $(C/G,\{\sigma_i\})$ is a stable $n$-pointed curve. It remains to check that $\pi$ is an admissible $G$-cover. The most difficult thing to check is that $\pi$ has the correct local picture at the nodes and markings, and that the $G$-action is balanced at the nodes. This is done in Proposition \ref{prop_normalized_coordinates} in the appendix. This local picture then implies that $\pi$ maps nodes to nodes. Since $G$ acts without inertia on $C_\sm - R$, $\pi$ is a $G$-torsor above $(C/G)_\gen$, so $\pi$ is an admissible $G$-cover as desired.
\end{proof}

\begin{thm}\label{thm_aGc_equals_smGc} For $g,n\in\bZ_{\ge 0}$, the map $\Phi$ sending an object $(C/S,R = \sqcup_{i=1}^n R_i)$ in $\ol{\cH}_{g,n,G}$ to the admissible $G$-cover $C\rightarrow C/G$ together with the $n$ sections of $C/G$ determined by $R_i/G$ gives an equivalence of categories
$$\Phi : \ol{\cH}_{g,n,G}\rightiso\cAdm^\conn_{g,n}(G).$$
A quasi-inverse is given by sending an admissible $G$-cover $(C\stackrel{\pi}{\rightarrow} D, \{\sigma_i : S\rightarrow D\}_{1\le i\le n})$ to $(C/S,R = \sqcup_{i=1}^n R_i)$, where here $R_i$ denotes the $i$th component of the marking associated to the admissible $G$-cover $\pi$ (Definition \ref{def_marking}). In particular, $\ol{\cH}_G$ is a smooth proper Deligne-Mumford stack of pure dimension 1 over $\bS$.
\end{thm}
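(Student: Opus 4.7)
The plan is to unpack the two constructions already assembled in Propositions \ref{prop_ac2smGc} and \ref{prop_smGc2ac}, check that they are mutually quasi-inverse, and then transport the geometric properties of $\cAdm(G)$ recorded in Theorem \ref{thm_admG} to $\ol{\cH}_G$ via the equivalence.

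First, I would define $\Phi$ and a candidate quasi-inverse $\Psi$ on objects. Given $(C/S, R = \sqcup R_i)$ in $\ol{\cH}_{g,n,G}$, Proposition \ref{prop_smGc2ac} shows that $\pi : C \to C/G$ is admissible with respect to the sections $\sigma_i := R_i/G$, which defines $\Phi$. In the other direction, given an admissible $G$-cover $(C \stackrel{\pi}{\to} D, \{\sigma_i\})$, set $\Psi(\pi) := (C/S, R = \sqcup R_i)$ with $R_i$ as in Definition \ref{def_marking}; Proposition \ref{prop_ac2smGc} identifies $C/G \cong D$ and exhibits $(C/S,R)$ as a stable marked $G$-curve. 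One checks each $R_i$ is $G$-stable and $G$-transitive on fibers (so $R_i/G \to S$ is an isomorphism) by working étale-locally: for $i \notin J$, this is the local picture in Definition \ref{def_admissible}\ref{part_admissible_local_marking} where $R_i$ is the cyclic ramification locus, and for $i \in J$ the cover is étale above $\sigma_i$ and $R_i = Z_i$ is literally the fiber of $\sigma_i$, a $G$-torsor. Extending $\Phi$ and $\Psi$ to morphisms uses in both directions that the relevant maps are $G$-equivariant Cartesian squares: on one side morphisms of admissible covers are determined by the $G$-equivariant map of covering curves (as noted right before Theorem \ref{thm_admG}), and on the other side morphisms of stable marked $G$-curves are $G$-equivariant and automatically descend to $C/G$.

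Next, I would verify the two natural isomorphisms. The isomorphism $\Phi \circ \Psi \cong \id$ is essentially Proposition \ref{prop_ac2smGc}: the canonical map $C/G \to D$ is an isomorphism carrying $R_i/G$ to $\sigma_i$ by construction. For $\Psi \circ \Phi \cong \id$, starting from $(C/S, R = \sqcup R_i)$, we form the admissible cover $C \to C/G$ and then need to check that the marking recovered via Definition \ref{def_marking} coincides with the original $R_i$. The index set $J$ corresponds to those $i$ for which $G$ acts without inertia on $R_i$ (so $R_i$ is already étale over $S$ as a torsor), and for $i \notin J$ the local balanced action at a generic point of $R_i$ displays $R_i$ as exactly $\Delta(H)$ for the corresponding cyclic stabilizer $H$; summing over components recovers $\cR_\pi\times_{D,\sigma_i}S$. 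These matchings are étale-local and follow from the normalized coordinate description (Proposition \ref{prop_normalized_coordinates}).

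The main obstacle, and the one meriting real care, is precisely this bookkeeping at the marking: one must confirm that the decomposition $R = \sqcup R_i$ that comes packaged in the definition of $\ol{\cH}_{g,n,G}$ is indeed reconstructed from the intrinsic data $(\pi, \sigma_i)$ via Definition \ref{def_marking}, and in particular that the dichotomy between $i\in J$ (étale case) and $i\notin J$ (truly ramified case) is forced by the condition $R_i/G \cong S$ together with the balanced admissible local model. Once this is settled, $\Phi$ is an equivalence, and the final sentence of the theorem follows immediately: by Theorem \ref{thm_admG}(a), $\cAdm(G) = \cAdm^\conn_{1,1}(G)$ is a smooth proper Deligne-Mumford stack of pure dimension $1$ over $\bS$, hence so is $\ol{\cH}_G$.
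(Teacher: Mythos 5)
Your proposal is correct and follows essentially the same route as the paper: both directions are exactly Propositions \ref{prop_ac2smGc} and \ref{prop_smGc2ac}, the morphism comparison rests on admissible covers being quotient maps so that morphisms are $G$-equivariant maps of covering curves preserving the marking decomposition, and $\Psi\circ\Phi=\id$ (your marking bookkeeping) gives essential surjectivity, with the final sentence imported from Theorem \ref{thm_admG}(a). The paper's proof is simply a more compressed version of this same argument.
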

\begin{proof} $\Phi$ is fully faithful since any admissible $G$-cover $C\rightarrow D$ is a quotient map (Proposition \ref{prop_ac2smGc}), so maps in $\cAdm_{g,n}^\conn(G)$ are precisely given by $G$-equivariant maps of the covering curve preserving the decomposition of the marking $R = \sqcup_{i=1}^n R_i$. It is essentially surjective since $\Psi\circ\Phi$ is equal to the identity functor on $\ol{\cH}_{g,n,G}$.
\end{proof}

\subsection{Relation to the moduli stack of elliptic curves with $G$-structures}\label{ss_comparison_with_G_structures}

In this section we compare $\cAdm(G)$ (equivalently $\ol{\cH}_G$) to the moduli stack $\cM(G)$ of elliptic curves with $G$-structures \cite{Chen18, Ols12, JP95, DM69}. The main result is that the open substack $\cAdm^0(G)\subset\cAdm(G)$ corresponding to smooth covers is an \'{e}tale gerbe over $\cM(G)$, and $\cM(G)$ is can be obtained from $\cAdm^0(G)$ by a rigidification process removing $Z(G)$ from all the automorphism groups of objects in $\cAdm^0(G)$. The same process applied to $\cAdm(G)$ results in a smooth compactification of $\cM(G)$, which we denote $\ol{\cM(G)}$. In this section we explain these relationships, following \cite{ACV03}, \cite{Chen18}, and \cite{BR11}.


We recall some definitions. Let $G$ be a finite group. A $G$-torsor over a scheme $X$ is a finite \'{e}tale morphism $p : Y\rightarrow X$ together with an $X$-linear right action of $G$ on $Y$ such that $p$ acts freely and transitively on geometric fibers. A morphism of $G$-torsors over $X$ is a $G$-equivariant morphism over $X$. Let $\cT_G^\pre$ denote the presheaf
\begin{equation}\label{eq_teichmuller_sheaf}
\begin{array}{rcl}
	\cT_G^\pre : \cM(1) & \longrightarrow & \Sets \\
	E/S & \mapsto & \{\text{$G$-torsors $X\rightarrow E^\circ$ with geometrically connected fibers over $S$}\}/\cong
\end{array}
\end{equation}
Let $\cT_G$ be the sheafification of $\cT_G^\pre$ with respect to the topology on $\cM(1)$ inherited from $(\Sch/\bS)_\et$. Then $\cM(G)$ is the stack associated $\cT_G$. For an elliptic curve $E/S$, a $G$-structure\footnote{Originally called a \emph{Teichmuller structure of level $G$} in \cite[\S5]{DM69}.} on $E/S$ is by definition an element of the set $\cT_G(E/S)$.


If $\alpha,\alpha'$ are isomorphism classes of $G$-torsors on $E/S$, $E'/S'$, then a morphism $(E'/S',\alpha')\rightarrow (E/S,\alpha)$ thus consists of a cartesian diagram
\[\begin{tikzcd}
	E'\ar[r,"f"]\ar[d] & E\ar[d] \\
	S'\ar[r] & S
\end{tikzcd}\]
such that $f^*\alpha = \alpha'$. Thus, morphisms in $\cM(G)$ are determined by morphisms of elliptic curves, so the map $\cM(G)\rightarrow\cM(1)$ is representable. Using the relative fundamental group, $\cT_G$ is checked to be finite locally constant on $\cM(1)$, and hence $\cM(G)\rightarrow\cM(1)$ is finite \'{e}tale. In particular, it is a smooth Deligne-Mumford stack.\footnote{As defined, $\cM(G)$ is an algebraic stack for the \'{e}tale topology. By \cite[076U]{stacks}, it is also an algebraic stack for the fppf topology (i.e., an algebraic stack in the sense of the stacks project).}

\begin{remark} We make some remarks.
\begin{enumerate}[label=(\alph*)]
\item Note that while the definition of $G$-structures above differs from the one used in \cite{Chen18}, the resulting objects are isomorphic. Namely, taking monodromy representations defines a map from $\cT_G^\pre$ to the presheaf of \cite[Definition 2.2.3]{Chen18} which is locally an isomorphism. Thus, their sheafifications are isomorphic.
\item If $S = \Spec k$ with $k$ a separably closed field, and $E/k$ is an elliptic curve, then the stalk of $\cT_G^\pre$ (resp. $\cT_G$) at $E/k$ is precisely $\cT_G^\pre(E/k)$ (resp. $\cT_G(E/k)$) \cite[06VW]{stacks}. Thus, since sheafification preserves stalks \cite[00Y8]{stacks}, $\cT_G^\pre(E/k) = \cT_G(E/k)$ is precisely the set of isomorphism classes of geometrically connected $G$-torsors over $E^\circ/k$.
\item If $E/S$ is an elliptic curve, then an object of $\cT_G(E/S)$ is given by an \'{e}tale covering $\{S_i\rightarrow S\}$, and $G$-torsors $X_i$ on each $E^\circ_i := E^\circ\times_S S_i$ with geometrically connected $S_i$-fibers whose common ``overlaps'' are isomorphic. However, there is no requirement that one can choose the isomorphisms to satisfy a cocycle condition, and hence the $G$-torsors $X_i$ need not glue to give a $G$-torsor on $X\rightarrow E^\circ$. If $G$ has trivial center, then such $G$-torsors have trivial automorphism groups, and hence any cocycle condition is automatic, so in this case we have $\cT_G^\pre = \cT_G$ \cite[Proposition 2.2.6(3)]{Chen18}.

\item Given an elliptic curve $E/S$, a $G$-torsor on $E^\circ$ with geometrically connected $S$-fibers defines a $G$-structure on $E/S$. This gives a map from the set of isomorphism classes of geometrically connected $G$-torsors on $E^\circ/S$ to the set of $G$-structures $\cT_G(E/S)$. As we saw above, this map is a bijection if either $S = \Spec k$ with $k$ a separably closed field, or if $G$ has trivial center, but in general it need not be injective or surjective\footnote{If $G$ is abelian, then the map is surjective but rarely injective: If $\pi : X\rightarrow E^\circ$ is a $G$-torsor geometrically connected over $S$ and $E^\circ$ admits a section $\sigma$, then for \emph{any} $G$-torsor $\xi$ over $S$, one can ``twist'' $\pi$ in a way that the restriction of the resulting $G$-torsor $\pi_\xi$ to $\sigma$ is isomorphic to $\xi$, but such that the torsors $\{\pi_\xi\}_{\xi}$ all determine the same $G$-structure. If $G$ has nontrivial center, then failure of descent implies that this map is typically not surjective.}. While this may make $G$-structures seem like a somewhat unnatural gadget, the upshot is that the forgetful map $\cM(G)\rightarrow\cM(1)$ is \emph{finite \'{e}tale}, and so it can be studied using Galois theory. On the other hand, while the objects of the related stack $\cAdm(G)$ are in some sense simpler to understand, the forgetful map $\cAdm(G)\rightarrow\ol{\cM(1)}$ is typically not representable, hence typically not finite, even above $\cM(1)$ (see \S\ref{sss_relation_between_cAdmG_MG}). Nonetheless, because $\cAdm(G)$ is both proper and carries a universal family of covers, it will be the central object of study. The main purpose of $\cM(G)$ is that it gives an approximation to $\cAdm(G)$ which allows us to use Galois theory to translate theorems about $\cAdm(G)$ into combinatorics.
\end{enumerate}
\end{remark}

\subsubsection{Review of $G$-structures}

Here we recall some of the salient features of the stacks $\cM(G)$.


\begin{thm}\label{thm_basic_properties} Let $G$ be a finite group. Let
$$\ff : \cM(G)\rightarrow\cM(1)$$
be the forgetful map. We work universally over $\bS = \Spec\bZ[1/|G|]$.
\begin{enumerate}[label=(\arabic*)]
    \item \label{part_etale} (\'{e}taleness) The category $\cM(G)$ is a Noetherian smooth separated Deligne-Mumford stack and the forgetful functor $\ff : \cM(G)\rightarrow\cM(1)$ is finite \'{e}tale.
    \item \label{part_coarse} (Coarse moduli and ramification) $\cM(G)$ admits a coarse moduli scheme $M(G)$ which is a normal affine scheme finite over $M(1)\cong\Spec\bZ[1/|G|][j]$, and smooth of relative dimension $1$ over $\bZ[1/|G|]$. Moreover, $M(G)$ is \'{e}tale over the complement of the sections $j = 0$ and $j = 1728$ in $M(1)$. If either $6\mid |G|$ or $S$ is a regular Noetherian $\bZ[1/|G|]$-scheme, then $M(G)\times_{\bZ[1/|G|]} S$ is the coarse moduli scheme of $\cM(G)\times_{\bZ[1/|G|]} S$, and is normal.
    \item \label{part_combinatorial} (Combinatorial description of $G$-structures) Let $\bL$ be the set of prime divisors of $|G|$. For any profinite group $\pi$, let $\pi^\bL$ denote the maximal pro-$\bL$-quotient of $\pi$. Let $E$ be an elliptic curve over $S$. Let $\ol{x}\in E^\circ$ be a geometric point, and let $\ol{s}\in S$ be the image of $\ol{x}$. The sequence $E^\circ_\ol{s}\hookrightarrow E\rightarrow S$ induces an outer representation
    $$\rho_{E,\ol{x}} : \pi_1(S,\ol{s})\rightarrow\Out(\pi_1^\bL(E^\circ_\ol{s},\ol{x}))$$
    from which we obtain a natural right action of $\pi_1(S,\ol{s})$ on the set
    $$\Epi^\ext(\pi_1^\bL(E^\circ_\ol{s},\ol{x}),G) := \Epi(\pi_1^\bL(E^\circ_\ol{s},\ol{x}),G)/\Inn(G)$$
    of surjective morphisms $\pi_1^\bL(E^\circ_\ol{s},\ol{x})\rightarrow G$ up to conjugation in $G$. By Galois theory this action corresponds to a finite \'{e}tale morphism $F\rightarrow S$, which fits into a cartesian diagram
    \[\begin{tikzcd}
    F\ar[d]\ar[r] & \cM(G)\ar[d,"\ff"]\\
    S\ar[r,"E/S"] & \cM(1)
    \end{tikzcd}\]	
    In particular, we obtain a bijection
    $$\cT_G(E/S)\rightiso \{\varphi\in\Epi^\ext(\pi_1(E^\circ_\ol{s},\ol{x}),G) \;\big|\; \varphi\circ\rho_{E,\ol{x}}(\sigma) = \varphi\quad\text{for all $\sigma\in\pi_1(S,\ol{s})$}\}.$$
    where recall that $\cT_G(E/S)$ is the set of $G$-structures on $E/S$.
    \item \label{part_fibers} (Fibers) Let $E$ be an elliptic curve over an algebraically closed field $k$ of characteristic not dividing $|G|$, and let $x_0\in E^\circ(k)$. Let $x_E : \Spec k\rightarrow\cM(1)$ be the geometric point given by $E$. The fiber $\mf{f}^{-1}(x_E)$ is in bijection with the set of connected $G$-torsors on $E^\circ$. Taking monodromy representations (see \S\ref{ss_higman_invariant}) gives a canonical bijection
	\begin{equation} \label{eq_fiber_bij_over_k}
    \ff^{-1}(x_E) \rightiso \Epi^{\ext}(\pi_1^{\et}(E^\circ,x_0),G).
    \end{equation}    
    If $E$ is an elliptic curve over $\bC$, then for $x_0\in E^\circ(\bC)$, write $\Pi := \pi_1^\tp(E^\circ(\bC),x_0)$. Taking monodromy representations gives a canonical bijection
    \begin{equation}\label{eq_fiber_bij_over_C}
	\ff^{-1}(x_E)\rightiso \Epi^{\ext}(\Pi,G).    	
    \end{equation}

    In particular, if $G$ is not generated by two elements, then $\cM(G)$ is empty. Let $a,b\in\Pi$ be generators. Let $\gamma_0,\gamma_{1728},\gamma_\infty,\gamma_{-I}\in\Aut(\Pi)$ be the automorphisms given by:
    $$\begin{array}{rcl}
\gamma_0 : (a,b) & \mapsto & (ab^{-1},a) \\
\gamma_{1728} : (a,b) & \mapsto & (b^{-1},a)
\end{array}\qquad
\begin{array}{rcl}
\gamma_\infty : (a,b) & \mapsto & (a,ab) \\
\gamma_{-I} : (a,b) & \mapsto & (a^{-1},b^{-1})
\end{array}$$    
    Let $f : \ol{M(G)}\rightarrow \ol{M(1)}_\bC$ be the map induced by $\ff$, where $\ol{M(G)}$ denotes a smooth compactification of $M(G)$ over $\bC$.\footnote{In fact we will see in Proposition \ref{prop_compactification}(e) below that $M(G)$ even admits a smooth modular compactification over $\bZ[1/|G|]$.} If $j(E)\ne 0,1728$, then viewing $\ol{M(1)}$ as the projective line with coordinate $j$, there is a bijection
    $$f^{-1}(j(E))\rightiso \Epi^\ext(\Pi,G)/\langle\gamma_{-I}\rangle$$
    such that via \eqref{eq_fiber_bij_over_C}, the map $\ff^{-1}(x_E)\rightarrow f^{-1}(j(E))$ induced by $\cM(G)\rightarrow M(G)$ is identified with the the canonical projection $\Epi^\ext(\Pi,G)\rightarrow \Epi^\ext(\Pi,G)/\langle\gamma_{-I}\rangle$. Letting $j(E)$ approach $j = 0,1728,\infty$ respectively, we also obtain bijections
    $$f^{-1}(0)\cong\Epi^\ext(\Pi,G)/\langle\gamma_0\rangle\qquad f^{-1}(1728)\cong\Epi^\ext(\Pi,G)/\langle\gamma_{1728}\rangle\qquad f^{-1}(\infty)\cong\Epi^\ext(\Pi,G)/\langle\gamma_{-I},\gamma_\infty\rangle$$

    \item \label{part_monodromy} (Monodromy) Let $E$ be an elliptic curve over $\bC$, $x_0\in E^\circ(\bC)$, and $\Pi := \pi_1^{top}(E^\circ(\bC),x_0)$. Let $x_E : \Spec\bC\rightarrow\cM(1)$ be the geometric point given by $E$. Then $\Pi$ is a free group of rank $2$, and the canonical map $\Pi\rightarrow H_1(E,\bZ)$ induces an isomorphism $\Pi/[\Pi,\Pi]\cong H_1(E,\bZ)$. Let $\Gamma_E$ denote the orientation-preserving mapping class group of $E^\circ(\bC)$, and let $\Out^+(\Pi)$ be the preimage of $\SL(H_1(E,\bZ))$ under the canonical map
    $$\alpha : \Out(\Pi)\rightarrow\GL(H_1(E,\bZ)).$$
    The outer action of $\Gamma_E$ on $\Pi$ is faithful and identifies $\Gamma_E$ with $\Out^+(\Pi)$. As $\alpha$ is an isomorphism, it induces canonical isomorphisms $\Gamma_E \rightiso \Out^+(\Pi) \rightiso \SL(H_1(E,\bZ))$. Note that the image of $\gamma_{-I}\in\Aut(\Pi)$ (see \ref{part_fibers}) in $\SL(H_1(E,\bZ))$ is central. The analytic theory identifies $\Gamma_E$ with the topological fundamental group of the analytic moduli stack of elliptic curves $\cM(1)^\an$, from which we obtain canonical isomorphisms $\Out^+(\Pi)^\wedge\rightiso\Gamma_E^\wedge\rightiso\pi_1(\cM(1)_{\Qbar},E)$ (where ${}^\wedge$ denotes profinite completion). In particular, we have a canonical map $\Out^+(\Pi)\hookrightarrow\pi_1(\cM(1)_{\Qbar},E)$ with dense image. Relative to this map, the bijection
    $$\ff^{-1}(x_E)\rightiso\Epi^\ext(\Pi,G)$$
    of \eqref{eq_fiber_bij_over_k} is $\Out^+(\Pi)$-equivariant.
    To summarize, we have canonical isomorphisms
    $$\pi_1^{\tp}(\cM(1)^\an,E) \cong\Gamma_E\cong \Out^+(\Pi)\cong \SL(H_1(E,\bZ))$$
    and
    $$\pi_1^{\et}(\cM(1)_{\Qbar},E) \cong \pi_1^{\tp}(\cM(1)^\an,E)^\wedge.$$
    \item \label{part_functoriality} (Functoriality) Let $\cC$ denote the category whose objects are finite groups generated by two elements, and whose morphisms are surjective homomorphisms. If $f : G_1\twoheadrightarrow G_2$ is a morphism in $\cC$, then we obtain a map $\cT_f^\pre : \cT_{G_1}^\pre\rightarrow\cT_{G_2}^\pre$ defined by sending the $G_1$-torsor $X^\circ\rightarrow E^\circ$ to the $G_2$-torsor $X^\circ/\ker(f)\rightarrow E^\circ$ where the $G_2$-action is given by the canonical isomorphism $G_1/\ker(f)\cong G_2$ induced by $f$. This induces a map $\cT_f : \cT_{G_1}\rightarrow\cT_{G_2}$, whence a map
$$\cM(f) : \cM(G_1)\rightarrow\cM(G_2)$$
The maps $\cM(f)$ make the rule sending $G\in\cC$ to the map $\cM(G)\rightarrow\cM(1)$ into an epimorphism-preserving functor from $\cC$ to the category\footnote{Here we mean the (1-)category associated to the (2,1)-category. Ie, the morphisms in this category are precisely the 2-isomorphism classes of 1-morphisms c.f. \cite[\S4]{Noo04}. We note that this category is equivalent to the category of finite locally constant sheaves on $\cM(1)$ (with respect to the \'{e}tale topology).} of stacks finite \'{e}tale over $\cM(1)$. Let $E,\Pi,\Gamma_E$ be as in (4), then in terms of the Galois correspondence for covers of $\cM(1)$, given a surjection $f : G_1\rightarrow G_2$, the induced map $\cM(G_1)\rightarrow\cM(G_2)$ (of $\bZ[1/|G|]$-stacks) is given by the $\Gamma_E$-equivariant map of fibers
    $$f_* : \Epi^{\ext}(\Pi,G_1)\rightarrow\Epi^{\ext}(\Pi,G_2)$$ 
    obtained by post-composing every surjection with $f$. 
    
    \item \label{part_cofinality} (Cofinality --- Asada's theorem) For any stack $\cM$ finite \'{e}tale over $\cM(1)_{\Qbar}$, there is a finite group $G$ such that $\cM$ is dominated by some connected component of $\cM(G)_{\Qbar}$. In particular, for any smooth projective curve $X$ over $\Qbar$, there is a component $\cM_X\subset\cM(G)_\Qbar$ and a finite \'{e}tale morphism $\cM_X\rightarrow X$. In particular, $\cM_X$ is a scheme. Here we can even arrange that $\cM_X\rightarrow X$ be Galois, and for the Galois action to be defined over the field of definition of $\cM_X$ as an element of $\pi_0(\cM(G)_\Qbar)$.
\end{enumerate}
\end{thm}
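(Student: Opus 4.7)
The statement is really a summary of standard facts, many of which are essentially already in the literature (\cite{DM69, Chen18, Ols12, Asada}), so my plan would be to outline how each part follows from the setup, emphasising where the nontrivial input enters.

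For \ref{part_etale}--\ref{part_combinatorial} I would start by showing that the presheaf $\cT_G^\pre$ of \eqref{eq_teichmuller_sheaf} is locally constant with finite fibers on $(\cM(1))_\et$, so its sheafification $\cT_G$ is finite locally constant. Concretely, fix the universal elliptic curve $\cE\rightarrow\cM(1)$ and look at the relative fundamental pro-groupoid of $\cE^\circ/\cM(1)$; the set of isomorphism classes of connected $G$-torsors on a geometric fiber $E^\circ_{\ol s}$ is precisely $\Epi^\ext(\pi_1^\bL(E^\circ_\ol{s},\ol{x}),G)$, which is finite because $\pi_1^\bL(E^\circ_\ol{s},\ol{x})$ has only finitely many quotients of order $|G|$ up to conjugation. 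The outer monodromy representation $\rho_{E,\ol{x}}$ acts on this finite set, and Galois theory of the finite étale site identifies the pullback $\cM(G)\times_{\cM(1)}S$ with the étale cover associated to this action --- giving \ref{part_etale} and \ref{part_combinatorial} simultaneously. For \ref{part_coarse}, Keel--Mori (Theorem \ref{thm_keel_mori}) gives the coarse scheme $M(G)$, finite over $M(1)$ because $\cM(G)\rightarrow\cM(1)$ is finite étale and $\cM(1)\rightarrow M(1)$ is proper; the étaleness over $M(1)-\{0,1728\}$ comes from the fact that the only non-trivial automorphisms of a generic elliptic curve is the $[-1]$-involution, which acts on $G$-structures via $\gamma_{-I}$ (see \ref{part_fibers}), and this is absorbed uniformly in the coarse space quotient; at $j=0,1728$ the extra cyclic automorphism groups of orders $6,4$ produce genuine ramification. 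The claim about $M(G)\times_{\bZ[1/|G|]}S$ giving the coarse scheme after base change uses that coarse space formation commutes with flat base change, plus the fact that when $6\mid |G|$ the stabilisers at $j=0,1728$ already act trivially on the fiber so the coarse space is actually obtained by a finite étale quotient.

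For \ref{part_fibers}, over an algebraically closed field $k$ the local-ring stalk computation gives $\cT_G^\pre(E/k)=\cT_G(E/k)$, which is exactly $\Epi^\ext(\pi_1^\et(E^\circ,x_0),G)$. Over $\bC$ the comparison theorem (Riemann existence plus $\pi_1^\et\cong\widehat{\pi_1^\tp}$) and the fact that surjections from $\Pi$ to a finite group factor through its profinite completion give the identification with $\Epi^\ext(\Pi,G)$. The descriptions of the fibres of the coarse map $f:\ol{M(G)}\rightarrow\ol{M(1)}_\bC$ over $0,1728,\infty$ then come from computing the stabilisers of points in $M(1)^\an$: at $j=0$ one has the order-$6$ automorphism $\zeta_6$, realised on $\Pi\cong\pi_1^\tp(E^\circ)$ by $\gamma_0$; at $j=1728$ the order-$4$ automorphism acts as $\gamma_{1728}$; and at the cusp $j=\infty$ one has the local monodromy $\gamma_\infty$ (a Dehn twist around the puncture) together with the involution $\gamma_{-I}$ that survives into the nodal limit. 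The fibre over a generic $j$ is then $\Epi^\ext(\Pi,G)/\langle\gamma_{-I}\rangle$ because the generic automorphism group of an elliptic curve is $\{\pm 1\}$.

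For \ref{part_monodromy} I would appeal to the Dehn--Nielsen--Baer theorem for the once-punctured torus, which says the outer action of $\Gamma_E = \MCG(E^\circ)$ on $\Pi$ identifies $\Gamma_E$ with $\Out^+(\Pi)$; the map $\alpha:\Out(\Pi)\rightarrow\GL(H_1(E,\bZ))$ is known to be an isomorphism for the free group of rank $2$ (Nielsen), giving $\Out^+(\Pi)\cong\SL_2(\bZ)$; finally the analytic uniformisation $\cM(1)^\an\cong[\cH/\SL_2(\bZ)]$ identifies $\pi_1^\tp(\cM(1)^\an,E)$ with $\Gamma_E$, and the GAGA/Riemann-existence comparison \cite{SGA1} gives the étale statement. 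The equivariance of the bijection in \eqref{eq_fiber_bij_over_k} is tautological from the definition of $\rho_{E,\ol{x}}$. Part \ref{part_functoriality} is a direct check: pushing out a $G_1$-torsor by a surjection $G_1\twoheadrightarrow G_2$ preserves geometric connectedness and commutes with monodromy representations, and the resulting natural transformation of presheaves descends to a morphism of stacks with the stated translation to the Galois side. The main obstacle --- and really the only non-elementary input --- is part \ref{part_cofinality}, Asada's theorem \cite{Asada}: every finite étale cover of $\cM(1)_\Qbar$ is dominated by some $\cM(G)$. This is deep; it is equivalent to saying that the map $\widehat{\SL_2(\bZ)}\rightarrow\pi_1^\et(\cM(1)_\Qbar)$ is surjective, or to the congruence kernel being ``generated by Dehn twists'' in the appropriate sense. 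Given this, the consequences about dominating an arbitrary smooth projective $X$ follow by taking a finite étale cover of $\cM(1)_\Qbar$ whose coarse space maps to $X$, replacing it by a Galois closure, and observing that any sufficiently large $G$ realises the required quotient.
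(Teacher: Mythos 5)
Your outline mostly retraces the content of the sources that the paper's own proof simply cites (the paper proves this theorem almost entirely by reference to \cite{Chen18}, \cite{BBCL20}, \cite{KM85}, \cite{OZ81}, Asada and Belyi, plus one short argument), but two of your justifications contain genuine errors or gaps. The most serious is the base-change claim in part (2). You invoke flat base change ``plus the fact that when $6\mid|G|$ the stabilisers at $j=0,1728$ already act trivially on the fiber.'' Neither half works: a regular Noetherian $\bZ[1/|G|]$-scheme $S$ need not be flat over $\bZ[1/|G|]$ (e.g.\ $S=\Spec\bF_p$ with $p\nmid|G|$), so flat base change says nothing about that case; and the stabilisers at $j=0,1728$ do \emph{not} act trivially on the fibers of $\cM(G)\to\cM(1)$ --- if they did, $M(G)\to M(1)$ would be unramified over $j=0,1728$, contradicting the ramification asserted in this same theorem and computed explicitly for $G=\SL_2(\bF_p)$ (where $6\mid|G|$) in Proposition \ref{prop_0_1728}. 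The actual role of $6\mid|G|$ is tameness: it excludes residue characteristics $2,3$, so the automorphism groups of objects of $\cM(G)$ (subgroups of $\Aut(E)$, of order dividing $4$ or $6$) are invertible on the base, and coarse-space formation for a tame stack commutes with arbitrary base change; the regular Noetherian case requires the separate argument of \cite[Proposition 3.3.4]{Chen18}. You also never address normality or smoothness of $M(G)$ over $\bZ[1/|G|]$ (the paper gets these from $M(G)$ being a finite-group quotient of a smooth representable moduli problem), and your étaleness argument over $j\ne 0,1728$ (``absorbed uniformly in the coarse space quotient'') is only a gesture at the gerbe-local computation; the paper instead passes to a representable finite étale cover obtained by adding full level $p^2$ structure and quotes \cite[Corollary 8.4.5]{KM85}. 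Your route can be made rigorous, but as written it is not an argument.

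In part (7) you correctly treat Asada's theorem as the deep input (as the paper does), but the deduction that \emph{every} smooth projective curve $X$ over $\Qbar$ receives a finite étale map from a component of some $\cM(G)_\Qbar$ requires Belyi's theorem to produce, in the first place, a finite étale cover of $\cM(1)_\Qbar$ related to $X$; you simply posit ``a finite étale cover of $\cM(1)_\Qbar$ whose coarse space maps to $X$,'' which is exactly the point at issue. The paper is explicit that this step is Asada's theorem \emph{together with} Belyi's theorem, and the final sentence of (7) (that the cover can be taken Galois with the Galois action defined over the field of definition of the component) is a separate result quoted from \cite{BBCL20}, not something obtained merely by ``replacing it by a Galois closure.''
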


\begin{proof} Part \ref{part_etale} is \cite[Proposition 3.1.4]{Chen18} . Everything in \ref{part_coarse} except for normality and \'{e}taleness is  \cite[Proposition 3.3.4]{Chen18}.  The normality of $M(G)$ follows from the fact that $M(G)$ is the quotient of a smooth representable moduli problem by a finite group  \cite[\S3.3.3]{Chen18}.  Let $U\subset M(1)$ be the complement of $j = 0,1728$; to see $M(G)$ is \'{e}tale over the complement of $U$, consider a finite \'{e}tale surjection $\cM\rightarrow\cM(G)$ with $\cM$ representable (we may for example take $\cM$ to be the product of $\cM(G)$ with the moduli stack of elliptic curves with full level $p^2$ structure for some $p\mid |G|$).
By \cite[Corollary 8.4.5]{KM85}, the map $\cM_U\rightarrow U$ is \'{e}tale, which implies the \'{e}taleness of $\cM(G)_U\rightarrow U$ \cite[02KM]{stacks}. 


Part \ref{part_combinatorial} is \cite[Proposition 2.2.6(1,2)]{Chen18}. The bijections \eqref{eq_fiber_bij_over_k},\eqref{eq_fiber_bij_over_C} of \ref{part_fibers} follows from part \ref{part_combinatorial}, setting $S = \Spec k$. For the rest of \ref{part_fibers}, see \cite[Proposition 2.1.2, Corollary 2.1.3]{BBCL20}.


For \ref{part_monodromy}, a theorem of Nielsen  gives that  $\alpha$ is an isomorphism \cite[Theorem 3.1]{OZ81}, and the isomorphism $\Gamma_E^\wedge\cong\pi_1(\cM(1)_{\Qbar},E)$ follows from the Riemann existence theorem for stacks \cite[Theorem 20.4]{noo05}. The rest of \ref{part_monodromy} is simply an unfolding of definitions. Part \ref{part_functoriality} is \cite[Proposition 3.2.8]{Chen18}. All but the final sentence of \ref{part_cofinality} is Asada's theorem together with Belyi's theorem (see \cite[Theorem 3.4.2]{Chen18}, \cite{BER11}, \cite[\S7]{asa01}). The final sentence is \cite[Theorem 2.2.3]{BBCL20}.
\end{proof}

\subsubsection{The relation between $\cAdm(G)$ and $\cM(G)$}\label{sss_relation_between_cAdmG_MG}

The key relation between $\cAdm(G)$ and $\cM(G)$ is that the map $\cAdm^0(G)\rightarrow\cM(1)$ is an \'{e}tale gerbe. First we show that it is \'{e}tale. This is a problem in deformation theory (see Proposition \ref{prop_deformation_theoretic_criterion_of_etaleness}). In fact, following  \cite[\S5]{BR11}, we can even describe the ramification indices at points lying over the ``cusp'' of $\ol{\cM(1)}$ represented by a nodal cubic. For this it will be useful to work with the equivalent stack $\ol{\cH}_G$ of stable marked $G$-curves (Definition \ref{def_stack_of_G_curves}).

\begin{prop}[{\cite[Theorem 5.1.5]{BR11}}]\label{prop_deformations} Let $k$ be an algebraically closed field (of characteristic coprime to $|G|$) and $\ol{x} : \Spec k\rightarrow\ol{\cH}_G$ be a geometric point with image $\ol{y}\in\ol{\cM(1)}$. The point $\ol{x}$ corresponds to a stable marked $G$-curve $(C/k,R)$, and $\ol{y}$ is given by the 1-generalized elliptic curve $E := C/G$ with origin $O = R/G$. The natural map $\ol{\cH}_G\rightarrow\ol{\cM(1)}$ given by taking quotients by $G$ induces a morphism from the deformation functor of $\ol{x}$ to that of $\ol{y}$. Let $\Lambda$ be the Cohen ring\footnote{$\Lambda = k$ if $\ch(k) = 0$, and otherwise it is the unique \cite[Theorem 29.2]{Mat89} complete discrete valuation ring with residue field $k$ and maximal ideal $p\Lambda$.} with residue field $k$. The deformation functors for $\ol{x},\ol{y}$ in $\ol{\cH}_{G,\Lambda}$ and $\ol{\cM(1)}_{\Lambda}$ are prorepresentable, and the induced map on universal deformation rings is given (with respect to suitable coordinates) by
\begin{eqnarray*}
\Lambda\ps{T} & \longrightarrow & \Lambda\ps{t}	\\
T & \mapsto & t^e
\end{eqnarray*}
where $e = 1$ if $C$ is smooth, and otherwise $e$ is the order of the stabilizer $G_p$ of any node $p\in C$. In particular, the map $\ol{\cH}_G\rightarrow\ol{\cM(1)}$ is flat and $\cH_G\rightarrow \cM(1)$ is \'{e}tale. Moreover, the substack $\cH_G\subset\ol{\cH}_G$ is open and dense, and the same is true of $\cAdm^0(G)\subset\cAdm(G)$.
\end{prop}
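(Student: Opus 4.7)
The plan is to identify the induced map on universal deformation rings by an étale-local analysis at the nodes of $C$, then to extract all the remaining claims as formal consequences.

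First I would invoke smoothness: $\ol{\cH}_G$ is a smooth Deligne-Mumford stack of pure relative dimension $1$ over $\bS$ by Theorem~\ref{thm_admG} combined with Theorem~\ref{thm_aGc_equals_smGc}, and $\ol{\cM(1)}$ is classically smooth of relative dimension $1$. Hence both deformation functors over $\Lambda$ are prorepresentable by formal power series rings in one variable, which I write as $\Lambda\ps{t}$ and $\Lambda\ps{T}$, and the forgetful map $(C,R)\mapsto (C/G,R/G)$ induces a local $\Lambda$-algebra map $\psi\colon \Lambda\ps{T}\to\Lambda\ps{t}$. In the smooth case, the étale-local form of Definition~\ref{def_admissible}\ref{part_admissible_local_marking} identifies infinitesimal deformations of $(C,R)$ with those of $(E,O)$ on tangent spaces (the marking is étale over the base, and the relative cotangent sheaf is canonically related to that of the base via the formula $\sigma_O^*\Omega_{E/S}\cong(\sigma^*\Omega_{C/S})^{\otimes e}$ sketched in \S\ref{sss_sketch_of_proof}), so $\psi$ is an isomorphism and $e=1$.

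The main local computation is the nodal case. I would pick a node $\ol{p}\in C$ with stabilizer $G_{\ol{p}}$ of order $e$, and use the balanced condition in Definition~\ref{def_admissible}\ref{part_admissible_balanced} to put coordinates $k\ps{\xi,\eta}/(\xi\eta)$ on the completed local ring at $\ol{p}$ such that a generator of $G_{\ol{p}}$ acts as $\xi\mapsto\zeta\xi$, $\eta\mapsto\zeta^{-1}\eta$ for a primitive $e$-th root of unity $\zeta$. The $G_{\ol{p}}$-invariants are then generated by $x:=\xi^e$ and $y:=\eta^e$, giving the completed local ring $k\ps{x,y}/(xy)$ of $E$ at $\pi(\ol{p})$. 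The universal $G_{\ol{p}}$-equivariant balanced smoothing would be $\Lambda\ps{t}\ps{\xi,\eta}/(\xi\eta-t)$ with $t$ forced to be $G_{\ol{p}}$-invariant; taking invariants yields $xy=(\xi\eta)^e=t^e$, so the corresponding smoothing of the node of $E$ is $xy=t^e$, i.e.\ $\psi(T)=t^e$ as asserted.

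From the form $T\mapsto t^e$, the ring $\Lambda\ps{t}$ is free of rank $e$ over $\Lambda\ps{T}$, which gives flatness of $\ol{\cH}_G\to\ol{\cM(1)}$; in the smooth case $e=1$ and the argument above gives étaleness of $\cH_G\to\cM(1)$. Openness of $\cH_G\subset\ol{\cH}_G$ is the standard openness of the smooth locus in a flat proper family, and density follows from the explicit computation, since the complement of $t=0$ in $\Spec\Lambda\ps{t}$ is dense and lies in $\cH_G$. The corresponding statement for $\cAdm^0(G)\subset\cAdm(G)$ is then immediate from Theorem~\ref{thm_aGc_equals_smGc}. The main subtlety I would handle carefully is the globalization step: one must verify that deformations of $(C,R)$ as an object of $\ol{\cH}_G$ are really controlled by the single smoothing parameter $t$ at a distinguished node, despite the possibility that the $G$-orbit of $\ol{p}$ contains several nodes and $(C,R)$ may carry nontrivial automorphisms. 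This reduces to a Schlessinger-type analysis of the equivariant obstruction space, which vanishes using the tameness hypothesis $\ch(k)\nmid|G|$ together with the stability of $(C,R)$; the $G$-action identifies the smoothing parameters at all nodes within a single $G$-orbit, so one truly obtains a one-dimensional deformation space.
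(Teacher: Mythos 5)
Your local computation at the node—taking the $G_{\ol{p}}$-equivariant balanced smoothing $\xi\eta = t$, passing to invariants, and reading off $xy = t^e$—matches the paper's computation and is the heart of the matter. But there are two places where your argument, as written, does not close the gap.

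First, the smooth case. You appeal to the isomorphism $\sigma_O^*\Omega_{E/S}\cong(\sigma^*\Omega_{C/S})^{\otimes e}$ and to the étale-local picture of markings to say that $\psi$ is an isomorphism. This is not a proof: that formula is a statement about conormal bundles along the ramified section, not about the deformation functors of $(C,R)$ and $(E,O)$. What is actually needed is an identification of the first-order deformations of $(C,R)$ as a stable marked $G$-curve with $H^1_G(C,\cT_C(-\cR_\pi))$, an identification of first-order deformations of $(E,O)$ with $H^1(E,\cT_E(-O))$, and a verification that the quotient map $\pi_*^G$ carries $\cT_C(-\cR_\pi)$ to $\cT_E(-O)$ (this is the local calculation at ramified markings, nodes, and generic points). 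The paper does precisely this via equivariant cohomology and a Čech-cohomological comparison, which is how one sees the map on tangent spaces is literally the cohomological isomorphism.

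Second, and more seriously, the globalization. You correctly flag this as the subtle point, but the mechanism you invoke—vanishing of the equivariant \emph{obstruction} space together with tameness—is not what makes the argument work. Vanishing of obstructions (which lives in a degree-$2$ group) gives smoothness of the deformation functor; it does not show that the global-to-local restriction map $D_{C,G}\to D_{C,G_{\ol{p}},\ol{p}}$ is injective, which is what you need to conclude that the single smoothing parameter $t$ determines the whole deformation. The kernel of the global-to-local map on tangent spaces is $H^1_G(C,\cT_C(-\cR_\pi))\cong H^1(E,\cT_E(-O))$, and this vanishes because $E$ is a \emph{stable} pointed nodal curve of genus one—that is, because the dual graph of $E$ has a loop, not because of tameness. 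Tameness is used elsewhere (exactness of $\pi_*^G$, identification of $\Ext^i_{\widehat{\cO},G_x}$ with $G_x$-invariants of $\Ext^i_{\widehat{\cO}}$), but the injectivity you need comes from stability of the base curve. Without this vanishing you have not ruled out first-order deformations of $(C,R)$ that are locally trivial at the node, and your claim that "one truly obtains a one-dimensional deformation space" identified with the node-smoothing parameter is unjustified. The rest of your argument (flatness from $T\mapsto t^e$ being finite free of rank $e$, openness of $\cH_G$ from flatness and properness, density from the miniversal picture, transfer to $\cAdm^0(G)\subset\cAdm(G)$ via the equivalence) is fine once this is repaired.
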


\begin{proof} This statement is a special case of \cite[Theorem 5.1.5]{BR11}. Here we sketch the argument in our situation. Let $D_{C,G}$ (resp. $D_E$) denote the deformation functor of $C$ as a stable marked $G$-curve (resp. of $E$ as a 1-generalized elliptic curve). Since $\ol{\cH}_{G},\ol{\cM(1)}$ are Deligne-Mumford, all deformation functors are prorepresentable, and the universal deformation rings are the completions of the \'{e}tale local rings of $\ol{x}\in\ol{\cH}_{G,\Lambda}$ and $\ol{y}\in\ol{\cM(1)}_\Lambda$ (see Proposition \ref{prop_elr2udr}). Because $\ol{\cH}_{G,\Lambda}$ and $\ol{\cM(1)}_\Lambda$ are smooth and 1-dimensional over $\Spec\Lambda$, the universal deformation rings are power series rings in one variable over $\Lambda$ \cite[0DYL]{stacks}. Let
$$\pi : C\rightarrow E := C/G$$
be the quotient map. Every deformation of $C$ yields by taking quotients a deformation of $E$, so $\pi$ induces a map $D_{C,G}\rightarrow D_E$. It remains to describe the induced map of universal deformation rings. The deformation theory of $C$ is described by equivariant cohomology (see \cite[\S3]{BM00} or \cite[\S3]{BM06}). We briefly recall some definitions. An $(\cO_C,G)$-module is a coherent sheaf which locally on an open affine $\Spec A$ is given by an $A$-module $M$ equipped with a $G$-action satisfying $g(am) = g(a)g(m)$ for any $g\in G,a\in A,m\in M$. Given an $(\cO_C,G)$-module $\cF$ on $C$,  let $\pi_*^G(\cF)$ be the module on $E$ given by $U\mapsto \Gamma(U,\pi_*\cF)^G$, and let $\Gamma^G(C,\cF) := \Gamma(C,\cF)^G$. Let $H_G^i(C,\cF) := R^i\Gamma^G\cF$. Let $\cT_C := \cHom_C(\Omega^1_{C/k},\cO_C)$ be the tangent sheaf. Since $\pi$ is finite and $|G|$ is invertible in $k$, $\pi_*^G : \Mod_{\cO_C,G}\rightarrow \Coh(E)$ is exact, and hence we obtain a canonical isomorphism
$$H^1_G(C,\cT_C(-\cR_\pi))\cong H^1(E,\pi_*^G\cT_C(-\cR_\pi)).$$


By a local calculation \cite[Proposition 4.1.11]{BR11}\footnote{See \cite[\S4.1.2]{BR11} for the definition of points of type I,II,III. In their notation $D = C/G$, and by $\theta_D(-\Delta)$ they mean $\theta_D(-B)$}, we have $\pi_*^G\cT_C(-\cR_\pi) = \cT_E(-O)$ from which we obtain a canonical isomorphism
\begin{equation}\label{eq_isom_deformation_spaces}
H^1_G(C,\cT_C(-\cR_\pi))\cong H^1(E,\cT_E(-O)).
\end{equation}

If $C$ (equivalently $E$) is smooth, $H^1(E,\cT_E(-O))$ is the tangent space of $D_E$ (see \cite[Theorem 5.3]{HartDT} for the unmarked case, also see \cite[\S XI.3]{ACGII}), and $H^1_G(C,\cT_C(-\cR_\pi))$ is the tangent space of $D_{C,G}$ \cite[Proposition 3.2.1]{BM00}. By \eqref{eq_isom_deformation_spaces} these tangent spaces are isomorphic (and 1-dimensional), though this does not tell us that the map on deformation spaces induced by $C\mapsto C/G$ induces an isomorphism. To check this, one can use the fact that the deformations of $E$ and $C$ can be described explicitly by Cech cohomology (equivariant in the case of $C$ (see \cite[\S3.1]{BM00} and \cite[\S5.5]{Gro57}), but the added complication is minimal due to the invertibility of $|G|$ on $k$). There is a natural map of Cech complexes induced by the map $C\rightarrow E$, which (a) induces the isomorphism \eqref{eq_isom_deformation_spaces}, and (b) is easily seen to agree with the map on tangent spaces $D_{C,G}(k[\epsilon])\rightarrow D_E(k[\epsilon])$ induced by $C\mapsto E$. This shows that $\cH_{G,\Lambda}\rightarrow\cM(1)_\Lambda$ induces an isomorphism of deformation rings and hence is \'{e}tale by Proposition \ref{prop_deformation_theoretic_criterion_of_etaleness}. Since $\cH_G\rightarrow\cM(1)$ is flat (Theorem \ref{thm_admG}(b)), this implies that $\cH_G\rightarrow\cM(1)$ is \'{e}tale. 



Now suppose $C$ is nodal. Let $\cC_\Lambda$ be the category of Artin local $\Lambda$-algebras. If $x\in C$ is a node with stabilizer $G_x$, then the local deformation functor $D_{C,G_x,x}$ of $C$ at $x$ \cite[\S3]{BM06} is given by sending an object $A\in\cC_\Lambda$ to the set of deformations of $\what{\cO_{C,x}}\cong k\ps{u,v}/(uv)$ over $A$ as an $A$-algebra with $G_x$-action. The natural ``global-to-local'' morphism $D_{C,G}\rightarrow D_{C,G_x,x}$ is \emph{smooth} (see \cite[Theorem 4.3]{BM06} for the unmarked case). The map on tangent spaces $D_{C,G}(k[\epsilon])\rightarrow D_{C,G_x,x}(k[\epsilon])$ can be identified with the map
$$\varphi : \Ext^1_{\cO_C,G}(\Omega^1_{C/k},\cO_C(-\cR_\pi))\lra \Ext^1_{\what{\cO}_{C,x},G_x}(\what{\Omega}_{\what{\cO}_{C,x}/k},\what{\cO}_{C,x})$$
coming from the local-to-global (equivariant) Ext spectral sequence, which is surjective with kernel $H^1_G(C,\cT_C(-\cR_\pi)) = H^1(E,\cT_E(-O)) = 0$ (see \cite[Lemme 4.1]{BM06} for the unmarked case), so the map $D_{C,G}\rightarrow D_{C,G_x,x}$ induces an isomorphism on tangent spaces.


Since $G$ is invertible in $k$, $\Ext^i_{\what{\cO}_{C,x},G_x}(-,-) = \Ext^i_{\what{\cO}_{C,x}}(-,-)^G$ for all $i\ge 0$. The fact that the $G$-action is balanced at $x$ implies that the $G$-action on $\Ext^1_{\what{\cO}_{C,x}}(\what{\Omega}_{\what{\cO}_{C,x}/k},\what{\cO}_{C,x})$ is trivial (see \cite[\S5.2]{BM06} and \cite[Theorem 5.1.1]{BR11}). On the other hand, this latter group is also the tangent space of the usual (non-equivariant) local deformation functor of a node. In fact, if $D_{C,x}$ denotes the usual deformation functor of the node $x$ without $G_x$-action, then it can be shown using the results of \cite{ST18}, that  that the forgetful map $D_{C,G_x,x}\rightarrow D_{C,x}$ is an \emph{isomorphism}, and that a miniversal family for $D_{C,G_x,x}$ is given by $\Lambda\ps{U,V,T}/(UV-T)$ with $G_x$ action given by $gU = \chi(g)U, gV = \chi(g)^{-1}V$ for some primitive character $\chi : G_x\rightarrow k^\times$. In particular $\Lambda\ps{U,V,T}/(UV-T)$ (without $G_x$-action) defines a miniversal family for $D_{C,x}$.



Let $R_{C,G,x} = \Lambda\ps{t}$ be a miniversal ring of $D_{C,G,x}$, and let $\ul{R}_{C,G,x}$ denote the corresponding functor on $\cC_\Lambda$. By versality, we obtain a morphism $D_{C,G}\rightarrow \ul{R}_{C,G,x}$ which must be an isomorphism since it induces an isomorphism on tangent spaces and both functors are prorepresented by regular complete local rings of the same dimension. Let $y\in E$ be the node lying under $x$. Let $D_{E,y}$ be the local deformation functor of the node $y\in E$, with miniversal family given by $R_{E,y} := \Lambda\ps{T}\rightarrow \Lambda\ps{U,V,T}/(UV-T)$. Then similarly we have $D_{E,y}\cong\ul{R}_{E,y}$. Thus, the map $D_{C,G,x}\rightarrow D_{E,y}$ induced by $D_{C,G}\rightarrow D_E$ can be computed by examining the induced map on miniversal families at the nodes. From the local picture of a balanced node, choosing appropriate coordinates, this map on miniversal families is given by
\begin{eqnarray*}
\Lambda\ps{U,V,T}/(UV-T) & \longrightarrow & \Lambda\ps{u,v,t}/(uv-t) \\	
(U,V,T) & \mapsto & (u^e,v^e,t^e)
\end{eqnarray*}
where $e = |G_x|$, and hence the map on miniversal rings is given by
\begin{eqnarray*}
R_{E,y} = \Lambda\ps{T} & \longrightarrow & \Lambda\ps{t} = R_{C,G,x}\\
T & \mapsto & t^e
\end{eqnarray*}
which via the isomorphisms $D_{C,G}\cong \ul{R}_{C,G,x}$ and $D_{E,y}\cong\ul{R}_{E,y}$ also computes the map on universal deformation rings induced by $D_{C,G}\rightarrow D_E$. Finally, the form of the miniversal families given above implies that any point of $\ol{\cH_G}$ corresponding to a nodal curve is the specialization of a point corresponding to a smooth curve. Thus, $\cH_G$ (resp. $\cAdm^0(G)$) are open and dense inside $\ol{\cH}_G$ (resp. $\cAdm(G)$).
\end{proof}



\begin{cor}\label{cor_empty_if_not_2_gen} If $G$ cannot be generated by two elements, then $\cAdm(G)$ is empty.
\end{cor}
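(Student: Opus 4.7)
The plan is to reduce to the smooth locus and then extract a surjection from a free group of rank $2$ onto $G$ from any hypothetical geometric point. By Proposition \ref{prop_deformations}, the substack $\cAdm^0(G)\subset\cAdm(G)$ is open and dense, so it suffices to show $\cAdm^0(G)$ is empty. Assume for contradiction that $\cAdm^0(G)$ has a geometric point $\bar{x}\colon\Spec k\to\cAdm^0(G)$. Since we work over $\bS = \Spec\bZ[1/|G|]$, we automatically have $\ch(k)\nmid |G|$, and $\bar{x}$ corresponds to a smooth admissible $G$-cover $\pi\colon C\to E$ of an elliptic curve $E/k$.

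Next I would unfold what an admissible cover gives us on the generic locus. By Definition \ref{def_admissible}\ref{part_admissible_torsor}, the restriction $\pi^\circ\colon \pi^{-1}(E^\circ)\to E^\circ$ (with $E^\circ := E\setminus\{O\}$) is a $G$-torsor. Because $C$ is prestable with connected geometric fibers, it is connected, and removing the finite set $\pi^{-1}(O)$ preserves connectedness; hence $\pi^\circ$ is a \emph{connected} $G$-torsor over $E^\circ$. By the Galois correspondence for the \'etale fundamental group, connected $G$-torsors over $E^\circ$ correspond, up to $\Inn(G)$-conjugacy, to continuous surjections $\pi_1^{\et}(E^\circ,\bar{x}_0)\twoheadrightarrow G$.

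To conclude, I would invoke the structure of the fundamental group. Since $|G|$ is invertible in $k$, any continuous map $\pi_1^{\et}(E^\circ,\bar{x}_0)\to G$ factors through the tame (equivalently, prime-to-$\ch(k)$) quotient. By Grothendieck's specialization theorem (SGA~1), this tame quotient is the prime-to-$\ch(k)$ completion of $\pi_1^{\tp}(E_\bC^\circ,\star)$, which is free of rank $2$; in particular it is topologically generated by $2$ elements. The existence of a continuous surjection from such a group onto the finite group $G$ exhibits $G$ as a $2$-generator group, contradicting the hypothesis.

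The main obstacle is cosmetic rather than substantive: one must make sure that the same rank-$2$ conclusion applies in positive characteristic, which is exactly the content of Grothendieck's comparison theorem for tame fundamental groups of smooth affine curves. Every other step is a direct application of the definition of admissible covers together with the density statement for $\cAdm^0(G)\subset\cAdm(G)$ in Proposition \ref{prop_deformations}.
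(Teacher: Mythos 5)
Your proof is correct and follows essentially the same path as the paper's: reduce to $\cAdm^0(G)$ via density (Proposition~\ref{prop_deformations}), extract a connected $G$-torsor over $E^\circ$, apply Galois theory to get a surjection from $\pi_1^{\et}(E^\circ)$, and observe that the maximal prime-to-$p$ quotient is topologically $2$-generated by SGA~1, Expos\'{e}~X. The only cosmetic difference is that you invoke the comparison with the topological fundamental group over $\bC$, while the paper cites the $2$-generation of the prime-to-$p$ quotient directly, but the content is identical.
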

\begin{proof} By Proposition \ref{prop_deformations}, $\cAdm^0(G)\subset\cAdm(G)$ is dense, so it suffices to show that if $G$ is not 2-generated, then $\cAdm^0(G)$ is empty. If $E$ is an elliptic curve over an algebraically closed field $k$ (of characteristic prime to $|G|$, since we're working universally over $\bS = \Spec\bZ[1/|G|]$), then an admissible $G$-cover $\pi : C\rightarrow E$ corresponds by Galois theory to a surjection $\pi_1(E^\circ_k)\rightarrow G$. Since $G$ is prime to $p$, this surjection factors through the maximal prime-to-$p$ quotient of $\pi_1(E^\circ_k)$, which is 2-generated \cite[Expos\'{e} X, Corollaire 3.10]{SGA1}, so $G$ must be 2-generated, as desired.
\end{proof}

While $\cAdm^0(G)$ is \'{e}tale over $\cM(1)$, unlike $\cM(G)$, the map $\cAdm^0(G)\rightarrow\cM(1)$ is generally not representable, hence it is generally not finite. The obstruction to representability is the vertical automorphism groups:

\begin{defn}\label{def_vertical_automorphisms} For a map of algebraic stacks $f : \cX\rightarrow\cY$ and a $T$-valued point $t : T\rightarrow\cX$, there is a homomorphism
$$f_* : \Aut_{\cX(T)}(t)\rightarrow\Aut_{\cY(T)}(f(t))$$
The \emph{vertical automorphism group} of $t$ (relative to $f$) is by definition the kernel of $f_*$. Thus if $\cX,\cY$ are Deligne-Mumford then $f$ is representable if and only if $f_*$ is injective on geometric points \cite[Lemma 4.4.3]{ACV03} (equivalently, the vertical automorphism groups of geometric points are trivial).


For a $T$-valued point $t : T\rightarrow\cX$ where $\cX$ is a stack equipped with a map to $\ol{\cM(1)}$ (this will essentially always be the case in this paper), its \emph{vertical automorphism group} is by default defined to be its vertical automorphism group relative to the map to $\ol{\cM(1)}$, and we will denote it by $\Aut^v(t)$. 
\end{defn}

For a geometric point $\Spec\Omega\rightarrow\cAdm^0(G)$ corresponding to an admissible $G$-cover $\pi : C\rightarrow E$, its vertical automorphism group is the group of the $G$-equivariant automorphisms $\sigma$ of $C$ which induce the identity on $E$ - i.e., which satisfiy $\pi\circ\sigma = \pi$. Any $G$-equivariant automorphism $\sigma$ of $C$ inducing the identity on $E$ restricts to an automorphism of the $G$-torsor $\pi : \pi^{-1}(E_\gen)\rightarrow E_\gen$. Since $C$ is smooth and connected, it is irreducible, so $\pi^{-1}(E_\gen)$ is also irreducible, so every automorphism of $\pi^{-1}(E_\gen)\rightarrow E_\gen$ is given by the action of some $g\in G$. Since the automorphism is $G$-equivariant, we must have $g\in Z(G)$. We have proved:

\begin{prop}\label{prop_smooth_vertical_automorphisms} The vertical automorphism groups of geometric points of $\cAdm^0(G)$ are isomorphic to $Z(G)$.	
\end{prop}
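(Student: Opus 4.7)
The plan is to argue entirely along the lines already foreshadowed in the paragraph preceding the proposition, but with the Galois-theoretic input stated crisply. Let $\xi : \Spec\Omega \to \cAdm^0(G)$ be a geometric point, corresponding to a smooth admissible $G$-cover $\pi : C \to E$, where $E$ is an elliptic curve over the algebraically closed field $\Omega$. The vertical automorphism group $\Aut^v(\xi)$ consists of those $G$-equivariant automorphisms $\sigma : C \to C$ (for the right $G$-action on $C$) satisfying $\pi \circ \sigma = \pi$. The first step is to produce a canonical homomorphism $\Aut^v(\xi) \to Z(G)$, and the second step is to produce an inverse.

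For the first step, restrict $\sigma$ to the generic locus to obtain a $G$-equivariant automorphism of the restricted cover $\pi : \pi^{-1}(E_\gen) \to E_\gen$. Because $C/\Omega$ is smooth and connected, hence irreducible, the open subscheme $\pi^{-1}(E_\gen)$ is also irreducible, so $\pi : \pi^{-1}(E_\gen) \to E_\gen$ is a connected $G$-torsor, i.e.\ a connected Galois \'etale cover with Galois group $G$. By Galois theory, every automorphism of this cover over $E_\gen$ is given by right multiplication by a unique element $g \in G$. The condition that $\sigma$ commute with the right $G$-action forces $g h = h g$ for every $h \in G$, i.e.\ $g \in Z(G)$. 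This gives a well-defined map $\Aut^v(\xi) \to Z(G)$, which is injective: two vertical automorphisms that agree on the dense open $\pi^{-1}(E_\gen)$ agree on all of $C$ by separatedness.

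For surjectivity, given $z \in Z(G)$, right multiplication by $z$ on $C$ is an $\Omega$-automorphism of $C$; it is $G$-equivariant because $z$ is central, and it descends to the identity on the quotient $C/G \cong E$ (by Proposition \ref{prop_ac2smGc}) since it is given by a Galois element over each geometric point of $E$. Hence it lies in $\Aut^v(\xi)$ and maps to $z$. The only step that requires care is the one-to-one correspondence between $G$-equivariant automorphisms of $\pi^{-1}(E_\gen) \to E_\gen$ and elements of $Z(G)$, which rests on the connectedness of $\pi^{-1}(E_\gen)$; this is the only place where the hypothesis $\xi \in \cAdm^0(G)$ (as opposed to the boundary, where $C$ may be reducible) is used, and it is the main conceptual obstacle behind the statement.
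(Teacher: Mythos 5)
Your argument is correct and follows the paper's own reasoning (given in the paragraph immediately preceding the proposition): restrict a vertical automorphism to the $G$-torsor over $E_\gen$, use irreducibility of the total space to identify it with translation by an element of $G$, and then use $G$-equivariance to force that element into $Z(G)$. You simply spell out the injectivity and surjectivity steps that the paper leaves implicit, but the key ideas — irreducibility of $\pi^{-1}(E_\gen)$ and the centrality constraint from $G$-equivariance — are identical.
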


Thus, if one takes $\cAdm^0(G)$ and considers all morphisms as defined ``modulo $Z(G)$'', then the corresponding fibered category should be representable over $\cM(1)$. This is achieved by the process of \emph{rigidification} (see \cite[\S5]{ACV03}, \cite[\S5]{Rom05}). The statement is the following:


\begin{thm}[{\cite[Theorem 5.1.5]{ACV03}}]\label{thm_rigidification} Let $H$ be a flat finitely presented separated group scheme over $\bS$, and let $\cX$ be an algebraic stack over $\bS$. Assume that for each object $\xi\in\cX(S)$, there is an embedding
$$i_\xi : H(S)\hookrightarrow\Aut_S(\xi),$$
which is compatible with pullback, in the following sense: Let $\phi : \xi\rightarrow\eta$ be a morphism in $\cX$ lying over a morphism of schemes $f : S\rightarrow T$, and let $g\in H(T)$; we require that the following diagram commutes:
\[\begin{tikzcd}
\xi\ar[r,"\phi"]\ar[d,"i_\xi(f^*g)"'] & \eta\ar[d,"i_\eta(g)"] \\
\xi\ar[r,"\phi"] & \eta
\end{tikzcd}\]
Since $\cX$ is fibered in groupoids, this implies that $i_\xi(f^*g) = \phi^*i_\eta(g)$. Then, the rigidification of $\cX$ by $H$ is a stack $\cX\fs H$, equipped with a smooth surjective finitely presented morphism $\cX\rightarrow \cX\fs H$ which satisfies

\begin{itemize}
\item[(a)] For any object $\xi\in\cX(S)$ with image $\eta\in (\cX\fs H)(S)$, we have that $H(S)$ lies in the kernel of $\Aut_S(\xi)\rightarrow\Aut_S(\eta)$.
\item[(b)] The morphism $\cX\rightarrow\cX\fs H$ is a gerbe and is universal for morphisms of stacks $\cX\rightarrow\cY$ satisfying (a) above.
\item[(c)] If $S$ is the spectrum of an algebraically closed field, then in (1), we have $\Aut_S(\eta) = \Aut_S(\xi)/H(S)$.
\item[(d)] If $c : \cX\rightarrow X$ is a coarse moduli space for $\cX$, then by (b) it factorizes through a unique morphism $c' : \cX\fs H\rightarrow X$, which is also a coarse moduli space for $\cX\fs H$. In particular, $\cX\rightarrow\cX\fs H$ induces a homeomorphism on topological spaces.
\item[(e)] If $\cX$ is Deligne-Mumford, then so is $\cX\fs H$, and the map $\cX\rightarrow\cX\fs H$ is \'{e}tale.
\item[(f)] If $\cX$ is smooth (resp. proper), then $\cX\fs H$ is smooth (resp. proper).
\end{itemize}
\end{thm}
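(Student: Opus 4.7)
The plan is to construct $\cX\fs H$ by quotienting out $H$ from all automorphism groups in $\cX$ and then stackifying. First, I would define a category $\cX^\pre$ fibered in groupoids over $\Sch/\bS$ with the same objects as $\cX$, but with morphism sets
$$\Hom_{\cX^\pre(S)}(\xi,\eta) := \Hom_{\cX(S)}(\xi,\eta)/H(S),$$
where $H(S)$ acts by post-composition with $i_\eta$. The compatibility hypothesis forces $i_\eta(g)\circ\phi = \phi\circ i_\xi(g)$ for every morphism $\phi:\xi\to\eta$, so this action coincides with pre-composition by $i_\xi$; in particular, composition of equivalence classes is well-defined and pullback descends. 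I would then define $\cX\fs H$ to be the stackification of $\cX^\pre$ with respect to the fppf topology on $\Sch/\bS$, with $q : \cX\to\cX\fs H$ the canonical morphism.

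Properties (a) and (b) follow from this construction: (a) is built in, and for (b), any morphism $F : \cX\to\cY$ killing the $i_\xi(H(S))$ factors uniquely through $\cX^\pre$ by the universal property of quotients on hom-sets, and then through its stackification. That $q$ is a gerbe reduces to local surjectivity on objects (immediate) together with the statement that for $\xi,\eta\in\cX(S)$ whose images become isomorphic in $\cX\fs H$, the sheaf $\mathcal{I}som_\cX(\xi,\eta)$ is fppf-locally an $H$-torsor, which is precisely what our definition of morphisms in $\cX^\pre$ encodes. Property (c) is then immediate: over an algebraically closed field stackification does not alter the fiber category, so $\Aut(\eta) = \Aut(\xi)/H(S)$. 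Property (d) follows because $q$ is universal among morphisms satisfying (a), so any coarse moduli map $c:\cX\to X$ factors uniquely through $\cX\fs H$ and the induced $c':\cX\fs H\to X$ inherits the coarse moduli universal property; the homeomorphism on topological spaces holds since $q$ is a gerbe, hence in particular a universal homeomorphism of underlying spaces.

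For (e), I would argue that if $\cX$ is Deligne-Mumford then its inertia is unramified, so the embedded $H$ is unramified, and being also flat and finitely presented over $\bS$ it must be étale. Étale-locally on $\cX\fs H$ the gerbe $q$ trivializes as $BH\times U\to U$ for some scheme $U$, so $q$ is étale, and the Deligne-Mumford property of $\cX\fs H$ follows by descent of the unramified diagonal along the étale surjection $q$. Property (f) is then standard descent: smoothness of $\cX\fs H$ descends along the smooth surjection $q$, and properness descends because $q$ is surjective and $\cX$ is assumed proper.

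The main obstacle will be the stackification step: one must verify that the stackification of $\cX^\pre$ is actually an algebraic stack (not merely a stack in groupoids), and that it inherits the Deligne-Mumford property when $\cX$ does. Concretely this amounts to producing smooth (or étale) atlases for $\cX\fs H$ out of atlases for $\cX$, using the gerbe structure of $q$ to descend representability. Once this technical backbone is in place --- in effect, a careful fppf-local analysis of the $H$-gerbe $q$ --- all of (a)--(f) follow formally from universal properties and descent.
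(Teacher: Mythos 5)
Your construction — forming a pre-stack by quotienting morphism sets by the $H$-action and then stackifying — is exactly the construction given in \cite[\S 5.1]{ACV03}, and the paper's own "proof" consists almost entirely of a citation to \cite[Theorem 5.1.5]{ACV03} (supplemented only by a short argument for the gerbe property in (b) and for part (f)). So you are reconstructing from scratch what the paper simply cites. That is fine in principle, but there are two concrete gaps.

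First, for properness in part (f), "properness descends because $q$ is surjective and $\cX$ is assumed proper" is not a valid inference. Surjectivity of $q$ gives universal closedness of $\cX\fs H \to \bS$ (closed image of a universally closed source), and finite typeness is inherited because $q$ is smooth of finite presentation, but nothing in that one line addresses separatedness (properness of the diagonal) of $\cX\fs H$ over $\bS$. Rigidification changes the inertia, hence the diagonal, so separatedness must actually be argued. The paper handles exactly this point by invoking the valuative criterion for $\cX\fs H \to \bS$ together with the fact that $q$ is a gerbe; your write-up omits this.

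Second, you correctly identify the stackification step — showing that the stackification of $\cX^\pre$ is an \emph{algebraic} stack, with an atlas built from one for $\cX$ — as "the main obstacle," but then you leave it as a promissory note rather than resolving it. Since this is precisely the technical content of \cite[Theorem 5.1.5]{ACV03}, leaving it unresolved means the proposal has not actually established the theorem. A smaller remark: in your argument for (e), the claim that the embedding of $H$ into the inertia of a Deligne--Mumford $\cX$ forces $H$ to be unramified (hence étale) only lets you conclude this over the image of $\cX$ in $\bS$; for the statement you actually need — that $q:\cX\to\cX\fs H$ is étale — this suffices, but the phrasing "being also flat and finitely presented over $\bS$ it must be étale" overstates what the argument gives globally.
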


\begin{remark} In \cite{ACV03} this rigidification would be denoted $\cX^H$, but since it seems closer to taking a quotient than taking fixed points, we will use the notation ``$\cX\fs\; H$'' as introduced in \cite{Rom05}.
\end{remark}

\begin{proof} Everything but (f) and the gerbiness in (b) is \cite[Theorem 5.1.5]{ACV03}. That $\cX\rightarrow\cX\fs H$ is a gerbe follows from the explicit description of $\cX\fs H$ given in \cite[\S5.1.7]{ACV03}, the key fact being that sheafification/stackification is locally surjective on sections/objects. If $\cX$ is smooth, then it admits a smooth covering by a smooth $\bS$-scheme $U\rightarrow \cX$, but then by $U\rightarrow\cX\rightarrow\cX\fs H$ is a smooth covering as well, so $\cX\fs H$ is also smooth. If $\cX$ is proper, then using the fact that $\cX\rightarrow\cX\fs H$ is a gerbe, it is straightforward to check that $\cX\fs H\rightarrow\bS$ satisfies the valuative criteria of properness \cite[0CLZ]{stacks}.
\end{proof}

Applying the theorem to $H = Z(G)$, by Theorem \ref{thm_rigidification}(b) we obtain a canonical factorization
\begin{equation}\label{eq_rigidification_factorization}
	\cAdm(G)\rightarrow\cAdm(G)\fs Z(G)\rightarrow\ol{\cM(1)}
\end{equation}

\begin{defn}\label{def_MGbar} Let $\ol{\cM(G)} := \cAdm(G)\fs Z(G)$. As usual we will write $\ol{M(G)}$ for its coarse moduli space.	
\end{defn}

Next we record some of the basic properties of $\ol{\cM(G)}$ and its relation to $\cAdm(G)$.
\begin{prop}\label{prop_compactification} We work over $\bS = \Spec\bZ[1/|G|]$. Let $\ol{\cM(G)}^0 := \cAdm^0(G)\fs Z(G)$ be the open substack classifying smooth covers.
\begin{itemize}
	\item[(a)] The map $\cAdm(G)\rightarrow\ol{\cM(G)}$ induces an isomorphism on coarse spaces $\Adm(G)\rightiso \ol{M(G)}$. In particular, it induces a homeomorphism on topological spaces. Both stacks are empty if $G$ cannot be generated by two elements.
	\item[(b)] Let $\varphi$ be the map
	$$\varphi : \cAdm^0(G)\rightarrow\cM(G)$$
	sending an admissible cover $\pi : C\rightarrow E$ to the $G$-structure on $E$ determined by the $G$-torsor $C_\gen\rightarrow E_\gen = E^\circ := E - O$. Then $\varphi$ is an \'{e}tale gerbe and factors through an isomorphism
	$$\ol{\cM(G)}^0 := \cAdm^0(G)\fs Z(G)\rightiso\cM(G)$$
	In particular, we find that $\cM(G)$ is naturally isomorphic to an open dense substack of $\ol{\cM(G)}$.
	\item[(c)] The stack $\ol{\cM(G)}$ is smooth and proper of relative dimension 1 over $\bS$.
	\item[(d)] The map $\ol{\cM(G)}\rightarrow\ol{\cM(1)}$ is flat, proper, and quasi-finite.
	\item[(e)] The scheme $\ol{M(G)}$, and hence $\Adm(G)$, is smooth and proper of relative dimension 1 over $\bS$.
	\item[(f)] Let $f : G_1\rightarrow G_2$ be a surjection of finite 2-generated groups, inducing a canonical isomorphism $\psi_f : G_2\rightiso G_1/\Ker(f)$. Consider the functor
	$$f_* : \cAdm(G_1)\rightarrow\cAdm(G_2)$$
	sending an admissible $G_1$-cover $\pi : C\rightarrow E$ to the cover $C/\Ker(f)\rightarrow E$ equipped with an action of $G_2$ via $\psi_f$. Its behavior on morphisms is defined using the universal property of quotients. Then ${f_*}$ induces a proper surjective and quasi-finite map
	$$\ol{\cM(f)} : \ol{\cM(G_1)}\rightarrow\ol{\cM(G_2)}$$
	of stacks over $\ol{\cM(1)}$ whose restriction to $\cM(G_1)$ agrees with the map described in Theorem \ref{thm_basic_properties}(6).
\end{itemize}
\end{prop}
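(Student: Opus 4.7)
The plan is to treat parts (a)--(e) as essentially formal consequences of the rigidification theorem (Theorem \ref{thm_rigidification}) combined with Theorems \ref{thm_admG} and \ref{thm_basic_properties}, together with the $Z(G)$-substructure on $\cAdm(G)$ provided by Proposition \ref{prop_smooth_vertical_automorphisms}, and to handle (f) by direct construction plus Gasch\"utz's lemma.

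Part (a) is immediate from Theorem \ref{thm_rigidification}(d) and Corollary \ref{cor_empty_if_not_2_gen}. The substantive content lies in (b). I would define $\varphi$ on objects by sending an admissible cover $\pi : C \to E$ to the $G$-structure determined by the geometrically connected $G$-torsor $C_\gen \to E_\gen$ (Definition \ref{def_admissible}(3)), and on morphisms by functoriality. The key claims to verify are: (i) $\varphi$ is \'etale, by cancellation from Proposition \ref{prop_deformations} and Theorem \ref{thm_basic_properties}(1); (ii) the vertical automorphism group of $\varphi$ at each geometric point equals $Z(G)$, which follows immediately from Proposition \ref{prop_smooth_vertical_automorphisms} together with the representability of $\cM(G) \to \cM(1)$ (so that vertical automorphisms relative to $\varphi$ coincide with vertical automorphisms relative to $\cM(1)$); and (iii) $\varphi$ is essentially surjective \'etale-locally --- given a $G$-structure, one passes to an \'etale cover on which it is represented by an actual $G$-torsor on $E^\circ$, and extends this torsor across the origin $O$ by normalization to obtain an admissible $G$-cover. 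These three together show $\varphi$ is an \'etale $Z(G)$-gerbe, so by the universal property of rigidification (Theorem \ref{thm_rigidification}(b)) one obtains the desired isomorphism $\ol{\cM(G)}^0 = \cAdm^0(G) \fs Z(G) \rightiso \cM(G)$.

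Parts (c)--(e) are then formal. For (c), apply Theorem \ref{thm_rigidification}(e),(f) to Theorem \ref{thm_admG}(a). For (d), the smooth surjective gerbe $\cAdm(G) \to \ol{\cM(G)}$ is faithfully flat with zero-dimensional fibers, so flatness, properness, and quasi-finiteness of $\ol{\cM(G)} \to \ol{\cM(1)}$ descend from those of $\cAdm(G) \to \ol{\cM(1)}$ (Theorem \ref{thm_admG}(b)). For (e), by (a) the coarse moduli $\ol{M(G)}$ coincides with $\Adm(G)$, which is a scheme (Theorem \ref{thm_admG}(c)); combined with (c), the smoothness and properness of $\ol{\cM(G)}$ over the regular Noetherian base $\bS$, and Lemma \ref{lemma_coarse_scheme_is_smooth}, this gives smoothness and properness of $\ol{M(G)}$ over $\bS$.

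For (f), I would construct $f_*$ on objects by $(C \to E) \mapsto (C/\ker(f) \to E)$, which is an admissible $G_2$-cover by Proposition \ref{prop_smGc2ac} applied to the natural stable marked $G_2$-curve structure inherited by $C/\ker(f)$, and on morphisms by functoriality of quotients. The map descends to the rigidifications because $f(Z(G_1)) \subseteq Z(G_2)$, so $Z(G_1)$-automorphisms of $C$ become $Z(G_2)$-automorphisms of $C/\ker(f)$, hence trivial in $\ol{\cM(G_2)}$. Properness and quasi-finiteness follow from (d), and surjectivity on geometric points reduces via Theorem \ref{thm_basic_properties}(4) to the lifting of surjections $\pi_1^\bL(E^\circ) \twoheadrightarrow G_2$ through $f$, which is Gasch\"utz's lemma applied to the (topologically 2-generated) profinite group $\pi_1^\bL(E^\circ)$. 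Compatibility with the map in Theorem \ref{thm_basic_properties}(6) on the open substack $\cM(G_1) \subseteq \ol{\cM(G_1)}$ is immediate from comparing definitions. The main obstacle I anticipate is item (iii) in the proof of (b): because $G$-structures are defined by sheafification and need not globally correspond to $G$-torsors on $E^\circ$, the \'etale-local passage to an actual torsor and the normalization argument extending it across $O$ must be carried out with care.
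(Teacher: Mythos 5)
Your proof is essentially correct and takes the same overall route as the paper for parts (a), (c), (d), (e), and (f). The one genuine difference is in (b), and there your anticipated obstacle is well-founded, but there is a shortcut that avoids it. Rather than verifying directly that $\varphi$ is an \'etale gerbe (which, as you note, requires \'etale-local essential surjectivity and hence an extension-by-normalization argument over a general base where normalization does not behave nicely in families), the paper first observes that the factored map $p : \ol{\cM(G)}^0 \to \cM(1)$ is finite \'etale: it is representable by Theorem \ref{thm_rigidification}(c) and Proposition \ref{prop_smooth_vertical_automorphisms}, proper because $\ol{\cM(G)}$ is proper, and \'etale because $\cAdm^0(G)\to\cM(1)$ is \'etale (Proposition \ref{prop_deformations}) and $\cAdm^0(G)\to\ol{\cM(G)}^0$ is \'etale surjective. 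Once $p$ and $\cM(G)\to\cM(1)$ are both known to be finite \'etale, the factorization map $\ol{\cM(G)}^0\to\cM(G)$ is an isomorphism as soon as it induces bijections on geometric fibers --- and this reduces to the statement, over an algebraically closed field $k$, that restriction and normalization give mutually inverse bijections between admissible $G$-covers of an elliptic curve $E/k$ and $G$-torsors on $E^\circ$, which is unproblematic because normalization over a field is under control (tame ramification). So the content of your step (iii) can be deflated to a purely geometric-fiber check, eliminating the delicacy you flagged. Your step (ii) --- that vertical automorphisms relative to $\varphi$ coincide with those relative to $\cM(1)$ by representability of $\cM(G)\to\cM(1)$ --- is a clean observation that the paper uses implicitly. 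Two small omissions: you do not address the open-density claim at the end of (b), which follows from Proposition \ref{prop_deformations} via $\cAdm^0(G)\subset\cAdm(G)$ dense; and in (f) your invocation of Gasch\"utz's lemma is correct but can be replaced by the paper's cleaner argument that the map is already surjective over $\cM(1)$ (this being packaged into Theorem \ref{thm_basic_properties}(6)) together with density of $\cM(G_1)$ and properness of $\ol{\cM(f)}$.
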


\begin{proof} With begin with (a). If $G$ cannot be generated by two elements, then by Corollary \ref{cor_empty_if_not_2_gen}, $\cAdm(G)$ is empty, so $\ol{\cM(G)}$ is empty. The rest of (a) follows from Theorem \ref{thm_rigidification}(d). Part (c) follows from Theorem \ref{thm_rigidification}(f).




For (b), we first show that the map $p : \ol{\cM(G)}^0\rightarrow\cM(1)$ is finite \'{e}tale. By Theorem \ref{thm_rigidification}(c) and Proposition \ref{prop_smooth_vertical_automorphisms}, $p$ is representable. Next, since $\ol{\cM(G)}$ is proper, the map $\ol{\cM(G)}\rightarrow\ol{\cM(1)}$ is also proper, so $\ol{\cM(G)}^0\rightarrow\cM(1)$ is is also proper. By Proposition \ref{prop_deformations}, $\cAdm^0(G)\rightarrow\cM(1)$ is \'{e}tale. By Theorem \ref{thm_rigidification}(e), $\cAdm^0(G)\rightarrow\cAdm^0(G)\fs Z(G) = \ol{\cM(G)}^0$ is \'{e}tale surjective, so since $p : \ol{\cM(G)}^0\rightarrow\cM(1)$ is representable, we find that $p$ is also \'{e}tale \cite[0CIL]{stacks}. Thus $p$ is representable, proper, and \'{e}tale, so it is finite \'{e}tale \cite[02LS]{stacks}. Thus to show that the map $\ol{\cM(G)}^0\rightarrow\cM(G)$, it would suffice to show that it induces a bijection on geometric fibers over $\cM(1)$. This follows from the observation that if $E$ is an elliptic curve over an algebraically closed field $k$, then every admissible $G$-cover of $E$ restricts to give a $G$-torsor over $E^\circ$, and conversely every $G$-torsor over $E^\circ$ (necessarily tamely ramified over $O\in E$ since we're working universally over $\bS = \Spec\bZ[1/|G|]$) extends by normalization to an admissible $G$-cover of $E$, and these processes are mutually inverse. Thus $\ol{\cM(G)}^0\cong\cM(G)$ as desired. Finally, to see that $\ol{\cM(G)}^0$, and hence $\cM(G)$ is open dense inside $\ol{\cM(G)}$, it suffices to check that $\cAdm^0(G)\subset\cAdm(G)$ is open dense, but this follows from Proposition \ref{prop_deformations}. This proves (b).


For (d), flatness follows from the fact that $\cAdm(G)\rightarrow\ol{\cM(1)}$ is flat (see Theorem \ref{thm_admG}(b)), and that $\cAdm(G)\rightarrow\ol{\cM(G)}$ is flat and surjective. Properness follows from the fact that it is a map of proper stacks. quasi-finiteness follows from the same property for the map $\cAdm(G)\rightarrow\ol{\cM(1)}$ (see Theorem \ref{thm_admG}(b)).


Part (e) follows from Lemma \ref{lemma_coarse_scheme_is_smooth}.


For (f), first we check that $C/\Ker(f)\rightarrow E$ satisfies the conditions (1)-(6) of an admissible $G_1/\Ker(f)\cong G_2$-cover (Definition \ref{def_admissible}). Part (1) follows from Proposition \ref{prop_stability_of_quotient}. Part (6) is immediate from the same property of $C$. Part (3) is Galois theory. Parts (2),(4),(5) are local questions, and follow from the explicit \'{e}tale local description of admissible $G$-covers.


Since formation of the quotient $C/\Ker(f)$ commutes with arbitrary base change (Lemma \ref{lemma_tame_quotients}), the description of $f_*$ is a map of stacks $\cAdm(G_1)\rightarrow\cAdm(G_2)$. By the universal property of rigidification (c.f. Theorem \ref{thm_rigidification}(b)), the composition
$$\cAdm(G_1)\longrightarrow\cAdm(G_2)\longrightarrow\cAdm(G_2)\fs Z(G_2) = \ol{\cM(G_2)}$$
factors uniquely via
$$\cAdm(G_1)\longrightarrow\cAdm(G_1)\fs Z(G_1) = \ol{\cM(G_1)}\longrightarrow\ol{\cM(G_2)}$$
and $\ol{\cM(f)}$ will be defined to be the second arrow in the above factorization. As a map between proper stacks, it is proper. As a map between Deligne-Mumford stacks quasi-finite over $\ol{\cM(1)}$, it is quasi-finite. By construction its restriction to the smooth locus agrees with the map described in Theorem \ref{thm_basic_properties}(6). Since the induced maps over $\cM(1)$ are surjective, since $\cM(G)\subset\ol{\cM(G)}$ is open dense, the surjectivity of $\ol{\cM(f)} : \ol{\cM(G_1)}\rightarrow\ol{\cM(G_2)}$ is a consequence of its properness.

\end{proof}

\begin{defn} In light of Proposition \ref{prop_compactification}, we see that $\ol{\cM(G)}$ is a smooth modular compactification of the moduli stack of elliptic curves with $G$-structures. If $k$ is a field, then a $k$-point of $\ol{\cM(G)}$ (resp. $\ol{M(G)}, \cAdm(G), \Adm(G)$) not lying in $\cM(G)$ (resp. $M(G), \cAdm^0(G), \Adm^0(G)$) is called a \emph{cusp} or a \emph{cuspidal object}.
\end{defn}

\begin{remark} The terminology of ``cusp'' comes from the fact that the schemes $M(G)_\bC$ are disjoint unions of quotients of the upper half plane by finite index subgroups of $\SL_2(\bZ)$ acting via mobius transformations \cite[\S3.3.5]{Chen18}. Each component of $M(G)_\bC$ should be viewed as a (possibly noncongruence) modular curve, whose cusps in the sense of hyperbolic geometry correspond exactly to the points in $\ol{M(G)}_\bC - M(G)_\bC$.
\end{remark}


\subsubsection{A Galois theoretic mantra}\label{sss_mantra}
We will often need to use the Galois correspondence to translate between the category of stacks finite \'{e}tale over $\cM(1)_\Qbar$ and finite sets with $\pi_1(\cM(1)_\Qbar)$-action. Some setup is required to make this precise, so that Theorem \ref{thm_basic_properties} can be applied. To avoid describing the same setup repeatedly, we will gather it into the following ``situation'' (Situation \ref{situation_galois_theory} below) which we will refer to when needed. We begin with a definition of ``equivalence classes of $G$-structures''.


For any 2-generated group $G$, let $\cT_G : \cM(1)\rightarrow\Sets$ be the sheaf of $G$-structures defined in \S\ref{ss_comparison_with_G_structures}. By functoriality of $\cT_G$ in $G$ (c.f. Theorem \ref{thm_basic_properties}(6)), we obtain an action of $\Aut(G)$ on $\cT_G$. By looking at geometric fibers, we find that $\Inn(G)$ acts trivially, and that the induced action of $\Out(G)$ on $\cT_G$ is \emph{free}. In particular we get an action of $\Out(G)$ on the stack $\cM(G)$ over $\cM(1)$. Thus for any subgroup $A\subset\Out(G)$, the quotient $\cT_G/A$ is also a finite locally constant sheaf on $\cM(G)$.

\begin{defn} Let $A\subset\Out(G)$ be a subgroup. The sections of $\cT_G/A$ over an elliptic curve $E/S$ will be called $(G|A)$-structures on $E/S$. The moduli stack of elliptic curves with $(G|A)$-structures is the quotient $\cM(G)/A$, defined as the stack over $\cM(1)$ associated to the quotient sheaf $\cT_G/A$. When $A = \Out(G)$, we will call them \emph{absolute $G$-structures}\footnote{We distinguish this special case because absolute structures have a natural moduli interpretation. Specifically, let $k = \ol{k}$ have characteristic coprime to $|G|$, and $E/k$ an elliptic curve. An absolute $G$-structure on $E$ is represented by a $G$-torsor over $E^\circ$. Two absolute $G$-structures over $k = \ol{k}$ are the same if their corresponding torsors are isomorphic \emph{as covers of $E^\circ$}. Namely, we don't require the isomorphism to be $G$-equivariant. Thus an absolute $G$-structure on $E$ amounts to giving a finite Galois cover of $E$, branched only above the origin, whose Galois group is isomorphic to $G$ (c.f. \cite[\S2.4]{BBCL20})}, and the corresponding stack $\cM(G)/\Out(G)$ will be denoted $\cM(G)^\abs$.
\end{defn}

\begin{sit}\label{situation_galois_theory} Let $E$ be an elliptic curve over $\Qbar$. Let $t\in E^\circ(\bC)$ be a point. Let $\Pi := \pi_1^\tp(E^\circ(\bC),t)$. Let $a,b$ be a basis for $\Pi$ with intersection number $+1$ (we will call this a ``positively oriented basis''). Let $x_E : \Spec\Qbar\rightarrow\cM(1)$ be the geometric point corresponding to $E$. Recall that $\Aut^+(\Pi)$ is defined to be the subgroup of $\Aut(\Pi)$ which induce determinant 1 automorphisms of $\Pi/[\Pi,\Pi]\cong\bZ^2$, and $\Out^+(\Pi) := \Aut^+(\Pi)/\Inn(\Pi)$. By Theorem \ref{thm_basic_properties}(5), there is a canonical map $i : \Out^+(\Pi)\hookrightarrow\pi_1(\cM(1)_\Qbar,x_E)$ which is injective with dense image. It induces an isomorphism $\what{\Out^+(\Pi)}\cong\pi_1(\cM(1)_\Qbar,x_E)$ which we will use to identify the two groups. In particular, we obtain an action of $\Out^+(\Pi)$ (and hence $\Aut^+(\Pi)$) on the geometric fiber over $x_E$ of any stack finite \'{e}tale over $\cM(1)$. Let $G$ be a finite 2-generated group, and let
$$\ff : \cM(G)\rightarrow\cM(1)$$
be the forgetful map. The fiber $\mf{f}^{-1}(x_E)$ is the set of isomorphism classes of geometrically connected $G$-torsors over $E^\circ$. Taking monodromy representations (see \S\ref{ss_higman_invariant}) gives a bijection
$$\alpha_{G,x_E} : \ff^{-1}(x_E)\rightiso\Epi^\ext(\Pi,G)$$
which is $\Out^+(\Pi)$-equivariant, where $\Out^+(\Pi)$ acts on $\ff^{-1}(x_E)$ via $i$ (c.f. Theorem \ref{thm_basic_properties}). Via this bijection, the Higman invariant can also be expressed at the level of $\Epi^\ext(\Pi,G)$ (c.f. Remark \ref{remark_topological_higman}). To be precise, there is a commutative diagram
\[\begin{tikzcd}
	\ff^{-1}(x_E)\ar[r,"\alpha_{G,x_E}"]\ar[rd,"\Hig"'] & \Epi^\ext(\Pi,G)\ar[d,"{\varphi\mapsto\varphi([b,a])}"] \\
	 & \Cl(G)
\end{tikzcd}\]

Let $\cC$ be the category whose objects are 2-generated finite groups, and where morphisms are surjections. Let $\FinSets_{\Out^+(\Pi)}$ be the category of finite sets with $\Out^+(\Pi)$-action, and let $\FEt_{\cM(1)_\Qbar}$ be the category of finite \'{e}tale maps to $\cM(1)$. Let $\cM : \cC\rightarrow\FEt_{\cM(1)_\Qbar}$ be the epimorphism-preserving functor of Theorem \ref{thm_basic_properties}(6). Let $F_{x_E} : \FEt_{\cM(1)_\Qbar}\rightarrow\FinSets_{\Out^+(\Pi)}$ be the fiber functor at $x_E$ (this is an equivalence by Galois theory). Then the diagram
\[\begin{tikzcd}
	\cC\ar[r,"\cM"]\ar[rd,"{\Epi^\ext(\Pi,-)}"'] & \FEt_{\cM(1)_\Qbar}\ar[d,"F_{x_E}"] \\
	 & \FinSets_{\Out^+(\Pi)}
\end{tikzcd}\]
2-commutes, in the sense that the two paths are isomorphic as functors, with the isomorphism defined using the isomorphisms $\alpha_{G,x_E}$. In particular, if $A\subset\Out(G)$ is a subgroup, if $g : \cM(G)\rightarrow\cM(G)/A$ is the quotient map, and if $\ff_A : \cM(G)/A\rightarrow\cM(1)$ is the forgetful map, then the commutative diagram
\[\begin{tikzcd}
\cM(G)\ar[r,"g"]\ar[rd,"\ff"'] & \cM(G)/A\ar[d,"\ff_A"] \\
 & \cM(1)
\end{tikzcd}\quad\text{induces the diagram}\quad
\begin{tikzcd}
	\ff^{-1}(x_E)\ar[d,"g"]\ar[r,"\cong"] & \Epi^\ext(\Pi,G)\ar[d,"h"] \\
	\ff_A^{-1}(x_E)\ar[r,"\cong"] & \Epi^\ext(\Pi,G)/A
\end{tikzcd}
\]
where $h$ is the quotient map, and the bottom bijection is also $\Out^+(\Pi)$-equivariant.
\end{sit}

\section{Degrees of components of $\Adm(G)$ over $\ol{M(1)}$}\label{section_main_result}
In this section, after recalling some definitions and the formalism of the relative dualizing sheaf, ramification divisor, and degrees of line bundles on 1-dimensional stacks, in \S\ref{ss_congruence} we establish the basic form of our main congruence on the degrees of components of $\Adm(G)$ over $\ol{M(1)}$.

\subsection{The dualizing sheaf and sheaf of differentials for universal families over moduli stacks}\label{ss_dualizing_sheaf}

In this section we will recall what it means to give a sheaf on a stack, and we will describe the relationship between the sheaf of relative differentials and the relative dualizing sheaf in the setting of a flat proper representable map of algebraic stacks with Cohen-Macaulay fibers. There is essentially no difference to the case of schemes. The main content of this section is to carefully define the objects we will need. We work universally over a base scheme $\bS$.

\subsubsection{The sheaf of relative differentials}\label{sss_differentials}

Let $\bS$ be a scheme. Let $\cC$ be an algebraic stack over $\bS$. We can view $\cC$ as a site by giving $\cC$ the inherited topology from $(\Sch/\bS)_\fppf$. Recall that this means that given an object $\xi : T\rightarrow\cC$ with $T$ a scheme, a covering of $\xi$ is given by a family $\{\xi_i\}$ of objects of $\cC$ such that each $\xi_i$ is given by $\xi_i : T_i\stackrel{t_i}{\rightarrow} T\rightarrow\cC$ such that $\{t_i : T_i\rightarrow T\}$ is a covering family in $(\Sch/\bS)_\fppf$. A sheaf on $\cC$ is just a sheaf on the corresponding site \cite[06TF]{stacks}. The structure sheaf on $\cC$ is the sheaf of rings $\cO_\cC$ given by $(U\rightarrow\cC)\mapsto\Gamma(U,\cO_U)$. A sheaf $\cF$ of $\cO_\cC$-modules is quasicoherent if its restrictions to each $T\rightarrow\cC$ is a quasicoherent sheaf on $T$.


Let $f : \cC\rightarrow\cX$ be a representable map of algebraic stacks. Given $t : T\rightarrow\cC$ with $T$ a scheme, consider the diagram
\begin{equation}\label{eq_stacky_sheaf_diagram}
\begin{tikzcd}
\cC_T\ar[d]\ar[r,"\tilde{t}"] & \cC\ar[d,"f"] \\
T\ar[u,bend left = 30,"\tau"]\ar[ru,"t"]\ar[r] & \cX
\end{tikzcd}
\end{equation}
where the outer square is cartesian, and $\tau$ is the section induced by $t$. This diagram is commutative if one ignores $\tau$. Let $\Omega_{\cC/\cX}$ be the presheaf on $\cC$ given by
$$\Omega_{\cC/\cX}(t) := \Gamma(T,\tau^*\Omega_{\cC_T/T})$$
We note that given a diagram of the form \eqref{eq_stacky_sheaf_diagram}, it ``factors through'' a diagram (commutative if one ignores sections)
\[\begin{tikzcd}
\cC_T\ar[r,"\tilde{\tau}"]\ar[d] & \cC_{\cC_T}\ar[r]\ar[d] & \cC\ar[d,"f"] \\
T\ar[r,"\tau"] & \cC_T\ar[u,bend left = 30,"\tilde{\tau}"]\ar[r,"f\circ\tilde{t}"]\ar[ru,"\tilde{t}"] & \cX
\end{tikzcd}\]
where the right square is cartesian, and $\tilde{\tau}$ is the section induced by $\tilde{t}$ (thus the left square is also cartesian). Since sheaves of relative differentials commute with base change, we have
\begin{equation}\label{eq_as_expected}
\Omega_{\cC/\cX}(\tilde{t} : \cC_T\rightarrow\cC) := \Gamma(\cC_T,\tilde{\tau}^*\Omega_{\cC_{\cC_T}/\cC_T}) \cong \Gamma(\cC_T,\Omega_{\cC_T/T})	
\end{equation}

The restriction maps of $\Omega_{\cC/\cX}$ are defined as follows. Given a map $s : S\rightarrow\cC$ and a map $p : S\rightarrow T$ with $t\circ p = s$, we obtain a commutative diagram
\[\begin{tikzcd}
\cC_S\ar[d]\ar[r,"\tilde{p}"] & \cC_T\ar[d]\ar[r,"\tilde{t}"] & \cC\ar[d,"f"] \\
S\ar[u,bend left=30,"\sigma"]\ar[r,"p"] & T\ar[u,bend left = 30,"\tau"]\ar[r]\ar[ru,"t"] & \cX
\end{tikzcd}\]
with both squares cartesian and where the section $\sigma$ is induced by $s = t\circ p$. Note that $\cC_S,\cC_T$ are schemes since $f$ is representable. From the diagram we obtain natural restriction maps on global sections
\begin{equation}\label{eq_restriction_maps}
\rho_{p,t,s} : \Omega_{\cC/\cX}(t) := \Gamma(T,\tau^*\Omega_{\cC_T/T})\longrightarrow\Gamma(S,p^*\tau^*\Omega_{\cC_T/T})\rightiso\Gamma(S,\sigma^*\tilde{p}^*\Omega_{\cC_T/T})\rightiso\Gamma(S, \sigma^*\Omega_{\cC_S/S}) =: \Omega_{\cC/\cX}(s)
\end{equation}
The first map is the usual pullback map on sections. The second map comes from the unique isomorphism $p^*\tau^*\cong \sigma^*\tilde{p}^*$ of functors $\Mod(\cO_{\cC_T})\rightarrow\Mod(\cO_S)$, the uniqueness coming from the fact that $p^*\tau^*$ and $\sigma^*\tilde{p}^*$ are both left adjoints of $\tau_*\circ p_* = \tilde{p}_*\circ \sigma_* = (\tau\,\circ\, p)_*$. The final map is induced by the unique isomorphism $\tilde{p}^*\Omega_{\cC_T/T}\cong \Omega_{\cC_S/S}$ coming from the universal property of the sheaf of differentials.


Since the restriction of $\Omega_{\cC/\cX}$ to any scheme $t : T\rightarrow\cC$ is just the quasicoherent sheaf on $(\Sch/T)_\fppf$ associated to the usual quasicoherent sheaf $\tau^*\Omega_{\cC_T/T}$ on $T$. Thus $\Omega_{\cC/\cX}$ is a quasicoherent sheaf, and by \eqref{eq_as_expected} it agrees with the usual sheaf of relative differentials when $\cC,\cX$ is are schemes.


\begin{defn} Let $f : \cC\rightarrow\cX$ be a representable morphism of algebraic stacks. Let $\Omega_{\cC/\cX}$ denote the sheaf of relative differentials, as defined above.
\end{defn}

Alternatively, the sheaf $\Omega_{\cC/\cX}$ can be defined on a presentation for $\cC/\cX$. Namely, let $U$ be a scheme and let $U\rightarrow\cX$ now be a smooth surjective morphism. Then $\cC_U := \cC\times_\cX U\rightarrow\cC$ is also smooth and surjective. Let $R := \cC_U\times_\cC \cC_U$ and $S := U\times_\cX U$. This determines a commutative diagram

\[\begin{tikzcd}
R\ar[r,shift left,"\pr_0"]\ar[r,shift right,"\pr_1"']\ar[d] & \cC_U\ar[d]\ar[r] & \cC\ar[d] \\
S\ar[r,shift left,"\pr_0"]\ar[r,shift right,"\pr_1"'] & U\ar[r] & \cX
\end{tikzcd}\]
which induces isomorphisms $[\cC_U/R]\cong\cC$ and $[U/S]\cong\cX$ \cite[04T4]{stacks}. To give a quasicoherent sheaf on $\cC$ is the same as giving a quasicoherent sheaf $\cF$ on $\cC_U$ together with an isomorphism
$$\alpha : \pr_0^*\cF\rightiso \pr_1^*\cF$$
over $R$ satisfying a certain cocycle condition \cite[0441,06WT]{stacks}. Letting $\cF = \Omega_{\cC_U/U}$ and $\alpha$ the canonical isomorphism $\pr_0^*\Omega_{\cC_U/U}\cong\Omega_{R/S}\cong\pr_1^*\Omega_{\cC_U/U}$, one checks that $\alpha$ satisfies the cocycle condition and hence this determines a sheaf $\Omega_{\cC/\cX}$ on $\cC$ which agrees with our earlier definition.

\subsubsection{The relative dualizing sheaf}\label{sss_dualizing}

Let $f : C\rightarrow X$ be a flat proper finitely presented morphism of schemes. Recall that if $X$ is quasi-compact quasi-separated, then the functor $Rf_* : D(\cO_C)\rightarrow D(\cO_X)$ has a right adjoint $f^! : D(\cO_X)\rightarrow D(\cO_C)$ \cite[0B6S]{stacks}. In this case the relative dualizing complex is $\omega^\blt_{C/X} := f^!(\cO_X)$, and is equipped with a ``trace map'' $\tr_f : Rf_*\omega^\blt_{C/X}\rightarrow\cO_X$ coming from adjunction. For general $X$, the uniqueness of the adjoint implies that the relative dualizing complexes locally defined on affine opens of $X$ glue to yield a pair $(\omega_{C/X}^\blt,\tr_f)$, where $\omega_{C/X}^\blt\in D(\cO_X)$ and $\tr_f : Rf_*\omega^\blt_{C/X}\rightarrow\cO_X$ is such that the pair $(\omega_{C/X}^\blt,\tr_f)$ restricts to the relative dualizing complex over affine opens of $X$ \cite[0E61]{stacks}. Moreover, such a pair is unique up to unique isomorphism and commutes with arbitrary base change \cite[0E5Z,0E60]{stacks}.


If $f$ moreover has Cohen-Macaulay and geometrically connected fibers of constant relative dimension $d$, then the dualizing complex $\omega_{C/X}^\blt$ has a unique nonzero cohomology sheaf which is in degree $-d$ \cite[0BV8]{stacks}\footnote{Technically, in order to use this, one should first reduce to the case where $X$ is affine, and then use Noetherian approximation to note that $f$ is the base change of a morphism $f_0 : C_0\rightarrow X_0$ where $X_0$ is of finite type over $\bZ$, and $f_0$ is also flat proper finitely presented with Cohen-Macaulay fibers \cite[01ZA,081C,045U]{stacks}. In this situation \cite[0BV8]{stacks} applies, and by pullback we deduce the result for $f$.}. Let $\omega_{C/X} := H^{-d}(\omega_{C/X}^\blt)$ denote this unique nonzero cohomology sheaf, which is called the (relative) dualizing sheaf for $f$.


\begin{defn} Let $f : \cC\rightarrow\cX$ be a representable flat proper finitely presented morphism of algebraic stacks with geometrically connected and Cohen-Macaulay fibers. The relative dualizing sheaf $\omega_{\cC/\cX}$ is defined as follows. Given any diagram of the form \eqref{eq_stacky_sheaf_diagram}, define
$$\omega_{\cC/\cX}(t) := \Gamma(T,\tau^*\omega_{\cC_T/T})$$
As in the case for $\Omega_{\cC/\cX}$, we obtain natural restriction maps $\rho_{p,t,s}$ as in \eqref{eq_restriction_maps}. Since relative dualizing sheaves commute with arbitrary base change, the same discussion as above implies that the restriction maps are compatible and define a quasicoherent sheaf on $\cC$, which is called the \emph{relative dualizing sheaf} of $\cC/\cX$.
\end{defn}

\subsubsection{The canonical map $\Omega_{\cC/\cX}\rightarrow\omega_{\cC/\cX}$}

We begin with a well-known lemma.

\begin{lemma}\label{lemma_very_ample} Let $X$ be a scheme and let $(f : C\rightarrow X, R)$ be a stable marked curve. Then the invertible sheaf
$$\omega_{C/X}(R)^{\otimes 3} := (\omega_{C/X}\otimes_{\cO_C}\cO_C(R))^{\otimes 3}$$
is very ample relative to $C\rightarrow X$, and $f_*\omega_{C/X}(R)^{\otimes 3}$ is locally free.
\end{lemma}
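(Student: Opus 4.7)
The plan is to reduce everything to a fiberwise statement and then invoke the classical theory of stable (pointed) curves going back to Deligne--Mumford and Knudsen--Mumford. By a standard limit argument, we may reduce to the case $X$ is Noetherian; then by the cohomology-and-base-change machinery it will suffice to prove, for every geometric point $s\to X$, that
\[
\omega_{C_s/s}(R_s)^{\otimes 3}\text{ is very ample on }C_s\qquad\text{and}\qquad H^1(C_s,\omega_{C_s/s}(R_s)^{\otimes 3})=0.
\]
Indeed, once this is known, since $\omega_{C/X}$ and $\cO_C(R)$ commute with arbitrary base change (cf.\ \S\ref{sss_dualizing}), Grauert's theorem implies that $f_*\omega_{C/X}(R)^{\otimes 3}$ is locally free and its formation commutes with base change, and the surjection $f^*f_*\omega_{C/X}(R)^{\otimes 3}\twoheadrightarrow\omega_{C/X}(R)^{\otimes 3}$ defines a morphism $C\to\bP(f_*\omega_{C/X}(R)^{\otimes 3})$ which, being a closed immersion on every geometric fiber, is a closed immersion by standard criteria (e.g.\ \cite[II, 4.6.7]{EGA}).

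Next I would verify the two fiberwise claims. Let $k$ be algebraically closed and $(C_s,R_s)$ a stable marked curve over $k$. Stability, in the form recalled in the definition of stable $n$-pointed curve, is exactly the assertion that $\omega_{C_s/s}(R_s)$ is ample; more precisely, $\deg(\omega_{C_s/s}(R_s)|_Z)\ge 1$ for every irreducible component $Z\subset C_s$. Consequently $\omega_{C_s/s}(R_s)^{\otimes 3}|_Z$ has degree at least $3\ge 2g(Z')+1$ on the normalization $Z'$ of every rational component, and degree $\ge 2g(Z)-1$ on every higher-genus component, so Serre duality (applied on the normalization of each component together with the standard partial-normalization exact sequence at the nodes) yields $H^1(C_s,\omega_{C_s/s}(R_s)^{\otimes 3})=0$.

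For very ampleness on $C_s$, I would apply the classical criterion that a line bundle $\cL$ on a nodal connected curve is very ample as soon as $\deg(\cL|_Z)\ge 2g(Z')+1$ on every irreducible component and $H^1(C_s,\cL(-p-q))=0$ for every pair of points $p,q$ in the smooth locus (separation of points and tangent vectors); both are ensured by the degree estimates above combined with the same Serre-duality argument. This is precisely the content of Knudsen's theorem that $\omega_{C_s}(R_s)^{\otimes 3}$ is very ample on stable pointed curves, and one may alternatively just cite his original paper.

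The main obstacle is the bookkeeping on reducible fibers: one must use the full force of the stability hypothesis to guarantee the degree-$\ge 1$ lower bound on every component, and one must be careful to handle the separation/tangent-vector conditions at the nodes, which is where the factor of $3$ in the tensor power is necessary. Once these fiberwise facts are established, the global conclusion (local freeness of $f_*\omega_{C/X}(R)^{\otimes 3}$ and relative very ampleness of $\omega_{C/X}(R)^{\otimes 3}$) is formal.
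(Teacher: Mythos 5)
Your proposal is correct and follows essentially the same route as the paper: reduce to $X$ Noetherian, deduce local freeness of $f_*\omega_{C/X}(R)^{\otimes 3}$ from fiberwise $H^1$-vanishing via cohomology-and-base-change, and then promote the fiberwise very ampleness to relative very ampleness by a spreading-out argument. The only substantive difference in organization is that the paper outsources both fiberwise claims (very ampleness of $\omega_{C_s}(R_s)^{\otimes 3}$ and vanishing of $H^1$) to Knudsen \cite[Corollaries 1.9, 1.11, Theorem 1.8]{Knud83II}, then first establishes relative ampleness and local projectivity before invoking a closed-immersion spreading-out result \cite[Proposition 12.93]{GW20}, whereas you sketch a direct verification of the fiberwise statements (degree lower bounds on components and Serre duality on normalizations) and build the morphism to $\bP(f_*\cL)$ directly from the surjection $f^*f_*\cL\twoheadrightarrow\cL$, finishing with the fiberwise closed-immersion criterion. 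The one place where you should be aware that your sketch is thin, and this is exactly where the content of the lemma lies, is the fiberwise very ampleness on a \emph{reducible nodal} curve: your stated criterion ("$\deg\cL|_Z\ge 2g(Z')+1$ on every component, $H^1(\cL(-p-q))=0$ for smooth $p,q$") is the one for irreducible smooth curves and does not automatically handle separation of points and tangent vectors across nodes and between components; making this rigorous requires the normalization exact sequences at the nodes and is precisely Knudsen's computation, which both you and the paper ultimately cite as a fallback. So while your version contains an honest acknowledgement that one may "just cite his original paper," you should treat the direct verification of the fiberwise very ampleness as a sketch rather than a complete argument.
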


\begin{proof} That $f_*\omega_{C/X}(R)^{\otimes 3}$ is locally free is \cite[Corollary 1.11]{Knud83II}, so it remains to prove that $\omega_{C/X}(R)^{\otimes 3}$ is very ample. By Noetherian approximation (see Remark \ref{remark_noetherian_approximation}), we may assume $X$ is Noetherian. Let $\cL := \omega_{C/X}(R)^{\otimes 3}$. First we note that being very ample is fpqc local on the target - this follows from \cite[01VR(4)]{stacks} and the fact that formation of $f_*\cL$ commutes with flat base change \cite[02KH]{stacks}. Thus \cite[Corollary 1.9]{Knud83II} implies that the restrictions of $\cL$ to fibers is very ample, so by \cite[0D2S]{stacks}, $\cL$ is $f$-relatively ample, so it is locally projective. Next, since $f$ has geometrically connected fibers, we have $f_*\cO_C \cong \cO_X$ \cite[0E0D]{stacks}. Since $H^1(C_x,\cL|_{C_x}) = 0$ for all $x\in X$ \cite[Theorem 1.8]{Knud83II}, we find that $f_*\cL$ is also an invertible sheaf \cite[Theorem 12.11]{HartAG}, so for any $x\in X$, there is an open neighborhood $x\in U\subset X$ and a map $j : f^{-1}(U)\rightarrow \bP^n_U$ such that $j^*\cO(1) \cong \cL|_{f^{-1}(U)}$ and $j|_{C_x}$ is a closed immersion. Since we can find an open $V\subset U$ containing $x$ such that $j|_{C_U}$ is a closed immersion \cite[Proposition 12.93]{GW20}, this implies that $\cL$ is very ample for $C_U\rightarrow U$. Since very ampleness is local on the target this shows that $\cL$ is very ample.
\end{proof}

The purpose of this section is to state the following result:

\begin{prop}\label{prop_canonical_map} Let $\bS$ be a Noetherian scheme. Suppose we have a commutative diagram of Noetherian algebraic stacks (over $\bS$)
\[\begin{tikzcd}
\cR\ar[rd,"h"']\ar[r,hookrightarrow] & \cC\ar[d,"f"] \\
 & \cX
\end{tikzcd}\]
where $f$ is a prestable curve and $h$ is a finite \'{e}tale map such that the base change of the diagram via any map $T\rightarrow\cX$ with $T$ a scheme defines a stable marked curve $(\cC_T/T,\cR_T)$ (Definition \ref{def_marked_G_curves}), where $\cR_T := \cR\times_\cX T$ is an effective Cartier divisor on $\cC_T$. Then there is a canonical map
$$\varphi : \Omega_{\cC/\cX}\longrightarrow\omega_{\cC/\cX}$$
which is an isomorphism at every geometric point $\Spec k\rightarrow\cC$ where $f$ is smooth.
\end{prop}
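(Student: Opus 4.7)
The plan is to reduce to the classical scheme-theoretic construction and then transport it along the definitions of $\Omega_{\cC/\cX}$ and $\omega_{\cC/\cX}$ from \S\ref{sss_differentials} and \S\ref{sss_dualizing}. The key input from Grothendieck duality is that for any prestable curve $g : D \to S$ of schemes, there is a canonical morphism $\varphi_{D/S} : \Omega_{D/S} \to \omega_{D/S}$ of quasicoherent $\cO_D$-modules, natural with respect to arbitrary base change in $S$, and which is an isomorphism on the smooth locus of $g$. On the smooth locus, both sheaves are canonically identified with the relative cotangent bundle; the extension across the nodes is determined canonically by the duality/trace formalism together with the standard explicit description of $\omega_{D/S}$ at a node.

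Given the data of the proposition, I would construct $\varphi$ in one of two equivalent ways. Intrinsically: for any $t : T \to \cC$ with $T$ a scheme, the base change $\cC_T \to T$ is a prestable curve of schemes (representability of $f$ ensures $\cC_T$ is a scheme), and pulling back $\varphi_{\cC_T/T}$ via the canonical section $\tau$ in diagram \eqref{eq_stacky_sheaf_diagram} and taking global sections yields $\varphi(t) : \Omega_{\cC/\cX}(t) \to \omega_{\cC/\cX}(t)$. Compatibility with the restriction maps $\rho_{p,t,s}$ of \eqref{eq_restriction_maps} then reduces formally to the base-change naturality of $\varphi_{D/S}$ together with the unique-adjoint compatibilities already used to define both $\Omega_{\cC/\cX}$ and $\omega_{\cC/\cX}$. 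Alternatively, pick a smooth surjection $U \to \cX$ with $U$ a scheme and let $R := U \times_\cX U$; then $\varphi_{\cC_U/U}$ and $\varphi_{\cC_R/R}$ are intertwined by both projections $R \rightrightarrows U$ thanks to the same naturality, so they descend to the required map on the quotient stack $\cC$.

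For the isomorphism claim, let $\xi : \Spec k \to \cC$ be a geometric point at which $f$ is smooth. Pick a smooth surjection $U \to \cX$ with $U$ a scheme and lift $\xi$ to a geometric point $\tilde\xi$ of the scheme $\cC_U = \cC \times_\cX U$; smoothness of $f$ at $\xi$ forces smoothness of $\cC_U \to U$ at $\tilde\xi$. The stalks of $\Omega_{\cC/\cX}$ and $\omega_{\cC/\cX}$ at $\xi$ are computed by pulling back along $\cC_U \to \cC$, and $\varphi_{\cC_U/U}$ is an isomorphism in a neighborhood of $\tilde\xi$ by the scheme-level statement, so $\varphi$ is an isomorphism at $\xi$.

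The main obstacle is the scheme-level input: producing $\varphi_{D/S}$ canonically and verifying its base-change naturality in a form robust enough to glue. Once that is in hand, the descent to stacks is essentially formal; in particular neither the subscheme $\cR$ nor the stability hypothesis enters the construction of $\varphi$ itself, so these only play a role in downstream applications, for instance via Lemma \ref{lemma_very_ample} when dealing with $\omega_{\cC/\cX}(\cR)^{\otimes 3}$.
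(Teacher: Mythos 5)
Your construction is correct, and it reaches the same map, but by a somewhat different route than the paper. The paper does not invoke the scheme-level map $\varphi_{D/S}$ as an abstract black box; instead it uses the stability hypothesis and $\cR$ to build a single \emph{global} closed immersion $\cC\hookrightarrow\cP:=\bP(f^*\cL)$, where $\cL:=f_*(\omega_{\cC/\cX}(\cR)^{\otimes 3})$ is locally free and relatively very ample by Lemma~\ref{lemma_very_ample}; with $\cP$ smooth and the immersion fixed, the determinant construction of \cite[0E9Z]{stacks} and \cite[\S1]{Knud83II} is applied uniformly over an (\'etale) presentation and descended. Your version avoids the embedding entirely by leaning on the canonicity of $\varphi_{D/S}$ (independence of the choice of local factorization) and its compatibility with base change; as a result, exactly as you observe, the subscheme $\cR$ and the stability hypothesis play no role in your construction of $\varphi$ itself. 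What the paper's approach buys is a concrete, globally fixed embedding against which the descent-datum compatibility can be checked explicitly, and a proof that is visibly self-contained given Lemma~\ref{lemma_very_ample}; what your approach buys is a shorter argument that exposes the statement's true logical dependencies, at the cost of citing the canonicity and base-change naturality of the determinant map as established facts, which is the ``main obstacle'' you flag and is also the step the paper delegates to the reader. One small point: the paper's proof, as written, takes $U\to\cX$ \'etale (so implicitly assumes $\cX$ is Deligne--Mumford), while your smooth presentation matches the generality of the stated hypotheses; this is harmless in the paper's applications but your formulation is the more faithful one. Your argument for the isomorphism on the smooth locus, by lifting $\xi$ to $\cC_U$ and invoking the scheme-level statement, is correct and essentially what the paper implies.
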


The map $\varphi$ comes from the theory of determinants \cite{KM76}. If $\cC,\cX$ are schemes, then $f : \cC\rightarrow\cX$ factors through an immersion into a smooth proper $\cX$-scheme $\cP$, in which case this is just \cite[0E9Z]{stacks} and \cite[\S1]{Knud83II}. We briefly explain how to construct the map $\varphi$ when $\cC,\cX$ are stacks.


Let $\cL := f_*(\omega_{\cC/\cX}(\cR)^{\otimes 3})$. One should take $\cP$ to be the projective bundle $\bP(f^*\cL)$ associated to $f^*\cL$, defined using the functorial characterization of projective space \cite[01NS]{stacks}. Thus $\cP := \bP(f^*\cL)$ is the stack over $\cX$ associated to a certain sheaf on $\cX$, whose restrictions to schemes agrees with the usual projective bundle. In particular this means that the map $\cP\rightarrow\cX$ is representable, so $\cP$ is an algebraic stack, which is moreover smooth since Lemma \ref{lemma_very_ample} implies $\cL$ is locally free, so $\cP$ is locally (over $\cX$) a projective space. The functorial characterization of $\cP = \bP(f^*\cL)$ also yields a factorization of $f$:

\begin{equation}\label{eq_projective_bundle_diagram}
\begin{tikzcd}
\cC\ar[d,"f"']\ar[r,"i"] & \cP\ar[ld,"g"] \\
\cX
\end{tikzcd}
\end{equation}

where $i$ is a closed immersion (since $f,g$ are representable, $i$ is also representable, so we may check this locally on $\cX$, using the very ampleness of $\omega_{\cC/\cX}(R)^{\otimes 3}$). Let $u : U\rightarrow\cX$ be an \'{e}tale covering by a scheme, which we may assume is Noetherian since $\cX$ is Noetherian. This defines a presentation of $\cX$ as $[U/R]$ where $R := U\times_\cX U$, and by pullback one obtains presentations of $\cC$ and $\cP$ as quotient stacks of $\cC_U$ and $\cP_U$, which fit into a diagram extending \eqref{eq_projective_bundle_diagram}. The standard theory (see \cite[0E9Z]{stacks} and \cite[\S1]{Knud83II}) yields a canonical map
$$\varphi_U : \Omega_{\cC_U/U}\rightarrow\omega_{\cC_U/U}.$$
which is an isomorphism over the smooth locus of $f$. We leave it to the reader to check that this map is compatible with the comparison isomorphisms associated to the presentation of $\cC$. This implies that $\varphi_U$ descends to a map $\varphi : \Omega_{\cC/\cX}\rightarrow\omega_{\cC/\cX}$ as desired \cite[06WT]{stacks}.

\subsection{The universal family over $\cAdm(G)$ and its reduced ramification divisor}\label{ss_ramification_divisor_of_universal_family}
Let $G$ be a finite group. We work universally over a $\bZ[1/|G|]$-scheme $\bS$. The universal admissible $G$-cover
\begin{equation}\label{eq_universal_family}
\cC(G)\stackrel{}{\lra}\cE(G)\lra\cAdm(G)	
\end{equation}

is defined as follows. The objects of $\cE(G)$ over a scheme $T$ are pairs $(\pi : C\rightarrow E,\sigma)$, where $\pi$ is an admissible $G$-cover of a 1-generalized elliptic curve $E$ over $T$ and $\sigma : T\rightarrow E$ is a section. Morphisms are morphisms in the category $\cAdm(G)$ respecting the sections $\sigma$. The map $\cE(G)\rightarrow\cAdm(G)$ is given by forgetting $\sigma$. Similarly, the objects of $\cC(G)$ over a scheme $T$ are pairs $(\pi : C\rightarrow E, \sigma)$ where $\pi$ is again an admissible $G$-cover of a 1-generalized elliptic curve $E$ over $T$, and $\sigma : T\rightarrow C$ is a section, with morphisms similarly defined. The map $\cC(G)\rightarrow\cE(G)$ sends $(\pi : C\rightarrow E,\sigma)$ to $(\pi : C\rightarrow E,\pi\circ\sigma)$. In particular, we find that for a geometric point $\ol{z}$ of $\cC(G)$ mapping to $\ol{x}$ in $\cAdm(G)$, the automorphism group of $\ol{z}$ is precisely the group of automorphisms of the admissible $G$-cover $\pi_{\ol{x}} : \cC(G)_{\ol{x}}\rightarrow\cE(G)_{\ol{x}}$ which fix $\ol{z}$.



It follows from the above discussion that the maps \eqref{eq_universal_family} are representable and for any scheme $T$ and map $T\rightarrow\cAdm(G)$ given by an admissible $G$-cover $\pi : C\rightarrow E$ over $T$, the pullback of \eqref{eq_universal_family} to $T$ is canonically isomorphic to $C\stackrel{\pi}{\rightarrow} E\ra T$.


Let $\cR_{\cC(G)/\cE(G)}$ be the strictly full subcategory of $\cC(G)$ whose objects over a scheme $T$ are pairs $(\pi : C\rightarrow E,\sigma)$ where $\sigma : T\rightarrow C$ factors (uniquely) through the closed immersion $\cR_\pi\hookrightarrow C$. If $T\rightarrow\cAdm(G)$ is a map given by an admissible $G$-cover $\pi : C\rightarrow E$, then $\cR_{\cC(G)/\cE(G)}\times_{\cAdm(G)}T = \cR_\pi$. It follows that the inclusion map $\cR_{\cC(G)/\cE(G)}\subset\cC(G)$ is a closed immersion.


Let $\cX\subset\cAdm(G)$ be a connected component, and let
$$\cC\stackrel{\pi}{\longrightarrow}\cE\lra\cX$$
denote the restriction of the universal family to $\cX$. Let $\cR_\pi = \cR_{\cC/\cE}$ denote the restriction of $\cR_{\cC(G)/\cE(G)}$ to $\cC$.

\begin{defn}\label{def_reduced_ramification_divisor} The \emph{reduced ramification divisor} of $\cC/\cE$ is the closed substack $\cR_{\cC/\cE}\subset\cC$ defined above.
\end{defn}

\begin{prop}\label{prop_stacky_RRD} The components of the reduced ramification divisor can be controlled as follows.
\begin{itemize}
	\item[(a)] Working over $\bS = \Spec\bZ[1/|G|]$, let $\cX\subset\cAdm(G) = \cAdm(G)_\bS$ be a connected component classifying covers with ramification index $e$. Let $\cC\stackrel{\pi}{\rightarrow}\cE\rightarrow\cX$ be the universal family. Then the reduced ramification divisor $\cR_\pi = \cR_{\cC/\cE}$ is finite \'{e}tale over $\cX$ of degree $|G|/e$. Let $\ol{z}$ be a geometric point of $\cR_\pi$ with stabilizer $G_{\ol{z}}\le G$. The connected components of $\cR_\pi$ are all isomorphic, each Galois over $\cX$ with Galois group isomorphic to a subgroup of $N_G(G_{\ol{z}})/G_{\ol{z}}$.
	\item[(b)] Working over $\bS = \Spec \bZ[1/|G|,\zeta_e]$, let $\cX\subset\cAdm(G) = \cAdm(G)_\bS$ be a connected component classifying covers with ramification index $e$. Let $\cC\stackrel{\pi}{\rightarrow}\cE\rightarrow\cX$ be the universal family. Then the conclusions of (a) hold and the Galois groups of components of $\cR_\pi$ are moreover isomorphic to a subgroup of $C_G(G_{\ol{z}})/G_{\ol{z}}$.
\end{itemize}
\end{prop}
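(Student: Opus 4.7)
The plan is to reduce both parts to Proposition \ref{prop_components_of_ramification_divisor} by passing to an étale atlas, and then to read off the statements about connected components from the Galois theory of Deligne-Mumford stacks. Throughout, let $\pi:\cC\to\cE$ denote the universal admissible $G$-cover over $\cX$ and let $\cR_\pi=\cR_{\cC/\cE}$ be its reduced ramification divisor (Definition \ref{def_reduced_ramification_divisor}).

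First I would fix an étale surjection $u:U\to\cX$ with $U$ a scheme; since $\cX$ is smooth (Theorem \ref{thm_admG}(a)) and we are working over $\bS=\Spec\bZ[1/|G|]$ (or $\Spec\bZ[1/|G|,\zeta_e]$ in case (b)), $U$ can be taken to be smooth over $\bS$, and by replacing $U$ with its connected components, we may assume $U$ is connected. In particular $U$ is regular and integral. The pullback $\pi_U:\cC_U\to\cE_U$ is an admissible $G$-cover of a 1-generalized elliptic curve over $U$, and by construction $\cR_{\pi_U}=u^*\cR_\pi$. Proposition \ref{prop_RRD} applied to $\pi_U$ shows that $\cR_{\pi_U}\to U$ is finite étale of degree $|G|/e$; since $u$ is étale surjective and being finite étale of fixed degree descends along étale covers, $\cR_\pi\to\cX$ is finite étale of degree $|G|/e$. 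This proves the finite étaleness and degree claim in (a).

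For the statements about connected components, I would work purely Galois-theoretically on $\cX$. Fix a geometric point $\bar x\to\cX$ and let $F:=(\cR_\pi)_{\bar x}$; by Galois theory for Deligne-Mumford stacks \cite[03YU]{stacks}, $F$ is a finite set with a continuous action of $\Pi:=\pi_1(\cX,\bar x)$, and the components of $\cR_\pi$ correspond to $\Pi$-orbits on $F$. The right $G$-action on $\cC$ restricts to a right action on $\cR_\pi$ over $\cX$, so it induces a right $G$-action on $F$ commuting with the $\Pi$-action. Because $G$ acts transitively on the geometric fibers of $\cR_\pi$ over the zero section of $\cE$ (which is $\cX$), $G$ acts transitively on $F$, and hence $G$ permutes the $\Pi$-orbits transitively; this shows the components are all isomorphic as $\cX$-schemes. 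Now fix $\bar z\in F$ with stabilizer $G_{\bar z}\le G$, let $\cR\subset\cR_\pi$ be the component corresponding to the orbit $\Pi\cdot\bar z$, and let $\bD_{\bar z}:=\Stab_G(\Pi\cdot\bar z)$ be its decomposition group. As in the proof of Proposition \ref{prop_components_of_ramification_divisor}(a), the commutativity of the $\Pi$- and $G$-actions forces $G_{\bar z}$ to act trivially on $\Pi\cdot\bar z$, so $G_{\bar z}\trianglelefteq\bD_{\bar z}$ and $\cR\to\cX$ is Galois with group $\bD_{\bar z}/G_{\bar z}\le N_G(G_{\bar z})/G_{\bar z}$. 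This completes (a).

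For part (b), after base changing to $\bS=\Spec\bZ[1/|G|,\zeta_e]$, the scheme $U$ chosen above is regular, integral, and $\Gamma(U,\cO_U)$ contains a primitive $e$th root of unity. Applying Proposition \ref{prop_components_of_ramification_divisor}(b) to the cover $\pi_U$ over $U$, the local representation $\chi_{\bar z}:G_{\bar z}\to\mathrm{GL}(\fm_B/\fm_B^2)$ on the cotangent space at the generic point of the component of $\cR_{\pi_U}$ containing $\bar z$ is faithful and factors through $\mu_e\subset\Gamma(U,\cO_U)^\times$. The key point is that this cotangent character is intrinsic to the geometric point $\bar z$ of $\cR_\pi$, independent of the atlas $U$, and conjugation by any element of the decomposition group $\bD_{\bar z}$ preserves the character (since $\mu_e$ lies in the base and the character is valued there). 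Running the argument of Proposition \ref{prop_components_of_ramification_divisor}(b) verbatim with $\Pi$ in place of $\pi_1(S,\bar s)$ gives $G_{\bar z}\le Z(\bD_{\bar z})$, so $\bD_{\bar z}\le C_G(G_{\bar z})$, and the Galois group $\bD_{\bar z}/G_{\bar z}$ of $\cR\to\cX$ embeds in $C_G(G_{\bar z})/G_{\bar z}$. The main subtlety, and the only nontrivial step, is to confirm that the cotangent/inertia analysis at $\bar z$ made on the scheme $U$ depends only on the geometric point $\bar z\to\cR_\pi$ and not on a lift through $u:U\to\cX$; this follows from the fact that the \'etale-local structure of $\pi$ at $\bar z$ is determined by $\bar z$ alone, together with compatibility of formation of the strict local ring with étale base change.
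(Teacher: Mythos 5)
Your argument for part (a) and the finite \'etaleness/degree claim is fine and matches the paper in spirit. The problem is in part (b), where you reduce to Proposition \ref{prop_components_of_ramification_divisor}(b) via an arbitrary \'etale atlas $u:U\to\cX$. The issue is that $u^*\cR$ need not be connected: when it is disconnected, the decomposition group $\bD_U:=\Stab_G(\pi_1(U,\tilde z)\cdot\bar z)$ that Proposition \ref{prop_components_of_ramification_divisor}(b) controls is in general a \emph{proper} subgroup of the decomposition group $\bD:=\Stab_G(\pi_1(\cX,\bar x)\cdot\bar z)$ you actually need. Applying Proposition \ref{prop_components_of_ramification_divisor}(b) to $\pi_U$ over $U$ then only proves $G_{\bar z}\le Z(\bD_U)$, not $G_{\bar z}\le Z(\bD)$, so it does not control the Galois group of $\cR\to\cX$. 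Your subsequent appeal to ``run the argument verbatim with $\Pi$ in place of $\pi_1(S,\bar s)$'' does not fill this in: the proof of Proposition \ref{prop_components_of_ramification_divisor}(b) is a function-field argument (generic point $\eta$ of $S$, generic point $\epsilon$ of $R$, complete local ring $B=\what{\cO_{C_\eta,\epsilon}}$, the Galois extension $L/K$), and those ingredients are specific to a regular integral \emph{scheme} base; there is no direct ``verbatim'' transposition to the stack $\cX$. The parenthetical ``since $\mu_e$ lies in the base and the character is valued there'' is the right instinct but is not by itself a proof that conjugation by $g\in\bD$ preserves $\chi_{\bar z}$; that requires identifying $\chi_{\bar z}$ with $\chi_{\bar zg}$ on $G_{\bar z}=G_{\bar zg}$, which is precisely the content one is trying to establish.

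The paper closes this gap in a genuinely different way: it constructs a \emph{specific} regular integral scheme $U$ with a map $f:U\to\cX$ (not merely an \'etale atlas) such that $f^*\cR$ \emph{remains connected}. This guarantees $\bD_U=\bD$, so applying Proposition \ref{prop_components_of_ramification_divisor}(b) to $\pi_U$ over $U$ gives the conclusion for the actual decomposition group over $\cX$. The construction is nontrivial (restrict to the smooth locus $\cY_K$, pull back the universal pointed curve $\cM_{g,n}\to\cM_g$ for $n$ large enough that $\cM_{g,n}$ is a scheme, and use smoothness/properness with geometrically connected fibers to propagate irreducibility and connectedness from $\cR$ to $f^*\cR$). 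You need either this construction or a careful local-constancy argument (that the section $w\mapsto\chi_w^{-1}(\zeta_e)\in G$ of the stabilizer sheaf is locally constant on $\cR_\pi$ when $\zeta_e\in\Gamma(\bS,\cO_\bS)$, from which $\chi_{\bar z}=\chi_{\bar zg}$ for $g\in\bD$ follows since $\bar z,\bar zg$ lie on the same component). As written, your proposal supplies neither and so has a genuine gap at the key step of part (b).
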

\begin{proof} For part (a), from the discussion above, we may argue exactly as in Proposition \ref{prop_components_of_ramification_divisor}(a). In part (b), the conclusions of (a) hold by base change, so it remains to justify the claim about the Galois groups. If $e = 1$ then the statement is trivial, so we may assume $e\ge 2$, and hence $\cC\rightarrow\cX$ has fibers of genus $g\ge 2$. Let $\cR\subset\cR_\pi$ be a connected component. Suppose there exists a map $f : U\rightarrow\cX$ with $U$ a regular integral scheme such that $f^*\cR$ is connected. Then we may apply Proposition \ref{prop_components_of_ramification_divisor}(b) to the pullback $f^*\cC\rightarrow f^*\cE$, which would give us the desired result. To construct the map $f$, let $K := \bQ(\zeta_e)$, let $\cM_g$ (resp. $\cM_{g,n}$) denote the moduli stack of smooth curves of genus $g$ (resp. with $n$ distinct marked points) over $K$ (see \cite{Knud83II}), and let $\cY_K\subset\cX_K$ be the open substack consisting of smooth objects. There is a natural map
$$h : \cY_K\rightarrow\cM_g$$
sending an admissible cover $C\rightarrow E$ to the genus $g$ curve $C$. The Riemann-Hurwitz formula together with Hurwitz's automorphism theorem implies that for fixed $g$, there is a large enough $n$ such that curves of genus $g$ (in characteristic 0) do not have any automorphisms with $n$ fixed points, so the stack $\cM_{g,n}$ is a scheme. Let us fix such an $n$. Forgetting marked points yields natural maps
$$\cM_{g,n}\lra\cM_{g,n-1}\lra\cdots\lra\cM_{g,1}\lra\cM_{g,0} = \cM_g$$
where the source of each map is the universal family over the target. Thus the composition $\cM_{g,n}\rightarrow\cM_g$ is representable, smooth, and proper with geometrically connected fibers. Let $U := h^*\cM_{g,n}$, then since automorphisms of objects in $\cAdm(G)$ are determined by how it behaves on the covering curve $C$, the map $h$ is representable and hence $U$ is a scheme. Since $\cY_K$ is regular and connected and $U\stackrel{\pr}{\rightarrow}\cY_K$ is proper and smooth (hence open and closed) with connected fibers, $U$ is also regular and connected, so it is a regular integral scheme. Let $f$ be the composition
$$f : U\stackrel{\pr}{\lra}\cY_K\stackrel{i}{\lra}\cX$$
By Proposition \ref{prop_compactification}(e), the coarse scheme $X$ of $\cX$ is smooth over $\bS$, hence normal, hence irreducible since it is connected, so the same is true of the coarse scheme of $\cR$. This implies that $\cX$ and $\cR$ are irreducible, so the restriction $i^*\cR$ is also irreducible. Next, the map $f^*\cR\rightarrow i^*\cR$ is smooth proper with geometrically connected fibers since the same is true of $\pr$. Since $i^*\cR$ is connected, this implies that $f^*\cR$ is also connected, as desired.
\end{proof}

\subsection{Restriction of the relative dualizing sheaf to a ramified section}\label{ss_restriction}

Let $G$ be a finite group, we work universally over a $\bZ[1/|G|]$-scheme $\bS$.

\begin{lemma}\label{lemma_alg} Let $A$ be a ring, and $I\subset A$ an ideal which is $A$-flat, and $e\ge 1$ an integer.
\begin{itemize}
\item[(a)] The multiplication map $\prod_{i=1}^e I\rightarrow I^e$ induces a canonical $A$-module isomorphism
$$\psi : I^{\otimes e}\cong I^e$$
\item[(b)] For $x_1,\ldots,x_e\in I$ with images $\ol{x_i}\in I/I^2$ and $a\in A/I$, the map
\begin{eqnarray*}
\xi_e : I^{\otimes e}\otimes_A A/I & \longrightarrow & (I/I^2)^{\otimes e} \\
(x_1\otimes\cdots\otimes x_e)\otimes a & \mapsto & a(\ol{x_1}\otimes\cdots\otimes \ol{x_n})
\end{eqnarray*}
is an isomorphism.

\end{itemize}
\end{lemma}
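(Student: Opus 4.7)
The plan is straightforward commutative algebra. For part (a), I would proceed by induction on $e$, with the base case $e=1$ trivial. For the inductive step, associativity of tensor products gives $I^{\otimes e} \cong I^{\otimes(e-1)} \otimes_A I$, which under the inductive isomorphism becomes $I^{e-1} \otimes_A I$. Tensoring the inclusion $I^{e-1} \hookrightarrow A$ on the right with the flat module $I$ yields an injection $I^{e-1} \otimes_A I \hookrightarrow A \otimes_A I = I$, whose image is exactly $I^{e-1} \cdot I = I^e$. Thus the multiplication map $I^{e-1} \otimes_A I \to I^e$ is an isomorphism, and chaining with the inductive isomorphism (which is itself multiplication) gives the desired $\psi : I^{\otimes e} \rightiso I^e$.

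For part (b), the isomorphism $\psi$ from (a) identifies $I^{\otimes e} \otimes_A A/I$ with $I^e \otimes_A A/I = I^e/(I\cdot I^e) = I^e/I^{e+1}$, and under this identification $\xi_e$ becomes the map $I^e/I^{e+1} \to (I/I^2)^{\otimes e}$ induced by the natural $A$-linear quotient $I^{\otimes e} \twoheadrightarrow (I/I^2)^{\otimes e}$ (which kills $I\cdot I^{\otimes e}$, hence descends). I would construct the inverse directly by the assignment $\bar{x_1} \otimes \cdots \otimes \bar{x_e} \mapsto x_1 \cdots x_e \bmod I^{e+1}$. This is well-defined since changing any $x_i$ by an element of $I^2$ modifies the product by an element of $I^2 \cdot I^{e-1} \subseteq I^{e+1}$, and is $A/I$-multilinear in the $\bar x_i$. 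The two maps are visibly mutually inverse on pure tensors (respectively, on products $x_1\cdots x_e$), and since such elements generate both sides as $A/I$-modules, the maps are mutually inverse isomorphisms.

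The only real content is the use of flatness of $I$ in part (a) to upgrade the surjective multiplication map $I^{e-1} \otimes_A I \twoheadrightarrow I^e$ to an injection; without flatness there can be nontrivial kernel (e.g.\ torsion). Part (b) is then purely formal, the key observation being that under $\psi$ the submodule $I \cdot I^{\otimes e} \subseteq I^{\otimes e}$ corresponds precisely to $I^{e+1} \subseteq I^e$, so quotienting by $I$ on one side matches quotienting by $I^2$ on each factor of the other.
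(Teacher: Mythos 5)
Your proposal is correct, and both parts take a route genuinely different from the paper's. For (a), the paper simply cites Liu's \emph{Algebraic Geometry and Arithmetic Curves} (\S 1, Theorem 2.4), whereas you give a self-contained induction: the key step, that tensoring the inclusion $I^{e-1}\hookrightarrow A$ against the flat module $I$ preserves injectivity and hence identifies $I^{e-1}\otimes_A I$ with $I^e$, is exactly right and is the correct place where flatness earns its keep. For (b), the paper avoids any explicit inverse by factoring $\xi_e$ as a chain of known isomorphisms
\[
I^{\otimes e}\otimes_A A/I \xrightarrow{\,1\otimes\phi^{-1}\,} I^{\otimes e}\otimes_A (A/I)^{\otimes e}\longrightarrow (I\otimes_A A/I)^{\otimes e}\xrightarrow{\,\xi_1^{\otimes e}\,}(I/I^2)^{\otimes e},
\]
where $\phi\colon(A/I)^{\otimes e}\cong A/I$ is part (a) applied to the unit ideal of $A/I$, and $\xi_1\colon I\otimes_A A/I\cong I/I^2$ comes from tensoring $0\to I\to A\to A/I\to 0$ with $I$. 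You instead transport the problem via $\psi\otimes 1$ to the statement that $I^e/I^{e+1}\to(I/I^2)^{\otimes_{A/I} e}$, $x_1\cdots x_e\mapsto \bar{x}_1\otimes\cdots\otimes\bar{x}_e$, is an isomorphism, and exhibit the inverse $\bar{x}_1\otimes\cdots\otimes\bar{x}_e\mapsto x_1\cdots x_e \bmod I^{e+1}$ directly; your well-definedness check (changing a representative perturbs the product only by $I^2\cdot I^{e-1}\subseteq I^{e+1}$) is the right one. The paper's route is slightly slicker in that it never leaves the world of natural tensor maps and so needs no ad hoc well-definedness argument, while your route is more concrete and arguably more transparent about where $I^2$ (as opposed to $I$) enters. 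Both are fine.
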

\begin{proof} Part (a) is \cite[\S1 Theorem 2.4]{Liu02}. For (b), note that (a) applied to the unit ideal in $A/I$ gives a canonical isomorphism $\phi : (A/I)^{\otimes e}\rightiso A/I$. Next, tensoring the exact sequence $0\rightarrow I\rightarrow A\rightarrow A/I\rightarrow 0$ by the $A$-module $I$ and using (a) shows that $\xi_1 : I\otimes_A A/I\rightarrow I/I^2$ is an isomorphism. That $\xi_e$ is an isomorphism follows from noting that $\xi_e$ can also be described as the composition of isomorphisms
$$I^{\otimes e}\otimes_A A/I\stackrel{1\otimes\phi^{-1}}{\longrightarrow} I^{\otimes e}\otimes_A (A/I)^{\otimes e}\longrightarrow (I\otimes_A A/I)^{\otimes e}\stackrel{\xi_1^{\otimes e}}{\longrightarrow} (I/I^2)^{\otimes e}$$
\end{proof}

\begin{prop}\label{prop_restriction} Let $\cX\subset\cAdm(G)$ be a connected component with universal family $\cC\stackrel{\pi}{\ra}\cE\rightarrow\cX$. Let $\sigma_\cO : \cX\rightarrow\cE$ denote the zero section. Suppose $\cC\rightarrow\cE$ has ramification index $e$ above $\sigma_\cO$. Suppose further that we have a section $\sigma : \cX\rightarrow\cC$ making the following diagram commute:
\[\begin{tikzcd}
\cC\ar[r,"\pi"] & \cE\\
 & \cX\ar[lu,"\sigma"]\ar[u,"\sigma_\cO"']
\end{tikzcd}\]
Then, there is a canonical isomorphism $\sigma_O^*\omega_{\cE/\cX}\cong(\sigma^*\omega_{\cC/\cX})^{\otimes e}$.
\end{prop}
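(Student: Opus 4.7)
The strategy is to reduce the statement about the dualizing sheaf to a statement about invertible ideal sheaves, at which point the étale-local description of admissible $G$-covers gives the desired identification directly. In local coordinates, the map $\pi$ has the form $x \mapsto \xi^e$, and while the naive pullback $\pi^*\Omega_{\cE/\cX} \to \Omega_{\cC/\cX}$ (sending $dx \mapsto e\xi^{e-1}d\xi$) vanishes at the ramified point when $e \ge 2$, the relation $\pi^*(x) = \xi^e$ lifts cleanly to an identification of ideal sheaves.

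First, I would replace the dualizing sheaves by sheaves of differentials. Since $\sigma_O$ is the zero section of the 1-generalized elliptic curve $\cE/\cX$ and $\sigma$ factors through $\cR_\pi$, which lies in the smooth locus of $\cC/\cX$ by Proposition \ref{prop_RRD}, both sections land in the smooth loci of their respective families. Applying Proposition \ref{prop_canonical_map} (or rather its pullback to an \'etale presentation of $\cX$), the canonical map $\Omega \to \omega$ is an isomorphism along these loci, so we obtain canonical identifications $\sigma_O^*\omega_{\cE/\cX} \cong \sigma_O^*\Omega_{\cE/\cX}$ and $\sigma^*\omega_{\cC/\cX} \cong \sigma^*\Omega_{\cC/\cX}$. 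It thus suffices to construct a canonical isomorphism $\sigma_O^*\Omega_{\cE/\cX} \cong (\sigma^*\Omega_{\cC/\cX})^{\otimes e}$.

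Next, let $O := \sigma_O(\cX) \hookrightarrow \cE$ and $R := \sigma(\cX) \hookrightarrow \cC$; since both sections land in smooth loci, they are effective Cartier divisors, with invertible ideal sheaves $J \subset \cO_\cE$ and $I \subset \cO_\cC$ respectively. The conormal exact sequence for the regular immersions $\sigma_O, \sigma$ collapses to canonical isomorphisms $\sigma_O^*\Omega_{\cE/\cX} \cong \sigma_O^*J$ and $\sigma^*\Omega_{\cC/\cX} \cong \sigma^*I$ (using that $\Omega_{\cX/\cX} = 0$ and that $J, I$ are invertible so $J/J^2 \cong J|_O$ and $I/I^2 \cong I|_R$). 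Using $\pi\circ\sigma = \sigma_O$, we have $\sigma_O^*J = \sigma^*\pi^*J$, so the task reduces to producing a canonical isomorphism $\sigma^*\pi^*J \cong (\sigma^*I)^{\otimes e}$.

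The heart of the argument is a local computation. By Definition \ref{def_admissible}\ref{part_admissible_local_marking}, around every geometric point of $R$ the cover $\pi$ is \'etale-locally modeled by $A[x] \to A[\xi]$, $x \mapsto \xi^e$, in which $J$ is generated by $x$ and $I$ by $\xi$. Hence $\pi^{-1}J\cdot\cO_\cC = I^e$ in a Zariski open $U \subset \cC$ containing $R$ and disjoint from the other (finitely many) preimages of $O$; equivalently, $\pi^*O|_U = eR|_U$ as Cartier divisors. By Lemma \ref{lemma_alg}(a) applied to the invertible ideal $I|_U$, the multiplication map $(I|_U)^{\otimes e} \rightiso I^e|_U = \pi^*J|_U$ is an isomorphism of invertible sheaves. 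Since $\sigma$ factors through $U$, pulling back yields $\sigma^*\pi^*J \cong (\sigma^*I)^{\otimes e}$, which completes the proof.

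The main subtlety, rather than a deep obstacle, lies in verifying that all of the above constructions (the isomorphism $\Omega \cong \omega$ on smooth loci, the conormal-sheaf identification, and the Cartier-divisor equality $\pi^*O|_U = eR|_U$) behave canonically in the stacky setting; this is handled by pulling back along an \'etale atlas of $\cX$ and descending, using that each of the canonical isomorphisms involved is natural in the base. No other step is delicate: the entire content of the proposition is the tame-ramification relation $\pi^*O = eR$ encoded by the \'etale-local shape of admissible covers.
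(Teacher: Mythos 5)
Your proof is correct and follows essentially the same route as the paper's: reduce $\omega$ to $\Omega$ along the smooth loci via the canonical map, identify $\sigma_O^*\Omega_{\cE/\cX}$ and $\sigma^*\Omega_{\cC/\cX}$ with (co)normal sheaves of the ideal sheaves of the sections, use the \'etale-local model $x\mapsto\xi^e$ to get $\pi^*\cI\cong\cJ^e$ near the section together with Lemma \ref{lemma_alg}, and descend from an \'etale atlas. The only cosmetic difference is your appeal to Proposition \ref{prop_RRD} to place $\sigma$ in the smooth locus (which presumes $e\ge 2$); the paper cites the local picture at markings in Definition \ref{def_admissible} directly, which covers $e=1$ as well.
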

\begin{proof} Let $U$ be a scheme and $U\rightarrow\cX$ a surjective \'{e}tale map corresponding to an admissible $G$-cover $C\stackrel{\pi}{\rightarrow} E\rightarrow U$. Let $\sigma,\sigma_O$ denote the sections of $C,E$ pulled back from $\sigma,\sigma_\cO$, with respective sheaves of ideals $\cJ\subset\cO_C,\cI\subset\cO_E$. Since $\pi\circ\sigma = \sigma_O$, $\pi$ induces a map $\pi^*\cI\rightarrow\cJ$. The \'{e}tale local picture of an admissible cover above a marking implies that this map factors through an isomorphism $\pi^*\cI\rightiso\cJ^e\subset\cJ$. On the other hand, since $\sigma,\sigma_O$ land in the smooth loci of $C$ and $E$, the conormal exact sequence induces isomorphisms
$$\cJ/\cJ^2\cong\sigma^*\Omega_{C/U}\qquad \cI/\cI^2\cong\sigma_O^*\Omega_{E/U}$$
Thus, using Lemma \ref{lemma_alg}, we obtain canonical isomorphisms
$$\sigma_O^*\Omega_{E/U}\cong \cI/\cI^2\cong\cI\otimes_{\cO_E}\cO_E/\cI\cong \sigma_O^*\cI\cong \sigma^*\pi^*\cI\cong\sigma^*\cJ^e\cong(\cJ/\cJ^2)^{\otimes e}\cong(\sigma^*\Omega_{C/U})^{\otimes e}$$
Since the canonical map $\varphi_U : \Omega_{C/U}\rightarrow\omega_{C/U}$ relative to the reduced ramification divisor $\cR_\pi\subset\cC$ is an isomorphism on the smooth locus (and similarly for $E/U$), these isomorphisms give an isomorphism $\sigma_O^*\omega_{E/U}\cong(\sigma^*\omega_{C/U})^{\otimes e}$. One checks that these isomorphisms are compatible with the comparison isomorphisms associated to the presentation of $\cX$ induced by $U$, and hence they descend to an isomorphism $\sigma_\cO^*\omega_{\cE/\cX}\cong(\sigma^*\omega_{\cC/\cX})^{\otimes e}$ as desired. 
\end{proof}

\subsection{Degree formalism for line bundles on 1-dimensional stacks}\label{ss_degree_formalism}

The main congruence described in the introduction (Theorem \ref{thm_vdovin_intro}) originates from a congruence on the degree of a certain line bundle on $\cAdm(G)$. Here we recall the notion of the degree of a line bundle on a proper 1-dimensional algebraic stack and prove some basic properties.


Given a finite flat morphism of algebraic stacks $f : \cX\rightarrow\cY$, given a map $y : \Spec k\rightarrow\cY$ with $k$ a field, its degree $\deg_y(f)$ at $y$ is the $k$-rank of the fiber $\cX\times_{\cY,y}\Spec k$. This integer is locally constant on $\cY$. If $\cY$ is connected we denote it simply by $\deg(f)\in\bZ$. For a line bundle $\cL$ on a proper scheme $X$ of pure dimension 1 over a field $k$, its degree is \cite[0AYR]{stacks}
\begin{equation}\label{eq_degree_ratios}
\deg(\cL) = \deg_k(\cL) := \chi_k(X,\cL) - \chi_k(X,\cO_X)
\end{equation}

\begin{lemma}[{\cite[Appendix B.2]{BDP17}}] Let $\cX$ be a connected proper algebraic stack of pure dimension 1 over a field $k$, and for $i = 1,2$, let $q_i : U_i\rightarrow\cX$ denote finite flat surjective morphisms with $U_i$ a connected scheme. Let $\cL$ be an invertible sheaf on $\cX$, then
$$\frac{\deg(q_1^*\cL)}{\deg q_1} = \frac{\deg(q_2^*\cL)}{\deg q_2}$$
\end{lemma}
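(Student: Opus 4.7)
The plan is to reduce to a simple multiplicativity statement by passing to the common refinement. Let $V := U_1 \times_\cX U_2$, and let $p_i : V \to U_i$ denote the projections. Since $q_1, q_2$ are representable, finite, and flat, $V$ is a proper algebraic space of pure dimension 1 over $k$, the $p_i$ are finite flat, and base change gives the degree identities $\deg(p_1) = \deg(q_2)$ and $\deg(p_2) = \deg(q_1)$. Moreover, by commutativity of the fiber square, $p_1^* q_1^* \cL \cong p_2^* q_2^* \cL$ canonically, since both are the pullback of $\cL$ along the common composition $V \to \cX$.

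The key input is the following multiplicativity lemma: if $\pi : W \to Y$ is a finite flat morphism of proper 1-dimensional algebraic spaces over $k$ and $\cM$ is a line bundle on $Y$, then
$$\deg(\pi^* \cM) = \deg(\pi) \cdot \deg(\cM).$$
I would prove this using the projection formula $\pi_* \pi^* \cM \cong \cM \otimes \pi_* \cO_W$ together with the fact that $\pi_* \cO_W$ is locally free of rank $n := \deg(\pi)$ on $Y$ (since $\pi$ is finite flat). The elementary identity $\chi_k(Y, \cM \otimes \cE) = \chi_k(Y, \cE) + \rk(\cE)\cdot \deg(\cM)$ for a locally free sheaf $\cE$ on a proper 1-dimensional scheme (proved by induction on rank via a local trivialization and the snake lemma) then yields
$$\chi_k(W, \pi^*\cM) = \chi_k(Y, \pi_* \cO_W) + n\cdot \deg(\cM) = \chi_k(W, \cO_W) + n\cdot \deg(\cM),$$
and subtracting gives the claim via the definition \eqref{eq_degree_ratios}. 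Note that although $W$ need not be a scheme a priori, one may verify the cohomological identities after pulling back along a finite flat cover of $Y$ by a scheme, or simply note that $\chi_k$ and the notion of degree of a line bundle extend verbatim to proper 1-dimensional algebraic spaces.

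Applying the lemma to $p_1$ and $p_2$ and using the isomorphism $p_1^* q_1^* \cL \cong p_2^* q_2^* \cL$ gives
$$\deg(q_2) \cdot \deg(q_1^* \cL) = \deg(p_1) \cdot \deg(q_1^*\cL) = \deg(p_1^* q_1^*\cL) = \deg(p_2^* q_2^*\cL) = \deg(p_2)\cdot\deg(q_2^*\cL) = \deg(q_1)\cdot\deg(q_2^*\cL),$$
and dividing by $\deg(q_1)\deg(q_2)$ finishes the proof.

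The main technical obstacle is verifying that everything behaves well when $V$ is only an algebraic space and possibly disconnected. Connectedness of $V$ is not actually needed in the argument above, since we never compute degrees on individual components of $V$; the Euler characteristic on a disjoint union is additive, which matches the additivity of $\deg(p_i)$ over connected components. The algebraic-space issue is handled either by a direct extension of the Euler characteristic formalism or by passing through an étale cover, and in either case the formula in the lemma remains valid.
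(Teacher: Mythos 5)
Your proof is correct and takes essentially the same route as the paper: base change to the fiber product $V = U_1\times_\cX U_2$, compare the two pullbacks of $\cL$ there, and use multiplicativity of degree under finite flat pullback. One simplification you miss: since $q_1$ is finite and representable, the projection $V\to U_2$ is finite with target a scheme, so $V$ is automatically a \emph{scheme} (not merely an algebraic space), which removes the technical worry at the end of your argument; the paper moreover passes to a connected component $U\subset V$ so it can cite the multiplicativity formula in the connected-scheme case directly rather than reproving it for disconnected targets as you do.
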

\begin{proof} Since $q_i$ is finite flat, $U_1\times_\cX U_2$ is a scheme. Let $U\subset U_1\times_\cX U_2$ be a connected component. Then $U_1,U_2,U$ are all proper connected $k$-schemes of pure dimension 1. We have a diagram with all maps finite flat surjective
\[\begin{tikzcd}
U\ar[rd,"p"]\ar[r,"p_2"]\ar[d,"p_1"'] & U_2\ar[d,"q_2"] \\
U_1\ar[r,"q_1"'] & \cX
\end{tikzcd}\]
Noting that $\frac{\deg(p_i^*q_i^*\cL)}{\deg(q_i^*\cL)} = \deg p_i$ \cite[0AYW,0AYZ]{stacks}, we find that the ratios in \eqref{eq_degree_ratios} are both equal to $\frac{\deg(p^*\cL)}{\deg p}$.
\end{proof}

\begin{defn}[{\cite[Appendix B.2]{BDP17}}] Let $\cX$ be a connected proper algebraic stack of pure dimension 1 over a field $k$ which admits a finite flat (equivalently, finite locally-free \cite[02KB]{stacks}) surjective map $p : U\rightarrow\cX$ with $U$ a scheme. For a line bundle $\cL$ on $\cX$, define
$$\deg(\cL) := \frac{\deg(p^*\cL)}{\deg p}\in\bQ$$
\end{defn}
It follows from the lemma that $\deg(\cL)$ is independent of the choice of the finite flat scheme cover $p : U\rightarrow\cX$. Recall that a Deligne-Mumford stack is \emph{tame} if its automorphism groups at every geometric point $\Spec\Omega\rightarrow\cX$ has order coprime to the characteristic of $\Omega$. We say it is generically tame if it has an open dense substack which is tame. The following lemma implies that it makes sense to speak of degrees of line bundles on $\cAdm(G),\ol{\cM(G)}$.

\begin{prop}[{\cite{KV04, EHKV01}}]\label{prop_finite_flat_scheme_cover} We work over a field $k$.
\begin{itemize}
\item[(a)] Let $\cX$ be a smooth separated generically tame 1-dimensional Deligne-Mumford stack admitting a coarse scheme (i.e., whose coarse space given by Theorem \ref{thm_keel_mori} is a scheme). Then there exists a finite flat cover $U\rightarrow\cX$ with $U$ a smooth $k$-scheme.
\item[(b)] Let $G$ be a finite group. If $\ch(k)\nmid 2|G|$, then for $\cM = \cAdm(G)_k$ or $\cM = \ol{\cM(G)}_k$, $\cM$ admits a finite flat surjective map $U\rightarrow\cM$ with $U$ a smooth $k$-scheme.	
\end{itemize}
\end{prop}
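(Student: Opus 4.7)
Part (a) is essentially a direct appeal to the main theorem of Kresch--Vistoli \cite{KV04}, which asserts that any smooth separated generically tame Deligne--Mumford stack of finite type over a field, whose coarse moduli space is a quasi-projective scheme, admits a finite flat surjective morphism from a smooth quasi-projective scheme. The hypotheses of (a) match this setup exactly: we have smoothness, separatedness, generic tameness, 1-dimensionality, and a coarse scheme; quasi-projectivity of the coarse scheme is automatic in our two applications in (b), since the coarse schemes there are proper smooth curves over $k$, hence projective. (For a more hands-on construction in dimension one, one could alternatively note that $\cX\to X$ is an isomorphism away from finitely many stacky points $p_1,\ldots,p_n$, each with cyclic stabilizer $H_i$ of order coprime to $\ch(k)$ by tameness, and then construct $U$ by Abhyankar-style root extraction at the $p_i$; but invoking \cite{KV04} is cleanest.)

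For (b), the strategy is to verify that $\cM = \cAdm(G)_k$ and $\cM = \ol{\cM(G)}_k$ satisfy the hypotheses of part (a), so that (a) produces the desired cover. Smoothness of pure dimension 1 is given by Theorem \ref{thm_admG}(a) and Proposition \ref{prop_compactification}(c). Properness over $k$ (hence separatedness) is also supplied by these results, as is the existence of a coarse scheme which is in fact a smooth proper, hence projective, curve over $k$ (Theorem \ref{thm_admG}(c) and Proposition \ref{prop_compactification}(e)).

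It remains to check generic tameness under the hypothesis $\ch(k)\nmid 2|G|$. A geometric point of $\cAdm(G)_k$ over a smooth $G$-cover $\pi:C\to E$ has automorphism group sitting in an extension of a subgroup of $\Aut(E)$ by the vertical automorphism group, the latter being a subgroup of $Z(G)$ by Proposition \ref{prop_smooth_vertical_automorphisms}; the same bound holds for $\ol{\cM(G)}_k$ via the rigidification $\cAdm(G)\to\ol{\cM(G)}$. Over an open dense locus of $\cM(1)$ (the complement of $j=0,1728$) one has $|\Aut(E)|=2$, so generic automorphism groups have order dividing $2|G|$, which is coprime to $\ch(k)$. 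Hence both stacks are generically tame and (a) applies to each.

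The main (and essentially the only) real point is the invocation of \cite{KV04}; once that is in hand, the content of (b) is simply an accounting of facts about $\cAdm(G)$ and $\ol{\cM(G)}$ already established in \S\ref{section_admissible_G_covers}, together with the elementary bound on the orders of automorphism groups that makes the tameness hypothesis $\ch(k)\nmid 2|G|$ suffice.
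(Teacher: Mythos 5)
Your proposal is correct and takes essentially the same route as the paper: part (a) is a citation of Kresch--Vistoli (the paper invokes \cite[Theorems 1, 2, 3]{KV04} after noting that the 1-dimensional coarse scheme is projective), and part (b) reduces to (a) by checking that $\cAdm(G)_k$ and $\ol{\cM(G)}_k$ satisfy its hypotheses, using Proposition \ref{prop_compactification}(e) and the fact that $\cAdm(G)\to\ol{\cM(G)}$ is an \'etale gerbe. The one place you go beyond what the paper writes down is in spelling out why $\ch(k)\nmid 2|G|$ yields generic tameness, via the exact sequence $1\to\Aut^v(\pi)\to\Aut(\pi)\to\Aut(E)$ with $\Aut^v(\pi)=Z(G)$ on the smooth locus and $|\Aut(E)|=2$ for generic $j$; the paper treats this as immediate and does not make it explicit. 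This is a useful gloss and does not change the substance of the argument.
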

\begin{proof} By Theorem \ref{thm_keel_mori}, the coarse scheme is proper over $k$. Since proper schemes of dimension 1 are projective, (a) is \cite[Theorems 1, 2, 3]{KV04}. By Proposition \ref{prop_compactification}(e), $\ol{\cM(G)}$ is smooth with smooth projective coarse scheme. Since $\cAdm(G)\rightarrow\ol{\cM(G)}$ is an \'{e}tale gerbe, the same is true of $\cAdm(G)$, so (b) follows from (a).
\end{proof}


Next we obtain a criterion to detect when $\deg(\cL)$ is an integer. Let $\cL$ be an invertible sheaf on a Deligne-Mumford stack $\cL$. For any geometric point $x : \Spec\Omega\rightarrow\cX$, $x^*\cL$ is a rank 1 representation of $\Aut_\cX(x)$, which we call the \emph{local character} of $\cL$ at $x$. We will need the following result.

\begin{prop}[{\cite[Proposition 6.1]{Ols12}}]\label{prop_trivial_local_characters} Let $\cX$ be a locally finitely presented tame separated Deligne-Mumford stack with a coarse scheme $c : \cX\rightarrow X$. Then pullback $c^*$ induces an isomorphism between the category of invertible sheaves on $X$ and the full subcategory of invertible sheaves on $\cX$ whose local characters at all geometric points are trivial. A quasi-inverse is given by $c_*$.
\end{prop}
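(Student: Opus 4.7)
The plan is to verify that $c^*$ and $c_*$ give mutually inverse equivalences between the two categories. First, $c^*$ automatically lands in the subcategory of sheaves with trivial local characters: for an invertible sheaf $\cL$ on $X$ and a geometric point $x : \Spec\Omega \to \cX$ with image $\ol{x}$ on $X$, the fiber $(c^*\cL)(x)$ is canonically identified with $\cL(\ol{x})$, and every element of $\Aut_\cX(x)$ is sent by $c$ to the identity on $\ol{x}$, hence acts trivially on this $1$-dimensional vector space.

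The remaining assertions are \'{e}tale-local on $X$, since $c_*$ commutes with flat base change by Theorem \ref{thm_keel_mori}(b). By the local structure theorem for tame separated Deligne--Mumford stacks admitting a coarse moduli space (as used in Lemma \ref{lemma_coarse_scheme_is_smooth}), \'{e}tale-locally on $X$ we may present $\cX \cong [\Spec B / H]$ and $X \cong \Spec A$ with $A = B^H$ and $H$ a finite group whose order is invertible in $B$. In this presentation, invertible sheaves on $\cX$ correspond to invertible $H$-equivariant $B$-modules $L$; pullback is $M \mapsto M \otimes_A B$ with trivial $H$-action, and pushforward is $L \mapsto L^H$.

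The heart of the proof is to show that if $L$ has trivial local characters, then $L^H$ is an invertible $A$-module and the evaluation map $\varepsilon : L^H \otimes_A B \to L$ is an isomorphism. Tameness makes the Reynolds operator $e_H := |H|^{-1}\sum_{h\in H} h$ a functorial $A$-linear retract of $L \twoheadrightarrow L^H$, so $(-)^H$ is exact on $H$-equivariant $B$-modules and $L^H$ is finitely generated. For each prime $\fp \subset A$, pick a prime $\mathfrak{q} \subset B$ over $\fp$ with stabilizer $H_\mathfrak{q} \le H$; the triviality of the local character at $\Spec \kappa(\mathfrak{q}) \to \cX$ means $H_\mathfrak{q}$ acts trivially on the one-dimensional $\kappa(\mathfrak{q})$-space $L \otimes_B \kappa(\mathfrak{q})$. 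Given a $B_\mathfrak{q}$-generator $\ell$ of $L_\mathfrak{q}$, the invariant $e_{H_\mathfrak{q}}(\ell)$ reduces modulo $\mathfrak{q}$ to $\ell \bmod \mathfrak{q}$, so by Nakayama it still generates $L_\mathfrak{q}$ and is $H_\mathfrak{q}$-invariant. Averaging over coset representatives of $H/H_\mathfrak{q}$ and combining localizations across the $H$-orbit of $\mathfrak{q}$ in the semilocal ring $B_\fp$ then produces an $H$-invariant generator of $L_\fp$ as a $B_\fp$-module. This shows $(L^H)_\fp$ is free of rank $1$ over $A_\fp$ and that $\varepsilon$ is surjective, hence an isomorphism by rank considerations.

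Finally, the second adjunction isomorphism $\cL \to c_*c^*\cL$ reduces in the local model to the equality $A = B^H$, which holds by construction. The main obstacle is the lifting step, from a trivial $H_\mathfrak{q}$-action on the residue-field fiber to an $H_\mathfrak{q}$-invariant generator of the stalk $L_\mathfrak{q}$; this is exactly what the tameness hypothesis delivers through the Reynolds operator, and without it the statement fails.
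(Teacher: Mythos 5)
The paper does not prove this statement; it simply cites \cite[Proposition 6.1]{Ols12}. So I will evaluate your argument on its own terms.

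Your overall strategy is correct and standard (adjunction, reduction to the local model $[\Spec B/H]$ with $A = B^H$, Reynolds operator, Nakayama), but there is a genuine gap at the crucial step. You assert that triviality of the local character at ``$\Spec\kappa(\mathfrak{q})\to\cX$'' forces the full stabilizer $H_\mathfrak{q}$ to act trivially on the fiber $L\otimes_B\kappa(\mathfrak{q})$. However, $\Spec\kappa(\mathfrak{q})$ is not in general a geometric point, and the automorphism group of a geometric point $\Spec\Omega\to[\Spec B/H]$ lying over $\mathfrak{q}$ is the \emph{inertia} subgroup $I_\mathfrak{q}\subseteq H_\mathfrak{q}$ consisting of those $h$ that stabilize $\mathfrak{q}$ \emph{and} act trivially on $\kappa(\mathfrak{q})$. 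The local-character hypothesis therefore only tells you that $I_\mathfrak{q}$ acts trivially on $L\otimes_B\kappa(\mathfrak{q})$, not $H_\mathfrak{q}$. For a non-closed or non-geometric point of $\Spec A$ the quotient $H_\mathfrak{q}/I_\mathfrak{q}$ is typically nontrivial (for example the generic point of $[\bA^1/\mu_2]$, where $H_\mathfrak{q} = \mu_2$ but $I_\mathfrak{q}=1$), it acts $\kappa(\mathfrak{q})$-semilinearly on the one-dimensional fiber, and nothing in the hypothesis directly constrains this action. Consequently, for an arbitrary $B_\mathfrak{q}$-generator $\ell$ you cannot conclude that $e_{H_\mathfrak{q}}(\ell)\equiv\ell\bmod\mathfrak{q}$; the average $\tfrac{1}{|H_\mathfrak{q}|}\sum_h c_h\,\overline{\ell}$ (where $c_h$ is the semilinear cocycle) can perfectly well vanish (e.g.\ $c_\sigma=-1$ for a degree-2 extension).

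Two clean repairs exist, either of which makes the argument rigorous. First, since $\kappa(\mathfrak{q})/\kappa(\mathfrak{q})^{H_\mathfrak{q}}$ is Galois with group $H_\mathfrak{q}/I_\mathfrak{q}$ and the $I_\mathfrak{q}$-action is trivial, Hilbert~90 ($H^1(H_\mathfrak{q}/I_\mathfrak{q},\kappa(\mathfrak{q})^\times)=0$) guarantees that the semilinear cocycle $h\mapsto c_h$ is a coboundary, so there \emph{is} an $H_\mathfrak{q}$-fixed basis of $L\otimes_B\kappa(\mathfrak{q})$; you must choose $\ell$ to be a lift of that basis before applying $e_{H_\mathfrak{q}}$ and Nakayama. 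Second, and perhaps more in keeping with the rest of your proof, replace $A_\fp$ by its strict henselization $A_\fp^{\sh}$ (permissible since $c_*$ commutes with flat base change, and faithfully flat descent detects local freeness); over a strictly henselian base the residue extension $\kappa(\mathfrak{q})/\kappa(\fp)$ is purely inseparable at the closed point, so $\Aut(\kappa(\mathfrak{q})/\kappa(\fp))$ is trivial and $H_\mathfrak{q}=I_\mathfrak{q}$, at which point your averaging argument goes through verbatim. As written, though, the step ``$H_\mathfrak{q}$ acts trivially on $L\otimes_B\kappa(\mathfrak{q})$'' does not follow from the stated hypothesis.
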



\begin{defn} Let $\cX$ be an irreducible Deligne-Mumford stack admitting a coarse scheme $c : \cX\rightarrow X$. Its \emph{generic automorphism group} is the automorphism group of a geometric generic point $\Spec\Omega\rightarrow\cX$.
\end{defn}

\begin{prop}\label{prop_degree_integrality} Let $\cX$ be an connected tame smooth proper 1-dimensional Deligne-Mumford stack over a field $k$ admitting a coarse scheme. Suppose its generic automorphism group has order $n$. Then for any invertible sheaf $\cL$ on $\cX$ with trivial local characters, we have
$$\deg(\cL) = \frac{1}{n}\deg(c_*\cL)\in\frac{1}{n}\bZ$$
\end{prop}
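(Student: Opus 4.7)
The plan is to reduce the computation of $\deg(\cL)$ to ordinary intersection theory on the scheme $X$. The key intermediate goal is to show that if $p : U \to \cX$ is a finite flat cover by a scheme, then the composition $c \circ p : U \to X$ is finite flat of degree $\deg(p)/n$; granting this, the result falls out of the multiplicativity of degree under pullback.

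First I would invoke Proposition \ref{prop_trivial_local_characters}: since $\cX$ is tame with a coarse scheme $c : \cX \to X$, the hypothesis that $\cL$ has trivial local characters lets me write $\cL \cong c^* \cM$ with $\cM := c_* \cL$ an invertible sheaf on $X$. By Proposition \ref{prop_finite_flat_scheme_cover}, pick a finite flat surjective cover $p : U \to \cX$ with $U$ a smooth $k$-scheme. Note $X$ is smooth proper of dimension $1$ (by Lemma \ref{lemma_coarse_scheme_is_smooth}), so both $U$ and $X$ are proper smooth integral $k$-curves. The map $c \circ p : U \to X$ is the composition of the proper finite map $p$ and the proper quasi-finite map $c$, hence proper and quasi-finite; since $X$ is a scheme, $c \circ p$ is finite. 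Miracle flatness (a finite surjective morphism between regular schemes of the same dimension is flat) then gives that $c \circ p$ is finite flat.

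Next, the main step: computing $\deg(c \circ p)$. Let $\eta$ be the generic point of $X$, pick a geometric point $\bar\eta : \Spec \bar K \to X$ above $\eta$, and write $\cX_{\bar\eta} := \cX \times_X \Spec \bar K$. Since $c$ is a universal homeomorphism, $\cX_{\bar\eta}$ has coarse space $\Spec \bar K$ and a unique geometric point with automorphism group of order $n$ (the generic automorphism group, which is well-defined as $\cX$ is connected and tame). The local structure theorem for tame DM stacks then forces $\cX_{\bar\eta} \cong BH$ for a finite group $H$ with $|H| = n$. Now consider the square
\[
\begin{tikzcd}
V \ar[r]\ar[d] & W \ar[r]\ar[d] & U \ar[d,"p"] \\
\Spec \bar K \ar[r] & BH \ar[r] & \cX
\end{tikzcd}
\]
where both squares are cartesian. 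Here $W = (c \circ p)^{-1}(\bar\eta)$ has $\bar K$-rank $\deg(c \circ p)$, while $V = p^{-1}(\text{lift of }\bar\eta)$ has $\bar K$-rank $\deg(p)$. Since $V \to W$ is the base change of the atlas $\Spec \bar K \to BH$ (an $H$-torsor of degree $n$), we get $\deg(p) = n \cdot \deg(c \circ p)$, i.e.
\[
\deg(c \circ p) = \frac{\deg(p)}{n}.
\]

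Finally, I assemble the pieces. By the definition of degree on $\cX$ (Section \ref{ss_degree_formalism}) and the multiplicativity of degree under pullback by finite flat maps of smooth proper curves,
\[
\deg(\cL) \;=\; \frac{\deg(p^*\cL)}{\deg(p)} \;=\; \frac{\deg((c\circ p)^*\cM)}{\deg(p)} \;=\; \frac{\deg(c\circ p)\cdot \deg(\cM)}{\deg(p)} \;=\; \frac{\deg(\cM)}{n}.
\]
Since $\cM = c_*\cL$ is a line bundle on the scheme $X$, $\deg(\cM) \in \bZ$, giving both the formula and the integrality assertion. The main obstacle is the degree calculation $\deg(c \circ p) = \deg(p)/n$; everything else is formal, and that step in turn reduces to the clean fact that the generic fiber of $c$ is $BH$ with $|H| = n$.
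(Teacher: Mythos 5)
Your proposal is correct, and its skeleton is the same as the paper's: reduce to $\cM := c_*\cL$ using Proposition \ref{prop_trivial_local_characters}, choose a finite flat scheme cover $p : U\to\cX$ via Proposition \ref{prop_finite_flat_scheme_cover}, and reduce everything to the single identity $\deg(c\circ p) = \deg(p)/n$. The only genuine difference is how that identity is established. The paper works \'{e}tale-locally on $X$, writing $\cX\to X$ as $[U'/\Gamma]\to U'/\Gamma$ with the kernel $\Gamma_1$ of the action of order $n$ and $\Gamma/\Gamma_1$ acting freely after shrinking, and then chases degrees through a diagram over that chart; you instead base change to the geometric generic point, identify the generic fiber of $c$ with $BH$ for $|H|=n$, and count ranks of the two fibers of $U$. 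Your rank count $\deg p = n\cdot\deg(c\circ p)$ is correct granted that identification, and the miracle-flatness argument for $c\circ p$ being finite flat is fine. The one place to be careful is the sentence claiming that having coarse space $\Spec\overline{K}$ and a unique geometric point with automorphism group of order $n$ ``forces'' $\cX_{\overline{\eta}}\cong BH$: that is not automatic from those two facts alone (for instance $[\Spec\overline{K}[\epsilon]/\mu_n]$ with a nontrivial action has coarse space $\Spec\overline{K}$ and a unique point with stabilizer of order $n$ but is not a gerbe), so one must actually verify that $\cX\to X$ is a gerbe banded by a group of order $n$ over a dense open of $X$. This follows from the local structure theorem exactly as in the paper's proof (shrink the chart so that $\Gamma_1$ acts trivially and $\Gamma/\Gamma_1$ acts with trivial inertia), after which neutrality of a gerbe over an algebraically closed field gives $\cX_{\overline{\eta}}\cong BH$ with $|H|=n$. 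With that point spelled out, your route and the paper's are essentially equivalent in content; yours isolates the statement ``the generic fiber of the coarse map is $BH$'' as the reusable fact, while the paper's avoids any discussion of gerbes by doing the count directly on the chart.
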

\begin{proof} By Proposition \ref{prop_finite_flat_scheme_cover}, we may find a finite flat map $p : Y\rightarrow\cX$ with $Y$ a smooth proper curve, so we may speak of degrees of line bundles on $\cX$. By Lemma \ref{lemma_coarse_scheme_is_smooth}, $X$ is a smooth proper curve, so the map $c\circ p : Y\rightarrow\cX\rightarrow X$ is finite flat. By Proposition \ref{prop_trivial_local_characters}, $\cL \cong c^*c_*\cL$, so it would suffice to show that $\deg(c\circ p) = \frac{\deg p}{n}$. By the local structure of Deligne-Mumford stacks \cite[Theorem 11.3.1]{Ols16}, $\cX\rightarrow X$ is \'{e}tale-locally given by $[U/\Gamma]\rightarrow U/\Gamma$ for some finite group $\Gamma$ acting on a scheme $U$ \'{e}tale over $\cX$. Let $\Gamma_1\le\Gamma$ be the kernel of the $\Gamma$-action on $U$, then by shrinking $U$ we may assume that $U$ is irreducible and $\Gamma/\Gamma_1$ acts with trivial inertia on $U$. In this case it follows that $\Gamma_1$ isomorphic to the generic automorphism group of $\cX$, so $|\Gamma_1| = n$. We have a diagram

\[\begin{tikzcd}
Y\times_\cX U\ar[r,"\gamma"]\ar[d,"\alpha"] & Y\times_\cX[U/\Gamma]\ar[r]\ar[d,"\delta"] & Y\ar[d,"p"] \\
U\ar[r]\ar[rd,"\beta"'] & {[U/\Gamma]}\ar[r]\ar[d,"\epsilon"] & \cX\ar[d,"c"] \\
& U/\Gamma\ar[r] & X
\end{tikzcd}\]
where all squares are cartesian. Thus $\gamma$ is finite \'{e}tale of degree $|\Gamma|$ and $\deg\alpha = \deg p$. Since $\Gamma/\Gamma_1$ acts with trivial inertia on $U$, $\beta$ is finite \'{e}tale of degree $|\Gamma/\Gamma_1| = \frac{1}{n}|\Gamma|$. Since $\gamma\circ\delta\circ\epsilon = \beta\circ\alpha$ are finite flat, we have
$$\deg(c\circ p) = \deg(\epsilon\circ\delta) = \frac{\deg \alpha\cdot\deg\beta}{\deg\gamma} = \frac{\deg p\cdot\frac{1}{n}|\Gamma|}{|\Gamma|} = \deg p\cdot\frac{1}{n}$$
as desired.	
\end{proof}

\begin{prop}\label{prop_degree_of_pullback} We work over a field $k$.
\begin{itemize}
\item[(a)] Let $f : \cY\rightarrow\cX$ be a finite flat map of connected proper 1-dimensional algebraic stacks of degree $d$, and $\cL$ is a line bundle on $\cX$, then
$$\deg(f^*\cL) = d\cdot\deg(\cL)$$
\item[(b)] Let $\cX$ be a connected tame smooth proper 1-dimensional Deligne-Mumford stack with generic automorphism group of order $n$ and admitting a coarse scheme $c : \cX\rightarrow X$. Then for a line bundle $\cL$ on $X$,
$$\deg(c^*\cL) = \frac{1}{n}\deg(\cL)$$
\item[(c)] Let $\cY,\cX$ be connected tame smooth proper 1-dimensional Deligne-Mumford stacks with generic automorphism groups of order $n_\cY,n_\cX$ respectively and admitting coarse schemes $Y,X$ respectively. If $f : \cY\rightarrow\cX$ induces a finite flat map $\ol{f} : Y\rightarrow X$ on coarse schemes, then for any line bundle $\cL$ on $\cX$, we have
$$\deg(f^*\cL) = \frac{n_\cX}{n_\cY}\deg(\ol{f})\cdot\deg(\cL)$$
\end{itemize}
\end{prop}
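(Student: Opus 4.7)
For part (a), I would choose a finite flat surjective morphism $p : U \to \cX$ with $U$ a scheme --- such a cover exists by hypothesis, since it is required for $\deg(\cL)$ to be defined. Form the cartesian square with $V := U \times_\cX \cY$ and projections $q_\cY : V \to \cY$, $q_U : V \to U$. Since $f$ is finite flat of degree $d$, so is the base change $q_U$; and since $U$ is a scheme and $V \to U$ is finite, $V$ is itself a scheme. Base change of finite flat morphisms preserves degree, so $\deg q_\cY = \deg p$. Then
\[\deg(f^*\cL) \;=\; \frac{\deg_V(q_\cY^* f^* \cL)}{\deg q_\cY} \;=\; \frac{\deg_V(q_U^* p^* \cL)}{\deg p} \;=\; \frac{d \cdot \deg_U(p^* \cL)}{\deg p} \;=\; d \cdot \deg(\cL),\]
using the scheme-level identity $\deg_V(q_U^* \cM) = \deg(q_U) \cdot \deg_U(\cM)$ applied to $\cM = p^*\cL$.

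For part (b), the line bundle $c^*\cL$ has trivial local characters: at any geometric point $\xi : \Spec\Omega \to \cX$ lying over $\ol{x} \in X$, the group $\Aut_\cX(\xi)$ maps into the trivial automorphism group of $\ol{x}$ in $X$, and therefore acts trivially on the fiber $\xi^*c^*\cL = \ol{x}^*\cL$. Hence by Proposition \ref{prop_trivial_local_characters}, $c_*c^*\cL \cong \cL$, and applying Proposition \ref{prop_degree_integrality} to $c^*\cL$ yields $\deg(c^*\cL) = \frac{1}{n}\deg(c_*c^*\cL) = \frac{1}{n}\deg\cL$.

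Part (c) is the most substantive piece, and the main idea is to reduce to the case that $\cL$ descends to the coarse scheme by passing to a suitable tensor power --- this is where tameness is essential and is the main obstacle. Since $\cX$ is a tame Deligne--Mumford stack with only finitely many stacky points, there is an integer $m \ge 1$ (a common multiple of the orders of the local characters of $\cL$ at these points) such that $\cL^{\otimes m}$ has trivial local characters everywhere. By Proposition \ref{prop_trivial_local_characters}, there exists a line bundle $M$ on $X$ with $c_\cX^* M \cong \cL^{\otimes m}$. Applying part (b), the scheme-level identity $\deg(\ol{f}^* M) = \deg(\ol{f}) \cdot \deg(M)$, and the $2$-commutativity $c_\cX \circ f = \ol{f} \circ c_\cY$, I would compute
\[m \cdot \deg(\cL) \;=\; \deg(c_\cX^* M) \;=\; \frac{\deg M}{n_\cX}, \qquad m \cdot \deg(f^*\cL) \;=\; \deg(c_\cY^* \ol{f}^* M) \;=\; \frac{\deg(\ol{f}) \cdot \deg M}{n_\cY},\]
and dividing the second equation by the first gives $\deg(f^*\cL) = \frac{n_\cX}{n_\cY}\, \deg(\ol{f}) \cdot \deg(\cL)$. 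Once the descent step is in hand, the remainder is routine manipulation within the degree formalism.
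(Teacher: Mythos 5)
Your proposal is correct and follows essentially the same route as the paper for all three parts: part (a) unfolds the stacks project references into an explicit base-change argument, part (b) combines the triviality of local characters of $c^*\cL$ with the integrality result exactly as the paper does, and part (c) reduces to the descent case via a tensor power $\cL^{\otimes m}$ with trivial local characters and then applies part (b). One small stylistic remark on part (c): rather than ``dividing the second equation by the first'' (which silently assumes $\deg\cL\ne 0$), it is cleaner to solve the first identity for $\deg M = m\,n_\cX\deg(\cL)$ and substitute into the second; this handles $\deg\cL = 0$ uniformly without any case distinction.
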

\begin{proof} For (a), see \cite[0AYW,0AYZ,02RH]{stacks}. Part (b) follows from Propositions \ref{prop_trivial_local_characters} and \ref{prop_degree_integrality}. For part (c), it follows from the \'{e}tale local picture of Deligne-Mumford stacks \cite[Lemma 2.2.3]{AV02} that for some integer $m\ge 1$, $\cL^{\otimes m}$ has trivial local characters. Since $\deg(\cL^{\otimes m}) = m\cdot\deg(\cL)$, we are reduced to the case where $\cL$ has trivial local characters, but in this case the result follows from (b).
\end{proof}

\begin{prop}\label{prop_hodge_bundle} Here we work over $\Qbar$. Let $\cE(1)\rightarrow\ol{\cM(1)}$ be the universal family of elliptic curves, with zero section $\sigma_O$. Then the \emph{Hodge bundle} $\lambda := \sigma_O^*\omega_{\cE(1)/\ol{\cM(1)}}$ is an invertible sheaf on $\ol{\cM(1)}$ of degree $\frac{1}{24}$.
\end{prop}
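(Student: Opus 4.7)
The plan is to descend $\lambda^{\otimes 12}$ to the coarse scheme $\ol{M(1)} \cong \bP^1_\Qbar$ and identify the descended line bundle by computing its space of global sections, then to apply Proposition \ref{prop_degree_of_pullback}(b).

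First I would verify that $\lambda^{\otimes 12}$ has trivial local characters at every geometric point of $\ol{\cM(1)}$. At a point corresponding to $(E,O)$, the local character of $\lambda$ is the representation of $\Aut(E,O)$ on the cotangent line $T_O^*E$; by tameness this is a faithful character of a cyclic group. The possibilities are: generic $j$, with $\Aut = \{\pm 1\}$ acting by $-1$; $j = 1728$, with $\Aut = \bZ/4\bZ$ acting through a primitive $4$-th root of unity; $j = 0$, with $\Aut = \bZ/6\bZ$ acting through a primitive $6$-th root of unity; and the cusp (the N\'{e}ron $1$-gon $\mathbb{G}_m \cup \{\infty\}$), where $\Aut = \bZ/2\bZ$ acts by inversion, sending $dt/t \mapsto -dt/t$. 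In every case the $12$-th power of the local character is trivial.

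By Proposition \ref{prop_trivial_local_characters}, $\lambda^{\otimes 12}$ is the pullback of a unique line bundle $\cL := c_*(\lambda^{\otimes 12})$ on the coarse scheme $\ol{M(1)}$, where $c : \ol{\cM(1)} \rightarrow \ol{M(1)}$ is the coarse space map. Since $\ol{M(1)}_\Qbar \cong \bP^1_\Qbar$, we have $\cL \cong \cO_{\bP^1}(d)$ for some integer $d$. To pin down $d$, note that global sections of $\lambda^{\otimes k}$ on $\ol{\cM(1)}$ are precisely the holomorphic modular forms of weight $k$ for $\SL_2(\bZ)$ (the extension to the cusp in $\ol{\cM(1)}$ corresponds, via the Tate curve, to holomorphicity of the $q$-expansion at infinity). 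For $k = 12$, this space is $2$-dimensional, spanned by $E_4^3$ and $\Delta$. Therefore
\[
d + 1 = \dim H^0(\bP^1_\Qbar, \cO(d)) = \dim H^0(\ol{M(1)}, \cL) = \dim H^0(\ol{\cM(1)}, \lambda^{\otimes 12}) = 2,
\]
so $d = 1$.

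Finally, the generic automorphism group of $\ol{\cM(1)}$ has order $n = 2$, so Proposition \ref{prop_degree_of_pullback}(b) gives
\[
12\cdot \deg(\lambda) \;=\; \deg(\lambda^{\otimes 12}) \;=\; \tfrac{1}{n}\deg(\cL) \;=\; \tfrac{1}{2},
\]
whence $\deg(\lambda) = 1/24$. The only nontrivial step is Step 1 (the local character computation) together with the classical identification of $H^0(\ol{\cM(1)}, \lambda^{\otimes k})$ with the space of weight-$k$ modular forms; everything else is formal from the machinery of \S\ref{ss_degree_formalism}.
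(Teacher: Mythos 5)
Your proposal is correct and follows essentially the same route as the paper: descend $\lambda^{\otimes 12}$ to the coarse scheme $\ol{M(1)}\cong\bP^1$, identify its degree as $1$ via the $2$-dimensional space of weight-$12$ level-$1$ modular forms, and then apply Proposition \ref{prop_degree_of_pullback}(b) with generic automorphism group of order $2$ to get $\deg(\lambda^{\otimes 12}) = \tfrac12$, hence $\deg(\lambda)=\tfrac{1}{24}$. The only difference is that you spell out the local-character check (needed to invoke Proposition \ref{prop_trivial_local_characters}) that the paper leaves implicit, which is a reasonable addition but not a different argument.
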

\begin{proof} Recall that if $\cM$ is an algebraic stack and $\cF$ is an $\cO_\cM$-module, its global sections is the set $\Gamma(\cM,\cF) := \Hom_{\cO_\cM}(\cO_\cM,\cF)$. Let $c : \ol{\cM(1)}\rightarrow\ol{M(1)}$ be the coarse scheme. Thus, we have 
$$\Gamma(\ol{\cM(1)},\lambda^{\otimes 12}) = \Hom_{\cO_{\ol{\cM(1)}}}(c^*\cO_{\ol{M(1)}},\lambda^{\otimes 12}) = \Hom_{\cO_{\ol{\cM(1)}}}(\cO_{\ol{M(1)}},c_*\lambda^{\otimes 12}) = \Gamma(\ol{M(1)},c_*\lambda^{\otimes 12})$$
Base changing to $\bC$, by standard GAGA arguments $\Gamma(\ol{\cM(1)},\lambda^{\otimes 12}) = \Gamma(\ol{M(1)},c_*\lambda^{\otimes 12})$ is isomorphic to the $\bC$-vector space of modular forms of level 1 and weight 12, which has dimension 2, generated by the Eisenstein series $E_{12}$ and the discriminant $\Delta$ \cite[Theorem 3.5.2]{DS06}. Since $\ol{M(1)}\cong\bP^1$, it follows that $c_*\lambda^{\otimes 12}$ has degree 1, so by Proposition \ref{prop_degree_of_pullback}(b), $\deg(\lambda^{\otimes 12}) = \frac{1}{2}$, so $\deg(\lambda) = \frac{1}{24}$.
\end{proof}

\subsection{The main congruence}\label{ss_congruence}
In this section we work universally over $\bS = \Spec k$ where $k$ is an algebraically closed field of characteristic 0. Note that by Corollary \ref{cor_empty_if_not_2_gen}, $\cAdm(G)$ is empty if $G$ is not 2-generated, so our results are only nontrivial for finite 2-generated groups $G$. It follows from Remark \ref{remark_topological_higman} that our results are also trivial if $G$ is abelian, so here one should think of $G$ as a finite nonabelian 2-generated group.

\begin{thm}\label{thm_congruence} Let $\cX\subset\cAdm(G)$ be a connected component, with universal family $\cC\stackrel{\pi}{\rightarrow}\cE\rightarrow\cX$ and reduced ramification divisor $\cR_\pi$. Let $\cX\rightarrow X$ be the coarse scheme of $\cX$, and let $\ol{\cM(1)}\rightarrow \ol{M(1)}\cong\bP^1_j$ be the coarse moduli scheme. Let $\cE(1)\rightarrow\ol{\cM(1)}$ be the universal family. Let $\cR\subset\cR_\pi$ be a connected component with coarse scheme $R$, and let $\epsilon : \cR\rightarrow\cX$ be the induced map. By definition, $\cC' := \cC\times_\cC\cR\rightarrow\cR$ admits a section $\sigma$ lying over the zero section of $\cE' := \cE\times_\cX\cR$. To $\cX$ we associate the three integers
\begin{itemize}
\item Let $e = e_\cX$ be the ramification index of any point of $C$ above the zero section of $\cE$.
\item Let $d = d_\cX$ be the degree of the induced map on coarse schemes $\ol{\epsilon} : R\rightarrow X$.
\item Let $m = m_\cX$ be the minimum positive integer such that $(\sigma^*\Omega_{\cC'/\cR})^{\otimes m}$ has trivial local characters.
\end{itemize}
Let $\ol{f} : X\rightarrow\ol{M(1)}$ be the map on coarse schemes induced by $f : \cX\rightarrow\ol{\cM(1)}$. Then we have
$$\deg(\ol{f}) \equiv 0\mod \frac{12e}{\gcd(12e,md)}$$
\end{thm}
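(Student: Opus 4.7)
The plan is to implement the prototype argument sketched in \S\ref{sss_sketch_of_proof}, with the universal family pulled back to $\cR$ so that a ramified section is available. Let $\pi' : \cC' \to \cE' \to \cR$ denote the pullback of the universal family along $\epsilon : \cR \to \cX$. By construction of $\cR$ as a component of the reduced ramification divisor $\cR_\pi$, there is a tautological section $\sigma : \cR \to \cC'$ lying in the smooth locus of $\cC'/\cR$ and satisfying $\pi' \circ \sigma = \sigma_O'$, where $\sigma_O' : \cR \to \cE'$ is the zero section. Proposition \ref{prop_restriction} then produces a canonical isomorphism $\sigma_O'^{*}\omega_{\cE'/\cR}\cong (\sigma^{*}\omega_{\cC'/\cR})^{\otimes e}$.

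I would next identify the left-hand side with the pullback of the Hodge bundle. Since $\cE(G)\to\cAdm(G)$ is the base change of the universal elliptic curve $\cE(1)\to\ol{\cM(1)}$ along the forgetful map, the family $\cE'/\cR$ is the base change of $\cE(1)$ along $g := f\circ\epsilon : \cR \to \ol{\cM(1)}$, and $\sigma_O'$ is the base change of the zero section. Base-change compatibility of the relative dualizing sheaf therefore gives $\sigma_O'^{*}\omega_{\cE'/\cR}\cong g^{*}\lambda$ with $\lambda$ the Hodge bundle, so on $\cR$ we have
\[ g^{*}\lambda \;\cong\; (\sigma^{*}\omega_{\cC'/\cR})^{\otimes e}. \]

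The heart of the argument is to take degrees of both sides. For the left-hand side, Proposition \ref{prop_degree_of_pullback}(c) applied to $g$ (whose coarse-scheme map $\ol{g}$ has degree $d\cdot\deg\ol{f}$) together with $n_{\ol{\cM(1)}}=2$ and $\deg\lambda = 1/24$ (Proposition \ref{prop_hodge_bundle}) gives
\[ \deg(g^{*}\lambda) \;=\; \frac{2}{n_\cR}\cdot d\cdot\deg(\ol{f})\cdot\frac{1}{24} \;=\; \frac{d\cdot\deg(\ol{f})}{12\,n_\cR}, \]
where $n_\cR$ denotes the order of the generic automorphism group of $\cR$. For the right-hand side, since $\sigma$ lands in the smooth locus Proposition \ref{prop_canonical_map} identifies $\sigma^{*}\Omega_{\cC'/\cR}$ with $\sigma^{*}\omega_{\cC'/\cR}$, so by definition of $m$ the $m$-th tensor power of this line bundle has trivial local characters. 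Proposition \ref{prop_trivial_local_characters} then descends it to a line bundle $\cN$ on the coarse scheme $R$, and Proposition \ref{prop_degree_of_pullback}(b) yields $m\cdot\deg(\sigma^{*}\omega_{\cC'/\cR}) = \deg(\cN)/n_\cR$. Equating the two sides of the identity $\deg(g^{*}\lambda) = e\cdot\deg(\sigma^{*}\omega_{\cC'/\cR})$ and clearing the common factor of $n_\cR$ yields
\[ d\,m\cdot\deg(\ol{f}) \;=\; 12\,e\cdot\deg(\cN), \]
and the claimed congruence follows at once since $\deg\cN\in\bZ$.

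The main technical hurdle is the bookkeeping required to make the degree formalism of \S\ref{ss_degree_formalism} applicable on $\cR$: one needs that $\cR$ is tame, smooth, proper, one-dimensional, and admits a finite flat scheme cover. This is inherited from $\cX$ via the finite \'etale map $\cR\to\cX$ (Proposition \ref{prop_stacky_RRD}, Proposition \ref{prop_finite_flat_scheme_cover}). The remaining points --- that $\sigma$ genuinely lands in the smooth locus, that the stable marking on $\cC$ needed to invoke Proposition \ref{prop_canonical_map} is supplied by $\cR_\pi$ together with the horizontal components of $\cC$ over the cusps, and that pulling back along $\epsilon$ preserves the section picture --- are straightforward unpackings of the constructions in \S\ref{ss_ramification_divisor_of_universal_family} and Definition \ref{def_admissible}.
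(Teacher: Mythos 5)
Your proposal is correct and follows essentially the same route as the paper's proof: the key isomorphism $\sigma_O'^{*}\omega_{\cE'/\cR}\cong(\sigma^{*}\omega_{\cC'/\cR})^{\otimes e}$ from Proposition \ref{prop_restriction}, identification of the left side with the pullback of the Hodge bundle of degree $\tfrac{1}{24}$, and the degree bookkeeping via Propositions \ref{prop_degree_of_pullback} and \ref{prop_trivial_local_characters}/\ref{prop_degree_integrality} on $\cR$, with the generic automorphism order $n_\cR$ cancelling to give $dm\deg(\ol{f})\in 12e\,\bZ$. The paper phrases the last step as $\deg(\epsilon^{*}f^{*}\lambda)\in\frac{e}{mn}\bZ$ rather than introducing the descended bundle $\cN$ explicitly, but this is the same argument.
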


\begin{proof} We have a commutative diagram with all squares cartesian (if you ignore the sections):
\[\begin{tikzcd}
\cC'\ar[d,"\pi'"]\ar[r] & \cC\ar[d,"\pi"] \\
\cE'\ar[r]\ar[d] & \cE\ar[d]\ar[r] & \cE(1)\ar[d] \\
\cR\ar[d]\ar[r,"\epsilon"]\ar[uu,bend left = 30, "\sigma"]\ar[u,bend right = 30,"\sigma_O'"'] & \cX\ar[r,"f"]\ar[d] & \ol{\cM(1)}\ar[d]\ar[u,bend right = 30,"\sigma_O"'] \\
R\ar[r,"\ol{\epsilon}"] & X\ar[r,"\ol{f}"] & \ol{M(1)}
\end{tikzcd}\]
where $\sigma_O,\sigma_O'$ denotes the zero sections. The sheaf $\lambda := \sigma_O^*\omega_{\cE(1)/\ol{\cM(1)}}$ is the Hodge bundle, which has degree $\frac{1}{24}$ by Proposition \ref{prop_hodge_bundle}. Since dualizing sheaves commute with arbitrary base change, by Proposition \ref{prop_restriction} we have
$$\epsilon^*f^*\lambda\cong\sigma_O'^*\omega_{\cE'/\cR}\cong(\sigma^*\omega_{\cC'/\cR})^{\otimes e}\cong(\sigma^*\Omega_{\cC'/\cR})^{\otimes e}$$
where the final isomorphism follows from the fact that $\sigma$ lies in the smooth locus by definition of admissible covers. Suppose $\cR$ has a generic automorphism group of order $n$. By Propositions \ref{prop_degree_integrality} and \ref{prop_degree_of_pullback}(c), we find that $\deg(\epsilon^*f^*\lambda)\in \frac{e}{mn}\bZ$. Since $\deg(\lambda) = \frac{1}{24}$, by (b) we have
$$\deg(\epsilon^*f^*\lambda) = \frac{2}{n}\deg(\ol{\epsilon})\cdot\deg(\ol{f})\cdot\frac{1}{24} = \frac{d}{12n}\cdot\deg(\ol{f})\in\frac{e}{mn}\bZ$$
and hence
$$\deg(\ol{f}) \in \frac{12e}{dm}\bZ\qquad\text{or equivalently (since $\deg(\ol{f})\in\bZ$)}\qquad\deg(\ol{f})\equiv 0\mod\frac{12e}{\gcd(12e,md)}.$$
\end{proof}

Theorem \ref{thm_congruence} takes as input a group $G$ and a component $\cX\subset\cAdm(G)$, and outputs a congruence which at best gives $\equiv 0\mod 12e$, but is possibly watered down by the integers $d_\cX,m_\cX$ associated to $\cX$. To obtain a good congruence, one wishes to show that $e_\cX$ does not share large divisors with $d_\cX$ and $m_\cX$. Suppose $\cX$ classifies covers with Higman invariant equal to the conjugacy class of $c\in G$. As we saw in \S\ref{ss_higman_invariant}, $e_\cX$ is just the order of $c$. Here we will describe some ways to control $d_\cX$ and $m_\cX$.



By Proposition \ref{prop_stacky_RRD}, $d_\cX$ must divide the order of $C_G(\langle c\rangle)/\langle c\rangle$. A trivial consequence of this is that if $\ell^r$ is a prime power dividing $|c|$ with $\ell^{r+1}\nmid |G|$, then $\ell\nmid d_\cX$. The integer $ m_\cX$ is more difficult to control. If $m_\cX'$ denotes the minimum positive integer required to kill all the vertical automorphism groups of geometric points of $\cR$ (Definition \ref{def_vertical_automorphisms}), then $12m_\cX$ kills all the automorphism groups of geometric points, so we must have $m_\cX\mid 12m_\cX'$. In fact, it follows from the local structure of admissible covers that the local characters of $\sigma^*\Omega_{\cC'/\cR}$ restrict to \emph{faithful representations} of the vertical automorphism groups, so up to a factor of 12, $m_\cX$ is equal to $m_\cX'$.


Let $\ol{r} : \Spec\Omega\rightarrow\cR$ be a geometric point with image $\ol{x}\in\cX$. Then $\ol{x}$ is given by a 1-generalized elliptic curve $E$ over $\Omega$ and an admissible $G$-cover $\pi : C\rightarrow E$, and $\ol{r}$ is given by $\pi$ together with a point $P\in C(\Omega)$ lying over $O\in E$. The vertical automorphism group of $\ol{x}$ is the group of $G$-equivariant automorphisms $\sigma\in\Aut(C)$ such that $\pi\circ\sigma = \pi$, and the vertical automorphism group of $\ol{r}$ is the subgroup consisting of vertical automorphisms of $\ol{x}$ satisfying $\sigma(P) = P$.


When $E$ is smooth, there is a simple description of the vertical automorphism group of $\ol{r}$:

\begin{prop}\label{prop_smooth_pointed_vertical_automorphisms} Let $\cY\subset\cAdm^0(G)$ be a connected component classifying covers with Higman invariant $\fc$. Let $c\in\fc$ be a representative. Let $\cR$ be a component of the ramification divisor of the universal family over $\cY$. Let $\ol{r}$ be a geometric point of $\cR$ with image $\ol{y}$ in $\cY$. Suppose $\ol{r}$ corresponds to the admissible cover $\pi : C\rightarrow E$ together with the point $P\in\pi^{-1}(O)$. Then the vertical automorphism groups of $\ol{y},\ol{r}$ are
$$\Aut^v(\ol{y}) = Z(G)\qquad\text{and}\qquad\Aut^v(\ol{r}) = Z(G)\cap \langle c\rangle$$
\end{prop}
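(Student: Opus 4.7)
The plan is to reduce both claims to the description of vertical automorphism groups of $\cAdm^0(G)$ already obtained in Proposition \ref{prop_smooth_vertical_automorphisms}, together with a local computation of stabilizers along the ramification divisor.

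The first claim $\Aut^v(\ol{y}) = Z(G)$ is essentially immediate: since $\cY \subset \cAdm^0(G)$, Proposition \ref{prop_smooth_vertical_automorphisms} already identifies the vertical automorphism group of any geometric point of $\cAdm^0(G)$ with $Z(G)$, acting on $C$ via the given right $G$-action. Concretely, the irreducibility of $\pi^{-1}(E_\gen)$ forces any $G$-equivariant automorphism of $C$ over $E$ to agree with the action of some $g \in G$, and $G$-equivariance then forces $g \in Z(G)$.

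For the second claim, observe that the natural forgetful map $\cR \to \cY$ induces an inclusion $\Aut^v(\ol{r}) \hookrightarrow \Aut^v(\ol{y}) = Z(G)$ consisting of those central elements whose action on $C$ fixes the chosen point $P \in \pi^{-1}(O) \subset C$. So the task is to compute $Z(G) \cap \Stab_G(P)$. Since $\pi$ is an admissible $G$-cover of $E$ smooth over $\Omega$ with Higman invariant $\fc$, the stabilizer $G_P := \Stab_G(P)$ is a cyclic subgroup of $G$ of order $|c|$ conjugate in $G$ to $\langle c \rangle$ (this follows from the \'{e}tale-local description in Definition \ref{def_admissible}\ref{part_admissible_local_marking} together with the definition of the Higman invariant in \S\ref{ss_higman_invariant}). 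Write $G_P = g\langle c\rangle g^{-1}$ for some $g \in G$.

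The key (elementary) observation is that for any $z \in Z(G)$, we have $z \in g\langle c\rangle g^{-1}$ if and only if $z = g^{-1} z g \in \langle c \rangle$, because $z$ is central. Hence
\[
\Aut^v(\ol{r}) \;=\; Z(G) \cap G_P \;=\; Z(G) \cap g\langle c \rangle g^{-1} \;=\; Z(G) \cap \langle c \rangle,
\]
independent of the choice of $g$. The main (mild) obstacle is simply matching the abstract conjugacy-class description of the Higman invariant with the concrete stabilizer at a specific ramified point $P$; this is handled by the local model for ramification at a marking provided in Definition \ref{def_admissible}\ref{part_admissible_local_marking}, which identifies the stabilizer of $P$ with a cyclic group of order $e = |c|$ acting faithfully on the cotangent space at $P$ and whose generator lies in the conjugacy class $\fc$.
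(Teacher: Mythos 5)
Your proof is correct and follows the same route as the paper: reduce the first claim to Proposition \ref{prop_smooth_vertical_automorphisms}, identify $\Aut^v(\ol{r}) = Z(G)\cap G_P$ with $G_P$ conjugate to $\langle c\rangle$ via the Higman invariant, and use centrality to conclude $Z(G)\cap G_P = Z(G)\cap\langle c\rangle$. The only difference is that you spell out explicitly the (elementary) step that centrality makes the intersection independent of the conjugator, which the paper leaves implicit.
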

\begin{proof} The fact that $\Aut^v(\ol{y}) = Z(G)$ is Proposition \ref{prop_smooth_vertical_automorphisms}. The subgroup of $G$-equivariant automorphisms which fix $P$ is then $Z(G)\cap G_P$, where $G_P := \Stab_G(P)$ is the stabilizer. By the definition of the Higman invariant, $G_P$ is conjugate to $\langle c\rangle$, so we have $\Aut^v(\ol{r}) = Z(G)\cap G_P = Z(G)\cap\langle c\rangle$.
\end{proof}

If $E$ is not smooth, then $C$ can fail to be irreducible, and the situation can be potentially be bad enough to make the congruence trivial (see \S\ref{ss_further_directions}). The main purpose of the next section is to give a precise group-theoretic characterization of the vertical automorphism groups of cuspidal objects of $\cAdm(G)$. This will allow us to control $m_\cX$ at least when $G = \SL_2(\bF_q)$ or a nonabelian finite simple group. In particular we will show that in these cases, one can often achieve nontrivial congruences (see \S\ref{ss_cuspidal_automorphisms}).

\section{Galois correspondence for cuspidal objects of $\cAdm(G)$}\label{section_cusps}

In this section we give a combinatorial characterization of the cuspidal objects of $\cAdm(G)$. The main purpose of this section is to characterize cuspidal admissible $G$-covers combinatorially in terms of group-theoretic information in $G$ and to recognize their automorphism groups. This is done in \S\ref{ss_cuspidal_automorphisms}, though it will need terminology from the preceding subsections. We will also formulate a combinatorial version of Theorem \ref{thm_congruence} (Theorem \ref{thm_combinatorial_congruence}), using which we will show that we can often obtain nontrivial congruences when $G$ is a nonabelian group (Corollary \ref{cor_vdovin_2}). The statements of these two results can be understood without consulting the previous subsections. 


We give an overview of our approach.  We begin in \S\ref{ss_precuspidal_preliminaries} by defining the notion of a ``precuspidal $G$-cover'' of a non-smooth 1-generalized elliptic curve $E$. The category of such objects is denoted $\cC_E^{pc}$. Given a precuspidal $G$-cover $\pi : C\rightarrow E$, taking normalizations we obtain a $G$-cover $\pi' : C'\rightarrow\bP^1$, ramified only above three points. We may assume that $0,\infty\in\bP^1$ are the preimages of the node in $E$. The normalization map also provides the data of a $G$-equivariant bijection $\alpha = \alpha_\pi : \pi'^{-1}(0)\rightiso \pi'^{-1}(\infty)$. We will let $\cC_{\bP^1}^\succ$ denote the category of such pairs $(\pi',\alpha)$. The usual Galois correspondence identifies $G$-covers of $\bP^1$ only branched over $\{0,1,\infty\}$ with finite sets equipped with commuting actions of $\Pi := \pi_1(\bP^1 - \{0,1,\infty\})$ and $G$. Accordingly, objects of $\cC_{\bP^1}^\succ$ can be identified with finite sets equipped with commuting actions of $\Pi$ and $G$ as well as a ``combinatorial $G$-equivariant bijection''. The category of such objects is denoted $\Sets^{(\Pi,G)_\delta,\succ}$, where $\delta$ denotes a path from a tangential base point at $0\in\bP^1$ to a tangential base point at $\infty\in\bP^1$. We will describe equivalences of categories:
$$\cC_E^{pc}\stackrel{\text{``}\Xi\text{''}}{\lra}\cC_{\bP^1}^\succ\stackrel{\text{``}F_{\delta}^\succ\text{''}}{\lra}\Sets^{(\Pi,G)_\delta,\succ}$$
Roughly speaking, the first is given by taking normalizations, and the second is given by taking geometric fibers. We say that an object of $\cC_E^{pc}$ is \emph{cuspidal} if it corresponds to a cuspidal object of $\cAdm(G)$. This amounts to the two additional conditions that the cover is \emph{connected} and (the $G$-action is) \emph{balanced}. The full subcategory of cuspidal objects of $\cC_E^{pc}$ is denoted $\cC_E^c$. To obtain a combinatorial characterization of $\cC_E^c$, we must describe what it means for an object of $\Sets^{(\Pi,G)_\delta,\succ}$ to come from a cuspidal (i.e., connected and balanced) object of $\cC_E^{pc}$. This is done in \S\ref{ss_combinatorial_balance_connectedness}. Using this, in \S\ref{ss_the_map_Inv} we give a combinatorial parametrization of all cuspidal objects of $\cAdm(G)$ in terms their ``$\delta$-invariant'' (where $\delta$ is the path mentioned above), which is an equivalence class of a generating pair of $G$. In \S\ref{ss_cuspidal_automorphisms}, we calculate the automorphism groups of cuspidal admissible $G$-covers in terms of their $\delta$-invariants, and in \S\ref{ss_congruences_for_NAFSG} we describe some applications to the cardinalities of Nielsen equivalence classes of generating pairs of finite groups.

\subsection{(Pre)cuspidal $G$-curves, (pre)cuspidal $G$-covers}\label{ss_precuspidal_preliminaries}

Throughout \S\ref{section_cusps}, we will work universally over $\bS = \Spec k$, where $k$ denotes an algebraically closed field of characteristic 0. Moreover, we will fix a compatible system of primitive $n$th roots of unity $\{\zeta_n\}_{n\ge 1}\subset k$, compatible in the sense that for all $d\mid n$, $\zeta_n^d = \zeta_{n/d}$. Essentially all of our methods are algebraic, so the same development should also make sense in all tame characteristics.


\begin{defn} A \emph{precuspidal $G$-curve} is a non-smooth prestable curve $C$ equipped with a faithful right action of $G$ and a $G$-invariant divisor $R\subset C$ finite \'{e}tale over $k$, such that the quotient map $C\rightarrow C/G$ sends nodes to nodes and is \'{e}tale on $C_\sm - R$, and $(C/G,R/G)$ is a nodal elliptic curve (i.e., a non-smooth 1-generalized elliptic curve). A \emph{cuspidal $G$-curve} is a precuspidal $G$-curve $C$ such that
\begin{itemize}
\item $C$ is \emph{connected}, and
\item the $G$-action is \emph{balanced} at the nodes in the sense of Remark \ref{remark_covers}\ref{part_balanced2}. In this case we say $C$ is balanced.
\end{itemize}
A morphism of (pre)cuspidal $G$-curves is a $G$-equivariant map preserving divisors.
\end{defn}

Note that up to isomorphism, there is only one nodal elliptic curve.

\begin{defn} A $G$-cover of a finite type equidimension 1 scheme $Y$ is a finite flat map $p : X\rightarrow Y$ equipped with a faithful right action of $G$ on $X$ such that $p$ induces an isomorphism $X/G\rightiso Y$.	
\end{defn}

\begin{defn} Let $E = (E,O)$ be nodal elliptic curve. A \emph{precuspidal $G$-cover} of $E$ is a $G$-cover $\pi : C\rightarrow E$ such that $(C,(C\times_E O)_\red)$ is a precuspidal $G$-curve. Equivalently, it is a $G$-cover $\pi : C\rightarrow E$ satisfying
\begin{itemize}
\item $\pi$ sends nodes to nodes,
\item $\pi$ is \'{e}tale on $C_\sm - \pi^{-1}(O)$
\end{itemize}
A precuspidal $G$-cover is \emph{connected} (resp. \emph{balanced}) if $C$ is connected (resp. the $G$-action is balanced at the nodes). A \emph{cuspidal $G$-cover} is a balanced connected precuspidal $G$-cover. A morphism of (pre)cuspidal $G$-covers is a $G$-equivariant map over $E$. Let $\cC_E^{pc}$ (resp. $\cC_E^c$) denote the category of precuspidal (resp. cuspidal) $G$-covers of $E$.
\end{defn}

In particular, a precuspidal $G$-cover is an admissible $G$-cover if and only if it is cuspidal, or equivalently, connected and balanced, or equivalently, is a cuspidal object of $\cAdm(G)$. Here is a precise statement.

\begin{prop}\label{prop_cuspidal_equivalence} Let $\cAdm^c(G) := \cAdm(G) - \cAdm^0(G)$ be the closed substack consisting of cuspidal (i.e. non-smooth) objects. Associating a cuspidal $G$-curve $(C,R)$ to the cuspidal $G$-cover $C\rightarrow C/G$ of the nodal elliptic curve $(C/G,R/G)$ gives an equivalence of categories
$$\{\text{cuspidal $G$-curves}\}	 \rightiso \cAdm^c(G)(k).$$
Let $E$ be a nodal elliptic curve. Let $\cAdm(G)_E$ denote the fiber category of $\cAdm(G)$ over $E\in\ol{\cM(1)}$. Thus, the objects of $\cAdm(G)_E$ are admissible $G$-covers of $E$, and morphisms are morphisms of admissible $G$-covers which induce the identity on $E$. Then we have an equality of categories
$$\cC_E^c = \cAdm(G)_E.$$
In particular, the automorphism groups of objects of $\cC_E^c$ (resp. $\cAdm(G)_E$) are the vertical automorphism groups of the corresponding objects of $\cAdm(G)$.
\end{prop}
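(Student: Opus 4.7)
The plan is to reduce both assertions to the equivalence of categories $\ol{\cH}_{1,1,G} \simeq \cAdm(G)$ established in Theorem \ref{thm_aGc_equals_smGc}, then to check that on $k$-points the relevant objects and markings match up with those appearing in the definitions of (cuspidal) $G$-curves and $G$-covers.

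For the first assertion, I would unfold the two definitions and verify they agree. An object of $\ol{\cH}_{1,1,G}(k)$ with non-smooth underlying curve consists of a non-smooth prestable curve $C/k$, a finite étale $G$-invariant effective Cartier divisor $R\subset C$, a faithful right $G$-action that is balanced at the geometric nodes and étale on $C_{\sm}-R$, such that $C/G$ is a prestable curve of genus $1$ and $R/G\cong\Spec k$. Combined with the balancedness and the local analysis at nodes (Proposition \ref{prop_normalized_coordinates}), these conditions force the quotient $C\to C/G$ to send nodes to nodes and force $(C/G,R/G)$ to be a nodal elliptic curve; conversely, these are precisely the conditions in the definition of a cuspidal $G$-curve. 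Connectedness of $C$ is the condition distinguishing $\cAdm(G)=\cAdm^{\conn}_{1,1}(G)$ from $\cAdm_{1,1}(G)$, and non-smoothness of $C$ is equivalent to non-smoothness of $E=C/G$ (Remark \ref{remark_covers}\ref{part_nodes}). Thus Theorem \ref{thm_aGc_equals_smGc} restricts to the desired equivalence on the cuspidal loci.

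For the equality $\cC_E^c=\cAdm(G)_E$, I would verify directly that the two categories have the same objects and morphisms. The only nontrivial point is that the marking $R\subset C$ associated to an admissible $G$-cover $\pi:C\to E$ (Definition \ref{def_marking}) coincides with $(C\times_E O)_{\red}$, which is the marking used in the definition of a cuspidal $G$-cover. When the ramification index above $O$ is $\ge 2$, the associated marking is the reduced ramification divisor $\cR_\pi$, which by Proposition \ref{prop_RRD} is reduced, finite étale, and supported on $\pi^{-1}(O)\cap C_{\sm}$; since $O\in E_{\sm}$, the closed subscheme $(C\times_E O)_{\red}$ is likewise reduced with the same support, so the two agree as closed subschemes of $C$. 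In the étale case the associated marking is $C\times_E O$ itself, which is already reduced. Morphisms in both categories are $G$-equivariant maps over $E$, so they coincide tautologically once objects are matched. The statement about automorphism groups is then immediate: by Definition \ref{def_vertical_automorphisms} the vertical automorphism group of an object of $\cAdm(G)$ over $E$ is exactly its automorphism group in $\cAdm(G)_E=\cC_E^c$.

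The main obstacle is not any substantive mathematics but careful bookkeeping: reconciling three nearby notions (admissible $G$-cover, stable marked $G$-curve, cuspidal $G$-curve) and checking that the two natural candidates for the marking of an admissible $G$-cover of a nodal elliptic curve agree scheme-theoretically, not just set-theoretically. Once the definitions and markings are aligned, both claims follow from Theorem \ref{thm_aGc_equals_smGc} together with Proposition \ref{prop_ac2smGc} with essentially no further work.
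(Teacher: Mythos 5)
Your proposal is correct and follows essentially the same route as the paper, which simply cites Theorem \ref{thm_aGc_equals_smGc} for the first equivalence and then observes that the same theorem yields $\cC_E^c = \cAdm(G)_E$. You have merely made explicit the definitional bookkeeping—matching the non-smooth objects of $\ol{\cH}_{1,1,G}(k)$ with cuspidal $G$-curves, and checking that the marking from Definition \ref{def_marking} agrees scheme-theoretically with $(C\times_E O)_{\red}$—that the paper leaves implicit.
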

\begin{proof} The first equivalence follows from Theorem \ref{thm_aGc_equals_smGc}, which in turn implies $\cC_E^c = \cAdm(G)_E$.
\end{proof}

\subsection{Tangential base points}\label{ss_tbp}

Here we follow Deligne \cite[\S15]{Del89}. Let $X$ be a smooth curve (a smooth finite type 1-dimensional scheme over $k = \ol{k}$), $x\in X$ a closed point, and $X^\circ := X - \{x\}$. Let $K_x := \Frac\cO_{X,x}$, and $v_x : K_x\rightarrow\bZ$ the discrete valuation. Then $K_x$ is filtered by $v_x$:
$$F^iK_x := \{f\in K_x \;|\; v_x(f)\ge i\}.$$
As a scheme, define the tangent space at $x$ by $\bT_x := \Spec\gr(\cO_{X,x})$, where $\gr(\cO_{X,x})$ is the associated graded ring with respect to the filtration $F^i$ defined above\footnote{Strictly speaking, this is really the tangent \emph{cone}. In particular, we are viewing $\bT_x$ as a scheme instead of as a vector space.}. Similarly the punctured tangent space $\bT_x^\circ := \bT_x - \{0\}$ is $\Spec\gr(K_x)$. These constructions are functorial in $(X,x)$. If $\pi^\circ : Y^\circ\rightarrow X^\circ$ is a finite \'{e}tale map which extends to a finite map $\pi : Y\rightarrow X$ of smooth curves, then we may associate to $\pi^\circ$ the maps
\begin{eqnarray*}
\pi_{(x)} : Y_{(x)} := \bigsqcup_{y\in \pi^{-1}(x)}\Spec\gr(\cO_{Y,y}) & \longrightarrow & \Spec\gr(\cO_{X,x}) =: \bT_x	\\
\pi_{(x)}^\circ : Y_{(x)}^\circ := \bigsqcup_{y\in \pi^{-1}(x)}\Spec\gr(K_y) & \longrightarrow & \Spec\gr(K_x) =: \bT_x^\circ
\end{eqnarray*}
defined as follows. The rings $\cO_{Y,y}$ and $K_y := \Frac\cO_{Y,y}$ are filtered according to the valuation $v_y : K_y\rightarrow\frac{1}{e}\bZ$ extending $v_x$, where $e$ is the ramification index of $\pi$ at $y$. Specifically, for $i\in\frac{1}{e}\bZ$,
$$F^iK_y = \{f\in K_y\;|\; v_y(f)\ge i\}.$$
If we also view $K_x$ as a $\frac{1}{e}\bZ$-filtered ring with $F^iK_x := F^iK_y\cap K_x$ for $i\in\frac{1}{e}\bZ$, then $\cO_{X,x}\rightarrow\cO_{Y,y}$ and $K_x\rightarrow K_y$ are filtered ring maps, and we define the maps $\pi_{(x)},\pi_{(x)}^\circ$ to be the maps which are induced by these filtered ring maps. Let $\varpi_x\in\cO_{X,x}$ be a uniformizer, and let $\varpi_y\in\cO_{Y,y}$ satisfy $\varpi_y^e \equiv \varpi_x\mod\fm_y^{e+1}$, so that $\varpi_y$ is a uniformizer of $\cO_{Y,y}$. Then the maps
$$\begin{array}{rclcrcl}
k[t] & \longrightarrow & \gr(\cO_{X,x})	& \qquad & k[s] & \rightiso & \gr(\cO_{Y,y}) \\
t & \mapsto & \varpi_x\mod \fm_x = F^1\cO_{X,x} & \qquad & s & \mapsto & \varpi_y\mod\fm_y = F^{1/e}\cO_{Y,y}
\end{array}$$
are isomorphisms \cite[00NO]{stacks} which moreover induce isomorphisms $k[t,t^{-1}]\cong\gr(K_x)$ and $k[s,s^{-1}]\cong\gr(K_y)$. Since $\varpi_y^e\equiv \varpi_x\mod\fm_y^{e+1}$, the restriction of $\pi_{(x)}^\circ$ to $\Spec\gr(K_y)$ fits into a commutative diagram
\[\begin{tikzcd}
	k[s,s^{-1}]\ar[r,"\cong"] & \gr(K_y) \\
	k[t,t^{-1}]\ar[u,"t\mapsto s^e"]\ar[r,"\cong"] & \gr(K_x)\ar[u,"\pi_{(x)}^\circ"']
\end{tikzcd}\]
In particular, we find that $\pi_{(x)}^\circ$ is finite \'{e}tale. For a scheme $S$, let $\FEt_{S}$ denote the category of finite \'{e}tale maps to $S$. Then the association $\pi^\circ\mapsto \pi^\circ_{(x)}$ defines an exact functor\footnote{Ie, it preserves finite limits and finite colimits. This can be checked using \cite[Expos\'{e} V, Proposition 6.1]{SGA1}.} $R_x : \FEt_{X^\circ}\rightarrow\FEt_{\bT^\circ_x}$, and hence if $t\in\bT^\circ_x$ is a closed point with associated fiber functor $F_{\bT^\circ_x,t}$, then the composition
$$F_{X^\circ,t} : \FEt_{X^\circ}\stackrel{R_x}{\longrightarrow}\FEt_{\bT^\circ_x}\stackrel{F_{\bT^\circ_x,t}}{\longrightarrow}\Sets$$
is also a fundamental functor for $\FEt_{X^\circ}$ \cite[Expos\'{e} V, Proposition 6.1]{SGA1}. That is to say, if $\Pi := \Aut(F_{X^\circ,t})$, then $F_{X^\circ,t}$ defines an equivalence of categories between $\FEt_{X^\circ}$ and the category of finite sets with $\Pi$-action.

\begin{defn} Let $X$ be a smooth curve, $x\in X$ a closed point, and $X^\circ := X - \{x\}$. A \emph{tangential base point} of $X$ at $x$ is a closed point $t\in\bT^\circ_x$. Given a finite \'{e}tale cover $\pi : Y^\circ\rightarrow X^\circ$, we will abuse notation and write:
$$Y^\circ_t = \pi^{-1}(t) := F_{X^\circ,t}(\pi)$$
and call $F_{X^\circ,t}$ the \emph{fiber of $\pi : Y^\circ\rightarrow X^\circ$ above $t$}. Accordingly we will write $\pi_1(X^\circ,t) := \Aut(F_{X^\circ,t})$.
\end{defn}

The group $\pi_1(\bT^\circ_x,t) := \Aut(F_{\bT^\circ_x,t})\cong\Zhat$ acts on $F_{X^\circ,t}$ in the obvious way, and this defines a canonical morphism of fundamental groups
$$\pi_1(\bT^\circ_x,t)\rightarrow \pi_1(X^\circ,t)$$
which is in fact injective \cite[V, Proposition 6.8]{SGA1}. There is a unique isomorphism $\bT_x\rightiso\bA^1$ sending $0\mapsto 0$ and $t\mapsto 1$ which induces an isomorphism $\bT_x^\circ\rightiso\bG_m := \bA^1 - \{1\}$ (which we will use to identify the two schemes). The Galois theory of $\bG_m$ is well-understood: for every integer $n\ge 1$, there is a unique connected degree $n$ finite \'{e}tale cover of $\bG_m$ given by $\pi_n : \bG_m\rightarrow\bG_m$ sending $z\mapsto z^n$. Let $g_{t,n}\in\Gal(\pi_n)$ be given by $z\mapsto \zeta_n z$.

\begin{defn}\label{def_canonical_generator_of_inertia} Let $X$ be a smooth curve, $x\in X$ a closed point, $X^\circ := X - \{x\}$, and $t\in\bT^\circ_x$ a tangential base point. The \emph{canonical generator of inertia} is the element $\gamma_t\in\pi_1(\bT^\circ_x,t)$ which acts on $F_{\bT^\circ_x,t}(\pi_n)$ via $g_{t,n} : z\mapsto \zeta_n z$ for every $n$. We will often view $\gamma_t$ as an element of $\pi_1(X^\circ,t)$ via the canonical map $\pi_1(\bT^\circ_x,t)\hookrightarrow\pi_1(X^\circ,t)$. Note that this definition depends on our choice of compatible system $\{\zeta_n\}_{n\ge 1}$.
\end{defn}

\begin{prop}\label{prop_tbp} Let $X$ be a smooth curve, $x\in X$ a closed point, and $X^\circ := X - \{x\}$.
\begin{itemize}
\item[(a)] Let $T_x^*X$ be the Zariski cotangent space at $x$ (a vector space), and let $T_0^*\bT_x$ be the Zariski cotangent space at $0\in\bT_x$. There is a canonical isomorphism
$$T_x^*X\rightiso T_0^*\bT_x$$
which is functorial in $(X,x)$. If $t$ is a tangential base point at $x$, then the canonical generator of inertia $\gamma_t$ acts on $T^*_xX$ by multiplication by $\zeta_n$.

\item[(b)] Let $\pi : Y\rightarrow X$ be a finite map of smooth curves, \'{e}tale over $X^\circ$. Every connected component of the scheme $Y_{(x)}$ has a unique point lying over $0\in\bT_x$. The fiber $\pi_{(x)}^{-1}(0)$ is canonically in bijection with $Y_x := \pi^{-1}(x)$. The map $Y_t := \pi_{(x)}^{-1}(t)\rightarrow \pi_{(x)}^{-1}(0)$ sending $y\in Y_t$ to the unique point of $Y_{(x)}$ lying over $0$ induces a canonical bijection (the specialization to a ramified fiber)
$$\langle\gamma_t\rangle\bs Y_t\rightiso \pi_{(x)}^{-1}(0)\rightiso Y_x$$
which is functorial in $\pi$, in the sense that it defines an isomorphism of functors $\FEt_{X^\circ}\rightarrow\Sets$. Given $y\in Y_t$, we will often let $[x]$ denote its image in $\langle \gamma_t\rangle\bs Y_t$ or its image in $Y_x$.
\end{itemize}
Moreover, the isomorphism in (a) is functorial in $(X,x)$, and the isomorphism in (b) \end{prop}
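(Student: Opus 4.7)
My first step is to identify $T_x^*X$ with $\fm_x/\fm_x^2$, where $\fm_x\subset\cO_{X,x}$ is the maximal ideal, and to identify $T_0^*\bT_x$ with the degree-one piece $\gr^1(\cO_{X,x})$ of the graded ring defining $\bT_x$. Since the origin of $\bT_x=\Spec\gr(\cO_{X,x})$ is cut out by the irrelevant ideal $\gr^{\ge 1}$, the cotangent space there is exactly $\gr^1(\cO_{X,x})=\fm_x/\fm_x^2$, giving the canonical isomorphism tautologically. Functoriality in $(X,x)$ then follows because a morphism $(X',x')\to(X,x)$ induces a local ring homomorphism respecting powers of the maximal ideals, hence a map of associated graded rings, hence a compatible map of degree-one pieces. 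For the normalization statement about $\gamma_t$, I would transport along the explicit isomorphism $\bT_x\cong\bA^1$ sending $t\mapsto 1$, so that $T_0^*\bT_x=k\cdot dz$ where $z$ is the standard coordinate. Using the degree-$n$ cover $\pi_n:\bG_m\to\bG_m$, $z\mapsto z^n$, the canonical generator $\gamma_t$ acts via $g_{t,n}:z\mapsto\zeta_n z$, which scales $dz$ (and hence the image of $T^*_xX$ inside the cotangent space of the cover at the preimage of $0$) by $\zeta_n$; this is precisely the characterization claimed.

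\textbf{Plan for part (b).} The key geometric input is that for each $y\in\pi^{-1}(x)$, the local ring $\cO_{Y,y}$ is a discrete valuation ring (since $Y$ is smooth of pure dimension $1$), so $\gr(\cO_{Y,y})\cong k[s]$ with $s$ placed in degree $1/e_y$, where $e_y$ is the ramification index of $\pi$ at $y$. Hence each summand $\Spec\gr(\cO_{Y,y})\cong\bA^1$ of the scheme $Y_{(x)}$ is irreducible and has a unique closed point lying over $0\in\bT_x$, namely the image of $\gr^{\ge 1}$. This produces the bijection $\pi_{(x)}^{-1}(0)\rightiso\pi^{-1}(x)=Y_x$ essentially by construction. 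To construct the specialization bijection $\langle\gamma_t\rangle\bs Y_t\rightiso Y_x$, I send each $y\in Y_t$ to the unique point of $Y_x$ lying on the same connected component of $Y_{(x)}$ as $y$. This map is $\langle\gamma_t\rangle$-invariant because $\gamma_t\in\pi_1(\bT_x^\circ,t)$ acts on $Y_t$ through deck transformations of the finite \'etale map $\pi_{(x)}^\circ$, which preserve the (finitely many) connected components of the target scheme $Y_{(x)}^\circ$.

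\textbf{Counting, functoriality, and the main obstacle.} To see that the induced map on orbits is a bijection, I count: $|Y_t|=\deg\pi=\sum_{y\in Y_x}e_y$, and the $\gamma_t$-orbit of a point lying in the component indexed by $y\in Y_x$ has size exactly $e_y$, since locally this component is the Kummer cover $s\mapsto s^{e_y}$ whose Galois group is cyclic of order $e_y$ and receives $\gamma_t$ via the canonical surjection $\Zhat\twoheadrightarrow\bZ/e_y$ (using the normalization from part~(a)). Summing gives $|\langle\gamma_t\rangle\bs Y_t|=|Y_x|$, forcing bijectivity. Functoriality in $\pi$ holds because every ingredient — the decomposition of $Y_{(x)}$ by local rings, the identification of components with $Y_x$, and the specialization map — is manifestly natural under a morphism $\pi'\to\pi$ of finite covers over $X^\circ$ that extends across $x$. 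The most delicate point, and the one I would want to verify with care, is the compatibility between the $\gamma_t$-equivariant structure on $Y_t$ (encoded by the Kummer theory of each ramified local branch) and the grading-based identification of $\pi_{(x)}^{-1}(0)$ with $Y_x$; this compatibility is precisely where the normalization of $\gamma_t$ established in part~(a) is indispensable.
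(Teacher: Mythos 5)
Your proposal is correct and follows essentially the same route as the paper: identify $T_0^*\bT_x$ with the degree-one graded piece $\fm_x/\fm_x^2$, and observe that $Y_{(x)}$ is a disjoint union of affine lines indexed by $y\in\pi^{-1}(x)$, each mapping to $\bT_x$ by $z\mapsto z^{e_y}$, so that the $t$-fiber of each component is a single $\gamma_t$-orbit of size $e_y$ and orbits correspond bijectively to points of $Y_x$. The extra detail you supply (Kummer-cover description, orbit counting) is exactly what the paper's one-line argument for (b) implicitly relies on, so there is no gap.
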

\begin{proof} For (a), to be precise let $\fm_0\subset\Spec\gr(\cO_{X,x})$ be the maximal ideal at $0$. Then for $r\ge 0$, $\fm_0^r = \oplus_{n\ge r}\gr_n(\cO_{X,x})$, so the Zariski cotangent space $\fm_0/\fm_0^2$ is canonically isomorphic to $\gr_1(\cO_{X,x}) = \fm_x/\fm_x^2 = T_x^*X$, and this isomorphism is clearly functorial in the pair $(X,x)$. It follows from the definition of the canonical generator of inertia that $\gamma_t$ acts by multiplication by $\zeta_n$ on tangent spaces, and hence it also acts by multiplication by $\zeta_n$ on cotangent spaces.


For (b), it suffices to observe that from the discussion above, the scheme $Y_{(x)} := \sqcup_{y\in \pi^{-1}(x)}\Spec\gr(\cO_{Y,y})$ is a disjoint union of affine lines indexed by $y\in\pi^{-1}(x)$, each mapping to $\bT_x$ (also an affine line) by the map $z\mapsto z^{e_y}$, where $e_y$ is the ramification index of $\pi$ at $y$.
\end{proof}

\begin{remark}\label{remark_analytic_theory} If $k = \bC$, there is a complementary analytic theory \cite[\S15.3-15.12]{Del89}, where the functor $R_x : \FEt_{X^\circ}\rightarrow \FEt_{\bT_x^\circ}$ is given by pulling back covers along the germ of a local isomorphism between a punctured neighborhood of $0\in\bT_x^\circ$ and a punctured neighborhood of $x\in X$. As an informal picture, one should imagine gluing a copy of $\bC^\times$ to $X^\circ$ along a (infinitesimally) small punctured disk at $0$ and $x$. In this picture a tangential base point is simply a point of $\bC^\times$. The resulting space is homeomorphic to $X^\circ$, and hence has the same fundamental group. Moreover, in this analytic theory, there is a good notion of ``path'' in the topological sense given by continuous maps from $[0,1]$, and so one can speak of true paths between points (normal, or tangential).


For general algebraically closed $k$ of characteristic 0, one can often deduce results from the analytic theory over $\bC$ as follows. First note that depending on the cardinality of $k$, either $k$ embeds in $\bC$ or it contains $\bC$. On the other hand, if $L/K$ is an extension of algebraically closed fields and $X$ is a $K$-scheme, then the map $X_L\rightarrow X$ induces an isomorphism on fundamental groups for any choice of base points \cite[Expos\'{e} XIII, Proposition 4.6]{SGA1}, so it induces an equivalence $\FEt_{X}\cong\FEt_{X_L}$. Finally, classical GAGA results \cite[Expos\'{e} XII]{SGA1}, provide an equivalence between the category of finite \'{e}tale covers of finite type $\bC$-schemes and finite \'{e}tale covers of their analytifications.
\end{remark}

\begin{remark} There is a simpler alternative version of tangential base points given as follows. Let $X$ be a smooth curve over $k$, $x\in X$ a closed point. Let $\varpi\in\cO_{X,x}$ be a uniformizer, then the map $\cO_{X,x}\rightarrow k\ps{t}$ sending $\varpi\mapsto t$ defines a map
$$t_\varpi : \Spec k\ls{z^{1/\infty}}\stackrel{i_z}{\longrightarrow}\Spec k\ls{z}\rightarrow\Spec k\ps{z}\rightiso\Spec\what{\cO_{X,x}}\longrightarrow X$$
such that $t_\varpi$ becomes a geometric point of $X^\circ$. This gives a true geometric point of $X^\circ$, and shares many of the same features as the tangential base points given above (canonical generators of inertia, specialization map to a ramified fiber). However in this definition, it is more difficult to describe the corresponding analytic theory over $\bC$, which we will need in two places.	
\end{remark}

\subsection{The fundamental group of $\bP^1 - \{0,1,\infty\}$}\label{ss_pi_1}
Here we set up some of the notation and convention that we will maintain for the rest of \S\ref{section_cusps}.


Let $\bP^* := \bP^1 - \{0,1,\infty\}$. Recall that the fundamental groupoid of $\bP^*$ is the category $\Pi_{\bP^*}$ whose objects are fundamental functors $\FEt_{\bP^*}\rightarrow\Sets$ (see \cite[\S V, Definition 5.1]{SGA1}) and whose morphisms are isomorphisms of functors. If $x\in\bP^*$ is a geometric point or tangential base point, let $F_x$ denote the corresponding fundamental functor, which we also call the \emph{fiber functor at $x$}. The fundamental group of $\bP^*$ with base point $x$ is the group $\pi_1(\bP^*,x) := \Aut(F_x)$. If $x,y$ are geometric points or tangential base points, then a path $\delta : x\leadsto y$ is an isomorphism of fiber functors $\delta : F_x\rightiso F_y$. If $\delta : x\leadsto y$ and $\delta' : y\leadsto z$ are paths, then we will write the composition $\delta'\circ\delta$ as $\delta'\delta$ or $\delta'\cdot\delta : x\leadsto z$. If $\gamma\in\pi_1(\bP^*,y) := \Aut(F_y)$ and $\delta : x\leadsto y$ is a path, then we write ``conjugation'' as:
\begin{equation}\label{eq_gamma_delta}
\gamma^\delta := \delta^{-1}\gamma\delta \in\pi_1(\bP^*,x) := \Aut(F_x)	
\end{equation}


An automorphism $f\in\Aut(\bP^*)$ determines by pullback an automorphism $f^* : \FEt_{\bP^*}\rightiso\FEt_{\bP^*}$ which induces an automorphism of the fundamental groupoid $f_* : \Pi_{\bP^*}\rightiso\Pi_{\bP^*}$ described by $f_*F_x := F_x\circ f^*$. There is a canonical isomorphism $F_x\circ f^*\cong F_{f(x)}$, using which we obtain the usual induced map $f_* : \pi_1(\bP^*,x)\rightarrow\pi_1(\bP^*,f(x))$.


Let $\iota : z\mapsto \frac{1}{z}$ denote the unique automorphism of $\bP^1$ fixing 1 and switching $0,\infty$. In what follows, we will fix a choice of tangential base point $t_0$ at $0\in\bP^1$, and define $t_\infty$ as the tangential base point at $\infty$ given by $\iota(t_0)$. If $\gamma_0,\gamma_\infty$ denote the canonical generators of inertia at $t_0,t_\infty$, then $\iota$ defines a homomorphism $\iota_* : \pi_1(\bP^*,t_0)\rightarrow\pi_1(\bP^*,t_\infty)$ sending $\gamma_0\mapsto \gamma_\infty$. If $\delta : t_0\leadsto t_\infty$ is a path, then $\iota_*\delta$ is a path $t_\infty\leadsto t_0$.

\begin{defn}\label{def_good_path} Having fixed a tangential base point $t_0$ at $0$, let $t_\infty := \iota(t_0)$. A path $\delta : t_0\leadsto t_\infty$ is \emph{good} if 
\begin{itemize}
	\item $\gamma_0$ and $\gamma_\infty^\delta := \delta^{-1}\gamma_\infty\delta$ topologically generate $\pi_1(\bP^*,t_0)$, and
	\item for some path $\epsilon : t_0\leadsto t_1$, we have $\gamma_\infty^\delta\gamma_1^\epsilon\gamma_0 = 1$.
\end{itemize}
The path $\delta$ is \emph{symmetric} if
$$\iota_*\delta = \gamma_0^r\cdot\delta^{-1}\cdot\gamma_\infty^s\qquad\text{for some $r,s\in\bZ$}$$
\end{defn}


It follows from the analytic theory (c.f. Remark \ref{remark_analytic_theory}) that symmetric good paths exist. Indeed, we can let $\delta$ be the path $t_0\leadsto t_\infty$ in $\bP^*$ which, for some small $\epsilon > 0$, traces the interval $(0,1-\epsilon)$, makes a small counterclockwise turn around $1\in\bP^1$, and continues along the interval $(1+\epsilon,\infty)$.

\subsection{The category $\cC_{\bP^1}^\succ$ and the normalization map $\Xi : \cC_E^{pc}\longrightarrow\cC_{\bP^1}^\succ$}\label{ss_normalization_map}
As in \S\ref{ss_pi_1}, in what follows let $\bP^* := \bP^1 - \{0,1,\infty\}$. We fix a choice of tangential base point $t_0$ at $0$. Let $t_\infty := \iota(t_0)$ denote the corresponding tangential base point at $\infty$ (see \S\ref{ss_pi_1}). Then associated to $t_0,t_\infty$ we have fiber functors
$$F_{t_0},F_{t_\infty} : \FEt_{X^\circ}\longrightarrow\Sets$$
and canonical generators of inertia $\gamma_0\in\pi_1(\bP^*,t_0),\gamma_\infty\in\pi_1(\bP^*,t_\infty)$.

\begin{defn} Let $\cC_{\bP^1}^\succ$ be the category of pairs $(q : D\rightarrow\bP^1,\alpha)$ where $q$ is a (possibly disconnected) smooth $G$-cover, \'{e}tale over $\bP^*$, and $\alpha$ is a $G$-equivariant bijection of fibers $\alpha : D_0\rightarrow D_\infty$, where $D_0 := q^{-1}(0)$ and $D_\infty := q^{-1}(\infty)$. A morphism between $(q : D\rightarrow\bP^1,\alpha)\rightarrow (q' : D'\rightarrow\bP^1,\alpha')$ in $\cC_{\bP^1}^\succ$ is a $G$-equivariant morphism $f : D\rightarrow D'$ over $\bP^1$ respecting the identifications of the fibers. In a formula, we require that $f$ satisfies
$$f(\alpha(x)) = \alpha'(f(x))\qquad\text{for all $x\in D_0$}$$
\end{defn}

Informally, $\cC_{\bP^1}^\succ$ is the category of three point covers with ``gluing data''. The symbol ``$\succ$'' represents that the objects come with gluing data.

\begin{defn} Given a nodal elliptic curve $E$, a \emph{standard normalization} of $E$ is a finite birational map $\nu : \bP^1\rightarrow E$ satisfying
\begin{itemize}
	\item $\nu(1) = O\in E$.
	\item $\nu|_{\nu^{-1}(E^\sm)} : \nu^{-1}(E^\sm)\rightarrow E^\sm$ is an isomorphism.
	\item Let $z\in E$ be the node, then $\nu^{-1}(z) = \{0,\infty\}\subset\bP^1$.
\end{itemize}
The map $\nu : \bP^1\rightarrow E$ identifies $\bP^1$ with the normalization of $E$, and hence up to precomposing with $\iota : z\mapsto 1/z$, the map $\nu$ is uniquely determined by these properties.
\end{defn}

Let $\nu : \bP^1\rightarrow E$ be a standard normalization, and let $p : C\rightarrow E$ be a precuspidal $G$-cover. Let $\nu_C : C'\rightarrow C$ denote the normalization of $C$, then the $G$-action on $C$ extends uniquely to $C'$. Moreover, by the universal property of normalization \cite[035Q]{stacks} there is a unique map $p' : C'\rightarrow\bP^1$ fitting into the commutative diagram
\[\begin{tikzcd}
C'\ar[r,"\nu_C"]\ar[d,"p'"] & C\ar[d,"p"] \\
\bP^1\ar[r,"\nu"] & E
\end{tikzcd}\]
such that $p'$ is a $G$-cover. Note that $C'$ may be disconnected even if $C$ is connected. Moreover, the normalization $\nu_C$ defines a $G$-equivariant bijection (a ``gluing datum'')
$$\alpha_p : C'_0\rightiso C'_\infty$$
defined by sending $x\in C'_0$ to the unique point $y\in C'_\infty$ such that $\nu_C(x) = \nu_C(y)$. Let $C'^\circ := p'^{-1}(\bP^*)$, then the restriction $p'|_{C'^\circ} : C'^\circ\rightarrow\bP^*$ is a finite \'{e}tale $G$-cover, and so the pair $(p',\alpha_p)$ defines an object of $\cC_{\bP^1}^\succ$. Moreover, by the universal property of normalization, a morphism of objects in $\cC_E^{pc}$ induces a morphism of their images in $\cC_{\bP^1}^\succ$. Thus, taking normalizations and remembering the gluing data defines a functor
\begin{eqnarray*}
\Xi_\nu : \cC_E^{pc} & \longrightarrow & \cC_{\bP^1}^\succ \\
(p : C\rightarrow E) & \mapsto & (p',\alpha_p).	
\end{eqnarray*}
We note that if $\nu'$ is another standard normalization, then $\nu' = \nu\circ\iota$, and $\iota : z\mapsto 1/z$ induces an isomorphism of functors $\Xi_\nu\cong\Xi_{\nu'}$. We will eventually show that $\Xi_\nu$ is an equivalence of categories. Let $(q : D\rightarrow\bP^1,\alpha)$ be an object of $\cC_{\bP^1}^\succ$. We summarize some of the relevant structures associated to $(q,\alpha)$.

\begin{itemize}
\item[(a)] A path $\delta : t_0\leadsto t_\infty$ defines a bijection of fibers
$$\delta(q) : D_{t_0}\rightiso D_{t_\infty}$$
which is $G$-equivariant since $\delta$ is an isomorphism of functors. Sometimes we will abuse notation and simply write $\delta = \delta(q)$. Moreover, it determines an isomorphism of fundamental groups:
\begin{eqnarray*}
\la{\delta}(\cdot) : \pi_1(\bP^*,t_0) & \rightiso & \pi_1(\bP^*,t_\infty) \\
\gamma & \mapsto & \la{\delta}\gamma := \delta\gamma\delta^{-1}	
\end{eqnarray*}
If we let $\pi_1(\bP^*,t_0)$ act on $D_{t_\infty}$ via this isomorphism, then $\delta(q)$ is $\pi_1(\bP^*,t_0)$-equivariant.
\item[(b)] For any point $x_0\in D_{t_0}$, the freeness and transitivity of the $G$-action on fibers of $q|_{q^{-1}(\bP^*)}$ yield monodromy representations
\begin{eqnarray*}
\varphi_{x_0} : \pi_1(\bP^*,t_0) & \lra & G \\
\varphi_{\delta x_0} : \pi_1(\bP^*,t_\infty) & \lra & G
\end{eqnarray*}
defined in the usual way by the relations:
$$\begin{array}{rcll}
\gamma\cdot x_0 & = & x_0\cdot \varphi_{x_0}(\gamma) & \text{for any }\gamma\in\pi_1(\bP^*,t_0) \\
\gamma\cdot \delta x_0 & = & \delta x_0\cdot \varphi_{\delta x_0}(\gamma) & \text{for any }\gamma\in\pi_1(\bP^*,t_\infty)
\end{array}$$
\item[(c)] Let $[\cdot]$ denote the map
\begin{eqnarray*}
{[\cdot]} : D_{t_0} & \longrightarrow & \langle\gamma_0\rangle\bs D_{t_0}	 \\
x & \mapsto & {[x]} := \langle\gamma_0\rangle x
\end{eqnarray*}
and similarly for points of $D_{t_\infty}$.
\item[(d)] Let $\gamma_\infty^\delta := \delta^{-1}\gamma_\infty\delta$. As in Proposition \ref{prop_tbp}, there are canonical bijections functorial in $D$
$$\begin{array}{rcl}
\xi_0 : \langle \gamma_0\rangle\bs D_{t_0} & \rightiso & D_0 \\[2pt]
\xi_\infty : \langle\gamma_\infty\rangle\bs D_{t_\infty} & \rightiso & D_\infty \\[2pt]
\xi_\infty\circ\delta : \langle\gamma_\infty^\delta\rangle\bs D_{t_0} & \rightiso & D_\infty
\end{array}$$
Here, functoriality means the following. Let $F_0$ (resp. $F_0'$) be the functors $\cC_{\bP^1}^\succ\rightarrow\Sets$ sending $(q : D\rightarrow \bP^1,\alpha)$ to $D_0$ (resp. $\langle\gamma_0\rangle\bs D_{t_0}$). Then $\xi_0$ defines a natural isomorphism of functors $\xi_0 : F_0'\rightiso F_0$. The map $\xi_0\circ [\cdot] : D_{t_0}\rightarrow D_0$ should be thought of as the ``specialization map to the ramified fiber'' induced by the ``specialization'' $t_0\leadsto 0$ in $\bP^1$. Thus for $x\in D_{t_0}$, we will often abuse notation and view $[x]$ as an element of $D_0$, and similarly for points of $D_{t_\infty}$. Via $\xi_0,\xi_\infty$, we will often view $\alpha$ as a bijection of coset spaces
$$\alpha : \langle\gamma_0\rangle\bs D_{t_0}\rightiso\langle\gamma_\infty\rangle\bs D_{t_\infty}$$
\item[(e)] Given a point $x\in D_0$, let $G_x := \Stab_G(x)$. The right-action of $G$ on $D$ induces a left action on the cotangent space $T^*_xD$, whence a \emph{local representation} 
\begin{equation}\label{eq_local_representation}
\chi_x : G_x\lra \GL(T_x^*D)
\end{equation}
\end{itemize}




Next we record some basic computations that we will use freely in what follows. Recall that for a path $\delta : x\leadsto y$ and a loop $\gamma$ at $y$, we write $\gamma^\delta := \delta^{-1}\gamma\delta$ to be the loop at $x$ given by $x\stackrel{\delta}{\leadsto}y\stackrel{\gamma}{\leadsto}y\stackrel{\delta^{-1}}{\leadsto} x$ (see \eqref{eq_gamma_delta}).

\begin{prop}\label{prop_obvious} Let $(q : D\rightarrow\bP^1,\alpha)\in\cC_{\bP^1}^\succ$, let $x_0\in D_{t_0}$, and let $\delta : t_0\leadsto t_\infty$ be a path. Then we have
\begin{itemize}
\item[(a)] $G_{[x_0]} = \langle\varphi_{x_0}(\gamma_0)\rangle$ and $\chi_{[x_0]}(\varphi_{x_0}(\gamma_0)) = \zeta_n$. 
\item[(b)] $G_{[\delta x_0]} = \langle\varphi_{\delta x_0}(\gamma_\infty)\rangle$ and $\chi_{[\delta x_0]}(\varphi_{\delta x_0}(\gamma_\infty)) = \zeta_n$. 
\item[(c)] $\varphi_{\delta x_0}(\gamma) = \varphi_{x_0}(\gamma^\delta)$ for all $\gamma\in\pi_1(\bP^*,t_\infty)$.
\item[(d)] $\varphi_{\delta^{-1}x_\infty}(\gamma) = \varphi_{x_\infty}(\gamma^{\delta^{-1}})$ for all $\gamma\in\pi_1(\bP^*,t_\infty)$, $x_\infty\in D_{t_\infty}$.
\item[(e)] $\varphi_{x g}(\gamma) = g^{-1}\varphi_{x}(\gamma)g$ for all $g\in G, \gamma\in\pi_1(\bP^*,q(x))$, and any unramified or tangential base point $x\in D$.
\end{itemize}
\end{prop}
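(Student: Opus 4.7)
All five assertions are formal consequences of (i) the definition of monodromy representations attached to a $G$-torsor and a choice of point in the fiber, (ii) the definition of $\delta$ as an isomorphism of fiber functors, and (iii) the compatibility of the specialization map $\xi_0 \circ [\cdot] : D_{t_0} \to D_0$ (resp.\ at $\infty$) with the $G$-action, together with the fact that the canonical generator of inertia $\gamma_0$ (resp.\ $\gamma_\infty$) acts on the cotangent space at $0$ (resp.\ $\infty$) by multiplication by $\zeta_n$ (Proposition~\ref{prop_tbp}(a)). The plan is to handle (c), (d), (e) first, since they are purely groupoid-theoretic, and then use these together with the canonical-inertia calculation to deduce (a) and (b).

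First, for (e), let $x\in D$ be an unramified (or tangential) base point and $g\in G$. Since the $G$-action commutes with the monodromy action, for any loop $\gamma$ at $q(x)$ we have $\gamma\cdot (xg) = (\gamma\cdot x)g = (x\,\varphi_x(\gamma))g = (xg)(g^{-1}\varphi_x(\gamma)g)$, which by the defining relation for $\varphi_{xg}$ gives $\varphi_{xg}(\gamma)=g^{-1}\varphi_x(\gamma)g$.

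Next, for (c), I use that $\delta:F_{t_0}\rightiso F_{t_\infty}$ is an isomorphism of functors, hence $G$-equivariant and compatible with the groupoid action: for any $\gamma\in\pi_1(\bP^*,t_\infty)$, the action of $\gamma$ on $D_{t_\infty}$ pulls back through $\delta$ to the action of $\gamma^\delta=\delta^{-1}\gamma\delta$ on $D_{t_0}$. Therefore $\gamma\cdot(\delta x_0)=\delta(\gamma^\delta\cdot x_0)=\delta(x_0\,\varphi_{x_0}(\gamma^\delta))=(\delta x_0)\,\varphi_{x_0}(\gamma^\delta)$, giving $\varphi_{\delta x_0}(\gamma)=\varphi_{x_0}(\gamma^\delta)$. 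Part (d) is identical with $\delta$ replaced by $\delta^{-1}$.

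Finally, for (a): applying the defining relation $\gamma_0\cdot x_0=x_0\,\varphi_{x_0}(\gamma_0)$ iteratively shows $\gamma_0^k\cdot x_0=x_0\,\varphi_{x_0}(\gamma_0)^k$, so the $G$-element $g$ sends $x_0$ into the $\langle\gamma_0\rangle$-orbit of $x_0$ if and only if $g\in\langle\varphi_{x_0}(\gamma_0)\rangle$; via the identification $\xi_0$, this is exactly the stabilizer $G_{[x_0]}$. For the local character, the cover $q$ is $G$-equivariant and étale on $\bP^*$, so after pulling back to $D_{(0)}$ the action of any inertia element matches the corresponding action on $\bT^*_0$ under the pullback $T^*_0\bP^1\to T^*_{[x_0]}D$. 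By Proposition~\ref{prop_tbp}(a), $\gamma_0$ acts on $T^*_0\bP^1$ by $\zeta_n$, and it acts on $T^*_{[x_0]}D$ through $\chi_{[x_0]}\circ\varphi_{x_0}$; comparing gives $\chi_{[x_0]}(\varphi_{x_0}(\gamma_0))=\zeta_n$. Part (b) is the same argument applied at $t_\infty$, using (c) to translate between base points if desired. Nothing here is genuinely difficult; the only point requiring care is the identification of the cotangent-space actions in (a)--(b), which is where Proposition~\ref{prop_tbp}(a) is essential.
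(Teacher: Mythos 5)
Your arguments for (c), (d), (e) and for the stabilizer half of (a)--(b) are correct and are essentially what the paper does: (c) is exactly the computation $\gamma\cdot(\delta x_0)=\delta(\gamma^\delta\cdot x_0)=(\delta x_0)\varphi_{x_0}(\gamma^\delta)$, (d) and (e) are the same manipulation, and the identification $G_{[x_0]}=\langle\varphi_{x_0}(\gamma_0)\rangle$ follows, as you say, from freeness of the $G$-action on $D_{t_0}$ together with the $G$-equivariance/functoriality of $\xi_0$.

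The local-character half of (a)--(b) has a genuine gap as written. You propose to compare the $\gamma_0$-action ``on $T^*_0\bP^1$'' with the action on $T^*_{[x_0]}D$ ``under the pullback $T^*_0\bP^1\to T^*_{[x_0]}D$.'' But at a ramified point this pullback is the zero map (locally $q^*x=\xi^e$, so $d(q^*x)=e\xi^{e-1}d\xi\equiv 0$ in $\fm/\fm^2$ when $e\ge 2$), so it cannot transport any information; moreover $\gamma_0$ does not act on $D$ or on $T^*_{[x_0]}D$ at all except through $\varphi_{x_0}$, which is precisely the thing whose cotangent action you are trying to pin down, so the ``comparison'' is circular. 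The correct route --- this is what the paper means by ``the local picture'' --- is to pass to $D_{(0)}$ (Proposition \ref{prop_tbp}): the branch through $[x_0]$ is identified with the standard degree-$e$ cover $\pi_e:\bG_m\to\bG_m\subset\bT_0$, $z\mapsto z^e$, on whose fiber $\gamma_0$ acts by $z\mapsto\zeta_e z$ by the very definition of the canonical generator of inertia (Definition \ref{def_canonical_generator_of_inertia}). The monodromy relation $x_0\cdot\varphi_{x_0}(\gamma_0)=\gamma_0\cdot x_0$ then says that the right action of $g:=\varphi_{x_0}(\gamma_0)$ on this branch agrees at one fiber point with the deck transformation $z\mapsto\zeta_e z$, hence equals it (both are automorphisms of the connected cover $\pi_e$ over $\bT_0^\circ$), and its induced left action on the cotangent space at the origin of the branch is multiplication by $\zeta_e$; transporting this along the functorial isomorphism of Proposition \ref{prop_tbp}(a) gives $\chi_{[x_0]}(g)=\zeta_e$. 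Without this step your argument does not determine the root of unity (in particular it cannot distinguish $\zeta_e$ from $\zeta_e^{-1}$, which is exactly the convention-sensitive content of (a) and (b) that the balancedness discussion later relies on).
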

\begin{proof} Parts (a) and (b) follows from the local picture. For (c), note that $\delta x_0\varphi_{\delta x_0}(\gamma) = \gamma \delta x_0 = \delta\delta^{-1}\gamma\delta x_0$ which forces $x_0\varphi_{\delta x_0}(\gamma) = \delta^{-1}\gamma\delta x_0$. This precisely says that $\varphi_{x_0}(\delta^{-1}\gamma\delta) = \varphi_{\delta x_0}(\gamma)$ as desired. The proof of (d) and (e) are similar.
\end{proof}

In the remainder of this section, we will show that $\Xi_\nu : \cC_E^{pc}\rightarrow \cC_{\bP^1}^\succ$ is an equivalence.

\begin{lemma}\label{lemma_pushout} Given an object $(\pi : D\rightarrow\bP^1,\alpha)$ of $\cC_{\bP^1}^\succ$. Suppose $\pi^{-1}(0)$ has cardinality $n$. Let
$$\beta : \underbrace{\sqcup_{x\in\pi^{-1}(\{0,\infty\})}\Spec k}_{Z}\longrightarrow \underbrace{\sqcup_{i=1}^n\Spec k}_{W}.$$
Then the pushout $D_\alpha := D\cup_{Z,\beta} W$ exists in the category of schemes. Let $\nu : D\rightarrow D_\alpha$ be the canonical injection. Then we have
\begin{itemize}
\item[(a)] The map $\nu : D\rightarrow D_\alpha$ is finite and surjective.
\item[(b)] $\nu$ identifies the topological space of $D_\alpha$ with the topological quotient of $D$ by the equivalence relation given by $\alpha$.
\item[(c)] For an open $U\subset D_\alpha$, we have $\Gamma(U,\cO_{D_\alpha}) = \{f\in\Gamma(\nu^{-1}(U),\cO_D)\;|\; f(x) = f(\alpha(x))\text{ for all }x\in \pi^{-1}(0)\}$.
\item[(d)] $D_\alpha$ is a prestable curve, and every point in $\nu(\pi^{-1}(\{0,\infty\}))$ is a node.
\item[(e)] $\nu$ identifies $D$ with a normalization of $D_\alpha$.
\item[(f)] $\nu$ restricts to an isomorphism on $D - \pi^{-1}(\{0,\infty\})$.
\item[(g)] Let $(E_0,O)$ be the nodal elliptic curve obtained by gluing $\bP^1$ along $\{0,\infty\}$, where we set the origin $O$ to be the image of $1\in\bP^1$. Then the canonical map $D_\alpha\rightarrow E_0$ is precuspidal $G$-cover of $E_0$, and sending $(\pi : D\rightarrow\bP^1,\alpha)\mapsto (\pi_\alpha : D_\alpha\rightarrow E_0)$ defines a ``gluing'' functor
$$\Glue : \cC_{\bP^1}^\succ\lra\cC_{E_0}^{pc}$$
\end{itemize}
We say that $D_\alpha$ is the prestable curve obtained by gluing $D$ along $\alpha$. Note that if $\pi$ is the degree 1 cover $D = \bP^1\rightarrow\bP^1$, then $D_\alpha$ is a nodal cubic, and isomorphic 
\end{lemma}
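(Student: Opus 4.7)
The proof will proceed by constructing the pushout explicitly, first affine-locally, then globalizing, and finally verifying each of the listed properties in turn.

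\textbf{Construction of the pushout (a,b,c).} My first step is to show that the construction is local on $D$ away from the $2n$ glued points. Let $T := \pi^{-1}(\{0,\infty\})$ and let $W$ denote the target of $\beta$, viewed as $n$ reduced $k$-points indexed by the equivalence classes $\{x,\alpha(x)\}$ with $x \in D_0$. Since $D$ is a smooth projective curve over $k$ (not necessarily connected), any finite subset is contained in a $G$-invariant affine open. Thus I can find a $G$-invariant affine open $V = \Spec A \subset D$ with $T \subset V$, such that for each pair $\{x,\alpha(x)\}$ both points lie in a common connected component of $V$. Define $D_\alpha$ set-theoretically as the quotient of $D$ by the equivalence relation generated by $x \sim \alpha(x)$, topologized as a quotient; this gives (b). Define the structure sheaf by the formula in (c); equivalently, over the affine piece $V$, set $B := \{a \in A : a(x) = a(\alpha(x)) \text{ for all } x \in \pi^{-1}(0)\}$, which is the equalizer ring. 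Since $A$ is finite over $B$ (one checks $A = B + k\cdot e_1 + \dots$ for suitable idempotent-like elements, or uses that $A/B$ is a finite-dimensional $k$-vector space supported at the glued points), $\Spec A \to \Spec B$ is finite, and this is the affine model of $\nu$ over $\Spec B \subset D_\alpha$. Gluing with the isomorphism $\nu|_{D \setminus T} : D \setminus T \xrightarrow{\sim} D_\alpha \setminus \nu(T)$ (which gives (f)) produces $D_\alpha$ as a scheme. The universal property of the equalizer identifies this with the categorical pushout, and $\nu$ is finite and surjective, giving (a).

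\textbf{Nodal structure and normalization (d,e).} For each pair $\{x,y = \alpha(x)\}$ lying in a common affine component of $V$, choose uniformizers $u,v \in A$ at $x,y$ respectively; then in the completion, $\what{B_p} \cong \{(f,g) \in k[[u]] \times k[[v]] : f(0) = g(0)\} \cong k[[u,v]]/(uv)$ via $u \mapsto (u,0)$, $v \mapsto (0,v)$. This shows $p := \nu(\{x,y\})$ is an ordinary double point of $D_\alpha$ and that $D_\alpha$ is a nodal curve, giving (d). The map $\nu$ restricts to the identity away from $T$ and sends two smooth points of $D$ isomorphically onto the two branches of the node; this is precisely the universal property of normalization, giving (e).

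\textbf{Cover structure (g).} The nodal cubic $E_0$ is itself the pushout $\bP^1 \cup_{\{0,\infty\}} \{*\}$ by the same construction applied to the degree 1 identity cover, with origin $O$ the image of $1$. Hence the composition $\pi\circ\nu^{-1}$ (on the open part) together with the map $W \to \{*\}\hookrightarrow E_0$ assemble, via the universal property of the pushout, to a canonical morphism $\pi_\alpha : D_\alpha \to E_0$. Because $\alpha$ is $G$-equivariant, the $G$-action on $D$ descends to $D_\alpha$ (this is the key place the equivariance hypothesis on $\alpha$ is used), and the descended action is trivial on $W$. Flatness of $\pi_\alpha$ follows by checking at the nodes, where both source and target are \'{e}tale-locally of the form $k[[u,v]]/(uv)$ and the map is $G$-equivariantly modeled by a balanced-type local picture. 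To see $\pi_\alpha$ is a $G$-cover, note that $\pi_\alpha^{-1}((E_0)^\circ) = D \setminus T$ maps to $(E_0)^\circ \cong \bP^*$ as a $G$-cover, and the quotient map $D_\alpha \to D_\alpha/G$ is readily identified with $\pi_\alpha$ using the description of the structure sheaf in (c) together with $G$-equivariance of $\alpha$. Functoriality of $\Glue$ follows from the functoriality of the pushout construction: any $G$-equivariant map $(D,\alpha)\to(D',\alpha')$ respecting gluing data induces a compatible map on the affine equalizer rings, hence on the pushouts.

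\textbf{Main obstacle.} The delicate step is the verification in (g) that $\pi_\alpha$ is a precuspidal $G$-cover, specifically that $\pi_\alpha$ is \emph{flat} at the nodes and induces $D_\alpha/G \cong E_0$. Flatness at a node requires a careful local computation with the equalizer ring $B$ and its analogue for $E_0$, and the isomorphism $D_\alpha/G \cong E_0$ requires showing that the $G$-invariants of the equalizer commute with the equalizer of $G$-invariants, which uses both the $G$-equivariance of $\alpha$ and the fact that $|G|$ is invertible in $k$ (so invariants are exact). Everything else is a relatively mechanical unwinding of the pushout construction.
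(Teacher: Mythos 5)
Your overall route is essentially the paper's. For (a)--(c) you build the pushout by hand via equalizer rings and gluing, where the paper simply cites the Stacks Project tag 0E25 (which gives exactly the description in (b),(c) and the cartesian/finite properties in (a)); your completion computation at the glued points replaces the citation of Knudsen for (d), and (e),(f) are handled the same way. For (g) you follow the same strategy as the paper: descend the $G$-action using equivariance of $\alpha$, realize $E_0$ as the analogous pushout of $\bP^1$, define $\pi_\alpha$ by the universal property, and identify $D_\alpha/G$ with $E_0$ by commuting invariants with the fiber product of rings --- note that left-exactness of $(-)^G$ already suffices here (fiber products are finite limits), so you do not need $|G|$ invertible for that step. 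Two incidental slips: you cannot in general arrange that $x$ and $\alpha(x)$ lie in a common connected component of your affine open (they may lie on different components of $D$), though your node computation never needs this; and $\pi_\alpha^{-1}(E_0 - O)$ is not $D\setminus T$ (it contains the nodes), though the \'{e}taleness you must verify is only on the smooth locus away from $O$, where $\nu$ is an isomorphism, so the substance is fine.

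The genuine gap is the flatness of $\pi_\alpha$ at the nodes, which you yourself flag as the delicate point but then dispatch by saying both sides are \'{e}tale-locally $k[[u,v]]/(uv)$ and the map is ``modeled by a balanced-type local picture.'' First, balancedness is not a hypothesis for precuspidal covers; only $G$-equivariance of $\alpha$ is available (it does force the two branch ramification indices at a node to agree). More importantly, the local picture does not yield flatness: if the glued points have stabilizer of order $e$, the map on completed local rings is (up to units) $k[[U,V]]/(UV)\rightarrow k[[u,v]]/(uv)$, $U\mapsto u^e$, $V\mapsto v^e$, and for $e\ge 2$ this is \emph{not} flat --- the fiber over the closed point has length $2e-1$ (basis $1,u,\dots,u^{e-1},v,\dots,v^{e-1}$), while the localizations at the two minimal primes have rank $e$, so the module cannot be free over the local ring. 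Hence flatness cannot be extracted from the local model at a node, and as written your justification of this step fails. The paper argues this step quite differently: it deduces flatness of $\pi_\alpha$ globally from flatness of the three corner maps $D\rightarrow D/G$, $Z\rightarrow Z/G$, $W\rightarrow W/G$ of the two pushout squares, via a flatness criterion for pushouts/fiber products of rings (Stacks Project tag 0ECL). Whatever one thinks of that route, your local claim is the wrong tool for it, so (g) is not established by your argument; the remaining conditions of a precuspidal cover (finite, nodes to nodes, \'{e}tale on $C_{\sm}-\pi_\alpha^{-1}(O)$, quotient isomorphic to $E_0$) are fine in your write-up.
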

\begin{proof} By \cite[0E25]{stacks}, the pushout $D\cup_{Z,\beta} W$ exists and satisfies (b) and (c) (which in turn determines the pushout uniquely), and moreover the pushout diagram is also cartesian, so $\nu$ is the pullback of $\beta$ and hence is finite surjective. Part (d) is \cite[Theorem 3.4]{Knud83II}. Since $D$ is smooth, part (e) follows from the universal properties of normalization \cite[035Q]{stacks}, and (f) follows from (e).


Finally, for (g), the $G$-equivariance of $\alpha$ implies that the $G$-action descends to $D_\alpha$. Write $Z = \Spec R$ and $W = \Spec S$. Give $W$ the unique $G$-action making $\beta : Z\rightarrow W$ $G$-equivariant. Note that $E_0 = \bP^1\cup_{Z/G}(W/G)$. The behavior of $\Glue$ on morphisms is defined by the universal property of pushouts, so it remains to show that $\pi_\alpha : D_\alpha\rightarrow E_0$ is a precuspidal $G$-cover. By (f), this is obvious on the smooth locus, so let $U = \Spec A\subset D$ be a $G$-invariant open affine containing $\pi^{-1}(\{0,\infty\})$ (see Lemma \ref{lemma_tame_quotients}). Let $U_\alpha := \nu(U)$, then $U_\alpha = \Spec A\times_R S$, and $U_\alpha/G = \Spec (A\times_R S)^G$. Since taking $G$-invariants is left exact, this is equal to $\Spec A^G\times_{R^G} S^G$, which is precisely the corresponding open affine neighborhood of the node in $E_0$, so $\pi_\alpha$ induces an isomorphism $D_\alpha/G\cong E_0$. This shows that $\pi_\alpha$ is finite. Since the maps $D\rightarrow D/G$, $Z\rightarrow Z/G$, $W\rightarrow W/G$ are all flat, so is $\pi_\alpha : D_\alpha\rightarrow E_0$ \cite[0ECL]{stacks}. Thus $\pi_\alpha$ is a $G$-cover. Part (f) implies that $\pi_\alpha$ sends nodes to nodes and is \'{e}tale at smooth points not mapping to $O\in E_0$, so $\pi_\alpha$ is a precuspidal $G$-cover of $E_0$, as desired.
\end{proof}

\begin{thm}\label{thm_equivalence} Let $E$ be a nodal elliptic curve, and $\nu : \bP^1\rightarrow E$ a standard normalization. The categories $\cC_E^{pc},\cC_{\bP^1}^\succ$ are groupoids, and the functor
$$\Xi_\nu : \cC_E^{pc}\longrightarrow \cC_{\bP^1}^\succ$$
sending $p : C\rightarrow E$ to $(p' : C'\rightarrow\bP^1,\alpha_p)$ is an equivalence of categories. We note that if $\nu'$ is another standard normalization (equivalently, $\nu' = \nu\circ\iota$), then $\iota$ defines an isomorphism $\Xi_\nu\cong\Xi_{\nu'}$. Let $E_0$ be the nodal elliptic curve obtained by gluing 0 to $\infty$ in $\bP^1$. A quasi-inverse to $\Xi_\nu$ is given by composing the gluing functor of Lemma \ref{lemma_pushout}(g) with the isomorphism $\cC_{E_0}^{pc}\rightiso \cC_E^{pc}$ induced by any isomorphism $E_0\rightiso E$.
\end{thm}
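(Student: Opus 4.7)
The plan is to verify the three assertions in order: that both categories are groupoids, that $\Xi_\nu$ is essentially surjective and fully faithful, and that the gluing functor of Lemma~\ref{lemma_pushout}(g) (transported from $\cC_{E_0}^{pc}$ to $\cC_E^{pc}$ via any chosen isomorphism $E_0\rightiso E$) gives a quasi-inverse.

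First, for the groupoid statement, I would argue as follows. A morphism $f : C\to C'$ in $\cC_E^{pc}$ is a $G$-equivariant map over $E$ between two $G$-covers of $E$. Over the smooth locus $E^\sm$, the restrictions of $C$ and $C'$ are $G$-torsors of the same degree, and any $G$-equivariant map between $G$-torsors is automatically an isomorphism; in particular $f$ is birational. Since $C$ and $C'$ are both finite flat $G$-covers whose quotients are $E$, $f$ is then a finite birational map of finite flat schemes over $E$ of the same degree; using the local description of admissible (hence precuspidal) covers at the node, $f$ is a local isomorphism everywhere and hence an isomorphism. The same argument, applied on the three-punctured sphere and then extended over $\{0,1,\infty\}$ by finiteness, shows that $\cC_{\bP^1}^\succ$ is also a groupoid (the compatibility with $\alpha$ is automatic from $G$-equivariance on fibers).

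Next, I would construct the quasi-inverse. Fix any isomorphism $E_0\cong E$ carrying the node and the chosen origin to each other; this identifies $\cC_{E_0}^{pc}$ with $\cC_E^{pc}$, and is essentially unique because the two automorphisms of $E_0$ fixing the node correspond to the two standard normalizations, switched by $\iota$. Composing with the gluing functor $\Glue$ of Lemma~\ref{lemma_pushout}(g) gives a functor $\Psi_\nu : \cC_{\bP^1}^\succ\to\cC_E^{pc}$. I would then construct two natural isomorphisms:
\begin{itemize}
\item $\Xi_\nu\circ \Psi_\nu \cong \id_{\cC_{\bP^1}^\succ}$: given $(\pi : D\to \bP^1,\alpha)$, Lemma~\ref{lemma_pushout}(e,f) says that $\nu : D\to D_\alpha$ identifies $D$ with the normalization of $D_\alpha$ and is an isomorphism away from $\pi^{-1}(\{0,\infty\})$, so the canonical lift of $\Psi_\nu(\pi,\alpha) = (D_\alpha\to E)$ is exactly $(\pi,\alpha)$ up to a canonical isomorphism determined by the universal property of the pushout.
\item $\Psi_\nu\circ \Xi_\nu \cong \id_{\cC_E^{pc}}$: given $p : C\to E$, let $(p',\alpha_p)$ be its normalization with gluing data, and let $C_{\alpha_p}$ be the pushout. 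The universal property of the pushout yields a canonical map $C_{\alpha_p}\to C$ over $E$, and it is $G$-equivariant; I would show this is an isomorphism by checking it fiberwise (away from the node it is the restriction of $C'\rightiso C^\sm$; at the node, it is determined by the description of $\cO_{C_{\alpha_p}}$ as the equalizer of the two pullbacks to $\pi^{-1}(\{0,\infty\})$, which matches the local ring $\cO_{C,z}$ at any node $z$ since $C$ is a nodal curve and $C'\to C$ is its normalization).
\end{itemize}

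The main technical point — and in some sense the only nontrivial one — is the last isomorphism $C_{\alpha_p}\cong C$, i.e.\ the assertion that a precuspidal cover is recovered from its normalization together with the gluing bijection. The content here is entirely local at the nodes of $C$: since the nodes of $C$ map to the node of $E$ and $p : C\to E$ is finite flat with a given $G$-action, an \'etale-local calculation on a $G$-stable affine neighborhood of $\pi^{-1}(\{0,\infty\})\subset C'$ expresses $\cO_{C,z}$ as the ring of functions on the normalization equal at the two preimages of $z$, which is exactly the structure sheaf of the pushout by Lemma~\ref{lemma_pushout}(c). Finally, the compatibility $\Xi_\nu\cong \Xi_{\nu'}$ for the two standard normalizations $\nu,\nu' = \nu\circ\iota$ is immediate, since $\iota$ induces an automorphism of $\cC_{\bP^1}^\succ$ (swapping the roles of $0$ and $\infty$ in the gluing datum) intertwining $\Xi_\nu$ and $\Xi_{\nu'}$.
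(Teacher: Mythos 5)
Your overall plan is the same as the paper's: essential surjectivity and the quasi-inverse come from the gluing/pushout construction of Lemma~\ref{lemma_pushout}, and the natural isomorphisms come from the universal properties of pushout and normalization. That part is fine. The difference, and the place where your proposal has a real imprecision, is the groupoid argument.

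Two concrete issues there. First, you assert that the restrictions of $C$ and $C'$ to $E^\sm$ are $G$-torsors. This is false: the cover is ramified over the origin $O\in E^\sm$. What is true is that the restrictions to $E_\gen = E - \{O,\text{node}\}$ are $G$-torsors, and that is enough to get birationality. Second, and more seriously, you invoke ``the local description of admissible (hence precuspidal) covers at the node.'' The logical arrow runs the wrong way: admissible $G$-covers are precuspidal covers that are additionally \emph{balanced}, and the normalized \'etale-local coordinates of Definition~\ref{def_admissible}(5)--(6) (equivalently Proposition~\ref{prop_normalized_coordinates}(b)) are established precisely under the balance hypothesis. A general precuspidal cover need not be balanced, so you cannot quote that local form. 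Your argument as written therefore has a gap at the nodes. It can be patched without the local coordinates: a $G$-equivariant $E$-morphism $f:C\to C'$ is finite (both sides are finite over $E$) and is an isomorphism over $E_\gen$, hence its lift $f^\nu$ to normalizations is a finite birational $G$-equivariant morphism of smooth curves over $\bP^1$ and so is an isomorphism; since flatness of $\pi,\pi'$ forces the preimage of the node of $E$ to consist entirely of nodes, $f$ respects the gluing identifications, so $f$ is an isomorphism. The paper avoids this altogether by a cleaner categorical route: the forgetful functor $\cC_{\bP^1}^\succ\to\FEt_{\bP^*}^G$ is faithful and conservative and the target is a groupoid, so $\cC_{\bP^1}^\succ$ is a groupoid, and then $\cC_E^{pc}$ is a groupoid via the (already established) equivalence $\Xi_\nu$. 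You may want to adopt that argument, or at least replace the appeal to admissible-cover local coordinates with the normalization argument above.
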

\begin{proof} By the universal properties of normalization \cite[035Q]{stacks}, this normalization procedure defines a functor. Now consider a pair of objects $(q_1 : D_1\rightarrow\bP^1,\alpha_1), (q_2 : D_2\rightarrow\bP^1,\alpha_2)$ in $\cC_{\bP^1}^\succ$. Suppose we have a morphism $s : (q_1,\alpha_1)\rightarrow(q_2,\alpha_2)$ in $\cC_{\bP^1}^\succ$. Since it respects $\alpha_1,\alpha_2$, by the universal property of pushouts it induces a unique map $s_\alpha : (D_1)_{\alpha_1}\rightarrow (D_2)_{\alpha_2}$ in $\Sch/\bP^1$ fitting into a commutative diagram
\begin{equation}\label{eq_norm1}
\begin{tikzcd}
D_1\ar[r,"s"]\ar[d,"\nu_1"] & D_2\ar[d,"\nu_2"] \\
(D_1)_{\alpha_1}\ar[r,"s_\alpha"] & (D_2)_{\alpha_2}
\end{tikzcd}
\end{equation}
of schemes over $\bP^1$. By Lemma \ref{lemma_pushout}(e), the canonical maps $\nu_1,\nu_2$ are also normalization maps. The universal property of pushouts also gives unique maps $p_i : (D_i)_{\alpha_i}\rightarrow E_0$ making the diagrams
\[\begin{tikzcd}
D_i\ar[r,"\nu_i"]\ar[d,"q_i"] & (D_i)_{\alpha_i}\ar[d,"p_i"] \\
\bP^1\ar[r] & E_0
\end{tikzcd}\]
commute (for $i = 1,2$), and forming a commutative prism with (\ref{eq_norm1}). By Lemma \ref{lemma_pushout}(e,g), each $p_i$ is a precuspidal $G$-cover and $q_i : D_i\rightarrow\bP^1$ is a normalization of $p_i$. Pulling back $p_i$ via some isomorphism $E\rightiso E_0$, we find that $q_i$ is the normalization of a precuspidal $G$-cover of $E$, so $\Xi_\nu$ is essentially surjective.


Since $\nu_1,\nu_2$ are birational morphisms between separated schemes, $s$ determines $s_\alpha$, so $\Xi_\nu$ is faithful. Since $s$ can be recovered from $s_\alpha$ as the induced map on normalizations $\Xi_\nu$ is fully faithful, so $\Xi_\nu$ is an equivalence.


Finally, to see that they are both groupoids, let $\FEt_{\bP^*}^G$ denote the category of finite \'{e}tale $G$-covers of $\bP^*$. Consider the ``forgetful functor'' $\cC_{\bP^1}^\succ\rightarrow\FEt_{\bP^*}^G$ sending $(q,\alpha)$ in $\cC_{\bP^1}^\succ$ to the restriction of $q$ to $\bP^*$. Since all schemes considered are separated, this functor is faithful. The category $\FEt_{\bP^*}^G$ is a groupoid (any $G$-equivariant map between \'{e}tale $G$-covers is an isomorphism).  Since the inverse of a map which preserves the identifications of fibers (in $\cC_{\bP^1}^\succ$) must also preserve identifications of fibers, this functor is also conservative (i.e. ``isomorphism reflecting''), and hence we find that $\cC_{\bP^1}^\succ$ is a groupoid, and hence so is $\cC_E^{pc}$.
\end{proof}

\begin{defn} We say that an object $(C'\rightarrow\bP^1,\alpha)$ of $\cC_{\bP^1}^\succ$ is \emph{connected} if the precuspidal $G$-curve $C'_\alpha$ is connected.
\end{defn}

\subsection{Balanced objects of $\cC_{\bP^1}^\succ$}\label{ss_balanced_objects_of_normalization}
Here we describe the property of being balanced in terms of the internal logic of $\cC_{\bP^1}^\succ$.


Let $(q : D\rightarrow\bP^1,\alpha)$ be an object of $\cC_{\bP^1}^\succ$. Let $x_0\in D_{t_0}$ be a point, and $\delta : t_0\leadsto t_\infty$ a path. Then $\alpha([x_0]) = [\delta x_0\cdot h]$ for some $h\in G$, where $h$ is unique up to the coset
$$H_{\delta,x_0} := G_{[\delta x_0]} h \in G_{[\delta x_0]}\bs G$$
Thus the coset $H_{\delta,x_0}\subset G$ is a well-defined function of the quadruple $(q,\alpha,\delta,x_0)$.


\begin{lemma}\label{lemma_hprime} Let $\delta'$ be another path from $t_0\leadsto t_\infty$. Then for any $h\in H_{\delta,x_0}$, we have
$$[\delta' x_0\cdot \varphi_{x_0}(\delta'^{-1}\delta)h] = [\delta x_0\cdot h]\qquad\text{or equivalently}\qquad \varphi_{x_0}(\delta'^{-1}\delta)h\in H_{\delta',x_0}$$
\end{lemma}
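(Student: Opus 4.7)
The plan is to unravel the definition of $\varphi_{x_0}(\delta'^{-1}\delta)$ and then trivially pass to $\langle \gamma_\infty\rangle$-orbits. The key observation is that $\delta'^{-1}\delta$ is a loop at $t_0$ (first follow $\delta : t_0 \leadsto t_\infty$, then $\delta'^{-1} : t_\infty \leadsto t_0$), so it lies in $\pi_1(\bP^*, t_0)$, and by definition of the monodromy representation we have
\[
(\delta'^{-1}\delta)\cdot x_0 = x_0 \cdot \varphi_{x_0}(\delta'^{-1}\delta).
\]

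Applying the bijection $\delta' : D_{t_0}\rightiso D_{t_\infty}$ (i.e., composing with $\delta'$ on the left, which is $G$-equivariant), the left-hand side becomes $\delta\cdot x_0$, yielding the key identity
\[
\delta\cdot x_0 \;=\; \delta'\cdot x_0 \cdot \varphi_{x_0}(\delta'^{-1}\delta)
\]
in $D_{t_\infty}$. Right-multiplying by $h \in G$ and passing to $\langle\gamma_\infty\rangle$-orbits (using the fact that $[\cdot]$ is $G$-equivariant on $D_{t_\infty}$) gives the first claimed equality
\[
[\delta x_0 \cdot h] \;=\; [\delta' x_0 \cdot \varphi_{x_0}(\delta'^{-1}\delta) h].
\]

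For the equivalence with the statement that $\varphi_{x_0}(\delta'^{-1}\delta)h \in H_{\delta',x_0}$, I would simply note that the left-hand side equals $\alpha([x_0])$ by the hypothesis $h \in H_{\delta,x_0}$, so the displayed identity is precisely the defining condition for $\varphi_{x_0}(\delta'^{-1}\delta)h$ to represent the coset $H_{\delta',x_0} \in G_{[\delta' x_0]}\backslash G$. There is no real obstacle here; the lemma is essentially a reformulation of the change-of-basepoint formula (Proposition \ref{prop_obvious}(c,d)) for the identifications of fibers, with the only subtlety being bookkeeping around the convention that loops act on the left while $G$ acts on the right.
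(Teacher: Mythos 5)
Your proof is correct and is essentially the paper's argument spelled out: the paper simply says the equality follows from the definition of $\varphi_{x_0}$ and the equivalence from the definitions of $H_{\delta,x_0}$, $H_{\delta',x_0}$, and your unwinding (the loop $\delta'^{-1}\delta$ acting as right multiplication by $\varphi_{x_0}(\delta'^{-1}\delta)$, transported by the $G$-equivariant bijection $\delta'$) is exactly that computation. The only cosmetic remark is that after right-multiplying by $h$ you already have equality of actual points of $D_{t_\infty}$, so no equivariance of $[\cdot]$ is needed at that step.
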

\begin{proof} The first equality follows from the definition of $\varphi_{x_0}$, and the equivalence follows from the definition of $H_{\delta,x_0}$ and $H_{\delta',x_0}$.
\end{proof}

The following proposition describes the objects of $\cC_{\bP^1}^\succ$ which correspond to balanced objects of $\cC_E^{pc}$.

\begin{prop}\label{prop_balanced_condition} For an object $(q : D\rightarrow\bP^1,\alpha)$ in $\cC_{\bP^1}^\succ$, the following are equivalent
\begin{itemize}
\item[(a)] For some (equivalently any) choices of $x_0\in D_{t_0}$ and $x_\infty\in D_{t_\infty}$ satisfying $[x_\infty] = \alpha([x_0])$, we have
$$\varphi_{x_0}(\gamma_0)^{-1} = \varphi_{x_\infty}(\gamma_\infty)$$
\item[(b)] For some (equivalently any) choices of $x_0\in D_{t_0}$, $\delta : t_0\leadsto t_\infty$, and $h\in H_{\delta,x_0}$, we have:
\begin{equation}\label{eq_condition}
\varphi_{x_0}(\gamma_0)^{-1} = h^{-1}\varphi_{x_0}(\gamma_\infty^\delta)h
\end{equation}
\end{itemize}
Moreover, if $p : C\rightarrow E$ is a precuspidal $G$-cover, then $p$ is balanced if and only if $\Xi(p) = (p',\alpha_p)\in\cC_{\bP^1}^\succ$ satisfies either of the equivalent conditions (a) or (b).
\end{prop}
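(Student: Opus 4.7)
\textbf{Proof plan for Proposition \ref{prop_balanced_condition}.}

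The plan is to verify the three claims in order: (a)$\Leftrightarrow$(b), the ``some equivalently any'' assertions in both statements, and the final sentence relating the conditions to balancedness. The equivalence (a)$\Leftrightarrow$(b) will be a direct calculation using Proposition \ref{prop_obvious}. Specifically, given $x_0\in D_{t_0}$, a path $\delta:t_0\leadsto t_\infty$, and $h\in H_{\delta,x_0}$, one may choose $x_\infty := \delta x_0\cdot h\in D_{t_\infty}$; by construction $[x_\infty]=\alpha([x_0])$. Then Proposition \ref{prop_obvious}(e) gives $\varphi_{x_\infty}(\gamma_\infty)=h^{-1}\varphi_{\delta x_0}(\gamma_\infty)h$, and Proposition \ref{prop_obvious}(c) gives $\varphi_{\delta x_0}(\gamma_\infty)=\varphi_{x_0}(\gamma_\infty^\delta)$, so (a) at $(x_0,x_\infty)$ becomes exactly \eqref{eq_condition}. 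This already gives the (``some''$\Leftrightarrow$``some'') equivalence; passing to arbitrary choices is essentially formal.

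For the ``some $\Leftrightarrow$ any'' clauses, I will show that each condition is invariant under the admissible replacements of data. Replacing $x_0$ by $\gamma_0^k x_0$ leaves $\varphi_{x_0}(\gamma_0)$ unchanged (the $G$- and $\pi_1$-actions commute), and similarly for $x_\infty$; replacing $(x_0,x_\infty)$ by $(x_0 g,x_\infty g)$ conjugates both sides of the equation in (a) by $g$, using Proposition \ref{prop_obvious}(e). Together with the $G$-equivariance of $\alpha$, this shows that (a) is independent of the representatives chosen in a given pair of cosets related by $\alpha$. For (b), independence of the choice of $\delta$ follows from Lemma \ref{lemma_hprime}: if $\delta'$ is another path and $h'=\varphi_{x_0}(\delta'^{-1}\delta)h\in H_{\delta',x_0}$, then $\gamma_\infty^{\delta'}=(\delta'^{-1}\delta)\gamma_\infty^\delta(\delta'^{-1}\delta)^{-1}$ in $\pi_1(\bP^*,t_0)$, and applying $\varphi_{x_0}$ together with conjugation by $h'$ reproduces \eqref{eq_condition}.

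The heart of the proof is the final assertion identifying condition (a) with balancedness. Given $p:C\to E$ precuspidal and its normalization $(p',\alpha_p)=\Xi(p)$, a node $z$ of $C$ corresponds to a pair $(x_0',x_\infty')\in C'_0\times C'_\infty$ with $\alpha_p([x_0'])=[x_\infty']$. The $G$-action on $C'$ preserves the fibers $C'_0$ and $C'_\infty$, so no element of $G$ can swap the two branches at $z$; hence $G_z=G_{[x_0']}=G_{[x_\infty']}$, and the normalization map identifies the cotangent space $T^*_z C$ with the canonical direct sum $T^*_{[x_0']}C'\oplus T^*_{[x_\infty']}C'$. The two characters of $G_z$ on these summands are exactly the local representations $\chi_{[x_0']}$ and $\chi_{[x_\infty']}$ of \eqref{eq_local_representation}. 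By Remark \ref{remark_covers}\ref{part_balanced2}, balancedness at $z$ is exactly the statement $\chi_{[x_0']}\cdot\chi_{[x_\infty']}=1$ on $G_z$.

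To conclude, I will use Proposition \ref{prop_obvious}(a),(b): $\chi_{[x_0']}$ sends the generator $a:=\varphi_{x_0'}(\gamma_0)$ of $G_{[x_0']}$ to $\zeta_n$, and $\chi_{[x_\infty']}$ sends $b:=\varphi_{x_\infty'}(\gamma_\infty)$ to $\zeta_n$, where $n=|G_z|$. Since $\chi_{[x_\infty']}$ is faithful on the cyclic group $G_z$, the identity $\chi_{[x_\infty']}(a)=\chi_{[x_0']}(a)^{-1}=\zeta_n^{-1}=\chi_{[x_\infty']}(b^{-1})$ forces $a=b^{-1}$, which is precisely condition (a) for the pair $(x_0',x_\infty')$. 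The converse is immediate by reversing this argument. The only part I expect to require care is verifying that the canonical decomposition $T^*_zC\cong T^*_{[x_0']}C'\oplus T^*_{[x_\infty']}C'$ induced by $\nu_C$ intertwines the $G_z$-actions with the local representations at $[x_0']$ and $[x_\infty']$; this is an \'{e}tale-local computation on a $G$-invariant affine neighborhood of $z$ of the type already used in Proposition \ref{prop_ac2smGc} (see Proposition \ref{prop_normalized_coordinates}), applied after the identification of branches given by the explicit description of the pushout in Lemma \ref{lemma_pushout}(c).
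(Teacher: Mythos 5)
Your proof is correct and takes essentially the same route as the paper's: the same computation via Proposition \ref{prop_obvious}(c) and (e) with $x_\infty := \delta x_0 h$ to get (a)$\Leftrightarrow$(b), the same use of Lemma \ref{lemma_hprime} for independence of $\delta$, and the same identification of balancedness through the local characters $\chi_{[x_0']},\chi_{[x_\infty']}$ acting by $\zeta_n$ and $\zeta_n^{-1}$ on the branches of the node. The only difference is organizational — you prove (a)$\Leftrightarrow$(b) first and leave the identification with balancedness for last, and you spell out the automatic fact that $G_z$ preserves the branches (which the paper leaves implicit) — but the substance is identical.
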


\begin{defn} We say that an object $(q,\alpha)$ of $\cC_{\bP^1}^\succ$ is balanced if the equivalent conditions (a), (b) of Proposition \ref{prop_balanced_condition} are satisfied.
\end{defn}

\begin{proof}[Proof of Proposition \ref{prop_balanced_condition}] Fix a standard normalization $\nu : \bP^1\rightarrow E$. Since $\Xi_\nu : \cC_E^{pc}\rightarrow\cC_{\bP^1}^\succ$ is an equivalence, we may assume that $q : D\rightarrow\bP^1$ fits into a commutative diagram
\[\begin{tikzcd}
	D\ar[r,"\nu_C"]\ar[d,"q"] & C\ar[d,"p"] \\
	\bP^1\ar[r,"\nu"] & E
\end{tikzcd}\]
such that $\nu_C,\nu$ are normalization maps, $q$ is the map of normalizations induced by $p$, and $\alpha$ is the $G$-equivariant bijection $D_0\rightiso D_\infty$ induced by $\nu_C$. Equivalently, $(q,\alpha) \cong \Xi(p)$. We wish to show that $p$ is balanced if and only if $(q,\alpha)$ satisfies (a) if and only if it satisfies (b).


By Proposition \ref{prop_obvious}(a), $\varphi_{x_0}(\gamma_0)$ generates $G_{[x_0]}$ and is the unique element of $G_{[x_0]}$ inducing $\zeta_n$ on the cotangent space at $[x_0]$. Let $x_\infty\in D_{t_\infty}$ satisfying $[x_\infty] = \alpha([x_0])$. Again by Proposition \ref{prop_obvious}(a), $\varphi_{x_\infty}(\gamma_\infty)$ is the unique element of $G_{[x_\infty]} = G_{\alpha([x_0])}$ inducing $\zeta_n$ on the cotangent space at $[x_\infty]$. Since $G_{[x_0]} = G_{\alpha_p([x_0])} = G_{[x_\infty]}$ (due to $G$-equivariance of $\alpha$), we find that $p$ is balanced at $\nu_C([x_0]) = \nu_C([x_\infty])$ if and only if

\begin{equation}\label{eq_balanced_at_x_0}
\varphi_{x_0}(\gamma_0)^{-1} = \varphi_{x_\infty}(\gamma_\infty)	.
\end{equation}
In this case, using Proposition \ref{prop_obvious}(e), for any $g\in G$ we have
$$\varphi_{x_0g}(\gamma_0)^{-1} = g^{-1}\varphi_{x_0}(\gamma_0)^{-1}g = g^{-1}\varphi_{x_\infty}(\gamma_\infty)g = \varphi_{x_\infty g}(\gamma_\infty).$$
Since $\alpha([x_0g]) = \alpha([x_0])g = [x_\infty]g = [x_\infty g]$, we have shown that $p$ is balanced if and only if it is balanced at the image of $x_0$ if and only if \eqref{eq_balanced_at_x_0} is satisfied. Thus, the choices of $x_0,x_\infty$ are irrelevant, so in part (a), ``some'' is equivalent to ``any'', and $p$ being balanced is equivalent to (a).


For part (b), we note that having fixed $x_0$, $x_\infty$ satisfies $[x_\infty] = \alpha([x_0])$ if and only if $x_\infty = \delta x_0h$ for some $\delta : t_0\leadsto t_\infty$ and $h\in H_{\delta,x_0}$. Let $x_\infty$ be given in this way. Then using Proposition \ref{prop_obvious}, we find that
$$h^{-1}\varphi_{x_0}(\gamma_\infty^\delta)h = h^{-1}\varphi_{\delta x_0}(\gamma_\infty)h = \varphi_{\delta x_0h}(\gamma_\infty) = \varphi_{x_\infty}(\gamma_\infty).$$
This implies that (b) is equivalent to (a).
\end{proof}

\subsection{Galois correspondence for precuspidal $G$-covers}

Recall that $t_0$ denotes a tangential base point at $0\in\bP^1$, and $t_\infty := \iota(t_0)$ is the corresponding tangential base point at $\infty\in\bP^1$. In the remainder of \S\ref{section_cusps}, let $\Pi := \pi_1(\bP^*,t_0)$, and let $\delta$ be a \emph{good path} $t_0\leadsto t_\infty$ (c.f. Definition \ref{def_good_path}). Then $\Pi$ is a free profinite group of rank 2 topologically generated by $\gamma_0,\gamma_\infty^\delta := \delta^{-1}\gamma\delta$.

\begin{defn} For a good path $\delta : t_0\leadsto t_\infty$, let $\Sets^{(\Pi,G)_\delta,\succ}$ denote the category of pairs $(F,\alpha)$ where $F$ is a finite set equipped with a left $\Pi$-action which commutes with a free and transitive right $G$-action, and
$$\alpha : \langle\gamma_0\rangle\bs F\rightiso \langle\gamma_\infty^\delta\rangle\bs F$$
is a $G$-equivariant bijection (which we view as a ``combinatorial gluing datum''). We will call such a pair $(F,\alpha)$ a \emph{precuspidal $G$-datum} (relative to $\delta$). Morphisms are given by $(\Pi,G)$-equivariant maps respecting $\alpha$'s. Let $F_{\delta}^\succ$ be the functor
$$F_{\delta}^\succ : \cC_{\bP^1}^\succ\longrightarrow\Sets^{(\Pi,G)_\delta,\succ}$$
which sends $(q : D\rightarrow\bP^1,\alpha)$ to the pair $(D_{t_0},F_{\delta}^\succ(\alpha))$, where $F_{\delta}^\succ(\alpha)$ is the bijection
$$F_{\delta}^\succ(\alpha) \;:\; \langle\gamma_0\rangle\bs D_{t_0}\stackrel{\xi_0}{\longrightarrow} D_0\stackrel{\alpha}{\longrightarrow} D_\infty\stackrel{\xi_\infty^{-1}}{\longrightarrow}\langle\gamma_\infty\rangle\bs D_{t_\infty}\stackrel{\delta^{-1}}{\lra}\langle \gamma_\infty^\delta\rangle\bs D_{t_0}$$
where $\xi_0,\xi_\infty$ are bijections defined as in Proposition \ref{prop_tbp}(b).

\end{defn}

\begin{prop}\label{prop_Galois_for_pairs} The functor $F_{\delta}^\succ : \cC_{\bP^1}^\succ\longrightarrow\Sets^{(\Pi,G)_\delta,\succ}$ is an equivalence of categories.
\end{prop}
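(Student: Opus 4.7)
The plan is to reduce the statement to the classical Galois correspondence for $\FEt_{\bP^*}$ together with the canonical specialization data provided by Proposition \ref{prop_tbp}(b), and then check that gluing data matches functorially.  First, I would invoke the standard Galois correspondence: the fiber functor $F_{t_0} : \FEt_{\bP^*} \rightarrow \Sets^\Pi$ is an equivalence of categories, where $\Sets^\Pi$ denotes finite sets with continuous left $\Pi$-action.  Restricting to $G$-equivariant objects, this gives an equivalence between $G$-covers $D^\circ \rightarrow \bP^*$ (i.e., étale $G$-torsors) and finite sets equipped with a free, transitive right $G$-action commuting with a left $\Pi$-action.  Since $\bP^1$ is smooth and proper, such a $D^\circ$ extends uniquely to a (possibly disconnected) smooth $G$-cover $q : D \rightarrow \bP^1$ étale over $\bP^*$ by taking normalization of $\bP^1$ in the function field of $D^\circ$; conversely, any such $q$ restricts to an étale $G$-torsor over $\bP^*$.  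This yields an equivalence between the category of smooth $G$-covers $q : D \rightarrow \bP^1$ étale over $\bP^*$ and $\Sets^{(\Pi,G)}$ (finite sets with commuting $\Pi,G$-actions, $G$ free-transitive), given by $q \mapsto D_{t_0}$.

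Next I would identify the ramified fibers combinatorially.  By Proposition \ref{prop_tbp}(b), there are canonical, functorial bijections $\xi_0 : \langle \gamma_0 \rangle \bs D_{t_0} \rightiso D_0$ and $\xi_\infty : \langle \gamma_\infty \rangle \bs D_{t_\infty} \rightiso D_\infty$, both $G$-equivariant.  The good path $\delta : t_0 \leadsto t_\infty$ induces a $G$-equivariant bijection $\delta : D_{t_0} \rightiso D_{t_\infty}$ intertwining the $\Pi$-action on the source with the $\Pi$-action on the target twisted by conjugation by $\delta$; in particular it induces a $G$-equivariant bijection $\langle \gamma_\infty^\delta \rangle \bs D_{t_0} \rightiso \langle \gamma_\infty \rangle \bs D_{t_\infty}$.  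Composing, we obtain a canonical bijection, functorial in $(q,\alpha)$, between the set of $G$-equivariant bijections $\alpha : D_0 \rightiso D_\infty$ and the set of $G$-equivariant bijections $\langle \gamma_0 \rangle \bs D_{t_0} \rightiso \langle \gamma_\infty^\delta \rangle \bs D_{t_0}$.  This bijection is exactly $F_\delta^\succ$ on the level of the gluing data, and it shows that $F_\delta^\succ$ is essentially surjective: given $(F,\alpha) \in \Sets^{(\Pi,G)_\delta,\succ}$, Galois theory produces a unique $q : D \rightarrow \bP^1$ realizing $F = D_{t_0}$, and $\alpha$ pulls back to a $G$-equivariant bijection $D_0 \rightiso D_\infty$ under the canonical identifications above.

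For full faithfulness, any morphism $f : (q_1,\alpha_1) \rightarrow (q_2,\alpha_2)$ in $\cC_{\bP^1}^\succ$ is in particular a $G$-equivariant morphism of $G$-covers over $\bP^1$, hence is determined by its restriction to $\bP^*$, and by the classical Galois correspondence this corresponds bijectively to a $(\Pi,G)$-equivariant map $F_{t_0}(f) : D_{1,t_0} \rightarrow D_{2,t_0}$.  The naturality of $\xi_0, \xi_\infty$ (as asserted in Proposition \ref{prop_tbp}(b)) and of the bijection induced by $\delta$ imply that $f$ respects $\alpha_1,\alpha_2$ if and only if $F_{t_0}(f)$ respects the combinatorial gluing bijections $F_\delta^\succ(\alpha_1), F_\delta^\succ(\alpha_2)$.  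Hence $F_\delta^\succ$ is fully faithful, completing the proof that it is an equivalence.

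The only real content is the input from the Galois theory of $\FEt_{\bP^*}$ and the canonical identifications in Proposition \ref{prop_tbp}(b); once those are in hand, the argument is a bookkeeping check.  The main thing to be careful with is the twisting conventions around $\delta$, namely that $\Pi$ acts on $D_{t_\infty}$ via conjugation by $\delta$, so that $\langle \gamma_\infty^\delta \rangle$ (rather than $\langle \gamma_\infty \rangle$) is the correct stabilizer on the $t_0$-side.  This is why the definition of $\Sets^{(\Pi,G)_\delta,\succ}$ uses $\gamma_\infty^\delta$, and with this convention the above check goes through cleanly.
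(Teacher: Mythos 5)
Your proposal is correct and follows essentially the same route as the paper: both reduce to the classical Galois correspondence for $\FEt_{\bP^*}$ (giving the $(\Pi,G)$-set from the fiber at $t_0$ and recovering the cover by extending over $\bP^1$), and both use the canonical, functorial identifications of Proposition \ref{prop_tbp}(b) together with the $\delta$-twist to transport the gluing datum, checking full faithfulness by the same naturality argument and essential surjectivity by (implicitly) constructing the same quasi-inverse. No substantive differences.
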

\begin{proof} Given objects $(D,\alpha),(D',\alpha')\in\cC_{\bP^1}^\succ$, the usual Galois correspondence implies giving a $G$-equivariant morphisms $f : D\rightarrow D'$ over $\bP^1$ is the same as giving a $(\Pi,G)$-equivariant map $D_{t_0}\rightarrow D'_{t_0}$. We must show that $f$ respects $\alpha$ if and only if the induced map $f_* : \langle\gamma_0\rangle\bs D_{t_0}\rightarrow\langle\gamma_0\rangle\bs D'_{t_0}$ respects $F_\delta^\succ(\alpha), F_\delta^\succ(\alpha')$. Consider the diagram
\[\begin{tikzcd}
	\langle\gamma_0\rangle\bs D_{t_0}\ar[d,"f_*"]\ar[r,"\xi_0"] & D_0\ar[d,"f_*"]\ar[r,"\alpha"] & D_\infty\ar[d,"f_*"]\ar[r,"\xi_\infty^{-1}"] & \langle\gamma_\infty\rangle\bs D_{t_\infty}\ar[d,"f_*"]\ar[r,"\delta^{-1}"] & \langle\gamma_\infty^\delta\rangle\bs D_{t_0}\ar[d,"f_*"] \\
	\langle\gamma_0\rangle\bs D_{t_0}'\ar[r,"\xi_0"] & D_0'\ar[r,"\alpha"] & D_\infty'\ar[r,"\xi_\infty^{-1}"] & \langle\gamma_\infty\rangle\bs D_{t_\infty}'\ar[r,"\delta^{-1}"] & \langle\gamma_\infty^\delta\rangle\bs D_{t_0}'
\end{tikzcd}\]
The composition of the top row is just $F_\delta^\succ(\alpha)$, and the composition of the bottom row is $F_\delta^\succ(\alpha')$. From left to right, the first, third, and fourth squares commute because $\xi_0,\xi_\infty^{-1},\delta^{-1}$ are all functorial in $D$. Thus the diagram commutes if and only if the second square commutes, which happens if and only if $f$ respects $\alpha$. Thus $F_\delta^\succ$ is fully faithful.



To show that $F_{\delta}^\succ$ is essentially surjective, we will define a quasi-inverse functor, denoted $H : \Sets^{(\Pi,G)_\delta,\succ}\rightarrow\cC_{\bP^1}^\succ$. Given a precuspidal $G$-datum $(F,\alpha)$, by the usual Galois correspondence we obtain a $G$-cover $\pi : X\rightarrow\bP^1$, \'{e}tale over $\bP^*$, equipped with a $(\Pi,G)$-equivariant bijection $\varphi : F\rightiso X_{t_0}$. Moreover, $\delta$ defines a $G$-equivariant bijection $\delta : X_{t_0}\rightiso X_{t_\infty}$, which induces a $G$-equivariant bijection $\langle\gamma_\infty^\delta\rangle\bs X_{t_0}\rightiso \langle\gamma_\infty\rangle\bs X_{t_\infty}$. Using $\varphi : F\rightiso X_{t_0}$ and $\delta\circ \varphi : F\rightiso X_{t_\infty}$, we obtain $G$-equivariant bijections
$$\begin{array}{rcccl}
\langle\gamma_0\rangle\bs F & \stackrel{\varphi}{\longrightarrow} & \langle\gamma_0\rangle\bs X_{t_0} & \stackrel{\xi_0}{\longrightarrow} & X_0 \\
\langle\gamma_\infty^\delta\rangle\bs F & \stackrel{\delta\circ\varphi}{\longrightarrow} & \langle\gamma_\infty\rangle\bs X_{t_\infty} & \stackrel{\xi_\infty}{\longrightarrow} & X_\infty
\end{array}$$
Connecting these bijections via $\alpha$, we obtain a $G$-equivariant bijection $\alpha_\pi : X_0\rightiso X_\infty$, and we define $H(F,\alpha) = (\pi : X\rightarrow\bP^1,\alpha_\pi)$. It's straightforward to check that $F_{\delta}^\succ\circ H\cong \id_{\Sets^{(\Pi,G)_\delta,\succ}}$, so $F_{\delta}^\succ$ is essentially surjective.
\end{proof}

Informally, an object $(F,\alpha)$ represents a ``three-point-cover with gluing data $X\rightarrow\bP^1$'' whose fibers at $t_0$ and $t_\infty$ are both represented by $F$, and where the bijection $\delta : X_{t_0}\rightiso X_{t_\infty}$ between the fibers becomes ``normalized'' to be the identity $\id_F : F\rightiso F$. The orbit spaces $\langle\gamma_0\rangle \bs F$ and $\langle\gamma_\infty^\delta\rangle \bs F$ correspond to the fibers $X_0,X_\infty$, and $\alpha$ corresponds to the gluing data $X_0\rightiso X_\infty$.

\subsection{Combinatorial balance, combinatorial connectedness}\label{ss_combinatorial_balance_connectedness}

For a good path $\delta : t_0\leadsto t_\infty$, we have defined equivalences of categories
$$\cC_E^{pc}\stackrel{\Xi}{\longrightarrow}\cC_{\bP^1}^\succ\stackrel{F_{\delta}^\succ}{\longrightarrow}\Sets^{(\Pi,G)_\delta,\succ}$$

Next we will define when a precuspidal $G$-datum corresponds to a balanced object of $\cC_{\bP^1}^\succ$. Given a precuspidal $G$-datum $(F,\alpha)$ (relative to a good path $\delta : t_0\leadsto t_\infty$), let $[\cdot]_0,[\cdot]_\infty$ be the ($G$-equivariant) projections
\begin{eqnarray*}
{[\cdot]}_0 : F & \longrightarrow & \langle\gamma_0\rangle\bs F \\
{[\cdot]}_\infty : F & \longrightarrow & \langle\gamma_\infty^\delta\rangle\bs F
\end{eqnarray*}
For $x\in F$, we have a homomorphism
$$\varphi_x : \Pi\longrightarrow G\qquad\text{defined by}\quad\gamma\cdot x = x\cdot \varphi_x(\gamma)\quad\text{for all $\gamma\in\Pi$}$$ 
Note that we have $G_{[x]_0} = \varphi_x(\langle\gamma_0\rangle)$ and $G_{[x]_\infty} = \varphi_x(\langle\gamma_\infty^\delta\rangle)$. Finally for $x\in F$ let
\begin{equation}\label{eq_H_delta_x}
H_{\delta,x} := \{h\in G \;|\; \alpha([x]_0) = [x]_\infty\cdot h\}\in G_{[x]_\infty}\bs G.
\end{equation}

\begin{defn} We say a precuspidal $G$-datum $(F,\alpha)\in \Sets^{(\Pi,G)_\delta,\succ}$ is \emph{balanced} if for some (equivalently any) choices of $x\in F$ and $h\in H_{\delta,x}$, we have
$$\varphi_x(\gamma_0)^{-1} = h^{-1}\varphi_x(\gamma_\infty^\delta)h$$
\end{defn}
It follows from Proposition \ref{prop_balanced_condition} that this definition makes sense and agrees with the notion of balanced objects in $\cC_{\bP^1}^\succ$ relative to the equivalence $F_{\delta}^\succ$.


Next we want to express the notion of connectedness for precuspidal $G$-covers in terms of precuspidal $G$-data. For this it is useful to introduce the graph of components of a prestable curve.


\begin{defn} Following \cite{Bass93}, a graph $\Gamma$ consists of the data $(\cV_\Gamma,\cE_\Gamma,\sigma,\tau)$ where
\begin{itemize}
\item $\cV_\Gamma$ is a set, called the set of \emph{vertices} of $\Gamma$,
\item $\cE_\Gamma$ is a set called the set of (directed) \emph{edges} (or arrows) of $\Gamma$,
\item $\sigma : \cE_\Gamma\rightarrow\cV_\Gamma$ is a function, called the ``incidence map'' or the ``source map''.
\item $\tau : \cE_\Gamma\rightarrow\cE_\Gamma$ is a fixed-point free involution. For $e\in\cE_\Gamma$ we also write $\ol{e} := \tau(e)$.
\end{itemize}
If $e\in\cE_\Gamma$, then we say that $e$ is an arrow from $\sigma(e)$ to $\sigma(\ol{e})$, and we write $\sigma(e)\stackrel{e}{\lra}\sigma(\ol{e})$.


A morphism of graphs is given by a pair of functions between the vertex and edge sets which commute with the $\sigma,\tau$ maps. Let $\Graphs$ denote the category of graphs.
\end{defn}

Let $\pi : C\rightarrow E$ be a precuspidal $G$-torsor. Let $(\pi' : C'\rightarrow\bP^1, \alpha_\pi : C_0'\rightiso C_\infty') = \Xi(\pi)$ be the associated object of $\cC_{\bP^1}^\succ$ obtained by normalization. Define a graph $\Gamma_\pi$ by
\begin{itemize}
\item $\cV_{\Gamma_\pi} = \pi_0(C')$ is the set of components of $C'$.
\item $\cE_{\Gamma_\pi} = C'_0\sqcup C'_\infty$.
\item For $e\in C'_0\sqcup C'_\infty$, $\sigma(e)$ is the component on which $e$ lies.
\item For $e\in C'_0$, $\ol{e} = \alpha_\pi(e)$, and if $e\in C'_\infty$ then $\ol{e} := \alpha_\pi^{-1}(e)$.
\end{itemize}
The action of $G$ on $C$ induces an action on $\Gamma_\pi$. It is immediate that
\begin{prop} $C$ is connected if and only if $\Gamma_\pi$ is connected.
\end{prop}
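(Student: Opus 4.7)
The plan is to read off the connectedness of $C$ directly from the pushout construction in Lemma \ref{lemma_pushout}. By Theorem \ref{thm_equivalence}, up to replacing $E$ with the standard nodal elliptic curve $E_0$, we may identify $C$ with the pushout $C'_{\alpha_\pi}$ obtained from $C'$ by gluing along $\alpha_\pi$. By Lemma \ref{lemma_pushout}(b), the underlying topological space of $C$ is the quotient of the topological space of $C'$ by the smallest equivalence relation $\sim$ satisfying $x \sim \alpha_\pi(x)$ for all $x \in C'_0$. Since the quotient map $C' \rightarrow C$ is continuous, surjective, and a closed map (being finite), the connected components of $C$ are exactly the images of the $\sim$-saturations of the connected components of $C'$.

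Next I would translate this equivalence relation on components of $C'$ into the graph $\Gamma_\pi$. Two components $U, V \in \pi_0(C')$ lie in the same $\sim$-class on $\pi_0(C')$ precisely when there is a finite chain $U = U_0, U_1, \ldots, U_n = V$ of components of $C'$ such that consecutive components $U_{i-1}, U_i$ each contain a point of $C'_0 \sqcup C'_\infty$ identified under $\alpha_\pi$ (or $\alpha_\pi^{-1}$). But by the definition of $\Gamma_\pi$, this is exactly the condition that $U$ and $V$ are joined by a path in $\Gamma_\pi$: the vertices $U_{i-1}, U_i$ are linked by an edge $e \in C'_0 \sqcup C'_\infty$ with $\sigma(e) = U_{i-1}$ and $\sigma(\ol{e}) = U_i$. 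Hence the set $\pi_0(C)$ is in natural bijection with $\pi_0(\Gamma_\pi)$.

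Finally, from this bijection the proposition is immediate: $C$ is connected if and only if $|\pi_0(C)| = 1$, which happens if and only if $|\pi_0(\Gamma_\pi)| = 1$, i.e.\ $\Gamma_\pi$ is connected. There is essentially no obstacle here; the only mild point requiring care is verifying that the relation generated by $\alpha_\pi$ at the level of points indeed descends to the stated relation on components (both directions), but this is a straightforward consequence of the fact that $C'_0 \sqcup C'_\infty$ is a finite set of smooth points, each lying on a unique component of $C'$, so that an edge of $\Gamma_\pi$ is precisely the datum of a gluing producing a node in $C$.
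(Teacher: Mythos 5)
Your argument is correct, and since the paper itself states this proposition with no proof at all (``It is immediate that\ldots''), your write-up simply supplies the details. The essential content --- identify $C$ with the pushout $C'_{\alpha_\pi}$ of Lemma~\ref{lemma_pushout}, use that the normalization map $C'\to C$ is a finite (hence closed) surjection so that the topology of $C$ is the quotient topology, and then observe that the minimal nonempty saturated clopen subsets of $C'$ are exactly the unions of irreducible components lying in a single connected component of $\Gamma_\pi$ --- is exactly what makes the claim ``immediate.''

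One small wording issue: in your first paragraph, the phrase ``the connected components of $C$ are exactly the images of the $\sim$-saturations of the connected components of $C'$'' is not quite right if $\sim$ is taken literally to be the pointwise relation generated by $x\sim\alpha_\pi(x)$: the $\sim$-saturation of a component $U$ at the level of points adds only finitely many extra nodes of $C'_0\sqcup C'_\infty$, not entire neighboring components, so its image under the quotient map is just $\nu(U)$, which in general is a proper subset of a component of $C$. What you actually need (and what your second paragraph correctly supplies) is the equivalence relation \emph{generated on $\pi_0(C')$} by ``$U$ and $V$ share a gluing point,'' and the fact that minimal saturated clopen subsets of $C'$ under that relation biject with components of $C$. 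Since you fix this immediately, the argument goes through; it would just be cleaner to state the component-level relation from the outset rather than passing through the pointwise $\sim$-saturation.
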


Let $\delta : t_0\leadsto t_\infty$ be a good path in $\bP^*$, and let $(F,\alpha)\in\Sets^{(\Pi,G)_\delta,\succ}$ be an object. Let $\Gamma_{F,\alpha}$ denote the graph:
\begin{itemize}
\item $\cV_{\Gamma_{F,\alpha}} = \Pi\bs F$.
\item $\cE_{\Gamma_{F,\alpha}} = \langle\gamma_0\rangle\bs F\sqcup \langle \gamma_\infty^\delta\rangle\bs F$.
\item For $e\in\cE_{\Gamma_{F,\alpha}}$, then $e = [x]_0$ or $e = [x]_\infty$ for some $x\in F$. In either case, let $\sigma(e) := \Pi\cdot x$.
\item For $e\in \langle\gamma_0\rangle\bs F$, define $\ol{e} := \alpha(e)$. For $e\in \langle \gamma_\infty^\delta\rangle\bs F$, define $\ol{e} := \alpha^{-1}(e)$.
\end{itemize}
$\Gamma_{F,\alpha}$ comes with a natural action of $G$ induced by its action on $F$.



\begin{prop}\label{prop_connectedness} The graph associated to a precuspidal $G$-cover $\pi : C\rightarrow E$ is isomorphic to the graph associated to the precuspidal $G$-datum $F_{\delta}^\succ(\Xi(\pi))$. Given a precuspidal $G$-datum $(F,\alpha)$ and $x\in F$, let $H_{\delta,x}$ be as in \eqref{eq_H_delta_x}. Let $M_x := \varphi_x(\Pi)$ be the ``monodromy group at $x$''. Then the graph $\Gamma_{F,\alpha}$ is connected if and only if either of the two equivalent conditions hold:
\begin{itemize}
\item[(a)] For some (equivalently any) $x\in F$, $G$ is generated by $M_x$ and $H_{\delta,x}$.
\item[(b)] For some (equivalently any) $x\in F$, $G$ is generated by $M_x$ and any element $h\in H_{\delta,x}$.
\end{itemize}
\end{prop}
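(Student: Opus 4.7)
My plan is to prove the two assertions in sequence. First, for the isomorphism $\Gamma_\pi\cong\Gamma_{F,\alpha}$ when $(F,\alpha)=F_\delta^\succ(\Xi(\pi))$, I would use the usual Galois correspondence together with the canonical specialization bijections from Proposition~\ref{prop_tbp}. The Galois correspondence identifies $\pi_0(C')$ with the set $\Pi\backslash F$ of $\Pi$-orbits on $F:=C'_{t_0}$. The bijections $\xi_0:\langle\gamma_0\rangle\bs F\rightiso C'_0$ and $\xi_\infty\circ\delta:\langle\gamma_\infty^\delta\rangle\bs F\rightiso C'_\infty$ identify the two edge sets. The source map is compatible because the component of $C'$ containing $\xi_0([x]_0)=[x]$ is exactly the component containing $x\in C'_{t_0}$, which in turn corresponds to $\Pi\cdot x$. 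The involution is compatible essentially by construction: $F_\delta^\succ(\alpha_\pi)$ is defined as the composition $\delta^{-1}\circ\xi_\infty^{-1}\circ\alpha_\pi\circ\xi_0$, which is precisely what is needed to make the two involutions agree under the edge bijections.

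Next I would verify that conditions (a) and (b) are equivalent and independent of the choice of $x\in F$. Equivalence of (a) and (b): since $H_{\delta,x}=G_{[x]_\infty}h=\langle\varphi_x(\gamma_\infty^\delta)\rangle h\subset\langle M_x,h\rangle$ for any $h\in H_{\delta,x}$, and conversely $G_{[x]_\infty}\le M_x$, the subgroups $\langle M_x,H_{\delta,x}\rangle$ and $\langle M_x,h\rangle$ coincide. Independence of $x$: $G$ acts transitively on $F$, and using Proposition~\ref{prop_obvious}(e) together with the $G$-equivariance of $\alpha$, one computes that $y=xg$ gives $M_y=g^{-1}M_xg$ and $g^{-1}hg\in H_{\delta,y}$, so $\langle M_y,H_{\delta,y}\rangle=g^{-1}\langle M_x,H_{\delta,x}\rangle g$; both are thus $G$ simultaneously. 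A similar calculation using $\varphi_{\pi x}(\gamma)=\varphi_x(\pi^{-1}\gamma\pi)$ handles $y\in\Pi\cdot x$.

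For the main content---that $\Gamma_{F,\alpha}$ is connected iff $\langle M_x,h\rangle=G$---I would fix $x\in F$ and coordinatize. Write $G_0:=\langle\varphi_x(\gamma_0)\rangle$, $G_\infty:=\langle\varphi_x(\gamma_\infty^\delta)\rangle$. Transitivity of the $G$-action on $F$ yields bijections $\Pi\bs F\rightiso M_x\bs G$ (by $\Pi\cdot xg\mapsto M_xg$), $\langle\gamma_0\rangle\bs F\rightiso G_0\bs G$, and $\langle\gamma_\infty^\delta\rangle\bs F\rightiso G_\infty\bs G$; under these the source maps become the natural projections $G_0\bs G\to M_x\bs G$ and $G_\infty\bs G\to M_x\bs G$, and the involution $\alpha$ (using $\alpha([x]_0)=[x]_\infty h$) becomes $G_0g\mapsto G_\infty hg$. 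Examining what happens at a vertex $M_xg$: the $\gamma_0$-edges above it are $\{G_0mg:m\in M_x\}$, whose images under $\alpha$ have sources $\{M_xhmg:m\in M_x\}$; similarly $\gamma_\infty^\delta$-edges give neighbors $\{M_xh^{-1}mg\}$. Iterating, the connected component of $M_x$ in the vertex set is exactly $M_x\bs\langle M_x,h\rangle$, so $\Gamma_{F,\alpha}$ is connected iff $\langle M_x,h\rangle=G$. The main obstacle is mostly bookkeeping---keeping the identifications $\xi_0,\xi_\infty,\delta$ and the various right-coset descriptions straight---rather than any genuine mathematical difficulty.
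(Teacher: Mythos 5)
Your proposal is correct and follows essentially the same route as the paper's proof: both arguments reduce to the observation that the vertices adjacent to $M_x g$ are exactly $\{M_x h^{\pm 1}mg : m\in M_x\}$, hence the component of $M_x$ in the vertex set $M_x\bs G$ is $M_x\bs\langle M_x,h\rangle$. The paper phrases this as an explicit induction on path length while you coordinatize the whole graph in terms of right-coset spaces of $G$ and read off the component directly, which is a cleaner presentation of the same computation; one small redundancy in your write-up is that the final remark about $y\in\Pi\cdot x$ adds nothing, since $G$ acts transitively on $F$ so $\Pi\cdot x\subset xG$ is already covered by the $y=xg$ case.
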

\begin{proof} That the graphs of $\pi$ and $F_{\delta}^\succ(\Xi(\pi))$ are isomorphic follows from their definitions. Since $G_{[x]_\infty} = \varphi_x(\langle\gamma_\infty^\delta\rangle)\subset M_x$, we see that (a) and (b) are equivalent. It remains to show that they are equivalent to the connectedness of $\Gamma_{F,\alpha}$. Suppose $\Gamma_{F,\alpha}$ is connected. Then for any $g\in G$, there is a path in $\Gamma_{F,\alpha}$ from the vertex $\Pi x$ to $\Pi xg$. Setting $g_n = g, g_0 = 1$, this means there is a sequence $1 = g_1,g_2,\ldots,g_n = g\in G$ and edges $e_1,e_2,\ldots,e_n$ fitting into a path
\begin{equation}\label{eq_path}
\Pi x = \Pi xg_0\stackrel{e_1}{\lra} \Pi xg_1\stackrel{e_2}{\lra}\Pi xg_2\stackrel{e_3}{\lra}\cdots\stackrel{e_n}{\lra} \Pi xg_n = \Pi xg
\end{equation}
For $i\in[1,n]$ there are two possibilities for the edge $\Pi xg_{i-1}\stackrel{e_i}{\lra}\Pi x g_i$:
\begin{itemize}
\item $e_i = [\gamma x g_{i-1}]_0 = [x\varphi_x(\gamma)g_{i-1}]_0$ for some $\gamma\in\Pi$. In this case, since $\alpha([x]_0) = [x\cdot h]_\infty$ for any $h\in H_{\delta,x}$, by $G$-equivariance we have $\ol{e_i} = [xh\varphi_x(\gamma)g_{i-1}]_\infty$. The fact that $\sigma(\ol{e_i}) = \Pi x g_i$ says precisely that $[xh\varphi_x(\gamma)g_{i-1}]_\infty = \langle\gamma_\infty^\delta\rangle xh\varphi_x(\gamma)g_{i-1}\subset\Pi xg_i$. Thus, for some $\gamma'\in\Pi$, we have
$$xh\varphi_x(\gamma)g_{i-1} = \gamma'xg_i = x\varphi_x(\gamma')g_i\qquad\text{so}\qquad g_i\in M_xhM_xg_{i-1}$$
\item $e_i = [\gamma x g_{i-1}]_\infty = [x\varphi_x(\gamma)g_{i-1}]_\infty$ for some $\gamma\in\Pi$. In this case, a similar argument shows $\ol{e_i} = [xh^{-1}\varphi_x(\gamma)g_{i-1}]_0$. The fact that $\sigma(\ol{e_i}) = \Pi x g_i$ says precisely that $[xh^{-1}\varphi_x(\gamma)g_{i-1}]_0 = \langle\gamma_\infty^\delta\rangle xh^{-1}\varphi_x(\gamma)g_{i-1}\subset\Pi xg_i$. Thus, for some $\gamma'\in\Pi$, we have
$$xh^{-1}\varphi_x(\gamma)g_{i-1} = \gamma'xg_i = x\varphi_x(\gamma')g_i\qquad\text{so}\qquad g_i\in M_xh^{-1}M_xg_{i-1}$$
\end{itemize}
Thus, by induction we find that for $j = \{1,\ldots,n\}$, there exist $i_j\in\{\pm 1\}$ and $m_j,m_j'\in M_x$ such that
$$g_n = g = m_n'h^{i_n}m_nm_{n-1}'h^{i_{n-1}}m_{n-1}\cdots m_1'h^{i_1}m_1.$$
Since this holds for any $g\in G$, we find that $G$ is generated by $M_x$ and $h$.


Conversely, suppose $G$ is generated by $M_x$ and $h$ for some $h\in H_{\delta,x}$. Then for every $g\in G$ we may write it as $g = h^{i_n}m_nh^{i_{n-1}}m_{n-1}\cdots h^{i_1}m_1$ with $i_j\in\{\pm1\}$ and $m_j\in M_x$. Then for $k\in[0,n]$ define $g_k$ inductively by $g_0 = 1$ and $g_k = h^{i_k}m_kg_{k-1}$ for $k\ge 1$ (so $g_n = g$). Define edges $e_1\ldots,e_n$ by
$$e_k := \left\{\begin{array}{ll}
[xm_kg_{k-1}]_0 & \text{if } i_j = 1 \\
{[xm_kg_{k-1}]_\infty} & \text{if } i_j = -1
\end{array}\right.$$
Then $e_1,\ldots,e_n$ define a path
$$\Pi x =\Pi xg_0\stackrel{e_1}{\lra} \Pi xg_1\stackrel{e_2}{\lra}\Pi xg_2\stackrel{e_3}{\lra}\cdots\stackrel{e_n}{\lra} \Pi xg_n = \Pi xg.$$
Since this holds for every $g\in G$, this implies that $\Gamma_{F,\alpha}$ is connected. Since $x$ was arbitrary in the discussion above, it follows that ``some'' is equivalent to ``any'' in (a) and (b).
\end{proof}


\begin{defn} For a good path $\delta : t_0\leadsto t_\infty$, we say that a precuspidal $G$-datum $(F,\alpha)\in\Sets^{(\Pi,G)_\delta,\succ}$ is \emph{connected} if the equivalent conditions of Proposition \ref{prop_connectedness} hold. A precuspidal $G$-datum is \emph{cuspidal} if it is connected and balanced.
\end{defn}

\begin{prop}\label{prop_equivalences} For a good path $\delta : t_0\leadsto t_\infty$, the equivalences $\Xi : \cC_E^{pc}\rightiso\cC_{\bP^1}^\succ$ and $F_{\delta}^\succ : \cC_{\bP^1}^\succ\rightiso\Sets^{(\Pi,G)_\delta,\succ}$ restrict to equivalences between the full subcategories of connected objects, balanced objects, and connected+balanced objects.
\end{prop}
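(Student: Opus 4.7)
The plan is to observe that this proposition is essentially a formal bookkeeping exercise: the equivalences $\Xi$ and $F_\delta^\succ$ have already been established (Theorem \ref{thm_equivalence} and Proposition \ref{prop_Galois_for_pairs}), and the definitions of ``balanced'' and ``connected'' on the combinatorial sides were precisely chosen so as to be transported correctly. So the whole proof reduces to verifying that each of the three properties (balanced, connected, cuspidal $=$ balanced $+$ connected) is preserved and reflected by $\Xi$ and by $F_\delta^\succ$. Since both functors are already equivalences, preservation/reflection of a property on objects automatically yields an equivalence between the corresponding full subcategories.

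For $\Xi : \cC_E^{pc}\rightiso\cC_{\bP^1}^\succ$, the compatibility with ``balanced'' is the final clause of Proposition \ref{prop_balanced_condition}, which asserts directly that $p : C \to E$ is balanced if and only if $\Xi(p) = (p',\alpha_p)$ satisfies condition (a) (or equivalently (b)) of that proposition, i.e.\ is balanced in the sense of $\cC_{\bP^1}^\succ$. The compatibility with ``connected'' is immediate from the construction: by Lemma \ref{lemma_pushout}, the covering curve $C$ is recovered from $(p',\alpha_p)$ as the pushout $C'_{\alpha_p}$, and the graph $\Gamma_\pi$ defined in \S\ref{ss_combinatorial_balance_connectedness} (which detects connectedness of $C$) depends only on the normalization datum $(p',\alpha_p)$. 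So connectedness of $C$ is equivalent to connectedness of $(p',\alpha_p)$ in $\cC_{\bP^1}^\succ$ (i.e.\ of $C'_{\alpha_p}$), as built into the definition.

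For $F_\delta^\succ : \cC_{\bP^1}^\succ \rightiso \Sets^{(\Pi,G)_\delta,\succ}$, both properties transfer by the way they were defined combinatorially. Writing $(q : D\to\bP^1,\alpha)$ and $F_\delta^\succ(q,\alpha) = (D_{t_0},\tilde\alpha)$: a choice of $x_0 \in D_{t_0}$ gives the monodromy representation $\varphi_{x_0} : \Pi \to G$, the stabilizer $G_{[x_0]}$ is generated by $\varphi_{x_0}(\gamma_0)$, and the set $H_{\delta,x_0}$ defined from $\alpha$ in \S\ref{ss_balanced_objects_of_normalization} matches verbatim the set $H_{\delta,x_0}$ defined from $\tilde\alpha$ in \eqref{eq_H_delta_x} (via the canonical identifications $\xi_0,\xi_\infty$ used to define $F_\delta^\succ$). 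Hence condition (b) of Proposition \ref{prop_balanced_condition} on $(q,\alpha)$ is literally the same equation as the definition of ``balanced'' for $(D_{t_0},\tilde\alpha)$. Similarly, Proposition \ref{prop_connectedness} (first sentence) asserts that the graph of the cover and of the associated $G$-datum are isomorphic, so one is connected iff the other is, and this is exactly the definition of ``connected'' for an object of $\Sets^{(\Pi,G)_\delta,\succ}$.

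Putting these two compatibilities together, $\Xi$ and $F_\delta^\succ$ each send balanced (resp.\ connected, resp.\ cuspidal) objects to balanced (resp.\ connected, resp.\ cuspidal) objects, and the analogous statement holds for their (already constructed) quasi-inverses. Restricting to the full subcategories of balanced, connected, or cuspidal objects therefore yields equivalences in each case. I do not anticipate any real obstacle: the one place that deserves care is checking that the $G$-equivariant bijection $H_{\delta,x}$ is preserved under the translation between $\cC_{\bP^1}^\succ$ and $\Sets^{(\Pi,G)_\delta,\succ}$, since this involves keeping track of the canonical identifications $\xi_0,\xi_\infty$ from Proposition \ref{prop_tbp} and the action of $\delta$, but this is a routine diagram chase with the definitions already laid out in Proposition \ref{prop_Galois_for_pairs}.
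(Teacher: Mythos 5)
Your proposal is correct and follows the same route as the paper, which simply cites Propositions \ref{prop_balanced_condition} and \ref{prop_connectedness} together with the definitions; you have just spelled out the bookkeeping more explicitly.
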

\begin{proof} Follows from the definitions together with Propositions \ref{prop_balanced_condition} and \ref{prop_connectedness}.
\end{proof}

\subsection{Parametrizing isomorphism classes of cuspidal admissible $G$-covers: the $\delta$-invariant}\label{ss_the_map_Inv}
Here we give an explicit characterization of the isomorphism classes of $\cC_E^c$ in terms of the internal logic of $G$. As usual let $\Pi := \pi_1(\bP^*,t_0)$, and let $\delta : t_0\leadsto t_\infty$ be a good path in $\bP^*$.

\begin{prop} A precuspidal $G$-datum $(F,\alpha)\in\Sets^{(\Pi,G)_\delta,\succ}$ is cuspidal (i.e. balanced and connected) if and only if both of the following conditions hold:
\begin{itemize}
\item[(a)] For some (equivalently any) $x\in F$ and $h\in H_{\delta,x}$ (see \eqref{eq_H_delta_x}), we have	
$$\varphi_x(\gamma_0)^{-1} = h^{-1}\varphi_x(\gamma_\infty^\delta)h\qquad\text{or equivalently}\qquad\varphi_x(\gamma_\infty^\delta) = h\varphi_x(\gamma_0)^{-1}h^{-1}$$
\item[(b)] For some (equivalently any) $x\in F$ and $h\in H_{\delta,x}$, $G$ is generated by $h$ and $\varphi_x(\gamma_0)$.
\end{itemize}
\end{prop}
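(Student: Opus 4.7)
The plan is to observe that this proposition is essentially a translation exercise, since condition (a) is literally the definition of a balanced precuspidal $G$-datum from Section~\ref{ss_combinatorial_balance_connectedness}. So no work is needed on the balanced side: $(F,\alpha)$ is balanced if and only if (a) holds. All of the content therefore lies in recognizing that, given balance, condition (b) is equivalent to connectedness.

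First I will invoke Proposition~\ref{prop_connectedness}(b), which says that connectedness of $(F,\alpha)$ is equivalent to the statement that for some (equivalently any) $x\in F$, the group $G$ is generated by $M_x := \varphi_x(\Pi)$ together with any $h\in H_{\delta,x}$. Because $\delta$ is a good path, the two elements $\gamma_0$ and $\gamma_\infty^\delta := \delta^{-1}\gamma_\infty\delta$ topologically generate $\Pi$ (see Definition~\ref{def_good_path}); since $G$ is finite and $\varphi_x : \Pi\to G$ is continuous, it factors through a finite discrete quotient, so $M_x$ is generated as an abstract group by $\varphi_x(\gamma_0)$ and $\varphi_x(\gamma_\infty^\delta)$.

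The key step is the following short identity: when condition (a) holds, the relation $\varphi_x(\gamma_\infty^\delta) = h\,\varphi_x(\gamma_0)^{-1} h^{-1}$ places $\varphi_x(\gamma_\infty^\delta)$ inside $\langle h,\varphi_x(\gamma_0)\rangle$, and hence $M_x\subseteq \langle h,\varphi_x(\gamma_0)\rangle$. Consequently $\langle M_x, h\rangle = \langle \varphi_x(\gamma_0), h\rangle$. Combined with Proposition~\ref{prop_connectedness}(b), this shows that assuming (a), connectedness of $(F,\alpha)$ is equivalent to (b). Conversely, if $(F,\alpha)$ is cuspidal then balance gives (a), and then the same identity turns the connectedness statement into (b). The ``some equivalently any'' clauses descend directly from the corresponding clauses in the definitions of balanced and connected data, so no additional argument is needed.

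The ``hard'' part here is essentially nonexistent: the statement is a cosmetic repackaging of balance-plus-connectedness which replaces the generating set $\{M_x, h\}$ by the more economical generating pair $\{\varphi_x(\gamma_0), h\}$, using the balance relation to absorb $\varphi_x(\gamma_\infty^\delta)$ into $\langle h,\varphi_x(\gamma_0)\rangle$. The only minor subtlety worth noting in the write-up is that $\Pi$ is only \emph{topologically} generated (not freely generated as an abstract group) by $\gamma_0, \gamma_\infty^\delta$, but finiteness of $G$ makes this a non-issue.
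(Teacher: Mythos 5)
Your proposal is correct and follows the paper's own argument: condition (a) is by definition balancedness, and given (a) the balance relation shows $\langle M_x,h\rangle=\langle\varphi_x(\gamma_0),h\rangle$, so Proposition \ref{prop_connectedness} identifies (b) with connectedness. Your remark about topological versus abstract generation (harmless because $G$ is finite) is a minor clarification the paper leaves implicit, but the route is the same.
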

\begin{proof} Condition (a) is the definition of balancedness. The group $\Pi$ is generated by $\gamma_0$ and $\gamma_\infty^\delta$, and hence in the presence of (a), $G$ is generated by $M_x := \varphi_x(\Pi)$ and $h$ if and only if it is generated by $\varphi_x(\gamma_0)$ and $h$. Thus in the presence of (a), by Proposition \ref{prop_connectedness}, (b) is equivalent to $(F,\alpha)$ being connected.	
\end{proof}


We will define a bijection between the set of isomorphism classes of cuspidal objects of $\Sets^{(\Pi,G)_\delta,\succ}$ and an explicit finite set built from $G$. Let $G$ act on the set $G\times G$ by conjugation:
$$g\cdot (u,h) = (gug^{-1},ghg^{-1})$$
If two pairs $(u,h)$ and $(u',h')$ are conjugate by this action, then we will write $(u,h)\sim (u',h')$. Let $\bZ$ act on $G\times G$ on the right by the rule
$$(u,h)\cdot k = (u,u^kh)\qquad k\in\bZ$$
These actions of $G$ and $\bZ$ on $G\times G$ commute and they preserve the subset of generating pairs. Let $\bI(G)$ denote the subset of the orbit space $G\bs(G\times G)/\bZ$ represented by generating pairs:
$$\bI(G) := G\bs\{(u,h) : \text{$u,h$ generate $G$}\}/\bZ$$

\begin{defn} For a generating pair $(u,h)\in G\times G$, let $\ps{u,h}$ denote its image in $\bI(G)$.
\end{defn}

\begin{defn} For a generating pair $(u,h)\in G\times G$, let $(F_{u,h},\alpha_{u,h})\in\Sets^{(\Pi,G)_\delta,\succ}$ be given as follows
\begin{itemize}
\item $F_{u,h}$ is the pointed set $G$ (with distinguished element $1_G$) viewed as a right $G$-torsor via right-multiplication. Give $F_{u,h}$ the structure of a left $\Pi$-set using the unique left $\Pi$-action which both commutes with the right $G$-action and also satisfies:
$$\gamma_\infty^\delta\cdot 1_G = 1_Gu\qquad\text{and}\qquad \gamma_0\cdot 1_G = 1_Gh^{-1}u^{-1}h$$
Explicitly, this left $\Pi$-action has the following description:
$$\gamma_\infty^\delta \cdot x = ux\qquad \gamma_0\cdot x = h^{-1}u^{-1}hx\qquad\forall x\in F_{u,h}$$
where the right hand sides of the equalities involve multiplication in $G$. Thus, the $\Pi$ action is given by left multiplication by the subgroup
\begin{equation}\label{eq_Muh}
M_{u,h} := \langle u,h^{-1}u^{-1}h\rangle\le G = \varphi_{1_G}(\Pi)	
\end{equation}
We call this subgroup the \emph{monodromy group} at $1_G$.
\item $\alpha_{u,h}$ is the $G$-equivariant bijection
\begin{eqnarray*}
\alpha_{u,h} : \langle\gamma_0\rangle\bs F_{u,h} & \lra & \langle\gamma_\infty^\delta\rangle\bs F_{u,h} \\
\langle\gamma_0\rangle \cdot x & \mapsto & \langle\gamma_\infty^\delta\rangle\cdot hx
\end{eqnarray*}
where $x\in F_{u,h}$ and $hx$ is multiplication in $G$. We note that this is the unique $G$-equivariant bijection satisfying $\alpha_{u,h}([1]_0) = [1]_\infty\cdot h = [h]_\infty$. Equivalently, in terms of the $\Pi$-action, we have
\begin{eqnarray*}
\alpha_{u,h} : \langle h^{-1}u^{-1}h\rangle\bs G & \lra & \langle u\rangle\bs G \\
\langle h^{-1}u^{-1}h\rangle\cdot x & \mapsto & \langle u\rangle\cdot hx
\end{eqnarray*}
Note that $H_{\delta,1_G} = \varphi_{1_G}(\langle \gamma_\infty^\delta\rangle)\cdot h = \langle u\rangle h$.
\end{itemize}
\end{defn}

It is easy to check that $(F_{u,h},\alpha_{u,h})$ is balanced and connected, hence cuspidal.

\begin{thm}[The $\delta$-invariant]\label{thm_delta_invariant} Let $\delta : t_0\leadsto t_\infty$ be a good path, and $(F,\alpha)\in\Sets^{(\Pi,G)_\delta,\succ}$ be a precuspidal $G$-datum. For any $x\in F$ and $h\in H_{\delta, x}$, define
\begin{eqnarray*}
\Inv_\delta : \Sets^{(\Pi,G)_\delta,\succ}  & \lra & G\bs (G\times G)/\bZ  \\
(F,\alpha) & \mapsto  & \ps{\varphi_x(\gamma_\infty^\delta),h}
\end{eqnarray*}
We will say that $\Inv_\delta(F,\alpha)$ is the \emph{invariant} of $(F,\alpha)$ with respect to the path $\delta$. Let $\cD_\delta$ be the full subcategory of $\Sets^{(\Pi,G)_\delta,\succ}$ consisting of cuspidal objects, and let $\pi_0(\cD_\delta)$ denote the set of isomorphism classes of $\cD_\delta$. Then $\Inv_\delta(F,\alpha)$ is independent of $x$ and $h$ and its restriction to $\cD_\delta$ yields a bijection
$$\Inv_\delta : \pi_0(\cD_\delta)\rightiso \bI(G)$$
with inverse induced by $(u,h)\mapsto (F_{u,h},\alpha_{u,h})$.
\end{thm}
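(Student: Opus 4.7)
The plan is to prove the theorem in three steps: first establish that $\Inv_\delta$ is a well-defined map on isomorphism classes, then show it lands in $\bI(G)$ on cuspidal objects and is surjective there via the construction $(u,h)\mapsto(F_{u,h},\alpha_{u,h})$, and finally prove injectivity by exhibiting an explicit isomorphism.

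First I would verify well-definedness on a single object $(F,\alpha)$. Fix $x\in F$ and set $u:=\varphi_x(\gamma_\infty^\delta)$. Any two elements of $H_{\delta,x}$ differ by left multiplication by an element of $G_{[x]_\infty}=\varphi_x(\langle\gamma_\infty^\delta\rangle)=\langle u\rangle$, which matches exactly the $\bZ$-action $(u,h)\cdot k=(u,u^kh)$. Replacing $x$ by $xg$ for $g\in G$, Proposition \ref{prop_obvious}(e) gives $\varphi_{xg}(\gamma_\infty^\delta)=g^{-1}ug$, while $G$-equivariance of $\alpha$ yields $H_{\delta,xg}=g^{-1}H_{\delta,x}g$, so the pair transforms by conjugation by $g$. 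Since $F$ is a $G$-torsor, $G$ acts transitively, so this covers all choices of base point. An isomorphism $f:(F,\alpha)\to(F',\alpha')$ in $\Sets^{(\Pi,G)_\delta,\succ}$ is $(\Pi,G)$-equivariant and respects $\alpha$'s, so $\varphi_{f(x)}=\varphi_x$ and $H_{\delta,f(x)}=H_{\delta,x}$; thus $\Inv_\delta$ factors through isomorphism classes.

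Next I would show $\Inv_\delta$ maps $\cD_\delta$ into $\bI(G)$ and construct a candidate inverse. For cuspidal $(F,\alpha)$, the balanced condition gives $\varphi_x(\gamma_0)=h^{-1}u^{-1}h$, so $M_x=\langle u,\varphi_x(\gamma_0)\rangle\subset\langle u,h\rangle$; then Proposition \ref{prop_connectedness}(b) gives $G=\langle M_x,h\rangle=\langle u,h\rangle$. Conversely, for any generating pair $(u,h)$ the object $(F_{u,h},\alpha_{u,h})$ is balanced by construction of its $\Pi$-action (since $\varphi_{1_G}(\gamma_0)=h^{-1}u^{-1}h$), and it is connected because its monodromy group $M_{u,h}$ together with $h$ generates $\langle u,h\rangle=G$. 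Evaluating at $x=1_G$ with $h\in H_{\delta,1_G}$ yields $\Inv_\delta(F_{u,h},\alpha_{u,h})=\ps{u,h}$, giving surjectivity.

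For injectivity I would produce the inverse map explicitly. Given cuspidal $(F,\alpha)$ with $\Inv_\delta(F,\alpha)=\ps{u,h}$, use well-definedness to pick $x\in F$ realizing this invariant, i.e.\ $\varphi_x(\gamma_\infty^\delta)=u$ and $h\in H_{\delta,x}$. Define $f:F_{u,h}\to F$ by $g\mapsto xg$; this is a $G$-equivariant bijection of $G$-torsors. To check $\Pi$-equivariance it suffices, since $\delta$ is good, to verify it on the topological generators $\gamma_0,\gamma_\infty^\delta$: for $\gamma_\infty^\delta$, $f(\gamma_\infty^\delta g)=f(ug)=xug=\varphi_x(\gamma_\infty^\delta)\cdot f(g)=\gamma_\infty^\delta\cdot f(g)$; for $\gamma_0$, $f(\gamma_0 g)=xh^{-1}u^{-1}hg$, which equals $x\varphi_x(\gamma_0)g=\gamma_0\cdot f(g)$ precisely by the balanced condition. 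Compatibility with $\alpha$ reduces by $G$-equivariance to $f(\alpha_{u,h}([1]_0))=f([h]_\infty)=[xh]_\infty=\alpha([x]_0)=\alpha(f([1]_0))$, which is the defining property of $h\in H_{\delta,x}$.

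The main obstacle is bookkeeping: tracking how the three sources of ambiguity (choice of $h$, of $x$, and of isomorphism) each map onto exactly one of the equivalence relations defining $\bI(G)$. The substantive role of the balanced hypothesis appears in both directions of the final step, since it is what allows $\varphi_x(\gamma_0)$ to be expressed in terms of $(u,h)$ alone and hence what forces the candidate map $g\mapsto xg$ to intertwine the action of $\gamma_0$.
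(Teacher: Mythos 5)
Your proposal is correct and follows essentially the same route as the paper's proof: independence of $x$, $h$, and the isomorphism class via the conjugation/$\bZ$-action bookkeeping, membership in $\bI(G)$ from the balanced plus connected conditions, and mutual inverseness via the unique $G$-equivariant map $F_{u,h}\to F$ sending $1_G\mapsto x$. The only difference is that you spell out the $\Pi$-equivariance on the generators $\gamma_0,\gamma_\infty^\delta$ and the compatibility with $\alpha$, which the paper leaves as "one can check."
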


\begin{defn}\label{def_delta_invariant} Let $E$ be a nodal elliptic curve. Given an object of $\cC_E^{pc}$ or $\cC_{\bP^1}^\succ$, its $\delta$-invariant is the $\delta$-invariant of its image in $\Sets^{(\Pi,G)_\delta,\succ}$ via the equivalences $\Xi$ and $F_{\delta}^\succ$. In particular, using the equality $\cC_E^c = \cAdm(G)_E$, taking $\delta$-invariants induces a bijection
$$\pi_0(\cAdm(G)_E)\rightiso \bI(G)$$
\end{defn}

\begin{proof}[Proof of Theorem \ref{thm_delta_invariant}] First we note that for $x' = xg\in F$, we have
$$\varphi_{xg}(\gamma) = g^{-1}\varphi_x(\gamma) g\quad\text{$\forall \gamma\in\Pi$}\quad\text{and}\quad H_{\delta,xg} = g^{-1}H_{\delta,x}g$$
Thus, changing $x$ results in a conjugate pair, so $\Inv_\delta$ is independent of $x$. On the other hand, the only other choices of $h$ are given by $\varphi_x(\gamma_\infty^\delta)^k\cdot h$, so by the definition of the $\bZ$-action on $(G\times G)$, $\Inv_\delta$ is also independent of $h$.


Next we note that for any choice of $x,h$, if $(F,\alpha)$ lies in $\cD_\delta$, then by connectedness of $(F,\alpha)$, $G$ is generated by $\varphi_x(\gamma_0),\varphi_x(\gamma_\infty^\delta)$, and $h$. On the other hand, since $(F,\alpha)$ is balanced, $\varphi_x(\gamma_0) = h^{-1}\varphi_x(\gamma_\infty^\delta)^{-1}h$, so $G$ is also generated by $\varphi_x(\gamma_\infty^\delta),h$. Thus $\Inv_\delta$ applied to an element of $\cD_\delta$ represents an element of $\bI(G)$.


Next, if $(F,\alpha)\cong (F',\alpha')$ via a $(\Pi,G)$-equivariant bijection $f : F\rightiso F'$ respecting $\alpha,\alpha'$, then one checks that
$$\varphi_{f(x)}(\gamma) = \varphi_x(\gamma)\quad\forall x\in F,\gamma\in\Pi\qquad\text{and}\qquad H_{\delta,x} = H_{\delta,f(x)}\quad\forall x\in F$$
so $\Inv_\delta$ is an isomorphism invariant.


We claim that the map $\{(u,h)\in G\times G \;|\; \text{$u,h$ generate $G$}\}\rightarrow\pi_0(\cD_\delta)$ defined by sending $(u,h)$ to the isomorphism class of $(F_{u,h},\alpha_{u,h})$ gives an inverse to $\Inv_\delta$. Indeed, we have $\Inv_\delta(F_{u,h},\alpha_{u,h}) = \ps{u,h}$ by construction. On the other hand, if $(F,\alpha)\in\Sets^{(\Pi,G)_\delta,\succ}$ has $\Inv_\delta(F,\alpha) = \ps{u,h}$, then for some $x\in F$, we have $\varphi_x(\gamma_\infty^\delta) = u$ and $h\in H_{\delta,x}$. Then one can check that the unique $G$-equivariant map $F_{u,h}\rightiso F$ sending $1_G\mapsto x$ defines an isomorphism $(F,\alpha)\rightiso (F_{u,h},\alpha_{u,h})$ as desired.
\end{proof}


\begin{prop}\label{prop_cuspidal_higman_invariant} Let $\delta : t_0\leadsto t_\infty$ be a good path. Let $\pi : C\rightarrow E$ be a cuspidal $G$-cover with $\delta$-invariant $\Inv_\delta(\pi) = \ps{u,h}$. Then its Higman invariant is the conjugacy class of $[u^{-1},h^{-1}] := u^{-1}h^{-1}uh\sim [u,h]$.
\end{prop}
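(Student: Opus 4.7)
The plan is to transport the problem via the equivalences $\Xi$ and $F_\delta^\succ$ to the combinatorial setting, and then to exploit the defining relation $\gamma_\infty^\delta \gamma_1^\epsilon \gamma_0 = 1$ built into the notion of a good path. By Theorem \ref{thm_delta_invariant} together with Proposition \ref{prop_equivalences}, after choosing a standard normalization $\nu : \bP^1 \rightarrow E$ with $\nu(1) = O$, we may replace $\pi$ (up to isomorphism in $\cC_E^c$) by the cuspidal $G$-cover arising from the standard cuspidal $G$-datum $(F_{u,h}, \alpha_{u,h})$, equipped with its monodromy representation $\varphi_x : \Pi \rightarrow G$ based at $x := 1_G \in F_{u,h}$.

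Next I would identify the Higman invariant combinatorially. Because $O$ is smooth on $E$ and the normalization $\nu_C : D \rightarrow C$ is an isomorphism above the smooth locus, the preimage $\pi^{-1}(O) \subset C$ is canonically identified with $q^{-1}(1) \subset D$, compatibly with all local data. Fix a path $\epsilon : t_0 \leadsto t_1$ witnessing the good-path relation for $\delta$. By Proposition \ref{prop_tbp}(b), the specialization map gives a canonical bijection $\langle \gamma_1^\epsilon \rangle \bs F_{u,h} \rightiso q^{-1}(1)$, and exactly as in Proposition \ref{prop_obvious}(a,b), the element $\varphi_x(\gamma_1^\epsilon)$ generates the stabilizer $G_{[x]_1}$ and acts on the cotangent space at $[x]_1$ by the canonical primitive $e$th root of unity. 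By the very definition of the algebraic Higman invariant, this identifies $\Hig(\pi)$ with the conjugacy class of $\varphi_x(\gamma_1^\epsilon)$.

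The computation then reduces to unwinding the $\delta$-invariant. At $x = 1_G$, the construction of $(F_{u,h}, \alpha_{u,h})$ gives $\varphi_x(\gamma_\infty^\delta) = u$, while the balance condition (Proposition \ref{prop_balanced_condition}(b)) gives $\varphi_x(\gamma_0)^{-1} = h^{-1} u h$, hence $\varphi_x(\gamma_0) = h^{-1} u^{-1} h$. Applying $\varphi_x$ to the good-path identity $\gamma_1^\epsilon = (\gamma_\infty^\delta)^{-1} \gamma_0^{-1}$ then yields
\[
\varphi_x(\gamma_1^\epsilon) \;=\; u^{-1}(h^{-1}u^{-1}h)^{-1} \;=\; u^{-1}h^{-1}uh \;=\; [u^{-1}, h^{-1}].
\]
A direct verification shows $(hu)\cdot [u^{-1},h^{-1}]\cdot (hu)^{-1} = [u,h]$, so $[u^{-1},h^{-1}] \sim [u,h]$ as required. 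The only nontrivial step is the identification of the algebraic Higman invariant with $\varphi_x(\gamma_1^\epsilon)$; this follows immediately from Proposition \ref{prop_tbp}(a) together with the local-picture verification already carried out in Proposition \ref{prop_obvious}, and the remainder is pure algebraic manipulation inside $G$.
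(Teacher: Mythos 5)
Your argument is correct and follows essentially the same route as the paper: normalize via $\Xi$, identify the Higman invariant with $\varphi_{x_0}(\gamma_1^\epsilon)$ using the good-path relation $\gamma_1^\epsilon=(\gamma_\infty^\delta)^{-1}\gamma_0^{-1}$ together with the local picture at $1\in\bP^1$, then plug in $\varphi_{x_0}(\gamma_\infty^\delta)=u$ and the balance relation to get $[u^{-1},h^{-1}]$. The only cosmetic difference is that you reduce to the explicit datum $(F_{u,h},\alpha_{u,h})$, whereas the paper stays with the normalization and simply picks a base point $x_0$ realizing $u=\varphi_{x_0}(\gamma_\infty^\delta)$ and $h\in H_{\delta,x_0}$; these are equivalent moves.
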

\begin{proof} 

Let $e$ denote the common ramification indices of $\pi$ above $O$. Let $\Xi(\pi) = (\pi' : C'\rightarrow\bP^1,\alpha)$ be the normalization-with-gluing data. Since normalization induces an isomorphism on the smooth locus, every point in $\pi'^{-1}(1)$ is also ramified with index $e$, and the Higman invariant of $\pi$ is the Higman invariant of $\pi'$ at $1\in\bP^1$. We note that the conjugacy class of $[u,h]$ does not depend on the representative $(u,h)\in\ps{u,h}$. Thus from the definition of $\Inv_\delta$, we may assume that for some $x_0\in C'_{t_0}$, we have

$$u = \varphi_{x_0}(\gamma_\infty^\delta)\qquad\text{and}\qquad h \in H_{\delta,x_0}	$$

For any $x\in \pi'^{-1}(1)$, let $\chi_x : G_x\rightarrow\GL(T_x^*C')$ be the local representation as in \eqref{eq_local_representation}, then the Higman invariant of $\pi$ is precisely $\chi_x^{-1}(\zeta_e)$. Since $\delta$ is good, for some path $\epsilon : t_0\leadsto t_1$, we have $\gamma_1^\epsilon = (\gamma_\infty^\delta)^{-1}\gamma_0^{-1}$. 
By Proposition \ref{prop_tbp}, there is a canonical $G$-equivariant bijection
$$\xi_1 : \langle\gamma_1\rangle\bs C_{t_1}'\rightiso C_1'$$
Let $x_1 := \epsilon x_0\in C_{t_1}'$, and let $x := \xi_1([x_1])$. Then from the local picture (also see Proposition \ref{prop_obvious}), we have
$$\chi_x^{-1}(\zeta_e) = \varphi_{x_1}(\gamma_1) = \varphi_{x_0}(\gamma_1^\epsilon)$$
but by our choice of $\epsilon$, $\varphi_{x_0}(\gamma_1^\epsilon) = \varphi_{x_0}(\gamma_\infty^\delta)^{-1}\varphi_{x_0}(\gamma_0)^{-1} = u^{-1}\varphi_{x_0}(\gamma_0)^{-1}$, and since the $G$-action is balanced, we have $\varphi_{x_0}(\gamma_0)^{-1} = h^{-1}\varphi_{x_0}(\gamma_\infty^\delta)h = h^{-1}uh$. Thus we find that the Higman invariant of $\pi$ is the conjugacy class of
$$\chi_x^{-1}(\zeta_e) = [u^{-1},h^{-1}] \sim [u,h].$$
\end{proof}


\subsection{The $\delta$-invariant of the $[-1]$-pullback of a cuspidal $G$-cover}

The sole purpose of this section is to prove the following proposition. To simplify calculations, we will assume that $\delta$ is a good path which is moreover \emph{symmetric} (c.f. Definition \ref{def_good_path}).

\begin{prop}\label{prop_iota_pullback} Let $\iota\in\Aut(\bP^1)$ denote the unique automorphism fixing $1$ and switching $0,\infty$. Let $t_0$ be a tangential base point at $0\in\bP^1$, and let $t_\infty := \iota(t_0)$. Let $\delta : t_0\leadsto t_\infty$ be a \emph{symmetric} good path. Let $E$ be a nodal elliptic curve over $k$. Let $p : C\rightarrow E$ be a cuspidal $G$-cover. If $\Inv_\delta(p) = \ps{u,h}$, then $\Inv_\delta([-1]^*p) = \ps{u^{-1},h^{-1}}$. In particular, if $[-1]$ denotes the unique nontrivial automorphism of $E$, then the following are equivalent
\begin{itemize}
\item[(a)] $[-1]$ lifts to an automorphism of $C$.
\item[(b)] $[-1]^*p\cong p$ as cuspidal $G$-covers.
\item[(c)] $\ps{u,h} = \ps{u^{-1},h^{-1}}$,
\item[(d)] $(u^{-1},h^{-1})$ is conjugate to $(u,u^rh)$ for some $r > 0$.
\end{itemize}
\end{prop}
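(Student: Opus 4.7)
The plan is to transport the assertion across the equivalences $\Xi : \cC_E^{pc}\rightiso\cC_{\bP^1}^\succ$ and $F_\delta^\succ : \cC_{\bP^1}^\succ\rightiso\Sets^{(\Pi,G)_\delta,\succ}$ (Theorem \ref{thm_equivalence} and Proposition \ref{prop_Galois_for_pairs}) and then carry out a direct computation, with the symmetry of $\delta$ doing all the work. First I would identify $[-1]^*$ in $\cC_{\bP^1}^\succ$: fixing a standard normalization $\nu : \bP^1\to E$, the fact that $[-1]$ is the unique nontrivial involution of $E$ fixing $O$ and that $\iota$ is the unique automorphism of $\bP^1$ fixing $1$ and swapping $0,\infty$ forces $[-1]\circ\nu=\nu\circ\iota$. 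Applying $\Xi$ and using its compatibility with the two standard normalizations $\nu$ and $\nu\circ\iota$, I obtain $\Xi([-1]^*p)\cong(\iota^*\pi',\alpha^{-1})$, where the gluing $\alpha^{-1}$ records that $\iota$ swaps the two branches at the node.

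Next I would apply $F_\delta^\succ$. Since $(\iota^*C')_{t_0}=C'_{t_\infty}$, the path $\delta$ provides a $G$-equivariant identification with $F=C'_{t_0}$, under which the new $\Pi$-action on $F$ is twisted by the automorphism
\[
\tilde\iota : \Pi\lra\Pi,\qquad \gamma\longmapsto\delta^{-1}\iota_*(\gamma)\delta.
\]
A short computation using the symmetry $\iota_*\delta=\gamma_0^r\cdot\delta^{-1}\cdot\gamma_\infty^s$ yields the clean formulas
\[
\tilde\iota(\gamma_0)=\gamma_\infty^\delta,\qquad \tilde\iota(\gamma_\infty^\delta)=(\gamma_\infty^\delta)^{-s}\cdot\gamma_0\cdot(\gamma_\infty^\delta)^s.
\]
In parallel I would track what the inverted gluing $\alpha^{-1}$ becomes in $\Sets^{(\Pi,G)_\delta,\succ}$: unwinding the composition defining $F_\delta^\succ$, it is the bijection obtained from $\alpha$ by inverting, by swapping the roles of $[\cdot]_0$ and $[\cdot]_\infty$, and by conjugating through $\delta$.

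Now take the standard model $(F_{u,h},\alpha_{u,h})$ realizing $\Inv_\delta(p)=\ps{u,h}$ (Theorem \ref{thm_delta_invariant}), and use the base point $1\in F_{u,h}=G$. The transported $\Pi$-action sends $1$ under $\gamma_\infty^\delta$ to $\tilde\iota(\gamma_\infty^\delta)\cdot_{\text{old}} 1$, giving
\[
u' \;=\; \varphi'_{1}(\gamma_\infty^\delta) \;=\; u^{-s}h^{-1}u^{-1}h\,u^s \;=\; (hu^s)^{-1}u^{-1}(hu^s).
\]
Setting $g:=hu^s$, the analogous bookkeeping for the inverted gluing produces an element $h'\in H'_{\delta,1}$ with $gh'g^{-1}\in\langle u^{-1}\rangle h^{-1}$; conjugating the pair $(u',h')$ by $g$ therefore places it in the $\bI(G)$-class $\ps{u^{-1},h^{-1}}$. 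This establishes the main assertion $\Inv_\delta([-1]^*p)=\ps{u^{-1},h^{-1}}$. The equivalences (a)--(d) then fall out: (a)$\Leftrightarrow$(b) because a lift of $[-1]$ is the same data as an isomorphism $p\rightiso [-1]^*p$ (the inverse comes from applying $[-1]$ again); (b)$\Leftrightarrow$(c) by the bijection $\pi_0(\cAdm(G)_E)\rightiso\bI(G)$ of Definition \ref{def_delta_invariant}; and (c)$\Leftrightarrow$(d) is the literal unpacking of the defining equivalence relation on $\bI(G):=G\bs(G\times G)/\bZ$, whereby $(u^{-1},h^{-1})$ and $(u,h)$ represent the same class exactly when some $g\in G$ conjugates the former to a $\bZ$-translate $(u,u^rh)$ of the latter.

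The main obstacle is the middle step of pinning down $h'$ precisely enough to identify its coset modulo $\langle u'\rangle$: the composition
$\langle\gamma_0\rangle\bs F\xrightarrow{\xi_0}C_0'\xrightarrow{\alpha^{-1}}C_0'\xrightarrow{\xi_\infty^{-1}}\langle\gamma_\infty\rangle\bs C'_{t_\infty}\xrightarrow{\delta^{-1}}\langle\gamma_\infty^\delta\rangle\bs F$
that defines the new gluing, combined with the twist by $\tilde\iota$ and the normalization $\alpha_{u,h}([1]_0)=[h]_\infty$, produces several conjugations that must be reconciled. The symmetric choice of $\delta$ is essential: without it, the formulas for $\tilde\iota(\gamma_\infty^\delta)$ and for the transported gluing acquire additional conjugation factors which fail to cancel, and the resulting class is no longer visibly $\ps{u^{-1},h^{-1}}$.
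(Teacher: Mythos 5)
Your outline runs along the same track as the paper's proof: transport $p$ through $\Xi$ and $F_\delta^\succ$ into $\Sets^{(\Pi,G)_\delta,\succ}$, exploit the symmetric-path identity $\iota_*\delta=\gamma_0^r\delta^{-1}\gamma_\infty^s$, and read off the $\delta$-invariant from the twisted monodromy and the transported gluing. Your computation of the first coordinate is correct and agrees with the paper after the expected change of base point (with $g:=hu^s$ one has $u'=g^{-1}u^{-1}g$, and at the paper's base point $\ol{x_\infty}=\tilde\iota^{-1}(\gamma_\infty^{-s}\delta x_0h)$, which under your identifications is $u^{-s}h\in F_{u,h}$, the two values of $\varphi'(\gamma_\infty^\delta)$ match by the conjugation rule of Proposition \ref{prop_obvious}(e)).

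The genuine gap is exactly where you flag it: the assertion that there exists $h'\in H'_{\delta,1}$ with $gh'g^{-1}\in\langle u^{-1}\rangle h^{-1}$ is stated without derivation, and that is the substantive step. It does hold --- working it through gives $h'=u^{-s}h^{-1}u^s$, so $gh'g^{-1}=h^{-1}$ on the nose --- but to get there one has to unwind the composite $\delta^{-1}\circ\xi_\infty^{-1}\circ\ol\alpha\circ\xi_0$, where $\ol\alpha=\tilde\iota^{-1}\circ\alpha^{-1}\circ\tilde\iota$, against the $\tilde\iota$-twisted $\Pi$-action, and these conjugations must cancel precisely. The paper avoids this bookkeeping by not working at the base point $1$ (i.e.\ $x_0$): it instead sets up the computation at $\ol{x_\infty}:=\tilde\iota^{-1}(\gamma_\infty^{-s}\delta x_0h)$, for which the short chain \eqref{eq_iota_pullback_3}--\eqref{eq_iota_pullback_5} yields $H_{\delta,\ol{x_\infty}}=G_{[\delta\ol{x_\infty}]}h^{-1}$ directly (and $\varphi_{\ol{x_\infty}}(\gamma_\infty^\delta)=h^{-2}u^{-1}h^2$), so the pair $(h^{-2}u^{-1}h^2,h^{-1})$ visibly lies in $\ps{u^{-1},h^{-1}}$ after conjugating by $h^2$. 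If you keep your base point, you should actually carry out the $H'_{\delta,1}$ computation; if you switch to the paper's base point, the gluing comes out for free. The derivation of (a)--(d) at the end is fine.
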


\begin{proof} The equivalence of (a),(b),(c),(d) follows from the statement that $\Inv_\delta([-1]^*p) = \ps{u^{-1},h^{-1}}$, so this is what we will prove.


Via $\Xi$, $p : C\rightarrow E$ corresponds to an object $(q : D\rightarrow\bP^1,\alpha)\in\cC_{\bP^1}^\succ$. Let $\ol{D} := \iota^*D$, so we have a cartesian diagram
\[\begin{tikzcd}
\ol{D}\ar[r,"\tilde{\iota}"]\ar[d,"\ol{q}"] & D\ar[d,"q"] \\
\bP^1\ar[r,"\iota"] & \bP^1
\end{tikzcd}\]
with $\tilde{\iota}$ a $G$-equivariant isomorphism. Via the standard normalization $\bP^1\rightarrow E$, $[-1]$ induces the automorphism $\iota$, so it suffices to show that if $\Inv_\delta(q,\alpha) = \ps{u,h}$, then $\Inv_\delta(\iota^*q,\iota^*\alpha) = \ps{u^{-1},h^{-1}}$.


The map $\tilde{\iota}$ induces bijections $\tilde{\iota} : \ol{D}_0\rightiso D_\infty$ and $\tilde{\iota} : \ol{D}_\infty\rightiso D_0$. Thus $\alpha : D_0\rightiso D_\infty$ induces a bijection
$$\ol{\alpha} := \iota^*\alpha : \ol{D}_0\stackrel{\tilde{\iota}}{\lra} D_\infty\stackrel{\alpha^{-1}}{\longrightarrow} D_0\stackrel{\tilde{\iota}^{-1}}{\lra}\ol{D}_\infty$$


Let $x_0\in D_{t_0}$ be a point, and as usual let
$$H_{\delta,x_0} := \{h\in G \;|\; \alpha([x_0]) = [\delta x_0 h]\} \in G_{[\delta x_0]}\bs G$$
Fix $h\in H_{\delta, x_0}$, then we have
$$\Inv_\delta(q,\alpha) = \ps{\varphi_{x_0}(\gamma_\infty^\delta),h}$$
Since $\delta$ is symmetric, write $\iota_*\delta = \gamma_0^r\delta^{-1}\gamma_\infty^s$ for some $r,s\in\bZ$. Let $x_\infty := \gamma_\infty^{-s}\delta x_0 h\in D_{t_\infty}$, so that $\alpha([x_0]) = [x_\infty]$. The canonical isomorphism $F_{t_0}\circ \iota^* \cong F_{\iota(t_0)} = F_{t_\infty}$ evaluated at $q : D\rightarrow\bP^1$ is realized by the bijection
$$\tilde{\iota} : \ol{D}_{t_0}\rightiso D_{t_\infty}$$
which is $G$-equivariant and moreover (by definition of $\iota_* : \pi_1(\bP^*,t_0)\rightarrow\pi_1(\bP^*,t_\infty)$ satisfies:
\begin{equation}\label{eq_iota_pullback_1}
\tilde{\iota}(\gamma z) = i_*\gamma\cdot\tilde{\iota}(z)\qquad \text{for all $z\in\ol{D}_{t_0}$ and $\gamma\in\pi_1(\bP^*,t_0)$}	
\end{equation}
In particular, this implies that $\tilde{\iota} : \ol{D}_{t_0}\rightarrow D_{t_\infty}$ commutes with $[\cdot]$, and the same is true for $\tilde{\iota} : \ol{D}_{t_\infty}\rightarrow D_{t_0}$.


Let $\ol{x_\infty} := \tilde{\iota}^{-1}(x_\infty)\in \ol{D}_{t_0}$. Then by \eqref{eq_iota_pullback_1} and $G$-equivariance of $\tilde{\iota}$, for any $\gamma\in\pi_1(\bP^*,t_0)$ we have
\begin{equation}\label{eq_iota_pullback_2}
x_\infty\varphi_{\ol{x_\infty}}(\gamma) = \tilde{\iota}(\ol{x_\infty}\cdot\varphi_{\ol{x_\infty}}(\gamma)) = \tilde{\iota}(\gamma\cdot\ol{x_\infty}) = \iota_*\gamma\cdot x_\infty = x_\infty\varphi_{x_\infty}(\iota_*\gamma)
\end{equation}
Since $\iota_*\delta = \gamma_0^r\delta^{-1}\gamma_\infty^s$, we have $\iota_*(\gamma_\infty^\delta) = \gamma_0^{\delta^{-1}\gamma_\infty^s}$. Thus since the $G$-action on $D_{t_\infty}$ is free, we get

$$\begin{array}{rcll}
\varphi_{\ol{x_\infty}}(\gamma_\infty^\delta) & = & \varphi_{x_\infty}(\gamma_0^{\delta^{-1}\gamma_\infty^s}) & \text{take $\gamma = \gamma_\infty^\delta$ in \eqref{eq_iota_pullback_2}}\\
 & = & \varphi_{\delta^{-1}\gamma_\infty^s x_\infty}(\gamma_0) & \text{by Proposition \ref{prop_obvious}(d)} \\
 & = & \varphi_{x_0h}(\gamma_0) & \text{since $x_\infty := \gamma_\infty^{-s}\delta x_0 h$} \\
 & = & h^{-1}\varphi_{x_0}(\gamma_0)h & \text{by Proposition \ref{prop_obvious}(e)} \\
 & = & h^{-2}\varphi_{x_0}(\gamma_\infty^\delta)^{-1}h^2 & \text{since $q$ is balanced (see Proposition \ref{prop_balanced_condition})}
\end{array}$$
This computes the first part of $\Inv(\iota^*q,\iota^*\alpha)$. To compute the second part, note that
\begin{equation}\label{eq_iota_pullback_3}
\ol{\alpha}([\ol{x_\infty}]) = \tilde{\iota}^{-1}(\alpha^{-1}([x_\infty])) = \tilde{\iota}^{-1}(\alpha^{-1}([\gamma_\infty^{-s}\delta x_0h])) = \tilde{\iota}^{-1}(\alpha^{-1}([\delta x_0h])) = \tilde{\iota}^{-1}([x_0])
\end{equation}

On the other hand, for $g\in G$, we have
\begin{equation}\label{eq_iota_pullback_4}
\tilde{\iota}(\delta\ol{x_\infty}g) = \iota_*\delta\cdot\tilde{\iota}(\ol{x_\infty})\cdot g = \gamma_0^r\delta^{-1}\gamma_\infty^s x_\infty g	
\end{equation}
Combining \eqref{eq_iota_pullback_3} with \eqref{eq_iota_pullback_4}, we get
\begin{equation}\label{eq_iota_pullback_5}
\ol{\alpha}([\ol{x_\infty}]) = [\delta\ol{x_\infty}g]\iff \tilde{\iota}^{-1}([x_0]) = [\tilde{\iota}^{-1}(\gamma_0^r\delta^{-1}\gamma_\infty^s x_\infty g)] \iff [x_0]  = [\gamma_0^r\delta^{-1}\gamma_\infty^s x_\infty g] = [x_0hg]
\end{equation}
Here we have used the fact that $\tilde{\iota}$ commutes with $[\cdot]$. On the other hand, note that
$$G_{[\delta\ol{x_\infty}]} = G_{[\ol{x_\infty}]} = G_{[x_\infty]} = G_{[\delta x_0h]} = G_{[x_0h]} = h^{-1}G_{[x_0]}h$$
Here we have used the fact that $\tilde{\iota}$ and (the fiber bijections induced by) $\delta$ are both $G$-equivariant. Thus \eqref{eq_iota_pullback_5} holds if and only if $hg\in G_{[x_0]}$, equivalently $g\in h^{-1}G_{[x_0]} = G_{[\delta\ol{x_\infty}]}h^{-1}$. This shows that
$$H_{\delta,\ol{x_\infty}} = G_{[\delta \ol{x_\infty}]}h^{-1}$$
Thus $\Inv_\delta(\iota^* q,\iota^*\alpha) = \ps{h^{-2}\varphi_{x_0}(\gamma_\infty^\delta)^{-1}h^2,h^{-1}} = \ps{\varphi_{x_0}(\gamma_\infty^\delta)^{-1},h^{-1}}$, which proves the Proposition.
\end{proof}

\subsection{Automorphism groups of cuspidal objects of $\cAdm(G)$}\label{ss_cuspidal_automorphisms}
By Theorem \ref{thm_delta_invariant}, every object $(F,\alpha)\in\Sets^{(\Pi,G)_\delta,\succ}$ is isomorphic to an object of the form $(F_{u,h},\alpha_{u,h})$ where $(u,h)$ represents the class $\Inv_\delta(F,\alpha)\in\bI(G)$. Here we calculate the automorphism group of the pair $(F_{u,h},\alpha_{u,h})$.


First we consider automorphisms of the $(\Pi,G)$-set $F_{u,h}$. Let $M_{u,h} := \langle u,h^{-1}u^{-1}h\rangle$ as in \eqref{eq_Muh}. Let $S_G$ denote the symmetric group on the underlying set of $G$, then an automorphism of the $(\Pi,G)$-set $F_{u,h}$ is given by a permutation $\sigma\in S_G$ such that:
\begin{itemize}
\item ($G$-equivariance) $\sigma(xg) = \sigma(x)g$ for all $x\in F_{u,h}$, $g\in G$.
\item ($\Pi$-equivariance) $\sigma(mx) = m\sigma(x)$ for all $x\in F_{u,h}$, $m\in M_{u,h}$.
\end{itemize}
Equivariance in $G$ implies that $\sigma(g) = \sigma(1\cdot g) = \sigma(1)\cdot g$ so $\sigma$ is determined by $\sigma(1)\in G$. On the other hand $\Pi$-equivariance implies that $m\sigma(1) = \sigma(m\cdot 1) = \sigma(m) = \sigma(1\cdot m) = \sigma(1)\cdot m$, so $\sigma(1)\in C_G(M_{u,h})$. Conversely, for any $a\in C_G(M_{u,h})$, the permutation
$$\sigma_a : G\rightarrow G\qquad\text{sending}\qquad g \mapsto ag$$
defines a permutation of $G = F_{u,h}$ which is both right $G$-equivariant and left $M_{u,h}$-equivariant, so it defines an automorphism of $F_{u,h}$. Next we seek to identify the elements $a\in C_G(M_{u,h})$ such that $\sigma_a$ respects the gluing $\alpha_{u,h}$. For $a\in C_G(M_{u,h})$, $\sigma_a$ respects the gluing if and only if we have an equality of cosets
$$\langle u\rangle\cdot h\sigma_a(x) = \langle u\rangle\cdot\sigma_a(hx)\qquad \text{for all $x\in F_{u,h}$}$$
This is equivalent to saying
\begin{equation}\label{eq_dehn_twist}
\langle u\rangle aha^{-1} = \langle u\rangle h\qquad\text{or equivalently}\qquad aha^{-1} = u^{k_a}h \quad\text{for some $k_a\in\bZ$}
\end{equation}

Thus, automorphisms of $(F_{u,h},\alpha_{u,h})$ are precisely the permutations $\sigma_a$ such that $a\in G$ centralizes $M_{u,h} = \langle u,h^{-1}u^{-1}h\rangle$ and satisfies \eqref{eq_dehn_twist}. Let
\begin{equation}\label{eq_AGuh}
A_{G,u,h} := \{a\in C_G(M_{u,h}) : aha^{-1} = u^{k_a}h\text{ for some $k_a\in\bZ$}\}\le C_G(M_{u,h})	
\end{equation}

\begin{prop}\label{prop_AGuh} Let $A_{G,u,h}$ be as in \eqref{eq_AGuh}. The map
\begin{eqnarray*}
z : A_{G,u,h} & \lra & \langle u\rangle \\
a & \mapsto & [a,h] := aha^{-1}h^{-1}
\end{eqnarray*}
is a group homomorphism which fits into an exact sequence
$$1\lra Z(G)\lra A_{G,u,h}\stackrel{z}{\lra}\langle u^{k_{u,h}}\rangle\lra 1$$
where $k_{u,h}$ is the smallest positive integer such that $(u,h)$ is conjugate to $(u,u^{k_{u,h}}h)$. In particular $A_{G,u,h}$ is an extension of a cyclic group of order $|u|/k_{u,h}$ by $Z(G)$. If $M_{u,h} = G$, then $A_{G,u,h} = Z(G)$ and $k_{u,h} = |u|$.
\end{prop}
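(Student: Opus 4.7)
The plan is to verify the four assertions in order: (1) $z$ is a homomorphism, (2) $\ker z = Z(G)$, (3) $\im z = \langle u^{k_{u,h}}\rangle$, and (4) the degeneration when $M_{u,h} = G$.

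First I would show $z$ is a group homomorphism. Given $a,b \in A_{G,u,h}$ with $aha^{-1} = u^{k_a}h$ and $bhb^{-1} = u^{k_b}h$, compute $(ab)h(ab)^{-1} = a(u^{k_b}h)a^{-1}$. Since $a \in C_G(M_{u,h}) \ni u$, it centralizes $u^{k_b}$, giving $(ab)h(ab)^{-1} = u^{k_b}\cdot aha^{-1} = u^{k_a+k_b}h$. Hence $z(ab) = u^{k_a+k_b} = z(a)z(b)$, which also shows $ab \in A_{G,u,h}$, so $A_{G,u,h}$ is indeed a group. For the kernel, $a \in \ker z$ iff $aha^{-1} = h$, i.e.\ $a \in C_G(h)$. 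Since also $a \in C_G(M_{u,h}) \subseteq C_G(u)$ and $(u,h)$ generates $G$ (because $\ps{u,h} \in \bI(G)$), we get $a \in C_G(\langle u,h\rangle) = Z(G)$. Conversely $Z(G) \subseteq A_{G,u,h}$ is obvious.

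Next I would identify the image. By definition, $u^k \in \im z$ iff there exists $a \in A_{G,u,h}$ with $aha^{-1} = u^k h$, which in particular requires $aua^{-1} = u$ and hence says that $g := a$ satisfies $g \cdot (u,h) = (u, u^k h)$ under the conjugation action on $G \times G$. Conversely, and this is the key observation, if some $g \in G$ satisfies $gug^{-1} = u$ and $ghg^{-1} = u^kh$, then $g$ automatically centralizes $h^{-1}u^{-1}h$: the computation $g(h^{-1}u^{-1}h)g^{-1} = (u^kh)^{-1}u^{-1}(u^kh) = h^{-1}u^{-1}h$ uses only these two relations. Therefore such a $g$ lies in $C_G(M_{u,h})$, hence in $A_{G,u,h}$. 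Thus $\im z = \{u^k : (u,h) \sim (u, u^kh)\}$. The set $K := \{k \in \bZ : (u,h) \sim (u,u^kh)\}$ is closed under addition (compose the two conjugating elements, using that the first preserves $u$) and under negation, so it is a subgroup of $\bZ$, necessarily generated by its smallest positive element $k_{u,h}$. Hence $\im z = \langle u^{k_{u,h}}\rangle$, and the order of this image is $|u|/k_{u,h}$ (where we interpret $k_{u,h} \mid |u|$, as $|u| \in K$ trivially via $a = 1$).

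Finally, if $M_{u,h} = G$ then $C_G(M_{u,h}) = Z(G)$, so $A_{G,u,h} \le Z(G)$. For $a \in Z(G)$, the relation $aha^{-1} = u^{k_a}h$ collapses to $h = u^{k_a}h$, i.e.\ $u^{k_a} = 1$, so $|u| \mid k_a$; thus $z$ is trivial, and the only admissible conjugating elements are central. This forces $A_{G,u,h} = Z(G)$ and $k_{u,h} = |u|$. The whole argument is essentially a string of direct manipulations; the only place requiring a genuine idea is the observation in the middle paragraph that centralizing $u$ together with the twisting relation $ghg^{-1} = u^kh$ already forces centralization of $h^{-1}u^{-1}h$, which is what makes the ``image'' side of the exact sequence match the purely combinatorial invariant $k_{u,h}$ defined via conjugacy of pairs.
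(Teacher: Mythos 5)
Your proof is correct and follows essentially the same route as the paper: the same direct verification that $z$ is a homomorphism, the same identification of the kernel with $Z(G)$ via $G = \langle M_{u,h}, h\rangle$, and the same key computation showing that any $g$ centralizing $u$ with $ghg^{-1} = u^kh$ automatically centralizes $h^{-1}u^{-1}h$ and hence lies in $A_{G,u,h}$. Your explicit remark that $K = \{k : (u,h)\sim(u,u^kh)\}$ is a subgroup of $\bZ$ just makes precise the inclusion $\im z \subseteq \langle u^{k_{u,h}}\rangle$ that the paper leaves implicit.
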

\begin{remark} Let $(u,h)$ correspond to the cuspidal $G$-cover $\pi : C\rightarrow E$ (Theorem \ref{thm_delta_invariant}). It follows from the equivalence $\cC_E^{pc}\cong\Sets^{(\Pi,G)_\delta,\succ}$ that $A_{G,u,h}$ is precisely the vertical automorphism group $\Aut^v(\pi)$ (viewing $\pi$ in $\cAdm(G)$). Certainly $k_{u,h}$ always divides $|u|$, and accordingly $Z(G)$ is always a subgroup of $A_{G,u,h} \cong \Aut^v(\pi)$. We have equality if and only if the generating pairs $(u,h),(u,uh),(u,u^2h),\ldots,(u,u^{|u|-1}h)$ are all non-conjugate. Thus, equality fails if and only if there is some ``unexpected'' relation between these generating pairs, corresponding to an unexpected vertical automorphism that does not lie in $Z(G)$. In \S\ref{ss_automorphism_groups}, we will show that for $G = \SL_2(\bF_q)$ ($q\ge 5$), there are no unexpected automorphisms, so $k_{u,h} = |u|$ and $A_{G,u,h} = Z(G)$ for any generating pair of $\SL_2(\bF_q)$.
\end{remark}

\begin{proof}[Proof of Proposition \ref{prop_AGuh}] The description of $A_{G,u,h}$ follows from the exactness of the sequence, so it suffices to establish exactness. If $a,b\in A_{G,u,h}$, then
$$z(ab) = abhb^{-1}a^{-1}h^{-1} = az(b)ha^{-1}h^{-1} = z(b)aha^{-1}h^{-1} = z(b)z(a) = z(a)z(b)$$
so $z$ is a homomorphism. The kernel of $z$ must commute with both $h$ and $M_{u,h}$, but $G$ is generated by $M_{u,h}$ and $h$, so the kernel is precisely $Z(G)$. Since $A_{G,u,h}\subset C_G(M_{u,h})$, if $(u,h)$ is conjugate to $(u,u^{k_{u,h}}h)$, then there must be a $g\in G$ which centralizes $u$ and satisfies $\la{g}h := ghg^{-1} = u^{k_{u,h}}h$. But then
$$\la{g}(h^{-1}u^{-1}h) = \la{g}h^{-1}\la{g}u^{-1}\la{g}h = h^{-1}u^{-k_{u,h}}u^{-1}u^{k_{u,h}}h = h^{-1}u^{-1}h$$
so $g$ also centralizes $h^{-1}u^{-1}h$, so we have $g\in C_G(M_{u,h})$, hence $g\in A_{G,u,h}$. This shows that $z$ is surjective onto $\langle u^{k_{u,h}}\rangle$, so the sequence is exact. If $M_{u,h} = G$, then $C_G(M_{u,h}) = Z(G)$ and hence $A_{G,u,h}$ is both contained in and contains $Z(G)$, so it is equal to $Z(G)$. The exactness of the sequence then forces $k_{u,h} = |u|$.
\end{proof}

Let $\ev_1$ denote the ``evaluation at 1'' map
\begin{eqnarray*}
\ev_1 : \Aut_{\Sets^{(\Pi,G)_\delta,\succ}}(F_{u,h},\alpha_{u,h}) & \lra & G \\
\sigma & \mapsto & \sigma(1)
\end{eqnarray*}

\begin{thm}\label{thm_cuspidal_automorphisms} As usual we work over an algebraically closed field $k$ of characteristic 0. Let $t_0$ be a tangential base point at $0\in\bP^1$. Let $\iota$ be the unique automorphism of $\bP^1$ fixing $1$ and swapping $0,\infty$. Let $t_\infty := \iota(t_0)$, and let $\delta : t_0\leadsto t_\infty$ be a symmetric good path. The map $\ev_1$ induces an isomorphism of \emph{groups} (note that $\sigma(\tau(1)) = \sigma(1\cdot\tau(1)) = \sigma(1)\tau(1)$)
$$\ev_1 : \Aut_{\Sets^{(\Pi,G)_\delta,\succ}}(F_{u,h},\alpha_{u,h})\rightiso A_{G,u,h}$$
Let $\pi : C\rightarrow E$ be a cuspidal object of $\cAdm(G)(k)$ with $\Inv_\delta(\pi) = \ps{u,h}$. Recall that its Higman invariant is $[u^{-1},h^{-1}]$ (Proposition \ref{prop_cuspidal_higman_invariant}). Let $\Aut^v(\pi)$ denote its vertical automorphism group. This is precisely the group of $G$-equivariant automorphisms of $C$ inducing the identity on $E$. For a generating pair $(u,h)$ of $G$, let $k_{u,h}$ be the minimal positive integer such that $(u,h)$ is conjugate to $(u,u^{k_{u,h}}h)$. Then
\begin{itemize}
\item[(a)] Let $A_{G,u,h}$ be as in \eqref{eq_AGuh}. There is an isomorphism
$$\Aut^v(\pi)\rightiso A_{G,u,h}$$
Thus, the vertical automorphism group of $\pi$ is isomorphic to a subgroup of $G$ which is an extension of a cyclic group of order $|u|/k_{u,h}$ by $Z(G)$.
\item[(a')] The vertical automorphism groups of geometric points of $\cAdm(G)$ are all reduced to $Z(G)$ (equivalently, the map $\ol{\cM(G)}\rightarrow\ol{\cM(1)}$ is representable) if and only if $k_{u,h} = |u|$ for all generating pairs $(u,h)$ of $G$.
\item[(b)] If $\ps{u,h} \ne \ps{u^{-1},h^{-1}}$, then $\Aut_{\cAdm(G)(k)}(\pi) = \Aut^v(\pi)$. If $\ps{u,h} = \ps{u^{-1},h^{-1}}$, then $\Aut_{\cAdm(G)(k)}(\pi)$ is an extension of $\Aut(E)\cong\bZ/2\bZ$ by $\Aut^v(\pi)$.
\item[(c)] Every irreducible component of $C$ is Galois over $E$ with Galois group isomorphic to $M_{u,h} = \langle u,h^{-1}u^{-1}h\rangle$. The number of irreducible components of $C$ is $[G:M_{u,h}]$. In particular $C$ is irreducible if and only if $G$ is generated by $u,h^{-1}u^{-1}h$, in which case $\Aut^v(\pi) = Z(G)$.
\item[(d)] As usual let $O\in E$ be the origin. Let $x\in \pi^{-1}(O)$. Then an automorphism $\sigma\in\Aut^v(\pi)$ fixes $x$ if and only if it acts trivially on $\pi^{-1}(O)$, and there is an isomorphism
\begin{equation}\label{eq_vertical_automorphisms_of_RRD_1}
\{\sigma\in\Aut^v(\pi)\;|\; \text{$\sigma$ acts trivially on $\pi^{-1}(O)$}\} = \Stab_{\Aut^v(\pi)}(x) \rightiso A_{G,u,h}\cap \langle [u^{-1},h^{-1}]\rangle
\end{equation}
In particular, let $\pi : \cC\rightarrow\cE$ be the universal family over $\cAdm(G)$ and let $\cR_\pi$ be its reduced ramification divisor, then for any cuspidal geometric point $z\in\cR_\pi$ whose image in $\cAdm(G)$ has $\delta$-invariant $\ps{u,h}$, its vertical automorphism group is
\begin{equation}\label{eq_vertical_automorphisms_of_RRD_2}
\Aut^v(z) \cong A_{G,u,h}\cap\langle[u^{-1},h^{-1}]\rangle
\end{equation}

\end{itemize}
  \end{thm}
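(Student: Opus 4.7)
The plan is to leverage the equivalence of categories $\cC_E^c \cong \cC_{\bP^1}^\succ_{c} \cong \Sets^{(\Pi,G)_\delta,\succ}_{c}$ established in Proposition \ref{prop_equivalences} to translate the automorphism-theoretic question into a purely combinatorial one about the canonical model $(F_{u,h},\alpha_{u,h})$, which represents the isomorphism class with $\delta$-invariant $\ps{u,h}$ by Theorem \ref{thm_delta_invariant}. The identification $\ev_1 : \Aut(F_{u,h},\alpha_{u,h}) \rightiso A_{G,u,h}$ is already essentially proved in the discussion preceding the theorem: a $G$-equivariant permutation $\sigma$ of $F_{u,h} = G$ is determined by $\sigma(1) = a$ via $\sigma(g) = ag$, left $\Pi$-equivariance forces $a \in C_G(M_{u,h})$, and compatibility with $\alpha_{u,h}$ is precisely the condition \eqref{eq_dehn_twist} defining $A_{G,u,h}$. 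That this assignment is a group homomorphism is straightforward since $\sigma_{ab}(1) = (ab) = \sigma_a(1)\sigma_b(1) = \sigma_a(\sigma_b(1))$. Part (a) then follows by combining this with the equivalence and Proposition \ref{prop_AGuh}, and part (a$'$) follows from (a) together with Proposition \ref{prop_smooth_vertical_automorphisms}, which already reduces the smooth vertical automorphisms to $Z(G)$.

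For part (b), I would use the exact sequence $1 \to \Aut^v(\pi) \to \Aut_{\cAdm(G)(k)}(\pi) \to \Aut(E)$, where the last map sends an automorphism of $\pi$ to the induced automorphism of $E$. Since $E$ is a nodal elliptic curve, $\Aut(E) = \bZ/2\bZ$ is generated by $[-1]$. The image is nontrivial iff $[-1]$ lifts to an automorphism of $C$, which by Proposition \ref{prop_iota_pullback} is equivalent to $[-1]^*\pi \cong \pi$, i.e.\ $\ps{u,h} = \ps{u^{-1},h^{-1}}$. This gives the two cases described.

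Part (c) is a direct unfolding of the equivalence. The normalization $\nu_C : C' \to C$ induces a bijection between connected components of $C'$ and irreducible components of $C$ (since $C'$ is smooth, so each connected component is irreducible, and $\nu_C$ is a bijection away from nodes while nodes lie on two components of $C'$ meeting in $C$). Under $F_\delta^\succ\circ\Xi$, connected components of $C'$ correspond to $\Pi$-orbits on $F_{u,h}$, which (since $\Pi$ acts via the monodromy subgroup $M_{u,h} = \langle u, h^{-1}u^{-1}h\rangle$ by left multiplication on $F_{u,h} = G$) are the right cosets $M_{u,h}\backslash G$. Each component is a $G$-translate of a single one and Galois theory on the smooth locus identifies its $G$-stabilizer with $M_{u,h}$, so $Z/\Stab_G(Z) \cong E$ gives the asserted Galois structure; the final clause then follows because $M_{u,h} = G$ forces $A_{G,u,h} \subseteq C_G(M_{u,h}) = Z(G)$.

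The main technical work is part (d), and this is where I expect the genuine computation to lie. Under the standard normalization $\bP^1 \to E$ sending $1 \mapsto O$, the smooth point $O$ pulls back to $1 \in \bP^1$, so $\pi^{-1}(O)$ corresponds to $\pi'^{-1}(1) \subset C'$. Choosing a path $\epsilon : t_0 \leadsto t_1$ with $\gamma_1^\epsilon = (\gamma_\infty^\delta)^{-1}\gamma_0^{-1}$ (which exists because $\delta$ is good), the canonical bijection of Proposition \ref{prop_tbp}(b) together with the monodromy computation of Proposition \ref{prop_cuspidal_higman_invariant} yields a $G$-equivariant bijection $\pi^{-1}(O) \cong \langle [u^{-1},h^{-1}]\rangle \backslash G$. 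The automorphism $\sigma_a$ acts on this set by $[g]\mapsto[ag]$, so $\sigma_a$ fixes $[g]$ iff $a \in \langle[u^{-1},h^{-1}]\rangle$, a condition independent of $g$; hence fixing one point of $\pi^{-1}(O)$ forces $\sigma_a$ to fix all of them, giving \eqref{eq_vertical_automorphisms_of_RRD_1}. Finally, for the universal family, the reduced ramification divisor $\cR_\pi$ at a cuspidal point is supported on $\pi^{-1}(O)$ (the smooth points with nontrivial inertia, all mapping to the marking), so the vertical automorphism group of a geometric point $z \in \cR_\pi$ is precisely the stabilizer in $\Aut^v(\pi)$ of the chosen preimage of $O$, yielding \eqref{eq_vertical_automorphisms_of_RRD_2}. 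The subtle point to verify carefully will be that the canonical path $\epsilon$ identifies the local representation at a fiber over $1$ with the commutator $[u^{-1},h^{-1}]$ in a way compatible with the $G$-action on $F_{u,h}$ by right multiplication, so that $\sigma_a$'s action on $\pi^{-1}(O)$ really is the asserted left multiplication on cosets.
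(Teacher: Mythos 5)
Your proposal is correct and follows essentially the same route as the paper: parts (a), (a$'$), (c) are read off from the computation of $\Aut(F_{u,h},\alpha_{u,h})\cong A_{G,u,h}$ via the equivalence of categories and Proposition \ref{prop_AGuh}, part (b) uses the exact sequence to $\Aut(E)$ together with Proposition \ref{prop_iota_pullback}, and part (d) is handled exactly as in the paper by choosing $\epsilon:t_0\leadsto t_1$ with $\gamma_1^\epsilon=(\gamma_\infty^\delta)^{-1}\gamma_0^{-1}$ and computing the $\sigma_a$-action on $\langle\gamma_1^\epsilon\rangle\backslash F_{u,h}$. The compatibility you flag at the end is precisely what the functoriality statement in Proposition \ref{prop_tbp}(b) supplies, so there is no gap.
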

\begin{proof} Parts (a),(a'), and (c) follows immediately from Proposition \ref{prop_AGuh} and the discussion above, noting that the map $\cAdm(G)\rightarrow\ol{\cM(G)}$ removes $Z(G)$ from all automorphism groups of geometric points.








For (b), note that since automorphisms of $\pi$ in $\cAdm(G)$ are $G$-equivariant, they descend to automorphisms of $E \cong C/G$, so we have an exact sequence
$$1\lra \Aut^v(\pi)\lra\Aut_{\cAdm(G)(k)}(\pi)\lra\Aut(E)$$
Since $E$ is a nodal elliptic curve, $\Aut(E)$ is cyclic of order 2 so let $[-1]$ be the generator. Then the map $\Aut_{\cAdm(G)(k)}(\pi)\rightarrow\Aut(E)$ is surjective if and only if there is an isomorphism $[-1]^*\pi \rightarrow \pi$ inducing the identity on $E$. By Proposition \ref{prop_iota_pullback}, this happens if and only if $\ps{u,h} = \ps{u^{-1},h^{-1}}$.


It remains to prove (d). Since automorphisms of $\pi$ are $G$-equivariant and $G$-acts transitively on all fibers, if an automorphism fixes a point, then it must fix the entire fiber. Next we prove the isomorphism \eqref{eq_vertical_automorphisms_of_RRD_1}. Let $\nu : \bP^1\rightarrow E$ be a standard normalization, and let $\Xi_\nu(\pi) = (\pi' : C'\rightarrow \bP^1,\alpha_\pi)$ be the normalized-cover-with-gluing-data. Let $x'\in C'$ be the unique point lying over $x$, so $x'\in C'_1 := \pi'^{-1}(1)$. Let $\Aut_{\bP^1}(\pi')$ be the automorphism group of $(\pi',\alpha_\pi)\in\cC_{\bP^1}^\succ$, then by the equivalence $\Xi_\nu : \cC_E^{pc}\rightiso\cC_{\bP^1}^\succ$, it suffices to define an isomorphism
$$\Stab_{\Aut_{\bP^1}(\pi')}(x')\rightiso A_{G,u,h}\cap\langle u^{-1}h^{-1}uh\rangle$$

Let $t_1$ be a tangential base point at $1\in\bP^1$, and $\gamma_1\in\pi_1(\bP^*,t_1)$ the canonical generator of inertia. Since $\delta$ is a good path, for some path $\epsilon : t_0\leadsto t_1$, we have $(\gamma_\infty^\delta)^{-1}\gamma_0^{-1} = \gamma_1^\epsilon$. Let $\xi_1 : \langle\gamma_1\rangle\bs C_{t_1}'\rightiso C'_1$ be the canonical isomorphism given by the local picture near 1. Using the path $\epsilon$, we obtain a $G$-equivariant bijection 
$$\xi_1\circ\epsilon : \langle\gamma_1^\epsilon\rangle\bs C_{t_0}'\rightiso\langle\gamma_1\rangle\bs C_{t_1}'\rightiso C'_1$$
By Proposition \ref{prop_tbp}(b), these bijections define natural isomorphisms of functors $\cC_{\bP^1}^\succ\rightarrow\Sets$, and hence we can compute the action of $\Aut_{\bP^1}(\pi')$ on $C_1'$ via its action on $\langle\gamma_1^\epsilon\rangle\bs C_{t_0}'$. Passing to $\Sets^{(\Pi,G)_\delta,\succ}$, let $F_{\delta}^\succ(\pi') = (C'_{t_0},\alpha)\in\Sets^{(\Pi,G)_\delta,\succ}$, then the same computations as above shows that for any choice of $x_0\in C'_{t_0}$, $\Aut(C'_{t_0},\alpha)\cong A_{G,u,h}$, where $u = \varphi_{x_0}(\gamma_\infty^\delta)$, and $h\in H_{\delta,x_0}$. Since $\pi$ is balanced, we also have $\varphi_{x_0}(\gamma_0) = h^{-1}u^{-1}h$. Here, for $a\in A_{G,u,h}$, let $\sigma_a\in\Aut(C'_{t_0},\alpha)$ be the corresponding automorphism. Then $\sigma_a$ acts on $C_{t_0}'$ by $\sigma_a(x_0g) = x_0ag$ for any $g\in G$. Since $\gamma_1^\epsilon = (\gamma_\infty^\delta)^{-1}\gamma_0^{-1}$, $\varphi_{x_0}(\gamma_1^\epsilon) = u^{-1}h^{-1}uh$, and for any $g\in G$ we have
\begin{eqnarray*}
\sigma_a(\gamma_1^\epsilon x_0g) & = & \sigma_a(x_0\varphi_{x_0}(\gamma_1^\epsilon)g) \\
 & = & \sigma_a(x_0u^{-1}h^{-1}uhg) \\
 & = & x_0(au^{-1}h^{-1}uhg)
\end{eqnarray*}
so $\sigma_a$ fixes the coset $\langle\gamma_1^\epsilon\rangle x_0g$ if and only if $a\in\langle u^{-1}h^{-1}uh\rangle$. This establishes the isomorphism \eqref{eq_vertical_automorphisms_of_RRD_1}. The isomorphism \eqref{eq_vertical_automorphisms_of_RRD_2} follows immediately from the definition of $\cR_\pi$ in \S\ref{ss_ramification_divisor_of_universal_family}.
\end{proof}

\begin{remark}\label{remark_abelian_level_structures} If $G$ is abelian, then it follows from the discussion above that the vertical automorphism group of any cuspidal $G$-cover $\pi : C\rightarrow E$ is equal to $G$. In particular, the map $\ol{\cM(G)} := \cAdm(G)\fs Z(G)\rightarrow\ol{\cM(1)}$ is representable. For $n\ge 3$, it is known that $\cM(n) := \cM(\bZ/n\bZ\times\bZ/n\bZ)$ is representable. On the other hand, it follows from Theorem \ref{thm_cuspidal_automorphisms}(b) that for $n\ge 3$, the cuspidal objects of $\ol{\cM(n)} := \ol{\cM(\bZ/n\bZ\times\bZ/n\bZ)}$ also have no automorphisms, so $\ol{\cM(n)}$ is also representable (even as a stack over $\bZ[1/n]$), and hence must agree with the Deligne-Rapoport moduli stack of generalized elliptic curves with full level $n$ structures. This recovers \cite[Theorem 2.7]{DR72}.
\end{remark}

Using this, we are able to give a purely combinatorial statement of Theorem \ref{thm_congruence}.

\begin{thm}[Combinatorial congruence]\label{thm_combinatorial_congruence} Let $G$ be a finite group, let $\fc\in\Cl(G)$ be a conjugacy class, and let $c\in\fc$ be a representative. Let $A_{G,u,h}$ be as in \eqref{eq_AGuh}.
\begin{itemize}
	\item[(a)] Let $d' = d'_\fc := |C_G(\langle c\rangle)/\langle c\rangle|$.
	\item[(b)] Let $m' = m'_\fc$ be the least positive integer which kills $A_{G,u,h}\cap \langle [u^{-1},h^{-1}]\rangle$ for any generating pair $(u,h)$ of $G$ with $[u,h]\in\fc$.
\end{itemize}
Then for any component $\cX\subset\cAdm(G)_\fc$ with coarse scheme $X$, the map to the $j$-line $X\rightarrow\ol{M(1)}$ satisfies
$$\deg(X\rightarrow\ol{M(1)})\equiv 0\mod \frac{|c|}{\gcd(|c|,m'd')}.$$
Combinatorially speaking, let $F_2 = \langle a,b\rangle$ be a free group of rank 2 and let $\gamma_{-I}\in\Aut^+(F_2)$ be the automorphism $(a,b)\mapsto (a^{-1},b^{-1})$. Note that the image of $\gamma_{-I}$ in $\Out^+(F_2)\cong\SL_2(\bZ)$ is central. Let $\Epi^\ext(F_2,G)_\fc\subset\Epi^\ext(F_2,G)$ be the subset represented by surjections $\varphi : F_2\twoheadrightarrow G$ satisfying $\varphi([a,b])\in\fc$, then every $\Out^+(F_2)$-orbit on $\Epi^\ext(F_2,G)_\fc/\langle\gamma_{-I}\rangle$ has cardinality divisible by $\frac{|c|}{\gcd(|c|,m'd')}$.\footnote{Viewing $F_2$ as the fundamental group of a punctured torus and $a,b$ as a positively oriented basis, then in the setup of Situation \ref{situation_galois_theory}, the Higman invariant is technically given by $\varphi([b,a])$ (as opposed to $\varphi([a,b])$). However the automorphism of $F_2$ given by $(a,b)\mapsto (b,a)$ induces an isomorphism on the corresponding $\Out^+(F_2)$-orbits, and so the sizes of the orbits are the same.}
\end{thm}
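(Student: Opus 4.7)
The plan is to deduce Theorem \ref{thm_combinatorial_congruence} from the main congruence Theorem \ref{thm_congruence} by bounding the stacky invariants $e_\cX$, $d_\cX$, $m_\cX$ in terms of the combinatorial quantities $|c|$, $d'$, $m'$, and then transcribing into group-theoretic language via the Galois correspondence of Situation \ref{situation_galois_theory}.

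First I identify the three quantities. By definition of the Higman invariant (\S\ref{ss_higman_invariant}), the common ramification index of the universal cover over a component $\cX\subset\cAdm(G)_\fc$ is $e_\cX = |c|$. Next, since we work universally over an algebraically closed field of characteristic $0$ (which contains all roots of unity), Proposition \ref{prop_stacky_RRD}(b) applies: each connected component $\cR\subset\cR_\pi$ is finite \'etale Galois over $\cX$ with Galois group isomorphic to a subgroup of $C_G(\langle c\rangle)/\langle c\rangle$. The degree $d_\cX$ of the induced map on coarse schemes $R\to X$ equals this Galois group order, so $d_\cX$ divides $d'=|C_G(\langle c\rangle)/\langle c\rangle|$.

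For $m_\cX$, I combine Proposition \ref{prop_smooth_pointed_vertical_automorphisms} and Theorem \ref{thm_cuspidal_automorphisms}(d) to describe the vertical automorphism group of every geometric point of $\cR$. At a smooth point, it is $Z(G)\cap\langle c\rangle$; at a cuspidal point with $\delta$-invariant $\ps{u,h}$ (necessarily satisfying $[u,h]\in\fc$ by Proposition \ref{prop_cuspidal_higman_invariant}), it is $A_{G,u,h}\cap\langle[u^{-1},h^{-1}]\rangle$. Since $Z(G)\subseteq A_{G,u,h}$ by Proposition \ref{prop_AGuh}, and $\langle[u^{-1},h^{-1}]\rangle$ is conjugate to $\langle c\rangle$, the cuspidal description already dominates the smooth one, so the combinatorial $m'$ kills every vertical automorphism group of a geometric point of $\cR$. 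By the discussion in \S\ref{ss_congruence}, the local characters of $\sigma^*\Omega_{\cC'/\cR}$ at any geometric point of $\cR$ restrict faithfully to its vertical automorphism group, while the full automorphism group differs from the vertical subgroup by a factor dividing $12$ (coming from the elliptic points of $\ol{\cM(1)}$); hence $m_\cX$ divides $12m'$.

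Substituting these three bounds into Theorem \ref{thm_congruence} gives
$$\deg(X\to\ol{M(1)})\equiv 0\mod\frac{12|c|}{\gcd(12|c|,\,m_\cX d_\cX)}.$$
Since $m_\cX d_\cX$ divides $12m'd'$, the right-hand gcd divides $\gcd(12|c|,12m'd')=12\gcd(|c|,m'd')$, so $\frac{|c|}{\gcd(|c|,m'd')}$ divides $\frac{12|c|}{\gcd(12|c|,m_\cX d_\cX)}$, yielding the stated congruence on $\deg(X\to\ol{M(1)})$. Finally, to obtain the orbit statement, I apply Situation \ref{situation_galois_theory} and Theorem \ref{thm_basic_properties}(4): the fiber of $X\to\ol{M(1)}$ over a generic $j$-invariant is identified with a single $\Out^+(\Pi)$-orbit on $\Epi^\ext(\Pi,G)_\fc/\langle\gamma_{-I}\rangle$, and the degree $\deg(X\to\ol{M(1)})$ equals its cardinality; identifying $\Pi$ with $F_2$ via a positively oriented basis finishes the translation. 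The main technical point is the divisibility $m_\cX\mid 12m'$, which rests on pinning down the local characters of $\sigma^*\Omega_{\cC'/\cR}$ at cuspidal points as faithful characters of the vertical automorphism groups computed in Theorem \ref{thm_cuspidal_automorphisms}(d).
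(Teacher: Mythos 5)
Your proposal is correct and follows essentially the same route as the paper's proof: identify $e_\cX=|c|$, bound $d_\cX\mid d'$ via Proposition \ref{prop_stacky_RRD}, bound $m_\cX\mid 12m'$ by combining Proposition \ref{prop_smooth_pointed_vertical_automorphisms} (smooth points) and Theorem \ref{thm_cuspidal_automorphisms}(d) (cusps) with the inclusion $Z(G)\cap\langle c\rangle\subset A_{G,u,h}\cap\langle[u^{-1},h^{-1}]\rangle$, feed into Theorem \ref{thm_congruence}, and translate via the gerbe $\cAdm^0(G)\to\cM(G)$ and Theorem \ref{thm_basic_properties}\ref{part_fibers}. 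One small point: your remark about the local characters being faithful on vertical automorphism groups is true but not actually needed for the bound $m_\cX\mid 12m'$ --- only the order of the automorphism groups matters; faithfulness would be relevant for a lower bound on $m_\cX$, which the theorem does not require.
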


\begin{remark} Recall that if $G$ is not 2-generated, then $\cAdm(G)$ is empty (Corollary \ref{cor_empty_if_not_2_gen}), so the theorem is only nontrivial for finite 2-generated groups $G$.
\end{remark}

\begin{proof} Let $\cC\stackrel{\pi}{\rightarrow}\cE\rightarrow\cAdm(G)_\fc$ denote the universal family. Let $\cR_\pi\subset\cC$ denote the reduced ramification divisor. Suppose $u,h$ is a generating pair of $G$ with $[u,h]\in\fc$. By Proposition \ref{prop_smooth_pointed_vertical_automorphisms} the vertical automorphism groups of geometric points of $\cR_\pi$ which lie over $\cM(1)$ are equal to
$$Z(G)\cap \langle c\rangle = Z(G)\cap \langle [u^{-1},h^{-1}]\rangle\subset A_{G,u,h}\cap \langle [u^{-1},h^{-1}]\rangle$$
so they are all killed by $m'$. By Theorem \ref{thm_cuspidal_automorphisms}(d), the vertical automorphism group of any geometric point of $\cR_\pi$ is killed by $m'$, so the full automorphism groups are killed by $12m'$. By Proposition \ref{prop_stacky_RRD}, the components of $\cR_\pi$ have degree over $\cX$ dividing $d'$, so Theorem \ref{thm_congruence} gives the congruence on degrees. For the congruence on $\Out^+(F_2)$-orbits, first by Proposition \ref{prop_compactification}(b), $\cAdm^0(G)$ is a gerbe over $\cM(G)$, so $X$ is the smooth compactification of a unique component $M\subset M(G)_\fc$. Thus, $\deg(X\rightarrow \ol{M(1)}) = \deg(M\rightarrow M(1))$. On the other hand, the fiber of the finite \'{e}tale map $\cM(G)_\fc\rightarrow\cM(1)$ above a geometric point $x_E\in\cM(1)$ is in bijection with $\Epi^\ext(F_2,G)_\fc$, and for a component $\cM\subset\cM(G)_\fc$, the fiber of $\cM\rightarrow\cM(1)$ is in bijection with an $\Out^+(F_2)$-orbit on $\Epi^\ext(F_2,G)_\fc$. By Theorem \ref{thm_basic_properties}\ref{part_fibers} (also see \cite[Proposition 2.1.2(1)]{BBCL20}), any unramified geometric fiber of $M\rightarrow M(1)$ is in bijection with the quotient of a geometric fiber of $\cM\rightarrow\cM(1)$ by $\gamma_{-I}$. Thus we obtain the congruence on $\Out^+(F_2)$-orbits from the congruence on degrees.
\end{proof}

\begin{remark} In Theorem \ref{thm_combinatorial_congruence}, the full automorphism groups of geometric points of $\cR_\pi$ can also be expressed totally combinatorially, but the payoff is limited to at most an improvement of the resulting congruence by a factor of 12, so we do not treat it here.
\end{remark}

\subsection{Remarks on the obstructions $m,d$}\label{ss_remarks_on_md}

Let $\cX\subset\cAdm(G)$ be a component classifying covers with ramification index $e$. Our main congruence (Theorem \ref{thm_congruence}) gives
$$\deg(X\rightarrow \ol{M(1)})\equiv 0\mod\frac{12e}{\gcd(12e,m_\cX d_\cX)}$$
which is possibly diluted by the integers $d_\cX$ and $m_\cX$. In the best possible case, we can obtain $\deg(X\rightarrow \ol{M(1)})\equiv 0\mod 12e$. However in general this is too much to hope for since for example the congruence modular curve $\Adm(\bZ/2\bZ) = X_1(2)$ for elliptic curves with ``$\Gamma_1(2)$-structures'' corresponds to ramification index $e = 1$ but it has degree 3 over $\ol{M(1)}$; thus, the conditions on $m_\cX, d_\cX$ are necessary. A more reasonable question is to ask if we can always obtain the congruence
\begin{equation}\label{eq_sober_congruence}
\deg(X\rightarrow \ol{M(1)})\equiv 0\mod e	
\end{equation}

It turns out this also does not hold in general. As an explicit example, let $D_{2k} := \bZ/k\bZ\rtimes\mu_2$ denote the dihedral group of order $2k$, where $k$ is odd. In this case, its commutator subgroup is $\bZ/k\bZ$, its abelianization is $\mu_2$, and its center is trivial. By \cite[Theorem 4.2.2]{Chen18}, $\Adm(D_{2k})$ is isomorphic to $\frac{\phi(k)}{2}$ copies of the curve $\Adm(\bZ/2\bZ)$. In particular, every component of $\Adm(D_{2k})$ has degree 3 over $\ol{M(1)}$, whereas one can check that the commutator of any generating pair of $D_{2k}$ has order $k$ \cite[\S4.2]{Chen18}, so every component of $\Adm(D_{2k})$ classifies $D_{2k}$-covers with ramification index $e = k$. Letting $k\to\infty$ we see that in the general case, even the congruence \eqref{eq_sober_congruence} can fail arbitrarily badly. From the combinatorial congruence (Theorem \ref{thm_combinatorial_congruence}), one can check that for $G = D_{2k}$, $d' = 2$, so $d_\cX\mid 2$ and hence $d_\cX$ does not affect the congruence; Thus the culprit must be $m_\cX$. Indeed, let $(u,h)$ be the generating pair
$$(u,h) = ((1,1),(0,-1))\in D_{2k} = \bZ/k\bZ\rtimes\mu_2$$
In this case, we find that $(u,h)$ is conjugate to $(u,uh)$, so in this case we have $A_{D_{2k},u,h} = [D_{2k},D_{2k}]$ has order $k$, so the vertical automorphism group of the cusp in $\cAdm(D_{2k})$ with $\delta$-invariant $\ps{u,h}$ has order $k$, which implies that $k\mid m_\cX$. Thus $m_\cX$ is totally responsible for the failure of the congruence; in other words, $m_\cX$ is a real obstruction. For lack of examples, we do not know if $d_\cX$ is a real obstruction. 


\subsection{Congruences for nonabelian finite simple groups}\label{ss_congruences_for_NAFSG}

If $f : G_1\rightarrow G_2$ is a surjection of 2-generated finite groups, then by Theorem \ref{thm_basic_properties}\ref{part_functoriality}, $f$ induces a finite etale surjection of moduli spaces $\cM(G_1)\rightarrow \cM(G_2)$ (corresponding to the surjection of sets $f_* : \Epi^\ext(F_2,G_1)\ra\Epi^\ext(F_2,G_2)$). Thus, in terms of understanding the sizes of $\Out^+(F_2)$-orbits, a crucial special case is when $G$ is a finite simple group. If $G$ is cyclic, then $\Out^+(F_2)$ acts transitively on $\Epi^\ext(F_2,G)$ with size equal to the index of the congruence subgroup $\Gamma_1(n)\le\SL_2(\bZ)$ (c.f. \cite[\S3.9]{DS06}). Moreover, note that in this case our results say nothing - when $G$ is abelian, $|c| = 1$ and hence the congruence in Theorem \ref{thm_combinatorial_congruence} is trivial.


In this section, we will show that Theorem \ref{thm_combinatorial_congruence} often yields nontrivial congruences when $G$ is a nonabelian finite simple group. For this we wish to control the integers $d_\fc'$ and $m_\fc'$ of Theorem \ref{thm_combinatorial_congruence}. More generally we also obtain nontrivial congruences if we have some control on the proper normal subgroups of $G$. We begin with some group theoretic lemmas:


\begin{lemma}\label{lemma_intersection_of_conjugate_cyclic_subgroups} Let $G$ be a group and $U\le G$ a cyclic subgroup of finite index. Then for any $g\in G$, $g$ normalizes $U\cap gUg^{-1}$.	
\end{lemma}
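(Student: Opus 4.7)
The plan is to show that $V := U \cap gUg^{-1}$ is normalized by $g$ by exhibiting $V$ and $gVg^{-1}$ as two finite-index subgroups of the cyclic group $gUg^{-1}$ with the same index, and then invoking the uniqueness of finite-index subgroups in a cyclic group.

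First I would note that since $U$ has finite index in $G$, so does its conjugate $gUg^{-1}$, and hence their intersection $V$ has finite index in $G$ as well. In particular $V$ has finite index in both $U$ and $gUg^{-1}$. Since $gVg^{-1} \subseteq gUg^{-1}$, both $V$ and $gVg^{-1}$ are finite-index subgroups of the cyclic group $gUg^{-1}$.

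Next I would compute that these two subgroups have the same index in $gUg^{-1}$. On one hand, conjugation by $g$ gives an isomorphism $U \cong gUg^{-1}$ carrying $V$ to $gVg^{-1}$, so $[gUg^{-1} : gVg^{-1}] = [U:V]$. On the other hand, from the tower $V \le gUg^{-1} \le G$, the multiplicativity of indices gives $[G:V] = [G:gUg^{-1}] \cdot [gUg^{-1}:V]$; since $[G:gUg^{-1}] = [G:U]$, comparison with $[G:V] = [G:U] \cdot [U:V]$ yields $[gUg^{-1}:V] = [U:V]$ as well.

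Finally I would appeal to the elementary fact that a cyclic group (whether infinite cyclic or finite cyclic) has at most one subgroup of each given finite index. Applied to $gUg^{-1}$, this forces $V = gVg^{-1}$, i.e.\ $g$ normalizes $V$. There is no real obstacle here; the only point requiring mild care is the index computation in the previous paragraph, and the argument uses nothing beyond the cyclic-subgroup uniqueness principle.
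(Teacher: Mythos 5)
Your proof is correct and follows essentially the same route as the paper: both arguments show that $V=U\cap gUg^{-1}$ and $gVg^{-1}$ sit inside the cyclic group $gUg^{-1}$ with the same finite index (via $[G:U]=[G:gUg^{-1}]$ and multiplicativity), and then invoke uniqueness of finite-index subgroups of a cyclic group. The paper merely phrases this with explicit generators of $U$ and $V$, which is an inessential difference.
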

\begin{proof} Let $u\in U$ and $x\in U\cap gUg^{-1}$ be generators. Then for some integer $i,j$, we have $x = u^i = (gug^{-1})^j = gu^jg^{-1}$. Since $x\in U$, $gxg^{-1}\in gUg^{-1}$, so it would suffice to show that $gxg^{-1}\in U$. We have $gxg^{-1} = gu^ig^{-1}$, so it suffices to show that $\langle gu^ig^{-1}\rangle = \langle gu^jg^{-1}\rangle$. Note that $U\cap gUg^{-1}$ has the same (finite) index in both $U$ and $gUg^{-1}$. We have
$$[gUg^{-1} : \langle gu^ig^{-1}\rangle] = [gUg^{-1} : \langle gxg^{-1}\rangle] = [U : \langle x\rangle] = [gUg^{-1} : \langle x\rangle] = [gUg^{-1} : \langle gu^jg^{-1}\rangle]$$
Since $gUg^{-1}$ is cyclic, this implies $\langle gu^ig^{-1}\rangle = \langle gu^jg^{-1}\rangle$ as desired.
\end{proof}

\begin{lemma}\label{lemma_abelian_subgroup} Let $G$ be a nonabelian finite group. In the notation of Theorem \ref{thm_combinatorial_congruence}, let $\ell$ is a prime and let $k\ge j\ge 0$ be integers such that
\begin{itemize}
	\item[(a)] $k := \ord_\ell(m_\fc')$ and\footnote{Note that if $G$ cannot be generated by two elements, then $m'_\fc$ is the least positive integer which satisfies a trivial condition, hence $m'_\fc = 1$. In particular this lemma implicitly assumes that $G$ can be generated by two elements.}
	\item[(b)] $G$ does not contain a proper normal subgroup of order divisible by $\ell^{j+1}$.
\end{itemize}
Then $G$ must contain a subgroup isomorphic to $\bZ/\ell^k\bZ\times\bZ/\ell^{k-j}\bZ$. In particular, we must have $\ell^{2k-j}\mid|G|$.
\end{lemma}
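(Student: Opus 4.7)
The plan is to extract the desired subgroup $\bZ/\ell^k\bZ \times \bZ/\ell^{k-j}\bZ$ from the abelian subgroup $\langle u, x\rangle$ of $G$, where $(u,h)$ is a generating pair of $G$ with $[u,h]\in\fc$ chosen so that $A_{G,u,h}\cap\langle[u^{-1},h^{-1}]\rangle$ has order divisible by $\ell^k$ (such a pair exists by hypothesis (a) and the definition of $m_\fc'$), and where $x$ is a generator of the $\ell$-Sylow of this cyclic intersection, so $|x|=\ell^k$. By the definition \eqref{eq_AGuh} of $A_{G,u,h}$, $x$ lies in $C_G(M_{u,h})$; in particular $x$ commutes with both $u$ and its conjugate $v := h^{-1}uh$, so $\langle u, x\rangle$ is abelian.

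The heart of the argument is the bound $\ord_\ell|\langle u\rangle\cap \langle x\rangle| \le j$. First apply Lemma \ref{lemma_intersection_of_conjugate_cyclic_subgroups} with $U = \langle u\rangle$ and $g = h^{-1}$: the intersection $W := \langle u\rangle \cap h^{-1}\langle u\rangle h = \langle u\rangle\cap\langle v\rangle$ is normalized by $h$, and it is trivially normalized by $u$ since $W \subseteq \langle u\rangle$. Because $G = \langle u, h\rangle$, $W$ is normal in $G$; since $G$ is nonabelian, $W$ is proper, so hypothesis (b) yields $\ord_\ell|W|\le j$. Next, observe that the map $\phi\colon \langle x\rangle \to \langle u\rangle$, $z\mapsto [z,h]$, is a well-defined group homomorphism: well-defined because $[z,h] = u^{k_z}$ for $z\in A_{G,u,h}\supseteq\langle x\rangle$ by definition of $A_{G,u,h}$, and a homomorphism because $\langle x\rangle$ commutes elementwise with $\langle u\rangle\supseteq\phi(\langle x\rangle)$. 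For $y = u^a \in \langle u\rangle\cap\langle x\rangle$, the computation $\phi(y) = u^a(huh^{-1})^{-a}$ forces $(huh^{-1})^a \in \langle u\rangle\cap h\langle u\rangle h^{-1} = W$, where the two presentations of $W$ agree by the normality of $W$ in $G$. Hence $a$ is a multiple of $|u|/|W|$ and $y$ lies in the unique subgroup of $\langle u\rangle$ of order $|W|$; therefore $|\langle u\rangle\cap\langle x\rangle|$ divides $|W|$, giving the claimed bound.

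Set $m := \ord_\ell|u|$ and $i := \ord_\ell|\langle u\rangle\cap\langle x\rangle|\le j$; the $\ell$-Sylow of the abelian 2-generated group $\langle u, x\rangle$ is then $\bZ/\ell^{\max(m,k)}\bZ\times\bZ/\ell^{\min(m,k)-i}\bZ$ of order $\ell^{m+k-i}$. When $m\ge k$, this contains $\bZ/\ell^k\bZ\times\bZ/\ell^{k-j}\bZ$ since $i\le j$, completing the proof in this case. The main obstacle is the remaining case $m<k$, where $\langle u, x\rangle$ yields only $\bZ/\ell^k\bZ\times\bZ/\ell^{m-i}\bZ$, whose second factor can be too small. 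To handle it, I would first derive $m\ge k-j$ by noting that $\ker(\phi|_{\langle x\rangle}) = \langle x\rangle\cap Z(G)$ and that $Z(G)$, being a proper normal subgroup of the nonabelian group $G$, has $\ell$-part at most $\ell^j$; so $\phi(\langle x\rangle)\le \langle u\rangle$ has order at least $\ell^{k-j}$. Then I would enlarge $\langle u, x\rangle$ by adjoining the (central) $\ell$-Sylow $Z(G)^{(\ell)}$ and exploit the central extension structure of $A_{G,u,h}$ established in Proposition \ref{prop_AGuh} together with the refined identity $\langle u\rangle\cap A_{G,u,h} = W$ coming from the above computation, to produce the needed second cyclic summand of order $\ell^{k-j}$. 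This delicate accounting of invariant factors in the resulting abelian $\ell$-group---particularly ruling out collapse of the new central generators into $\langle u, x\rangle$---is the main obstacle.
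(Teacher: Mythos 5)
Your setup and the first part of the argument are sound, and in one respect sharper than the paper's: the containment $\langle u\rangle\cap\langle x\rangle\subseteq W$, with $W=\langle u\rangle\cap h^{-1}\langle u\rangle h$ normal (via Lemma \ref{lemma_intersection_of_conjugate_cyclic_subgroups}) and proper, hence of $\ell$-order at most $\ell^j$ by (b), strengthens the paper's weaker ``at least one of $\langle z\rangle\cap\langle u\rangle$, $\langle z\rangle\cap\langle h^{-1}uh\rangle$ has order not divisible by $\ell^{j+1}$''; and your derivation of $\ell^{k-j}\mid |u|$ from $\ker(\phi|_{\langle x\rangle})=\langle x\rangle\cap Z(G)$ is the same fact the paper extracts from $|A_{G,u,h}|$ dividing $|u|\cdot|Z(G)|$. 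The case $m:=\ord_\ell|u|\ge k$ is correctly finished.

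However, the proposal is genuinely incomplete, by your own admission, exactly where the real work lies. When $m<k$, the group $\langle u,x\rangle$ only yields $\bZ/\ell^k\bZ\times\bZ/\ell^{m-i}\bZ$, and nothing you prove gives $m-i\ge k-j$: you only have $m\ge k-j$ and $i\le j$, which is compatible with $m-i<k-j$. The sketched repair (adjoin the $\ell$-Sylow subgroup of $Z(G)$ and do a ``delicate accounting'' of invariant factors) is not carried out, and as described it cannot work in general: your own kernel computation shows that $m<k$ forces $\langle x\rangle\cap Z(G)$ to have $\ell$-order at least $k-m$, so in precisely the problematic configuration the central $\ell$-elements you propose to adjoin may already lie inside $\langle x\rangle$ (for instance when the $\ell$-Sylow of $Z(G)$ is contained in $\langle x\rangle$), and adjoining them enlarges nothing; no mechanism is exhibited that actually raises the second invariant factor to $\ell^{k-j}$. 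The paper, for its part, does not split into cases on $m$ at all: having $\ell^{k-j}\mid|u|$, it picks $u'\in\langle u\rangle$ of order $\ell^{k-j}$ and a generator $z'$ of $\langle z\rangle$, and concludes from $\ell^{j+1}\nmid|\langle z'\rangle\cap\langle u'\rangle|$ that $\langle z'^{\ell^j}\rangle\cap\langle u'\rangle$ is trivial and hence $\langle z',u'\rangle\cong\bZ/\ell^k\bZ\times\bZ/\ell^{k-j}\bZ$. Controlling that intersection tightly enough to force the direct-product structure is exactly the crux you identified as ``the main obstacle''; since your write-up neither reproduces such a step nor supplies a substitute, the lemma is not proved as it stands.
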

\begin{proof} By Theorem \ref{thm_combinatorial_congruence}, there must exist a generating pair $(u,h)$ of $G$ with $[u,h]\in\fc$ such that there exists an element $z\in A_{G,u,h}\cap\langle [u^{-1},h^{-1}]\rangle$ of order $\ell^k$. By Proposition \ref{prop_AGuh}, $|A_{G,u,h}|$ divides $|u|\cdot|Z(G)|$. Since $G$ is nonabelian, (b) implies that $\ell^{k-j}$ divides $|u|$. Moreover $z$ centralizes $M_{u,h} = \langle u,h^{-1}u^{-1}h\rangle$. Let $U := \langle u\rangle$ and $U^h := \langle h^{-1}uh\rangle$. By Lemma \ref{lemma_intersection_of_conjugate_cyclic_subgroups}, $h$ normalizes $U\cap U^h$. Since $U$ is cyclic, $u$ normalizes $U\cap U^h$, so $G = \langle u,h\rangle$ also normalizes $U\cap U^h$. Since $G$ is not cyclic, (b) implies that $\ell^{j+1}$ does not divide $|U\cap U^h|$. Thus $|\langle z\rangle\cap U|$ and $|\langle z\rangle\cap U^h|$ cannot both be divisible by $\ell^{j+1}$. Note that since $z$ centralizes $M_{u,h}$, both $\langle z,u\rangle$ and $\langle z,u^h\rangle$ are abelian subgroups of $G$. For simplicity assume $\ell^{j+1}$ does not divide $|\langle z\rangle\cap U|$. Choose $z'\in\langle z\rangle, u'\in\langle u\rangle$ such that $|z'| = \ell^k$ and $|u'| = \ell^{k-j}$, then $\ell^{j+1}$ does not divide $|\langle z'\rangle\cap\langle u'\rangle|$, so $\langle z'^j\rangle\cap\langle u'\rangle$ is trivial, so $\langle z'^j,u'\rangle\cong (\bZ/\ell^{k-j}\bZ)^2$. It follows that $\langle z',u'\rangle\cong\bZ/\ell^k\bZ\times\bZ/\ell^{k-j}\bZ$.
\end{proof}

The $\ell$-adic valuation $\ord_\ell(d'_\fc)$ can be easily bounded if we know the relative valuations of $\ord_\ell(|c|)$ and $\ord_\ell(|G|)$. One use of Lemma \ref{lemma_abelian_subgroup} is to do the same for $\ord_\ell(m'_\fc)$. A theorem of Vdovin gives another method to control $d_\fc',m_\fc'$ when $G$ is a nonabelian finite simple group other than $\PSL_2(\bF_q)$:

\begin{thm}[Vdovin {\cite[Theorem A]{Vdo99}}]\label{thm_vdovin} If $G$ is a nonabelian finite simple group which is not isomorphic to $\PSL_2(\bF_q)$ (for any prime power $q$), then for any abelian subgroup $A\le G$, we have $|A| < |G|^{1/3}$.	
\end{thm}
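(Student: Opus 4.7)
The plan is to follow a case-by-case argument based on the classification of finite simple groups (CFSG), which partitions the nonabelian finite simple groups into the alternating groups $A_n$ ($n\ge 5$), the finite simple groups of Lie type, and the $26$ sporadic groups. For each family I would bound the order of a maximal abelian subgroup and compare with $|G|^{1/3}$. The family $\PSL_2(\bF_q)$ must be excluded precisely because a Borel-type cyclic subgroup of order $q$ already violates the bound: $|\PSL_2(\bF_q)|\sim q^3$, so $|G|^{1/3}\sim q$ and the inequality $|A|<|G|^{1/3}$ fails on the nose.

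For the alternating groups, an abelian subgroup of $A_n$ is contained in the centralizer of one of its generators, and such centralizers (viewed inside $S_n$) are products of cyclic groups indexed by cycle type. One checks directly that the maximum of $\prod k_i$ over partitions $n=\sum k_i$ grows at most like $3^{n/3}$, whereas $|A_n|=n!/2$ grows superexponentially, so $|A|<|A_n|^{1/3}$ for all $n\ge 5$ (one verifies the small cases $n=5,6,7,8$ by hand). For the sporadic groups the statement is a finite computation: for each of the $26$ groups one reads off the orders of maximal abelian subgroups from their ATLAS character tables and checks the inequality.

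The heart of the argument, and the main obstacle, is the Lie type case. Here I would fix a simple algebraic group $\mathbf{G}$ of rank $r$ defined over $\bF_q$ with associated finite simple group $G=\mathbf{G}(\bF_q)/Z$. Every abelian subgroup $A\le G$ has a Jordan decomposition $A=A_s\times A_u$ into its semisimple and unipotent parts (after conjugation, since $A$ is abelian). The semisimple part lies in a maximal torus, so $|A_s|\le |T|$ for some maximal torus $T$, and $|T|$ is bounded by a product of $r$ factors each of size $O(q)$, giving $|A_s|=O(q^r)$. The unipotent part lies in a Sylow-$p$ subgroup $U$; here one must use the detailed structure theory of root subgroups to bound the maximal \emph{abelian} subgroups of $U$, a classical problem going back to Malcev. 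The key estimate, which is the deep technical input one would extract from CFSG case analysis (type by type: $A_r,B_r,C_r,D_r$ and each exceptional type), is that the largest abelian subgroup of $\mathbf{G}(\bF_q)$ has order at most roughly $q^{\lfloor\dim\mathbf{G}/3\rfloor}$, with the $\PSL_2$ family providing the unique tight case. Combined with the formula $|G|\sim q^{\dim\mathbf{G}}$, this gives $|A|<|G|^{1/3}$. The hard part is carrying out these root-system bounds uniformly across types and verifying the residual small-rank, small-$q$ cases by direct computation; this is precisely where Vdovin's original argument in \cite{Vdo99} concentrates its effort.
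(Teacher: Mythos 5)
The paper does not prove this theorem: it is cited directly as \cite[Theorem A]{Vdo99}, so there is no ``paper's own proof'' to compare your attempt against. The theorem is used as a black box to bound abelian subgroups in Corollary \ref{cor_vdovin}.

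Your outline is a fair description of the shape of Vdovin's actual argument, which is indeed a CFSG case analysis: alternating groups by the $3^{n/3}$ bound on abelian subgroups of $S_n$, sporadics by inspection, and groups of Lie type by the Jordan decomposition $A=A_s\times A_u$ combined with torus bounds and Mal'cev-type bounds on abelian subgroups of the unipotent radical, carried out type by type. You also correctly identify why $\PSL_2(\bF_q)$ is excluded (a cyclic subgroup of order $\sim q$ meets the $|G|^{1/3}$ threshold). Two small cautions: first, $A_5\cong\PSL_2(\bF_4)\cong\PSL_2(\bF_5)$ and $A_6\cong\PSL_2(\bF_9)$ are already excluded by hypothesis, so the ``small $n$'' checks begin at $n=7$ (note that $A_7$ has an abelian subgroup $C_3\times C_3$ of order $9 < 2520^{1/3}$, and $A_8\cong\PSL_4(\bF_2)$ is in scope). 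Second, and more substantively, your proposal explicitly defers the Lie type root-system bounds to ``where Vdovin's original argument concentrates its effort,'' so it is a plan rather than a proof; that deferred part is the entire mathematical content of the theorem. As a blind attempt this is the right strategy but it does not constitute a self-contained argument, which is fine here since the paper itself treats the result as an external input.
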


To summarize, when $G$ is nonabelian, $d'_\fc$ and $m'_\fc$ can be controlled as follows.

\begin{cor}\label{cor_vdovin} Let $G$ be a finite group. Let $c\in G$ and let $\fc$ be its conjugacy class. Let $\cX\subset\cAdm(G)_\fc$ be a connected component with coarse scheme $X$. For a prime $\ell$, let $r := \ord_\ell(|c|)$. Let $d_\fc',m_\fc'$ be as in Theorem \ref{thm_combinatorial_congruence}.
\begin{itemize}
\item[(a)] Suppose $G$ is nonabelian. Write $\ord_\ell(|G|) = r+s$, and let $j\ge 0$ be an integer such that $G$ does not contain any proper normal subgroups of order divisible by $\ell^{j+1}$. Then
\begin{itemize}
\item[$\bullet$] $\ord_\ell(d_\fc')\le s$.
\item[$\bullet$] $\ord_\ell(m_\fc')\le \floor{\frac{r+s+j}{2}}$.
\end{itemize}
\item[(b)] Suppose $G$ is nonabelian and simple.
\begin{itemize}
\item[$\bullet$] If $\ell^{r+1}\ge |G|^{1/3}$ and $G$ is not isomorphic to $\PSL_2(\bF_q)$ for any $q$, then $\ord_\ell(d_\fc') = 0$.
\item[$\bullet$] If $\ell^{k+1}\ge |G|^{1/3}$ ($k\in\bZ$) and $G$ is not isomorphic to $\PSL_2(\bF_q)$ for any $q$, then $\ord_\ell(m_\fc')\le \floor{\frac{k}{2}}$.
\end{itemize}
\end{itemize}
\end{cor}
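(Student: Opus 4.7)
My plan is to prove Corollary~\ref{cor_vdovin} by feeding the definition of $d_\fc'$, Lemma~\ref{lemma_abelian_subgroup}, and Vdovin's Theorem~\ref{thm_vdovin} into each of the four bullet points in turn; none of the four requires any machinery beyond what has already been set up. For the first bullet of (a), I would simply observe that $\langle c\rangle \le C_G(\langle c\rangle)$, so $|C_G(\langle c\rangle)| = |c|\cdot d_\fc'$ divides $|G|$; taking $\ord_\ell$ of both sides and subtracting $r$ yields $\ord_\ell(d_\fc') \le s$. The second bullet of (a) is a direct reading of Lemma~\ref{lemma_abelian_subgroup}: writing $k := \ord_\ell(m_\fc')$, the lemma furnishes a subgroup $\bZ/\ell^k\bZ\times\bZ/\ell^{k-j}\bZ\le G$, so $2k-j\le \ord_\ell(|G|) = r+s$, giving $k\le \lfloor (r+s+j)/2\rfloor$.

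For part (b), both bullets are contradictions obtained by combining the relevant structural fact with Vdovin's bound $|A| < |G|^{1/3}$ for abelian subgroups $A\le G$, valid whenever $G$ is nonabelian finite simple and not $\PSL_2(\bF_q)$. For the second bullet of (b), since any nonabelian finite simple group has no proper normal subgroup of order divisible by $\ell$, the choice $j = 0$ is admissible in Lemma~\ref{lemma_abelian_subgroup}; setting $m := \ord_\ell(m_\fc')$ exhibits an abelian subgroup $(\bZ/\ell^m\bZ)^{\oplus 2}$ of order $\ell^{2m}$, and Vdovin gives $\ell^{2m} < |G|^{1/3}\le \ell^{k+1}$, so $2m\le k$ and thus $m\le \lfloor k/2\rfloor$. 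For the first bullet of (b), I will argue by contradiction: suppose $\ord_\ell(|C_G(\langle c\rangle)|) \ge r+1$, and let $P$ be a Sylow $\ell$-subgroup of $C_G(\langle c\rangle)$, so $|P|\ge \ell^{r+1}$. The $\ell$-part $c_\ell\in \langle c\rangle$ lies in $Z(C_G(\langle c\rangle))$, hence in $Z(P)$, and $P/\langle c_\ell\rangle$ is a nontrivial $\ell$-group, which therefore has nontrivial center. Lifting a central element of order $\ell$ to $x\in P$ produces an element with $x\notin \langle c_\ell\rangle$ but $x^\ell\in\langle c_\ell\rangle$; because $c_\ell\in Z(P)$ commutes with $x$, the subgroup $\langle x,c_\ell\rangle \le G$ is abelian of order $\ell^{r+1}$. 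Vdovin then forces $\ell^{r+1} < |G|^{1/3}$, contradicting the hypothesis $\ell^{r+1}\ge |G|^{1/3}$.

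I do not anticipate any genuine obstacle. The only step which is not completely routine is producing the abelian subgroup of order $\ell^{r+1}$ in the first bullet of (b); everything else is either immediate from divisibility or a direct invocation of Lemma~\ref{lemma_abelian_subgroup}. I should also remember to handle the degenerate cases silently: if $G$ is not $2$-generated then $\cAdm(G)$ is empty by Corollary~\ref{cor_empty_if_not_2_gen}, while if $\ell\nmid |G|$ then $r = s = 0$ and the bounds are vacuous, so in each part it suffices to treat $2$-generated $G$ with $\ell\mid |G|$.
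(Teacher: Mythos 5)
Your proposal is correct and follows essentially the same route as the paper: the $d_\fc'$ bound in (a) is the trivial Lagrange observation, the $m_\fc'$ bounds come from Lemma \ref{lemma_abelian_subgroup}, and both bullets of (b) are contradictions against Vdovin's Theorem \ref{thm_vdovin}. Your Sylow/central-quotient construction for the first bullet of (b) is just a more explicit version of the paper's one-line argument, which takes an $\ell$-power torsion element $z\in C_G(\langle c\rangle)\setminus\langle c\rangle$ and notes that $\langle z,c\rangle$ contains an abelian subgroup of order at least $\ell^{r+1}$.
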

\begin{proof} Part (a) follows from Lemma \ref{lemma_abelian_subgroup}. For (b), if $\ord_\ell(d'_\fc) > 0$ then there exists an $\ell$-power torsion element $z\in C_G(\langle c\rangle)$ which does not lie in $\langle c\rangle$. Then $\langle z,c\rangle$ is abelian of order $\ell^{r+1}$, which is forbidden by Vdovin's theorem. Similarly, (c) immediately follows from Lemma \ref{lemma_abelian_subgroup} and Vdovin's theorem.
\end{proof}

This allows us to guarantee a nontrivial congruence in a number of settings. For example, we have

\begin{cor}\label{cor_vdovin_2} Let $G$ be a finite group. Let $c\in G$ and let $\fc$ be its conjugacy class. Let $\cX\subset\cAdm(G)_\fc$ be a connected component with coarse scheme $X$. For a prime $\ell$, let $r := \ord_\ell(|c|)$. Then we have
\begin{itemize}
\item[(a)] Write $\ord_\ell(|G|) = r+s$, and let $j\ge 0$ be an integer such that $G$ does not contain any proper normal subgroup of order divisible by $\ell^{j+1}$. Then
$$\deg(X\rightarrow\ol{M(1)})\equiv 0\mod \ell^{\ceil{\frac{r-3s-j}{2}}}$$
Combinatorially speaking, using the notation of Theorem \ref{thm_combinatorial_congruence}, every $\Out^+(F_2)$-orbit on $\Epi^\ext(F_2,G)_\fc/\langle\gamma_{-I}\rangle$ has cardinality divisible by $\ell^{\ceil{\frac{r-3s-j}{2}}}$.
\item[(b)] Suppose $G$ is nonabelian and simple. If $\ell^{r+1}\ge |G|^{1/3}$ and $G$ is not isomorphic to $\PSL_2(\bF_q)$ for any $q$, then
$$\deg(X\rightarrow\ol{M(1)})\equiv 0\mod \ell^{\ceil{\frac{r}{2}}}$$
Combinatorially speaking, using the notation of Theorem \ref{thm_combinatorial_congruence}, every $\Out^+(F_2)$-orbit on $\Epi^\ext(F_2,G)_\fc/\langle\gamma_{-I}\rangle$ has cardinality divisible by $\ell^{\ceil{\frac{r}{2}}}$.
\end{itemize}
\end{cor}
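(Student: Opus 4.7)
The plan is to derive this corollary as a near-mechanical consequence of the combinatorial congruence Theorem~\ref{thm_combinatorial_congruence} together with the bounds on $\ord_\ell(d'_\fc)$ and $\ord_\ell(m'_\fc)$ already packaged in Corollary~\ref{cor_vdovin}. There is no new geometric content to add; all the difficulty has been absorbed into those two earlier results, and what remains is an arithmetic extraction of the $\ell$-part.

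First I would apply Theorem~\ref{thm_combinatorial_congruence} to obtain
\[
\deg(X\rightarrow\ol{M(1)})\equiv 0\mod \frac{|c|}{\gcd(|c|,m'_\fc d'_\fc)}.
\]
Taking $\ord_\ell$ and using $\ord_\ell(|c|)=r$, this gives
\[
\ord_\ell\bigl(\deg(X\rightarrow\ol{M(1)})\bigr)\;\geq\; r-\min\bigl(r,\,\ord_\ell(m'_\fc)+\ord_\ell(d'_\fc)\bigr).
\]
When the minimum equals $r$ the resulting lower bound is $0$, but then $r-3s-j\le 0$ (resp.\ $r\le 0$) and the target exponent $\lceil(r-3s-j)/2\rceil$ (resp.\ $\lceil r/2\rceil$) is also $\le 0$, so the asserted congruence is vacuous. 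Hence it suffices to bound $\ord_\ell(m'_\fc d'_\fc)$ from above in each case.

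For part (a), Corollary~\ref{cor_vdovin}(a) gives $\ord_\ell(d'_\fc)\le s$ and $\ord_\ell(m'_\fc)\le \lfloor(r+s+j)/2\rfloor$, whence
\[
r-\ord_\ell(m'_\fc d'_\fc)\;\geq\; r - s - \lfloor(r+s+j)/2\rfloor \;=\; \lceil(r-3s-j)/2\rceil,
\]
the identity on the right being an instance of $n+\lceil x\rceil=\lceil n+x\rceil$ for $n\in\bZ$. For part (b), since $\ell^{r+1}\ge |G|^{1/3}$ and $G$ is nonabelian simple but not $\PSL_2(\bF_q)$, Corollary~\ref{cor_vdovin}(b) gives $\ord_\ell(d'_\fc)=0$ and (taking $k=r$) $\ord_\ell(m'_\fc)\le\lfloor r/2\rfloor$, so $r-\ord_\ell(m'_\fc d'_\fc)\ge r-\lfloor r/2\rfloor=\lceil r/2\rceil$.

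The combinatorial reformulation in terms of the cardinality of an $\Out^+(F_2)$-orbit on $\Epi^\ext(F_2,G)_\fc/\langle\gamma_{-I}\rangle$ follows verbatim, since Theorem~\ref{thm_combinatorial_congruence} already provides the parallel combinatorial congruence with the same modulus. The only step requiring any care is the elementary identity $r-s-\lfloor(r+s+j)/2\rfloor = \lceil(r-3s-j)/2\rceil$; I don't anticipate any genuine obstacle, since the serious work has been carried out in the proofs of Theorem~\ref{thm_combinatorial_congruence} (applying the main congruence Theorem~\ref{thm_congruence} and the combinatorial description of cuspidal automorphism groups from Theorem~\ref{thm_cuspidal_automorphisms}) and of Corollary~\ref{cor_vdovin} (invoking Lemma~\ref{lemma_abelian_subgroup} and Vdovin's theorem~\ref{thm_vdovin}).
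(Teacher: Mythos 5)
Your proposal is correct and follows essentially the same route as the paper, which likewise deduces the corollary immediately from Theorem~\ref{thm_combinatorial_congruence} together with the valuation bounds of Corollary~\ref{cor_vdovin}; you have merely written out the elementary $\ell$-adic arithmetic that the paper leaves implicit. The only cosmetic omission is the abelian case (where Corollary~\ref{cor_vdovin}(a) does not apply), but there $\cAdm(G)_\fc$ is empty for any nontrivial $\fc$ by Remark~\ref{remark_topological_higman}, so the statement is vacuous, exactly as the paper notes.
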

\begin{proof} We note that if $G$ is abelian then $\cAdm(G)_\fc$ is empty for any nontrivial conjugacy class $\fc$, so (a) holds trivially in this case (see Remark \ref{remark_topological_higman}). If $G$ is nonabelian, (a) and (b) follow immediately from Corollary \ref{cor_vdovin} and Theorem \ref{thm_combinatorial_congruence}.
\end{proof}

\begin{remark}\label{remark_combinatorial_improvement} We make a few observations.
\begin{itemize}
\item If $\ell\ge 3$ and $\fc$ is a class of $\SL_2(\bF_\ell)$ of order divisible by $\ell$, then since $\ell^2\nmid |\SL_2(\bF_\ell)|$, we may apply Corollary \ref{cor_vdovin_2}(a) with $r = 1$, $s = j = 0$ to obtain a congruence mod $\ell$. We will later recover this fact from a more general analysis in \S\ref{ss_automorphism_groups} below, where we will even show that when $G = \SL_2(\bF_q)$ and $q\ge 3$, $A_{\SL_2(\bF_q),u,h} = Z(\SL_2(\bF_q))$ for any generating pair of $\SL_2(\bF_q)$. Moreover we will explicitly compute $d_\fc'$ and $m_\fc'$ for any Higman invariant $\fc$.
\item Lemma \ref{lemma_abelian_subgroup} can be slightly sharpened by weakening condition (b). The proof only requires that $\ell^{j+1}$ does not divide $|Z(G)|$ or the order of any proper normal \emph{cyclic} subgroup. Accordingly part (a) of the above corollaries can also be sharpened.
\item Part (b) of the above corollaries use Vdovin's theorem and hence can be slightly sharpened by noting that the proof of Lemma \ref{lemma_abelian_subgroup} often gives an abelian subgroup which is larger than $(\bZ/\ell^k\bZ)^2$.
\end{itemize}
\end{remark}




\section{Applications to Markoff triples and the geometry of $\cM(\SL_2(\bF_q))$}\label{section_applications}

In this section we will specialize our discussions above to the case of admissible $\SL_2(\bF_q)$-covers of elliptic curves. The key new features we use here is the well-developed theory of the character variety for $\SL_2$-representations of a free group of rank 2, which is explained in \S\ref{ss_character_variety}, and the work of Bourgain, Gamburd, and Sarnak as explained in \S\ref{ss_bgs_conjecture}. In \S\ref{ss_automorphism_groups}, we use the theory of this character variety and the results obtained in \S\ref{ss_cuspidal_automorphisms} to compute the vertical automorphism groups of $\cAdm(\SL_2(\bF_q))$: we will prove that these automorphism groups are reduced to the center of $\SL_2(\bF_q)$. In \S\ref{ss_congruences_for_SL2}, we put everything together to obtain congruences for many Markoff-type equations. The application to the Markoff equation $x^2+y^2+z^2-3xyz=0$ is explicitly described in \S\ref{ss_bgs_conjecture}. In \S\ref{ss_genus_formulas} we give a genus formula for the components of $M(\SL_2(\bF_p))_{-2}$.


We work universally over a base scheme $\bS$ over which $|G|$ is invertible.

\subsection{The trace invariant}\label{ss_trace_invariant}
If $\pi : C\rightarrow E$ is an admissible $G$-cover of a 1-generalized elliptic curve $E$ over an algebraically closed field $k$, then relative to a compatible system of roots of unity $\{\zeta_n\}_{n\ge 1}$, we have defined its Higman invariant in \S\ref{ss_higman_invariant}, which is a conjugacy class in $G$.

\begin{defn} Let $q = p^r$ be a prime power. Let $G \le \GL_2(\bF_q)$ be a subgroup. Then the trace invariant of a geometric point of $\cAdm(G)_\Qbar$ is the trace of its Higman invariant relative to $\{\exp(\frac{2\pi i}{n})\}_{n\ge 1}$.
\end{defn}

As in \S\ref{ss_higman_invariant}, we also have decompositions
\begin{equation}\label{eq_trace_decomposition}
\ol{\cM(\SL_2(\bF_q))}_\Qbar = \bigsqcup_{t\in\bF_q}\ol{\cM(\SL_2(\bF_q))}_t \quad\text{and}\quad \cAdm(\SL_2(\bF_q))_\Qbar = \bigsqcup_{t\in\bF_q}\cAdm(\SL_2(\bF_q))_t
\end{equation}
into open and closed substacks corresponding to objects with trace invariant $t$.

\begin{defn} We say that a conjugacy class $\fc\in\Cl(\SL_2(\bF_q))$ (resp. an element $t\in\bF_q$) is $q$-admissible if it is the conjugacy class (resp. trace) of the commutator of a generating pair of $\SL_2(\bF_q)$.
\end{defn}

The $q$-admissible classes (resp. traces) were classified by McCullough and Wanderley \cite{MW11}.

\begin{prop}\label{prop_Higman_vs_trace} We state the following results in pairs, beginning with a group-theoretic statement, followed by a geometric consequence.
\begin{itemize}
\item[(a)] If $q = 2, 4, 8$ or $q\ge 13$, then the $q$-admissible traces are $\bF_q - \{2\}$. If $q = 3,9,11$, then the $q$-admissible traces are $\bF_q - \{1,2\}$. If $q = 5$, the $q$-admissible traces are $\in\bF_q - \{0,2,4\}$. If $q = 7$, the $q$-admissible traces are $\in\bF_q - \{0,1,2\}$.

\item[(a')] For any $t\in\bF_q$, $\ol{\cM(\SL_2(\bF_q))}_t$ is nonempty if and only if $t$ is $q$-admissible.

\item[(b)] If $q$ is even, then for any $t\in\bF_q - \{\pm 2\} = \bF_q - \{0\}$, there exists a unique conjugacy class in $\SL_2(\bF_q)$ with trace $t$. If $t = \pm 2 = 0$, then there exist precisely two classes with trace $t$, represented by $\spmatrix{\pm1}{0}{0}{\pm1}$ and $\spmatrix{\pm1}{1}{0}{\pm1}$, neither of which are $q$-admissible. In particular, every $q$-admissible trace $t\in\bF_q$ is the trace of a \emph{unique} $q$-admissible class $\fc$, and for any $a\in\bF_q$, any noncentral element of $\SL_2(\bF_q)$ with trace $a$ has the same order.

\item[(b')] If $q$ is even, then for any $q$-admissible class $\fc$ with trace $t$, the map
$$\cAdm(\SL_2(\bF_q))_\fc\lra \cAdm(\SL_2(\bF_q))_t$$
is an isomorphism.

\item[(c)] If $q$ is odd, then for any $t\in\bF_q - \{\pm 2\}$, there exists a unique conjugacy class in $\SL_2(\bF_q)$ with trace $t$. If $t = \pm 2$, then there exist precisely three classes with trace $t$, represented by $\spmatrix{\pm1}{0}{0}{\pm1},\spmatrix{\pm1}{1}{0}{\pm1}$, and $\spmatrix{\pm1}{a}{0}{\pm1}$, where $a\in\bF_q^\times$ is not a square. In particular, every $q$-admissible $t$ other than $-2$ is the trace of a \emph{unique} $q$-admissible class, and for any $a\in\bF_q$, any noncentral element of $\SL_2(\bF_q)$ with trace $a$ has the same order. The $q$-admissible classes of trace $-2$ are represented by $\spmatrix{-1}{1}{0}{-1},\spmatrix{-1}{a}{0}{-1}$ where $a\in\bF_q^\times$ is a nonsquare. If $u\in\bF_{q^2}^\times - \bF_q^\times$ with $u^2\in\bF_q$ and $\gamma := \spmatrix{u}{0}{0}{u^{-1}}$, then conjugation by $\gamma$ switches the two $q$-admissible classes of trace $-2$.

\item[(c')] If $q$ is odd, then for any $q$-admissible class $\fc$ with trace $t$, the map
$$\cAdm(\SL_2(\bF_q))_\fc\lra \cAdm(\SL_2(\bF_q))_t$$
is an isomorphism except for $t = -2$. If $t = -2$, let $\fc_1,\fc_2$ denote the two $q$-admissible classes of trace $-2$. Then conjugation by $\gamma$ induces an isomorphism $\cAdm(\SL_2(\bF_q))_{\fc_1}\rightiso\cAdm(\SL_2(\bF_q))_{\fc_2}$, and
$$\cAdm(\SL_2(\bF_q))_{-2} = \bigsqcup_{\fc\in\{\fc_1,\fc_2\}}\cAdm(\SL_2(\bF_q))_\fc$$
\end{itemize}
\end{prop}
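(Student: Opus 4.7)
The statements (a), (b), (c) are purely group-theoretic, while (a'), (b'), (c') are formal consequences of these together with the decomposition \eqref{eq_trace_decomposition} and the characterization of the Higman invariant as (the conjugacy class of) a commutator of a generating pair (Remark \ref{remark_topological_higman}). My plan is to treat (a), (b), (c) first and then deduce the geometric consequences.

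For (a), I would cite directly the classification of the image of the trace-of-commutator map $\Epi(F_2,\SL_2(\bF_q))\to\bF_q$ obtained by McCullough and Wanderley in \cite{MW11}; this is exactly the description of $q$-admissible traces claimed. For (b), (c) I would give a short self-contained argument from the rational canonical form over $\bF_q$: an element of $\SL_2(\bF_q)$ of trace $t\ne\pm 2$ is regular semisimple with characteristic polynomial $X^2-tX+1$ having distinct roots in $\bF_{q^2}$, and such an element is determined up to $\SL_2(\bF_q)$-conjugacy by $t$. For $t=\pm 2$ one writes the element as $\pm I + N$ with $N$ nilpotent; the non-central case is $\GL_2(\bF_q)$-conjugate to $\pm\spmatrix{1}{1}{0}{1}$, and the centralizer computation shows the $\GL_2$-orbit splits under $\SL_2$ into orbits parameterized by $\bF_q^\times/(\bF_q^\times)^2$, which is trivial for $q$ even (giving one non-central class) and of order two for $q$ odd (giving the two classes distinguished by whether the top-right entry is a square). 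The order statement (non-central elements of a fixed trace all share the same order) follows, since for $t\ne\pm 2$ the minimal polynomial determines the order, while non-central elements of trace $\pm 2$ are all unipotent, of order $p$, and in odd characteristic those of trace $-2$ have order $2p$. The non-admissibility of $\pm I$ is part of the McCullough-Wanderley classification cited for (a).

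For (a'), by Remark \ref{remark_topological_higman} every Higman invariant is a commutator of a generating pair, and by Theorem \ref{thm_basic_properties}\ref{part_fibers} combined with Proposition \ref{prop_compactification}(b) every such class is realized geometrically. Hence $\cAdm(\SL_2(\bF_q))_\fc$ is nonempty iff $\fc$ is $q$-admissible, and combining with \eqref{eq_trace_decomposition} gives (a'). For (b'), (c'): apart from the case $(q,t)=(\text{odd},-2)$, parts (b) and (c) show that there is a unique $q$-admissible class $\fc$ with trace $t$, while any other class of trace $t$ is non-admissible and contributes an empty stratum by (a'), so $\cAdm(\SL_2(\bF_q))_\fc\to\cAdm(\SL_2(\bF_q))_t$ is an equality. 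When $q$ is odd and $t=-2$, the same reasoning gives the disjoint decomposition $\cAdm(\SL_2(\bF_q))_{-2}=\cAdm_{\fc_1}\sqcup\cAdm_{\fc_2}$.

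It remains to construct the isomorphism $\cAdm_{\fc_1}\rightiso\cAdm_{\fc_2}$ via conjugation by $\gamma$. Since $u^2\in\bF_q^\times$, conjugation by $\gamma=\spmatrix{u}{0}{0}{u^{-1}}$ maps $\SL_2(\bF_q)$ to itself and defines an automorphism $\sigma$ of $G=\SL_2(\bF_q)$ (an outer ``diagonal'' automorphism, since $u\notin\bF_q$). Applying the functoriality of Proposition \ref{prop_compactification}(f) to $\sigma:G\rightiso G$ yields an automorphism of $\cAdm(G)$ whose effect on Higman invariants is induced by $\sigma$ on conjugacy classes. A direct computation gives
\[
\gamma\spmatrix{-1}{1}{0}{-1}\gamma^{-1}=\spmatrix{-1}{u^2}{0}{-1},
\]
and since $u\in\bF_{q^2}\setminus\bF_q$ with $u^2\in\bF_q$, the element $u^2$ is a non-square in $\bF_q^\times$; hence $\sigma$ swaps the classes of $\spmatrix{-1}{1}{0}{-1}$ and $\spmatrix{-1}{a}{0}{-1}$, producing the desired isomorphism $\cAdm_{\fc_1}\rightiso\cAdm_{\fc_2}$. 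The main technical point to verify carefully is this last bookkeeping: one needs the non-squareness of $u^2$ (which is automatic from $u\notin\bF_q$, since $u$ satisfies $X^2-u^2$ which is irreducible over $\bF_q$), and one needs to check that $\sigma$ does fix neither class (since the only other possibility, fixing both, is excluded because $\sigma$ is not inner on the admissible strata of trace $-2$).
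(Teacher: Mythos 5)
Your proposal is correct and follows essentially the same route as the paper: part (a) is cited from McCullough--Wanderley, parts (b),(c) are the standard $\SL_2$ conjugacy-class facts (which the paper simply cites from \cite[\S5]{MW13} where you prove them directly via rational canonical form and the centralizer/square-class splitting), the geometric parts follow formally from the trace decomposition and the commutator description of the Higman invariant, and (c') is obtained exactly as in the paper by twisting the $G$-action through conjugation by $\gamma$, which swaps the two noncentral trace $-2$ classes because $u^2$ is a nonsquare. Your final worry about $\sigma$ ``fixing both classes'' is superfluous, since the explicit computation $\gamma\spmatrix{-1}{1}{0}{-1}\gamma^{-1}=\spmatrix{-1}{u^2}{0}{-1}$ together with the nonsquareness of $u^2$ already identifies the image class.
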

\begin{proof} Part (a) is precisely \cite[Theorem 2.1]{MW11}. Parts (b) and (c) follow from \cite[\S5]{MW13}. Parts (a') and (b') follow immediately from (a) and (b). For part (c'), assume $q$ odd, and let $\fc_1,\fc_2$ denote the two $q$-admissible classes of trace invariant -2. Let $i_\gamma\in\Aut(\SL_2(\bF_q))$ be given by $g\mapsto \gamma g\gamma^{-1}$, then $i_\gamma$ defines a map
$$\cAdm(\SL_2(\bF_q))\lra\cAdm(\SL_2(\bF_q))$$
sending an admissible $G$-cover $\pi : C\rightarrow E$ to the same map $\pi : C\rightarrow E$, but with $G$-action defined via the isomorphism $i_\gamma$. Since $i_\gamma$ has finite order, this is an equivalence. Since $u^2\in\bF_q^\times$ is a nonsquare, $i_\gamma$ switches the two $q$-admissible classes of trace invariant $-2$, and hence it restricts to an equivalence
$$\cAdm(\SL_2(\bF_q))_{\fc_1}\rightiso\cAdm(\SL_2(\bF_q))_{\fc_2}.$$
\end{proof}

Let $I := \spmatrix{1}{0}{0}{1}$. By Proposition \ref{prop_Higman_vs_trace}(b,c), for any $a\in\bF_q$, with the exception of the conjugacy classes of $\pm I$, the conjugacy classes of $\SL_2(\bF_q)$ with trace $a$ all have the same order. Thus, we may define:

\begin{defn}\label{def_na} Let $q = p^r$ be a prime power. For $a\in \bF_q$, let $n_q(a)$ denote the order of any matrix $A\in\SL_2(\ol{\bF_q}) - \{\pm I\}$ with trace $a$.
\end{defn}

\begin{prop}\label{prop_na} Let $q = p^r$ be a prime power. For $a\in\bF_q$, there is a set $\{\omega,\omega^{-1}\}\subset\bF_{q^2}$ which is uniquely determined by the property that $a = \omega + \omega^{-1}$. The integer $n_q(a)$ satisfies:

$$n_q(a) = \left\{\begin{array}{ll}
	2 & \text{if $q$ is even and $a = \pm 2 = 0$} \\
	p & \text{if $q$ is odd and $a = 2$} \\
	2p & \text{if $q$ is odd and $a = -2$} \\
	|\omega| & \text{if $a\ne \pm2$}
\end{array}\right.$$
\end{prop}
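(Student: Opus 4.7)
The plan is to analyze the characteristic polynomial of a matrix $A \in \SL_2(\ol{\bF_q}) - \{\pm I\}$ with $\tr(A) = a$, which is $x^2 - ax + 1$, and then determine the conjugacy class of $A$ over $\ol{\bF_q}$ from its Jordan form. First I would establish the existence and uniqueness of the set $\{\omega, \omega^{-1}\}$: since $x^2 - ax + 1$ is a degree-2 polynomial over $\bF_q$, its roots lie in $\bF_{q^2}$, and since their product is the constant term $1$, they are inverses of each other. The unordered pair $\{\omega, \omega^{-1}\}$ is clearly uniquely determined by $a = \omega + \omega^{-1}$.

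Next I would split into cases according to whether the eigenvalues of $A$ are distinct. If $a \neq \pm 2$, then $\omega \neq \omega^{-1}$ (otherwise $\omega = \pm 1$ forces $a = \pm 2$), so $A$ is diagonalizable over $\ol{\bF_q}$ and conjugate to $\diag(\omega, \omega^{-1})$. Then the order of $A$ equals the order of $\omega$ in $\ol{\bF_q}^\times$, giving $n_q(a) = |\omega|$. In particular, all such $A$ are conjugate in $\GL_2(\ol{\bF_q})$, so the order only depends on $a$, and $n_q(a)$ is well-defined.

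For the remaining case $a = \pm 2$, $A$ has a repeated eigenvalue $\omega = \pm 1$ (equal in characteristic 2). Since $A \neq \pm I$, $A$ is not diagonalizable, hence conjugate over $\ol{\bF_q}$ to the Jordan block $\spmatrix{\omega}{1}{0}{\omega}$. A direct calculation gives $\spmatrix{\omega}{1}{0}{\omega}^n = \spmatrix{\omega^n}{n\omega^{n-1}}{0}{\omega^n}$, so its order is the smallest $n \geq 1$ with $\omega^n = 1$ and $n\omega^{n-1} = 0$ in $\bF_q$, i.e.\ with $\omega^n = 1$ and $p \mid n$. This gives $n_q(2) = p$ when $q$ is odd, and (using $\omega^n = (-1)^n = 1$ iff $2 \mid n$) $n_q(-2) = \mathrm{lcm}(2, p) = 2p$ when $q$ is odd. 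When $q$ is even, $\pm I = I$ and $\omega = 1$ with $p = 2$, so the condition reduces to $2 \mid n$, giving $n_q(0) = 2$. In each case all non-central matrices of trace $a$ have this common order, so $n_q$ is well-defined.

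There is no real obstacle here — the argument is essentially a Jordan-form computation over $\ol{\bF_q}$, and everything follows once one notes that $\omega$ lives in $\bF_{q^2}^\times$ by construction. The only minor subtlety is keeping the characteristic-2 case separate so as to merge $a = 2$ and $a = -2$ correctly, and to verify that $n_q(a)$ really is independent of the choice of representative matrix, which falls out of the conjugacy classification used in each case.
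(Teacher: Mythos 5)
Your proof is correct and follows essentially the same route as the paper: both identify $\{\omega,\omega^{-1}\}$ as the roots of $x^2-ax+1$ and use diagonalizability over $\bF_{q^2}$ for $a\ne\pm2$. The only difference is that for $a=\pm2$ you compute the order of the Jordan block $\left[\begin{smallmatrix}\omega&1\\0&\omega\end{smallmatrix}\right]$ directly, where the paper instead quotes its earlier classification of trace-$\pm2$ conjugacy classes (Proposition \ref{prop_Higman_vs_trace}); this is a harmless, self-contained substitute.
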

\begin{proof} If $\omega+\omega^{-1} = a$, then $\{\omega,\omega^{-1}\}$ are precisely the roots of the polynomial $x^2 - ax + 1$, so they are determined by $a$. The description of $n_q(a)$ follows from Proposition \ref{prop_Higman_vs_trace}, noting that if $a\ne \pm 2$, then $\omega\ne\omega^{-1}$ so any matrix with trace $a$ is diagonalizable over $\bF_{q^2}$.
\end{proof}

\subsection{$\SL_2(\bF_q)$-structures as $\bF_q$-points of a character variety}\label{ss_character_variety}


Let $G$ be a finite group. Let $E,\Pi,a,b,x_E,\alpha$ be as in Situation \ref{situation_galois_theory}. Then the fiber of $\cM(G)_\Qbar/\cM(1)_\Qbar$ over $x_E$ is canonically in bijection with
$$\Epi^\ext(\Pi,G)$$
Under this bijection, the orbits of the $\Out^+(\Pi)$-action correspond to the components of $\cM(G)_\Qbar$. In this section we will show that when $G = \SL_2(\bF_q)$, $\Epi^\ext(\Pi,\SL_2(\bF_q))$ is closely related to the $\bF_q$ points of a certain character variety. This correspondence is central in our geometric approach to the conjecture of Bourgain, Gamburd, and Sarnak \ref{conj_bgs_intro}, and moreover it allows us to calculate of the automorphism groups of geometric points of $\cM(\SL_2(\bF_q))$, which forms the key input to Theorem \ref{thm_congruence}.



Using the oriented basis $a,b\in\Pi$, there is a bijection
\begin{equation}\label{eq_generating_pairs}
\begin{array}{rcl}
\Epi(\Pi,G) & \rightiso & \{(A,B)\;|\;\text{$A,B$ generate $G$}\} \\
\varphi & \mapsto & (\varphi(a),\varphi(b))
\end{array}
\end{equation}

For the purposes of understanding the $\Out^+(\Pi)$-orbits, it suffices to consider the orbits of $\Aut^+(\Pi)$ on the set of generating pairs of $G$. An elementary Nielsen move applied to a generating pair $(A,B)$ of $G$ sends $(A,B)$ to any of:
\begin{equation}\label{eq_nielsen_move}
(A,AB), (B,A), \;\text{or}\; (A,B^{-1}).
\end{equation}


Two generating pairs of $G$ are said to be Nielsen equivalent if they are related by a sequence of elementary Nielsen moves. One can check that the elementary Nielsen moves, applied to the generators $(a,b)$ of $\Pi$, define a set of generators of $\Aut(\Pi)$ \cite[\S3]{MKS04}, and that two generating pairs of $G$ are Nielsen equivalent if and only if they lie in the same $\Aut(\Pi)$-orbit via the bijection in \eqref{eq_generating_pairs}. Questions about Nielsen equivalence were traditionally studied from the point of view of combinatorial group theory \cite[\S2]{Pak00}. However, if $R$ is a ring, $S$ an $R$-algebra, and $G$ is obtained as the $S$-points of an algebraic group $\cG/R$, then the set $\Hom(\Pi,G)$ can be viewed as the $S$-points of the functor $\Hom(\Pi,\cG) : \Sch/R\rightarrow\Sets$ sending $T\mapsto \Hom(\Pi,\cG(T))$. Since $\Pi$ is free on $a,b$, this functor is representable by the scheme $\cG\times\cG$, and hence $\Epi(\Pi,G)$ inherits an algebraic structure as a subset of the $S$-points of $\cG\times\cG$. Similarly $\Epi^\ext(\Pi,G)$ in many cases can be approximated by a subset of the $S$-points of
$$\Hom(\Pi,\cG)/\cG = (\cG\times\cG)/\cG$$
where the action of $\cG$ is by simultaneous conjugation, and the quotient is taken in a suitable sense (see Theorem \ref{thm_moduli_interpretation} and Remark \ref{remark_nakamoto} below). Moreover, since $\Aut(\Pi)$ is generated by automorphisms of the form \eqref{eq_nielsen_move}, $\Aut(\Pi)$ also acts on $\Hom(\Pi,\cG)$ and $\Hom(\Pi,\cG)/\cG$ (on the right!) as automorphisms of schemes. When $\cG = \SL_2$, these observations together with the work of Brumfiel-Hilden \cite{BH95} and Nakamoto \cite{Nak00} will allow us to connect the sets $\Epi^\ext(\Pi,\SL_2(\bF_q))$ to the Markoff equation (Theorem \ref{thm_moduli_interpretation}). In \S\ref{ss_automorphism_groups}, we will also use this relationship to compute the vertical automorphism groups of $\cM(\SL_2(\bF_q))$.


When $\cG = \SL_{2,R}$, it is more natural to consider the quotient by $\GL_{2,R}$ acting by conjugation.

\begin{thm}\label{thm_character_variety} Let $R$ be any ring. Let $A[\Pi]$ denote affine ring of $\Hom(\Pi,\SL_{2,R}) \cong \SL_{2,R}\times\SL_{2,R}$. Let $X_{\SL_{2,R}} := \Spec A[\Pi]^{\GL_{2,R}} = \Hom(\Pi,\SL_{2,R})\git\GL_{2,R}$. Then $X_{\SL_{2,R}}$ is a \emph{universal categorical quotient} in the sense of geometric invariant theory (here $\GL_{2,R}$ acts by conjugation). If $R = \bZ$ we will simply write $X_{\SL_2} := X_{\SL_{2,\bZ}}$. Let $\Tr$ denote the map
$$\Tr : \Hom(\Pi,\SL_{2,R})\longrightarrow \bA^3_R$$
defined on $A$-valued points\footnote{Since every scheme is covered by affine opens, to define a morphism of schemes, it suffices to define it on $T$-valued points for all affine schemes $T$.} for various $R$-algebras $A$ by sending $\varphi : \Pi\rightarrow\SL_2(A)$ to $\tr\varphi(a),\tr\varphi(b)$, and $\tr\varphi(ab)$ respectively. Then $\Tr$ induces an isomorphism (which we also denote by $\Tr$)
\begin{equation}\label{eq_trace_map}
\Tr : X_{\SL_{2,R}}\rightiso\bA^3_R	
\end{equation}
\end{thm}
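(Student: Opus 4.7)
The plan is to prove both claims simultaneously by showing that the coordinate ring of invariants is a polynomial ring on three trace generators, from which the universality of the quotient follows for free via flatness. Working universally over $\bZ$, write $A[\Pi] = \bZ[\SL_2\times\SL_2]$, and define the three elements $x := \tr\varphi(a)$, $y := \tr\varphi(b)$, $z := \tr\varphi(ab)$ in $A[\Pi]$. Since the trace of a product is conjugation-invariant, $x,y,z$ all lie in $A[\Pi]^{\GL_{2,\bZ}}$, and since the formation of $\Tr$ commutes with arbitrary base change, the map $\Tr$ automatically factors through $X_{\SL_{2,R}}$ for every $R$.

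The heart of the proof is the classical Fricke--Vogt theorem in its modern, scheme-theoretic form: the natural ring map
\[
\bZ[x,y,z]\longrightarrow A[\Pi]^{\GL_{2,\bZ}}
\]
is an isomorphism. This is exactly the content of \cite{BH95} and \cite[Theorem 1.3]{Nak00} (with a convenient reformulation in terms of the character scheme of the free group of rank $2$). First I would sketch the surjectivity: one uses the $2\times 2$ Cayley--Hamilton relation $M^{2}-(\tr M)M+I=0$ (valid over $\bZ$ since $M\in\SL_2$) together with the polarization identity $2\tr(MN) = \tr M\tr N + \tr(MN) - \tr(MN^{-1})$-type relations to rewrite $\tr(w(A,B))$ for any word $w$ as a polynomial in $\tr A,\tr B,\tr(AB)$; iterating this reduces all words of length $\le n$ to polynomials in $x,y,z$, and standard arguments using the first fundamental theorem of matrix invariants (Procesi) imply that the trace monomials generate the full invariant ring. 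Algebraic independence is shown by exhibiting an explicit section: the family of pairs
\[
A(x,y,z) = \begin{pmatrix}x & -1\\ 1 & 0\end{pmatrix},\qquad B(x,y,z) = \begin{pmatrix}0 & \beta\\ -\beta^{-1} & y\end{pmatrix}
\]
(with $\beta$ chosen so that $\tr(AB)=z$) realizes arbitrary values of the three traces, proving that $x,y,z$ are algebraically independent over $\bZ$.

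Once $A[\Pi]^{\GL_{2,\bZ}}=\bZ[x,y,z]$ is established, both assertions of the theorem follow. For base change: because $\bZ[x,y,z]$ is $\bZ$-flat (indeed $\bZ$-free), and because the action of $\GL_{2,\bZ}$ on $\SL_{2,\bZ}\times\SL_{2,\bZ}$ has geometrically reductive structure group, the formation of the invariant ring commutes with arbitrary base change $\bZ\to R$; hence $A[\Pi]_R^{\GL_{2,R}} = R[x,y,z]$, giving the isomorphism $\Tr:X_{\SL_{2,R}}\rightiso\bA^3_R$. For the universal categorical quotient property, one verifies that the quotient map $\Hom(\Pi,\SL_{2,R})\to \bA^3_R$ defined by $\Tr$ is affine and remains a categorical quotient after any base change $R\to R'$; this reduces, after further base change, to the statement that $(R'[\SL_2\times\SL_2])^{\GL_{2,R'}}=R'[x,y,z]$, which is the content of the flat base change statement just proved.

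The main obstacle, and the only nontrivial input, is the integral Fricke--Vogt theorem of Step 2. Over a field of characteristic $0$ the result is classical and follows immediately from the linear reductivity of $\GL_2$ and Procesi's theorem, but the assertion over $\bZ$ (in particular in characteristics $2$ and $3$) requires genuine work: one must verify that no denominators are introduced when expressing arbitrary trace monomials as polynomials in $x,y,z$, and that the Cayley--Hamilton reduction is effective integrally. This is precisely what is carried out in Nakamoto's paper \cite{Nak00}, so in the present write-up it suffices to cite that result. Everything else is bookkeeping: invariance of trace, flatness-based base change, and the tautology that an isomorphism onto $\bA^3_R$ is automatically a universal categorical quotient.
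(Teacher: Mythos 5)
Your overall skeleton is the same as the paper's: everything reduces to the integral Fricke--Vogt computation $A[\Pi]^{\GL_2}=R[x,y,z]$, and the universality of the categorical quotient is then read off from the fact that this computation is available over every ring. The paper does exactly this, citing Brumfiel--Hilden \cite[Propositions 3.5, 9.1(ii)]{BH95}, whose statements are already formulated over an \emph{arbitrary} commutative ring $R$, plus the small sandwich $R[A,B,C]\subset A[\Pi]^{\GL_{2,R}}\subset A[\Pi]^{\GL_2(R)}=R[A,B,C]$ to pass between invariance under the group scheme $\GL_{2,R}$ and invariance under the abstract group $\GL_2(R)$ (a distinction you elide, though it is harmless once the sandwich is observed).

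The genuine problem is your mechanism for getting from $\bZ$ to a general $R$. You assert that because $\bZ[x,y,z]$ is $\bZ$-flat and $\GL_2$ is geometrically reductive, formation of the invariant ring commutes with \emph{arbitrary} base change. That is not a valid principle: for reductive group schemes, invariants commute with \emph{flat} base change, and for arbitrary base change one only gets adequacy-type statements (the natural map $A^G\otimes_R R'\rightarrow (A\otimes_R R')^G$ need not be an isomorphism); note also that $\bZ\rightarrow\bF_p$ is not flat, so the flatness of the invariant ring buys you nothing in exactly the characteristics ($2$ and $3$) you rightly identify as the delicate ones. Equally, a ``universal categorical quotient'' in GIT does not mean that invariants base change; so your final reduction is circular as written. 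The statement you need is true here, but not formally: either quote the computation of the invariant ring directly over each $R$ (as the paper does via \cite{BH95}), or invoke a good-filtration/Donkin-type argument showing that for this particular $\GL_2$-module invariants do commute with base change. With that substitution your argument closes; your Cayley--Hamilton/Fricke surjectivity sketch and the explicit matrices witnessing algebraic independence are fine as a summary of what the cited reference proves.
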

\begin{proof} When $R = \bC$ this amounts to a classical result of Fricke and Vogt (see \cite{Gold03,Gold09}). For the general case, let $A,B,C$ denote the functions $\tr\varphi(a),\tr\varphi(b),\tr\varphi(ab)\in A[\Pi]$ respectively. By \cite[Proposition 3.5 and Proposition 9.1(ii)]{BH95}, we have $A[\Pi]^{\GL_2(R)} = R[A,B,C]$, but we also have the inclusions
$$R[A,B,C]\subset A[\Pi]^{\GL_{2,R}}\subset A[\Pi]^{\GL_2(R)} = R[A,B,C]$$
so all the inclusions must be equalities, as desired. The map $\Hom(\Pi,\SL_2)\rightarrow X_{\SL_2,R}$ is clearly a categorical quotient. That it's universal follows from the fact that the isomorphism $\Tr$ holds over any ring $R$.
\end{proof}

\begin{defn} With notation as in Theorem \ref{thm_character_variety}, $\Hom(\Pi,\SL_{2,R})$ the called the \emph{representation variety} for $\SL_{2,R}$-representations of $\Pi$, and $X_{\SL_{2,R}}$ is the \emph{character variety} for $\SL_{2,R}$-representations of $\Pi$. Since $X_{\SL_{2,R}}$ is a universal categorical quotient, for most applications it will suffice to work with $X_{\SL_2} = X_{\SL_{2,\bZ}}$.	
\end{defn}

\begin{defn} For a ring $A$ and $\varphi\in\Hom(\Pi,\SL_{2}(A))$, we will call $\Tr(\varphi)\in \bA^3(A) = A^3$ the ``trace coordinates of $\varphi$''. The \emph{trace invariant} of $\varphi$ is the element $\tr\varphi([b,a]) = \tr\varphi([a,b])\in A$.	
\end{defn}

There is a natural right action of $\Aut(\Pi)$ on the functor $\Hom(\Pi,\SL_{2})$ commuting with the conjugation action of $\GL_2$. By the Yoneda lemma, it follows that $\Aut(\Pi)$ acts on trace coordinates by \emph{polynomials}, and the action descends to a right action of $\Aut(\Pi)$ on the character variety $X_{\SL_{2}} \cong \bA^3$. Moreover, we will see that this action preserves the trace invariant (Proposition \ref{prop_trace_fibration} below). The key calculation is the following:

\begin{lemma} Let $R$ be any ring, and let $\Pi$ be a free group with generators $a,b$. Let $\varphi : \Pi\rightarrow\SL_2(R)$ be a homomorphism. Let $(A,B,C) := (\tr \varphi(a),\tr\varphi(b),\tr\varphi(ab))$. Then the trace invariant of $\varphi$ can be computed as follows:
$$\tr(\varphi([a,b])) = A^2 + B^2 + C^2 - ABC - 2\in R$$
\end{lemma}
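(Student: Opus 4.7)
The plan is to reduce everything to two formal consequences of Cayley--Hamilton for $2\times 2$ matrices and then compute directly. Throughout, write $X := \varphi(a)$ and $Y := \varphi(b)$, so that $A = \tr X$, $B = \tr Y$, $C = \tr(XY)$, and we are computing $\tr(XYX^{-1}Y^{-1})$.

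The first ingredient is the identity $M + M^{-1} = (\tr M)\cdot I$ for any $M \in \SL_2(R)$, which follows from the Cayley--Hamilton relation $M^2 - (\tr M)M + I = 0$. Multiplying by an arbitrary $N \in \SL_2(R)$ on the right and taking traces yields the fundamental ``Fricke trace identity''
\[
\tr(MN) + \tr(M^{-1}N) \;=\; (\tr M)(\tr N),
\]
or equivalently $\tr(MN) + \tr(MN^{-1}) = (\tr M)(\tr N)$ (since $\tr$ is central with respect to $N \leftrightarrow N^{-1}$ via $M \mapsto N$'s counterpart). This identity is the only nontrivial tool required.

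The main step is a clever choice of $(M,N)$: applying the identity to $M = XY$ and $N = YX$, and using cyclicity of the trace plus $\tr(XY) = \tr(YX) = C$, gives
\[
\tr(XYX^{-1}Y^{-1}) \;=\; C^2 - \tr(X^2 Y^2).
\]
So the problem reduces to computing $\tr(X^2Y^2)$. Here I would apply Cayley--Hamilton once to each factor: $X^2 = AX - I$ and $Y^2 = BY - I$. Expanding the product $X^2Y^2 = ABXY - AX - BY + I$ and taking traces gives $\tr(X^2Y^2) = ABC - A^2 - B^2 + 2$. Substituting back yields
\[
\tr(\varphi([a,b])) \;=\; C^2 - (ABC - A^2 - B^2 + 2) \;=\; A^2 + B^2 + C^2 - ABC - 2,
\]
as claimed.

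There is no real obstacle: the entire argument is a short manipulation valid over any commutative ring $R$, since Cayley--Hamilton holds universally for $2\times 2$ matrices. The only point worth mentioning is that one should not invoke diagonalizability or pass to an algebraic closure---the identities used are polynomial in the matrix entries and therefore valid in $\SL_2(R)$ for arbitrary $R$, which is essential since the lemma is stated for any ring.
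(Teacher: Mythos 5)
Your computation is correct, and it is a self-contained elementary argument where the paper itself simply cites \cite{BH95} and \cite{Gold09} without reproducing a proof. The structure of the argument is sound: the Fricke identity with $(M,N)=(XY,YX)$ reduces the commutator trace to $C^2 - \tr(X^2Y^2)$, and then Cayley--Hamilton applied once to each of $X,Y$ gives $\tr(X^2Y^2) = ABC - A^2 - B^2 + 2$, yielding the claim; every step is a polynomial identity in the matrix entries, so it is valid over any commutative ring, as you correctly emphasize.

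One small imprecision worth tightening: you say the equality $\tr\bigl((XY)^{-1}YX\bigr) = \tr(XYX^{-1}Y^{-1})$ follows from ``cyclicity of the trace.'' It does not follow from cyclicity alone; the cyclic orbit of $Y^{-1}X^{-1}YX$ and the cyclic orbit of $XYX^{-1}Y^{-1}$ are disjoint. What you need in addition is $\tr(P^{-1}) = \tr(P)$ for $P\in\SL_2(R)$, which is an immediate consequence of $P + P^{-1} = (\tr P)\cdot I$ --- the very identity you stated as your first ingredient. With that added to the justification, the step is airtight: $\tr(Y^{-1}X^{-1}YX) = \tr\bigl((X^{-1}Y^{-1}XY)^{-1}\bigr) = \tr(X^{-1}Y^{-1}XY)$, which is cyclically equivalent to $\tr(XYX^{-1}Y^{-1})$. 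Also, the parenthetical remark about the ``equivalent form'' $\tr(MN)+\tr(MN^{-1})=(\tr M)(\tr N)$ is garbled and in any case unused in your argument; you can drop it.
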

\begin{proof} See \cite[Proposition A.1*.10(iii)]{BH95}, and also \cite[\S2.2]{Gold09}.
\end{proof}

\begin{prop}\label{prop_trace_fibration} Let $T : \bA^3\rightarrow\bA^1$ (over $\bZ$) be given by
$$T(x,y,z) = x^2 + y^2 + z^2 - xyz - 2$$
Let $\tau : X_{\SL_2}\rightarrow\bA^1$ be given by $\varphi \mapsto \tr\varphi([a,b])$. Then the following diagram is commutative
\[\begin{tikzcd}
X_{\SL_2}\ar[r,"\Tr"]\ar[rd,"\tau"'] & \bA^3\ar[d,"T"] \\
 & \bA^1
\end{tikzcd}\]
where $\Tr$ is the isomorphism \eqref{eq_trace_map}. The right action of $\Aut(\Pi)$ on $X_{\SL_2}$ and its induced action on $\bA^3$ preserve the fibers of $\tau$ and $T$. Viewing the induced action on $\bA^3$ as a left action, $\Tr$ defines an injective \emph{anti-homomorphism}

$$\Tr_* : \Aut(\Pi)\hookrightarrow\Aut(\bA^3)$$
which can be described as follows. The group $\Aut(\Pi)$ is generated by three elements $r,s,t$ (defined below), and their images in $\Aut(\bA^3)$ under the anti-homomorphism $\Tr_*$ are given as follows
$$\begin{array}{rcl}
r : (a,b) & \mapsto & (a^{-1},b)\\
s : (a,b) & \mapsto & (b,a) \\
t : (a,b) & \mapsto & (a^{-1},ab)
\end{array}\quad\stackrel{\Tr_*}{\lra}\quad
\begin{array}{rcl}
R_3 : (x,y,z) & \mapsto & (x,y,xy-z) \\
\tau_{12} : (x,y,z) & \mapsto & (y,x,z) \\
\tau_{23} : (x,y,z) & \mapsto & (x,z,y)
\end{array}$$
\end{prop}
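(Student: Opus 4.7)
The plan is to verify the proposition in four steps: commutativity of the triangle, construction of the $\Aut(\Pi)$-action on $\bA^3$, preservation of the fibers of $T$, and direct computation on the generators $r,s,t$. Commutativity is immediate from the preceding lemma, since both $\tau$ and $T\circ\Tr$ are morphisms of affine $\bZ$-schemes, and the lemma computes $\tr\varphi([a,b]) = A^2+B^2+C^2-ABC-2$ in terms of the trace coordinates $(A,B,C) = \Tr(\varphi)$ for every $R$-point $\varphi$ and every ring $R$.

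For the action, the natural right $\Aut(\Pi)$-action on the representation variety $\Hom(\Pi,\SL_2)$ by precomposition commutes with simultaneous $\GL_2$-conjugation, so by the universal property of the GIT quotient (Theorem \ref{thm_character_variety}) it descends to $X_{\SL_2}$ and transports via $\Tr$ to a right action on $\bA^3$ by scheme automorphisms. Reinterpreting this right action as a left action of the opposite group gives the anti-homomorphism $\Tr_*$. Fiber preservation then follows from commutativity of the triangle together with the $\Aut(\Pi)$-invariance of $\tau$: by a classical result of Nielsen, every $\sigma\in\Aut(F_2)$ sends $[a,b]$ to a conjugate of $[a,b]^{\pm 1}$, and trace in $\SL_2$ is invariant under both conjugation and inversion.

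The formulas for $r,s,t$ are then verified by a straightforward trace calculation using three identities valid in $\SL_2(R)$ for any ring $R$: $\tr(A^{-1})=\tr(A)$, cyclicity $\tr(AB)=\tr(BA)$, and the Fricke relation $\tr(A)\tr(B)=\tr(AB)+\tr(AB^{-1})$. Under $r : (a,b)\mapsto(a^{-1},b)$ the first two trace coordinates are unchanged and $\tr\varphi(a^{-1}b) = \tr\varphi(a)\tr\varphi(b)-\tr\varphi(ab) = xy-z$, giving $R_3$; $s$ gives $\tau_{12}$ by cyclicity; and $t : (a,b)\mapsto(a^{-1},ab)$ sends $ab$ to $a^{-1}(ab) = b$, giving $\tau_{23}$. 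That $r,s,t$ generate $\Aut(\Pi)$ is classical \cite[\S3]{MKS04}, so these three formulas determine $\Tr_*$ completely.

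The hard part will be injectivity of $\Tr_*$, which requires some care: the inner automorphism $\iota_g$ acts on $\varphi$ by conjugation by $\varphi(g)\in\SL_2\subset\GL_2$, so $\Inn(\Pi)$ lies in the kernel and $\Tr_*$ factors through $\Out(\Pi)$. Combining this with Nielsen's isomorphism $\Out(F_2)\cong\GL_2(\bZ)$, the essential content of injectivity is the faithfulness of the induced $\GL_2(\bZ)$-action on $\bA^3$; I would verify this by expressing arbitrary elements of $\GL_2(\bZ)$ as words in $R_3,\tau_{12},\tau_{23}$ and checking that a nontrivial word cannot act as the identity polynomial automorphism of $\bZ[x,y,z]$, for instance by evaluation on a representation with algebraically independent trace coordinates.
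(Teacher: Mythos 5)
Your treatment of the commutativity of the triangle, the verification that $r,s,t$ go to $R_3,\tau_{12},\tau_{23}$, and the appeal to Fricke's relation to handle $r$ all match the paper's proof, which is precisely this computation on generators. Your route to fiber preservation is different and valid: the paper verifies the identity $T\circ\sigma = T$ directly for $\sigma\in\{R_3,\tau_{12},\tau_{23}\}$, whereas you invoke the classical fact that every element of $\Aut(F_2)$ sends $[a,b]$ to a conjugate of $[a,b]^{\pm 1}$ together with conjugation- and inversion-invariance of trace. Your argument is more conceptual but imports an external fact; the paper's direct check is self-contained and, since the generator formulas have to be verified anyway, is essentially free.

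The serious problem is your final paragraph on injectivity, where you have in fact uncovered an error in the statement but talked yourself out of it. You correctly observe that an inner automorphism $\iota_g$ acts on $\Hom(\Pi,\SL_2)$ by conjugation by $\varphi(g)\in\SL_2\subset\GL_2$, hence acts trivially on the GIT quotient $X_{\SL_2}\cong\bA^3$. This means $\Inn(\Pi)\subset\ker\Tr_*$, which directly \emph{contradicts} the asserted injectivity of $\Tr_* : \Aut(\Pi)\hookrightarrow\Aut(\bA^3)$ (recall $\Inn(\Pi)\cong\Pi$ is nontrivial). You should have stopped there and flagged the statement as incorrect rather than reinterpreting "injectivity" as faithfulness of the induced $\Out(\Pi)$-action. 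That reinterpretation also fails: $\gamma_{-I} : (a,b)\mapsto (a^{-1},b^{-1})$, which represents the central element $-I\in\GL_2(\bZ)\cong\Out(F_2)$, satisfies $\tr\varphi(a^{-1}) = \tr\varphi(a)$, $\tr\varphi(b^{-1}) = \tr\varphi(b)$, and $\tr\varphi(a^{-1}b^{-1}) = \tr\varphi((ba)^{-1}) = \tr\varphi(ab)$, so $\gamma_{-I}$ acts as the identity on $\bA^3$. The paper itself records this fact later (in the proposition describing the monodromy action, where $\gamma_{-I}\mapsto(x,y,z)\mapsto(x,y,z)$), and its own proof of the present proposition simply never addresses injectivity. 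So the plan in your last paragraph cannot be carried out, not because of a gap in your method, but because the thing you are trying to prove is false; the action of $\Aut(\Pi)$ on $\bA^3$ factors through $\PGL_2(\bZ)$. Trust the computation that led you to notice $\Inn(\Pi)$ acts trivially.
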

\begin{proof} We begin with the final statement. That $r,s,t$ generate $\Aut(\Pi)$ follows from \cite[\S3]{MKS04} (also see \cite[\S2]{MW13}). That $(s,t)\mapsto (\tau_{12},\tau_{23})$ is easy to check. To see that $r\mapsto R_3$, one should use the Fricke identity
\begin{equation}\label{eq_fricke_identity}
\tr(AB) + \tr(A^{-1}B) = \tr(A)\tr(B)	
\end{equation}
valid for $A,B\in\SL_2(R)$ for any ring $R$. To verify this identity, one uses the Cayley-Hamilton theorem to deduce that $A+A^{-1} = \Tr(A)$. Multiplying both sides by $B$ and taking traces yields the identity \eqref{eq_fricke_identity}.


The commutativity of the diagram follows from the lemma. Using the explicit form of the $\Aut(\Pi)$-action, the fact that $\Aut(\Pi)$ preserves the fibers amounts to the observation that for ring $R$ and any homomorphism $\varphi : \Pi\rightarrow\SL_2(R)$, the following identities hold:
$$T(\tau_{12}(x,y,z)) = T(\tau_{23}(x,y,z)) = T(R_3(x,y,z)) = T(x,y,z)$$
where $\tau_{12},\tau_{23},R_3$ are as in Proposition \ref{prop_trace_fibration}. This is easily verified by hand.
\end{proof}

\begin{remark} The fact that $\Aut^+(\Pi)$ preserves the fibers of $\tau$ also follows from the local constancy of the trace invariant (see Proposition \ref{prop_Higman_invariant_is_locally_constant}). The fact that $\Aut(\Pi)$ preserves the fibers of $\tau$ can be explained as follows. If $\Pi = \pi_1^\tp(E^\circ(\bC),x_0)$ where $E$ is an elliptic curve over $\bC$, then the fact that $\Aut(\Pi)$ preserves the fibers of $\tau$ amounts to the fact that any automorphism of $\Pi$ is represented by a self homeomorphism of $E^\circ(\bC)$ (this is the Dehn-Nielsen-Baer theorem \cite[\S8]{FM11}). Any such self-homeomorphism must send a neighborhood of the puncture to another neighborhood of the puncture, and hence must either preserve the free homotopy class of a small loop winding once around the puncture, or reverse its orientation. Thus, in light of Remark \ref{remark_topological_higman}, if the points of $X_{\SL_{2,k}}$ are viewed as covers of $E^\circ(\bC)$, then such homeomorphisms must either preserve the Higman invariant of the cover, or send it to its inverse, and hence it must preserve the trace invariant, but $\tau$ is precisely the map that sends the monodromy representation of a cover to its trace invariant.
\end{remark}


\begin{defn} Let $T$ be a scheme, $n\ge 1$ an integer, and $G$ a group. A representation $\varphi : G\rightarrow\GL_n(T) = \GL_n(\Gamma(T,\cO_T))$ is \emph{absolutely irreducible} if the induced algebra homomorphism $\Gamma(T,\cO_T)[G]\rightarrow M_n(\Gamma(T,\cO_T))$ is surjective. A subgroup $G\subset\GL_n(T)$ is \emph{absolutely irreducible} if the inclusion $G\hookrightarrow\GL_n(T)$ is an absolutely irreducible representation.
\end{defn}

When $T = \Spec k$ with $k$ a field, then this notion of absolute irreducibility is the same as the nonexistence of nontrivial $G$-invariant subspaces of $\ol{k}^2$, where $\ol{k}$ denotes the algebraic closure \cite[\S XVII, Corollary 3.4]{Lang02}.

\begin{defn} Let $R$ be any ring. Let $\Hom(\Pi,\SL_{2,R})^\ai\subset \Hom(\Pi,\SL_{2,R})$ denote the subfunctor corresponding to the absolutely irreducible representations. By \cite[\S3]{Nak00}, this is represented by an open subscheme of $\Hom(\Pi,\SL_{2,R})$. Accordingly, let
$$X_{\SL_{2,R}}^\ai := \Hom(\Pi,\SL_{2,R})^\ai\git\GL_2$$
As usual if $R = \bZ$ then we will simply write $X_{\SL_2}^\ai := X_{\SL_{2,\bZ}}^\ai$.
\end{defn}

\begin{lemma}\label{lemma_absolutely_irreducible} Let $R$ be a ring. Let $\varphi : \Pi\rightarrow \SL_2(R)$ be a representation. Let $(A,B,C) := (\tr\varphi(a), \tr\varphi(b), \tr\varphi(ab))$. Then $\varphi$ is absolutely irreducible if and only if
$$A^2 + B^2 + C^2 - ABC - 4 \in R^\times$$
In particular, we have $X_{\SL_{2}}^\ai = X_{\SL_{2}} - \tau^{-1}(2)$.
\end{lemma}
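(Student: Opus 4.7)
I would first reduce absolute irreducibility to a basis condition in $M_2(R)$ and then identify that basis condition with the unit condition on $\kappa := A^2+B^2+C^2-ABC-4$ via the trace form. Concretely, I claim $\varphi$ is absolutely irreducible if and only if $\{I,\varphi(a),\varphi(b),\varphi(a)\varphi(b)\}$ is an $R$-basis of $M_2(R)$. One direction is trivial. For the converse, Cayley--Hamilton in $\SL_2$ gives $\varphi(a)^2 = A\varphi(a)-I$ and $\varphi(b)^2 = B\varphi(b)-I$, and applying Cayley--Hamilton to $\varphi(a)+\varphi(b)$ together with the $2\times 2$ identity $\det(X+Y) = \det X + \det Y + \tr X\,\tr Y - \tr(XY)$ yields the anticommutator formula
\[
\varphi(a)\varphi(b) + \varphi(b)\varphi(a) \;=\; A\varphi(b) + B\varphi(a) + (C-AB)\,I.
\]
These relations show that the $R$-span of the four elements is closed under left and right multiplication by $\varphi(a)$ and $\varphi(b)$, so it equals the $R$-subalgebra of $M_2(R)$ they generate. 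Absolute irreducibility forces this subalgebra to be all of $M_2(R)$, which is free of rank $4$, so the four elements form a basis.

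Next, I equip $M_2(R)$ with the non-degenerate symmetric bilinear form $\langle X, Y\rangle := \tr(XY)$. Its Gram matrix in the standard basis $\{E_{ij}\}$ has determinant $-1$, so for any tuple $v = (v_1,\ldots,v_4)$ in $M_2(R)$ with change-of-basis matrix $P$, the Gram matrix $G_v := (\tr(v_iv_j))$ satisfies $\det G_v = -\det(P)^2$; hence $\{v_i\}$ is an $R$-basis iff $\det G_v \in R^\times$. Applied to $v = (I,\varphi(a),\varphi(b),\varphi(a)\varphi(b))$, repeated use of Cayley--Hamilton and cyclic invariance of trace reduces each $\tr(v_iv_j)$ to an explicit polynomial in $A,B,C$, producing a $4\times 4$ matrix $G_\varphi$. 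Expanding yields the polynomial identity $\det G_\varphi = -\kappa^2$. Combined with the first step, this gives $\det(P)^2 = \kappa^2$, so $\varphi$ is absolutely irreducible iff $\kappa \in R^\times$. The closing assertion then follows from Proposition~\ref{prop_trace_fibration}, which identifies $\tau - 2$ with $\kappa$ under $X_{\SL_2} \cong \bA^3$: the closed subscheme $\tau^{-1}(2)$ is cut out by $\kappa$, and its open complement is precisely $X_{\SL_2}^{\ai}$.

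The main obstacle will be verifying the polynomial identity $\det G_\varphi = -\kappa^2$: a brute $4\times 4$ expansion is correct but algebraically tedious. A cleaner route I would use in practice is to first establish $\det N = -\kappa$ for $N := \varphi(a)\varphi(b) - \varphi(b)\varphi(a)$. Since $\tr(N) = 0$, Cayley--Hamilton gives $N^2 = -\det(N)\cdot I$, while reducing $\tr((\varphi(a)\varphi(b))^2)$ and $\tr(\varphi(a)^2\varphi(b)^2)$ via Cayley--Hamilton yields $\tr(N^2) = 2\kappa$; thus $2(\det N + \kappa) = 0$ in the coordinate ring of $\SL_2\times\SL_2$ over $\bZ$, which is an integral domain of characteristic $0$, forcing $\det N = -\kappa$ universally. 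One then recovers $\det(P)^2 = \kappa^2$ from $\det N = -\kappa$ by a column operation on $P$ that uses the anticommutator identity to replace $\varphi(a)\varphi(b)$ by $N$ in the last column.
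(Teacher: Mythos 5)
Your proposal is correct, but it takes a genuinely different route from the paper: the paper disposes of the trace criterion by citing \cite{BH95}*{Proposition 4.1} and deduces the scheme-theoretic statement from Proposition \ref{prop_trace_fibration}, whereas you prove the criterion from scratch. Your argument --- (i) absolute irreducibility is equivalent to $\{I,\varphi(a),\varphi(b),\varphi(a)\varphi(b)\}$ being an $R$-basis of $M_2(R)$, via Cayley--Hamilton and the anticommutator identity to identify their span with the image of $R[\Pi]$, together with the standard fact that four generators of a free rank-$4$ module over a commutative ring form a basis; (ii) a tuple is a basis iff the Gram determinant of the trace form $\tr(XY)$ is a unit, since the standard-basis Gram determinant is $-1$; (iii) the universal polynomial identity $\det G_\varphi=-\kappa^2$ with $\kappa=A^2+B^2+C^2-ABC-4$ --- is complete and valid over an arbitrary commutative ring, which is exactly the generality the lemma requires. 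What the paper's citation buys is brevity; what your proof buys is self-containedness, and all the identities you use check out (the anticommutator formula, the Gram entries, and $\det G_\varphi=-\kappa^2$, which one can also see by passing to the tuple $(I,\varphi(a),\varphi(b),N)$ with $N=\varphi(a)\varphi(b)-\varphi(b)\varphi(a)$: the Gram matrix becomes block diagonal with a $3\times 3$ block of determinant $-2\kappa$ and the scalar $\tr(N^2)=2\kappa$, while the change of tuple multiplies the Gram determinant by $4$). Your handling of the closing assertion is at the same level of detail as the paper's; if you wanted to be fastidious you would add that $\kappa$ is a $\GL_2$-invariant function, so $\Hom(\Pi,\SL_2)^\ai$ is the preimage of the principal open $D(\kappa)\subset X_{\SL_2}$ and taking invariants commutes with inverting $\kappa$, which gives the equality of schemes $X^\ai_{\SL_2}=X_{\SL_2}-\tau^{-1}(2)$.

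One caveat on your optional ``cleaner route'': knowing $\det N=-\kappa$ plus the column operation (which only yields $\det P'=2\det P$ for the tuple ending in $N$) does not by itself recover $\det(P)^2=\kappa^2$; you still need the orthogonality relations $\tr(N)=\tr(\varphi(a)N)=\tr(\varphi(b)N)=0$ and the value $-2\kappa$ of the $3\times 3$ Gram determinant of $(I,\varphi(a),\varphi(b))$ to close the block-diagonal computation. Since your primary route (direct expansion of $\det G_\varphi$) is already complete, this affects only the shortcut, not the proof.
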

\begin{proof} The first statement is \cite[Proposition 4.1]{BH95}. The second statement follows from the description of $\tau$ in Proposition \ref{prop_trace_fibration}.
\end{proof}

We have the following ``moduli interpretation'' for $X_{\SL_{2,R}}^\ai(\bF_q)$.

\begin{thm}\label{thm_moduli_interpretation} Let $\Pi$ be a free group on the generators $a,b$. Let $q = p^r$ be a prime power. Let $\GL_2(\bF_q)$ act on the set $\Hom(\Pi,\SL_2(\bF_q))$ by conjugation. The map
\begin{eqnarray*}
\Tr : \Hom (\Pi , \SL_2(\bF_q)) & \longrightarrow & \bF_q^3 \\
\varphi & \mapsto & (\tr\varphi(a),\tr\varphi(b),\tr\varphi(ab)).
\end{eqnarray*}
is \emph{surjective}. Moreover, the following maps induced by $\Tr$ are \emph{bijections}
\begin{equation}\label{eq_moduli_interpretation}
\begin{tikzcd}
\Hom(\Pi,\SL_2(\bF_q))^\ai/\GL_2(\bF_q)\;\ar[r,"\alpha"] & X_{\SL_2}^\ai(\bF_q)\ar[r,"\Tr"] & \bF_q^3 - T^{-1}(2)
\end{tikzcd}
\end{equation}
\end{thm}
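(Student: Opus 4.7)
The plan is as follows. The second bijection $X_{\SL_2}^\ai(\bF_q) \to \bF_q^3 - T^{-1}(2)$ falls out immediately from machinery already in place: by Theorem \ref{thm_character_variety}, $\Tr$ induces an isomorphism of $\bZ$-schemes $X_{\SL_2} \rightiso \bA^3$, which by Lemma \ref{lemma_absolutely_irreducible} and the commutative diagram of Proposition \ref{prop_trace_fibration} restricts to an isomorphism $X_{\SL_2}^\ai \rightiso \bA^3 - T^{-1}(2)$. Taking $\bF_q$-points gives the claim.

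For the first map $\alpha : \Hom(\Pi, \SL_2(\bF_q))^\ai / \GL_2(\bF_q) \to X_{\SL_2}^\ai(\bF_q)$, well-definedness is immediate from the $\GL_2(\bF_q)$-invariance of the trace coordinates. \emph{Injectivity} reduces to a standard fact about $2$-dimensional absolutely irreducible representations: if $\varphi_1, \varphi_2 : \Pi \to \SL_2(\bF_q)$ are absolutely irreducible and share the same trace coordinates $(A, B, C)$, then by the Fricke--Vogt identities $\tr \varphi_1(w) = \tr \varphi_2(w)$ for every word $w \in \Pi$, so $\varphi_1$ and $\varphi_2$ have the same character; since both are absolutely irreducible, by Brauer--Nesbitt they are isomorphic as $\bF_q[\Pi]$-modules, i.e.\ $\GL_2(\bF_q)$-conjugate.

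\emph{Surjectivity} of both $\alpha$ and of $\Tr : \Hom(\Pi, \SL_2(\bF_q)) \to \bF_q^3$ reduces to producing, for each $(A,B,C) \in \bF_q^3$, an explicit representation $\varphi$ with $(\tr \varphi(a), \tr \varphi(b), \tr \varphi(ab)) = (A,B,C)$. I plan to try the ansatz $\varphi(a) = \spmatrix{A}{-1}{1}{0}$ and $\varphi(b) = \spmatrix{x}{y}{z}{B-x}$ subject to $x(B-x) - yz = 1$. Imposing $\tr \varphi(ab) = C$ and eliminating $z$ reduces the problem to finding $x \in \bF_q$ such that
\[
\Delta(x) := (A^2-4) x^2 + (4B - 2AC) x + (C^2 - 4)
\]
is a square in $\bF_q$. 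The affine conic $u^2 = \Delta(x)$ is a (possibly degenerate) plane conic with at least one $\bF_q$-point outside of trivially small or degenerate cases, so such an $x$ always exists. For $(A,B,C) \notin T^{-1}(2)$ the resulting $\varphi$ is absolutely irreducible by Lemma \ref{lemma_absolutely_irreducible}, yielding surjectivity of $\alpha$. For $(A,B,C) \in T^{-1}(2)$, the factorization of $T - 2$ over $\bF_{q^2}$ as a product of two linear factors in $C$ (with the other two variables playing the role of $\omega + \omega^{-1}$ and $\omega' + \omega'^{-1}$) lets one write down such triples directly using upper-triangular (or appropriately twisted elliptic) reducible representations.

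The main obstacle is the conic argument together with the unavoidable case analysis at $A^2 = 4$, $B^2 = 4$, or in characteristic $2$, where the chosen ansatz degenerates and must be replaced by an analogous direct construction. Each exceptional case is elementary in isolation, but bundling them uniformly requires some bookkeeping.
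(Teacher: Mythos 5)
Your handling of the second map and of injectivity is correct, and the injectivity argument is genuinely different from the paper's. The paper deduces injectivity from Macbeath's theorem that two nonsingular representations with the same trace coordinates are conjugate over $\ol{\bF_q}$, and then descends to $\GL_2(\bF_q)$ via the normalizer computation (Proposition \ref{prop_normalizer}); its second proof instead uses Nakamoto's result that $\Hom(\Pi,\SL_{2,R})^\ai\rightarrow X^\ai_{\SL_{2,R}}$ is a principal $\PGL_2$-bundle together with Lang's theorem. You get $\GL_2(\bF_q)$-conjugacy directly: by Theorem \ref{thm_character_variety} the trace of every word is a $\GL_2$-invariant regular function, hence a polynomial in $(A,B,C)$, so equal trace coordinates give equal characters; since $\det=1$ forces the characteristic polynomial $t^2-\tr(g)t+1$, Brauer--Nesbitt applies to the two absolutely irreducible (hence semisimple) representations and gives an isomorphism of $\bF_q[\Pi]$-modules, i.e.\ $\GL_2(\bF_q)$-conjugacy. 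This cleanly bypasses both the normalizer lemma and Lang's theorem for that half of the statement.

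The gap is in surjectivity, which is exactly the part the paper outsources (to Macbeath's Theorem 1 for surjectivity of $\Tr$ onto $\bF_q^3$, and to Macbeath's Theorem 3, resp.\ Lang's theorem, for surjectivity of $\alpha$), and which your proposal only sketches. Two concrete problems with the plan as written. First, in characteristic $2$ the criterion ``$\Delta(x)$ is a square'' is not the right solvability condition: there $\Delta(x)=(Ax+C)^2$ is automatically a square, while recovering $y,z$ from $y+z=s$, $yz=P$ requires the Artin--Schreier condition that $t^2+st+P$ split, so the conic argument proves nothing in that case (it can be repaired, e.g.\ by taking $x=C/A$ when $A\neq 0$ so that $s=0$ and $y=z=P^{1/2}$, with a separate treatment of $A=0$, but that is a different argument). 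Second, on the locus $T^{-1}(2)$ the phrase ``upper-triangular or twisted elliptic reducible representations'' conceals a rationality issue: when the eigenvalue data of $\varphi(a)$ and $\varphi(b)$ are of mixed split/nonsplit type, Frobenius interchanges the two candidate values of $C$, so $\bF_q$-rational triples occur only in degenerate configurations, and one must run through all type combinations (including the $A^2=4$ cases you flag). None of this is deep, but as submitted the existence half of the theorem is unproved; the quickest repair is to cite Macbeath's Theorem 1 (or, for surjectivity of $\alpha$ alone, the triviality of $\PGL_2$-torsors over $\bF_q$), after which your Brauer--Nesbitt injectivity yields a complete proof along a route genuinely different from the paper's.
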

The surjectivity of $\Tr$ is \cite[Theorem 1]{Mac69}. We will give two proofs of the bijectivity statement. The first is elementary and amounts to a classical result of Macbeath \cite{Mac69} which involves explicit calculations in $\SL_2(\ol{\bF_p})$. The second is significantly more general and uses results of Nakamoto \cite{Nak00} on character varieties, which in particular shows that the map $\Hom(\Pi,\SL_{2,R})^\ai\rightarrow X_{\SL_{2,R}}^\ai$ is a universal geometric quotient. This argument uses the triviality of the Brauer group of a finite field, and also easily generalizes to character varieties for absolutely irreducible representations of arbitrary groups in arbitrary degree.
\begin{proof}[Proof 1] The second map $\Tr : X^\ai_{\SL_{2}}(\bF_q)\longrightarrow\bF_q^3 - T^{-1}(2)$ is already a bijection by Proposition \ref{prop_trace_fibration}. Thus it suffices to show that the composition is bijective. This amounts to a classical result of Macbeath. For a homomorphism $\varphi : \Pi\rightarrow\SL_2(\bF_q)$, let $(A,B,C) = (\tr\varphi(a), \tr\varphi(b), \tr\varphi(ab))$ and let 
$$Q_\varphi(x,y,z) = Q_{A,B,C}(x,y,z) := x^2 + y^2 + z^2 + Ayz + Bxz + Cxy$$
We say that $\varphi$ is nonsingular if the projective conic defined by $Q_\varphi$ is smooth (equivalently geometrically integral). Similarly given any triple $(A,B,C)\in\bF_q^3$, we say that it is nonsingular if the form $Q_{A,B,C}$ defines a smooth projective conic. We claim that $\varphi$ is nonsingular if and only if it is absolutely irreducible. Indeed, the discriminant of $Q_\varphi = Q_{A,B,C}$ is
$$\disc(Q_\varphi) = \disc(Q_{A,B,C}) = -(A^2 + B^2 + C^2 - ABC - 4)$$
and the associated conic fails to be smooth if and only if $\disc(Q_\varphi) = 0$\footnote{The definition of the discriminant and its association with the smoothness of the associated conic is classical in characteristic $p \ne 2$, but in characteristic 2 some care is needed. A modern treatment describing the discriminant for an arbitrary projective hypersurface over arbitrary fields is given in Demazure \cite{Dem12}. The definition of discriminant is \cite[Definition 4]{Dem12}, and its association with smoothness is \cite[Proposition 12]{Dem12}. For a ternary quadratic form $q(x,y,z)$ over a field $k$, to calculate its discriminant one should first lift $q$ to a quadratic form $\tilde{q}$ in characteristic 0 (e.g., over a Cohen ring of $k$), compute its discriminant there, defined as one half of the determinant of the Gram matrix of the bilinear form $b(v,w) = \tilde{q}(v+w) - \tilde{q}(v) - \tilde{q}(w)$, and then take its image in $k$.}, or equivalently, when $T(A,B,C) = 2$, where $T$ is as in Proposition \ref{prop_trace_fibration}. By Lemma \ref{lemma_absolutely_irreducible}, this is equivalent to $\varphi$ not being absolutely irreducible, so the nonsingular representations are precisely the absolutely irreducible representations, and the nonsingular triples are precisely those in $\bF_q^3 - T^{-1}(2)$. 


In \cite[Theorem 3]{Mac69}, Macbeath shows that every nonsingular triple in $\bF_q^3$ is the trace of a nonsingular $\varphi$, and conversely any two nonsingular representations $\varphi,\varphi'$ are conjugate by some $P\in\SL_2(\ol{\bF_q})$. Since $\varphi,\varphi'$ are absolutely irreducible, the associated algebra homomorphisms $\what{\varphi},\what{\varphi'} : \bF_q[\Pi]\rightarrow M_2(\bF_q)$ are surjective, and hence such a $P$ must normalize $M_2(\bF_q)$, so it must normalize $\SL_2(\bF_q)$. Finally, it follows from the description of the normalizer $N_{\GL_2(\ol{\bF_q})}(\SL_2(\bF_q))$ in Proposition \ref{prop_normalizer} that the action factors through the action of $\GL_2(\bF_q)$, so Macbeath's result establishes the desired bijectivity.
\end{proof}

\begin{proof}[Proof 2] Since $\Tr$ is a bijection (\ref{thm_character_variety}, \ref{prop_trace_fibration}), it remains to prove the bijectivity of $\alpha$. It follows from \cite[Corollary 2.13, Corollary 6.8]{Nak00} that for any ring $R$ the map
$$\xi : \Hom(\Pi,\SL_{2,R})^\ai\rightarrow X_{\SL_{2,R}}^\ai$$
is a \emph{universal geometric quotient} by $\GL_{2,R}$ in the sense of geometric invariant theory \cite[Definition 0.6]{MFK94}. Thus for algebraically closed fields $\Omega$ over $R$, $\xi$ induces a bijection
$$\Hom(\Pi,\SL_2(\Omega))^\ai/\GL_2(\Omega)\rightiso X_{\SL_{2,R}}^\ai(\Omega)$$
Setting $\Omega = \ol{\bF_q}$, Proposition \ref{prop_normalizer} implies (arguing as in the first proof) that two absolutely irreducible representations $\varphi,\varphi' : \Pi\rightarrow\SL_2(\bF_q)$, are conjugate in $\GL_2(\ol{\bF_q})$ if and only if they are conjugate in $\GL_2(\bF_q)$. This implies the injectivity of $\alpha$. Since the center of $\GL_{2,R}$ acts trivially, $\xi$ is also a universal geometric quotient by $\PGL_{2,R}$. Since our representations are absolutely irreducible, the $\PGL_{2,R}$-action is \emph{free} \cite[Corollary 6.5]{Nak00}, so $\xi$ is moreover a principal $\PGL_{2,R}$-bundle. In particular, for any field $k$ and map $x : \Spec k\rightarrow  X_{\SL_{2,R}}^\ai$, the restriction $x^*\xi : x^*\Hom(\Pi,\SL_{2,R})^\ai\rightarrow \Spec k$ is a principal $\PGL_{2,k}$ bundle. When $k = \bF_q$, by Lang's theorem \cite[Theorem 5.12.19]{Poo17}, principal $\PGL_{2,\bF_q}$-bundles over $\Spec \bF_q$ are trivial, and hence $\alpha$ is surjective.\footnote{Alternatively, one could argue surjectivity as follows. Let $k^s$ denote a separable closure of $k$. The exact sequence
$$1\lra\bG_m(k^s)\lra\GL_2(k^s)\lra\PGL_2(k^s)\lra 1$$
induces a longer exact sequence of Galois cohomology sets \cite[I,\S5.7, Proposition 43]{SerreGC}
$$\cdots\lra H^1(k,\GL_2(k^s))\lra H^1(k,\PGL_2(k^s))\lra H^2(k,\bG_m(k^s))\lra\cdots$$
Since $\PGL_2$ is smooth, every principal $\PGL_2$-bundle over $k^s$ is trivial, and hence $H^1(k,\PGL_2(k^s))$ classifies principal $\PGL_2$-bundles over $k$ \cite[\S5.12.4]{Poo17}. On the other hand, the first term of the sequence vanishes by Hilbert's theorem 90 \cite[Proposition 1.3.15]{Poo17}, and the last is the Brauer group, which vanishes for finite fields (or any field of cohomological dimension $\le 1$). In particular if $k = \bF_q$ then $x^*\xi$ is a trivial $\PGL_{2,k}$-bundle, and hence $\alpha$ is surjective.}
\end{proof}

\begin{remark}\label{remark_nakamoto} The methods used in the second proof also hold for character varieties of absolutely irreducible representations of arbitrary groups in arbitrary dimension. Precisely, if $\Pi$ temporarily denotes an \emph{arbitrary} group, $n\ge 1$ an integer, and $R$ any ring, then by \cite[Corollary 6.8]{Nak00} if $X_{\Pi,n}^\ai$ denotes the $\GL_{n,R}$-quotient of the representation variety of absolutely irreducible representations $\Hom(\Pi,\GL_{n,R})^\ai$, then the quotient map $\Hom(\Pi,\GL_{n,R})^\ai\rightarrow X_{\Pi,n}^\ai$ is a universal geometric quotient and a principal $\PGL_{n,R}$-bundle. Since the Brauer group of a finite field is trivial, the argument used in the second proof also shows that we also obtain a bijection
\begin{eqnarray*}
\Hom(\Pi,\GL_n(\bF_q))/\GL_2(\bF_q) & \rightiso & X_{\GL_n}(\bF_q)
\end{eqnarray*}
where $X_{\GL_n} := \Hom(\Pi,\GL_n)\git\GL_n$. Thus, combined with Theorem \ref{thm_congruence}, restricting to the subsets with image of a particular type, this bijection can potentially be used to establish congruences on the $\Aut(\Pi)$-orbits on $\bF_q$ points of more general character varieties.
\end{remark}


By Theorem \ref{thm_moduli_interpretation}, elements of $X_{\SL_{2}}(\bF_q)$ do not quite correspond to $\SL_2(\bF_q)$-structures, but rather $\GL_2(\bF_q)$-equivalence classes of such structures. 

\begin{defn}\label{def_Dq} For a prime power $q$, let $i : \GL_2(\bF_q)\rightarrow\Aut(\SL_2(\bF_q))$ be the map which sends $A\in \GL_2(\bF_q)$ to the corresponding action by conjugation on $\SL_2(\bF_q)$. Let $D(q)$ be the image of $\GL_2(\bF_q)$ in $\Out(\SL_2(\bF_q))$. We call $D(q)$ the group of ``diagonal'' outer automorphisms of $\SL_2(\bF_q)$.


For a general subgroup $G\le\SL_2(\bF_q)$, let $D(q,G)$ denote the image of $i : N_{\GL_2(\bF_q)}(G)\rightarrow\Out(G)$ where $i$ is given by the conjugation action. Thus if $G = \SL_2(\bF_q)$, then $D(q,G) = D(q)$.
\end{defn}

\begin{prop}\label{prop_Dq} Let $q = p^r$ be a prime power. If $q$ is even, $D(q)$ is trivial. If $q$ is odd, $D(q)$ has order 2, and the nontrivial element is represented by any matrix $A\in\GL_2(\bF_q)$ with nonsquare determinant. When $q = p$ is a prime, $D(p) = \Out(\SL_2(\bF_p))$.
\end{prop}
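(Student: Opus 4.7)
The plan is to identify $D(q)$ explicitly as the quotient $\bF_q^\times/(\bF_q^\times)^2$ via the determinant, and then invoke the known classification of $\Out(\SL_2(\bF_p))$ for the final claim.

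First I would compute the kernel of the homomorphism $i : \GL_2(\bF_q) \to \Out(\SL_2(\bF_q))$. An element $A \in \GL_2(\bF_q)$ lies in $\ker i$ if and only if conjugation by $A$ agrees with conjugation by some $B \in \SL_2(\bF_q)$, i.e., $B^{-1}A$ centralizes $\SL_2(\bF_q)$ inside $\GL_2(\bF_q)$. Since the standard $2$-dimensional representation of $\SL_2(\bF_q)$ is absolutely irreducible for every prime power $q$ (even for $q = 2$, where $\SL_2(\bF_2) \cong S_3$), Schur's lemma identifies the centralizer of $\SL_2(\bF_q)$ in $\GL_2(\bF_q)$ with the center $Z(\GL_2(\bF_q)) = \bF_q^\times \cdot I$. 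Hence $\ker i = \SL_2(\bF_q) \cdot \bF_q^\times$, and the determinant induces a group isomorphism
\[
D(q) \;=\; \GL_2(\bF_q)/\bigl(\SL_2(\bF_q)\cdot \bF_q^\times\bigr) \;\xrightarrow{\;\det\;}\; \bF_q^\times/(\bF_q^\times)^2.
\]
When $q$ is even, $|\bF_q^\times| = q - 1$ is odd, so squaring is a bijection and $D(q) = 1$. When $q$ is odd, $[\bF_q^\times : (\bF_q^\times)^2] = 2$, giving $|D(q)| = 2$, and the nontrivial class is represented by any $A$ with $\det A \notin (\bF_q^\times)^2$.

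For the final assertion, when $q = p$ is prime I would appeal to the standard description of the outer automorphism group of $\SL_2(\bF_q)$: it is generated by the diagonal automorphisms (coming from $\GL_2(\bF_q)/\bF_q^\times \cdot \SL_2(\bF_q)$) together with the field automorphisms coming from $\Gal(\bF_q/\bF_p)$, and there are no graph automorphisms in type $A_1$ (see, e.g., the discussion of automorphisms of classical groups in Dieudonn\'e or the ATLAS). When $q = p$ is prime, $\Gal(\bF_p/\bF_p)$ is trivial, so the diagonal automorphisms exhaust $\Out(\SL_2(\bF_p))$ and hence $D(p) = \Out(\SL_2(\bF_p))$. (In the excluded case $p = 2$ this reads $D(2) = \Out(S_3) = 1$, which is also consistent.)

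The only step that requires any real work beyond a routine calculation is the last one, which relies on an external input (the classification of automorphisms of $\SL_2(\bF_q)$) rather than anything internal to this paper. The rest is a Schur's-lemma computation plus the elementary structure of $\bF_q^\times$.
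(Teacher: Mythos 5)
Your proof is correct and takes essentially the same route as the paper: the paper dispatches the determination of $D(q)$ with "the rest is clear," and you have filled in exactly that (Schur's lemma to identify $\ker i = \bF_q^\times\cdot\SL_2(\bF_q)$, then $D(q)\cong\bF_q^\times/(\bF_q^\times)^2$), while for $D(p)=\Out(\SL_2(\bF_p))$ both you and the paper appeal to the classification of automorphisms of Chevalley groups (the paper cites Steinberg's Theorem 30, noting the absence of field and graph automorphisms, which is the same input you invoke via Dieudonn\'e/ATLAS).
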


\begin{proof} The fact that $D(p) = \Out(\SL_2(\bF_p))$ follows from \cite[Theorem 30]{Stein16}, noting that for $\SL_2(\bF_p)$, there are no graph or field automorphisms. The rest is clear.
\end{proof}


\begin{defn}\label{def_dihedral_type} For a prime power $q$, we say that a subgroup $G\le\SL_2(\bF_q)$ is \emph{of dihedral type} if its image in $\PSL_2(\bF_q) := \SL_2(\bF_q)/\{\pm I\}$ is either trivial or isomorphic to the dihedral group of order $2n$ for some $n\ge 1$. For a representation $\varphi : \Pi\rightarrow\SL_2(\bF_q)$, we say that it is of \emph{dihedral type} if its image is a subgroup of dihedral type.
\end{defn}

For the results in the next section we will eventually need to exclude the representations of dihedral type. Next we make some preliminary observations about such representations.


\begin{defn}\label{def_Xq} Let $R$ be a ring, and let $t\in R$ be an element corresponding to a map $t : \Spec R\rightarrow\bA^1$. Let
$$X_{\SL_{2},t} := \tau^{-1}(t) := X_{\SL_2}\times_{\tau,\bA^1,t}\Spec R$$
be the fiber corresponding to representations with $\tr\varphi([a,b]) = t$. Via $\Tr : X_{\SL_{2,R}}\rightiso\bA_R^3$, $X_{\SL_2,t} = \tau^{-1}(t)$ is identified with the affine surface $T^{-1}(t)\subset\bA^3_R$
$$T^{-1}(t) : x^2 + y^2 + z^2 - xyz - 2 = t$$
In particular, when $t = -2\in\bZ$, this is the Markoff surface $\bX\subset\bA^3_\bZ$. By Theorem \ref{thm_moduli_interpretation}, for $t\in\bF_q$, every $\bF_q$ point of $T^{-1}(t)$ is the image under $\Tr$ of a representation $\varphi : \Pi\rightarrow\SL_2(\bF_q)$.
\begin{itemize}
	\item Let $X_t(q)$ denote the set of $\bF_q$-points of the affine surface $T^{-1}(t) \subset\bA^3_{\bF_q}$;
	\item Let $X^*_t(q)\subset X_t(q)$ denote the subset corresponding to absolutely irreducible representations which are not of dihedral type (Definition \ref{def_dihedral_type});
	\item Let $X^\circ_t(q)\subset X_t(q)$ denote the subset corresponding to \emph{surjective} representations $\Pi\twoheadrightarrow\SL_2(\bF_q)$;
	\item Let $X(q) := \bigsqcup_{t\in\bF_q} X_t(q)$, let $X^*(q) := \bigsqcup_{t\in\bF_q} X_t^*(q)$, and let $X^\circ(q) := \bigsqcup_{t\in\bF_q} X_t^\circ(q)$.
\end{itemize}
\end{defn}

\begin{prop}\label{prop_dihedral_representations} The following are true.
\begin{itemize}
\item[(a)] For all prime powers $q$, $X^*_2(q)$ and $X^\circ_2(q)$ are empty.
\item[(b)] For $q\ge 3$, $X^\circ(q)\subset X^*(q)$.
\item[(c)] For $q = 2$, $X^*(2)$ is empty whereas $X^\circ(2)$ is not.
\item[(d)] For any prime power $q$, if $t\ne 2\in\bF_q$, then
$$X^*_t(q) = \{(x,y,z)\in X_t(q) \;|\;\text{At least two of $\{x,y,z\}$ are nonzero in $\bF_q$}\}$$
\end{itemize}

\end{prop}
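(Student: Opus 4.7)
The plan is to handle the four parts in parallel via Lemma~\ref{lemma_absolutely_irreducible}, the identity $A^2+B^2+C^2-ABC-4=t-2$, and Cayley--Hamilton in $\SL_2$. In what follows, given a point $(A,B,C)\in X_t(q)$ I will let $\varphi:\Pi\to\SL_2(\bF_q)$ denote any representative, which exists (and when $t\ne 2$ is unique up to $\GL_2(\bF_q)$-conjugation) by Theorem~\ref{thm_moduli_interpretation}.

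For (a), I first observe that $\SL_2(\bF_q)\hookrightarrow\GL_2(\bF_q)$ is absolutely irreducible for \emph{every} prime power $q\ge 2$ (even $\SL_2(\bF_2)=S_3$ spans $M_2(\bF_2)$, which can be checked by listing its six elements), so any surjective $\varphi:\Pi\twoheadrightarrow\SL_2(\bF_q)$ is absolutely irreducible, and in particular $X_t^\circ(q)\subseteq X_t^\ai(q)$. But by Lemma~\ref{lemma_absolutely_irreducible}, absolute irreducibility forces $t-2\in\bF_q^\times$, so at $t=2$ both $X_2^*(q)$ and $X_2^\circ(q)$ are empty.

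For (b) and (c), since every surjective $\varphi$ is absolutely irreducible, the only condition left to verify for $X^\circ(q)\subseteq X^*(q)$ is that the image of $\varphi$ is not of dihedral type, equivalently that $\PSL_2(\bF_q)$ is not dihedral. For $q=3$, $\PSL_2(\bF_3)\cong A_4$; for $q\ge 4$ it is simple and non-cyclic, so no normal cyclic subgroup of index $2$ exists. Hence $X^\circ(q)\subseteq X^*(q)$ for $q\ge 3$. For $q=2$, $\PSL_2(\bF_2)=\SL_2(\bF_2)=S_3$ is itself dihedral, so every $\varphi$ into $\SL_2(\bF_2)$ is of dihedral type and $X^*(2)=\emptyset$; the explicit matrices $\varphi(a)=\spmatrix{1}{1}{0}{1}$, $\varphi(b)=\spmatrix{1}{0}{1}{1}$ generate $\SL_2(\bF_2)$ (contains elements of order $2$ and $3$), showing $X^\circ(2)\ne\emptyset$.

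Part (d) is the main case. Since $t\ne 2$, Lemma~\ref{lemma_absolutely_irreducible} guarantees absolute irreducibility, so I only need to match ``at most one of $A,B,C$ is nonzero'' with ``the image of $\varphi$ is of dihedral type''. Suppose first that $\varphi$ has dihedral-type image. Since $\varphi$ is absolutely irreducible the image in $\PSL_2(\bF_q)$ is a nontrivial dihedral subgroup (the abelian cases are excluded by absolute irreducibility), so $\varphi(\Pi)$ sits inside the normalizer $N$ of a (possibly non-split) maximal torus $T\subset\SL_2(\bF_q)$ with $N/T\cong\bZ/2$. Over $\ol{\bF_q}$ any element of $N\setminus T$ is the conjugate of an antidiagonal matrix, hence has trace zero; and the composition $\Pi\to N\to N/T$ must be nontrivial, since otherwise $\varphi(\Pi)\subseteq T$ would be abelian and reducible. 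A short case analysis over which of $\varphi(a),\varphi(b),\varphi(ab)$ lie in $N\setminus T$ (using that $T$ is normal in $N$, so the product of one element of each type again lies in $N\setminus T$) shows that in each case at most one of $A,B,C$ is nonzero. Conversely, if two of $A,B,C$ vanish, say $A=B=0$, then Cayley--Hamilton gives $\varphi(a)^2=\varphi(b)^2=-I$, so both images in $\PSL_2(\bF_q)$ have order dividing $2$ and the image of $\varphi$ in $\PSL_2(\bF_q)$ is a subgroup generated by two elements of order $\le 2$, hence dihedral (or smaller). The cases $A=C=0$ and $B=C=0$ reduce to this one by replacing the generating pair $(a,b)$ with $(a,ab)$ or $(b,ab)$ respectively, which acts on trace coordinates by a permutation and does not change the image of $\varphi$. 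The only real care needed is to confirm that the ``trace-zero $\iff$ Weyl-element'' characterization is valid over $\bF_q$ for both split and non-split tori, which is straightforward since both trace and the coset $N/T$ are Galois-equivariant.
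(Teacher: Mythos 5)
For (a)--(c) your argument matches the paper's: (a) is immediate from Lemma~\ref{lemma_absolutely_irreducible}, (b) from $\PSL_2(\bF_q)$ not being dihedral for $q\ge 3$, and (c) from the fact that every subgroup of $\SL_2(\bF_2)\cong S_3$ is cyclic or dihedral. For (d), the ``dihedral type $\Rightarrow$ at most one nonzero trace'' direction is where you genuinely diverge, and your route is actually the more robust of the two. The paper asserts that for a generating pair $(\ol{A},\ol{B})$ of the dihedral image, both $\ol{A}^2 = \ol{B}^2 = 1$, and then checks a three-case list of orders in $\{1,2\}$. That assertion is too strong: a generating pair of $D_{2n}$ may include a rotation of order $\ge 3$ (the standard $(r,s)$, say), and this possibility is missing from the paper's case list. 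The correct structural fact --- which your $\bar\varphi\colon\Pi\to N/T\cong\bZ/2$ bookkeeping encodes exactly --- is that in any generating pair of a non-cyclic dihedral group, at least two of $\{g,h,gh\}$ lie in the reflection coset and hence have order $\le 2$ in $\PSL_2$. So your argument via the torus normalizer supplies precisely the case the paper's direct argument misses, and the two proofs reach the same conclusion.

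Two small points to tighten. First, the step ``so $\varphi(\Pi)$ sits inside the normalizer $N$ of a maximal torus'' deserves a sentence of justification, because it is where the absolute irreducibility hypothesis does real work: the index-2 normal abelian subgroup $A\le\varphi(\Pi)$ (preimage of the rotation subgroup) cannot contain a nontrivial unipotent, for then the unique common eigenline of $A$ would be $\varphi(\Pi)$-stable, contradicting irreducibility; so $A$ consists of semisimple elements with distinct eigenvalues, hence lies in a unique torus $T = C_{\SL_2}(A)$, which $\varphi(\Pi)$ must then normalize. Second, the Nielsen move you cite for the reduction is not the right one: $(a,b)\mapsto(a,ab)$ sends $\tr\varphi(ab)$ to $\tr\varphi(a^2b)$, which is not a permutation of $(A,B,C)$. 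The substitution realizing the coordinate transposition $(x,y,z)\mapsto(x,z,y)$ is $(a,b)\mapsto(a^{-1},ab)$ (Proposition~\ref{prop_trace_fibration}). The idea of reducing the three symmetric cases by coordinate-permuting Nielsen moves that preserve the image is fine; just use the moves that actually permute the trace coordinates.
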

\begin{proof} Part (a) follows immediately from Lemma \ref{lemma_absolutely_irreducible}. Part (b) follows from the fact that $\PSL_2(\bF_q)$ is not dihedral for $q\ge 3$. For $q = 2$, $X^*(q)$ is empty since every subgroup of $\SL_2(\bF_2) = \PSL_2(\bF_2)$ is either dihedral or cyclic. On the other hand, there exist surjections $\Pi\twoheadrightarrow\SL_2(\bF_2)$, so $X^\circ(2)$ is nonempty. This proves (c).


Finally we prove (d). Let $\varphi : \Pi\rightarrow\SL_2(\bF_q)$ be an absolutely irreducible representation with $A := \varphi(a), B := \varphi(b)$. Recall that $\varphi$ is absolutely irreducible if and only if $\tr\varphi([a,b]) = 2$ (Lemma \ref{lemma_absolutely_irreducible}). Thus, $X^*_2(q)$ is empty, and it remains to show that $\varphi$ is of dihedral type if and only if two of $\tr A,\tr B,\tr AB$ are equal to 0. Note that a group is dihedral if and only if it is generated by two elements $g,h$ with $g^2 = h^2 = 1$, and that a matrix $A\in\SL_2(\bF_q)$ maps to an element of order 2 in $\PSL_2(\bF_q)$ if and only if $\tr(A) = 0$. Thus if two of $\tr A,\tr B, \tr AB$ are equal to zero, then $\varphi$ must be of dihedral type. Conversely, if $\varphi$ is of dihedral type, let $\ol{A},\ol{B}\in\PSL_2(\bF_q)$ be the images of $A,B$, then $\ol{A},\ol{B}$ must satisfy $\ol{A}^2 = \ol{B}^2 = 1$, and up to switching $A,B$, there are three cases:
\begin{itemize}
\item $|\ol{A}| = |\ol{B}| = 1$. Then $\varphi(\Pi) = \langle A,B\rangle\le Z(\SL_2(\bF_q)$, so $\varphi$ is not absolutely irreducible, so this case cannot occur.
\item $|\ol{A}| = 1$ and $|\ol{B}| = 2$. Then $A = \pm I$ and $\tr(B) = \tr(AB) = 0$ In this case $\Tr(\varphi) = (\pm 2,0,0)$ (though this case is actually forbidden since $\langle A,B\rangle$ is cyclic so $\varphi$ cannot be absolutely irreducible in this case).
\item $|\ol{A}| = |\ol{B}| = 2$. Then $\tr(A) = \tr(B) = 0$. In this case $\Tr(\varphi) = (0,0,\tr(AB))$
\end{itemize}

Thus a representation of dihedral type must have at least two of $\tr\varphi(a),\tr\varphi(b),\tr\varphi(ab)$ equal to 0, as desired.	
\end{proof}

For us, an important special case is when $q = p$ and $t = -2$,

\begin{prop}\label{prop_surjective_representation} The following are true.
\begin{itemize}
\item[(a)] For all primes $p\ge 3$, $X^*_{-2}(p) = X^\circ_{-2}(p)$ consists of the $\bF_p$-points of the surface
$$\bX : x^2 + y^2 + z^2 - xyz = 0$$
other than $(0,0,0)$. In other words, for $p\ge 3$, we have $X^*_{-2}(p) = X^\circ_{-2}(p) = X_{-2}(p) - \{(0,0,0)\} = \bX^*(p)$.
\item[(b)] For $p = 2$, $X^\circ_{-2}(2) = X^*_{-2}(2)$ is empty but
$$X_{-2}(2) - \{(0,0,0)\} = \{(1,1,0),(1,0,1),(0,1,1),(1,1,1)\}$$
\end{itemize}
\end{prop}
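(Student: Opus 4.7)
The plan is to handle parts (a) and (b) separately, with (a) being the substantive case.

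For part (a) with $p \ge 3$ odd, I would establish the two equalities in turn. Since $-2 \ne 2$ in $\bF_p$, Proposition \ref{prop_dihedral_representations}(d) identifies $X^*_{-2}(p)$ with the subset of points $(x,y,z) \in X_{-2}(p)$ having at least two nonzero coordinates. But if two coordinates of such a point vanish — say $y = z = 0$ — then the equation $x^2+y^2+z^2-xyz = 0$ forces $x^2 = 0$, hence $x = 0$. So the only point of $X_{-2}(p)$ with fewer than two nonzero coordinates is the origin, giving $X^*_{-2}(p) = X_{-2}(p) - \{(0,0,0)\} = \bX^*(p)$. For the second equality $X^\circ_{-2}(p) = X^*_{-2}(p)$, one direction is Proposition \ref{prop_dihedral_representations}(b); for the converse, I would take $(x,y,z) \in X^*_{-2}(p)$ and lift it via Theorem \ref{thm_moduli_interpretation} to an absolutely irreducible, non-dihedral-type representation $\varphi : \Pi \to \SL_2(\bF_p)$ with $\tr\varphi([a,b]) = -2$, and then show that $\varphi$ is surjective.

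The heart of the proof is a two-step group-theoretic argument for this surjectivity. First, I would rule out the possibility that $\varphi([a,b]) = -I$: if the commutator were central, then $\bar\varphi(a)$ and $\bar\varphi(b)$ would commute in $\PSL_2(\bF_p)$, making $\bar\varphi(\Pi) \le \PSL_2(\bF_p)$ abelian. For $p$ odd the abelian subgroups of $\PSL_2(\bF_p)$ are either cyclic (in which case $\bar\varphi(\Pi)$ lies in (the normalizer of) a maximal torus or a unipotent root subgroup, forcing $\varphi$ to preserve a line over $\ol{\bF_p}$ and contradicting absolute irreducibility) or a Klein four-group (making $\varphi$ of dihedral type, contradicting hypothesis). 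Hence $\varphi([a,b])$ is non-central with trace $-2$, so Proposition \ref{prop_na} gives $|\varphi([a,b])| = n_p(-2) = 2p$. The unique order-$2$ element $\varphi([a,b])^p$ of the cyclic group it generates must be $-I$, so $-I \in \varphi(\Pi)$ and $\bar\varphi(\Pi)$ contains an element of order $p$. The second step is to invoke Dickson's classification of the subgroups of $\PSL_2(\bF_p)$: any subgroup containing an element of order $p$ is either contained in a Borel (the normalizer of a Sylow $p$-subgroup) or equals all of $\PSL_2(\bF_p)$. The Borel case would force $\varphi$ to be reducible, so $\bar\varphi(\Pi) = \PSL_2(\bF_p)$; combined with $-I \in \varphi(\Pi)$ this yields $\varphi(\Pi) = \SL_2(\bF_p)$.

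For part (b), the emptiness of $X^*_{-2}(2)$ is immediate from Proposition \ref{prop_dihedral_representations}(c). For $X^\circ_{-2}(2) = \emptyset$, I would use that $\SL_2(\bF_2) \cong S_3$ has commutator subgroup $A_3$, so for any surjection $\varphi : \Pi \twoheadrightarrow \SL_2(\bF_2)$ the images $\varphi(a),\varphi(b)$ cannot commute (as $S_3$ is non-abelian), forcing $\varphi([a,b])$ to be a nontrivial element of $A_3$, i.e., a 3-cycle; a direct matrix computation then shows that 3-cycles in $\SL_2(\bF_2)$ have trace $1$, not $0 = -2 \in \bF_2$. The explicit enumeration of $X_{-2}(2)$ is obtained by rewriting the defining equation over $\bF_2$ as $x+y+z+xyz = 0$ (using $u^2 = u$) and checking the eight triples in $\bF_2^3$ by hand. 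The main obstacle throughout is the surjectivity step in part (a), whose success relies on Dickson's classification of the subgroups of $\PSL_2(\bF_p)$; every other step is a direct computation, an appeal to an earlier proposition, or a classical matrix identity.
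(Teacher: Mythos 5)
Your proof is correct, but it takes a genuinely different route from the paper. For the key equality $X^\circ_{-2}(p) = X_{-2}(p) - \{(0,0,0)\}$ the paper simply cites the analysis in McCullough--Wanderley \cite[\S 11]{MW13} (and then derives $X^*_{-2}(p) = X_{-2}(p) - \{(0,0,0)\}$ from Proposition \ref{prop_dihedral_representations}(b) as you do), whereas you supply a self-contained proof of surjectivity: you first rule out $\varphi([a,b]) = -I$ by noting that a central commutator would make $\bar\varphi(\Pi)$ abelian, hence (by the structure of abelian subgroups of $\PSL_2(\bF_p)$ for odd $p$) either cyclic, forcing reducibility, or Klein four, i.e.\ of dihedral type --- both excluded by hypothesis; then $\varphi([a,b])$ has order $n_p(-2) = 2p$, so $-I \in \varphi(\Pi)$ and $\bar\varphi(\Pi)$ contains an element of order $p$, and Dickson's classification forces $\bar\varphi(\Pi) = \PSL_2(\bF_p)$ once the Borel case is ruled out by irreducibility. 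This is a perfectly valid argument and arguably more transparent than an external citation, at the cost of invoking Dickson; the paper's approach is shorter by outsourcing the content to \cite{MW13}. For part (b) the paper is also slightly more economical: since $-2 = 2$ in $\bF_2$, Proposition \ref{prop_dihedral_representations}(a) already gives the emptiness of both $X^*_{-2}(2)$ and $X^\circ_{-2}(2)$ at once, whereas you make a separate ad hoc argument via $\SL_2(\bF_2)\cong S_3$ for $X^\circ_{-2}(2)$; both are fine.
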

\begin{proof} Note that in $X_{-2}(p)$, if two of the coordinates are zero, then the third must also be zero. The fact that $X^\circ_{-2}(p) = X_{-2}(p) - \{(0,0,0)\}$ for $p\ge 3$ follows from the analysis of \cite[\S11]{MW13}. This also implies $X^*_{-2}(p) = X_{-2}(p) - \{(0,0,0)\}$ for $p\ge 3$ by Proposition \ref{prop_dihedral_representations}(b). For $p = 2$, $-2 = 2$, so the statement in this case follows from Proposition \ref{prop_dihedral_representations}(a).
\end{proof}

\begin{remark} For general prime powers $q$ and traces $t\in\bF_q$, the discussion in \cite[\S11]{MW13} gives a complete (albeit more complicated) description of the subsets $X^\circ_t(q)\subset X(q)$.
\end{remark}

\begin{defn}\label{def_trace_invariant_of_subgroup} Given a subgroup $G\le\SL_2(\bF_q)$, we may speak of the trace of elements $g\in G$. For any $t\in\bF_q$, let $\cAdm(G)_t\subset\cAdm(G)_\Qbar$ denote the open and closed substack classifying $G$-covers whose Higman invariant has trace $t$.	 We have analogous notions of $\Adm(G)_t,\cM(G)_t,M(G)_t$.
\end{defn}

Proposition \ref{prop_trace_fibration} and Theorem \ref{thm_moduli_interpretation} allows us to relate the $\Aut(\Pi)$-action on $X^*(q)$ to the geometry of the moduli stacks $\cM(G)_\Qbar/D(q,G)$ which we summarize here. 

\begin{prop}\label{prop_monodromy_formulas} Let $p\ge 3$ be a prime. Let $E,\Pi,a,b,x_E$ be as in Situation \ref{situation_galois_theory}. Let $G\le\SL_2(\bF_q)$ be the image of an absolutely irreducible representation $\varphi : \Pi\rightarrow\SL_2(\bF_q)$, and let $D(q,G)$ be as in Definition \ref{def_Dq}. Let
$$\ff : \cM(G)_\Qbar/D(q,G)\lra \cM(1)$$
be the forgetful map. We have a commutative diagram
\begin{equation}\label{eq_correspondence}
\begin{tikzcd}
\ff^{-1}(x_E) = \text{$\{(G|D(q,G))$-structures on $E\}$}\ar[rrd,bend right = 18,"\text{trace invariant}"']\ar[r,"\cong"] & \Epi^\ext(\Pi,G)/D(q,G)\ar[rd,"{\tr\varphi([a,b])}"']\ar[r,hookrightarrow] & \;\; X^*(q)\subset\bF_q^3\ar[d,"x^2+y^2+z^2-xyz-2"] \\
& & \;\;\bF_q - \{2\}
\end{tikzcd}
\end{equation}
where the map ``trace invariant'' is the trace invariant of any $G$-structure in the $D(q,G)$-orbit. The automorphisms $\gamma_0,\gamma_{1728},\gamma_\infty,\gamma_{-I}\in\Aut^+(\Pi)$ and their induced actions on $X(q) := \sqcup_{t\in\bF_q}X_t(q) = \bF_q^3$ are given as follows:

$$\begin{array}{rcl}
\gamma_0 : (a,b) & \mapsto & (ab^{-1},a) \\
\gamma_{1728} : (a,b) & \mapsto & (b^{-1},a) \\
\gamma_\infty : (a,b) & \mapsto & (a,ab) \\
\gamma_{-I} : (a,b) & \mapsto & (a^{-1},b^{-1})
\end{array}\quad\stackrel{\Tr_*}{\longrightarrow}\quad
\begin{array}{rcl}
(x,y,z) & \mapsto & (xy-z,x,x^2y-xz-y) \\
(x,y,z) & \mapsto & (y,x,xy-z) \\
\rot_1 : (x,y,z) & \mapsto & (x,z,xz-y) \\
(x,y,z) & \mapsto & (x,y,z)
\end{array}$$

By Galois theory, to every $\Aut^+(\Pi)$-orbit on $X(q)$ is associated a connected component of $\cM(G)_\Qbar/D(q,G)$ for some $G$ as above. If $P\in X(q)$ corresponds to a geometric point $x_P\in\cM(G)_\Qbar/D(q,G)$, then the connected component $\cM(x_P)\subset\cM(G)_\Qbar/D(q,G)$ containing $P$ has degree over $\cM(1)_\Qbar$ equal to the size of the $\Aut^+(\Pi)$-orbit of $P$. Let $M(x_P)$ denote the coarse scheme, let $\ol{M(x_P)}$ denote its smooth compactification and let
$$f : \ol{M(x_P})\rightarrow\ol{M(1)}_\Qbar$$
denote the forgetful map. If we identify $\ol{M(1)}_\Qbar$ with the projective line with coordinate $j$, then $f$ is a branched covering of smooth proper curves over $\Qbar$, \'{e}tale over the complement of the points $j = 0,1728,\infty$. If $j(E)\ne 0,1728$, then the bijection in \eqref{eq_correspondence} induces a bijection
$$f^{-1}(x_E) \rightiso \left(\Aut^+(\Pi)\cdot P\right)/\langle\gamma_{-I}\rangle = \Aut^+(\Pi)\cdot P\quad\text{(since $\gamma_{-I}$ acts trivially on $X(q)$)}$$
For $j = 0,1728,\infty$, there are bijections
$$f^{-1}(0) = \left(\Aut^+(\Pi)\cdot P\right)/\langle\gamma_0\rangle\qquad f^{-1}(1728) = \left(\Aut^+(\Pi)\cdot P\right)/\langle\gamma_{1728}\rangle\qquad f^{-1}(\infty) = \left(\Aut^+(\Pi)\cdot P\right)/\langle\gamma_\infty\rangle$$
such that the ramification index of a given point in $f^{-1}(0)$ (resp. $f^{-1}(1728),f^{-1}(\infty)$) is equal to the size of the corresponding orbit under $\gamma_0$ (resp. $\gamma_{1728},\gamma_\infty$) in $\left(\Aut^+(\Pi)\cdot P\right)$.
\end{prop}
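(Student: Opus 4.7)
The plan is to assemble the statement from three previously established results: the combinatorial description of fibers of $\cM(G) \to \cM(1)$ in Situation \ref{situation_galois_theory}, the identification of $\GL_2(\bF_q)$-conjugacy classes of absolutely irreducible representations with the $\bF_q$-points of the character variety in Theorem \ref{thm_moduli_interpretation}, and the description of $\gamma_0, \gamma_{1728}, \gamma_\infty$ as monodromy generators around $j = 0, 1728, \infty$ in Theorem \ref{thm_basic_properties}\ref{part_fibers}.

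First, the commutative diagram. By Situation \ref{situation_galois_theory}, the fiber of $\cM(G) \to \cM(1)$ over $x_E$ is canonically in bijection with $\Epi^\ext(\Pi, G)$, and taking $D(q,G)$-orbits on both sides gives $\ff^{-1}(x_E) \cong \Epi^\ext(\Pi, G)/D(q, G)$. Composing a surjection $\psi : \Pi \twoheadrightarrow G$ with the inclusion $G \hookrightarrow \SL_2(\bF_q)$ yields an absolutely irreducible representation, and two such compositions are $\GL_2(\bF_q)$-conjugate if and only if the $\GL_2(\bF_q)$-conjugating element normalizes $G$ (since it preserves the image $G$), i.e., if and only if the original surjections differ by an element of $\Inn(G) \cdot D(q,G)$. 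This gives an injection $\Epi^\ext(\Pi,G)/D(q,G) \hookrightarrow \Hom(\Pi, \SL_2(\bF_q))^\ai / \GL_2(\bF_q)$, which by Theorem \ref{thm_moduli_interpretation} is identified with $X^*_{\SL_2}(\bF_q) \subset \bF_q^3$; the image lies in $X^*(q)$ since $G$ is absolutely irreducible and (assumed) not of dihedral type. The trace invariant identification is the lemma preceding Proposition \ref{prop_trace_fibration}, which states $\tr\varphi([a,b]) = A^2 + B^2 + C^2 - ABC - 2$ where $(A,B,C) = \Tr(\varphi)$.

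Second, the action formulas. By Proposition \ref{prop_trace_fibration}, the induced action of $\Aut(\Pi)$ on $X_{\SL_2} \cong \bA^3$ is polynomial and determined functorially by pullback of the representation. All four formulas then reduce to repeated applications of the Fricke identity $\tr(AB) + \tr(A^{-1}B) = \tr(A)\tr(B)$ valid for $A,B \in \SL_2$, together with $\tr(A^{-1}) = \tr(A)$. For instance, $\gamma_\infty^* \varphi$ has trace coordinates $(\tr\varphi(a),\tr\varphi(ab),\tr\varphi(a^2b))$, and applying Fricke with $(A,B) = (a,ab)$ gives $\tr(a^2b) = xz - y$, yielding $(x,y,z) \mapsto (x,z,xz-y)$. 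For $\gamma_0$ one uses Fricke twice to compute $\tr(ab^{-1}) = xy - z$ and then $\tr(ab^{-1}\cdot a) = x(xy-z) - y$. For $\gamma_{1728}$ the computation is shorter, and for $\gamma_{-I}$ the triviality is immediate from $\tr(A^{-1}) = \tr(A)$ and $\tr((ba)^{-1}) = \tr(ab)$.

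Third, the ramification structure. By Theorem \ref{thm_basic_properties}\ref{part_fibers}, for a component of the coarse scheme of $\cM(G)/D(q,G)$ corresponding to an $\Aut^+(\Pi)$-orbit $\cO = \Aut^+(\Pi)\cdot P$, the fibers of the forgetful map $f$ over $j = 0, 1728, \infty$ are in bijection with $\cO/\langle\gamma_0\rangle$, $\cO/\langle\gamma_{1728}\rangle$, $\cO/\langle\gamma_{-I},\gamma_\infty\rangle$ respectively, and the ramification index at each point equals the size of the corresponding stabilizer orbit; generic fibers over $j \ne 0, 1728$ are $\cO/\langle\gamma_{-I}\rangle$. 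Since $\gamma_{-I}$ acts trivially on $X(q)$ by our computation, the $\langle\gamma_{-I}\rangle$- and $\langle\gamma_{-I},\gamma_\infty\rangle$-quotients collapse to $\cO$ and $\cO/\langle\gamma_\infty\rangle$. The main bookkeeping step will be to verify that the compatibility in Situation \ref{situation_galois_theory} between the $\Out^+(\Pi)$-action on $\Epi^\ext(\Pi,G)$ and the monodromy at $x_E$ descends cleanly through the $D(q,G)$-quotient; this is routine since the actions of $\Out^+(\Pi)$ and $D(q,G) \subset \Out(G)$ commute (one acts on the source of $\psi$, the other on the target). I expect no serious obstacle beyond this bookkeeping and the verification that representations with absolutely irreducible, non-dihedral image actually land in $X^*(q)$, which is Proposition \ref{prop_dihedral_representations}.
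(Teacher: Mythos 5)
Your proposal is correct and follows essentially the same route as the paper: the diagram is assembled from Situation \ref{situation_galois_theory}, Theorem \ref{thm_moduli_interpretation} and Proposition \ref{prop_trace_fibration}, and the ramification statement is the Galois-theoretic fiber description (the paper cites \cite[Proposition 2.2.3]{BBCL20} for the ramification indices, which is the same content you attribute to Theorem \ref{thm_basic_properties}\ref{part_fibers} plus standard covering theory). The only cosmetic difference is in verifying the explicit formulas: you compute the trace coordinates of $\varphi\circ\gamma$ directly via the Fricke identity, while the paper writes $\gamma_0,\gamma_{1728},\gamma_\infty,\gamma_{-I}$ as words in the generators $r,s,t$ and composes their known images $R_3,\tau_{12},\tau_{23}$, keeping track of the fact that $\Tr_*$ is an anti-homomorphism; your computations (including $\tr(a^2b)=xz-y$ and $\tr(ab^{-1}a)=x(xy-z)-y$) check out and yield the stated formulas.
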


\begin{proof} The properties of the diagram \eqref{eq_correspondence} follows from Theorem \ref{thm_moduli_interpretation}, Proposition \ref{prop_trace_fibration}, and the discussion in Situation \ref{situation_galois_theory}. The formulas for the automorphisms of $X(q) := \bigsqcup_{t\in\bF_q}X_t(q) = \bF_q^3$ induced by $\gamma_0,\gamma_{1728},\gamma_\infty$ can be verified using Proposition \ref{prop_trace_fibration}, noting that
$$\gamma_0 = s\circ r\circ s\circ t\circ r\circ s\qquad \gamma_{1728} = s\circ r\qquad\gamma_\infty = t\circ r\qquad \gamma_{-I} = \gamma_{1728}^2$$
Here we must remember that $\Tr_* : \Aut(\Pi)\rightarrow\Aut(X(q))$ is an \emph{anti-homomorphism}. Since $\gamma_{-I} = \gamma_{1728}^2$ acts trivially on $X^\circ(p)$, the statement about ramification indices follows from \cite[Proposition 2.2.3]{BBCL20}. 
\end{proof}

\begin{remark} The automorphisms $\gamma_0,\gamma_{1728},\gamma_\infty,\gamma_{-I}$ are perhaps more familiar in terms of the corresponding matrices they determine in $\SL_2(\bZ)$. Namely, under the map $\Pi\rightarrow\bZ^2$ sending $a,b$ to the canonical basis of $\bZ^2$, we find that $\gamma_0,\gamma_{1728},\gamma_\infty,\gamma_{-I}$ correspond to the matrices
$$\spmatrix{1}{1}{-1}{0},\qquad\spmatrix{0}{1}{-1}{0},\qquad\spmatrix{1}{-1}{0}{1},\qquad\spmatrix{-1}{0}{0}{-1}$$
respectively. Viewing the matrices as fractional linear transformations of the upper half plane $\cH$, the $j$-function takes the values $0,1728,\infty$ respectively on their sets of fixed points. Finally $\gamma_{-I}$ corresponds to $\spmatrix{-1}{0}{0}{-1}$ and acts trivially on $\cH$.	
\end{remark}

\begin{remark} By Proposition \ref{prop_surjective_representation}(a), $X^*_{-2}(p) = X^\circ_{-2}(p)$ (for $p\ge 3$) is precisely the set of nonzero $\bF_p$-points of $x^2 + y^2 + z^2 - xyz = 0$. Thus it follows from the combinatorial analysis in the previous section that every $\Out^+(\Pi)$-orbit on this set has cardinality divisible by $p$ (see Corollary \ref{cor_vdovin_2}(a)). This proves the Theorem \ref{thm_orbit_congruence_intro} given in the introduction, and hence resolves the conjecture of Bourgain, Gamburd, and Sarnak for all but finitely many primes. In the following sections we will give a different proof of this fact using the explicit form of the $\Aut^+(\Pi)$-action on $X(q) = \sqcup_{t\in\bF_q}X_t(q)$; this will yield additional congruences not directly implied by Corollary \ref{cor_vdovin_2}.
\end{remark}

\subsection{Automorphism groups of $\cAdm(\SL_2(\bF_q))_\Qbar$}\label{ss_automorphism_groups}
In this section we use the formalism developed in \S\ref{ss_character_variety} to describe the vertical automorphism groups of geometric points of $\cAdm(\SL_2(\bF_q))_\Qbar$. The main result is that for any $q\ge 3$, the map $\ol{\cM(\SL_2(\bF_q)}_\Qbar\rightarrow\ol{\cM(1)}_\Qbar$ is representable. By Proposition \ref{prop_compactification}(b), the restriction to the preimage above $\cM(1)_\Qbar$ is just the finite \'{e}tale map $\cM(G)_\Qbar\rightarrow\cM(1)_\Qbar$ (Theorem \ref{thm_basic_properties}\ref{part_etale}), so there the representability is a consequence of finiteness. It remains to show representability at the cusps - ie, that the vertical automorphism groups remain trivial at the cusps. For this it will be important to consider the ``Dehn twist'' $\gamma_\infty\in\Aut(\Pi)$ given by
$$\gamma_\infty = t\circ r : (a,b)\mapsto (a,ab)$$
which induces the ``rotation'' (using the terminology of \cite{BGS16arxiv})
$$\rot_1 := \Tr_*(t\circ r) = \Tr_*(r)\circ\Tr_*(t) = R_3\circ\tau_{23} : (x,y,z)\mapsto (x,z,xz-y)$$

By Theorem \ref{thm_delta_invariant}, the cusps of $\cAdm(\SL_2(\bF_q))_\Qbar$ are classified by the set $\bI(\SL_2(\bF_q))$. Let $\Pi,a,b$ be as in Situation \ref{situation_galois_theory}. Then the map
\begin{eqnarray*}
\Epi^\ext(\Pi,\SL_2(\bF_q)) & \lra & \bI(\SL_2(\bF_q)) \\
\varphi & \mapsto & \ps{\varphi(a),\varphi(b)}
\end{eqnarray*}
induces a bijection $\Epi^\ext(\Pi,\SL_2(\bF_q))/\langle\gamma_\infty\rangle\rightiso\bI(\SL_2(\bF_q))$. It follows from Theorem \ref{thm_cuspidal_automorphisms}(a) that the following are equivalent:
\begin{itemize}
	\item For every $\varphi\in\Epi^\ext(\Pi,\SL_2(\bF_q))$, the orbit $\{\varphi\circ\gamma_\infty^i\;|\; i\in\bZ\}$ has size $|\varphi(a)|$.
	\item The vertical automorphism groups of geometric points of $\cAdm(\SL_2(\bF_q))$ are reduced to $\{\pm I\} = Z(\SL_2(\bF_q))$.
\end{itemize}
Using the moduli interpretation of the points $X_{\SL_2}(\bF_q)$ (Theorem \ref{thm_moduli_interpretation}), we will show something even stronger:

\begin{lemma}\label{lemma_free} For any prime power $q\ge 3$, let $\varphi : \Pi\rightarrow\SL_2(\bF_q)$ be an absolutely irreducible representation with $(\tr\varphi(b),\tr\varphi(ab))\ne (0,0)$ (in particular $\varphi$ is not of dihedral type, see Proposition \ref{prop_dihedral_representations}). Then the $\gamma_\infty$-orbit of $\varphi$ viewed as an element of $\Hom(\Pi,\SL_2(\bF_q))^\ai/D(q)$ has size $|\varphi(a)|$.
\end{lemma}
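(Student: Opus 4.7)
The plan is to reformulate the orbit-size question as the period of a linear recurrence governed by a transfer matrix, using Theorem \ref{thm_moduli_interpretation} to pass to the character variety.

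First I would invoke Theorem \ref{thm_moduli_interpretation} to identify the image of $\varphi$ in $\Hom(\Pi,\SL_2(\bF_q))^\ai/\GL_2(\bF_q)$ with its trace coordinates
\begin{equation*}
P := (x,y,z) := (\tr\varphi(a),\,\tr\varphi(b),\,\tr\varphi(ab))\in X^\ai_{\SL_2}(\bF_q);
\end{equation*}
by Proposition \ref{prop_monodromy_formulas} the action induced by $\gamma_\infty$ is the polynomial automorphism $\rot_1\colon (x,y,z)\mapsto (x,z,xz-y)$, so the size of the orbit in question equals the least $k \ge 1$ with $\rot_1^k(P) = P$.

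Next I would realize the iteration as a linear recurrence. Setting $A := \varphi(a)$, $B := \varphi(b)$, and $u_n := \tr(A^n B)$, the Cayley--Hamilton identity $A + A^{-1} = (\tr A)\cdot I$, multiplied by $A^n B$ and traced, yields
\begin{equation*}
u_{n+1} = x\,u_n - u_{n-1},\qquad u_0 = y,\quad u_1 = z,
\end{equation*}
and a direct check gives $\rot_1^n(P) = (x,u_n,u_{n+1})$. I would encode this recurrence by its transfer matrix $T := \spmatrix{0}{1}{-1}{x}\in\SL_2(\bF_q)$, so that $\spvector{u_n}{u_{n+1}} = T^n\spvector{y}{z}$. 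Observe that $T$ has the same characteristic polynomial $t^2 - xt + 1$ as $A$, and neither is scalar (for $T$, the $(1,1)$-entry is $0$; for $A$, absolute irreducibility forces $A \notin Z(\SL_2(\bF_q))$), so $T$ and $A$ have the same Jordan form over $\ol{\bF_q}$, and in particular the same order, which by Proposition \ref{prop_na} equals $|A|$. Thus $T^{|A|}=I$, so $|A|$ is a period of $P$; the proof then reduces to showing that $|A|$ is the \emph{minimal} period, i.e.\ that $(T^k - I)\spvector{y}{z}=0$ with $\spvector{y}{z}\ne 0$ forces $T^k = I$.

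This final step is a short case analysis on the Jordan type of $T$. If $x^2\ne 4$, then $T$ is regular semisimple with distinct eigenvalues $\omega^{\pm 1}$, and a nonzero fixed vector of $T^k$ exists only when both $\omega^{k}=1$ and $\omega^{-k}=1$, i.e.\ $T^k = I$; the hypothesis $(y,z)\ne(0,0)$ ensures $\spvector{y}{z}$ is genuinely nonzero. If $x^2 = 4$, then $T$ is a nontrivial Jordan block with repeated eigenvalue $\omega = \pm 1$, and a direct computation shows that for each $k \ge 1$ the matrix $T^k - I$ is either invertible, vanishes (in which case $T^k = I$), or has one-dimensional kernel equal to the eigenline of $T$ --- namely $\spvector{1}{1}$ when $x=2$ and $\spvector{1}{-1}$ when $x=-2$. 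The absolute-irreducibility hypothesis, via Lemma \ref{lemma_absolutely_irreducible} and the identity $\tr[A,B] = x^2+y^2+z^2-xyz-2$, unravels to $y\ne z$ when $x=2$ and to $y+z\ne 0$ when $x=-2$ (these coincide in characteristic $2$), which is precisely the condition that $\spvector{y}{z}$ avoids the eigenline of $T$. Hence $T^k - I$ can annihilate $\spvector{y}{z}$ only when $T^k = I$, completing the reduction. The main obstacle, and the only place both hypotheses are essential, will be this last step: absolute irreducibility is exactly what excludes spurious small periods in the degenerate cases $x=\pm 2$.
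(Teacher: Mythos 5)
Your proof is correct and is essentially the same argument as the paper's, merely repackaged. In both, the heart of the matter is the action of the matrix $\spmatrix{0}{1}{-1}{x}$ (which the paper writes as $\rot_1|_{C_1(a)_t}$, with $a = x$) on the vector $\spvector{y}{z}$, and the argument splits into cases according to whether that matrix is semisimple with distinct eigenvalues or a nontrivial Jordan block; the case $x^2 = 4$ is exactly where absolute irreducibility (in the form $(y-z)^2 \ne 0$ or $(y+z)^2 \ne 0$) is needed to keep $\spvector{y}{z}$ off the eigenline, and the case $x^2 \ne 4$ only needs $(y,z) \ne (0,0)$. Your ``transfer matrix'' framing and the observation that $T$ and $A$ share a characteristic polynomial and are both non-scalar --- hence conjugate over $\ol{\bF_q}$, hence of the same order --- is a clean way of saying what the paper says by introducing $n_q(a)$ in Definition \ref{def_na} and verifying case by case that the orbit size is $n_q(a)$. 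One tiny caveat worth flagging that applies to the paper as well: the parenthetical in the lemma statement asserting ``in particular $\varphi$ is not of dihedral type'' does not actually follow from $(y,z)\ne(0,0)$ alone (take $(x,y,z)=(0,y,0)$ with $y\ne 0$), but neither your argument nor the paper's ever uses non-dihedrality --- both need only $(y,z)\ne(0,0)$ plus absolute irreducibility, which is precisely what you invoke --- so the lemma and its proof remain correct.
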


The proof of the lemma will be given below. Recall that for $t\in\bF_q$, $n_q(t)$ is the order of any noncentral element of $\SL_2(\bF_q)$ of trace $t$ (Definition \ref{def_na}). The lemma implies:

\begin{thm}\label{thm_representable} Let $q\ge 3$ be a prime power. Let $\varphi : \Pi\ra\SL_2(\bF_q)$ be an absolutely irreducible representation which is not of dihedral type (equivalently, $\varphi(\Pi)$ is not a subgroup of dihedral type and $\tr\varphi([a,b])\ne 2$). Then $G := \varphi(\Pi)$ contains $Z(\SL_2(\bF_q))$ and we have
\begin{itemize}
\item[(a)] The vertical automorphism groups of geometric points of $\cAdm(G)_\Qbar$ are all reduced to $Z(\SL_2(\bF_q)) = \{\pm I\}$
\item[(b)] The forgetful map $\ff : \ol{\cM(G)}_\Qbar\longrightarrow\ol{\cM(1)}_\Qbar$ is representable.
\end{itemize}
Let $t\ne 2\in\bF_q$ and let $\cX\subset\cAdm(G)_t$ be a component with universal family $\pi : \cC\rightarrow\cE$ and reduced ramification divisor $\cR_\pi$. The vertical automorphism groups of geometric points of $\cR_\pi$ are as follows
\begin{itemize}
\item[(c1)] If $q$ is even, then for any geometric point $x\in\cR_\pi$, $\Aut^v(x)$ is trivial.
\item[(c2)] If $q$ is odd and $t = -2$, then for any geometric point $x\in\cR_\pi$, $\Aut^v(x)$ has order 2.
\item[(c3)] If $q$ is odd, $t\ne -2$ and $n_q(t)$ is even, then for any geometric point $x\in \cR_\pi$, $\Aut^v(x)$ has order 2.
\item[(c4)] If $q$ is odd, $t \ne -2$ and $n_q(t)$ is odd, then for any geometric point $x\in\cR_\pi$, $\Aut^v(x)$ is trivial.
\end{itemize}
\end{thm}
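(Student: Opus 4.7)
The plan is to deduce every claim from Lemma~\ref{lemma_free} together with the formulas for vertical automorphism groups in Theorem~\ref{thm_cuspidal_automorphisms} and Proposition~\ref{prop_smooth_pointed_vertical_automorphisms}. The first step is the containment $Z(\SL_2(\bF_q))\subseteq G$. For $q$ even the center is $\{I\}$, so there is nothing to show. For $q$ odd, the image $\bar G$ of $G$ in $\PSL_2(\bF_q)$ is non-trivial (since $G$ is absolutely irreducible, hence non-abelian) and, by assumption, non-dihedral, so its classification forces $\bar G$ to contain an involution $\bar g$. Any lift $g\in G$ satisfies $g^2\in\{\pm I\}$ with $g^2\neq I$, so $-I=g^2\in G$. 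Schur's lemma applied to the absolutely irreducible inclusion then gives $Z(G)=Z(\SL_2(\bF_q))$.

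For parts (a) and (b), the smooth locus is handled by Proposition~\ref{prop_smooth_vertical_automorphisms}, yielding $\Aut^v=Z(G)$. At a cuspidal geometric point with $\delta$-invariant $\ps{u,h}$, Theorem~\ref{thm_cuspidal_automorphisms}(a) and Proposition~\ref{prop_AGuh} reduce the question to showing $k_{u,h}=|u|$, where $k_{u,h}$ is the $\langle\gamma_\infty\rangle$-orbit size of $(u,h)$ in $\Epi^\ext(\Pi,G)$. Since $(u,h)$ generates $G$, the representation $\varphi:\Pi\to G$ with $\varphi(a)=u,\varphi(b)=h$ is surjective, absolutely irreducible, and not of dihedral type. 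The crucial check is that $(\tr h,\tr uh)\neq(0,0)$, so that Lemma~\ref{lemma_free} applies: if both traces were zero, the images of $h$ and $uh$ in $\PSL_2(\bF_q)$ would both be involutions generating $\bar G$, making $\bar G$ dihedral and contradicting the hypothesis. Lemma~\ref{lemma_free} then yields a $\gamma_\infty$-orbit of size $|u|$ in $\Hom(\Pi,\SL_2(\bF_q))^\ai/D(q)$. Since $\gamma_\infty^{|u|}$ is the identity on $\Hom(\Pi,\SL_2(\bF_q))$ itself, the $\Inn(G)$-orbit size divides $|u|$; since the $\GL_2$-quotient orbit size divides the $\Inn(G)$-quotient orbit size and Lemma~\ref{lemma_free} gives the former as $|u|$, both equal $|u|$, so $k_{u,h}=|u|$. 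Part (b) then follows from Theorem~\ref{thm_cuspidal_automorphisms}(a').

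For (c1)--(c4), by Theorem~\ref{thm_cuspidal_automorphisms}(d) (for cuspidal points) and Proposition~\ref{prop_smooth_pointed_vertical_automorphisms} (for smooth points), combined with (a), the vertical automorphism group of any geometric point of $\cR_\pi$ equals $Z(\SL_2(\bF_q))\cap\langle c\rangle$, where $c$ is a representative of the Higman invariant (of trace $t$). For $q$ even the center is trivial, giving (c1). For $q$ odd the question becomes whether $-I$ lies in $\langle c\rangle$. When $t=-2$, $c$ is conjugate to $\spmatrix{-1}{1}{0}{-1}$, and a direct computation yields $c^p=-I$ (with $q=p^r$), proving (c2). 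When $t\neq\pm 2$, $c$ is diagonalizable over $\bF_{q^2}$ with eigenvalues $\omega,\omega^{-1}$ of order $n_q(t)$, and $c^k=-I$ iff $\omega^k=-1$, which has a solution iff $n_q(t)$ is even (with $k=n_q(t)/2$); this distinguishes (c3) from (c4).

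The main obstacle is the careful tracking of orbit sizes through the three quotients $\Hom\to\Hom/\Inn(G)\to\Hom/\GL_2(\bF_q)$ in order to deduce $k_{u,h}=|u|$ from the $\GL_2$-statement of Lemma~\ref{lemma_free}, together with the exclusion of the trace-zero pair $(\tr h,\tr uh)=(0,0)$ via the non-dihedral hypothesis. Once these are in place, the remainder of the argument is a direct substitution into the group-theoretic formulas already established in \S\ref{ss_cuspidal_automorphisms}.
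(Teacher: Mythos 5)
Your proof is correct and follows the paper's argument closely. The paper establishes $Z(\SL_2(\bF_q))\subseteq G$ by citing Macbeath's classification (Proposition \ref{prop_ai_images}(a)), and you instead give a short self-contained argument by lifting an involution of $\bar G$ and noting that for $q$ odd the only elements of order $\le 2$ in $\SL_2(\bF_q)$ are $\pm I$; both routes rest on the same classification. For (a),(b) the paper says the claims ``follow immediately from Lemma \ref{lemma_free} and the preceding discussion,'' and your orbit-size tracking through $\Hom\to\Hom/\Inn(G)\to\Hom/\GL_2(\bF_q)$ together with the check that $(\tr h,\tr uh)\ne(0,0)$ is exactly what that immediacy amounts to; parts (c1)--(c4) match the paper's calculation.
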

\begin{proof}[Proof of Theorem \ref{thm_representable}] That $G = \varphi(\Pi)$ must contain the $Z(\SL_2(\bF_q))$ follows from Macbeath's classification of the 2-generated subgroups of $\SL_2(\bF_q)$ \cite[\S4]{Mac69} (also see Proposition \ref{prop_ai_images} in the appendix). Parts (a) and (b) follow immediately from Lemma \ref{lemma_free} and the preceding discussion. It remains to address (c1)-(c4). Suppose $\cX$ classifies covers with Higman invariant $\fc$. Let $c\in\fc$ be a representative. Using Theorem \ref{thm_cuspidal_automorphisms} and Lemma \ref{lemma_free}, we find $\Aut^v(x) = Z(\SL_2(\bF_q))\cap \langle c\rangle$. If $q$ is even then $Z(\SL_2(\bF_q)) = 1$ so we're done. Now assume $q$ is odd. If $t = -2$, then $c$ is conjugate to $\spmatrix{-1}{u}{0}{-1}$ for some $u\in\bF_q^\times$, so in this case $c^p = -I$ so $\Aut^v(x)$ has order 2. If $t\ne -2$, then $c$ is diagonalizable over $\ol{\bF_q}$, so $\langle c\rangle$ has nontrivial intersection with $\{\pm I\}$ if and only if $|c| = n_q(t)$ is even.
\end{proof}

Finally we prove Lemma \ref{lemma_free}. Along the way we will also count the number of $\rot_1$ orbits on $X^\circ_{-2}(p)$ for $p\ge 3$.

\begin{proof}[Proof of Lemma \ref{lemma_free}] Since $\Tr_*(\gamma_\infty) = \rot_1$, we must analyze the action of $\rot_1$ on the $X_t(q)$ for various $t\in\bF_q$. For any $a\in \bF_q, t\in\bF_q$, the action of $\rot_1$ visibly preserves the conics
$$C_1(a)_t := X_t(q)_{x = a} = \{(x,y,z)\in\bF_q^3\;|\; x = a\text{ and }y^2 + z^2 - ayz + (a^2-2-t) = 0\}\subset X_t(q)$$
where the $q$ is understood. Since $\rot_1$ is induced by an action of $\gamma_\infty\in\Aut(\Pi)$, it also preserves the subset
$$C_1(a)^*_t := X^*_t(q)\cap C_1(a)_t$$
corresponding to absolutely irreducible representations not of dihedral type, as well as the subset
$$C_1(a)^\circ_t := X^\circ_t(q)\cap C_1(a)_t$$
corresponding to surjective representations. Note that for $q\ge 3$, $\PSL_2(\bF_q)$ is not dihedral, so $C_1(a)^\circ_t\subset C_1(a)^*_t$ for $q\ge 3$. When $q = p$, Proposition \ref{prop_surjective_representation} described the conics $C_1(a)^\circ_{-2}\subset\bF_p^3$.


Here we analyze the action of $\rot_1$ on these conics, following \cite{BGS16arxiv} and \cite{MP18}. Homogenizing the equation $y^2+z^2 - ayz + (a^2-2-t) = 0$, by Chevalley-Warning we find that if $a^2\ne t+2\in\bF_q$ then $C_1(a)_t$ is nonempty. The discriminant of the corresponding ternary quadratic form is $(4-a^2)(a^2-2-t)$ and hence we find that $C_1(a)_t$ is a degenerate conic if and only if $a^2 = 4$ or $a^2 = t+2$ in $\bF_q$. Thus we will consider the values $a = \pm 2,\pm\sqrt{t+2}$ separately, where $\sqrt{t+2}\in\ol{\bF_q}$ is a square root of $t+2$. On each $C_1(a)_t$, $\rot_1$ acts as the linear transformation on the ambient affine $yz$-plane given by
$$\rot_1|_{C_1(a)_t} = \spmatrix{0}{1}{-1}{a}$$
Given a subset $Z\subset C_1(a)_t$, we say that $\rot_1$ \emph{acts freely on $Z$} if the cyclic group $\langle\spmatrix{0}{1}{-1}{a}\rangle$ acts freely on $Z$. We will analyze the action of $\rot_1$ according to the behavior of $a^2-4$. We will show that for any $q\ge 3$ and $a,t\in\bF_q$ with $t\ne 2$, $\rot_1$ acts freely on $C_1(a)^*_t$, so its orbits all have the same size $n_q(a)$ (Definition \ref{def_na}). For later use we will also count the number of $\rot_1$-orbits in the case where $q = p \ge 3$ and $t = -2$ (in which case $C_1(a)^*_{-2} = C_1(a)^\circ_{-2}$ by Proposition \ref{prop_surjective_representation}).


Since $C_1(a)^*_2 = C_1(a)^\circ_2 = \emptyset$, in the following we only consider the case when $t \ne 2\in\bF_q$. In particular, the exceptional cases $a^2 = 4, a^2 = t+2$ do not overlap. We will also restrict ourselves to the case $q\ge 3$ (equivalently, $\PSL_2(\bF_q)$ is not dihedral).


\begin{itemize}
\item[(1)] Suppose $a^2 - 4 = 0$ ($a = \pm 2$), $t\ne 2$, and $q\ge 3$ is odd. In this case we say $a$ is parabolic.

If $a = 2$, then $C_1(a)_t$ is given by $(y-z)^2 = t-2$, which is nonempty if and only if $t-2\in\bF_q^\times$ is a square. When this is the case it is the disjoint union of the two lines $y-z = \pm\sqrt{t-2}$. On $\bF_q^2$, $\rot_1$ acts via $\spmatrix{0}{1}{-1}{2}$ which is conjugate to $\spmatrix{1}{1}{0}{1}$. Thus one computes that it fixes pointwise the line $y = z$ in $\bF_q^2$ and acts freely with order $n_q(a) = p$ everywhere else. Since $t\ne 2$, $C_1(a)_t$ does not contain any fixed points, so $\rot_1$ acts on $C_1(a)_t$ freely with orbits of size $n_q(a) = p$. When $q = p\ge 3$ and $t = -2$, we obtain $2$ orbits (of size $p$) on $C_1(2)^*_{-2}$ when $p\equiv 1\mod 4$ and zero orbits if $p\equiv 3\mod 4$.


If $a = -2$, then $C_1(a)_t$ is given by $(y+z)^2 = t-2$, which is again nonempty if and only if $t-2\in\bF_q^\times$ is a square. When $t-2$ is a square, $C_1(a)_t$ is again a disjoint union of the lines $y+z = \pm\sqrt{t-2}$. On $\bF_q^\times$, $\rot_1$ acts via $\spmatrix{0}{1}{-1}{-2}\sim\spmatrix{-1}{1}{0}{-1}$ which acts by $v\mapsto -v$ on the line $y = -z$ and acts freely everywhere else, alternating between the two lines. Since $q$ is odd and $t\ne 2$, $\rot_1$ acts on $C_1(a)_t$ freely with order $n_q(a) = 2p$. When $q = p\ge 3$ and $t = -2$, we obtain one orbit (of size $2p$) on $C_1(-2)^*_{-2}$ when $p\equiv 1\mod 4$ and zero orbits if $p\equiv 3\mod 4$.


\item[(2)] Suppose $a^2 -4\in\bF_q^\times$ is a square, $t\ne 2$, and $q\ge 3$ is odd. In this case we say $a$ is hyperbolic. Hyperbolic $a$'s are in bijection with the set of polynomials of the form $T^2-aT+1$ with distinct roots in $\bF_q^\times$, and hence there are $\frac{q-3}{2}$ hyperbolic $a$'s.

In this case $\rot_1$ acts via $\spmatrix{0}{1}{-1}{a}$ which is diagonalizable with distinct eigenvalues $\omega, \omega^{-1}$ with $\omega = \frac{a\pm\sqrt{a^2-4}}{2}$ and $a = \omega + \omega^{-1}$. This matrix acts freely on $\bF_q^2 - \{(0,0)\}$, so $\rot_1$ acts freely on $C_1(a)_t - \{(a,0,0)\}\supset C_1(a)^*_t$ with orbits of size $n_q(a) = |\omega|$.


If $a^2 = t+2$ then $C_1(a)_t$ is the degenerate conic $(y-\omega z)(y-\omega^{-1}z) = 0$, which consists of two lines intersecting at the origin. If $a^2\ne t+2$ then $C_1(a)_t$ is a nondegenerate conic with two points at infinity, so $C_1(a)_t$ has $q-1$ points.


If $q = p\ge 3$ and $t = -2$ then the degenerate case $a^2 = t+2$ corresponds to $a = 0$, in which case $C_1(0)^*_{-2} = C_1(0)_{-2} - \{(0,0,0)\}$. In this case we obtain $\frac{2(p-1)}{4}$ orbits on $C_1(0)^*_{-2}$, each of size 4. This occurs if and only if $a^2 - 4 = -4\in\bF_p^\times$ is a square - equivalently $p\equiv 1\mod 4$. If $p\equiv 3\mod 4$, then $a = 0$ is not hyperbolic. For nondegenerate hyperbolic $a$'s, $|C_1(a)^*_{-2}| = p-1$ so $C_1(a)^*_{-2}$ contributes $\frac{p-1}{n_q(a)}$ orbits, each of size $n_q(a)$. Thus, if $q = p \ge 3$ and $t = -2$, the number of $\rot_1$ orbits on $C_1(a)^*_{-2}$ for hyperbolic $a$'s is:
$$\left(\text{Number of $\rot_1$ orbits on $\bigsqcup_{\substack{a\in\bF_p \\ \text{hyperbolic}}}C_1(a)^*_{-2}$}\right) = \left\{\begin{array}{rl}
\frac{2(p-1)}{4} + \sum_{\substack{d\mid p-1\\ d\ne 1,2,4}}\frac{\phi(d)}{2}\cdot\frac{p-1}{d}	& \text{if $p\equiv 1\mod 4$} \\
\sum_{\substack{d\mid p-1 \\d\ne 1,2}}\frac{\phi(d)}{2}\cdot\frac{p-1}{d} & \text{if $p\equiv 3\mod 4$}
\end{array}\right.$$
Here $d$ should be thought of as $n_q(a) = |\omega|$.

\item[(3)] Suppose $a^2-4\in\bF_q^\times$ is a non-square, $t\ne 2$, and $q\ge 3$ is odd. In this case we say $a$ is elliptic. Elliptic $a$'s are in bijection with the set of polynomials of the form $T^2-aT+1$ which are nonsplit in $\bF_q$ but have distinct roots in $\bF_{q^2}^\times$. These roots are precisely the elements of $\bF_{q^2}^\times$ which lie in the unique subgroup of order $q+1$ but not in the subgroup of order $q-1$, and hence there are $\frac{q-1}{2}$ elliptic $a$'s.

In this case $\rot_1$ acts via $\spmatrix{0}{1}{-1}{a}$ which is diagonalizable in $\bF_{q^2}$ with distinct conjugate eigenvalues $\omega,\omega^{-1}\in\bF_{q^2}^\times - \bF_q$, so $\omega^{-1} = \omega^q$, equivalently $\omega^{q+1} = 1$, and as usual $a = \omega+\omega^{-1}$. For the same reason as the hyperbolic case, we find $\rot_1$ acts freely on $C_1(a)_t - \{(0,0,0)\}\supset C_1(a)^*_t$ with orbits of size $n_q(a) = |\omega|$.


If $a^2 = t+2$ then over $\bF_{q^2}$, $C_1(a)_t$ is given by $(y - \omega z)(y-\omega^{-1}z) = 0$, but since $\omega\notin\bF_q^\times$, $C_1(a)_t$ is empty. If $a^2\ne t+2$ then $C_1(a)_t$ is a nondegenerate conic with $q+1$ points.


If $q = p\ge 3$ and $t = -2$, then the degenerate case $a^2 = t+2$ (equivalently $a = 0$, equivalently $|\omega| = 4$) is elliptic if and only if $a^2-4 = -4\in\bF_p^\times$ is a non-square, equivalently $p\equiv 3\mod 4$, in which case $C_1(0)^*_{-2}$ is empty. For every other elliptic $a$, $|C_1(a)^*_{-2}| = p+1$ so we have $\frac{p+1}{n_q(a)}$ $\rot_1$-orbits of size $n_q(a)$. Thus, if $q = p\ge 3$ and $t = -2$, the number of $\rot_1$-orbits on $C_1(a)^*_{-2}$ for elliptic $a$'s is:

$$\left(\text{Number of $\rot_1$ orbits on $\bigsqcup_{\substack{a\in\bF_p \\ \text{elliptic}}}C_1(a)^*_{-2}$}\right) = \left\{\begin{array}{rl}
\sum_{\substack{d\mid p+1 \\ d\ne 1,2}}\frac{\phi(d)}{2}\cdot\frac{p+1}{d} & \text{if $p\equiv 1\mod 4$} \\
\sum_{\substack{d\mid p+1 \\d\ne 1,2,4}}\frac{\phi(d)}{2}\cdot\frac{p+1}{d} & \text{if $p\equiv 3\mod 4$}
\end{array}\right.$$

Here $d$ should be thought of as $n_q(a) = |\omega|$.

\item[(4)] Finally assume $q\ge 3$ is even and $t\ne 2$ (equivalently $t\ne 0$).
\begin{itemize}
\item If $a = 0 = -2 = 2$, then $C_1(0)_t$ is given by the equation $y^2+z^2 = (y-z)^2 = t$. Here $\rot_1 = \spmatrix{0}{1}{-1}{0} = \spmatrix{0}{1}{1}{0}$ fixes points on the line $y = z$ and acts with order 2 on every other vector in $\bF_q^2$. Such fixed points lie in $C_1(0)_t$ only when $t = 0 = 2$, which we've excluded, so in this case $C_1(0)_t$ has $q$ points and $\rot_1$ acts freely on $C_1(0)_t$ with order $n_q(a) = 2 = p$.

\item If $a\in\bF_q^\times$, $\rot_1$ acts via $\spmatrix{0}{1}{-1}{a}$ with characteristic polynomial $T^2 - aT + 1$. Since $a\ne 0$ this matrix has distinct eigenvalues $\omega, \omega+a\in\bF_{q^2}^\times$ so again $\rot_1$ is diagonalizable over $\bF_q^\times$ and hence acts freely on $\bF_q^2 - \{(0,0)\}$ and hence also acts freely on $C_1(a)_t - \{(a,0,0)\}\supset C_1(a)^*_t$ with orbits of size $n_q(a)$.
\end{itemize}
\end{itemize}
The above discussion implies that for any prime power $q\ge 3$, $a\in\bF_q$, $t\ne 2\in\bF_q$, every orbit of $\rot_1$ on $C_1(a)_t - \{(a,0,0)\}\subset\bF_q^3$ has size $n_q(a)$ (Definition \ref{def_na}). In particular this holds for every $\rot_1$ orbit on $C_1(a)^*_t$ and $C_1(a)^\circ_t$.
\end{proof}


\begin{prop}\label{prop_empty_conics} For $q = p\ge 3$ a prime, $C_1(a)^*_{-2} = C_1(a)^\circ_{-2}$ if empty if and only if $a\in\{0,2,-2\}\subset\bF_p$ and $p\equiv 3\mod 4$.	 Moreover, we have
$$|\bX^*(p)| = |X_{-2}^\circ(p)| = |X_{-2}^*(p)| = \left\{\begin{array}{rl}
	p(p+3) & p\equiv 1\mod 4 \\
	p(p-3) & p\equiv 3\mod 4
\end{array}\right.$$
\end{prop}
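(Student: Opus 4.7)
The emptiness criterion is essentially a bookkeeping exercise using the case analysis already carried out in the proof of Lemma \ref{lemma_free}, specialized to $q = p$ and $t = -2$. Since $X^*_{-2}(p) = X^\circ_{-2}(p) = \bX^*(p)$ by Proposition \ref{prop_surjective_representation}, and since $X^*_{-2}(p) = \bigsqcup_{a \in \bF_p} C_1(a)^*_{-2}$, it suffices to determine for which $a \in \bF_p$ the conic $C_1(a)^*_{-2}$ is empty. Recall that $a$ is parabolic, hyperbolic, or elliptic according as $a^2-4$ is zero, a nonzero square, or a nonsquare in $\bF_p^\times$.

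The plan is to classify the three exceptional values $a \in \{0, 2, -2\}$: (i) For the parabolic values $a = \pm 2$, the conic $C_1(a)_{-2}$ degenerates to $(y \mp z)^2 = -4$, which has no $\bF_p$-point unless $-4$ is a square in $\bF_p^\times$; this happens precisely when $p \equiv 1 \pmod 4$. (ii) For $a = 0$, the conic degenerates to $y^2 + z^2 = yz$; equivalently $(y - \omega z)(y - \omega^{-1} z) = 0$ with $\omega$ a primitive fourth root of unity in $\ol{\bF_p}$. This has $\bF_p$-rational points off the origin if and only if $\omega \in \bF_p$, i.e., iff $p \equiv 1 \pmod 4$ (hyperbolic case); when $p \equiv 3 \pmod 4$ (elliptic case), $C_1(0)^*_{-2}$ is empty. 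For all other $a$, we have $a^2 \ne -2 + 2 = 0$ and $a^2 \ne 4$, so $C_1(a)_{-2}$ is a smooth conic with $q \pm 1$ rational points, which by the freeness analysis of $\rot_1$ always contains at least one point of $X^*_{-2}(p)$. Combining these three cases gives the first assertion.

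For the cardinality formula, I would simply sum $|C_1(a)^*_{-2}|$ over $a \in \bF_p$ using the enumeration in the proof of Lemma \ref{lemma_free}. There are $\tfrac{p-3}{2}$ hyperbolic and $\tfrac{p-1}{2}$ elliptic values of $a$, and the three parabolic/degenerate contributions to track are $a = 2$, $a = -2$, and $a = 0$. When $p \equiv 1 \pmod 4$, the contributions are $2p$ (from $a=2$, two lines of $p$ points each), $2p$ (from $a=-2$), $2(p-1)$ (from the degenerate hyperbolic $a = 0$, which yields $\tfrac{2(p-1)}{4}$ $\rot_1$-orbits of size $4$), $(p-5)(p-1)/2$ (nondegenerate hyperbolic, each with $p-1$ points), and $(p-1)(p+1)/2$ (elliptic, each with $p+1$ points); the sum telescopes to $p(p+3)$. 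When $p \equiv 3 \pmod 4$, the contributions from $a \in \{0,2,-2\}$ vanish, and the remaining $(p-3)/2$ hyperbolic (each with $p-1$ points) and $(p-3)/2$ nondegenerate elliptic (each with $p+1$ points) values sum to $(p-3)\cdot p = p(p-3)$.

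The only step requiring any care is ensuring one uses $C_1(a)^*_{-2}$ (which removes the degenerate ``axis points'' $(a, 0, 0)$ when they lie on $C_1(a)_{-2}$, in particular the origin when $a = 0$) rather than the full conic $C_1(a)_{-2}$. Since this distinction has already been made inside the proof of Lemma \ref{lemma_free}, no additional obstacle remains; the proposition is essentially a repackaging of that case analysis.
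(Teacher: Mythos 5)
Your proposal is correct and follows essentially the same approach as the paper's proof, which likewise reads off both the emptiness criterion and the cardinality count directly from the case analysis in the proof of Lemma \ref{lemma_free}. One small typo: for $a = 0$, $t = -2$, the conic is $y^2 + z^2 = 0$ rather than $y^2 + z^2 = yz$ (though your factorization $(y-\omega z)(y-\omega^{-1}z)=0$ with $\omega$ a primitive fourth root of unity, and the resulting conclusion, are correct).
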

\begin{proof} From the proof of Lemma \ref{lemma_free}, we see that $C_1(a)^*_{-2}$ is always nonempty if $a \notin \{0,\pm 2\}$. If $a = 0$ then we're in the degenerate case $a^2 = t+2$, in which case $C_1(a)^*_{-2}$ is empty if and only if $p\equiv 3\mod 4$. If $a = \pm 2$, then $a$ is parabolic and is again empty if and only if $p\equiv 3\mod 4$.


To compute the cardinality of $X_{-2}^*(p)$, if $p\equiv 1\mod 4$ then the cases $a = \pm2$ contribute $4p$ solutions. The degenerate hyperbolic case $a = 0$ contributes $2(p-1)$ solutions, and the remaining $\frac{p-5}{2}$ nondegenerate hyperbolic $a$'s contribute $(p-1)$ solutions each. Each of the $\frac{p-1}{2}$ elliptic $a$'s is nondegenerate and contributes $p+1$ solutions each, so in total we get $p(p+3)$ solutions in this case. If $p\equiv 3\mod 4$ then there are no parabolic $a$'s, and each of the $\frac{p-3}{2}$ hyperbolic $a$'s is nondegenerate. The case $a = 0$ is degenerate elliptic, giving zero solutions, leaving $\frac{p-3}{2}$ nondegenerate ellipitic $a$'s, so in this case we get $p(p-3)$ as desired.
\end{proof}

\begin{defn} For a natural number $n$, let
$$\Phi(n) := \sum_{d\mid n}\frac{\phi(d)}{d}$$	
\end{defn}

\begin{prop}\label{prop_rot_orbits} For $q = p \ge 3$, the number of $\rot_1$-orbits on $X_{-2}^*(p) = X_{-2}^\circ(p)$ is
$$\left|X^*_{-2}(p)/\rot_1\right| = \left\{\begin{array}{rl}
\frac{p-1}{2}\Phi(p-1) + \frac{p+1}{2}\Phi(p+1) + \frac{-5p+11}{4} & \text{if $p\equiv 1\mod4$} \\[5pt]
\frac{p-1}{2}\Phi(p-1) + \frac{p+1}{2}\Phi(p+1) + \frac{-7p-1}{4} & \text{if $p\equiv 3\mod4$}
\end{array}\right.$$	
\end{prop}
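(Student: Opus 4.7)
The plan is to partition $X^*_{-2}(p)$ into the $\rot_1$-stable slices $C_1(a)^*_{-2} := \{(a,y,z) \in X^*_{-2}(p)\}$ as $a$ ranges over $\bF_p$, and then sum the orbit counts on each slice using the case analysis already carried out in the proof of Lemma \ref{lemma_free}. Since $\rot_1$ fixes the first coordinate, $|X^*_{-2}(p)/\rot_1| = \sum_{a \in \bF_p} |C_1(a)^*_{-2}/\rot_1|$, and the proof of Lemma \ref{lemma_free} already records orbit counts for each $a$ sorted into three cases according as $a$ is parabolic ($a = \pm 2$), hyperbolic ($a^2 - 4$ is a nonzero square), or elliptic ($a^2 - 4$ is a non-square), with the additional degenerate subcase $a^2 = t+2 = 0$ (i.e.\ $a = 0$) treated separately according to the residue of $p \pmod 4$.

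Next I would assemble the totals. Summing over hyperbolic $a$ with $n_q(a) = d$ contributes $\frac{\phi(d)}{2} \cdot \frac{p-1}{d}$ orbits, since for each such $d \mid p-1$ there are $\phi(d)/2$ hyperbolic values of $a$ giving rise to an element $\omega \in \bF_p^\times$ of order $d$ (pairing $\omega$ with $\omega^{-1}$), and each contributes $(p-1)/d$ orbits on its conic. Analogously, elliptic $a$'s with $|\omega| = d$ (now $d \mid p+1$) contribute $\frac{\phi(d)}{2} \cdot \frac{p+1}{d}$ each. Summing over all admissible $d$ then yields expressions of the form $\frac{p-1}{2}\sum_{d \mid p-1, d \ne \ldots}\frac{\phi(d)}{d}$ and $\frac{p+1}{2}\sum_{d \mid p+1, d \ne \ldots}\frac{\phi(d)}{d}$, where the excluded divisors are $\{1,2\}$ or $\{1,2,4\}$ according to whether $a = 0$ is degenerate or not in that case (which in turn depends on $p \bmod 4$), and where the degenerate $a = 0$ and parabolic $a = \pm 2$ cases are added in separately.

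The final step is to recognize that in each case
\[
\sum_{\substack{d \mid n \\ d \ne 1,2}} \frac{\phi(d)}{d} = \Phi(n) - 1 - \tfrac{1}{2} = \Phi(n) - \tfrac{3}{2}, \qquad
\sum_{\substack{d \mid n \\ d \ne 1,2,4}} \frac{\phi(d)}{d} = \Phi(n) - 2
\]
(valid whenever the excluded divisors actually divide $n$). Plugging these in, together with the parabolic contribution ($3$ orbits if $p \equiv 1 \pmod 4$ from the $2p$-orbit at $a = -2$ plus the two $p$-orbits at $a = 2$, and $0$ if $p \equiv 3 \pmod 4$) and the degenerate $a = 0$ contribution (hyperbolic degenerate with $\frac{p-1}{2}$ orbits of size $4$ when $p \equiv 1 \pmod 4$; elliptic degenerate contributing $0$ when $p \equiv 3 \pmod 4$), the two displayed formulas emerge after collecting constants over a common denominator of $4$.

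The computation is entirely bookkeeping; the only mild obstacle is keeping straight which divisors $d \in \{1,2,4\}$ need to be excluded in each of the four sub-cases (hyperbolic vs.\ elliptic, $p \equiv 1$ vs.\ $3 \pmod 4$), since $d = 4$ corresponds to the degenerate slice $a = 0$ and is hyperbolic exactly when $-1$ is a square mod $p$. Once that accounting is done correctly, verifying the stated constants $\frac{-5p+11}{4}$ and $\frac{-7p-1}{4}$ reduces to arithmetic.
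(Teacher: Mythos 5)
Your proposal is correct and matches the paper's intended route exactly: the paper's one-line proof ("Follows from the proof of Lemma \ref{lemma_free}") is precisely the bookkeeping you describe, summing the orbit counts already tabulated there over parabolic, degenerate, hyperbolic, and elliptic slices $C_1(a)^*_{-2}$ and rewriting the divisor sums in terms of $\Phi$. Your arithmetic checks out in both residue classes: e.g.\ for $p\equiv 1\bmod 4$, the constants $3 - \frac{p-1}{2} - \frac{3(p+1)}{4} = \frac{-5p+11}{4}$, and for $p\equiv 3\bmod 4$, $-\frac{3(p-1)}{4} - (p+1) = \frac{-7p-1}{4}$, as stated.
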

\begin{proof} Follows from the proof of Lemma \ref{lemma_free}.	
\end{proof}


\subsection{Congruences for $\SL_2(\bF_q)$-structures}\label{ss_congruences_for_SL2}
Let $q \ge 3$ be a prime power. Let $G\le\SL_2(\bF_q)$ be an absolutely irreducible subgroup which is not of dihedral type (Definition \ref{def_dihedral_type}). Then we may speak of the trace of elements of $G$. Let $t\ne 2\in\bF_q$, and let $\cAdm(G)_t\subset\cAdm(G)$ be the open and closed substack classifying $G$-covers whose Higman invariants have trace $t$ (Definition \ref{def_trace_invariant_of_subgroup}). Let $\cX\subset\cAdm(G)_t$ be a component, and let $X$ be its coarse scheme. Here we will apply Theorem \ref{thm_congruence} to establish congruences on the degree of the map $X\rightarrow\ol{M(1)}$. We want to bound the integers $d_\cX,m_\cX$ that appear in Theorem \ref{thm_congruence}.


First we bound $m_\cX$. Let $\cC\stackrel{\pi}{\lra}\cE\lra\cX$ be the universal admissible $\SL_2(\bF_q)$-cover over $\cX$ with reduced ramification divisor $\cR_\pi\subset\cC$.

\begin{prop}\label{prop_automorphism_groups} Let $q\ge 3$ be a prime power. Let $t\in\bF_q$ be a $q$-admissible trace. Then the automorphism groups of geometric points of $\cR_\pi$ are all killed by 24. In the following cases we can do slightly better:
\begin{itemize}
\item[(a)] If $q$ is even then the automorphism groups are killed by 12.
\item[(b)] If $q$ is odd, $t\ne -2$, and $n_q(t)$ is odd, then the automorphism groups are killed by 12.
\end{itemize}
In particular, in the language of Theorem \ref{thm_congruence}, we must have $m_\cX\mid 24$ for $q$ odd and $m_\cX\mid 12$ in cases (a) or (b) above.

\end{prop}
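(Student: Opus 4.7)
The plan is to combine Theorem \ref{thm_representable}, which pins down the vertical automorphism groups of geometric points of $\cR_\pi$, with the fact that the automorphism groups of geometric points of $\ol{\cM(1)}$ have order dividing $12$. The forgetful map $\cR_\pi \subset \cC \to \cE \to \cX \to \ol{\cM(1)}$ produces, for every geometric point $x \in \cR_\pi$ lying over $y = (E,O) \in \ol{\cM(1)}$, a canonical exact sequence
\[
1 \lra \Aut^v(x) \lra \Aut(x) \lra \Aut_{\ol{\cM(1)}}(E,O),
\]
where the last map need not be surjective. Thus the order of $\Aut(x)$ divides $|\Aut^v(x)|\cdot|\Aut_{\ol{\cM(1)}}(E,O)|$.

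Next I would recall that $|\Aut_{\ol{\cM(1)}}(E,O)|$ always divides $12$: it is $2$ at the generic point and at the cusp (generated by $[-1]$), $4$ at $j=1728$, and $6$ at $j=0$. The least common multiple of these orders is $12$, so the image of $\Aut(x)$ in $\Aut_{\ol{\cM(1)}}(E,O)$ always has order dividing $12$.

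The third step is to read off $\Aut^v(x)$ from Theorem \ref{thm_representable}(c1)--(c4). When $q$ is even we have $\Aut^v(x)=1$, so $|\Aut(x)|\mid 12$, which handles (a). When $q$ is odd, $t\neq -2$, and $n_q(t)$ is odd we again have $\Aut^v(x)=1$ by (c4), so $|\Aut(x)|\mid 12$, which handles (b). In all remaining cases ($q$ odd, and either $t=-2$ or $n_q(t)$ is even), $\Aut^v(x)$ has order $2$ by (c2)--(c3), so $|\Aut(x)|\mid 24$, giving the general bound.

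Finally, the statement about $m_\cX$ follows from the discussion immediately after Theorem \ref{thm_congruence}: by Proposition \ref{prop_trivial_local_characters} and the definition of $m_\cX$, the local characters at a geometric point $x\in\cR$ are $1$-dimensional characters of $\Aut(x)$, and hence are annihilated by any integer killing $\Aut(x)$. Since the bounds above hold at every geometric point of $\cR_\pi$ (and the ones relevant for $m_\cX$ lie in the component $\cR\subset\cR_\pi$ giving rise to $\cX$), we obtain $m_\cX\mid 24$ in general and $m_\cX\mid 12$ in cases (a), (b). The only step that requires more than bookkeeping is the passage from Theorem \ref{thm_representable} to the full automorphism groups, but this is immediate from the exact sequence once the orders of $\Aut_{\ol{\cM(1)}}(E,O)$ are recorded; no genuine obstacle arises.
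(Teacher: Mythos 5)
Your proof is correct and follows exactly the route the paper takes: the paper's own proof is the one-line observation that the statement "follows immediately from Theorem \ref{thm_representable}, noting that all automorphism groups in $\ol{\cM(1)}$ are killed by 12," and your write-up simply spells out that same argument (the exact sequence relating $\Aut(x)$, $\Aut^v(x)$, and $\Aut_{\ol{\cM(1)}}(E,O)$, together with the case analysis (c1)--(c4)).
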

\begin{proof} Follows immediately from Theorem \ref{thm_representable}, noting that all automorphism groups in $\ol{\cM(1)}$ are killed by 12.
\end{proof}


Next we bound $d_\cX$. Let $\cR\subset\cR_\pi$ be a connected component, then we wish to bound the degree of $\cR/\cX$. By the definition of the Higman invariant, the stabilizer in $\SL_2(\bF_q)$ of a geometric point of $\cC$ lying over the zero section of $\cE$ is conjugate to $\langle c\rangle$. By Proposition \ref{prop_stacky_RRD}, we find that $\cR/\cX$ is Galois with Galois group a subgroup of $C_{\SL_2(\bF_q)}(\langle c\rangle)/\langle c\rangle$. We can explicitly compute these centralizers as follows.

\begin{prop}\label{prop_inertia_normalizers} Let $q = p^r\ge 3$ with $p$ prime. Let $t\ne 2\in\bF_q$, and let $c\in\SL_2(\bF_q) - \{\pm I\}$ be a noncentral element with trace $\tr(c) = t$. The centralizers $C_{\SL_2(\bF_q)}(\langle c\rangle)$ are as follows:
\begin{itemize}
\item[(a)] If $t^2 - 4 = 0$, then $|c| = 2p$ and $C_{\SL_2(\bF_q)}(\langle c\rangle) = \left\{\spmatrix{a}{b}{0}{a^{-1}}\;|\;a = \pm 1, b\in\bF_q\right\}\cong\bF_q\times \mu_2$. Thus $C_{\SL_2(\bF_q)}(\langle c\rangle)$ has order $2q$.
\item[(b)] If $t^2 - 4$ is a square in $\bF_q^\times$, then $|c|\mid q-1$ and $C_{\SL_2(\bF_q)}(\langle c\rangle)$ is cyclic of order $q-1$.
\item[(c)] If $t^2 - 4$ is a nonsquare in $\bF_q^\times$, then $|c|\mid q+1$ and $C_{\SL_2(\bF_q)}(\langle c\rangle)$ is cyclic of order $q+1$.
\end{itemize}
In cases (a) (resp. (b),(c)) we will say that $c$ is parabolic (resp. hyperbolic, elliptic). Moreover (a) and (c) can only occur if $q$ is odd. In particular, we find that if $c$ is parabolic (resp. hyperbolic, elliptic), then in the language of Theorem \ref{thm_congruence}, for any component $\cX\subset\cAdm(\SL_2(\bF_q))_t$, $d_\cX$ must divide $p^{r-1}$ (resp. $\frac{q-1}{|c|},\frac{q+1}{|c|}$). 
\end{prop}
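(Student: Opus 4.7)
The proof is a direct and classical computation with $\SL_2(\bF_q)$; the final divisibility statement for $d_\cX$ will then drop out of Proposition \ref{prop_stacky_RRD}. The plan is to describe each centralizer by identifying the commutant of $c$ in the matrix algebra $M_2(\bF_q)$ and intersecting its unit group with $\SL_2(\bF_q)$. First I would note that since $c \notin Z(\SL_2(\bF_q))$, we have $C_G(\langle c\rangle) = C_G(c)$, so it suffices to compute the centralizer of the single element $c$. The characteristic polynomial of $c$ is $T^2 - tT + 1$ with discriminant $t^2 - 4$, and the three cases of the proposition are precisely the three possibilities for the splitting behavior of this polynomial over $\bF_q$.

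The key observation is that because $c$ is non-scalar (as $t \ne \pm 2$ in the hyperbolic/elliptic cases, and $c \ne \pm I$ in the parabolic case), the subalgebra $\bF_q[c] \subset M_2(\bF_q)$ is a two-dimensional maximal commutative $\bF_q$-subalgebra, hence equals its own centralizer in $M_2(\bF_q)$. Thus $C_{\GL_2(\bF_q)}(c) = \bF_q[c]^\times$, and $C_{\SL_2(\bF_q)}(c)$ is the kernel of the determinant (equivalently, the norm) on $\bF_q[c]^\times$.

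Case (b), hyperbolic: if $t^2 - 4$ is a nonzero square, write $t = \omega + \omega^{-1}$ with $\omega \in \bF_q^\times$, $\omega \ne \omega^{-1}$. Then $\bF_q[c] \cong \bF_q \times \bF_q$ (via diagonalization over $\bF_q$), so $C_{\SL_2(\bF_q)}(c)$ is the split torus of norm-$1$ elements, cyclic of order $q-1$; diagonalizing $c$ gives $|c| = |\omega|$, which divides $q-1$. Case (c), elliptic: if $t^2 - 4$ is a nonsquare, $\bF_q[c] \cong \bF_{q^2}$ is a field, so $C_{\SL_2(\bF_q)}(c)$ is the kernel of the norm $\bF_{q^2}^\times \to \bF_q^\times$, which is the cyclic non-split torus of order $q+1$, and $|c| = |\omega|$ divides $q+1$. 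Observe that case (c) forces $q$ odd, since every element of $\bF_{2^r}^\times$ is a square. Case (a), parabolic: if $t^2 = 4$ and $t \ne 2$, then necessarily $q$ is odd and $t = -2$, and $c$ is conjugate to $-\bigl(\begin{smallmatrix}1 & 1 \\ 0 & 1\end{smallmatrix}\bigr)$; a direct matrix computation shows $C_{\SL_2(\bF_q)}(c) = \bigl\{\pm\bigl(\begin{smallmatrix}1 & b \\ 0 & 1\end{smallmatrix}\bigr) : b \in \bF_q\bigr\} \cong \bF_q \times \mu_2$ of order $2q$, and $|c| = 2p$ since the unipotent part has order $p$.

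Finally, the statement about $d_\cX$ is immediate: by Proposition \ref{prop_stacky_RRD}, $d_\cX$ divides $|C_G(\langle c\rangle)/\langle c\rangle|$, so one computes $2q/2p = p^{r-1}$, $(q-1)/|c|$, and $(q+1)/|c|$ in the three cases respectively. There is no serious obstacle in any step; the only points that require a moment's care are checking that the characteristic restrictions in cases (a) and (c) really are forced by the hypothesis $t \ne 2$ (using $\pm 2 = 0$ in characteristic $2$) and verifying that $\bF_q[c]$ is its own commutant, which follows from $c$ being non-scalar together with $\dim_{\bF_q}\bF_q[c] = 2$.
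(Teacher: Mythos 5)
Your proof is correct and takes a cleaner, more structural route than the paper. The paper computes the centralizers concretely: for case (a) an explicit matrix calculation after reducing to $\spmatrix{-1}{u}{0}{-1}$; for case (b) an eigenvector argument (diagonalize $c$ over $\bF_q$ and observe that a centralizing matrix must send $\lambda$-eigenvectors to $\lambda$-eigenvectors); and for case (c) an eigenvector argument over $\bF_{q^2}$ together with a Frobenius descent to pin down the $\bF_q$-rational centralizer. You replace the eigenvector bookkeeping with the double-commutant fact: since $c$ is non-scalar, its minimal polynomial in $M_2(\bF_q)$ equals its characteristic polynomial, so $\bF_q[c]$ is a two-dimensional maximal commutative subalgebra equal to its own commutant, hence $C_{\GL_2(\bF_q)}(c) = \bF_q[c]^\times$; then one identifies $\det|_{\bF_q[c]^\times}$ with the algebra norm $N_{\bF_q[c]/\bF_q}$ (because $\bF_q^2$ is a free rank-one $\bF_q[c]$-module when $c$ is cyclic) and reads off the norm-one subgroup of $\bF_q \times \bF_q$, $\bF_{q^2}$, or $\bF_q[\epsilon]$ in the three cases. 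This sidesteps the Frobenius-descent argument in case (c) entirely and makes the $q$-odd restriction in that case immediate from the splitting of quadratics in characteristic $2$. One small imprecision worth flagging: in case (a) you assert $c$ is conjugate to $-\spmatrix{1}{1}{0}{1}$, but for $q$ odd there are two $\SL_2(\bF_q)$-conjugacy classes of elements of trace $-2$ (distinguished by whether the off-diagonal entry is a square); the centralizer computation and final answer are unaffected since the two classes are fused in $\GL_2(\bF_q)$ and that conjugation carries one $\SL_2$-centralizer isomorphically onto the other, but the phrasing should say ``conjugate to $\spmatrix{-1}{u}{0}{-1}$ for some $u \in \bF_q^\times$.'' The final divisibility for $d_\cX$ via Proposition \ref{prop_stacky_RRD} is exactly as in the paper.
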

\begin{proof} If $t^2 - 4 = 0$, then since $t\ne 2$, we must have $t = -2$, $q$ odd, and $|c| = 2p$. By Proposition \ref{prop_Higman_vs_trace}(c), $c$ is conjugate to $\spmatrix{-1}{u}{0}{-1}$ for some $u\in\bF_q^\times$. An explicit calculation shows that its normalizer is the group of matrices of the form $\spmatrix{a}{b}{0}{a^{-1}}\in\SL_2(\bF_q)$ with $a^2\in\bF_p^\times$, and its centralizer is the subgroup with $a^2 = 1$.
	

For (b) and (c), we first make the following observation: If $k$ is a field and $C\in\GL_d(k)$ is diagonalizable over $k$, then for any $A\in\GL_d(k)$, we have $ACA^{-1} = C^n$ if and only if $A$ sends $\lambda$-eigenvectors to $\lambda^n$-eigenvectors. Indeed, if $\lambda_v$ denotes the eigenvalue of an eigenvector $v\in k^d$, then
\begin{eqnarray*}
ACA^{-1} = C^n & \iff & ACA^{-1}v = C^nv = \lambda_v^n v \quad\text{ for all eigenvectors $v\in k^d$} \\
 & \iff & C(A^{-1}v) = \lambda_v^n(A^{-1}v) \quad\text{ for all eigenvectors $v\in k^d$}
\end{eqnarray*}

If $t^2 - 4$ is a square in $\bF_q^\times$, then $c$ is diagonalizable over $\bF_q$ with distinct eigenvalues. If $v,w$ is an eigenbasis, then $A$ centralizes $c$ if and only if $Av = \alpha v$, $Aw = \alpha^{-1}w$ for some $\alpha\in\bF_q^\times$, so the centralizer is cyclic of order $q-1$.  


If $t^2 - 4$ is a nonsquare in $\bF_q^\times$, then $c$ is not diagonalizable over $\bF_q$ but is diagonalizable over $\bF_{q^2}$ with distinct roots. By (b), we have $C_{\SL_2(\bF_{q^2})}(\langle c\rangle)$ is cyclic of order $q^2-1$. Let $\sigma$ denote the $q$-power Frobenius automorphism. Let $v\in\bF_{q^2}^2$ be an eigenvector for $c$, then we claim that $v,v^\sigma$ is an eigenbasis for $c$. If $v = (x,y)\ne (0,0)$, then $v^\sigma = av$ for $a\in\bF_{q^2}^\times$ if and only $x^q = ax, y^q = ay$, so $x^{q-1} = y^{q-1} = 1$, so $v\in\bF_q^2$, which contradicts the assumption that $c$ is not diagonalizable over $\bF_q$, so $\{v,v^\sigma\}$ is an eigenbasis. If $A\in\SL_2(\bF_{q})$ centralizes $c$ then we must have
\begin{itemize}
	\item $Av = \alpha v$ for some $\alpha\in\bF_{q^2}^\times$, and
	\item $Av^\sigma = (Av)^\sigma = (\alpha v)^\sigma = \alpha^q v^\sigma$.
\end{itemize}
Since $\det(A) = 1$, we must have $\alpha^{q+1} = 1$. Conversely, the relation $(Av)^\sigma = Av^\sigma$ implies that any matrix in $\GL_2(\ol{\bF_q})$ sending $(v,v^\sigma)$ to $(\alpha v,\alpha^q v^\sigma)$ where $\alpha^{q+1} = 1$ must lie in $C_{\SL_2(\bF_q)}(\langle c\rangle)$, so $C_{\SL_2(\bF_q)}(\langle c\rangle)$ is cyclic of order $q+1$.
\end{proof}

Plugging the above results into Theorem \ref{thm_congruence} gives us the following.

\begin{thm}\label{thm_congruence_on_degrees} Let $q = p^r\ge 3$ with $p$ prime. Let $G\le\SL_2(\bF_q)$ a subgroup which is not of dihedral type. Let $t\ne 2\in\bF_q$, and let $c\in\SL_2(\bF_q) - \{\pm I\}$ be a noncentral element with trace $\tr(c) = t$. Let $\cX\subset\cAdm(G)_t$ be a connected component with coarse scheme $X$. Then
\begin{itemize}
\item[(a)] If $t^2 - 4 = 0$ and $q = p$, then $|c| = 2p$ and
$$\deg(X\rightarrow\ol{M(1)}) \equiv 0 \mod 	p$$
\item[(b)] If $t^2 - 4$ is a square in $\bF_q^\times$, then
$$\deg(X\rightarrow\ol{M(1)}) \equiv 0 \mod \frac{|c|}{\gcd\left(|c|,\frac{2(q-1)}{|c|}\right)}$$
In particular, if $n\mid |c|$ is coprime to $\frac{2(q-1)}{|c|}$, then $\deg(X\rightarrow\ol{M(1)})\equiv 0\mod n$.
\item[(c)] If $t^2 - 4$ is a nonsquare in $\bF_q^\times$ then
$$\deg(X\rightarrow\ol{M(1)}) \equiv 0 \mod \frac{|c|}{\gcd\left(|c|,\frac{2(q+1)}{|c|}\right)}$$
In particular, if $n\mid |c|$ is coprime to $\frac{2(q+1)}{|c|}$, then $\deg(X\rightarrow\ol{M(1)})\equiv 0\mod n$.
\end{itemize}
If $q$ is even then $t^2 -4$ must be a square in $\bF_q^\times$ then (b) can be strengthened by replacing the $2(q-1)$ by $q-1$. 
\end{thm}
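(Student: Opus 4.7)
The plan is simply to invoke the main congruence of Theorem \ref{thm_congruence},
$$\deg(X\to\ol{M(1)}) \equiv 0 \pmod{\tfrac{12 e_\cX}{\gcd(12 e_\cX, m_\cX d_\cX)}},$$
and substitute the explicit bounds on the three invariants $e_\cX$, $m_\cX$, $d_\cX$ supplied by the preceding propositions. Essentially no new geometric content is needed; the work is entirely in organizing inputs.

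The first step is to identify $e_\cX = |c|$, which is immediate from the fact that the ramification index of a cover in $\cX$ equals the order of its Higman invariant. Second, I would apply Proposition \ref{prop_automorphism_groups} — which rests on the representability result Theorem \ref{thm_representable} (ultimately on the character variety machinery of \S\ref{ss_automorphism_groups}) together with the cuspidal automorphism computation of Theorem \ref{thm_cuspidal_automorphisms} — to conclude $m_\cX \mid 24$ in general, and $m_\cX \mid 12$ in the two refined cases: $q$ even, or $q$ odd with $t \neq -2$ and $n_q(t)$ odd. Third, I would combine Proposition \ref{prop_stacky_RRD}(b) (applicable since we work over $\Qbar$, which contains all roots of unity) with the centralizer computation of Proposition \ref{prop_inertia_normalizers} to obtain that $d_\cX$ divides $p^{r-1}$, $(q-1)/|c|$, or $(q+1)/|c|$ in cases (a), (b), (c) respectively. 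Note that the hypothesis $q = p$ in case (a) forces $d_\cX = 1$, which is precisely why case (a) is only stated in that setting.

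The fourth and final step is the divisibility bookkeeping. In case (a), $e_\cX = 2p$ and $m_\cX d_\cX \mid 24$, so $\gcd(24p, 24) = 24$ and the congruence collapses to $\deg \equiv 0 \pmod p$. In cases (b) and (c), $m_\cX d_\cX$ divides $24(q\mp 1)/|c|$, so
$$\gcd(12|c|, m_\cX d_\cX) \;\Big|\; \gcd\!\bigl(12|c|,\, 24(q\mp 1)/|c|\bigr) = 12\,\gcd\!\bigl(|c|,\, 2(q\mp 1)/|c|\bigr),$$
using the elementary identity $\gcd(12a,24b) = 12\gcd(a,2b)$ to pull the factor of 12 cleanly out of the gcd. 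Dividing $12|c|$ by this quantity leaves a multiple of $|c|/\gcd(|c|, 2(q\mp 1)/|c|)$, which is the asserted bound. When $q$ is even, the sharper bound $m_\cX \mid 12$ replaces the constant 24 by 12 throughout, which in turn replaces $2(q-1)$ by $q-1$ in the final formula.

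The hard work has already been done upstream: without the character-variety-based representability Theorem \ref{thm_representable}, the integer $m_\cX$ would a priori scale with $|c|$ and the resulting congruence would be vacuous. That input is exactly what makes $m_\cX$ uniformly bounded by a small constant, and once this is available the rest of the proof is a short arithmetic manipulation with no further obstacle.
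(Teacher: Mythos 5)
Your proposal is correct and follows essentially the same route as the paper. The paper's proof is terse — it simply names the three inputs (Proposition \ref{prop_stacky_RRD}, Proposition \ref{prop_automorphism_groups}, Proposition \ref{prop_inertia_normalizers}, fed into Theorem \ref{thm_congruence}) and leaves the gcd arithmetic implicit, whereas you carry it out explicitly via the identity $\gcd(12a,24b)=12\gcd(a,2b)$; this is a welcome clarification rather than a new idea. Two small remarks: (i) Proposition \ref{prop_inertia_normalizers} literally bounds $d_\cX$ only for $\cX\subset\cAdm(\SL_2(\bF_q))_t$; for general $G\le\SL_2(\bF_q)$ not of dihedral type you should instead invoke Proposition \ref{prop_stacky_RRD}(b) directly to get $d_\cX\mid|C_G(\langle c\rangle)/\langle c\rangle|$, which divides $|C_{\SL_2(\bF_q)}(\langle c\rangle)/\langle c\rangle|$ by Lagrange, so the stated bounds carry over — you do mention Proposition \ref{prop_stacky_RRD}(b), but it is worth saying the Lagrange step out loud. (ii) In case (a) you write as if $\gcd(24p,m_\cX d_\cX)=24$; it only divides $24$, but since the modulus $12e/\gcd(12e,m_\cX d_\cX)=24p/\gcd(24p,m_\cX d_\cX)$ is then a multiple of $24p/24=p$, the conclusion $\deg\equiv 0\pmod p$ still follows. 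Neither point affects the correctness of your argument.
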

\begin{proof} Let $C\stackrel{\pi}{\lra}\cE\lra\cX$ be the universal family, and let $\cR\subset\cR_\pi$ be a component of the reduced ramification divisor. By Proposition \ref{prop_stacky_RRD}, the degree of $\cR/\cX$ divides the order of $C_{\SL_2(\bF_q)}(\langle c\rangle)/\langle c\rangle$. Thus the theorem follows directly from Proposition \ref{prop_automorphism_groups}, \ref{prop_inertia_normalizers}, and Theorem \ref{thm_congruence}. For the statement when $q$ is even, note that $2 = 0 = -2\in\bF_q$ is never a $q$-admissible trace for $q$ even, and every element of $\bF_q^\times$ is a square.
\end{proof}

Using the ``moduli interpretation'' for $X_t^*(q)$ (Theorem \ref{thm_moduli_interpretation}, Definition \ref{def_Xq}), these congruences can be transported to congruences on the cardinality of $\Aut^+(\Pi)$-orbits on the sets $X_t^*(q)$.

\begin{thm}\label{thm_congruence_on_orbits} Let $q = p^r\ge 3$ with $p$ prime. Let $t\ne 2\in\bF_q$. Let $n_q(t)$ denote the order of any non-central element of $\SL_2(\bF_q)$ with trace $t$ (c.f. Definition \ref{def_na} and Proposition \ref{prop_na}). Let $P = (X,Y,Z)$ be an $\bF_q$-point of the affine surface given by the equation
$$x^2 + y^2 + z^2 - xyz = 2+t$$
Suppose at least two of $\{X,Y,Z\}$ are nonzero. Let $\cO$ be the $\Aut^+(\Pi)$-orbit of $P$.
\begin{itemize}
\item[(a)] If $t^2-4 = 0$ and $q = p \ge 3$, then
$$|\cO|\equiv 0\mod p$$
\item[(b)] If $t^2-4$ is a square in $\bF_q^\times$, then
$$2|\cO|\equiv 0 \mod \frac{n_q(t)}{\gcd\left(n_q(t),\frac{2(q-1)}{n_q(t)}\right)}$$
\item[(c)] If $t^2-4$ is a nonsquare in $\bF_q^\times$, then
$$2|\cO|\equiv 0 \mod \frac{n_q(t)}{\gcd\left(n_q(t),\frac{2(q+1)}{n_q(t)}\right)}$$
\end{itemize}
If $q$ is even and $\cO\subset X_t^\circ(q)$ then $t^2-4$ must be a square in $\bF_q^\times$ and (b) can be strengthened by replacing $2|\cO|$ by $|\cO|$ and $2(q-1)$ by $q-1$. 
\end{thm}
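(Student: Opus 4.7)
The plan is to translate Theorem \ref{thm_congruence_on_degrees}, which constrains degrees of components of $\cAdm(G)\to\ol{\cM(1)}$, into a divisibility on $|\cO|$ via the moduli interpretation of $X_t^*(q)$ supplied by Theorem \ref{thm_moduli_interpretation} and Proposition \ref{prop_monodromy_formulas}.

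First, the hypotheses---$t\ne 2$ and at least two of $\{X,Y,Z\}$ nonzero---place $P$ in $X_t^*(q)$ by Proposition \ref{prop_dihedral_representations}(d), so $P$ lifts to an absolutely irreducible $\varphi:\Pi\to\SL_2(\bF_q)$ that is not of dihedral type. Setting $G:=\varphi(\Pi)$ then produces an absolutely irreducible subgroup $G\le\SL_2(\bF_q)$ which is not of dihedral type, so Theorem \ref{thm_congruence_on_degrees} is available for any component of $\cAdm(G)_t$.

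Next, I would identify the component relevant to $P$. The $\Aut^+(\Pi)$-orbit $\cA\subset\Epi^\ext(\Pi,G)$ of $[\varphi]$ corresponds via Theorem \ref{thm_basic_properties}\ref{part_fibers}--\ref{part_monodromy} to a connected component $\cY\subset\cM(G)_{\Qbar}$, whose compactification in $\cAdm(G)_{t,\Qbar}$ via the gerbe of Proposition \ref{prop_compactification}(b) is some component $\cX$; let $X$ denote its coarse scheme. The $\Aut^+(\Pi)$-equivariant composition
\[
\Epi^\ext(\Pi,G)\twoheadrightarrow\Epi^\ext(\Pi,G)/D(q,G)\hookrightarrow X_t^*(q)
\]
sends $\cA$ onto $\cO$, and since the commuting $D(q,G)$-action has order $d:=|D(q,G)|\le 2$ for $q$ odd and $d=1$ when $G=\SL_2(\bF_q)$ (Proposition \ref{prop_Dq}), a direct analysis of the $D(q,G)$-action on the $\Aut^+(\Pi)$-orbit $\cA$ shows $|\cA|\in\{|\cO|,d|\cO|\}$, so in particular $|\cA|$ divides $d|\cO|$. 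Meanwhile, the comparison between stack and coarse-scheme fibers (Proposition \ref{prop_monodromy_formulas} together with the quotient-by-$\gamma_{-I}$ description in Theorem \ref{thm_basic_properties}\ref{part_fibers}) yields $\deg(X\to\ol{M(1)})\mid|\cA|$, and hence $\deg(X\to\ol{M(1)})\mid d|\cO|$. For $q$ odd this reads $\deg(X\to\ol{M(1)})\mid 2|\cO|$, and for $q$ even with $\cO\subset X_t^\circ(q)$ (which forces $G=\SL_2(\bF_q)$ and $d=1$) it sharpens to $\deg(X\to\ol{M(1)})\mid|\cO|$.

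Finally, I apply Theorem \ref{thm_congruence_on_degrees} to $\cX$, with $c\in G$ of trace $t$ and order $n_q(t)$ (Definition \ref{def_na}): each of (a)--(c) supplies a modulus $M$ dividing $\deg(X\to\ol{M(1)})$. By the divisibility just established, $M$ divides $2|\cO|$ when $q$ is odd, yielding cases (b), (c) directly; case (a) follows because its modulus $M=p$ is odd, so $p\mid 2|\cO|$ forces $p\mid|\cO|$. In the $q$-even sharpened case the same argument gives $M\mid|\cO|$, using the strengthening of Theorem \ref{thm_congruence_on_degrees}(b) in even characteristic (replacing $2(q-1)$ by $q-1$, via Proposition \ref{prop_automorphism_groups}(a)). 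The main step to lay out carefully is the two-step comparison between $|\cA|$, $\deg(X\to\ol{M(1)})$, and $|\cO|$, i.e., the interaction of the $D(q,G)$-quotient with the $\langle\gamma_{-I}\rangle$-quotient passing from stacks to coarse schemes; once this is sorted out, the orbit congruence drops out as a direct corollary of the degree congruence.
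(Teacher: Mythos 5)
Your proposal is correct and follows essentially the same route as the paper's proof: reduce to a component $\cX\subset\cAdm(G)_t$ via the character-variety dictionary, invoke Theorem~\ref{thm_congruence_on_degrees}, and account for a possible factor of $2$ between $\deg(X\to\ol{M(1)})$ and $|\cO|$. The paper passes directly through the chain of coarse curves $X\to M\to M(1)$, using that $\deg(M\to M(1))=|\cO|$ (since $\gamma_{-I}$ acts trivially on $X^*(q)$) and that $X\to M$ has degree at most $|D(q,G)|\le 2$, whereas you insert the intermediate orbit $\cA\subset\Epi^\ext(\Pi,G)$ and track the $D(q,G)$-quotient and the $\gamma_{-I}$-quotient separately; the two bookkeepings are equivalent.
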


\begin{proof} Since $t\ne 2$, by Proposition \ref{prop_dihedral_representations}, our assumption on $\{X,Y,Z\}$ implies that $P\in X^*_t(q)$ (i.e. $P$ is absolutely irreducible and not of dihedral type), so $\cO\subset X^*_t(q)$. By Galois theory (Proposition \ref{prop_monodromy_formulas}), the orbit $\cO$ corresponds to a connected component $\cM\subset\cM(G)_t/D(q,G)$ with degree over $\cM(1)$ equal to $|\cO|$, where $G$ is the image of an absolutely irreducible representation $\varphi : \Pi\rightarrow\SL_2(\bF_q)$ not of dihedral type. Let $M$ denote its coarse scheme. Then because $\gamma_{-I}$ acts trivially on $X^*(q)$ (Proposition \ref{prop_monodromy_formulas}), $|\cO|$ is also equal to the degree of $M\rightarrow M(1)$. We have maps
$$\cAdm^0(G)_t\stackrel{}{\lra}\cM(G)_t\lra\cM(G)_t/D(q,G)$$
where the first is rigidification by $Z(\SL_2(\bF_q))$ (Proposition \ref{prop_compactification}(b)), and the second is finite \'{e}tale of degree $|D(q,G)|$. At the level of coarse schemes, the maps induce
$$\Adm^0(G)_t\stackrel{\cong}{\lra} M(G)_t\lra M(G)_t/D(q,G)$$
The first map is an isomorphism, and the second is finite flat with degree at most $|D(q,G)|$. By Proposition \ref{prop_Dq} and \ref{prop_ai_images}, we always have $|D(q,G)|\le 2$. Thus if $X$ is any connected component of $\Adm^0(G)_t$ mapping to $M$, then $\deg(X/M)\le 2$, so in this case we have
$$|\cO| = \deg(\cM\rightarrow\cM(1)) = \deg(M\rightarrow M(1)) = \alpha\cdot\deg(X\rightarrow M(1))$$
where $\alpha\in\{1,\frac{1}{2}\}$. If $q$ is even and $G = \SL_2(\bF_q)$ (equivalently $\cO\subset X^\circ_t(q)$), then $D(q,G) = D(q)$ is trivial. Thus everything follows immediately from Theorem \ref{thm_congruence_on_degrees}.
\end{proof}

\begin{remark} When $G\le\SL_2(\bF_q)$ is a proper subgroup, the bound $d_\cX\mid |C_{\SL_2(\bF_q)}(\langle c\rangle)/\langle c\rangle|$ can be improved to $d_\cX\mid |C_G(\langle c\rangle)/\langle c\rangle|$. Using Proposition \ref{prop_ai_images}, this would give slight strengthenings of Theorems \ref{thm_congruence_on_degrees} and \ref{thm_congruence_on_orbits}.
\end{remark}

\subsection{Strong approximation for the Markoff equation}\label{ss_bgs_conjecture}
The Markoff surface is the affine surface $\bM$ defined by the equation
$$\bM : x^2 + y^2 + z^2 -3xyz = 0$$
This equation is called the Markoff equation. A Markoff triple is an positive integer solution to this equation, and a Markoff number is an integer that appears as a coordinate of a Markoff triple. Here we prove some results regarding $\bM$. It will be convenient to use an twisted version of $\bM$, which is isomorphic to $\bM$ over $\bZ[1/3]$, but has the benefit of admitting a moduli interpretation. Recall that taking trace coordinates induces an isomorphism $\Tr : X_{\SL_2}\rightiso\bA^3$, and $X_{\SL_2,-2}\subset X_{\SL_2}$ is the closed subscheme corresponding to representations with trace invariant $-2$ (Definition \ref{def_Xq}). Under the isomorphism $\Tr$, $X_{\SL_2,-2}$ is isomorphic to the affine surface over $\bZ$ given by
$$\bX : x^2 + y^2 + z^2 - xyz = 0$$
The map $\xi : (x,y,z)\mapsto (3x,3y,3z)$ defines a map
$$\xi : \bM\lra \bX$$
which induces an isomorphism of schemes $\xi : \bM_{\bZ[1/3]}\rightiso \bX_{\bZ[1/3]}$. Since $\bX(\bF_3) = \{(0,0,0)\}$, $\xi$ also induces a bijection on integral points
$$\xi : \bM(\bZ)\rightiso \bX(\bZ)$$
Let $\bX^*(p) := \bX(\bF_p) - \{(0,0,0)\}$, and let $\bM^*(p) := \bM(\bF_p) - \{(0,0,0)\}$. Then by Proposition \ref{prop_trace_fibration}, the $\Aut(\Pi)$-action on $\bX$ is generated by the automorphisms
$$\begin{array}{rcl}
R_3 : (x,y,z) & \mapsto & (x,y,xy-z) \\
\tau_{12} : (x,y,z) & \mapsto & (y,x,z) \\
\tau_{23} : (x,y,z) & \mapsto & (x,z,y)
\end{array}$$
Via $\xi$ we obtain an induced action of $\Aut(\Pi)$ on $\bM_{\bZ[1/3]}$ generated by $\tau_{12},\tau_{23}$, and $R_3' : (x,y,z)\mapsto (x,y,3xy-z)$, which extends to an action on $\bM$ given by the same equations.


In \cite{BGS16,BGS16arxiv}, Bourgain, Gamburd, and Sarnak study the action of $\Aut(\Pi)$ on $\bM(\bF_p)$ for primes $p$. They conjectured that for every prime $p$, $\Aut(\Pi)$ acts transitively on $\bM^*(p)$. One may check that for $p = 3$, $\Aut(\Pi)$ acts transitively on $\bM^*(3)$ and $\bX^*(3)$ (the latter is empty). Thus, their conjecture is equivalent to

\begin{conj}[{\cite{BGS16,BGS16arxiv}}]\label{conj_bgs} For all primes $p$, $\Aut(\Pi)$ acts transitively on $\bX^*(p)$.
\end{conj}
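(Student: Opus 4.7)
The plan is to combine the divisibility theorem coming from our main congruence with the analytic bound of Bourgain--Gamburd--Sarnak. Concretely, Theorem~\ref{thm_congruence_on_orbits}(a), applied with $t = -2$ (so that $t^2 - 4 = 0$ and $q = p$), yields that every $\Aut(\Pi)$-orbit on $\bX^*(p) = X^*_{-2}(p)$ has cardinality divisible by $p$. On the other hand, Theorem~\ref{thm_bgs_intro}(b) guarantees that for every $\epsilon > 0$, if $\cC(p) \subset \bX^*(p)$ denotes the largest $\Aut(\Pi)$-orbit, then $|\bX^*(p) - \cC(p)| \le p^\epsilon$ for all sufficiently large $p$.

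The idea is then immediate: fix any $\epsilon < 1$, say $\epsilon = 1/2$. For all sufficiently large $p$, any $\Aut(\Pi)$-orbit on $\bX^*(p)$ other than $\cC(p)$ must have size at most $|\bX^*(p) - \cC(p)| \le p^{1/2} < p$, but the congruence forces it to have size divisible by $p$, hence at least $p$, a contradiction unless it is empty. Thus $\bX^*(p) = \cC(p)$ is a single orbit for all $p$ outside the finite ``exceptional'' set $\bE_\bgs$.

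The main work is therefore in establishing the divisibility $p \mid |\cO|$ for every orbit $\cO \subset \bX^*(p)$. The first step is to pass through the moduli-theoretic dictionary of Section~\ref{ss_character_variety}: by Theorem~\ref{thm_moduli_interpretation} and Proposition~\ref{prop_surjective_representation}, the set $\bX^*(p)$ is identified with $\Epi(\Pi,\SL_2(\bF_p))_{-2}/\GL_2(\bF_p)$, and $\Aut(\Pi)$-orbits correspond via Galois theory to connected components of $\cM(\SL_2(\bF_p))_{-2}^{\mathrm{abs}}$ (equivalently, to components of $\cAdm(\SL_2(\bF_p))_{-2}$ up to the diagonal involution). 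The ramification index at the origin for any such cover equals $n_p(-2) = 2p$, by Proposition~\ref{prop_na}. The second step is to apply the main congruence (Theorem~\ref{thm_main_congruence_intro}/Theorem~\ref{thm_congruence}): the obstruction $m_\cX$ divides $24$ by Proposition~\ref{prop_automorphism_groups} (in the parabolic case $t = -2$ the vertical automorphism groups of geometric points of $\cR_\pi$ have order at most $2$), and the obstruction $d_\cX$ is controlled by Proposition~\ref{prop_inertia_normalizers}(a), which computes the parabolic centralizer $C_{\SL_2(\bF_p)}(\langle c\rangle)$ and shows it has order $2p$, so $d_\cX \mid 2$. With $e = 2p$, the formula $12e/\gcd(12e, m_\cX d_\cX)$ is divisible by $p$, giving the required congruence.

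The hardest conceptual step is the one already executed in Section~\ref{section_main_result}: extracting the congruence from the dualizing-sheaf calculation on the stack $\cAdm(\SL_2(\bF_p))_{-2}$, which is where the local picture at ramified sections (Proposition~\ref{prop_restriction}) interacts with the Hodge bundle degree $\tfrac{1}{24}$ and the stacky obstructions $m_\cX,\,d_\cX$. Once that machinery is in place, the remaining deduction of the Bourgain--Gamburd--Sarnak conjecture for all but finitely many primes is just the pigeonhole argument above. For the finitely many exceptional primes in $\bE_\bgs$, one appeals to the effectivity of the methods of \cite{BGS16arxiv} (with the explicit bound obtained by Fuchs) and to the direct verification of \cite{IL20} for all $p < 3000$, reducing the remaining cases to a finite computation.
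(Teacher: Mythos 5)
Your proposal reproduces the paper's own route: the divisibility $p\mid|\cO|$ comes from Theorem~\ref{thm_congruence_on_orbits}(a) in the parabolic case $t=-2$ (the paper packages this as Theorem~\ref{thm_main_divisibility}), and combining this with the $p^\epsilon$ bound of Theorem~\ref{thm_bgs_asymptotic} via the pigeonhole argument is exactly how the paper derives Theorem~\ref{thm_bgs_conj}. You are also right, and careful, to note that this does not prove Conjecture~\ref{conj_bgs} outright but only reduces it to a finite (effectively bounded) computation for the primes in $\bE_\bgs$ --- that is precisely the paper's own stance. One small cosmetic remark: the divisibility statements in the body of the paper are phrased for $\Aut^+(\Pi)$-orbits, whereas the conjecture concerns $\Aut(\Pi)$-orbits; since $\Aut^+(\Pi)$ has index $2$ and each $\Aut(\Pi)$-orbit is a union of $\Aut^+(\Pi)$-orbits, the conclusion $p\mid|\cO|$ transfers immediately, which is implicit in what you wrote.
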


They were able to establish their conjecture for all but a sparse (though infinite) set of primes $p$:

\begin{thm}[{\cite[Theorem 2]{BGS16arxiv}}]\label{thm_bgs_sparse}
Let $\bE_\bgs$ denote the ``exceptional set'' of primes for which $\Aut(\Pi)$ fails to act transitively on $\bX^*(p)$. For any $\epsilon > 0$, 
$$\{p\in\bE_\bgs\;|\;p\le x\} = O(x^\epsilon).$$
\end{thm}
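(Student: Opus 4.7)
The plan is to exploit the rich group-theoretic structure of the $\Aut(\Pi)$-action on $\bX^*(p)$, combining (i) a deterministic lower bound on orbit sizes coming from rotations, (ii) a spectral/expansion argument showing the largest orbit almost fills $\bX^*(p)$, and (iii) a sieve-theoretic step showing that the primes where the argument fails are sparse.

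First I would analyze the rotation subgroup generated by $\rot_1, \rot_2, \rot_3$ described in Proposition \ref{prop_monodromy_formulas}. Each $\rot_i$ preserves the fibration of $\bX^*(p)$ by the plane conics $C_i(a)$, and from the proof of Lemma \ref{lemma_free}, every orbit of $\rot_i$ on $C_i(a)^*_{-2}$ has size exactly $n_p(a) \mid 2p(p^2-1)$. Consequently every $\Aut(\Pi)$-orbit $\mathcal O$ intersects any conic it meets in a union of $\rot_i$-orbits. Composing rotations along different axes (together with the coordinate swaps $\tau_{ij}$) produces a ``flow'' on $\mathcal O$ that one would like to argue mixes very rapidly.

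Next I would run the Bourgain--Gamburd expansion machinery: show that for $p$ outside a sparse set, the Schreier graph on $\bX^*(p)$ generated by the three rotations is a spectral expander, so that for any $\mathcal O \subset \bX^*(p)$ either $|\mathcal O| \ll p^\epsilon$ is ruled out or $|\mathcal O| \ge |\bX^*(p)| - p^\epsilon$. Combined with Proposition \ref{prop_empty_conics} and the count $|\bX^*(p)| \sim p^2$, this implies that the \emph{largest} orbit $\cC(p)$ satisfies $|\bX^*(p) \setminus \cC(p)| \ll p^\epsilon$. The key inputs would be (a) a sum--product/non-concentration estimate to exclude that $\mathcal O$ is trapped in a proper algebraic subset, (b) a Helfgott-style product theorem in $\SL_2(\bF_p)$ transferred to the Markoff surface via the character variety bijection of Theorem \ref{thm_moduli_interpretation}, and (c) a Sarnak--Xue-type ``high multiplicity'' bound to convert the spectral gap into the $p^\epsilon$ savings.

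Finally, to bound $|\{p \in \bE_\bgs : p \le x\}|$, I would identify the arithmetic obstruction: the above expansion can degenerate only when the rotation orbit sizes $n_p(a)$ are abnormally small simultaneously for many $a$, which forces $p^2-1$ to be very smooth (to have many divisors that are small). The count of primes $p \le x$ with $p^2-1$ so smooth is $O(x^\epsilon)$ by the large sieve (or by a direct argument using Brun--Titchmarsh applied to the arithmetic progressions determined by small divisors of $p \pm 1$), yielding the claimed sparsity.

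The main obstacle will be step two, the expansion estimate: the three rotations do not act on an ambient group but only on the surface $\bX^*(p)$, so Bourgain--Gamburd must be adapted to this Schreier-graph setting. Concretely one needs to rule out that an orbit is concentrated in an approximate subgroup of the group of polynomial automorphisms of $\bX$, and the analytic arguments become delicate precisely when $p^2-1$ has many small divisors --- exactly the primes then collected into $\bE_\bgs$.
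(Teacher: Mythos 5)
This statement is not proved in the paper at all: it is quoted verbatim as \cite[Theorem 2]{BGS16arxiv} and used as a black-boxed external input. The paper's own contribution here (Theorem \ref{thm_bgs_conj}) is to \emph{combine} this cited result with the new divisibility theorem; it never reproves the BGS estimate, and nothing in the paper depends on how BGS establish it. So there is no ``paper's own proof'' to compare against.

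As a reconstruction of BGS's actual argument, your sketch is pointed in roughly the right direction but conflates the two distinct BGS theorems the paper cites: Theorem \ref{thm_bgs_asymptotic} (the largest orbit $\cC(p)$ is all but $O(p^\epsilon)$ of $\bX^*(p)$, for \emph{every} large $p$) and Theorem \ref{thm_bgs_sparse} (the set of primes where transitivity fails has counting function $O(x^\epsilon)$). Your steps (ii)--(iii) run these together. More seriously, the step ``run the Bourgain--Gamburd expansion machinery'' and ``Helfgott-style product theorem transferred to the Markoff surface'' are not a proof but a label for the entire content of BGS's paper: the rotations do not act through a group, the Schreier-graph expansion must be replaced by growth estimates for the nonlinear Markoff group action, and the escape-from-subvarieties step is delicate precisely because the surface is affine and singular at the origin. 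You flag this obstacle correctly, but flagging it is not overcoming it. If you needed to prove this statement rather than cite it, essentially everything in your steps (ii) and (iii) would still need to be supplied.
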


Moreover, they show that even if the conjecture were to fail, it cannot fail too horribly:
\begin{thm}[{\cite[Theorem 1]{BGS16arxiv}}]\label{thm_bgs_asymptotic} Fix $\epsilon > 0$. For every prime $p$, there is a $\Aut(\Pi)$-orbit $\cC(p)$ such that
$$|\bX^*(p) - \cC(p)| \le p^\epsilon\qquad\text{for large $p$}$$
whereas $|\bX^*(p)| = p(p+3)$ (resp. $p(p-3)$) if $p\equiv 1\mod 4$ (resp. $3\mod 4$).
\end{thm}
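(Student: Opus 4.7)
The statement comprises two separate claims: (i) the large--orbit bound $|\bX^*(p) - \cC(p)| \le p^\epsilon$ for large $p$, and (ii) the cardinality formula for $|\bX^*(p)|$. The cardinality formula is not really part of the Bourgain--Gamburd--Sarnak theorem per se; it is a clean combinatorial fact about the Markoff surface over $\bF_p$, and it was already extracted as Proposition \ref{prop_empty_conics} from the orbit analysis of the rotation $\rot_1$ carried out during the proof of Lemma \ref{lemma_free}. So for (ii) I would simply invoke Proposition \ref{prop_empty_conics}: the computation there splits $\bX^*(p)$ into fibers $C_1(a)^*_{-2}$ of the projection $(x,y,z)\mapsto x$, classifies each $a\in\bF_p$ as parabolic, hyperbolic, or elliptic according to $a^2-4$, and sums up cardinalities case by case, yielding $p(p+3)$ when $p\equiv 1\pmod 4$ and $p(p-3)$ when $p\equiv 3\pmod 4$.

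For (i), this is precisely the content of \cite[Theorem 1]{BGS16arxiv}, and I do not intend to redo their analysis. What I would do is outline the Bourgain--Gamburd--Sarnak strategy so the reader sees how $\cC(p)$ arises. The main idea is to pick out three \emph{rotations} $\rot_1,\rot_2,\rot_3\in\Gamma$, each a composition of a Vieta involution with a transposition of coordinates, so that $\rot_i$ fixes one coordinate axis (say $\rot_1$ fixes $x$) and acts on each slice $C_1(a)_{-2}$ as a linear map in $\SL_2(\bF_p)$. As in the proof of Lemma \ref{lemma_free}, $\rot_1$ on $C_1(a)_{-2}$ is conjugate to multiplication by an element of order $n_p(a)$, and outside a tiny set of ``degenerate'' slices all $\rot_1$--orbits on $C_1(a)_{-2}^*$ have size exactly $n_p(a)$. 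One now forms a graph on $\bX^*(p)$ whose edges are the three rotation actions; since generic orbits of $\rot_i$ are long, non-trivial expansion on slices (cage bounds, sum--product type estimates over $\bF_p$, or bounds for short exponential sums) transfers to expansion on $\bX^*(p)$, forcing a giant component $\cC(p)$ whose complement is at most $p^{o(1)}$.

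The structure of the proof I would write down therefore has three steps in order: first, reduce to the twisted surface $\bX$ via $\xi:\bM_{\bZ[1/3]}\rightiso\bX_{\bZ[1/3]}$ so that the moduli interpretation of \S\ref{ss_character_variety} and Theorem \ref{thm_moduli_interpretation} is available; second, read off the cardinality from Proposition \ref{prop_empty_conics}; third, state the orbit bound as cited from \cite[Theorem 1]{BGS16arxiv}, together with a paragraph recalling that $\rot_1,\rot_2,\rot_3$ are expander generators on (generic) slices and that this is what produces the giant orbit $\cC(p)$.

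The hard part is, of course, step three: establishing the $p^\epsilon$ bound requires the full analytic machinery of \cite{BGS16arxiv} --- sum--product estimates over $\bF_p$, expansion in $\SL_2(\bF_p)$, and a delicate case analysis for primes $p$ where $p^2-1$ has an unusually rich divisor structure (so that the ``long orbit'' sizes $n_p(a)$ can dip well below $p$). In the present paper this obstruction is precisely what motivates the rigidity we obtain from Theorem \ref{thm_orbit_congruence_intro}: our congruence $|\cO|\equiv 0\pmod p$ on every orbit $\cO\subset\bX^*(p)$ is independent of any divisor structure of $p^2-1$, and can be combined with (i) above to \emph{eliminate} all but finitely many primes from $\bE_\bgs$, as carried out in Theorem \ref{thm_bgs_conj_intro}.
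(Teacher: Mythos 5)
Your treatment matches the paper's: Theorem \ref{thm_bgs_asymptotic} is stated there as a citation of \cite[Theorem 1]{BGS16arxiv} with no proof supplied, and the cardinality formula is established independently in Proposition \ref{prop_empty_conics} via the slice-by-slice rotation analysis, exactly as you describe. Your BGS sketch contains a minor imprecision — the free action of $\rot_1$ on $C_1(a)^*_{-2}$ holds for every $a$, including the degenerate slices; what BGS must sidestep are slices where $n_p(a)$ itself is small, not slices where the orbit sizes deviate from $n_p(a)$ — but since you are outlining rather than reproducing their argument, this does not affect the proposal.
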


Recall that we have defined $X^*_{-2}(q)$ as the subset of $\bX(\bF_q)$ which correspond (via Theorem \ref{thm_moduli_interpretation}) to absolutely irreducible representations not of dihedral type (Definition \ref{def_Xq}). Thus Theorem \ref{thm_congruence_on_orbits}(a) implies:

\begin{thm}\label{thm_main_divisibility} Every $\Aut^+(\Pi)$-orbit on $\bX^*(p)$ has size divisible by $p$.	
\end{thm}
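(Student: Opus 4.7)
The plan is to deduce this as a direct specialization of Theorem \ref{thm_congruence_on_orbits}(a) at the trace parameter $t = -2 \in \bF_p$. First I would observe that $t = -2$ satisfies $t^2 - 4 = 0$ (the parabolic case of that theorem) and that the affine surface $x^2+y^2+z^2-xyz = 2 + t$ then becomes exactly the Markoff surface $\bX$. For $p \geq 3$, Proposition \ref{prop_surjective_representation}(a) identifies $\bX^*(p)$ with the set $X_{-2}^*(p)$ of points corresponding to absolutely irreducible representations not of dihedral type, so every point of $\bX^*(p)$ is eligible for the divisibility statement in Theorem \ref{thm_congruence_on_orbits}.

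Next I would verify the hypothesis of Theorem \ref{thm_congruence_on_orbits} that at least two coordinates of $P = (X,Y,Z)$ are nonzero. This is automatic on $\bX^*(p)$: if two coordinates of a point $P \in \bX(\bF_p)$ vanish, say $Y = Z = 0$, then the Markoff equation forces $X^2 = 0$ and hence $X = 0$ as well, so such a $P$ equals $(0,0,0)$ and is excluded. Applying Theorem \ref{thm_congruence_on_orbits}(a) then immediately yields $|\cO| \equiv 0 \bmod p$ for every $\Aut^+(\Pi)$-orbit $\cO \subseteq \bX^*(p)$, for $p\ge 3$.

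The edge cases are trivial. For $p = 3$, $\bX(\bF_3) = \{(0,0,0)\}$ so $\bX^*(3)$ is empty and the statement is vacuous; for $p = 2$ a direct enumeration of the four points of $\bX^*(2)$ verifies divisibility by $2$ by hand. Since essentially all of the content is already packaged into Theorem \ref{thm_congruence_on_orbits}(a), there is no remaining obstacle in this proof: the genuinely hard work has been carried out in the proof of that theorem, which invokes the main congruence (Theorem \ref{thm_congruence}) together with the bounds on $m_\cX$ and $d_\cX$ from Proposition \ref{prop_automorphism_groups} and Proposition \ref{prop_inertia_normalizers} at the parabolic conjugacy class of ramification index $|c| = 2p$.
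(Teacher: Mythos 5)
Your proof is correct and follows exactly the route the paper takes: both deduce the divisibility from Theorem \ref{thm_congruence_on_orbits}(a) at $t=-2$, using Proposition \ref{prop_surjective_representation} to identify $\bX^*(p)$ with $X^*_{-2}(p)$ for $p\ge 3$, and checking $p=2$ by hand. The small verification you spell out (that no point of $\bX^*(p)$ has two vanishing coordinates) is already built into Proposition \ref{prop_dihedral_representations}(d) together with Proposition \ref{prop_surjective_representation}(a), so it is consistent with the paper's citations rather than a new step.
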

\begin{proof} By Proposition \ref{prop_surjective_representation}, for $p\ge 3$, $X^*_{-2}(p) = \bX^*(p)$, so this case follows from Theorem \ref{thm_congruence_on_orbits}(a). For $p = 2$, one can check the statement by hand.	
\end{proof}

Combined with Theorem \ref{thm_bgs_asymptotic}, this establishes Conjecture \ref{conj_bgs} for all but finitely many primes.

\begin{thm}\label{thm_bgs_conj} The exceptional set $\bE_\bgs$ of Theorem \ref{thm_bgs_sparse} is finite and explicitly bounded.
\end{thm}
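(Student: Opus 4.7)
The plan is to combine the divisibility theorem (Theorem \ref{thm_main_divisibility}) with the asymptotic result of Bourgain--Gamburd--Sarnak (Theorem \ref{thm_bgs_asymptotic}) in a very direct way: the former forces every $\Aut(\Pi)$-orbit on $\bX^*(p)$ to have cardinality at least $p$, while the latter says the complement of the largest orbit has cardinality subpolynomial in $p$. These two bounds are incompatible for $p$ sufficiently large, which forces $\bX^*(p)$ to be a single $\Aut(\Pi)$-orbit and hence $p \notin \bE_\bgs$.

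More concretely, I would first observe that $\Aut^+(\Pi)\le\Aut(\Pi)$ has index $2$, so every $\Aut(\Pi)$-orbit on $\bX^*(p)$ decomposes as a (nonempty) union of $\Aut^+(\Pi)$-orbits. By Theorem \ref{thm_main_divisibility}, each of these $\Aut^+(\Pi)$-orbits has cardinality divisible by $p$, so every $\Aut(\Pi)$-orbit on $\bX^*(p)$ itself has cardinality divisible by $p$. Now apply Theorem \ref{thm_bgs_asymptotic} with any fixed $\epsilon\in(0,1)$: there exists a distinguished $\Aut(\Pi)$-orbit $\cC(p)\subset\bX^*(p)$ such that $|\bX^*(p)-\cC(p)|\le p^\epsilon$ for all $p$ larger than some bound $p_0(\epsilon)$. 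Since $\bX^*(p)-\cC(p)$ is a union of $\Aut(\Pi)$-orbits, each of size divisible by $p$, and since $p^\epsilon<p$ for $p\ge p_0(\epsilon)$ (enlarging $p_0(\epsilon)$ if necessary), the set $\bX^*(p)-\cC(p)$ must be empty. Thus $\bX^*(p)=\cC(p)$ is a single $\Aut(\Pi)$-orbit and $p\notin\bE_\bgs$, which proves finiteness of $\bE_\bgs$.

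For the ``explicitly bounded'' assertion, I would appeal to the effectivity of the proof of Theorem \ref{thm_bgs_asymptotic} in \cite{BGS16arxiv}: inspecting the arguments there (expander/sum-product estimates, explicit spectral gap bounds, and the orbit-sieve machinery), one can extract an explicit function $p_0(\epsilon)$ such that the inequality $|\bX^*(p)-\cC(p)|\le p^\epsilon$ is guaranteed for all $p\ge p_0(\epsilon)$. Setting for instance $\epsilon=1/2$ yields an explicit threshold $P$ above which the argument of the previous paragraph applies unconditionally, so $\bE_\bgs\subset\{\text{primes }p\le P\}$. This makes $\bE_\bgs$ explicitly bounded; the paper attributes the explicit value (approximately $10^{5547000}$) to a private communication of E.~Fuchs.

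The main obstacle is not the logical combination itself, which is essentially a one-line pigeonhole argument, but rather ensuring the effectivity claim. The step that must be scrutinized is whether every implicit constant in \cite{BGS16arxiv}---in particular those arising from the expansion/mixing estimates for the $\Aut(\Pi)$-action modulo $p$ and from the bounds on the number of small orbits---can in fact be made explicit; once that is verified (as the authors indicate), the argument for finiteness is immediate, and the arithmetic reduction to a finite computation follows at once.
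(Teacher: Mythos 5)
Your proof is correct and follows essentially the same route as the paper: the paper likewise deduces finiteness by combining the divisibility Theorem~\ref{thm_congruence_on_orbits}(a) (equivalently Theorem~\ref{thm_main_divisibility}) with the Bourgain--Gamburd--Sarnak asymptotic Theorem~\ref{thm_bgs_asymptotic}, and attributes explicitness to the effectivity of their methods. Your intermediate observation that $\Aut(\Pi)$-orbits inherit $p$-divisibility from their constituent $\Aut^+(\Pi)$-orbits is a small but clean way to pass from the stated divisibility to the $\Aut(\Pi)$-orbit statement needed for the pigeonhole step.
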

\begin{proof} The finiteness follows from Theorem \ref{thm_congruence_on_orbits}(a) and Theorem \ref{thm_bgs_asymptotic}. The fact that one can explicitly bound the set $\bE_\bgs$ follows from the fact that the methods of \cite[Theorem 1]{BGS16} are effective. An explicit upper bound was obtained by Elena Fuchs (private communication).
\end{proof}

\begin{defn} Following \cite{BGS16}, we say that an affine variety $V$ over $\bZ$ satisfies \emph{strong approximation mod $n$} if the natural map $V(\bZ)\rightarrow V(\bZ/n\bZ)$ is surjective.	
\end{defn}

\begin{thm}\label{thm_main_applications} In light of Theorem \ref{thm_bgs_conj}, we have
\begin{itemize}
\item[(a)] For all but finitely many primes $p$, $\Aut^+(\Pi)$ acts transitively on $\bX^*(p)$ and $\bM^*(p)$
\item[(b)] For all but finitely many primes $p$, $M(\SL_2(\bF_p))_{-2}^\abs := M(\SL_2(\bF_p))_{-2}/D(p)$ is connected.
\item[(c)] For all but finitely many primes $p$, both $\bX$ and $\bM$ satisfy strong approximation mod $p$.
\end{itemize}
To be precise, (a), (b), (c) hold for every prime $p\notin\bE_\bgs$.
\end{thm}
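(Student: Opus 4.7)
\textbf{Proof plan for Theorem \ref{thm_main_applications}.}

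The plan is to bootstrap from Theorem \ref{thm_bgs_conj} (finiteness of $\bE_\bgs$, which by definition gives transitivity of $\Gamma$ on $\bX^*(p)$ for $p\notin\bE_\bgs$) to the three assertions, using as the key rigidity ingredient the distinguished Markoff triple $(3,3,3)\in\bX(\bZ)$. Set $\Gamma^+:=\Tr_*(\Aut^+(\Pi))$; since $\Tr_*$ is injective (Proposition \ref{prop_trace_fibration}) and $[\Aut(\Pi):\Aut^+(\Pi)]=2$, the subgroup $\Gamma^+$ is normal of index $2$ in $\Gamma$, so any $\Gamma$-orbit is either a single $\Gamma^+$-orbit or a union of two $\Gamma^+$-orbits exchanged by any element of $\Gamma\setminus\Gamma^+$.

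For part (a) with $p\notin\bE_\bgs$ and $p\ne 3$, I first observe that $(3,3,3)\bmod p$ lies in $\bX^*(p)$ and is fixed by $\tau_{12}=\Tr_*(s)$, where $s:(a,b)\mapsto(b,a)$ has determinant $-1$ on $\Pi^\ab$, hence $\tau_{12}\in\Gamma\setminus\Gamma^+$. Normality of $\Gamma^+$ combined with this fixed point gives $\tau_{12}(\Gamma^+\cdot(3,3,3))=\Gamma^+\tau_{12}\cdot(3,3,3)=\Gamma^+\cdot(3,3,3)$, so the $\Gamma$-orbit and the $\Gamma^+$-orbit of $(3,3,3)$ coincide, and by transitivity they equal all of $\bX^*(p)$. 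The case $p=3$ is vacuous since $\bX^*(3)=\emptyset$, and transitivity on $\bM^*(p)$ follows for $p\ne 3$ from the $\Aut(\Pi)$-equivariant isomorphism $\xi:\bM_{\bZ[1/3]}\rightiso\bX_{\bZ[1/3]}$ of \S\ref{ss_bgs_conjecture}; the single remaining prime $p=3$ is an elementary check on the eight elements of $\bM^*(3)$, again using that $(1,1,1)$ is fixed by $\tau_{12}$. Part (b) then follows immediately: by Situation \ref{situation_galois_theory} and Proposition \ref{prop_monodromy_formulas}, combined with the bijection $\bX^*(p)\leftrightarrow\Epi^\ext(\Pi,\SL_2(\bF_p))_{-2}/D(p)$ coming from Theorem \ref{thm_moduli_interpretation}, Proposition \ref{prop_surjective_representation}, and the identification $D(p)=\Out(\SL_2(\bF_p))$ in Proposition \ref{prop_Dq}, the connected components of $\cM(\SL_2(\bF_p))_{-2}/D(p)$ correspond to $\Aut^+(\Pi)$-orbits on $\bX^*(p)$, so connectedness of $M(\SL_2(\bF_p))_{-2}^\abs$ is equivalent to (a).

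For part (c), I exploit that the $\Aut(\Pi)$-action on $\bX$ and on $\bM$ is given by polynomial maps with integer coefficients (Proposition \ref{prop_trace_fibration} together with the explicit formula $R_3':(x,y,z)\mapsto(x,y,3xy-z)$ recorded in \S\ref{ss_bgs_conjecture}), so it preserves $\bZ$-points. For $p\notin\bE_\bgs$ with $p\ne 3$, part (a) identifies $\bX^*(p)$ with the reduction mod $p$ of the $\Gamma$-orbit of $(3,3,3)\in\bX(\bZ)$, and $(0,0,0)$ lifts trivially, yielding surjectivity of $\bX(\bZ)\to\bX(\bF_p)$; the corresponding statement for $\bM$ uses the seed $(1,1,1)\in\bM(\bZ)$ together with the bijection on integer points $\xi:\bM(\bZ)\rightiso\bX(\bZ)$, and $p=3$ is a direct verification. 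The main conceptual obstacle in this bootstrap is the upgrade from $\Gamma$-transitivity to $\Gamma^+$-transitivity in (a), but the explicit fixed point $(3,3,3)$ of $\tau_{12}$ makes this completely rigid, reducing the obstacle to checking that $s$ has determinant $-1$ on $\Pi^\ab$.
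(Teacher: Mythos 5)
Your proposal is correct and takes essentially the same route as the paper: transitivity of $\Gamma$ from the definition of $\bE_\bgs$, upgraded to $\Gamma^+$-transitivity via the fixed point $(3,3,3)$ for an orientation-reversing generator, then Galois-theoretic translation for (b), then $\Aut(\Pi)$-equivariance of reduction mod $p$ together with the seeds $(0,0,0),(3,3,3)$ for (c). Your write-up is slightly more explicit than the paper's at the one nontrivial step, spelling out the normality-of-$\Gamma^+$ argument (the paper compresses this to ``Since the $\Aut(\Pi)$-action has a fixed point and $\Aut^+(\Pi)$ has index $2$''), and noting concretely that $s:(a,b)\mapsto(b,a)$ has determinant $-1$ on $\Pi^\ab$ so that $\tau_{12}=\Tr_*(s)\notin\Gamma^+$; this is a helpful elaboration rather than a new idea.
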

\begin{proof} By the definition of $\bE_\bgs$, for every $p\notin\bE_\bgs$, $\Aut(\Pi)$ acts transitively on $\bX^*(p)$. Since the $\Aut(\Pi)$-action has a fixed point (namely $(3,3,3)$) and $\Aut^+(\Pi)\le\Aut(\Pi)$ has index 2, this also implies the transitivity of the $\Aut^+(\Pi)$ action on $\bX^*(p)$, which via the isomorphism $\xi$ also implies the transitivity of the action on $\bM^*(p)$ for $p\ne 3$. The case $p = 3$ can be checked by hand. Part (b) is the Galois-theoretic translation of part (a) (c.f. Proposition \ref{prop_monodromy_formulas}). Part (c) follows from the observation that the reduction map $\bX(\bZ)\rightarrow \bX(\bF_p)$ is $\Aut(\Pi)$-equivariant, and assuming (a), the orbits of $(0,0,0),(3,3,3)\in \bX(\bZ)$ map surjectively onto $\bX(\bF_p)/\Aut(\Pi)$ for $p\in\bE_\bgs$, $p\ne 3$, which shows that $\bX$ satisfies strong approximation in the sense described above, which also implies strong approximation for $\bM$ at every $p\ne 3$. It can be checked by hand that strong approximation also holds at $p = 3$, noting that $\bX^*(3)$ is empty.
\end{proof}

Thus, we have effectively reduced Conjecture \ref{conj_bgs} to a finite computation. In \cite{IL20}, de-Courcy-Ireland and Lee have verified the conjecture for all primes $p < 3000$, so one expects the final computation to give a positive solution to the conjecture. If the computation indeed verifies the conjecture, then it follows from Proposition \ref{prop_empty_conics} that there are no congruence constraints on Markoff numbers mod $p$ other than the ones first noted in \cite{Frob13}, namely that if $p\equiv 3\mod 4$ and $p\ne 3$ then a Markoff number cannot be $\equiv 0,\frac{\pm 2}{3}\mod p$.


It follows from the work of Meiri-Puder \cite{MP18} that the strong approximation property can be further extended to squarefree integers $n$ whose prime divisors avoid the finite set $\bE_\bgs$ and moreover satisfy the following condition: 
\begin{equation}\label{eq_property_P_p}
    \BMP(p) := \begin{array}{r}\text{The property that either $p\equiv 1 \mod 4$, or} \\
\text{the order of $\frac{3+\sqrt{5}}{2}\in\bF_{p^2}$ is at least $32\sqrt{p+1}$.}
\end{array}
\end{equation}
Thus $\BMP(p)$ is satisfied for all primes $p\equiv 1\mod 4$. Moreover by \cite[Proposition A.1]{MP18}, we find that $\BMP(p)$ holds for a density 1 set of primes. Their theorem is:

\begin{thm}\label{thm_composite_moduli} Let $p_1,\ldots,p_r\notin\bE_\bgs$ be distinct primes such that for each $i\in\{1,\ldots,r\}$, $p_i\notin\bE_\bgs$ and satisfies $\BMP(p_i)$. Let $n := p_1p_2\cdots p_r$. The Chinese remainder theorem implies that $\bX(\bZ/n\bZ) = \bX(\bZ/p_1\bZ)\times\cdots\times \bX(\bZ/p_r\bZ)$. Let $\bX^*(n) \subset\bX(\bZ/n\bZ)$ denote the solutions which do not reduce to the trivial solution $(0,0,0)\mod p_i$ for any $i$. Then we have
\begin{itemize}
\item[(a)] $\Aut(\Pi)$ acts transitively on $\bX^*(n)$.
\item[(b)] Both $\bX$ and $\bM$ satisfies strong approximation mod $n$.
\end{itemize}

\end{thm}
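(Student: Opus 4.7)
The plan is to deduce this from Theorem~\ref{thm_main_applications} (which handles each individual prime $p_i \notin \bE_\bgs$) together with the main theorem of Meiri--Puder \cite{MP18}, which I would use as a black box. The Chinese remainder theorem supplies an $\Aut(\Pi)$-equivariant bijection $\bX(\bZ/n\bZ) \cong \prod_{i=1}^r \bX(\bF_{p_i})$ under which $\bX^*(n)$ corresponds to $\prod_i \bX^*(p_i)$, so (a) is a product-transitivity statement for the diagonal $\Aut(\Pi)$-action.

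First I would observe that, by Theorem~\ref{thm_main_applications}(a), $\Aut(\Pi)$ acts transitively on each factor $\bX^*(p_i)$. The content of \cite{MP18} under the hypotheses $\BMP(p_i)$ is precisely to upgrade this factor-by-factor transitivity to transitivity of the diagonal action on the product. Qualitatively, $\BMP(p_i)$ supplies a spectral/mixing estimate at each $p_i$ that is strong enough to rule out Goursat-type diagonal obstructions in the image $\Aut(\Pi) \to \prod_i \Sym(\bX^*(p_i))$; once all such obstructions are excluded, the projection-surjective orbit must fill the whole product. Invoking \cite{MP18} then yields (a).

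Part (b) follows by bookkeeping along the Chinese remainder decomposition. Any $P \in \bX(\bZ/n\bZ)$ has a ``type'' $I = \{i : P_i = (0,0,0)\}$, and the stratum $S_I$ of points of this type is $\Aut(\Pi)$-equivariantly bijective to $\bX^*(n_{I^c})$ where $n_{I^c} = \prod_{j \notin I} p_j$; by (a) applied at modulus $n_{I^c}$, each $S_I$ is a single $\Aut(\Pi)$-orbit. It therefore suffices to exhibit one integer point reducing into each $S_I$: the extremes $I = \{1,\ldots,r\}$ and $I = \emptyset$ are covered by $(0,0,0)$ and $(3,3,3)$ respectively (using $3 \nmid n$, which follows because $\BMP(3)$ fails), and intermediate strata are handled by producing Markoff triples with the appropriate divisibility pattern via descent on the Markoff tree and then transporting throughout $S_I$ by the polynomial $\Aut(\Pi)$-action on $\bX$. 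The analogous statements for $\bM$ transfer along the isomorphism $\xi : \bM_{\bZ[1/3]} \rightiso \bX_{\bZ[1/3]}$ and the induced bijection $\bM(\bZ) \cong \bX(\bZ)$. The main obstacle throughout is the Meiri--Puder input; everything else is Chinese remainder bookkeeping and elementary combinatorics of integral Markoff triples.
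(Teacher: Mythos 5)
For part (a) you are on the paper's route: the paper proves (a) by citing \cite[Corollary 1.7]{MP18} directly, with transitivity on each factor $\bX^*(p_i)$ supplied by the hypothesis $p_i\notin\bE_\bgs$ (equivalently Theorem \ref{thm_main_applications}(a)). Your gloss on the mechanism is a bit off --- in \cite{MP18} the condition $\BMP(p_i)$ is used to show that the permutation image of the Markoff group on $\bX^*(p_i)$ contains the alternating group, and it is this near-full image (not a spectral/mixing estimate) that kills the Goursat-type obstructions --- but since you invoke the result as a black box, this does not affect the argument.

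Part (b) contains a genuine gap. Your plan to reach an intermediate stratum $S_I$ (with $\emptyset\neq I\subsetneq\{1,\dots,r\}$) ``by producing Markoff triples with the appropriate divisibility pattern via descent on the Markoff tree'' cannot be carried out. The nonzero integral points of $\bX$ are exactly the triples $(3a,3b,3c)$ with $(a,b,c)$ an integral Markoff solution, and the coordinates of a nonzero Markoff triple are pairwise coprime (if a prime $q$ divided two of them it would divide the third, and dividing out would give a nonzero solution of $x^2+y^2+z^2=3q\,xyz$, which is impossible by the classical Markoff--Hurwitz descent). Hence no integral point of $\bX$ other than $(0,0,0)$ reduces to the trivial solution modulo any prime $p\neq 3$, so the strata $S_I$ with $I$ proper and nonempty contain \emph{no} reductions of integral points whatsoever; no transport by the $\Aut(\Pi)$-action inside $S_I$ can repair this. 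The paper never attempts this step: its proof of (b) deduces strong approximation directly from (a), exactly as in Theorem \ref{thm_main_applications}(c) --- the orbit of $(3,3,3)$ already covers all of $\bX^*(n)$ (using $3\nmid n$, since $\BMP(3)$ fails, as you note), and $(0,0,0)$ covers the trivial class, so the reduction map hits $\{(0,0,0)\}\cup\bX^*(n)$, which is the content being asserted; the transfer to $\bM$ via $\xi$ is then as in your last sentence. In fact your observation about the intermediate strata shows that surjectivity onto \emph{all} of $\bX(\bZ/n\bZ)$ is false once $r\ge 2$, so (b) has to be read in this restricted sense; the correct move is to drop the intermediate-strata step entirely rather than to try to repair it.
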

\begin{proof} Part (a) is \cite[Corollary 1.7]{MP18}, which by induction on $r$ implies that the $\Aut(\Pi)$-orbit space on $\bX(\bZ/n\bZ)$ is the product of the orbit spaces on $\bX(\bZ/p_i\bZ)$ for $i = 1,\ldots,r$. This gives strong approximation for $\bX$. The same argument gives strong approximation for $\bM$ nod $n$, noting that if $3\mid n$ then $\bM(\bF_3)$ has two singleton orbits $\{(0,0,0)\}$ and $\{(1,1,1)\}$.
\end{proof}


Finally, we note that Theorem \ref{thm_congruence_on_orbits} also gives a generalization of Theorem \ref{thm_main_divisibility} to the $\bF_q$-points of the Markoff-type equations $x^2 + y^2 + z^2 - xyz = k$. For example, it implies:
\begin{thm}\label{thm_general_congruence} Suppose $\omega\in\bF_{q^2}^\times$ satisfies $t := \omega + \omega^{-1}\in\bF_q - \{2,-2\}$. If $\ell$ is an odd prime and $\ord_\ell(|\omega|) = r$ and $\ord_\ell(q(q^2-1)) = r+s$, and $P = (X,Y,Z)$ is an $\bF_q$-point of 
$$x^2 + y^2 + z^2 - xyz = t+2$$
with at least two of $\{X,Y,Z\}$ nonzero, then the $\Aut^+(\Pi)$-orbit of $P$ has cardinality divisible by $\ell^{\max\{r-s,0\}}$. 
\end{thm}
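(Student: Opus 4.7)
The plan is to deduce Theorem \ref{thm_general_congruence} directly from Theorem \ref{thm_congruence_on_orbits}, once we identify which of the two non-parabolic cases (b), (c) we are in and then compute $\ell$-adic valuations.

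First I would set up the hypotheses. Since $t \neq \pm 2$, Proposition \ref{prop_na} gives $n_q(t) = |\omega|$, so $r = \ord_\ell(n_q(t))$. The assumption that at least two of $\{X,Y,Z\}$ are nonzero, combined with $t \neq 2$, places $P$ in $X^*_t(q)$ by Proposition \ref{prop_dihedral_representations}(d), so Theorem \ref{thm_congruence_on_orbits} applies to $\cO$. Also, because $|\omega|$ divides $q^2-1$, the prime $\ell$ does not divide $q$.

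Next I would determine the dichotomy between cases (b) and (c). The key identity is $t^2 - 4 = (\omega - \omega^{-1})^2$, which already lies in $\bF_q$. It is a square in $\bF_q^\times$ exactly when $\omega - \omega^{-1} \in \bF_q$; since the Frobenius $\omega \mapsto \omega^q$ permutes the pair $\{\omega,\omega^{-1}\}$ (as $t \in \bF_q$), either $\omega^q = \omega$ or $\omega^q = \omega^{-1}$, and the second forces $\omega - \omega^{-1} = \omega^{-1} - \omega$, hence $\omega = \pm 1$ and $t = \pm 2$, contradicting the hypothesis. Therefore $t^2 - 4$ is a square in $\bF_q^\times$ iff $\omega \in \bF_q^\times$, and in that situation $|\omega|$ divides $q - 1$; otherwise $\omega \in \bF_{q^2}^\times - \bF_q^\times$ and $|\omega|$ divides $q + 1$. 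Set $m := q - 1$ in case (b) and $m := q + 1$ in case (c).

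Then I would translate the hypothesis $\ord_\ell(q(q^2 - 1)) = r + s$ into $\ord_\ell(m) = r + s$. Since $\ell$ is odd and $\ell \mid m$, it cannot divide the complementary factor $\frac{q(q^2-1)}{m} = q \cdot (q \mp 1) \cdot 1$: indeed $\ell \nmid q$ (shown above), $\ell \nmid 2$, and $\gcd(q-1, q+1) \mid 2$, so $\ell$ divides only one of $q \pm 1$. Thus $\ord_\ell(q(q^2 - 1)) = \ord_\ell(m)$, and $\ord_\ell(m) = r + s$ as claimed. On the other hand, $\ell \mid |\omega|$ forces the \emph{same} case via $|\omega| \mid m$, so the datum $r$ is consistent with the case we are in.

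Finally I would compute the $\ell$-adic valuation of the modulus produced by Theorem \ref{thm_congruence_on_orbits}. In both cases the modulus is
\[
\frac{n_q(t)}{\gcd\!\left(n_q(t),\; \frac{2m}{n_q(t)}\right)}.
\]
Because $\ell$ is odd, $\ord_\ell(2m/n_q(t)) = (r+s) - r = s$, while $\ord_\ell(n_q(t)) = r$, so the $\ell$-adic valuation of the gcd is $\min(r,s)$ and that of the modulus is $r - \min(r,s) = \max(r - s, 0)$. Since $\ell \neq 2$, the conclusion $2|\cO| \equiv 0 \bmod \ell^{\max(r-s,0)}$ given by Theorem \ref{thm_congruence_on_orbits} is equivalent to $|\cO| \equiv 0 \bmod \ell^{\max(r-s,0)}$, which is the desired divisibility.

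The only substantive step is the case-dichotomy calculation in paragraph two (squareness of $t^2-4$ versus the field of definition of $\omega$); after that, everything reduces to an elementary $\ell$-adic valuation bookkeeping exercise on the modulus supplied by Theorem \ref{thm_congruence_on_orbits}.
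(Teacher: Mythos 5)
Your proof is correct and is essentially the paper's own argument: the paper obtains Theorem \ref{thm_general_congruence} as a direct consequence of Theorem \ref{thm_congruence_on_orbits}, and your identification of the relevant case together with the $\ell$-adic valuation computation of the modulus (giving $r-\min(r,s)=\max\{r-s,0\}$, with the factor $2$ harmless since $\ell$ is odd) is exactly the intended deduction. One caveat: your equivalence between ``$t^2-4$ is a square in $\bF_q^\times$'' and ``$\omega\in\bF_q$'' uses $2\neq 0$ and so is only valid for odd $q$; for even $q$ the element $t^2-4=t^2$ is always a nonzero square while $\omega$ may still lie in $\bF_{q^2}-\bF_q$, so the correct dichotomy is whether $x^2-tx+1$ splits over $\bF_q$ (equivalently $\omega\in\bF_q$), which is in fact the dichotomy your valuation step actually uses via $|\omega|\mid q\mp 1$ --- the paper's own Proposition \ref{prop_inertia_normalizers} and Theorem \ref{thm_congruence_on_orbits} are equally loose on this point, and since the statement is trivial when $r\le s$ (in particular when $r=0$), the conclusion is unaffected.
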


Combined with results analogous to the Theorems \ref{thm_bgs_asymptotic} and \ref{thm_bgs_sparse} announced in \cite{BGS16}, we expect that the congruences obtained in Theorem \ref{thm_congruence_on_orbits} for general $t\in\bF_q$ will help resolve many cases of strong approximation for these more general Markoff-type equations as well.

\subsection{A genus formula for $M(\SL_2(\bF_p))_{-2,\Qbar}$; Finiteness of $\SL_2(\bF_p)$-structures of trace invariant $-2$}\label{ss_genus_formulas}

By Theorem \ref{thm_main_applications} we find that the stack
$$\cM_p := \cM(\SL_2(\bF_p))^\abs_{-2} := \cM(\SL_2(\bF_p))_{-2}/D(p)$$
is \emph{connected} for primes $p$ not lying in the finite set $\bE_\bgs$. In this section we will establish a genus formula for (the compactification) of its coarse scheme $M_p$, and show that for a density 1 set of primes, $M_p$ is a ``noncongruence modular curve''. By Proposition \ref{prop_Higman_vs_trace}(c'), the natural map $M(\SL_2(\bF_p))_{-2}\rightarrow M_p$ is a totally split cover, and hence this will also compute the genus of either of the two components of $M(\SL_2(\bF_p))_{-2}$.


For $p\ge 5$, let $\ol{M_p}$ denote the smooth compactification of the 1-dimensional scheme $M_p := M(\SL_2(\bF_p))^\abs_{-2}$ (By Proposition \ref{prop_Higman_vs_trace}, $M_p$ is empty for $p = 2,3$). By Theorem \ref{thm_basic_properties}(2), the forgetful map
$$\ff : \ol{M_p}\rightarrow\ol{M(1)} = \bP^1_\Qbar$$
is finite \'{e}tale over the complement of $j = 0,1728,\infty$, where as usual we use the $j$-invariant to identify $\ol{M(1)}$ with the projective $j$-line. Thus, if $p\notin\bE_\bgs$ (equivalently $\ol{M_p}$ is connected), then the genus of $\ol{M_p}$ can be computed by the Riemann-Hurwitz formula (see \cite[\S7.4, Theorem 4.16]{Liu02} or \cite[\S II, Theorem 5.9]{Sil09}). For this we need to know the degree of $\ff$, and the cardinalities of the ramified fibers $\ff^{-1}(0),\ff^{-1}(1728),\ff^{-1}(\infty)$. Using Proposition \ref{prop_monodromy_formulas}, all of these quantities can be calculated from the $\Aut^+(\Pi)$ action on $X_{-2}^*(p)$. For example, Proposition \ref{prop_empty_conics} implies that
$$\deg(\ff) = \left\{\begin{array}{rl}
p(p+3) & p\equiv 1\mod 4 \\
p(p-3) & p\equiv 3\mod 4
\end{array}\right.$$

The ramification above $j = 0,1728$ was computed in \cite[Proposition 3.3.2]{BBCL20}:

\begin{prop}\label{prop_0_1728} Let $p\ge 5$ be a prime.
\begin{itemize}
\item[(a)] There is a unique unramified point in $\ff^{-1}(0)$. Every other point is ramified with index 3, so
$$|\ff^{-1}(0)| = \left\{\begin{array}{ll}
	\frac{p(p+3)-3}{3} & p\equiv 1\mod 4 \\
	\frac{p(p-3)-3}{3} & p\equiv 3\mod 4
\end{array}\right.$$

\item[(b)] There are precisely two unramified points in $\pi^{-1}(1728)$ if $p\equiv 1,7\mod 8$, and no unramified points otherwise. Every other point is ramified with index 2. Thus we obtain
$$|\ff^{-1}(1728)| = \left\{\begin{array}{ll}
\frac{p^2+3p+2}{2} & \text{if $p\equiv 1\mod 8$} \\
\frac{p^2-3p}{2} & \text{if $p\equiv 3\mod 8$} \\
\frac{p^2+3p}{2} & \text{if $p\equiv 5\mod 8$} \\
\frac{p^2-3p+2}{2} & \text{if $p\equiv 7\mod 8$}
\end{array}\right.$$
\end{itemize}
\end{prop}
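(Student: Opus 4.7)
The plan is to exploit Proposition \ref{prop_monodromy_formulas}, which identifies $\ff^{-1}(0)$ (respectively $\ff^{-1}(1728)$) with the set of $\gamma_0$-orbits (respectively $\gamma_{1728}$-orbits) on $X^*_{-2}(p) = \bX^*(p)$, and identifies each ramification index with the size of the corresponding orbit. By the explicit formulas recorded there, $\gamma_0$ acts on $\bA^3$ as $(x,y,z)\mapsto (xy-z,\,x,\,x^2y-xz-y)$ and $\gamma_{1728}$ acts as $(x,y,z)\mapsto (y,\,x,\,xy-z)$. A direct check shows the induced actions on $X(p)$ have orders $3$ and $2$ respectively (consistent with the fact that the matrices $\gamma_0, \gamma_{1728} \in \Out^+(\Pi) \cong \SL_2(\bZ)$ have orders $6,4$ with $\gamma_{-I}$ acting trivially on $X(p)$), so orbits on $\bX^*(p)$ have sizes in $\{1,3\}$ and $\{1,2\}$ respectively; the problem reduces to enumerating fixed points.

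First I would compute $\bX^*(p)^{\gamma_0}$. The equations $xy-z = x$ and $x = y$ force $y = x$ and $z = x^2-x$, and one verifies that the third coordinate equation is then automatic. Substituting into the Markoff equation $x^2+y^2+z^2-xyz = 0$ reduces to $x^2(3-x) = 0$, giving the origin (excluded from $\bX^*(p)$) and the unique non-origin fixed point $(3,3,6)$. Combined with the cardinalities $|\bX^*(p)| = p(p\pm 3)$ of Proposition \ref{prop_empty_conics}, this accounts for the one unramified point and the collection of index-$3$ ramified points in part (a).

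Next I would compute $\bX^*(p)^{\gamma_{1728}}$. The conditions $x = y$ and $xy - z = z$ force $z = x^2/2$ (here $p$ is odd), and substitution reduces the Markoff equation to $x^2(8-x^2)/4 = 0$. Hence non-origin fixed points correspond to square roots of $8$ in $\bF_p$, of which there are two when $2$ is a quadratic residue modulo $p$ (that is, $p \equiv \pm 1 \pmod 8$) and none otherwise; this is exactly the unramified-point count in part (b). The remaining orbits all have size $2$ since $\gamma_{1728}^2 = \id$ on $X(p)$, and summing gives the stated formula.

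The computations are wholly routine: the only mild obstacle is verifying that the fixed-point equations, naively a system in three coordinates, are in fact only bi-constrained because one of the coordinate equations is redundant modulo the others, but a short algebraic check confirms this in both cases. No deeper input is required.
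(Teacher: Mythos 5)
Your overall strategy is exactly the right one, and it is the same computation the paper relies on: after Proposition \ref{prop_monodromy_formulas} identifies $\ff^{-1}(0)$ and $\ff^{-1}(1728)$ with the $\gamma_0$- and $\gamma_{1728}$-orbit spaces on $\bX^*(p)$ (with ramification index equal to orbit size), the paper simply cites an external reference for the fixed-point count, which is precisely what you carry out directly. Your computations are correct: since $\gamma_0^3=\gamma_{1728}^2=\gamma_{-I}$ in $\Out^+(\Pi)$ and $\gamma_{-I}$ acts trivially on trace coordinates, orbits have size $1$ or $3$ (resp.\ $1$ or $2$); the fixed locus of $\gamma_0$ on the Markoff surface is cut out by $x^2(3-x)=0$, giving the single non-origin point $(3,3,6)$; the fixed locus of $\gamma_{1728}$ is $(x,x,4)$ with $x^2=8$, nonempty exactly when $2$ is a square mod $p$, i.e.\ $p\equiv\pm1\bmod 8$. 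Your count for part (b) then reproduces the displayed formula in all four congruence classes.

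The one real defect is that in part (a) you stop at ``this accounts for part (a)'' without doing the final arithmetic, and had you done it you would have found a discrepancy with the statement as printed. A unique unramified point with all other points of index $3$ gives
$$|\ff^{-1}(0)| \;=\; 1+\frac{|\bX^*(p)|-1}{3}\;=\;\frac{p(p+3)+2}{3}\quad(p\equiv1\bmod4),\qquad \frac{p(p-3)+2}{3}\quad(p\equiv3\bmod4),$$
whereas the proposition displays $\frac{p(p\pm3)-3}{3}$. The displayed expression cannot be correct: its numerator is $\equiv p^2\equiv 1\bmod 3$, so it is never an integer, and it is a typo in the statement. Your value is the one consistent both with the qualitative description (one unramified point, index $3$ elsewhere) and with the downstream genus formula: running Riemann--Hurwitz with $\deg\ff=|\bX^*(p)|$ from Proposition \ref{prop_empty_conics}, your $|\ff^{-1}(0)|$, the stated $|\ff^{-1}(1728)|$, and Proposition \ref{prop_number_of_cusps} reproduces exactly the $\epsilon(p)$ in Theorem \ref{thm_genus_formula} (e.g.\ $\frac{7}{8}p-\frac{29}{24}$ for $p\equiv1\bmod 8$), while the printed $\frac{p(p\pm3)-3}{3}$ would shift the genus by the non-integer $\frac56$. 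So your derivation is in fact the correct proof of (a), but you should carry the count to the end and flag explicitly that the cardinality formula in the statement must be corrected to $\frac{p(p\pm3)+2}{3}$, rather than asserting agreement with a formula your own computation contradicts. (Minor housekeeping: you should also remark, as you implicitly do, that the fixed points found lie in $\bX^*(p)$, i.e.\ are not the origin, which is clear for $p\ge5$.)
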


Recall that for $n\in\bN$, $\Phi(n) = \sum_{d\mid n}\frac{\phi(d)}{d}$. Theorem \ref{thm_basic_properties}\ref{part_fibers} (or Proposition \ref{prop_monodromy_formulas}) gives a combinatorial classification of the cusps. Using this, Proposition \ref{prop_rot_orbits} calculates the number of cusps of $\ol{M_p}$:

\begin{prop}\label{prop_number_of_cusps} Let $p\ge 5$ be a prime.
$$|\ff^{-1}(\infty)| = \left\{\begin{array}{ll}
	\frac{p-1}{2}\Phi(p-1) + \frac{p+1}{2}\Phi(p+1) + \frac{-5p+11}{4} & \text{if $p\equiv 1\mod 4$} \\[5pt]
	\frac{p-1}{2}\Phi(p-1) + \frac{p+1}{2}\Phi(p+1) + \frac{-7p-1}{4} & \text{if $p\equiv 3\mod 4$}
\end{array}\right.$$
\end{prop}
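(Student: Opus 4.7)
The plan is to reduce the calculation to the count of $\langle\gamma_\infty\rangle$-orbits on $X^*_{-2}(p)$, which has essentially already been carried out. By Proposition \ref{prop_monodromy_formulas}, for each connected component of $\ol{M_p}$ corresponding to an $\Aut^+(\Pi)$-orbit $\cO$ on $X^*_{-2}(p)$, the geometric points of $\ff^{-1}(\infty)$ lying on that component are in bijection with $\cO/\langle\gamma_\infty\rangle$ (recall that $\gamma_{-I}$ acts trivially on $X^*(p)$, so the identification of the smooth fiber with $\cO$ holds without further quotient). Summing over all $\Aut^+(\Pi)$-orbits, the total cardinality $|\ff^{-1}(\infty)|$ is equal to the total number of $\langle\gamma_\infty\rangle$-orbits on $X^*_{-2}(p)$. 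In particular, this count does not depend on whether $\Aut^+(\Pi)$ acts transitively on $X^*_{-2}(p)$, so the statement is valid for every prime $p\ge 5$, not merely those outside $\bE_\bgs$.

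Next, by Proposition \ref{prop_trace_fibration}, the action of $\gamma_\infty$ on the character variety $X_{\SL_2}\cong\bA^3$ is identified (as a polynomial automorphism) with the rotation $\rot_1:(x,y,z)\mapsto(x,z,xz-y)$. By Proposition \ref{prop_surjective_representation}, for $p\ge 3$ we have the equality $X^*_{-2}(p)=X^\circ_{-2}(p)=\bX(\bF_p)-\{(0,0,0)\}$, so counting $\langle\gamma_\infty\rangle$-orbits on $X^*_{-2}(p)$ is the same as counting $\rot_1$-orbits on $\bX^*(p)$.

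Finally, this latter count was carried out during the proof of Lemma \ref{lemma_free} and recorded in Proposition \ref{prop_rot_orbits}, where the action of $\rot_1$ was analyzed conic by conic on the slices $C_1(a)^*_{-2}\subset X^*_{-2}(p)$, stratified by whether $a\in\bF_p$ is parabolic ($a=\pm2$), hyperbolic ($a^2-4\in(\bF_p^\times)^2$), or elliptic ($a^2-4\in\bF_p^\times$ a non-square), with the degenerate value $a=0$ treated separately. Summing the orbit counts over these strata gives exactly the two case-by-case residue formulas modulo $4$ recorded in Proposition \ref{prop_rot_orbits}, which coincide with the formulas asserted in Proposition \ref{prop_number_of_cusps}. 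Hence the statement follows by direct invocation of Propositions \ref{prop_monodromy_formulas}, \ref{prop_surjective_representation}, and \ref{prop_rot_orbits}; the only substantive obstacle, namely the stratified enumeration of $\rot_1$-orbits on the conics $C_1(a)^*_{-2}$, is already complete.
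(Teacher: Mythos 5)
Your proposal is correct and follows essentially the same route as the paper: the cusps are identified with $\langle\gamma_\infty\rangle$-orbits (equivalently $\rot_1$-orbits) on $X^*_{-2}(p)=\bX^*(p)$ via the combinatorial description of the fibers (Proposition \ref{prop_monodromy_formulas}), and the count is then exactly Proposition \ref{prop_rot_orbits}, whose stratified conic-by-conic enumeration was done in the proof of Lemma \ref{lemma_free}. Your remark that the count is independent of transitivity, hence valid for all $p\ge 5$, is also how the paper's statement should be read.
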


An elementary Riemann-Hurwitz calculation yields the following

\begin{thm}\label{thm_genus_formula} Let $p\ge 5$ be prime. Let $\ol{M}_p$ be the smooth compactification of $M_p$. The smooth compactification of $M(\SL_2(\bF_p))_{-2}$ is a disjoint union of two copies of $\ol{M}_p$. For $p\notin\bE_\bgs$, $\ol{M}_p$ is a smooth curve of genus

$$\genus(\ol{M}_p) = \frac{1}{12}p^2 - \frac{p-1}{4}\Phi(p-1) - \frac{p+1}{4}\Phi(p+1) + \epsilon(p)$$
where
$$\epsilon(p) = \left\{\begin{array}{ll}
	\frac{7}{8}p - \frac{29}{24} & \text{if $p\equiv 1\mod 8$}\\[5pt]
	\frac{5}{8}p + \frac{19}{24} & \text{if $p\equiv 3\mod 8$}\\[5pt]
	\frac{7}{8}p - \frac{17}{24} & \text{if $p\equiv 5\mod 8$}\\[5pt]
	\frac{5}{8}p + \frac{7}{24} & \text{if $p\equiv 7\mod 8$}
\end{array}\right.$$
In particular $\genus(\ol{M}_p)\sim\frac{1}{12}p^2$, and moreover for all $p\ge 5$, we have
$$\genus(\ol{M}_p)\ge \frac{1}{12}p^2 - \frac{1}{2}(p-1)^{3/2} - \frac{1}{2}(p+1)^{3/2} + \frac{1}{2}p$$

\end{thm}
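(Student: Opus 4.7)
The theorem has three assertions: (i) the smooth compactification of $M(\SL_2(\bF_p))_{-2}$ is two copies of $\ol{M}_p$, (ii) the explicit genus formula, and (iii) the asymptotic together with the lower bound. For (i), I would combine Proposition \ref{prop_Higman_vs_trace}(c') and Proposition \ref{prop_Dq}: the former gives $\cAdm(\SL_2(\bF_p))_{-2} = \cAdm(\SL_2(\bF_p))_{\fc_1}\sqcup\cAdm(\SL_2(\bF_p))_{\fc_2}$ with the two closed substacks swapped by conjugation by a matrix $\delta\in\GL_2(\bF_p)$ of nonsquare determinant, and the latter identifies such a conjugation with the nontrivial element of $D(p) = \Out(\SL_2(\bF_p))$. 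Passing to coarse schemes and quotienting by $D(p)$, each $M(\SL_2(\bF_p))_{\fc_i}$ maps isomorphically to $M_p$, so their smooth compactifications are each canonically isomorphic to $\ol{M}_p$.

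For (ii), the plan is to apply Riemann--Hurwitz to the finite map $\ff:\ol{M}_p\to\ol{M(1)}\cong\bP^1$ between smooth projective curves. When $p\notin\bE_\bgs$, Theorem \ref{thm_main_applications}(b) ensures $\ol{M}_p$ is connected, and by Theorem \ref{thm_basic_properties}\ref{part_coarse} the map $\ff$ is étale over $\bP^1\setminus\{0,1728,\infty\}$. Writing
\begin{equation*}
2\genus(\ol{M}_p) - 2 = -2d + R_0 + R_{1728} + R_\infty,
\end{equation*}
with $d=\deg(\ff)$ and $R_j=\sum_{x\in\ff^{-1}(j)}(e_x-1)$, all four ingredients are already in hand: Proposition \ref{prop_empty_conics} gives $d=|X^*_{-2}(p)|$ equal to $p(p+3)$ or $p(p-3)$ according to $p\bmod 4$; Proposition \ref{prop_0_1728}(a) gives $R_0$ from one unramified point and the rest ramified with index $3$; Proposition \ref{prop_0_1728}(b) gives $R_{1728}$ from at most two unramified points (depending on $p\bmod 8$) and the rest ramified with index $2$; and Proposition \ref{prop_number_of_cusps} gives $R_\infty = d - |\ff^{-1}(\infty)|$ via the explicit $\rot_1$-orbit count. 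Substituting everything and collecting terms in the four residue classes of $p$ modulo $8$ produces the stated closed form. The main obstacle is precisely this mod-$8$ bookkeeping: no new ideas are needed, but one must track both $d$ (controlled by $p\bmod 4$) and the $1728$-ramification (controlled by $p\bmod 8$) simultaneously in order to recover the piecewise formula for $\epsilon(p)$ exactly as stated.

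For (iii), the asymptotic and the lower bound are elementary estimates. The inequality $\Phi(n) = \sum_{d\mid n}\phi(d)/d \le \tau(n) \le 2\sqrt{n}$ (the last bound because divisors pair up with at most one per pair below $\sqrt{n}$) gives $\tfrac{p\pm 1}{4}\Phi(p\pm 1)\le \tfrac{1}{2}(p\pm 1)^{3/2}$. Together with the case-by-case check (one short linear inequality in each of the four residue classes modulo $8$) that $\epsilon(p)\ge p/2$ for all $p\ge 5$, this yields the stated lower bound. Finally, $\Phi(n) = O(n^{1/2})$ forces each $(p\pm 1)\Phi(p\pm 1)/4$ to be $o(p^2)$, so $\genus(\ol{M}_p)\sim p^2/12$.
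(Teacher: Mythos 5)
Your proposal is correct and follows essentially the same route as the paper: Riemann--Hurwitz applied to $\ff:\ol{M}_p\to\ol{M(1)}$ with degree from Proposition~\ref{prop_empty_conics}, ramification over $j=0,1728$ from Proposition~\ref{prop_0_1728}, cusp count from Proposition~\ref{prop_number_of_cusps}, the two-copies statement from Proposition~\ref{prop_Higman_vs_trace}(c') together with Proposition~\ref{prop_Dq}, and the lower bound from $\Phi(n)\le d(n)\le 2\sqrt{n}$ plus the case check $\epsilon(p)\ge p/2$. The paper's proof of the theorem is precisely this computation, phrased as ``follows from the above discussion.''
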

\begin{proof} Everything but the lower bound follows from the above discussion. The lower bound follows from the upper bound for the divisor function $d(n) \le 2\sqrt{n}$, where $d(n)$ is the number of positive divisors of $n$, and noting that $\Phi(n)\le d(n)\le 2\sqrt{n}$, and $\epsilon(p)\ge \frac{1}{2}p$ for $p\ge 5$.
	
\end{proof}

\begin{thm}\label{thm_finiteness_of_SL2p_structures} For $p\ge 13$, $p\notin\bE_\bgs$, $\ol{M}_p$ is a smooth curve of genus $\ge 2$. For $p = 5,7,11$, $\ol{M}_p$ is a smooth curve of genus $0,0,1$ respectively. In particular, for $p\ge 13$, $p\notin\bE_\bgs$ and any number field $K$, only finitely many elliptic curves admit a $\SL_2(\bF_p)$-structure with ramification index $2p$.
\end{thm}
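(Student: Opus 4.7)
The plan is to combine the exact genus formula of Theorem \ref{thm_genus_formula} with Faltings' theorem. First, I would verify the explicit genera for $p=5,7,11$ by direct substitution into the formula. For instance, when $p=11$ one has $\Phi(10)=27/10$, $\Phi(12)=10/3$, and $\epsilon(11)=23/3$ (from the case $p\equiv 3\bmod 8$), giving
\[
\genus(\ol{M_{11}}) \;=\; \tfrac{121}{12} - \tfrac{10}{4}\cdot\tfrac{27}{10} - \tfrac{12}{4}\cdot\tfrac{10}{3} + \tfrac{23}{3} \;=\; 1,
\]
and the analogous computations for $p=5,7$ yield $0$.

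To prove $\genus(\ol{M_p})\ge 2$ for all $p\ge 13$ with $p\notin\bE_\bgs$, I would split into two ranges. For large $p$, apply the unconditional lower bound of Theorem \ref{thm_genus_formula},
\[
\genus(\ol{M_p}) \;\ge\; \tfrac{1}{12}p^2 - \tfrac12(p-1)^{3/2} - \tfrac12(p+1)^{3/2} + \tfrac12 p.
\]
An elementary estimate shows the right-hand side exceeds $2$ once $p$ is past a modest explicit cutoff (one can check it already holds for $p\ge 140$, and the comparison $p^2 > 12(p+1)^{3/2}$ gives the asymptotics rigorously). For the intermediate range $13\le p\le 139$, I would invoke the exact formula of Theorem \ref{thm_genus_formula}: the only ingredients are $p$, $\Phi(p\pm 1)$, and the residue of $p$ modulo $8$, so one just tabulates the $\approx 30$ values of $p$ in this range and checks the genus directly. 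The main obstacle is really only bookkeeping — verifying the arithmetic for the intermediate primes — since the bound from Theorem \ref{thm_genus_formula} (which uses $\Phi(n)\le 2\sqrt n$) is too loose for small $p$ but comfortable once $p$ is a few hundred.

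For the finiteness statement, note that $\cM(\SL_2(\bF_p))^\abs_{-2}$ is naturally defined over $\bZ[1/p]$: the trace $-2$ is canonically distinguished (the Galois-conjugate classes of an element of order $2p$ in $\SL_2(\bF_p)$ all still have trace $-2$, using that elements of $(\bZ/2p\bZ)^\times$ are odd), so $M_p$ descends to $\bQ$. For $p\notin\bE_\bgs$, $\ol{M_p}$ is therefore a smooth projective geometrically connected curve over $\bQ$ of genus $\ge 2$. Faltings' theorem gives $\ol{M_p}(K)$ finite for every number field $K$. Finally, by Proposition~\ref{prop_na} an $\SL_2(\bF_p)$-cover of an elliptic curve $E/K$ (only branched over the origin) with ramification index $2p$ has trace invariant $-2$, so its underlying absolute $\SL_2(\bF_p)$-structure produces a $K$-point of $M_p$; since each elliptic curve admits at most $|\Epi^\ext(\Pi,\SL_2(\bF_p))|$ such covers, finiteness of $M_p(K)$ gives finiteness of the set of elliptic curves $E/K$ admitting such a cover, which completes the proof.
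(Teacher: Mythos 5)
Your proposal is correct and takes essentially the same route as the paper: plug into the genus formula of Theorem~\ref{thm_genus_formula} for small $p$, use the unconditional lower bound $\genus(\ol{M_p})\ge\frac{1}{12}p^2-\frac{1}{2}(p-1)^{3/2}-\frac{1}{2}(p+1)^{3/2}+\frac{1}{2}p$ (which is positive only past roughly $p\approx 135$), fill in the intermediate primes $13\le p\le 139$ by explicit computation, and then invoke Faltings. Your $p=11$ arithmetic and the analogous $p=5,7$ checks are correct. The one place where you supply detail the paper elides is the descent step before Faltings — the observation that trace $-2$ is a Galois-stable condition (since odd powers of a parabolic of order $2p$ still have trace $-2$), so that $M_p$ is a curve over $\bQ$ rather than merely over $\Qbar$ — which is needed to turn Faltings into the stated finiteness over an arbitrary number field $K$, and is a worthwhile addition.
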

\begin{proof} By the classification of elements in $\SL_2(\bF_p)$ as parabolic, hyperbolic, or elliptic, only parabolic elements have order $2p$, and by Proposition \ref{prop_Higman_vs_trace}, this only happens when $p\ge 3$ and the trace is $-2$. The first part of the theorem follows from the lower bound for the genus obtained in Theorem \ref{thm_genus_formula}, together with some explicit computer-aided calculations. The finiteness follows from the first by Falting's theorem.
\end{proof}

Given a connected finite \'{e}tale cover $\ff : \cM\rightarrow\cM(1)_\bC$ inducing $M\rightarrow M(1)_\bC$ on coarse schemes, the analytic theory identifies $\cM$ with a quotient $[\cH/\Gamma]$ where $\cH$ is the Poincar\'{e} upper half plane, and $\Gamma\le\SL_2(\bZ)$ is a finite index subgroup acting by Mobius transformations \cite{Chen18}. The subgroup $\Gamma$ is uniquely determined by $\ff$ up to conjugation.

\begin{defn} Let $k\subset\bC$ be a subfield. We say that a finite \'{e}tale map $\ff : \cM\rightarrow\cM(1)_k$ is a congruence modular stack (and $M\rightarrow M(1)_k$ is a congruence modular curve) if $\cM_\bC$ is connected and the subgroup $\Gamma\le\SL_2(\bZ)$ associated to $\ff_\bC$ is a congruence subgroup of $\SL_2(\bZ)$ (that is, $\Gamma$ contains $\Ker(\SL_2(\bZ)\rightarrow\SL_2(\bZ/n))$ for some integer $n\ge 1$). Otherwise we say that it is a noncongruence modular stack (resp. curve).
\end{defn}

In \cite[Conjecture 4.4.1]{Chen18}, the author conjectured that the components of $\cM(G)$ for nonsolvable groups $G$ should all be noncongruence. In general the problem of determining whether the components of $\cM(G)$ are congruence or noncongruence can be quite difficult. From the results of \cite[\S5.5]{Chen18}, this problem is also central to the geometric approach to the \emph{unbounded denominators conjecture} for noncongruence modular forms \cite[Conjecture 5.2.2]{Chen18}. We end this section by showing that for a density 1 set of primes $p$, all components of $\cM(\PSL_2(\bF_p))_{-2}$ (and hence $\cM(\SL_2(\bF_p))_{-2}$) are noncongruence.


Let $\Pi,E,x_E$ be as in Situation \ref{situation_galois_theory}. We work over $\Qbar$. The action of $D(p)\cong\Out(\SL_2(\bF_p))$ on $\SL_2(\bF_p)$ descends via the characteristic quotient $h : \SL_2(\bF_p)\rightarrow\PSL_2(\bF_p)$ to an outer action on $\PSL_2(\bF_p)$, where it also acts via the full outer automorphism group of $\PSL_2(\bF_p)$. Thus this quotient induces a finite \'{e}tale surjection
$$\cM(h) : \cM(\SL_2(\bF_p))/D(p)\lra\cM(\PSL_2(\bF_p))/D(p)$$
and a surjection
$$h_* : \Epi(\Pi,\SL_2(\bF_p))/D(p)\lra\cM(\PSL_2(\bF_p))/D(p)$$
which can be identified with the map on fibers above $x_E$ induced by $\cM(h)$. For $t\in\bF_p$, let $\cM(\PSL_2(\bF_p))_t$ denote the image of $\cM(\SL_2(\bF_p))_t$, and we say that $\cM(\PSL_2(\bF_p))_t$ classifies $\PSL_2(\bF_p)$-structures of trace invariant $t$. Because $h$ is a central extension, there is no risk of confusion - any preimage of $x\in\cM(\PSL_2(\bF_p))_t$ in $\cM(\SL_2(\bF_p))$ must also have trace invariant $t$. Let
$$\bP_\MP := \{\text{primes }p\ge 5\;|\; p\notin\bE_\bgs \text{ and $p$ satisfies $\BMP(p)$ (see \eqref{eq_property_P_p})}\}$$
Then $\bP_\MP$ is a density 1 set of primes.

\begin{thm}[Meiri-Puder]\label{thm_MP} We work over $\Qbar$. For $p\in\bP_\MP$, the stack $\cM(\PSL_2(\bF_p))_{-2}^\abs = \cM(\PSL_2(\bF_p))_{-2}/D(p)$ is connected. Its degree over $\cM(1)$ is
$$d_p := \left\{\begin{array}{rl}
\frac{p(p+3)}{4} & p\equiv 1\mod 4 \\
\frac{p(p-3)}{4} & p\equiv 3\mod 4	
\end{array}\right.$$
and its monodromy group\footnote{Equivalently, the Galois group of its Galois closure.} over $\cM(1)$ is the full alternating or symmetric group on $d_p$:
$$\text{Mon}(\cM(\PSL_2(\bF_p))_{-2}^\abs\rightarrow\cM(1)) = \left\{\begin{array}{rl}
	S_{d_p} & \text{if }p\equiv 5,7,9,11\mod 16 \\
	A_{d_p} & \text{if }p\equiv 1,3,13,15\mod 16
\end{array}\right.$$
\end{thm}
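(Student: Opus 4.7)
The plan is to verify each of the three assertions (connectedness, degree $d_p$, monodromy group) separately, using the character-variety machinery of Section \ref{ss_character_variety} for the first two, and citing the main theorem of Meiri-Puder \cite{MP18} for the third.

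For connectedness, I would argue that since $\bP_\MP \subseteq \bE_\bgs^c$, Theorem \ref{thm_main_applications}(b) makes $M(\SL_2(\bF_p))_{-2}^\abs$ connected. The functorial map $\cM(h): \cM(\SL_2(\bF_p))_{-2}^\abs \to \cM(\PSL_2(\bF_p))_{-2}^\abs$ induced by the central quotient $h: \SL_2(\bF_p) \twoheadrightarrow \PSL_2(\bF_p)$ (and the compatibility of trace invariants, see below) is surjective by Proposition \ref{prop_compactification}(f), so the target is connected as well.

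For the degree, I would compute the fiber of $\cM(\PSL_2(\bF_p))_{-2}^\abs$ over $x_E$ by comparison with the $\SL_2(\bF_p)$-fiber, whose cardinality is $|\bX^*(p)|$ by Theorem \ref{thm_moduli_interpretation} and Proposition \ref{prop_surjective_representation}(a). Three observations are needed: (i) the trace of the commutator $[a,b]$ is well-defined on $\PSL_2(\bF_p)$-surjections, because any two $\SL_2(\bF_p)$-lifts of a given $\bar\varphi: \Pi \to \PSL_2(\bF_p)$ differ by a character $\Pi \to \{\pm I\}$ which is trivial on commutators; (ii) each surjection $\bar\varphi \in \Epi(\Pi, \PSL_2(\bF_p))$ admits exactly four $\SL_2(\bF_p)$-lifts (parametrized by $\Hom(\Pi,\{\pm I\}) \cong (\bZ/2\bZ)^2$, using freeness of $\Pi$), all of which are surjective because $\SL_2(\bF_p)$ has no subgroup of index $2$ for $p \geq 5$; and (iii) the conjugation action of $\GL_2(\bF_p)$ fixes $\pm I$ and commutes with multiplication by central elements, so it preserves the ``twist class'' of each lift, and the same is true for the outer action of $D(p)$. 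Together these imply that $h_*$ is $4$-to-$1$ on $D(p)$-orbit spaces, giving $d_p = |\bX^*(p)|/4$, which matches the stated formula via Proposition \ref{prop_empty_conics}.

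For the monodromy group, Theorem \ref{thm_basic_properties}\ref{part_monodromy} identifies the monodromy of $\cM(\PSL_2(\bF_p))_{-2}^\abs \to \cM(1)$ with the image of $\Out^+(\Pi)$ in the symmetric group on $\Epi^\ext(\Pi, \PSL_2(\bF_p))_{-2}/D(p)$. The assertion that this image equals $A_{d_p}$ or $S_{d_p}$ according to the residue of $p$ modulo $16$ is the main theorem of Meiri-Puder \cite{MP18}, from which they deduce the composite-moduli transitivity used in Theorem \ref{thm_composite_moduli}. I would not attempt a new proof; the only work on our side is to translate their formulation into ours via the character-variety correspondence and the $4$-to-$1$ identification of the previous paragraph.

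The hard part, and hence the main obstacle, is entirely on the monodromy side. The strategy of \cite{MP18} is, in outline, to establish primitivity of the $\Aut(\Pi)$-action and then to exhibit a permutation of small prime-power support so that Jordan's classification of primitive groups containing such permutations forces the image to be $A_{d_p}$ or $S_{d_p}$; the even/odd dichotomy, and hence the dependence on $p \bmod 16$, is governed by a parity computation involving ramification above $j = 0$ and $j = 1728$, which is precisely the input provided by Proposition \ref{prop_0_1728}. All the remaining steps in our proof are bookkeeping that reduces the geometric statement to their combinatorial one.
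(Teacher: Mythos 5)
Your route is essentially the paper's: connectedness is inherited from Theorem \ref{thm_main_applications}(b) via the surjection induced by $h:\SL_2(\bF_p)\twoheadrightarrow\PSL_2(\bF_p)$, the degree is $|\bX^*(p)|/4$ with $|\bX^*(p)|$ from Proposition \ref{prop_empty_conics}, and the large-image statement is outsourced to Meiri--Puder. Two steps, however, are not yet sound as written. In the degree count, your (iii) only shows that conjugation and the $D(p)$-action are compatible with central twisting, which gives that each fiber of $h_*$ has \emph{at most} four classes; the content of ``exactly $4$-to-$1$'' is that two distinct twists $\chi_1\varphi_0\neq\chi_2\varphi_0$ of the same lift are never $\GL_2(\bF_p)$-conjugate, i.e.\ that a surjection is not conjugate to a nontrivial central twist of itself. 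In trace coordinates this is precisely the statement that no point of $\bX^*(p)$ has two vanishing coordinates, i.e.\ that the order-$4$ group negating two coordinates acts \emph{freely} on $\bX^*(p)$ --- which is exactly how the paper phrases and uses this step. The fact is easy (two vanishing coordinates on the Markoff surface force the third to vanish), but your justification does not contain it.

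The more serious issue is the monodromy step. The monodromy group of $\cM(\PSL_2(\bF_p))^\abs_{-2}\to\cM(1)$ is the permutation image of $\Out^+(\Pi)$, whereas \cite{MP18} concerns the full Markoff group, i.e.\ the image of $\Out(\Pi)$, and (as the paper uses it) only yields that this larger image is alternating or symmetric. Two further inputs are needed which your plan omits: first, the image of the index-$2$ subgroup $\Out^+(\Pi)$ is again $A_{d_p}$ or $S_{d_p}$ because alternating groups in degree $\ge 5$ have no proper index-$2$ subgroups; second, the determination of \emph{which} of the two occurs as a function of $p\bmod 16$ is not taken from \cite{MP18} but is a separate sign computation for the $\Out^+(\Pi)$-generators (using ramification data as in Proposition \ref{prop_0_1728}) carried out in \cite[\S3.3]{BBCL20}. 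Note in particular that when the $\Out(\Pi)$-image is $S_{d_p}$, the $\Out^+(\Pi)$-image could a priori drop to $A_{d_p}$, so the stated mod-$16$ dichotomy does not follow from \cite{MP18} by bookkeeping alone, contrary to your final paragraph.
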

\begin{proof} Since $p\in\bP_\MP$, $p\notin\bE_\bgs$, so the connectedness statement follows from the connectedness of $\cM(\SL_2(\bF_p))^\abs_{-2}$ (Theorem \ref{thm_main_applications}(b)). Let $V$ denote the order 4 group of automorphisms acting freely on $\bX^*(p)$ by negating two of the coordinates. Let $\bY^*(p) := \bX^*(p)/V$. The action of $\Gamma$ on $\bX^*(p)$ descends to an action on $\bY^*(p)$, and the theory of the character variety allows us to identify the quotient map $\bX^*(p)\rightarrow\bY^*(p)$ with the map
$$h_* : \Epi^\ext(\Pi,\SL_2(\bF_p))_{-2}/D(p)\rightarrow\Epi^\ext(\Pi,\PSL_2(\bF_p))_{-2}/D(p)$$
where the $\Gamma$-action is induced by the action of $\Out(\Pi)$ (Proposition \ref{thm_moduli_interpretation}). The sizes of $\bX^*(p)$ were computed in Proposition \ref{prop_empty_conics}. Thus the formula for $d_p$ follows from the freeness of the $V$-action. By \cite[Theorems 1.3, 1.4]{MP18}, the permutation image of $\Gamma$ on $\bY^*(p)$ is either the full alternating or symmetric group. Since alternating groups in degrees $\ge 5$ do not have nontrivial index 2 subgroups, the same is true for the permutation image of $\Out^+(\Pi)$ acting on $\Epi^\ext(\Pi,\PSL_2(\bF_p))_{-2}/D(p)$ (note $|\bY^*(p)|\ge 7$ for $p\ge 5$). The determination of exactly when one obtains the alternating (or symmetric group) is done in \cite[\S3.3]{BBCL20}.
\end{proof}

\begin{remark}\label{remark_MP_conjecture} Conjecturally, this Theorem holds for all primes $p\ge 5$ \cite[Conjecture 1.2]{MP18}	
\end{remark}

\begin{cor}\label{cor_noncongruence} For $p\in\bP_\MP$, the stack $\cM(\PSL_2(\bF_p))^\abs_{-2}$ is noncongruence, so the same is true of $\cM(\SL_2(\bF_p))^\abs_{-2}$.
\end{cor}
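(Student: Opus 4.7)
The plan is to argue by contradiction using the Meiri--Puder computation of the monodromy group (Theorem \ref{thm_MP}) combined with the classical description of composition factors of $\SL_2(\bZ/n\bZ)$. The guiding principle is that the monodromy group of a congruence modular stack is severely restricted, whereas Theorem \ref{thm_MP} forces a huge alternating/symmetric group as monodromy, which cannot fit.

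First, I would unwind the definitions. Suppose $\cM(\PSL_2(\bF_p))^\abs_{-2}$ is congruence, so it corresponds (via the identification $\pi_1(\cM(1)_\bC) \cong \Out^+(\Pi) \cong \SL_2(\bZ)$ from Theorem \ref{thm_basic_properties}\ref{part_monodromy}) to a subgroup $\Gamma \le \SL_2(\bZ)$ containing $\Gamma(n)$ for some $n \ge 1$. Since $\Gamma(n)$ is normal in $\SL_2(\bZ)$, the normal core of $\Gamma$ contains $\Gamma(n)$, and therefore the monodromy group of the cover $\cM(\PSL_2(\bF_p))^\abs_{-2} \to \cM(1)$ is a quotient of $\SL_2(\bZ)/\Gamma(n) = \SL_2(\bZ/n\bZ)$. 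In particular, by Theorem \ref{thm_MP}, the simple group $A_{d_p}$ (which is either the monodromy group or the unique nonabelian composition factor thereof) must occur as a composition factor of $\SL_2(\bZ/n\bZ)$.

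Next, I would analyze the composition factors of $\SL_2(\bZ/n\bZ)$. By the Chinese remainder theorem, $\SL_2(\bZ/n\bZ) \cong \prod_{i} \SL_2(\bZ/q_i^{a_i}\bZ)$ for the prime factorization $n = \prod q_i^{a_i}$, so (by Jordan--H\"older) any simple composition factor occurs in some $\SL_2(\bZ/q^a\bZ)$. The kernel of the reduction $\SL_2(\bZ/q^a\bZ) \to \SL_2(\bF_q)$ is a $q$-group, and $\SL_2(\bF_q)$ is a central extension of $\PSL_2(\bF_q)$ by $\{\pm I\}$. So the nonabelian simple composition factors of $\SL_2(\bZ/n\bZ)$ are precisely the groups $\PSL_2(\bF_{q_i})$ with $q_i \ge 5$. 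We are therefore forced to have $A_{d_p} \cong \PSL_2(\bF_q)$ for some prime $q \mid n$.

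Finally, I would dispatch this with a size/classification argument. For every $p \in \bP_\MP$ (so $p \ge 5$) one checks directly from the formula in Theorem \ref{thm_MP} that $d_p \ge 7$ (the smallest values are $d_5 = 10$, $d_7 = 7$, $d_{11} = 22$, \ldots). By the classical classification of coincidences between alternating groups and two-dimensional projective linear groups, the only isomorphisms $A_m \cong \PSL_2(\bF_q)$ occur at $m = 5$ (with $q = 4, 5$) and $m = 6$ (with $q = 9$); hence $A_{d_p}$ is not isomorphic to any $\PSL_2(\bF_q)$, a contradiction. This proves the noncongruence statement for $\cM(\PSL_2(\bF_p))^\abs_{-2}$. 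The assertion for $\cM(\SL_2(\bF_p))^\abs_{-2}$ then follows: the natural surjection $\cM(h) : \cM(\SL_2(\bF_p))^\abs_{-2} \to \cM(\PSL_2(\bF_p))^\abs_{-2}$ forces the subgroup of $\SL_2(\bZ)$ classifying the source to be \emph{contained} in the one classifying the target, so if the source were congruence the target would be as well. The only mild subtlety in this plan is the bookkeeping in Step~2 (identifying composition factors of $\SL_2(\bZ/q^a\bZ)$), which is standard, so I expect no serious obstacle.
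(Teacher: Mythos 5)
Your proof is correct and follows essentially the same route as the paper: assume the cover is congruence, conclude the monodromy group is a quotient of $\SL_2(\bZ/n\bZ)$, observe that its nonabelian composition factors can only be groups $\PSL_2(\bF_q)$ with $q\ge 5$, and contradict Theorem \ref{thm_MP}, then pass to $\cM(\SL_2(\bF_p))^\abs_{-2}$ by the "a cover of a noncongruence stack is noncongruence" observation. The only difference is that you make explicit the final step --- that $A_{d_p}$ with $d_p\ge 7$ is not isomorphic to any $\PSL_2(\bF_q)$, via the exceptional isomorphisms at degrees $5$ and $6$ --- which the paper leaves implicit.
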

\begin{proof} First observe that a cover of a noncongruence modular stack is noncongruence, so it suffices to show that $\cM(\PSL_2(\bF_p))^\abs_{-2}$ is noncongruence. If $\cM(\PSL_2(\bF_p))^\abs_{-2}\rightarrow\cM(1)$ is congruence, then it fits into a factorization
$$\cM(n)\rightarrow\cM(\PSL_2(\bF_p))^\abs_{-2}\rightarrow\cM(1)$$
where $\cM(n)$ is connected (in fact it is a component of $\cM(\bZ/n\times\bZ/n)$) and the composition is Galois with Galois group $\SL_2(\bZ/n)$. It follows that the monodromy group of $\cM(\PSL_2(\bF_p))^\abs_{-2}/\cM(1)$ is a quotient of $\SL_2(\bZ/n)$. Writing $n = \prod_{i=1}^r q_i$ with each $q_i$ a prime power, we have $\SL_2(\bZ/n) = \prod_{i=1}^r\SL_2(\bZ/q_i)$, and hence the composition factors of $\SL_2(\bZ/n)$ are either abelian or of the form $\PSL_2(\bF_p)$ for primes $p\ge 5$, so the same must be true of the composition factors of the monodromy group of $\cM(\PSL_2(\bF_p))^\abs_{-2}/\cM(1)_\bC$, but this contradicts Theorem \ref{thm_MP}.
\end{proof}

Thus, Theorem \ref{thm_genus_formula} can be viewed as a noncongruence analog of Rademacher's conjecture, proved by Dennin \cite{Den75}, that there exist only finitely many congruence subgroups of a given genus. By contrast, it follows from Belyi's theorem that there are infinitely many noncongruence subgroups of every genus, so to obtain finiteness, one must restrict the types of noncongruence modular curves considered. Theorem \ref{thm_genus_formula} yields finiteness for the family $\{M(\SL_2(\bF_p))_{-2}^\abs\;|\; p\in\bP_\MP\}$.

\section{Appendix}
\subsection{Normalized coordinates for tame balanced actions on prestable curves}

The purpose of this section is to prove Proposition \ref{prop_normalized_coordinates} below, which shows that the \'{e}tale local picture of an admissible $G$-cover (Definition \ref{def_admissible} can be checked on fibers. The trickiest part is to check this at a node, where the calculation uses the explicit form of the projection maps onto isotypic subspaces (Lemma \ref{lemma_tame_decomposition}). 

\begin{remark}[Noetherian approximation]\label{remark_noetherian_approximation} It is sometimes useful to prove results about prestable curves by working over a Noetherian base. By standard Noetherian approximation arguments, we do not lose any generality in doing so. A precise statement we'll need is this. Let $C\rightarrow S$ be a prestable curve equipped with an effective Cartier divisor $R\subset C$ finite \'{e}tale over a quasicompact quasiseparated scheme $S$ and an $S$-linear action of a finite group $G$ preserving $R$. Then we may write $S$ as a limit $S = \lim_{i\in I} S_i$ with affine transition morphisms with each $S_i$ of finite type over $\bZ$ \cite[01ZA,07RN]{stacks}. In this case each map $S\rightarrow S_i$ is also affine \cite[01YX]{stacks}. Moreover, for some $i\in I$ the pair $(C,R)$ with $G$-action is the base change of a prestable curve $C_i\rightarrow S_i$ with divisor $R_i\subset C_i$ finite \'{e}tale over $S_i$ and $G$-action preserving $R_i$. The key fact is that the category of schemes of finite presentation over $S$ is the colimit of the categories of schemes of finite presentation over $S_i$ \cite[01ZM]{stacks}. For example, from this, one can find a map $R_i\rightarrow C_i$ over $S_i$ which pulls back to $R\rightarrow C$. By \cite[0C5F,081C,081F]{stacks}, we may assume that $C_i/S_i$ is a prestable curve, and that $R_i/S_i$ is finite \'{e}tale, and hence $R_i\rightarrow C_i$ must be the inclusion of an effective Cartier divisor. Similarly, viewing the $G$-action as being given as a collection of automorphisms satisfying certain properties, by \cite[01ZM]{stacks}, we may assume the $G$-action is also the pullback of an $S_i$-linear $G$-action on $(C_i,R_i)$.
\end{remark}


\begin{lemma}\label{lemma_tame_decomposition} Let $A$ be a ring such that $\Spec A$ is connected. Let $e\ge 1$ be an integer invertible in $A$. Suppose the finite \'{e}tale group scheme $\mu_{e,A} = \Spec A[x]/(x^e-1)$ is totally split over $A$. Let $M$ be an $A$-module, and let $G$ be a cyclic group generated by $g$, acting $A$-linearly on $M$. For a root of unity $\zeta\in\mu_e(A)$, let
$$p_\zeta : M\lra M\qquad\text{be given by}\qquad p_\zeta(m) = \frac{1}{e}\sum_{j=0}^{e-1}\zeta^{-j}g^j(m).$$
Then $p_\zeta(M) = M_\zeta := \{m\in M\;|\; gm = \zeta m\}$, and $\bigoplus_{\zeta\in\mu_e(A)} p_\zeta : M\rightarrow\bigoplus_{\zeta\in\mu_e(A)} M_\zeta$ is an isomorphism. Moreover, for each $\zeta\in\mu_e(A)$, formation of $M_\zeta$ defines an exact functor $\Mod_{A[G]}\rightarrow\Mod_{A[G]}$.
\end{lemma}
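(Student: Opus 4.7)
The plan is to adapt the classical character-averaging argument that decomposes a representation of a finite cyclic group (over a field of characteristic prime to its order) into isotypic components. The hypotheses --- $e$ invertible in $A$, $\mu_{e,A}$ totally split, and (implicitly) the order of $g$ dividing $e$ so that $g^e=1$ --- supply exactly what is needed: the averaging makes sense, and $\mu_e(A)$ has the correct size for character orthogonality to hold.

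First I would verify $p_\zeta(M)\subseteq M_\zeta$ by the direct reindexing
\[
g(p_\zeta(m)) \;=\; \tfrac{1}{e}\sum_{j=0}^{e-1}\zeta^{-j}g^{j+1}(m) \;=\; \zeta\cdot p_\zeta(m),
\]
where the second equality uses $g^e=1$ and $\zeta^e=1$ to shift the summation index back to $\{0,\ldots,e-1\}$. The opposite inclusion $M_\zeta\subseteq p_\zeta(M)$ is immediate: if $g(m)=\zeta m$ then $g^j(m)=\zeta^j m$ and the sum defining $p_\zeta(m)$ collapses to $m$. Next I would compute the compositions $p_\zeta\circ p_{\zeta'}$ (using that $p_{\zeta'}(m)\in M_{\zeta'}$) and the total sum $\sum_\zeta p_\zeta$; both reduce to the orthogonality identity $\sum_{\zeta\in\mu_e(A)}\zeta^j=0$ for $0<j<e$. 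Granting this, the $p_\zeta$ form a complete system of orthogonal idempotents and the desired decomposition $M\xrightarrow{\sim}\bigoplus_\zeta M_\zeta$ follows.

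The main subtlety --- really the only one --- is justifying the orthogonality sum over a general ring $A$ rather than over a field. For this I would use the factorization $X^e-1=\prod_{\zeta\in\mu_e(A)}(X-\zeta)$ in $A[X]$, which is available precisely because $\mu_{e,A}$ is totally split, differentiate, and evaluate at a given $\omega\in\mu_e(A)$ to obtain $e\cdot\omega^{e-1}=\prod_{\zeta\neq\omega}(\omega-\zeta)$. Since the left side is a unit, each factor $\omega-\zeta$ on the right must be a unit in $A$; the identity $(\omega^j-1)\sum_{k=0}^{e-1}\omega^{jk}=\omega^{ej}-1=0$ then forces the geometric sum to vanish for $0<j<e$. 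Finally, exactness of the functor $M\mapsto M_\zeta$ is automatic: the $p_\zeta$ are defined by a formula involving only the $G$-action and elements of $A$, so they commute with every $A[G]$-linear map, making $M_\zeta$ a functorial direct summand of $M$, and any short exact sequence therefore decomposes componentwise into short exact sequences of $\zeta$-parts.
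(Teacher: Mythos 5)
Your proof is correct and follows essentially the same route as the paper: both reduce the decomposition to the orthogonality identity $\tfrac{1}{e}\sum_{\zeta\in\mu_e(A)}\zeta^{-j}=\delta_{j\equiv 0 \bmod e}$ (together with $\sum_{k=0}^{e-1}\omega^{k}=0$ for $1\neq\omega\in\mu_e(A)$), justified via the totally split structure of $A[x]/(x^e-1)$, and your derivative trick nicely makes explicit that differences of distinct roots are units, a point the paper leaves implicit. One small overstatement: $\sum_{k=0}^{e-1}\omega^{jk}$ vanishes only when $\omega^{j}\neq 1$, so for the completeness relation $\sum_{\zeta}p_\zeta=\mathrm{id}$ you should take $\omega$ to be a generator of $\mu_e(A)$ (which is cyclic of order $e$, since $\mu_{e,A}$ is a totally split, hence constant, group scheme over the connected base) — this is exactly the primitive $e$-th root of unity the paper also invokes without further comment, while the instances you actually need for the idempotent relations $p_\zeta p_{\zeta'}=\delta_{\zeta\zeta'}p_\zeta$ are already covered by the unit-difference fact you proved.
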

\begin{proof} It's easy to check that $p_\zeta$ maps into $M_\zeta$, and that the composition $M_\zeta\hookrightarrow M\stackrel{p_\zeta}{\rightarrow} M_\zeta$ is the identity. Thus, each $M_\zeta$ is a direct summand of $M$. Moreover, if $\zeta_e\in A$ is a primitive $e$-th root of unity, then looking at the map $A[x]/(x^e-1)\rightarrow A$ sending $x\mapsto \zeta_e$ shows that $\frac{1}{e}\sum_{\zeta\in\mu_e(A)}\zeta^{-j} = 1$ if $j\equiv 0\mod e$, and is zero otherwise. Thus, the map
\begin{eqnarray*}
\oplus_{\zeta\in\mu_e(A)}p_\zeta : M & \longrightarrow & \bigoplus_{\zeta\in\mu_e(A)} M_\zeta \\
m & \mapsto & \sum_{\zeta\in\mu_e(A)}p_\zeta(m) = \frac{1}{e}\sum_{\zeta\in\mu_e(A)}\sum_{j=0}^{e-1}\zeta^{-j}g^j(m) = \frac{1}{e}\sum_{j=0}^{e-1}\sum_{\zeta\in\mu_e(A)}\zeta^{-j}g^j(m)
\end{eqnarray*}
is the identity. This establishes the desired decomposition. The exactness of $M\mapsto M_\zeta$ is easy to check.
\end{proof}

\begin{lemma}[Henselization commutes with $G$-invariants]\label{lemma_henselization_commutes_with_invariants} Let $R$ be a ring equipped with an action of a finite group $G$ (no tameness assumptions). Let $\fm_R\subset R$ be a maximal ideal. Let $\fm_{R^G} := \fm_R\cap R^G$. Let $(R^h,\fm_R^h), ((R^G)^h,\fm_{R^G}^h)$ denote the henselizations of the pairs $(R,\fm_R), (R^G,\fm_{R^G})$ \cite[0A02]{stacks}. Then
\begin{enumerate}[label=(\alph*)]
\item $\fm_{R^G} := \fm_R\cap R^G$ is a maximal ideal of $R^G$,
\item $R^h,(R^G)^h$ are local rings with maximal ideals $\fm_R^h,\fm_{R^G}^h$,
\item Let $f : (R^G)^h\rightarrow R^h$ be the natural map induced by the morphism of pairs $(R^G,\fm_{R^G})\rightarrow (R,\fm_R)$. Then $f$ induces an isomorphism $R\otimes_{R^G}(R^G)^h\rightiso R^h$.
\item The natural map $(R^G)^h = (R^G)\otimes_{R^G} (R^G)^h\lra (R\otimes_{R^G}(R^G)^h)^G\cong (R^h)^G$ is an isomorphism.
\end{enumerate}
\end{lemma}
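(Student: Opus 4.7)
\textbf{Proof plan for Lemma \ref{lemma_henselization_commutes_with_invariants}.}

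For part (a), the plan is to invoke the standard fact that $R$ is integral over $R^G$: every $r\in R$ is a root of the monic polynomial $\prod_{g\in G}(X-g(r))\in R^G[X]$. Hence $R^G\hookrightarrow R$ is an integral extension, and lying-over together with incomparability (\cite[00GQ]{stacks}) forces $\fm_{R^G}=\fm_R\cap R^G$ to be maximal in $R^G$. This is routine.

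For part (b), I will use the fact that in any henselian pair $(B,J)$, the ideal $J$ is contained in the Jacobson radical of $B$ (\cite[09XH]{stacks}). Applied to $(R^h,\fm_R^h)$, the residue ring $R^h/\fm_R^h$ is canonically the residue field $R/\fm_R$ (this is part of the construction of the henselization of a pair via the filtered colimit of étale $A$-algebras $B$ with $B/IB\cong A/I$, see \cite[0A02]{stacks}), so $\fm_R^h$ is a maximal ideal contained in the Jacobson radical and must therefore be the unique maximal ideal. The same argument works for $((R^G)^h,\fm_{R^G}^h)$.

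For part (c), my plan is to combine two standard structural facts. First, since $R$ is finite over $R^G$ (by the integrality argument in (a)), the tensor product $R\otimes_{R^G}(R^G)^h$ is finite over the henselian local ring $(R^G)^h$, and hence decomposes as a finite product of local henselian rings indexed by its maximal ideals (\cite[04GH]{stacks}). Second, these maximal ideals correspond to primes of $R$ lying over $\fm_{R^G}$, which by \cite[0BRI]{stacks} form a single $G$-orbit; in the applications of this lemma $G$ stabilizes $\fm_R$ (for instance when $R$ is already local, which is the case of interest), so $\fm_R$ is the unique such prime and the product collapses to a single henselian local ring. The natural map $R\otimes_{R^G}(R^G)^h\to R^h$ is then a surjection of henselian local rings between the initial objects of the same universal problem (henselian local rings receiving a compatible map from $R_{\fm_R}$); identifying this with the statement that henselization commutes with finite morphisms (\cite[07QP]{stacks}) finishes the argument. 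The main obstacle is keeping track of this bijection-of-maximal-ideals step carefully, as the identification $R^h = R\otimes_{R^G}(R^G)^h$ is genuinely false in general if $G$ fails to fix $\fm_R$.

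For part (d), the plan is to note that taking $G$-invariants commutes with flat base change over $R^G$: given any $(R^G)$-module $M$ with $G$-action, $M^G$ is defined by the left-exact sequence
$$0\longrightarrow M^G\longrightarrow M\stackrel{m\mapsto (g(m)-m)_g}{\longrightarrow}\prod_{g\in G}M,$$
and this sequence is preserved under tensoring with any flat $R^G$-module. Since $(R^G)^h$ is a filtered colimit of étale, hence flat, $R^G$-algebras (\cite[0A02]{stacks}), it is flat over $R^G$. Applying invariants-commute-with-flat-base-change to $M=R$ (and using part (c) to identify $R\otimes_{R^G}(R^G)^h$ with $R^h$) gives $(R^h)^G = (R\otimes_{R^G}(R^G)^h)^G = R^G\otimes_{R^G}(R^G)^h = (R^G)^h$, as desired. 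No tameness hypothesis is needed here since the equalizer description of $G$-invariants is completely general.
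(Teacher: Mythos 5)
Parts (a), (b), and (d) are in substance the paper's own arguments: for (a) you use lying-over where the paper invokes that integral morphisms are universally closed, for (b) the residue-field plus Jacobson-radical argument is identical, and for (d) you spell out the finite-equalizer argument whereas the paper simply cites \cite[Proposition A.7.1.3]{KM85}; these are all fine. Part (c) is where you diverge, and where the proposal has a genuine hole. You assert that $R$ is \emph{finite} over $R^G$ ``by the integrality argument in (a),'' but the monic polynomial $\prod_{g\in G}(T-gr)$ only shows that $R$ is \emph{integral} over $R^G$; finiteness can fail, and no Noetherian or finite-type hypothesis appears in the lemma. Your product decomposition and your closing citation both lean on finiteness, so the step is broken. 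Even granting finiteness, your final ``initial objects of the same universal problem'' step is not justified as phrased: $R\otimes_{R^G}(R^G)^h$ is characterized as a pushout, not as the initial henselian local ring under $R_{\fm_R}$, and identifying the two is exactly what the paper's cited result \cite[0DYE]{stacks} (henselization of pairs commutes with base change along an \emph{integral} ring map) provides, not something that follows by inspection.

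On the other hand, you correctly flag a hypothesis that is missing from the lemma as stated: $\fm_R$ must be $G$-stable for (c) to hold. The cited result identifies $R\otimes_{R^G}(R^G)^h$ with the henselization of the pair $(R,\fm_{R^G}R)$, and this coincides with the henselization of $(R,\fm_R)$ only when $\fm_R$ is the unique prime of $R$ over $\fm_{R^G}$; since those primes form a single $G$-orbit, that is precisely the condition that $G$ fixes $\fm_R$. A concrete counterexample otherwise is $R = k\times k$ with $G = \bZ/2$ swapping factors and $\fm_R = k\times 0$, where $R^h\cong k$ but $R\otimes_{R^G}(R^G)^h = R\otimes_k k\cong k\times k$. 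The hypothesis does hold in the one place the lemma is applied in the paper (a strict local ring $R = \cO_{C_A,\ol{p}}$ with the stabilizer $G_{\ol{p}}$ acting), so nothing downstream is affected, but it ought to be stated, and your instinct to flag it is correct even though your own proof of (c) does not close the gap.
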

\begin{proof} Integral morphisms are universally closed, so we get (a). Henselization of a pair $(A,I)$ preserves the quotient $A/I$ \cite[0AGU]{stacks}, so $R^h/\fm_R^h\cong R/\fm_R, (R^G)^h/\fm_{R^G}^h\cong R^G/\fm_{R^G}$ are fields, so $\fm_R^h,\fm_{R^G}^h$ are maximal. On the other hand, $\fm_R^h,\fm_{R^G}^h$ must be contained in the Jacobson radical of their corresponding rings \cite[09XE]{stacks}, so $R^h, (R^G)^h$ are local, so we get (b). Since every $r\in R$ satisfies the polynomial $\prod_{g\in G}(T-gr)\in R^G[T]$, $R^G\ra R$ is integral, so (c) is \cite[0DYE]{stacks}. Finally, henselization is flat, so (d) follows from the fact that taking $G$-invariants commutes with flat base change \cite[Proposition A.7.1.3]{KM85}.
\end{proof}

\begin{prop}[Normalized coordinates for tame balanced actions on nodal curves]\label{prop_normalized_coordinates} Let $G$ be a finite group with order invertible on $S$. Let $C/S$ be a prestable curve equipped with a ($S$-linear) right action of $G$ acting faithfully on fibers. Let $k$ be an algebraically closed field and $\ol{p} : \Spec k\rightarrow C$ a geometric point with image $\ol{s}$ in $S$. Suppose the stabilizer $G_{\ol{p}}$ is cyclic of order $e$, and let $g\in G_{\ol{p}}$ be a generator. Let $A := \cO_{S,\ol{s}}$ and let $R := \cO_{C_A,\ol{p}}$ the strict local ring of $C_A$ at $\ol{p}$, equipped with the induced action of $G_{\ol{p}}$.
\begin{enumerate}[label=(\alph*)]
	\item\label{part_normal_form_at_smooth_point} Suppose the image of $\ol{p}$ is smooth inside $C_{\ol{s}}$. Then there is an $A$-algebra homomorphism
	$$\phi : A[z]\lra R$$
	such that
	\begin{itemize}
	\item $\phi$ induces an isomorphism between $R$ and the strict local ring of $A[z]$ at $(\fm_A,z)$.
	\item There is a primitive $e$th root of unity $\zeta_e\in A$ such that $\phi$ is $G_\ol{p}$-equivariant relative to the action of $G_\ol{p}$ on $A[z]$ given by $gz = \zeta_ez$.
	\end{itemize}
	\item\label{part_normal_form_at_node} Suppose the image of $\ol{p}$ is a node in the fiber $C_\ol{s}$. The normalization of $C_{\ol{s}}$ defines a decomposition of the Zariski cotangent space $T_{C_\ol{s},\ol{p}}^*$ into a sum of two 1-dimensional subspaces, called ``branches''. Suppose the action of $G_\ol{p}$ on $T_{C_\ol{s},\ol{p}}^*$ preserves this decomposition and has image contained in $\SL(T_{C_\ol{s},\ol{p}}^*)$. Then there is an element $a\in\fm_A$ and an $A$-algebra homomorphism
	$$\phi : A[z,w]/(zw-a)\lra R$$
	such that
	\begin{itemize}
	\item $\phi$ induces an isomorphism between $R$ and the strict local ring of $A[z,w]/(zw-a)$ at $(\fm_A,z,w)$.
	\item There is a primitive $e$th root of unity $\zeta_e\in A$ such that $\phi$ is $G_\ol{p}$-equivariant relative to the action of $G_\ol{p}$ on $A[z,w]/(zw-a)$ given by $gz = \zeta_ez$, $gw = \zeta_e^{-1}w$.
	\end{itemize}
\end{enumerate}	
In particular, the quotient map $\pi : C\rightarrow C/G$ satisfies conditions \ref{part_admissible_local_marking}, \ref{part_admissible_local_nodes}, and \ref{part_admissible_balanced} of the definition of an admissible $G$-cover (Definition \ref{def_admissible}).
\end{prop}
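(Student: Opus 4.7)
My strategy is uniform: apply the projection operators of Lemma \ref{lemma_tame_decomposition} to extract eigenvectors in $R$, and then verify that the resulting $A$-algebra map induces an isomorphism on strict local rings by regularity (for \ref{part_normal_form_at_smooth_point}) or by flatness together with an isomorphism on fibers (for \ref{part_normal_form_at_node}). Throughout I use that a primitive $e$-th root of unity $\zeta_e\in k$ lifts uniquely to $A$ by the total splitting of $\mu_e$ over the strictly henselian $A$.

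For part \ref{part_normal_form_at_smooth_point}, standard tame ramification theory (cf.\ \cite[\S IV.2]{SerreLF}) makes the character $\chi\colon G_\ol{p}\to k^\times$ acting on the $1$-dimensional cotangent space $T := \fm_R/(\fm_A R + \fm_R^2)$ faithful, so $\chi(g)=\zeta_e$. Lifting a generator of $T$ to $z_0\in\fm_R$ and averaging $z := p_{\zeta_e}(z_0)$ preserves the image in $T$ while forcing $gz=\zeta_e z$. The map $\phi\colon A[z]\to R$, $z\mapsto z$, is $G_\ol{p}$-equivariant and induces an isomorphism on strict local rings, since both sides are regular local rings of dimension $\dim A + 1$ sharing a regular system of parameters.

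For part \ref{part_normal_form_at_node}, by the \'etale-local structure of a nodal morphism there exist $a\in\fm_A$ and an (a priori non-equivariant) $A$-algebra isomorphism $R\cong (A[U,V]/(UV-a))^\sh$ at $(\fm_A,U,V)$. The balance hypothesis, combined with faithfulness of the local action on the cotangent space $T^*_{C_\ol{s},\ol{p}}$, identifies the characters on the two branches as $\zeta_e$ and $\zeta_e^{-1}$. Setting $z := p_{\zeta_e}(U)$ and $w := p_{\zeta_e^{-1}}(V)$ yields eigenvectors in $R$ whose images in $T^*_{C_\ol{s},\ol{p}}$ generate the respective branches. The main obstacle is that $zw$ agrees with $a$ only up to higher-order correction, so that some adjustment is required to achieve the exact relation $zw = a'\in\fm_A$.

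To handle this obstacle, I would invoke the deformation theory established in Proposition \ref{prop_deformations} (relying on \cite{ST18}): the $G_\ol{p}$-equivariant deformation functor of a balanced node coincides with the non-equivariant one and is versally represented by $\Lambda\ps{U,V,T}/(UV-T)$, with $G_\ol{p}$ acting by $U\mapsto\zeta_eU$, $V\mapsto\zeta_e^{-1}V$, $T\mapsto T$. This supplies a $G_\ol{p}$-equivariant isomorphism $\hat R\cong\hat A\ps{U,V}/(UV-\tilde a)$ with $\tilde a\in\fm_{\hat A}$, which descends via Artin approximation to an $A$-algebra, $G_\ol{p}$-equivariant isomorphism $\phi\colon A[z,w]/(zw-a')\to R$ with $a'\in\fm_A$. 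That $\phi$ is an isomorphism on strict local rings follows from flatness over $A$ together with an isomorphism on fibers at $\ol{s}$, where both sides reduce to the strict henselization of $k[z,w]/(zw)$ at $(z,w)$. The concluding claim that $\pi\colon C\to C/G$ satisfies conditions \ref{part_admissible_local_marking}, \ref{part_admissible_local_nodes}, and \ref{part_admissible_balanced} of Definition \ref{def_admissible} is then immediate: $R^{G_\ol{p}}$ is generated over $A$ by $z^e,w^e$ with the single relation $z^e w^e=(a')^e$, matching the local form prescribed there with $r=e$.
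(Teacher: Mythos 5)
Your overall strategy (average with the projectors of Lemma \ref{lemma_tame_decomposition} to produce eigencoordinates) matches the paper's for part \ref{part_normal_form_at_smooth_point}, but both parts have real problems. In \ref{part_normal_form_at_smooth_point}, the justification that $\phi$ is an isomorphism on strict local rings because ``both sides are regular local rings of dimension $\dim A+1$'' fails: $A=\cO_{S,\ol{s}}$ is an arbitrary strictly henselian local ring, in general neither regular nor even Noetherian, so $R$ need not be regular. The paper first reduces to $S$ of finite type over $\bZ$ by Noetherian approximation, writes the averaged lift as $x'=x+m+fx^2$, checks directly that $x\mapsto x'$ is an automorphism of $\what{R}\cong\what{A}\ps{x}$, and then deduces that $A[z]\to R$, $z\mapsto x'$, is \'{e}tale because it induces an isomorphism on completions; some argument of this kind is needed and yours does not supply it.

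The more serious gap is in part \ref{part_normal_form_at_node}. You correctly identify that averaging $U$ and $V$ separately only gives $zw=a$ up to higher-order terms, but your fix — invoking Proposition \ref{prop_deformations} and the identification of the equivariant miniversal family of a balanced node — is circular inside this paper: Proposition \ref{prop_normalized_coordinates} is the input to Proposition \ref{prop_smGc2ac} and Theorem \ref{thm_aGc_equals_smGc}, on which the discussion around Proposition \ref{prop_deformations} rests, and the cited claim that the equivariant node deforms as $\Lambda\ps{U,V,T}/(UV-T)$ with $gU=\chi(g)U$, $gV=\chi(g)^{-1}V$ is essentially the completed form of the statement you are trying to prove (so at best you are outsourcing the core computation to \cite{ST18}/\cite{BR11} rather than proving it). Moreover, the descent step you sketch is not routine: Artin approximation requires excellence of $A$ (so again Noetherian approximation, which you never perform), and one must arrange that the approximated isomorphism remains $G_{\ol{p}}$-equivariant, keeps the exact shape $zw=a'$, and produces $a'$ in $A$ rather than merely in $\what{A}$. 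The paper avoids all of this with a coupled choice of coordinates: writing the $\zeta_e$-eigenlift of the $x$-branch generator as $x'=ux$ with $u\in R^\times$ and setting $y':=p_{\zeta_e^{-1}}(u^{-1}y)$, the identity $g(a)=a$ gives the one-line computation $x'y'=\frac{1}{e}\sum_j\zeta_e^j\,ux\,g^j(u^{-1}y)=a$ exactly, with the same $a$ furnished by the non-equivariant local structure of the node — no deformation theory or approximation is needed. To repair your proof you should either adopt this coupled choice or give an independent, non-circular argument for the formal equivariant normal form together with a careful equivariant algebraization.
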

\begin{proof} By Noetherian approximation (Remark \ref{remark_noetherian_approximation}), we may assume that $S$ is of finite type over $\bZ$.\footnote{In particular every ring in the proof will be Noetherian. This makes it easier for us to work with completions.} Let $\kappa$ be the residue field of $A$, and let $\ol{R} := R\otimes_A\kappa$. 


We begin with part \ref{part_normal_form_at_smooth_point}. By the local picture at a smooth point \cite[054L]{stacks}, there is a map $A[x]\rightarrow R$ inducing an isomorphism between $R$ and the strict local ring of $A[x]$ at the maximal ideal $(\fm_A,x)$. We abuse notation and also let $x$ denote the image of $x$ under the map $A[x]\rightarrow R$. Let $\what{R}$ denote the completion of $R$, then $\what{R} \cong \what{A}\ps{x}$. The Zariski cotangent space of $C_\ol{s}$ at $\ol{p}$ is the 1-dimensional $\kappa$-vector space $\fm_{\ol{R}}/\fm_{\ol{R}}^2 = xR/(\fm_A,x^2)$. Since the $G$-action is faithful, $g$ acts on the cotangent space by multiplication by a primitive $e$th root of unity $\zeta_e$. Let $\ol{x}$ denote the image of $x$ in $\fm_\ol{R}/\fm_{\ol{R}}^2$. By Lemma \ref{lemma_tame_decomposition}, the surjection $xR\rightarrow\fm_\ol{R}/\fm_{\ol{R}}^2$ induces a surjection $(xR)_{\zeta_e}\rightarrow (\fm_{\ol{R}}/\fm_{\ol{R}}^2)_{\zeta_e} = \fm_{\ol{R}}/\fm_{\ol{R}}^2$. Let $x'\in (xR)_{\zeta_e}$ denote any preimage of $\ol{x}$, then we have $x' = x + m + fx^2$ for some $m\in\fm_AR$ and $f\in R$. Because $\what{A}\ps{x}$ is also the completion of $A[x]$ at $(\fm_A,x)$, the universal property of power series rings gives us a unique $\what{A}$-algebra map $\what{A}\ps{x}\rightarrow \what{A}\ps{x}$ sending $x\mapsto x' = x+m+fx^2$. This map is moreover an automorphism since it is the composition of the automorphisms $x\mapsto x+m$ and $x\mapsto x+fx^2 = x(1+fx)$. We will view this as giving an automorphism of $\what{R}$. Since $R$ is a colimit of \'{e}tale $A[x]$-algebras, there is an \'{e}tale $A[x]$-algebra $R_0$ such that $x'$ comes from an element of $R_0$. Then we have a commutative diagram

\[\begin{tikzcd}
	A[x]\ar[r]\ar[d,"x\mapsto z"'] & R_0\ar[r] & R\ar[r,hookrightarrow] & \what{R}\ar[d,"x\mapsto x'"] \\
	A[z]\ar[r,"z\mapsto x'"] & R_0\ar[r] & R\ar[r,hookrightarrow] & \what{R}
\end{tikzcd}\]
where the unlabeled maps are the obvious ones. Since $A$ is already strict henselian, $\what{R}$ is the completion of both $R_0$ and also of $A[x]$ at $(\fm_A,x)$. Thus we find that the map $A[z]\rightarrow R_0$ sending $z\mapsto x'$ induces an isomorphism on completions, and hence it is \'{e}tale \cite[\S4.3 Proposition 3.26]{Liu02}, so the composition $\phi : A[z]\rightarrow R_0\rightarrow R$ sending $z\mapsto x'$ identifies $R$ with the strict local ring of $A[z]$ at $(\fm_A,z)$. By definition of $x'$, $\phi$ is $G_\ol{p}$-equivariant relative to the action $gz = \zeta_e z$, as desired. Because $A$ has separably closed residue field, by Lemma \ref{lemma_henselization_commutes_with_invariants}(d), the map $R^G\rightarrow R$ is $G$-equivariantly isomorphic to the map of henselizations induced by $A[z^e] = A[z]^G\hookrightarrow A[z]$, which shows that $\pi : C\rightarrow C/G$ satisfies condition \ref{part_admissible_local_marking} of the definition of an admissible $G$-cover.


Next we address \ref{part_normal_form_at_node}. By the local picture at a node \cite[0CBY]{stacks}, we find that $R$ is the strict henselization of $A[x,y]/(xy-a)$ for some $a\in\fm_A$. Again we abuse notation and let $x,y$ also denote their images in $R$. The cotangent space of the $x$-branch is $T^*_x := x\ol{R}/(x\ol{R}\cap\fm_{\ol{R}}^2)$. Since $G_\ol{p}$ acts faithfully, it acts on $T^*_x$ by multiplication by a primitive $e$-th root of unity $\zeta_e$. Since $g$ preserves the branches of the node, it preserves the ideals $xR, yR$, so by Lemma \ref{lemma_tame_decomposition}, we obtain a surjection $(xR)_{\zeta_e}\rightarrow (T^*_x)_{\zeta_e} = T^*_x$. Let $x'\in (xR)_{\zeta_e}$ be any preimage of a basis of $T^*_x$, then we must have $x' = ux$ where $u\notin (\fm_AR,xR,yR) = \fm_R$, so $u\in R^\times$ is a unit. This implies that $u^{-1}y$ almost lies in $R_{\zeta_e^{-1}}$, in the sense that
\begin{equation}\label{eq_almost_semi_invariant}
uxu^{-1}y = xy = a = g(a) = g(xy) = g(ux u^{-1}y) = \zeta_e ux g(u^{-1}y)	
\end{equation}
Let $p_{\zeta_e^{-1}}$ be as in Lemma \ref{lemma_tame_decomposition}, then by \eqref{eq_almost_semi_invariant}, $y' := p_{\zeta_e^{-1}}u^{-1}y$ satisfies
$$x'y' = \frac{x'}{e}\sum_{j=0}^{e-1}\zeta_e^jg^j(u^{-1}y) = \frac{1}{e}\sum_{j=0}^{e-1}\zeta_e^juxg^j(u^{-1}y) = \frac{1}{e}\sum_{j=0}^{e-1}uxu^{-1}y = a$$
On the other hand, $y'$ and $u^{-1}y$ both map to the same basis element of $T^*_y := y\ol{R}/(y\ol{R}\cap\fm_{\ol{R}}^2)$, so $y'\equiv u^{-1}y\mod(\fm_AR, x^2R, y^2R)$. Arguing as in the smooth case, we find that there is an automorphism of $\what{R}$ sending $(x,y)\mapsto (x',y')$ (also see \cite[Proposition 2.1.1(ii)]{Wew99}), and such that the map $\phi : A[z,w]/(zw-a)\rightarrow R$ sending $(z,w)\mapsto (x',y')$ satisfies the desired properties. As in the smooth case, Lemma \ref{lemma_henselization_commutes_with_invariants}(d) implies that $R^G\hookrightarrow R$ is $G$-equivariantly isomorphic to the map on henselizations induced by $A[z^e,w^e]/(z^ew^e-a^e)\hookrightarrow A[z,w]/(zw-a)$, which shows that $\pi : C\rightarrow C/G$ satisfies conditions \ref{part_admissible_local_nodes} and \ref{part_admissible_balanced} of the definition of an admissible $G$-cover.
\end{proof}

\subsection{The normalizer of $\SL_n(\bF_q)$ in $\GL_n(\ol{\bF_q})$}
\begin{prop}\label{prop_normalizer} Let $q$ be a prime power, let $n\ge 1$ be an integer. Then
$$N_{\GL_n(\ol{\bF_q})}(\SL_n(\bF_q)) = \ol{\bF_q}^\times\cdot\GL_n(\bF_q) = \{uA \;|\; u\in\ol{\bF_q}^\times, A\in\GL_n(\bF_q)\}$$
Moreover, the same is true if $\SL_n(\bF_q)$ is replaced by $\GL_n(\bF_q)$.
\end{prop}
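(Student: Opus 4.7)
The inclusion $\ol{\bF_q}^\times \cdot \GL_n(\bF_q) \subseteq N_{\GL_n(\ol{\bF_q})}(\SL_n(\bF_q))$ is immediate: scalars are central, and $\GL_n(\bF_q)$ normalizes $\SL_n(\bF_q)$ since the determinant map is a group homomorphism. The case $n = 1$ is trivial, so we focus on $n \ge 2$, and the same argument will handle both $\SL_n(\bF_q)$ and $\GL_n(\bF_q)$ in parallel.

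The crux is the following preliminary observation: the $\bF_q$-linear span of $\SL_n(\bF_q)$ inside $M_n(\bF_q)$ is the full matrix algebra $M_n(\bF_q)$. To see this, let $B \subseteq M_n(\bF_q)$ be this $\bF_q$-span; it is automatically an $\bF_q$-subalgebra since $\SL_n(\bF_q)$ is closed under multiplication. The inclusion $\SL_n(\bF_q) \hookrightarrow \GL_n(\ol{\bF_q})$ is absolutely irreducible (the standard representation has no nontrivial invariant subspace over $\ol{\bF_q}$ for any $n \ge 2$ and any $q$, as $\SL_n(\bF_q)$ is generated by transvections and acts transitively on nonzero vectors of $\bF_q^n$). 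By Burnside's theorem, the $\ol{\bF_q}$-span of $\SL_n(\bF_q)$ is all of $M_n(\ol{\bF_q})$, which forces $B \otimes_{\bF_q} \ol{\bF_q} = M_n(\ol{\bF_q})$ and therefore $B = M_n(\bF_q)$. The analogous statement for $\GL_n(\bF_q)$ is a fortiori true.

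Now let $g \in N_{\GL_n(\ol{\bF_q})}(\SL_n(\bF_q))$. The map $X \mapsto gXg^{-1}$ is $\ol{\bF_q}$-linear on $M_n(\ol{\bF_q})$ and sends $\SL_n(\bF_q)$ into itself, hence sends every $\bF_q$-linear combination of elements of $\SL_n(\bF_q)$ into a corresponding $\bF_q$-linear combination in $M_n(\bF_q)$. By the previous paragraph this means $g M_n(\bF_q) g^{-1} \subseteq M_n(\bF_q)$; applying the same to $g^{-1}$ gives equality, so conjugation by $g$ is an $\bF_q$-algebra automorphism of $M_n(\bF_q)$. By the Skolem--Noether theorem, this automorphism is inner: there exists $A \in \GL_n(\bF_q)$ such that $gXg^{-1} = AXA^{-1}$ for all $X \in M_n(\bF_q)$. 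Then $A^{-1}g$ centralizes $M_n(\bF_q)$ in $M_n(\ol{\bF_q})$, and by the double centralizer theorem (equivalently, Schur's lemma applied to the absolutely irreducible $M_n(\bF_q)$-module $\ol{\bF_q}^n$), this centralizer is exactly $\ol{\bF_q} \cdot I$. Hence $A^{-1}g \in \ol{\bF_q}^\times$, giving $g \in \ol{\bF_q}^\times \cdot \GL_n(\bF_q)$ as required.

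The argument for $\GL_n(\bF_q)$ in place of $\SL_n(\bF_q)$ is identical: the only input used was that the subgroup $\bF_q$-spans $M_n(\bF_q)$, which is even more obvious for $\GL_n(\bF_q)$. The main (minor) obstacle in writing this out carefully is verifying absolute irreducibility of $\SL_n(\bF_q)$ in the small cases such as $(n,q) = (2,2)$, but this follows by inspection from the action of $\SL_n(\bF_q)$ on the set of nonzero vectors.
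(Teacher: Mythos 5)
Your proof is correct, but it takes a genuinely different route from the paper's. The paper uses a Galois-descent argument: writing $\phi$ for the $q$-power Frobenius acting coefficientwise, it observes that $A$ normalizes $\SL_n(\bF_q)$ if and only if $A^{-1}\phi(A)$ centralizes $\SL_n(\bF_q)$, hence (by Schur's lemma, using absolute irreducibility) $A^{-1}\phi(A)$ is a scalar, i.e.\ $\phi(A) = uA$; dividing through by any nonzero entry $r$ of $A$ then gives $r^{-1}A \in \GL_n(\bF_q)$. Your proof instead goes via noncommutative ring theory: Burnside's theorem to see that $\SL_n(\bF_q)$ spans $M_n(\bF_q)$ over $\bF_q$, the observation that conjugation by a normalizing element therefore stabilizes $M_n(\bF_q)$, Skolem--Noether to make that automorphism inner by some $A \in \GL_n(\bF_q)$, and finally the double centralizer/Schur to show $A^{-1}g$ is scalar. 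Both approaches rest on absolute irreducibility of the standard module, but the paper needs it only once (via Schur) and sidesteps Burnside and Skolem--Noether entirely; in exchange, your argument is more structural and applies verbatim to any subgroup of $\GL_n(\ol{\bF_q})$ whose $\bF_q$-span is all of $M_n(\bF_q)$ (e.g.\ arbitrary absolutely irreducible finite subgroups with entries in $\bF_q$), whereas the paper's Frobenius-twist trick is tailored to subgroups cut out by $\phi$-invariance. One tiny presentational point: you are careful to flag the small cases $(n,q)=(2,2)$, etc., for absolute irreducibility; the paper leaves this implicit, so your treatment is if anything more scrupulous.
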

\begin{proof} Certainly every matrix of the form $uA$ with $u\in\ol{\bF_q}^\times, A\in\GL_n(\bF_q)$ normalizes $\SL_n(\bF_q)$, so it remains to show that any matrix which normalizes must take this form. Let $\phi\in\Aut(\GL_n(\ol{\bF_q}))$ denote the Frobenius automorphism defined on matrices by acting on coefficients via $a\mapsto a^q$. Then, for $A\in\GL_n(\ol{\bF_q})$, $A\in\GL_n(\bF_q)$ if and only if $\phi(A) = A$. Thus, for $B\in\SL_n(\bF_q)$, we have $ABA^{-1}\in\SL_n(\bF_q)$ if and only if $\phi(ABA^{-1}) = \phi(A)B\phi(A)^{-1} = ABA^{-1}$, which happens if and only if $A^{-1}\phi(A)$ centralizes $B$. Thus, $A$ normalizes $\SL_n(\bF_q)$ if and only if $A^{-1}\phi(A)$ centralizes $\SL_n(\bF_q)$, which by Schur's lemma happens if and only if $A^{-1}\phi(A)\in\ol{\bF_q}$ is a scalar. Equivalently, this is to say that $\phi(A) = uA$ for some unit $u\in\ol{\bF_q}^\times$. It remains to characterize the elements of $\SL_n(\ol{\bF_q})$ on which $\phi$ acts by multiplication by a unit $u\in\ol{\bF_q}^\times$. 


Suppose $A = (a_{ij}) \in\GL_n(\ol{\bF_q})$ satisfies $\phi(A) = uA$ with $u\in\ol{\bF_q}^\times$. Let $r\in\{a_{ij}\}$ be chosen so that $r\ne 0$. Then since $a_{ij}^q = ua_{ij}$, it follows that $\phi\left(\frac{1}{r}A\right) = \frac{1}{r}A$, so $\frac{1}{r}A\in\GL_n(\bF_q)$, so we may write
$$A = rA'\qquad \text{where $r\in\ol{\bF_q}^\times, A'\in\GL_n(\bF_q)$}$$
as desired.
\end{proof}

\subsection{Images of absolutely irreducible representations $\varphi : \Pi\rightarrow\SL_2(\bF_q)$}
Let $\Pi$ be a free group of rank 2. In \cite{Mac69}, Macbeath classified the possible images of absolutely irreducible representations $\varphi : \Pi\rightarrow\SL_2(\bF_q)$. Here, $\PSL_2(\bF_q)$ denotes the quotient of $\SL_2(\bF_q)$ by the subgroup of scalar matrices, and $\PGL_2(\bF_q)$ denotes the quotient of $\GL_2(\bF_q)$ by the subgroup of scalar matrices.

\begin{prop}\label{prop_ai_images} Let $q = p^r$ be a prime power. Let $\varphi : \Pi\rightarrow\SL_2(\bF_q)$ be an absolutely irreducible representation. Let $G := \varphi(\Pi)$, and let $\ol{G}$ be its image in $\PSL_2(\bF_q)$. Let $Z := Z(\SL_2(\bF_q))$ be the center, so $Z$ has order 2 for $q$ odd and is trivial for $q$ even. Then $G,\ol{G}$ must fall into one of the following categories:
\begin{itemize}
\item[(E1)] $\ol{G}\cong D_{2n}$ is dihedral ($n\ge 2$).
\item[(E2)] $\ol{G}\cong A_4$.
\item[(E3)] $\ol{G}\cong A_5$.
\item[(E4)] $\ol{G}\cong S_5$.
\item[(P1)] $\ol{G}$ is $\PGL_2(\bF_q)$-conjugate to $\PSL_2(\bF_{q'})$ for some $q'\mid q$. In this case $G$ is $\GL_2(\bF_q)$-conjugate to $\SL_2(\bF_{q'})$.
\item[(P2)] $\ol{G}$ is $\PGL_2(\bF_q)$-conjugate to $\PGL_2(\bF_{q'})$ for some $q'$ satisfying $q'^2\mid q$. In this case if $q$ is odd then $G$ is $\GL_2(\bF_q)$-conjugate to $\langle\SL_2(\bF_{q'}),\spmatrix{a}{0}{0}{a^{-1}}\rangle$ where $a\in\bF_{q'^2} - \bF_{q'}$ with $a^2\in\bF_{q'}$. If $q$ is even then $\PGL_2(\bF_{q'}) = \PSL_2(\bF_{q'})$ and $G$ must be as in case P1.
\end{itemize}
Moreover, we have
\begin{itemize}
\item[(a)] In each of the above cases, $G$ is a central extension of $\ol{G}$ by $Z$.	
\item[(b)] In each of the above cases except (E1) (dihedral), let $N := N_{\GL_2(\bF_q)}(G)$ be its normalizer and $C := C_{\GL_2(\bF_q)}(G)$ be its centralizer. Then $N/C$ has order at most 2.
\end{itemize}

\end{prop}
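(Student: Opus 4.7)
The plan is to invoke Dickson's classification of finite subgroups of $\PSL_2(\ol{\bF_q})$ as the main engine, then argue case-by-case to cut down to the listed possibilities and to pin down the precise $\SL_2(\bF_q)$-conjugacy classes and normalizers.

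First I would apply Dickson's theorem to the image $\ol{G} \subseteq \PSL_2(\bF_q)$. This classifies every finite subgroup of $\PSL_2(\ol{\bF_q})$ as one of: cyclic, dihedral, $A_4$, $S_4$, $A_5$, a subgroup of a Borel (fixing a point of $\bP^1(\ol{\bF_q})$), $\PSL_2(\bF_{q'})$, or $\PGL_2(\bF_{q'})$ (for the last two, after conjugation in $\PGL_2(\ol{\bF_q})$). The cyclic case and the Borel case would force $\varphi$ to have image contained in a Borel subgroup of $\GL_2(\ol{\bF_q})$, hence $\varphi$ would be reducible, contradicting absolute irreducibility; these are therefore eliminated. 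For the $\PSL_2(\bF_{q'})/\PGL_2(\bF_{q'})$ cases one must verify that the conjugation can be achieved inside $\PGL_2(\bF_q)$ rather than merely in $\PGL_2(\ol{\bF_q})$; this is a descent argument using that the corresponding conjugacy class has a rational point over $\bF_q$ (e.g.\ via Lang's theorem applied to the normalizer), together with the explicit description of the normalizer of $\SL_2(\bF_{q'})$ given in Proposition~\ref{prop_normalizer}. The $S_4$ case is not listed separately because $S_4 \cong \PGL_2(\bF_3)$ is absorbed into case (P2); similarly $S_5 \cong \PGL_2(\bF_5)$ is listed as (E4) because it only arises (in our setting) when $5 \mid q$ with $25 \mid q$, at which point it coincides with (P2) for $q' = 5$.

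Next, for part (a), I would show $Z := Z(\SL_2(\bF_q)) \subseteq G$. This is automatic when $q$ is even since then $Z = 1$. For odd $q$ we must show $-I \in G$. The idea is that an absolutely irreducible representation has $\tr\varphi([a,b]) \neq 2$ (Lemma~\ref{lemma_absolutely_irreducible}), so the commutator $[\varphi(a),\varphi(b)] \neq I$. But Dickson's list shows that $\ol{G}$ is either dihedral of order $\geq 4$, or one of $A_4, A_5, S_5$, or contains $\PSL_2(\bF_{q'})$ for $q' \geq 2$; in each case $\ol{G}$ is a perfect extension away from small cases, and the Schur multiplier of $\ol{G}$ together with the fact that $\ol{G}$ does not split back into $\SL_2(\bF_q)$ forces $G \to \ol{G}$ to be the nontrivial central extension by $\{\pm I\}$. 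Concretely, it suffices to exhibit in each case two noncommuting elements $\bar x,\bar y \in \ol{G}$ with a specified product that, lifted to $\SL_2$, forces the commutator relation $[x,y]^{\text{ord}} = -I$; this is a direct trace computation in $\SL_2(\bF_q)$ using the Fricke identity.

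Finally, for part (b), for each of the five surviving cases I would compute $N := N_{\GL_2(\bF_q)}(G)$ and $C := C_{\GL_2(\bF_q)}(G)$ and show $[N:C] \leq 2$. By Schur's lemma (and the connected scalar centralizer), $C = \bF_q^\times \cdot Z_{\SL_2(\bF_q)}(G)$, and since $G$ is absolutely irreducible, $Z_{\SL_2(\bF_q)}(G) = Z$, so $C$ consists of scalars. Thus $N/C$ embeds into $\Out(G)$ via conjugation. For $\ol{G} \in \{A_4, A_5, S_5\}$ one uses the known structure of their outer automorphism groups (all of order $\le 2$, and trivial for $A_5, S_5$) together with the observation that any conjugating element must lie in $\GL_2(\bF_q)$, not just $\GL_2(\ol{\bF_q})$ (Proposition~\ref{prop_normalizer}). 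For the $\PSL_2(\bF_{q'})$/$\PGL_2(\bF_{q'})$ cases, the relevant outer automorphisms are generated by the diagonal automorphism (conjugation by a matrix of nonsquare determinant), giving index exactly $2$ when $q$ is odd and $1$ when $q$ is even. The main obstacle is the descent step in part one (realizing the $\PSL_2(\bF_{q'})$-conjugacy class inside $\PGL_2(\bF_q)$ rather than $\PGL_2(\ol{\bF_q})$) and keeping careful bookkeeping of whether $-I \in G$ in the small exceptional cases where $\ol{G}$ happens to split back into $\SL_2$.
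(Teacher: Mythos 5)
Your overall skeleton (a Dickson/Macbeath-type classification of $\ol{G}$, absolute irreducibility to kill the reducible cases, Schur's lemma plus outer automorphisms for the normalizer) parallels the paper, but two steps have genuine gaps. The first is part (a): your argument is circular. You deduce $Z\subseteq G$ from ``the fact that $\ol{G}$ does not split back into $\SL_2(\bF_q)$'', but that non-splitting is exactly what (a) asserts, and the Schur-multiplier framing cannot supply it: dihedral groups, $A_4$, $S_4$, $S_5$ are not perfect, the abstract split extension $\ol{G}\times Z$ exists in every case, and nothing about $H_2(\ol{G},\bZ)$ alone decides which central extension sits inside $\SL_2(\bF_q)$; the promised ``direct trace computation using the Fricke identity'' is never carried out. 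The paper's argument is a one-line fact you are missing: for $q$ odd the unique involution of $\SL_2(\bF_q)$ is $-I$, so any $g\in G$ lifting an element of even order of $\ol{G}$ (such elements exist in every case on the list) has even order, and a suitable power of $g$ equals $-I$; hence $Z\subseteq G$.

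The second gap is that your reduction of Dickson's list to the stated cases does not close. You claim $S_4\cong\PGL_2(\bF_3)$ is absorbed into (P2), but (P2) requires $q'^2\mid q$, i.e. $9\mid q$, while $S_4\le\PSL_2(\bF_q)$ whenever $q\equiv\pm1\pmod 8$ (already for $q=7$), and its preimage in $\SL_2(\bF_7)$ is a two-generated absolutely irreducible subgroup, so this case genuinely occurs and is not covered by your bookkeeping. The paper does not start from Dickson and reconcile by hand: it quotes Macbeath's classification of two-generator subgroups (whose exceptional groups are dihedral, $A_4$, $S_4$, $A_5$), uses $\tr\varphi([a,b])\ne 2$ (Lemma \ref{lemma_absolutely_irreducible}) to exclude the affine case, and pins down $G$ in (P1)/(P2) by comparing cardinalities; your (E4)/(P2) reconciliation is not a substitute for that. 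Smaller but real issues in (b): conjugation maps $N$ into $\Aut(G)$, not $\Out(G)$ --- the image of $N/C$ contains $\Inn(G)$ (for $G=\SL_2(\bF_q)$ it is all of $\PGL_2(\bF_q)$) --- so the quantity actually bounded by $2$, and used later as $D(q,G)$, is the image of $N$ in $\Out(G)$; moreover the relevant outer automorphism group is that of the central extension $G$ itself, not of $\ol{G}$ (the paper computes $H^2(\ol{G},Z)$, examines each extension, and excludes the split one in the symmetric case using an order-$4$ element whose lift must have order $8$), and $\Out(A_5)$ has order $2$, not $1$. Finally, ``Lang's theorem applied to the normalizer'' does not work as stated, since the normalizer of the finite subfield subgroup inside $\PGL_2$ over $\ol{\bF_q}$ is finite, hence not connected; the $\PGL_2(\bF_q)$-conjugacy is already part of the Macbeath/Dickson statement, supplemented by Proposition \ref{prop_normalizer}.
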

\begin{proof} The cases (E1)-(E4) are called \emph{exceptional}, and the cases P1 and P2 are called \emph{projective}. They are not necessarily mutually exclusive. We begin with (a). Since $Z$ is trivial for $q$ even, we may assume $q$ odd. In this case, an element $g\in\PSL_2(\bF_q)$ of even order must either be diagonalizable over $\bF_{q^2}$ or be conjugate to $\spmatrix{-1}{u}{0}{-1}$ for $u\in\bF_q^\times$. If $g$ is diagonalizable over $\bF_{q^2}$ of order $2k$, then $g^k = -I$, so $Z\subset G$. If $g$ is conjugate to $\spmatrix{-1}{u}{0}{-1}$, then $g^p = -I$, so $Z\subset G$.


For (b), we will proceed case by case. First suppose $q$ is even, so $Z = 1$ and $G \cong \ol{G}$. Then $N/C$ is naturally a subgroup of $\Out(G)$, but $\Out(A_4)\cong\Out(A_5)$ have order 2, and $\Out(S_5)$ is trivial. Finally, from Proposition \ref{prop_normalizer}, we know that $\GL_2(\bF_q)$ acts on $\SL_2(\bF_{q'})$ via $\GL_2(\bF_{q'})$, so in this case we also have $|N/C|\le 2$. Now suppose $q$ is odd, so $G$ is a central extension of $\ol{G}$ by $Z\cong\bZ/2\bZ$. Case (P1) proceeds exactly as in the case where $q$ is even. In case (P2), again we may assume $\ol{G} = \PGL_2(\bF_{q'})$ and $G = \langle \SL_2(\bF_{q'}),\spmatrix{a}{0}{0}{a^{-1}}\rangle$. We know that the image of $N$ in $\PGL_2(\bF_q)$ must normalize $\PGL_2(\bF_{q'})$, hence it normalizes its unique index 2 subgroup $\PSL_2(\bF_{q'})$, so $N$ must normalize $\SL_2(\bF_{q'})$, but then Proposition \ref{prop_normalizer} implies that $N$ acts via $\GL_2(\bF_{q'})$, so again we must have $|N/C|\le 2$.


In cases (E2)-(E4), for each of $\ol{G} = A_4,A_5,S_5$, we will compute the outer automorphism groups of central extensions of $\ol{G}$ by $Z$.. Using the exact sequence \cite[Exercise 6.1.5]{Weibel94}
$$0\rightarrow\Ext^1_\bZ(H_1(G,\bZ),A)\rightarrow H^2(G,A)\rightarrow\Hom(H_2(G,\bZ),A)\rightarrow 0$$
we find that $H^2(A_4,Z), H^2(A_5,Z), H^2(S_5,Z)$ have orders 2, 2, and 4 respectively. Using GAP we explicitly construct the associated central extensions, and find that in all cases except for the split extension $S_5\times Z$, the extension has an outer automorphism group of order 2. However, we claim that if $\ol{G}\cong S_5$, then $G$ cannot be isomorphic to $S_5\times Z$. Indeed, in this case there is an element $\ol{g}\in\ol{G}$ of order 4, but for $q$ odd $\ol{g}$ must be the image of a matrix $g\in\SL_2(\bF_q)$ which is diagonalizable over $\ol{\bF_q}$, so $g$ must have order 8, but $S_5\times Z$ does not contain any elements of order 8. This shows that if $\ol{G} = A_4,A_5,S_5$, then $\Out(G)\cong\bZ/2\bZ$, so $|N/C| \le 2$.


Finally, we show that $G,\ol{G}$ must fall into one of the categories listed. Note that since $\varphi$ is absolutely irreducible, by Lemma \ref{lemma_absolutely_irreducible}, we must have $\tr\varphi([a,b])\ne 2$, so $\varphi$ must be ``nonsingular'' in the terminology of the first proof of Theorem \ref{thm_moduli_interpretation}, and hence \cite[Theorem 2]{Mac69} implies that $\ol{G}$ cannot be an ``affine group'' in the sense of \cite[\S4]{Mac69}. Thus the classification in \cite[\S4]{Mac69} implies that $\ol{G}$ must fall into one of the categories described above. We can rule out the case $\ol{G} \cong D_{2}$ because in that case $G$ would be abelian. It remains to show that $G$ has the stated form in cases (P1) and (P2), but this follows from comparing cardinalities.	
\end{proof}

\subsection{\'{E}tale local rings of Deligne-Mumford stacks}\label{ss_dm_elr}

The goal of this section is to show that (the completion of) the \'{e}tale local ring of a geometric point of a Deligne-Mumford stack is canonically isomorphic to the universal deformation ring at that point. While this is certainly well-known to experts, the author does not know of a good reference. The discussion here parallels the development in the stacks project \cite[06G7]{stacks}, but we do \emph{not} make the assumption that $k$ is a \emph{finite} $\Lambda$-algebra (see below). We do this so we can easily talk about the universal deformation rings of geometric points of $\cM$.

\subsubsection{\'{E}tale local rings}
We begin with a discussion of \'{e}tale local rings.


\begin{defn}\label{def_\'{e}tale_local_ring} Let $\cM$ be a Deligne-Mumford stack whose diagonal is representable (by schemes)\footnote{This implies that if $f : U\rightarrow\cM$, $g : V\rightarrow\cM$ are any maps from schemes, then $U\times_\cM V$ is a scheme. I.e., $f,g$ is are representable (by schemes).}. Let $\Omega$ be a separably closed field, and let $x : \Spec \Omega\rightarrow\cM$ be a point. An \'{e}tale neighborhood of $x$ is a quadruple $(U,i,\tilde{x},\alpha)$ where $U$ is an affine scheme, $U,i,\tilde{x}$ form a diagram
\[\begin{tikzcd}
	\Spec \Omega\ar[r,"\tilde{x}"]\ar[rd,"x"] & U\ar[d,"i"] \\
	 & \cM
\end{tikzcd}\]
and $\alpha$ is an isomorphism $x\rightiso i\circ\tilde{x}$ in $\cM(\Spec \Omega)$. A morphism of neighborhoods $(U,i,\tilde{x},\alpha) \rightarrow (U',i',\tilde{x}',\alpha')$ is a pair $(f,\beta)$ where $f$ is a map $f : U\rightarrow U'$ and $\beta$ is an isomorphism $\beta : i\rightiso i'\circ f$ in $\cM(U)$ such that all 2-morphisms in the associated ``tetrahedron'' are compatible. Note that given $f$, if there exists a $\beta$ making $(f,\beta)$ into a morphism of neighborhoods, then $\beta$ is unique. Let $N_{\cM,x}$ denote the category of \'{e}tale neighborhoods of $x$. Because $\cM$ has representable diagonal, the same argument as in the schemes case shows that $N_{\cM,x}$ is cofiltered \cite[03PQ]{stacks}. The \'{e}tale local ring of $\cM$ at $x$ is

$$\cO_{\cM,x} := \colim_{(U,i,\tilde{x},\alpha)}\Gamma(U,\cO_U)$$
where the colimit runs over the category of \'{e}tale neighborhoods $N_{\cM,x}$. If $\cM$ is a scheme, then $\cO_{\cM,x}$ is the strict henselization of the local ring of the image of $x$ \cite[04HX]{stacks}. The inclusion functor $\Phi : \AffSch\hookrightarrow\Sch$ is right adjoint to the functor $X\mapsto \Spec\Gamma(X,\cO_X)$ \cite[01I1]{stacks}, so $\Phi$ preserves all limits. Thus $\Spec\cO_{\cM,x}$ is also the limit of all \'{e}tale neighborhoods of $x$. Let $\kappa(x)$ denote the residue field of $\cO_{\cM,x}$. For any \'{e}tale neighborhood $(U,i,\tilde{x},\alpha)$, $\cO_{\cM,x}$ fits into a canonical 2-commutative diagram
\[\begin{tikzcd}
	\Spec \Omega\ar[r]\ar[rr, bend left = 20,"{\lim\tilde{x}}"]\ar[rrd, bend right = 15, "x"'] & \Spec\kappa(x)\ar[r] & \Spec\cO_{\cM,x}\ar[d,"i_x"]\ar[r] & U\ar[ld,"i"] \\
	 & & \cM
\end{tikzcd}\]
\end{defn}

\begin{prop}\label{prop_elr} Let $\cM$ be a Deligne-Mumford stack with representable diagonal. Let $\Omega$ be a separably closed field, and $x : \Spec \Omega\rightarrow\cM$ a point. The canonical map $i_x : \Spec\cO_{\cM,x}\rightarrow\cM$ is formally \'{e}tale. If $(U,q,\tilde{x},\alpha)$ is an \'{e}tale neighborhood of $x$ and $u$ is the image of $\tilde{x}$, then there is a canonical isomorphism $\cO_{\cM,x}\cong\cO_{U,\tilde{x}}$ which induces an isomorphism between $\kappa(x)$ and the separable closure of $\kappa(u)$ inside $\Omega$.
\end{prop}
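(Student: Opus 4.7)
The plan is to reduce both statements to the corresponding (well-known) facts for schemes by fixing an étale atlas through $x$. Concretely, pick any étale neighborhood $(U,i,\tilde{x},\alpha)$ and let $u\in U$ denote the image of $\tilde{x}$. My goal is to exhibit a canonical isomorphism $\cO_{\cM,x}\cong\cO_{U,\tilde{x}}$, where the right-hand side is (by definition, or by \cite[04HX]{stacks}) the strict henselization of the local ring $\cO_{U,u}$ at the geometric point $\tilde{x}$, whose residue field is by construction the separable closure of $\kappa(u)$ inside $\Omega$. Once this isomorphism is established, the residue field assertion is immediate, and formal étaleness of $i_x$ follows by factoring $i_x$ as
\[
\Spec\cO_{\cM,x}\cong\Spec\cO_{U,\tilde{x}}\longrightarrow U\stackrel{i}{\longrightarrow}\cM,
\]
where the first map is a cofiltered limit of étale morphisms (hence formally étale) and the second is étale.

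To construct the isomorphism I would compare the indexing categories. On the one hand, any étale neighborhood $(V',i',\tilde{x}',\alpha')$ of $\tilde{x}$ in $U$ gives, by postcomposing $i'$ with $i$, an étale neighborhood of $x$ in $\cM$; this defines a functor $\Phi_1:N_{U,\tilde{x}}\to N_{\cM,x}$. Conversely, given $(V,j,\tilde{y},\beta)\in N_{\cM,x}$, the fiber product $V\times_{\cM}U$ is a \emph{scheme} (since $\cM$ has representable diagonal) and is étale over $U$; the pair of $\Omega$-points $\tilde{x}$ of $U$ and $\tilde{y}$ of $V$, together with the 2-isomorphism $\beta^{-1}\circ\alpha:i\circ\tilde{x}\rightiso j\circ\tilde{y}$, yields a canonical $\Omega$-point of $V\times_{\cM}U$ lifting $\tilde{x}$. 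This defines a functor $\Phi_2:N_{\cM,x}\to N_{U,\tilde{x}}$. The composition $\Phi_1\circ\Phi_2$ sends $(V,j,\tilde{y},\beta)$ to $(V\times_{\cM}U,j\circ\pr_V,-,-)$, which admits a canonical morphism to $(V,j,\tilde{y},\beta)$ in $N_{\cM,x}$ via $\pr_V$; this is the key step and shows that the essential image of $\Phi_1$ is cofinal in $N_{\cM,x}$. The composition $\Phi_2\circ\Phi_1$ is isomorphic to the identity on $N_{U,\tilde{x}}$ because étale neighborhoods of $\tilde{x}$ in $U$ satisfy $V'\times_{\cM}U\cong V'\times_U U\cong V'$ (using that $V'\to\cM$ factors through $U$). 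Taking the colimit of $\Gamma(-,\cO)$ over each category therefore yields a canonical isomorphism $\cO_{\cM,x}\cong\cO_{U,\tilde{x}}$, which is moreover independent of the chosen $(U,i,\tilde{x},\alpha)$ by a standard compatibility check between two such choices (pass to a common refinement $U\times_{\cM}U'$).

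The main obstacle is purely bookkeeping: verifying that the 2-isomorphisms $\alpha,\beta$ in the definition of an étale neighborhood can be threaded coherently through the fiber product construction so that $\Phi_1,\Phi_2$ are genuine functors and the cofinality statement above is strictly true (not merely true up to 2-isomorphism). Because $\cM$ is Deligne-Mumford and $x$ is valued in a separably closed field, the automorphism group scheme $\underline{\Aut}_\cM(x)$ is étale and the relevant 2-morphisms are rigid enough for this to work, but it requires some care; once accomplished, everything else reduces to standard facts about strict henselizations of schemes.
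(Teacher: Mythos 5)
Your proposal takes essentially the same route as the paper: identify $N_{U,\tilde{x}}$ with a cofinal (co)subcategory of $N_{\cM,x}$ and conclude that the colimits of $\Gamma(-,\cO)$ agree. The paper states this in one line ("every \'{e}tale neighborhood of $x$ is refined by an \'{e}tale neighborhood of $\tilde{x}$"); your $\Phi_1,\Phi_2$ construction is a correct unwinding of exactly that refinement claim, with the key step being the canonical $\Omega$-point of $V\times_\cM U$ (built from $\tilde{y}$, $\tilde{x}$ and the 2-isomorphism) and the projection $\mathrm{pr}_V$ giving the refinement map $\Phi_1\Phi_2(V,\ldots)\to (V,\ldots)$ in $N_{\cM,x}$. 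That cofinality is all you need.

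One small error in the write-up: the claim that $\Phi_2\circ\Phi_1\cong\mathrm{id}$ ``because $V'\times_\cM U\cong V'\times_U U\cong V'$'' is not correct. The fiber product $V'\times_\cM U$ over $\cM$ uses the two separate maps to $\cM$; the fact that the structure map $V'\to\cM$ factors through $U$ does \emph{not} let you replace $\times_\cM$ by $\times_U$, and for instance when $U\to\cM$ has degree $d$ the scheme $V'\times_\cM U$ is a degree-$d$ cover of $V'$, not $V'$ itself. What is true is that the diagonal $\Delta_{U/\cM}$ is an open immersion (since $U\to\cM$ is \'{e}tale), so its base change $V'\times_U U\hookrightarrow V'\times_\cM U$ is an open immersion identifying $V'$ with a connected-neighborhood component of $V'\times_\cM U$, and the distinguished $\Omega$-point lands in that piece. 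Fortunately your cofinality argument only uses the $\Phi_1\circ\Phi_2$ direction, so this does not break the proof; you can simply delete the $\Phi_2\circ\Phi_1$ paragraph, or replace the wrong isomorphism with the observation about the open immersion.
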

\begin{proof} First note that since $\cM$ is Deligne-Mumford, \'{e}tale neighborhoods \emph{exist}. Thus the map $i_x$ is a limit of \'{e}tale morphisms, so it is formally \'{e}tale. The strict henselization $\cO_{U,\tilde{x}}$ is defined as the (global sections) of the cofiltered limit of \'{e}tale neighborhoods of $\tilde{x} : \Spec \Omega\rightarrow U$. The category of \'{e}tale neighborhoods of $\tilde{x}$ embeds into the category of \'{e}tale neighborhoods of $x$, and since every \'{e}tale neighborhood of $x$ is refined by an \'{e}tale neighborhood of $\tilde{x}$, this map induces an isomorphism on limits, whence the isomorphism $\cO_{\cM,x}\cong\cO_{U,\tilde{x}}$. The final statement follows from the fact that the residue field of $\cO_{U,\tilde{x}}$ is the separable closure of $\kappa(u)$ inside $\Omega$.

\end{proof}

\subsubsection{\'{E}tale local rings vs universal deformation rings}
We work universally over a scheme $\bS$. Let $s : \Spec k\rightarrow\bS$ be a morphism with $k$ a field. Suppose it factors as $\Spec k\rightarrow\Spec\Lambda\subset\bS$ where $\Spec\Lambda\subset\bS$ is a Noetherian open affine subscheme. Let $\cC_\Lambda = \cC_{\Lambda,k}$ be the category of pairs $(A,\psi)$, where $A$ is an Artinian local $\Lambda$-algebra and $\psi : A/\fm_A\rightiso k$ is an isomorphism of $\Lambda$-algebras. A morphism $(A,\psi)\rightarrow (A',\psi')$ in $\cC_\Lambda$ is a local $\Lambda$-algebra homomorphism $f : A\rightarrow A'$ such that $\psi'\circ(f\mod\fm) = \psi$. The category $\cC_\Lambda$ has a final object, given by $(k,\id_k)$. Note that if $\Spec k\rightarrow\Spec\Lambda'\subset\bS$ is another factorization, the categories $\cC_\Lambda,\cC_{\Lambda'}$ are canonically isomorphic. Let
$$p : \cM\rightarrow(\Sch/\bS)$$
be an algebraic stack over $\bS$ and let $x_0 : \Spec k\rightarrow\cM$ be a morphism. By the 2-Yoneda lemma, we will identify $x_0$ with the object it defines in $\cM(\Spec k)$ \cite[04SS]{stacks}. 


A \emph{deformation} of $x_0$ over $(A,\psi)\in\cC_\Lambda$ is by definition a pair $(x,\varphi)$, where $x\in\cM(\Spec A)$, and $\varphi : x_0\rightarrow x$ is a morphism in $\cM$ such that $p(\varphi) : \Spec k\rightarrow \Spec A$ induces the isomorphism $\psi : A/\fm_A\rightiso k$. A morphism of deformations $(x,\varphi)\rightarrow(x',\varphi')$ is a map $f : x\rightarrow x'$ with $f\circ\varphi = \varphi'$. A morphism $f : (x,\varphi)\rightarrow (x',\varphi')$ of deformations over $(A,\psi)\in\cC_\Lambda$ is an \emph{$A$-isomorphism} if $f$ is an isomorphism and $p(f) = \id_A$.

\begin{defn} The \emph{deformation functor} for $x_0$ is the functor
\begin{eqnarray*}
F_{x_0} : \cC_\Lambda & \longrightarrow & \Sets \\
(A,\psi) & \mapsto & \{\text{deformations of $x_0$ over $(A,\psi)$}\}/\text{$A$-isomorphisms}
\end{eqnarray*}	
\end{defn}

\begin{remark} In a Deligne-Mumford stack, the diagonal (and hence inertia stack) is unramified (and hence formally unramified). This implies that a deformation over $(A,\psi)$ has no nontrivial $A$-automorphisms.
\end{remark}


If $f : \cM\rightarrow\cN$ is a morphism of algebraic stacks, then if $(x,\varphi)$ is a deformation of $x_0$ over $(A,\psi)$, then $(f(x),f(\varphi))$ is a deformation of $f(x_0)$ over $(A,\psi)$. This defines a morphism of functors $f_* : F_{x_0}\rightarrow F_{f\circ x_0}$.


Let $\what{\cC}_\Lambda$ be the category of pairs $(R,\psi)$, where $R$ is a Noetherian complete local $\Lambda$-algebra and $\psi : R/\fm_R\rightiso k$ is a $\Lambda$-algebra isomorphism. Morphisms are local homomorphisms respecting $\psi$'s. Thus, $\cC_\Lambda$ embeds as a full subcategory of $\what{\cC_\Lambda}$. Given $(R,\psi)\in\what{\cC}_\Lambda$, it gives rise to a functor $h_{R,\psi} : \cC_\Lambda\rightarrow\Sets$ defined by $h_R((A,\psi')) = \Hom_{\what{\cC}_\Lambda}((R,\psi),(A,\psi'))$. A functor $F : \cC_\Lambda\rightarrow\Sets$ is \emph{pro-representable} by $(R,\psi)\in\what{\cC}_\Lambda$ if there is an isomorphism $F\cong h_{R,\psi}$. If $F_{x_0}$ is pro-represented by $(R,\psi)$, then we say that $R$ is a \emph{universal deformation ring for $x_0$}.






\begin{prop}\label{prop_elr2udr} Let $\cM$ be a Deligne-Mumford stack with representable diagonal over a scheme $\bS$. Let $\Spec\Lambda\subset\bS$ a Noetherian open affine, and let $k$ be a $\Lambda$-algebra which is a field. Let $\pi : U\rightarrow\cM$ be a formally \'{e}tale morphism with $U$ a Noetherian scheme, let $\tilde{x}_0 : \Spec k\rightarrow U$ be a point with image $u\in U$, and let $x_0 := \pi\circ\tilde{x}_0$, with associated deformation functor $F_{x_0} : \cC_\Lambda\rightarrow\Sets$ as above. Let $R := \cO_{U,u}$. Assume that $\tilde{x}_0$ induces an isomorphism $\xi : R/\fm_R\rightiso k$. Let $\what{R} := \lim_n R/\fm_R^n$ be the completion, with induced map $\what{\xi} : \what{R}/\fm_{\what{R}}\rightiso k$. Then
\begin{itemize}
\item[(a)] The map $\Spec\what{R}\rightarrow\cM$ induces an isomorphism $\eta_{\what{R}} : h_{\what{R},\what{\xi}}\rightiso F_{x_0}$.
\item[(b)] Let $z_0 : \Spec\Omega\rightarrow\cM$ be a point with $\Omega$ separably closed, let $\kappa(z_0) \subset\Omega$ be the residue field of $\cO_{\cM,z_0}$, and let $z_0' : \Spec \kappa(z_0)\rightarrow\cM$ the corresponding map. Then the map $\Spec\cO_{\cM,z_0}\rightarrow\cM$ identifies $\what{\cO_{\cM,z_0}}$ with the universal deformation ring of $z_0'$.
\item[(c)] Let $\rho : V\rightarrow\cN$ be a formally \'{e}tale morphism from a Noetherian scheme $V$ to a Deligne-Mumford stack $\cN$ with representable diagonal, and suppose we are given a map $g : U\rightarrow V$ fitting into a commutative diagram
\[\begin{tikzcd}
	U\ar[r,"\pi"]\ar[d,"g"] & \cM\ar[d,"f"] \\
	V\ar[r,"\rho"] & \cN
\end{tikzcd}\]
Let $v := g(u)$, and suppose $g$ induces an isomorphism of residue fields $\kappa(u)\rightiso\kappa(v)$. Let $S := \cO_{V,v}$, so $g$ induces a map $S\rightarrow R$, which induces an isomorphism $\zeta : S/\fm_S\rightiso k$. Then the corresponding diagram
\[\begin{tikzcd}
	h_{\what{R},\what{\xi}}\ar[r,"\eta_{\what{R}}"]\ar[d] & F_{x_0}\ar[d,"f_*"] \\
	h_{\what{S},\what{\zeta}}\ar[r,"\eta_{\what{S}}"] & F_{f\circ x_0}
\end{tikzcd}\]
commutes, where $\eta_{\what{R}},\eta_{\what{S}}$ are as given in (a) and $h_{\what{R},\what{\xi}}\rightarrow h_{\what{S},\what{\zeta}}$ is induced by $g$.
\end{itemize}
\end{prop}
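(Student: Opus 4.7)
The plan is to reduce everything to the formal \'etaleness of $\pi : U\rightarrow\cM$ (for part (a)), then deduce part (b) from (a) using Proposition \ref{prop_elr}, and check part (c) by inspection.

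First I will construct the map $\eta_{\what{R}} : h_{\what{R},\what{\xi}}\rightarrow F_{x_0}$. Given $(A,\psi)\in\cC_\Lambda$ and a local $\Lambda$-algebra homomorphism $h : \what{R}\rightarrow A$ compatible with $\what{\xi}$ and $\psi$, composition with the canonical maps $\Spec A\rightarrow\Spec\what{R}\rightarrow\Spec R\rightarrow U\stackrel{\pi}{\rightarrow}\cM$ gives an object $x_h\in\cM(\Spec A)$, and restriction along $\Spec k\hookrightarrow\Spec A$ together with the 2-isomorphism at $\tilde{x}_0$ supplies a canonical $\varphi_h : x_0\rightiso x_h|_{\Spec k}$; this determines an element of $F_{x_0}(A,\psi)$, well-defined up to $A$-isomorphism, and the construction is clearly functorial in $A$.

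The heart of the proof is to show $\eta_{\what{R}}$ is bijective on each $(A,\psi)$; this is where formal \'etaleness enters. Given a deformation $(x,\varphi)$ of $x_0$ over $A$, the inclusion $\Spec k\hookrightarrow\Spec A$ is a nilpotent thickening (since $A$ is Artinian with residue field $k$), so because $\pi : U\rightarrow\cM$ is formally \'etale, there is a unique lift $\tilde{x} : \Spec A\rightarrow U$ with $\pi\circ\tilde{x}\simeq x$ and $\tilde{x}|_{\Spec k} = \tilde{x}_0$ (compatibly with $\varphi$); this is precisely the ``unique lift against nilpotent thickenings'' property applied to the 2-commutative square whose four edges are $\tilde{x}_0$, $x$, $\Spec k\hookrightarrow\Spec A$, and $\pi$, and where the diagonal filler is forced to be unique because $\cM$ is Deligne--Mumford (so 2-automorphisms are trivial and the lift data $\tilde{x}$ is honestly a morphism of schemes). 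The resulting local $\Lambda$-algebra map $R\rightarrow A$ kills $\fm_R^n$ for $n\gg 0$ since $A$ is Artinian, and so factors uniquely through $\what{R}\rightarrow A$, giving an element of $h_{\what{R},\what{\xi}}(A,\psi)$ whose image under $\eta_{\what{R}}$ is $(x,\varphi)$; uniqueness of the lift gives injectivity. This proves (a).

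For (b), apply (a) with $U := \Spec\cO_{\cM,z_0'}$ and $\pi := i_{z_0'} : \Spec\cO_{\cM,z_0'}\rightarrow\cM$, which is formally \'etale by Proposition \ref{prop_elr}; the residue field of $\cO_{\cM,z_0'}$ is exactly $\kappa(z_0)$ by the last clause of Proposition \ref{prop_elr}, supplying the required isomorphism $\xi$, so (a) identifies $\what{\cO_{\cM,z_0'}}$ with a pro-representing object for $F_{z_0'}$. (The Noetherian hypothesis on $U$ holds because $\cO_{\cM,z_0'}$ is the strict henselization of the local ring of a scheme at a point via Proposition \ref{prop_elr}, and in our applications $\cM$ is locally Noetherian so this local ring is Noetherian.) For (c), the commutativity of the square is an immediate consequence of the constructions: both compositions send a local map $\what{R}\rightarrow A$ to the deformation obtained by post-composing $\Spec A\rightarrow U\stackrel{\pi}{\rightarrow}\cM\stackrel{f}{\rightarrow}\cN$, which by the given commutative diagram equals $\Spec A\rightarrow U\stackrel{g}{\rightarrow} V\stackrel{\rho}{\rightarrow}\cN$.

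The main technical obstacle I anticipate is the bookkeeping for the 2-categorical data: one must carefully track the 2-isomorphism $\varphi$ between $x_0$ and the restriction of the lifted deformation, and verify that the unique lift guaranteed by formal \'etaleness is actually a lift of this particular 2-commutative datum (and not just of the underlying 1-cells). Because $\cM$ is Deligne--Mumford the relevant 2-automorphism groups are trivial, so ultimately the 2-categorical subtleties collapse, but phrasing the formally \'etale lifting property correctly for maps into a stack (as opposed to a scheme) is where one must be most careful.
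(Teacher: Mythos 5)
Your argument is correct and follows essentially the same strategy as the paper: the key step is to use formal \'etaleness of $\pi$ to lift a deformation $(x,\varphi)$ over an Artinian local $A$ uniquely to a map into $U$ (and hence into $\Spec R$, then $\Spec\what{R}$), with (b) obtained by specializing to $U = \Spec\cO_{\cM,z_0}$ and (c) by inspecting the construction. The only presentational difference is that you invoke the lifting property directly against the nilpotent thickening $\Spec k\hookrightarrow\Spec A$ keeping track of the 2-isomorphism data by hand, whereas the paper first forms the scheme $\Spec R\times_\cM\Spec A$ (using the representable diagonal) and lifts the section $t : \Spec k\rightarrow\Spec R\times_\cM\Spec A$ along the formally \'etale projection $\pi_{u,A}$, so that the lifting problem is literally one of schemes and the 2-categorical bookkeeping you flag is absorbed into the definition of the fiber product. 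Both routes are valid; the fiber-product phrasing is slightly cleaner precisely because it packages the 2-commutativity datum $\alpha_\varphi$ into the morphism $t$, making the uniqueness of the lift a statement in ordinary scheme theory rather than one that needs the Deligne--Mumford/unramified-inertia input to be invoked separately.
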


\begin{proof} Any map $a : (\what{R},\what{\xi})\rightarrow (A,\psi)$ with $(A,\psi)\in\cC_\Lambda$ defines a map $x_a : \Spec A\rightarrow \Spec R\rightarrow\cM$. Since $\cM$ is fibered in groupoids, there is a map $\varphi : x_0\rightarrow x_a$ in $\cM$ inducing the isomorphism $\psi : A/\fm_A\rightiso k$, and this map $\varphi$ is unique up to precomposing with $A$-isomorphisms. The pair $(x_a,\varphi)$ is thus a deformation of $x_0$ over $A$, and this defines a morphism of functors
$$\eta_{\what{R}} : h_{\what{R},\what{\xi}}\rightarrow F_{x_0}$$
Let $h_{R,\xi} : \cC_\Lambda\rightarrow\Sets$ be the functor sending $(A,\psi)$ to the set of local $\Lambda$-algebra homomorphisms $a : R\rightarrow A$ satisfying $\psi\circ (a\mod\fm_R) = \xi$. Since any local homomorphism $\what{R}\rightarrow A$ must factor through $\what{R}/\fm_{\what{R}}^n\cong R/\fm_R^n$ for some $n$, the natural map $h_{R,\xi}\rightarrow h_{\what{R},\what{\xi}}$ is an isomorphism. Thus it suffices to show that the map
$$\eta_R : h_{R,\xi}\longrightarrow F_{x_0}$$
is an isomorphism. Let $(x,\varphi)$ be a deformation of $x_0$ over $(A,\psi)\in\cC_\Lambda$, so that $p(\varphi) : \Spec k\rightarrow\Spec A$ induces $\psi : A/\fm_A\rightiso k$. The map $\varphi : x_0\rightarrow x$ induces a unique isomorphism $\alpha_\varphi : x_0\rightiso x\circ p(\varphi)$. The triple $(\tilde{x}_0, p(\varphi),\alpha_\varphi)$ defines a map $t : \Spec k\rightarrow \Spec R\times_\cM\Spec A$ making the following diagram commute (ignoring the dotted arrow for now)

\[\begin{tikzcd}
	\Spec k\ar[d,"p(\varphi)"']\ar[r,"t"]\ar[rr,bend left = 20,"\tilde{x}_0"] & \Spec R\times_\cM\Spec A\ar[r]\ar[d,"\pi_{u,A}"] & \Spec R\ar[d,"\pi_u"] \\
	\Spec A\ar[r,equals]\ar[ur,dashed] & \Spec A\ar[r,"x"] & \cM
\end{tikzcd}\]

Since $\pi$ is formally \'{e}tale, the same is true of $\pi_u$ and $\pi_{u,A}$ \cite[04EG]{stacks}. Thus, there is a unique dotted arrow making the diagram commute, and hence the corresponding map $\Spec A\rightarrow\Spec R$ gives an element of $h_{R,\xi}(A,\psi)$ inducing the deformation $(x,\varphi)$. The existence of the dotted arrow implies that $\eta_R$ is surjective, and the uniqueness implies that $\eta_R$ is injective. This proves (a). Part (b) follows from (a) by setting $U = \Spec\cO_{\cM,z_0}$, and (c) follows from the construction of the isomorphism $\eta_{\what{R}}$.
\end{proof}


\begin{prop}\label{prop_deformation_theoretic_criterion_of_etaleness} Let $\cM,\cN$ be locally Noetherian Deligne-Mumford stacks with representable diagonal. Let $f : \cM\rightarrow\cN$ be a quasi-finite morphism. The following are equivalent
\begin{itemize}
\item[(1)] $f$ is \'{e}tale.
\item[(2)] For every point $z_0 : \Spec\Omega\rightarrow\cM$ with $\Omega$ separably closed, the induced morphism of \'{e}tale local rings $(f_{z_0})_* : \cO_{\cN,f\circ z_0}\rightarrow \cO_{\cM,z_0}$ is an isomorphism.
\item[(3)] For every point $z_0 : \Spec\Omega\rightarrow\cM$ with $\Omega$ separably closed and $\kappa(z_0) = \Omega$ (c.f. Definition \ref{def_\'{e}tale_local_ring}), $f$ induces an isomorphism of deformation functors $f_* : F_{z_0}\rightiso F_{f\circ z_0}$.
\end{itemize}
\end{prop}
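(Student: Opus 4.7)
\emph{Proof plan.} The strategy is to prove the cycle $(1) \Rightarrow (2) \Rightarrow (3) \Rightarrow (2) \Rightarrow (1)$, where only the step $(3) \Rightarrow (2)$ carries real content; the easy directions follow from the results already proved about étale local rings and universal deformation rings (Propositions~\ref{prop_elr} and~\ref{prop_elr2udr}).

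For $(1) \Rightarrow (2)$: choose an étale presentation $\pi \colon W \to \cM$ by a scheme and an étale presentation $\rho \colon V \to \cN$ such that the composition $f \circ \pi$ lifts to a scheme-theoretic map $g \colon W \to V$ (obtained by first replacing $W$ by an étale cover of $\cM \times_{\cN} V$). If $f$ is étale then $g$ is étale by two-out-of-three. Lift $z_0$ to a geometric point $\bar w$ of $W$; by Proposition~\ref{prop_elr} we have canonical isomorphisms $\cO_{\cM,z_0} \cong \cO_{W,\bar w}$ and $\cO_{\cN,f\circ z_0} \cong \cO_{V,g(\bar w)}$, and étale morphisms of schemes induce isomorphisms on strict henselizations at matched geometric points. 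For $(2) \Rightarrow (3)$: Proposition~\ref{prop_elr2udr}(b),(c) identifies the universal deformation rings of $z_0$ and $f \circ z_0$ (when $\kappa(z_0) = \Omega$) with the completions of the étale local rings in a functorial way, so an isomorphism of the étale local rings induces an isomorphism of deformation functors.

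The heart of the argument is $(3) \Rightarrow (2)$. First, since the étale local ring depends only on the underlying scheme-point (up to canonical isomorphism after changing $\Omega$), it is harmless to pass to a geometric point with $\kappa(z_0) = \Omega$, so (3) applies. By Proposition~\ref{prop_elr2udr}(a),(c) the hypothesis upgrades to an isomorphism on completions $\widehat{\cO_{\cN, f\circ z_0}} \to \widehat{\cO_{\cM,z_0}}$. To upgrade this to an isomorphism of the étale local rings themselves we use quasi-finiteness: choose étale neighborhoods $V \to \cN$ of $f \circ z_0$ and $U' \to \cM$ of $z_0$ fitting into a scheme-theoretic map $U' \to V$ compatible with $f$ (exists by fiber product as above), and consider the induced local map $\cO_{V,v}^{\mathrm{sh}} \to \cO_{U',u'}^{\mathrm{sh}}$, which is essentially of finite type and quasi-finite. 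By Hensel's lemma applied to the strict henselian ring $\cO_{V,v}^{\mathrm{sh}}$, any quasi-finite local algebra essentially of finite type over it which is also local is finite as a module. Hence $\cO_{U',u'}^{\mathrm{sh}}$ is a finite $\cO_{V,v}^{\mathrm{sh}}$-module, and both kernel and cokernel of the ring map are finitely generated over the Noetherian local ring $\cO_{V,v}^{\mathrm{sh}}$. Faithful flatness of completion on finitely generated modules then forces kernel and cokernel to vanish, giving (2).

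Finally, $(2) \Rightarrow (1)$: with $\pi \colon W \to \cM$, $\rho \colon V \to \cN$, and $g \colon W \to V$ as above, it suffices to show $g$ is étale, since $\pi, \rho$ are. The map $g$ is locally of finite type (from quasi-finiteness of $f$, hence locally of finite presentation because everything is locally Noetherian), and by Proposition~\ref{prop_elr} the hypothesis (2) translates into the statement that $g$ induces an isomorphism of strict henselizations at every geometric point of $W$. A locally finitely presented morphism of locally Noetherian schemes with this property is étale (it is formally étale by the universal property of strict henselizations, and locally of finite presentation by hypothesis). The principal obstacle is the Hensel-lemma step in $(3) \Rightarrow (2)$: translating a formal isomorphism into an actual isomorphism of étale-local data requires the quasi-finiteness assumption in an essential way, and the statement would fail without it (one cannot expect isomorphism of completions to be faithful without some finiteness).
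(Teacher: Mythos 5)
Your overall structure matches the paper's: the paper also proves $(1)\Leftrightarrow(2)$ by reducing to a diagram of étale scheme presentations (using Proposition~\ref{prop_elr} to identify the stacky étale local rings with strict henselizations of the presentations, and then invoking the scheme-theoretic criterion that a quasi-finite map is étale at a point iff it induces an isomorphism on strict henselizations), shows $(2)\Rightarrow(3)$ via Proposition~\ref{prop_elr2udr}, and treats $(3)\Rightarrow(2)$ as a completed-local-ring criterion for étaleness. The only genuine divergence is in $(3)\Rightarrow(2)$: where you give a self-contained Hensel's-lemma argument (base change of strict henselizations, module-finiteness over the henselian ring from quasi-finiteness, then faithful flatness of completion to detect the isomorphism), the paper simply cites \cite[\S4.3, Proposition 3.26]{Liu02}, which is precisely this criterion for finite-type morphisms of locally Noetherian schemes. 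Both routes are valid; yours is more explicit at the cost of a small imprecision you should correct: the map $\cO_{V,v}^{\mathrm{sh}}\to\cO_{U',u'}^{\mathrm{sh}}$ is not itself essentially of finite type (strict henselization is ind-étale, not of finite type over the base ring). The correct phrasing is that quasi-finiteness of $U'\to V$ plus Hensel's lemma (e.g.\ in the form that a quasi-finite local algebra essentially of finite type over a strictly henselian local ring is module-finite, combined with the fact that strict henselization commutes with this construction) shows that $\cO_{U',u'}^{\mathrm{sh}}$ is a finite module over the Noetherian local ring $\cO_{V,v}^{\mathrm{sh}}$; with that substitution your faithful-flatness-of-completion step goes through as stated.
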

\begin{proof} First we show (1)$\iff$(2). Let $V\rightarrow\cN$ be an \'{e}tale neighborhood of $f\circ z_0$, and then $\cM_V := \cM\times_\cN V\rightarrow\cM$ is \'{e}tale and $\cM_V$ is also a Deligne-Mumford stack with representable diagonal. Let $u : U\rightarrow\cM_V$ be an \'{e}tale neighborhood of some lift of $z_0$ to $\cM_V$. Possibly shrinking $U,V$, we may assume that $u$ is quasi-finite. Let $\tilde{z_0} : \Spec\Omega\rightarrow U$ be a lift of $z_0$ to $U$. Then $f$ is \'{e}tale at $z_0$ if and only if $\cM_V\rightarrow V$ is \'{e}tale at $u(\tilde{z_0})$ if and only if the map $h : U\rightarrow V$ is \'{e}tale at the image of $\tilde{z_0}$ \cite[02KM]{stacks}. Since $h$ is quasi-finite, the induced map on local rings is of finite presentation, so $h$ is \'{e}tale at the image of $\tilde{z_0}$ if and only if the induced map of local rings is weakly \'{e}tale \cite[0CKP, 039L]{stacks}, but this is equivalent to $h$ inducing an isomorphism on \'{e}tale local rings at $\tilde{z_0}$ \cite[094Z]{stacks}. Since $U\rightarrow\cM$ and $V\rightarrow\cN$ are \'{e}tale, this map of \'{e}tale local rings is precisely the map $(f_{z_0})_*$, so (1)$\iff$(2).


Proposition \ref{prop_elr2udr} shows that (2)$\Rightarrow$(3). The converse follows from \cite[\S4.3, Proposition 3.26]{Liu02}. 
\end{proof}

\bibliography{references}

\end{document}